\newtheorem{thm}{Theorem}[section]
\newtheorem{lem}[thm]{Lemma}%
\newtheorem{prop}[thm]{Proposition}%
\newtheorem{cor}[thm]{Corollary}%
\theoremstyle{remark}
\newtheorem{definition}{Definition}[section] %
\newtheorem{remark}{Remark}[section] %
\theoremstyle{plain}
\numberwithin{equation}{section}
\def\PP{{\mathbb P}}
\def\RR{{\mathbb R}}
\def\SS{{\mathbb S}}
\def\TT{{\mathbb T}}
\def\XX{{\mathbb X}}
\def\YY{{\mathbb Y}}
\def\mm{{\mathsf m}}
\def\I{\operatorname{I}}
\def\Leb{\operatorname{Leb}}
\def\C{\operatorname{C}}
\def\undef{\operatorname{undef}}
\def\tdef{\operatorname{def}}
\def\Ad{\operatorname{Ad}}
\def\new{{\operatorname{new}}}
\def\UB{{\scrB_1^{d-1}}}
\def\US{{\S_1^{d-1}}}
\def\Pac{P_{\mathrm{ac}}}
\def\veca{{\text{\boldmath$a$}}}
\def\vecb{{\text{\boldmath$b$}}}
\def\vecc{{\text{\boldmath$c$}}}
\def\vece{{\text{\boldmath$e$}}}
\def\vech{{\text{\boldmath$h$}}}
\def\vecell{{\text{\boldmath$\ell$}}}
\def\vecm{{\text{\boldmath$m$}}}
\def\vecn{{\text{\boldmath$n$}}}
\def\vecq{{\text{\boldmath$q$}}}
\def\vecQ{{\text{\boldmath$Q$}}}
\def\vecp{{\text{\boldmath$p$}}}
\def\vecu{{\text{\boldmath$u$}}}
\def\vecv{{\text{\boldmath$v$}}}
\def\vecV{{\text{\boldmath$V$}}}
\def\vecVp{{\text{\boldmath$V$}}_{\!\!+}}
\def\vecVone{{\text{\boldmath$V$}}_{\!\!1}}
\def\vecw{{\text{\boldmath$w$}}}
\def\vecW{{\text{\boldmath$W$}}}
\def\vecx{{\text{\boldmath$x$}}}
\def\vecy{{\text{\boldmath$y$}}}
\def\vecz{{\text{\boldmath$z$}}}
\def\vecalf{{\text{\boldmath$\alpha$}}}
\def\vecbeta{{\text{\boldmath$\beta$}}}
\def\scrA{{\mathcal A}}
\def\scrB{{\mathcal B}}
\def\scrD{{\mathcal D}}
\def\scrE{{\mathcal E}}
\def\scrH{{\mathcal H}}
\def\scrK{{\mathcal K}}
\def\scrL{{\mathcal L}}
\def\scrO{{\mathcal O}}
\def\scrQ{{\mathcal Q}}
\def\scrP{{\mathcal P}}
\def\scrS{{\mathcal S}}
\def\scrT{{\mathcal T}}
\def\scrU{{\mathcal U}}
\def\scrV{{\mathcal V}}
\def\scrX{{\mathcal X}}
\def\scrZ{{\mathcal Z}}
\def\fD{{\mathfrak D}}
\def\fJ{{\mathfrak J}}
\def\fL{{\mathfrak L}}
\def\fQ{{\mathfrak Q}}
\def\fW{{\mathfrak W}}
\def\fZ{{\mathfrak Z}}
\def\diag{\operatorname{diag}}
\def\dim{\operatorname{dim}}
\def\id{\operatorname{id}}
\def\L{\operatorname{L{}}}
\def\M{\operatorname{M}}
\def\I{\operatorname{I}}
\def\GL{\operatorname{GL}}
\def\S{\operatorname{S}}
\def\SL{\operatorname{SL}}
\def\ASL{\operatorname{ASL}}
\def\SO{\operatorname{SO}}
\def\T{\operatorname{T{}}}
\def\Tr{\operatorname{Tr}}
\def\supp{\operatorname{supp}}
\def\vol{\operatorname{vol}}
\def\trans{^{\!\mathsf{T}}}
\def\Onder#1#2#3#4#5{#1 \setbox0=\hbox{$#1$}\setbox1=\hbox{$#2$}
       \dimen0=.5\wd0 \dimen1=\dimen0 \dimen2=\dp0 \dimen3=\dimen2
       \advance\dimen0 by .5\wd1 \advance\dimen0 by -#4
       \advance\dimen1 by -.5\wd1 \advance\dimen1 by -#4
       \advance\dimen2 by -#3 \advance\dimen2 by \ht1
       \advance\dimen2 by 0.3ex \advance\dimen3 by #5
        \kern-\dimen0\raisebox{-\dimen2}[0ex][\dimen3]{\box1}
       \kern\dimen1}
\def\nbar{\overline{n}}
\def\tc{\widetilde{c}}
\def\tvecc{\widetilde{\vecc}}
\newcommand{\oPhi}{\overline{\Phi}}
\newcommand{\tXX}{\widetilde{\XX}}
\newcommand{\ck}{\:^{\circ\hspace{-1pt}}k}
\newcommand{\cp}{\:^{\circ\hspace{-1pt}}p}
\newcommand{\ckappa}{\:^{\circ\hspace{-1pt}}\kappa}
\newcommand{\tscrZ}{\widetilde{\scrZ}}
\newcommand{\tSS}{\widetilde{\SS}}
\newcommand{\tPsi}{\widetilde{\Psi}}
\newcommand{\tpsi}{\widetilde{\psi}}
\newcommand{\tvecV}{\widetilde{\vecV}}
\newcommand{\tG}{\widetilde{G}}
\newcommand{\tGamma}{\widetilde{\Gamma}}
\newcommand{\tgamma}{\widetilde{\gamma}}
\newcommand{\tTT}{\widetilde{\TT}}
\newcommand{\tp}{\widetilde{p}}
\newcommand{\tiota}{\widetilde{\iota}}
\newcommand{\hf}{\widehat{f}}
\newcommand{\g}{\mathsf{g}}
\newcommand{\p}{\mathsf{r}}
\renewcommand{\r}{\mathsf{r}}
\newcommand{\tr}{\widetilde{\mathsf{r}}}
\renewcommand{\a}{\mathsf{a}}
\newcommand{\oomega}{\overline{\omega}}
\newcommand{\tomega}{\widetilde{\omega}}
\newcommand{\toomega}{\widetilde{\overline{\omega}}}
\newcommand{\vs}{\varsigma}
\newcommand{\oA}{\overline{A}}
\newcommand{\tV}{\widetilde{V}}
\newcommand{\oB}{\overline{B}}
\newcommand{\tB}{\widetilde B}
\newcommand{\tP}{\widetilde\scrP}
\newcommand{\bs}{\backslash}
\newcommand{\tL}{\widetilde{L}}
\newcommand{\tfZ}{\widetilde\fZ}
\newcommand{\tlambda}{\widetilde\lambda}
\newcommand{\tmu}{\widetilde\mu}
\newcommand{\hmu}{\widehat\mu}
\newcommand{\tX}{\widetilde X}
\newcommand{\tnu}{\widetilde\nu}
\newcommand{\GaG}{\Gamma\backslash G}
\newcommand{\lsl}{\mathfrak{sl}}
\newcommand{\ig}{\mathfrak{g}}
\newcommand{\is}{\mathfrak{s}}
\newcommand{\ih}{\mathfrak{h}}
\newcommand{\il}{\mathfrak{l}}
\newcommand{\Q}{\mathbb{Q}}
\newcommand{\R}{\mathbb{R}}
\newcommand{\Z}{\mathbb{Z}}
\renewcommand{\mod}{\text{ mod }}
\newcommand{\col}{\: : \:}
\newcommand{\bn}{\mathbf{0}}
\newcommand{\tvecu}{\widetilde{\vecu}}
\newcommand{\tM}{\widetilde{M}}
\newcommand{\txi}{\widetilde{\xi}}
\newcommand{\trho}{{\widetilde{\rho}}}
\newcommand{\tg}{\tilde{g}}
\newcommand{\tf}{\widetilde{f}}
\newcommand{\ve}{\varepsilon}
\newcommand{\matr}[4]{\left( \begin{matrix} #1 & #2 \\ #3 & #4 \end{matrix} \right) }
\newcommand{\smatr}[4]{\bigl( \begin{smallmatrix} #1 & #2 \\ #3 & #4 \end{smallmatrix} \bigr) }
\newcommand{\scmatr}[2]{\left( \begin{smallmatrix} #1 \\ #2 \end{smallmatrix} \right) }
\title{The Boltzmann-Grad limit of the Lorentz gas in a union of lattices}
\author{Matthew Palmer and Andreas Str\"ombergsson}
\begin{document}

\begin{abstract}
The Lorentz gas describes an ensemble of noninteracting point particles in an infinite array of spherical scatterers.
In the present paper we consider the case when the scatterer configuration $\scrP$
is a fixed union of (translated) lattices in $\R^d$,
and %
prove that in the limit of low scatterer density,
the particle dynamics converges to a
random flight process.  %
In the special case when the lattices in $\scrP$ are pairwise incommensurable, this settles a conjecture from \cite{jMaS2013a}.
The proof is carried out 
by applying a %
framework developed in recent work %
by Marklof and Str\"ombergsson
\cite{jMaS2019}, %
and central parts of our proof
are the construction of an admissible marking of the point set $\scrP$, and 
the verification of the uniform spherical equidistribution %
condition required in \cite{jMaS2019}.
Regarding the random flight process %
obtained in the low density limit of the Lorentz gas,
we prove that it can be reconstructed from the
corresponding limiting 
flight processes arising from the %
individual commensurability classes of lattices in $\scrP$. %
We furthermore prove that 
the free path lengths of the limit flight process have a 
distribution with a power law tail,
whose exponent depends on the 
number of commensurability classes in $\scrP$. %
\end{abstract}

\maketitle

\tableofcontents

\section{Introduction}
\label{INTROsec}

The Lorentz gas 
\cite{hL1905} 
describes the dynamics of a cloud of non-interacting 
point particles in an array of fixed spherical scatterers of radius $\rho>0$, centered at the elements of a 
given locally finite point set $\scrP\subset\RR^d$.
Each particle travels with constant velocity along straight lines, 
and each time it hits a scatterer 
it is deflected
by elastic %
reflection
or by a more general (fixed) scattering process.
We denote the position and velocity of a point particle at time $t$
by $\vecq(t)$ and $\vecv(t)$.
Since the particle speed outside the scatterers is a constant of motion
we may without loss of generality consider only point particles
having unit speed, viz., $\|\vecv(t)\|=1$.
This means that the particle dynamics 
takes place in the unit tangent bundle
$\T^1(\scrK_\rho):=\scrK_\rho\times\US$ of the domain
\begin{align*}
\scrK_\rho:=\R^d\setminus(\scrP+\scrB_\rho^d),
\end{align*}
where $\scrB_\rho^d$ denotes the open ball of radius $\rho$, centered at the origin.\label{scrBrhodDEF}
The Liouville measure on $\T^1(\scrK_\rho)$ is
$\vol\times\sigma$,
where $\vol$ denotes %
the Lebesgue measure on $\R^d$
and $\sigma:=\vol_{\US}$ is the Lebesgue measure on $\US$. \label{sigmadef}

Since the gas particles are assumed to be non-interacting, 
to study the evolution of a particle cloud, 
we may just as well %
consider the orbit $t \mapsto (\vecq(t),\vecv(t))$   %
of a single %
point particle 
starting from a random %
point %
$(\vecq_0,\vecv_0)$, 
chosen according to a 
given %
probability measure on the phase space $\T^1(\scrK_\rho)$. %
Then %
$t \mapsto (\vecq(t),\vecv(t))$
becomes a random flight process, which we call 
the \textit{Lorentz process.}
A central challenge is to determine whether,
in the limit of small scatterer density
(that is %
as $\rho\to0$),
the %
Lorentz process
converges to a limiting stochastic process.
In order to give a precise formulation of this question,
we assume from now on that 
$\scrP$ has an asymptotic density,
meaning that there exists a constant
$\nbar_{\scrP}>0$  \label{nbarPDEF1} 
such that for any bounded set $\scrD\subset\RR^d$ 
with boundary of Lebesgue measure zero, we have
\begin{equation}\label{density000}
\lim_{R\to\infty} \frac{ \#(\scrP\cap R \scrD)}{R^d} = \nbar_{\scrP}\vol(\scrD) .
\end{equation}
Then a simple heuristic argument shows that the mean free path length,
i.e.\ the mean time between consecutive collisions,
should be expected to scale as $\rho^{1-d}$ as $\rho\to0$.
It is therefore natural
to consider the so-called \textit{Boltzmann-Grad} scaling,
in which length and time units are rescaled by a factor of $\rho^{1-d}$.
That is, we consider the macroscopic coordinates
\begin{align*}
\bigl(\vecQ(t),\vecV(t)\bigr)
=\bigl(\rho^{d-1}\vecq(\rho^{-(d-1)}t),\vecv(\rho^{-(d-1)}t)\bigr).
\end{align*}
The challenge now is to prove that %
the rescaled random flight process
$t\mapsto(\vecQ(t),\vecV(t))$
converges to a limiting random flight process as $\rho\to0$.

\vspace{5pt}

There are two %
instances where this problem has been fully understood
for some years.
The first is the
case when $\scrP$ is a fixed realisation of a Poisson point process. Here Boldrighini, Bunimovich
and Sinai \cite{cBlByS83} 
proved that the Lorentz process converges to a limit that is consistent with the
linear Boltzmann equation (cf.\ also %
\cite{gG69}
and \cite{hS78}).
In fact the paper 
\cite{cBlByS83} 
is restricted to dimension $d=2$ and hard sphere scatterers,
but it was proved in \cite{jMaS2019}
that the results generalise to general dimensions %
and soft scattering potentials.

The second instance is when the scatterer configuration $\scrP$ equals %
a Euclidean lattice
$\scrL$ of full rank in $\R^d$. %
For this case, Marklof and Str\"ombergsson 
\cite{jMaS2010a,jMaS2011a,jMaS2008,jMaS2010b}
proved convergence of the Lorentz process to a limiting random flight process which in fact 
only depends on the asymptotic density of $\scrL$. %
The limit process is Markovian only on an extended phase
space which, in addition to position and momentum, also includes the impact parameter
and distance to the next collision. The corresponding transport equation is in particular not
consistent with the linear Boltzmann equation. This new transport equation was obtained
independently in dimension $d = 2$ for $\scrP=\Z^2$ by 
Caglioti and Golse 
\cite{eCfG2008,eCfG2010},
subject to a
heuristic assumption that was proved (in any dimension) in \cite{jMaS2011a}. 
In the lattice %
setting, the limit transport process in fact satisfies a superdiffusive
central limit theorem \cite{jMbT2014}, with a mean-square displacement proportional to $t\log t$ 
(where $t$ is time measured in units of the mean collision time), rather than the standard linear scaling
which appears in the case of random scatterer configurations.

\vspace{5pt}

In a recent paper by Marklof and Str\"ombergsson
\cite{jMaS2019}, a general framework was developed
which, under a certain set of hypotheses on
the scatterer configuration $\scrP$, %
allows the proof of convergence of 
the rescaled Lorentz process
$t\mapsto(\vecQ(t),\vecV(t))$
to a limiting
random flight process.
This framework was proved %
to apply when $\scrP$ 
belongs to a certain class of quasicrystals
(this includes in particular the case when $\scrP$ is a general \textit{periodic} point set),
and also in the case when $\scrP$ is a fixed realisation of a Poisson point process
of constant intensity.

Our main goal in the present paper is to prove that the 
framework from \cite{jMaS2019}
also applies %
in the case when the scatterer configuration $\scrP$
is an arbitrary finite union of \textit{grids}. %
(By definition, a `grid' is a translate of a full rank lattice in $\R^d$.)
That is, we will assume that 
\begin{align}\label{Pform0}
\scrP=\bigcup_{i=1}^N\scrL_i,
\end{align}
where each $\scrL_i$ is a grid.
As we will see, this case serves as a nice 
testing ground for the framework developed in 
\cite{jMaS2019}, and exhibits new features
compared with the previous cases for which %
that framework has been proved to apply. %

In the special case when the %
$\scrL_i$ are pairwise \textit{incommensurable}\footnote{Two grids $\scrL$ and $\scrL'$ in $\R^d$ are said to be 
commensurable if there exist ${\delta}>0$ and $\vecv\in\R^d$
such that $\scrL\cap({\delta}\scrL'+\vecv)$ is a grid.\label{commensurableDEF}},
the scatterer configurations  in \eqref{Pform0}
have previously been considered in the paper \cite{jMaS2013a},
where among other things a conjectural description of the
Boltzmann-Grad limit of the Lorentz process was given.
The main result of the present paper settles that conjecture as a special case.
However, %
the general case of $\scrP$ %
as in \eqref{Pform0}, without the %
incommensurability assumption,
is considerably more difficult %
and involves
interesting new phenomena
(see Section \ref{SigmaremarksexSEC}).

\begin{remark}\label{overlapREMARK}
In the case when some scatterers in the family
$\{\vecp+\scrB_\rho^d\col\vecp\in\scrP\}$
\textit{overlap,}
certain technical annoyances appear in the definition of the 
Lorentz process $t\mapsto(\vecq(t),\vecv(t))$.
Let us note that %
overlapping scatterers in general \textit{do} exist %
in the situation studied in the
present paper. 
Indeed, if $\scrP$ is any finite union of grids which is not periodic,
then for every $\rho>0$ there exist points $\vecp\neq\vecp'$ in $\scrP$ with 
$\|\vecp-\vecp'\|<2\rho$.
In the classical case when the particles interact with the scatterers
through specular reflection,
this issue is handled in a standard manner \cite[Ch.\ 1.2]{jMaS2019}: in this case the
Lorentz flow equals the standard billiard flow in the region $\scrK_\rho$,
and this flow is technically defined 
only on a subset of $\T^1(\scrK_\rho)$
of full measure with respect to the Liouville measure $\vol\times\sigma$;
the exceptional points include all points $(\vecq,\vecv)\in\partial\scrK_\rho\times\US$
for which $\vecv$ points into a scatterer,
and also, in the case of scatterer overlaps,
any initial condition for which the particle at some time point either in the past or in the future collides with
an intersection point of two or more scatterer boundaries.
In the case of scatterer interaction through %
a more general scattering map,
for example a scattering process generated by a spherically symmetric potential,
a simple way to avoid intricacies in the definition of the Lorentz flow is to
\textit{remove} an arbitrary %
maximal subset of 
scatterer centers in $\scrP$ causing overlap \cite[Ch.\ 1.3]{jMaS2019}.
Since the probability of the particle hitting a scatterer which 
overlaps with %
another scatterer %
tends to zero in the Boltzmann-Grad limit $\rho\to0$,\footnote{This follows from the conclusion
$\Lambda(\fW(n;\rho))\to1$ in
\cite[Remark 4.3]{jMaS2019},
since $\fW(n;\rho)$ is by definition 
(see
\cite[(4.44) and end of Section 3.4]{jMaS2019})
the set of initial conditions, in macroscopic coordinates, which are such that the first $n$ collisions of the point particle
only involve \textit{separated} scatterers, i.e.\ scatterers which do not overlap any other scatterer.}
the limiting random flight process becomes the same independently of 
the choice of convention
and the choice of which overlapping scatterers to remove.
\end{remark}

\begin{remark}
The methods developed in the present paper
can be expected to be useful also for
the treatment of a Lorentz gas in
a \textit{polycrystal},
in the case when the typical grain size is comparable to the mean free path length,
and with a periodic scatterer configuration inside each grain;
cf.\ \cite{jMaS2015}.
Note however that a rigorous treatment of this case
(from the point of view 
of mimimal randomness of the scatterer configuration,
as adopted in the present paper as well as in e.g.\ \cite{cBlByS83},
and \cite{jMaS2019})
would require first extending the 
framework developed in \cite{jMaS2019}.
Another case where the methods developed in this paper
may prove useful\footnote{In combination with
the unfolding method for polyhedral billiards.}
is that of a %
periodic Lorentz gas contained inside
a \textit{polyhedral container}, 
assuming that the point particles are specularly reflected
whenever hitting the boundary of the container.
\end{remark}

\subsection{The limiting flight process}
\label{LIMITFLIGHTPROCESS}
We will assume throughout the %
paper that the fixed
scattering process of the Lorentz gas %
satisfies the conditions in
\cite[Sec.\ 3.4]{jMaS2019}.
Special cases %
include the case of hard sphere scatterers
as well as a general class of soft scattering potentials.

The main result of the present paper is
Theorem \ref{MAINTHM} in Section \ref{KINTHEORYrecapsec}, %
which states that all the assumptions required in the
framework in \cite{jMaS2019}
are satisfied in the case when $\scrP$ is an arbitrary finite union of grids.  
By the results of \cite{jMaS2019},
this implies the existence of the Boltzmann-Grad limit of the particle dynamics 
in the Lorentz gas.
To make a precise statement,   %
let us write $F_{t,\rho}$ for the rescaled Lorentz flow;
\begin{align*}
\bigl(\vecQ(t),\vecV(t)\bigr)=F_{t,\rho}\bigl(\vecQ(0),\vecV(0)\bigr).
\end{align*}
For notational reasons we extend the dynamics to the inside of each scatterer trivially,
that is, set $F_{t,\rho}=\id$ whenever $\vecQ$ is inside the scatterer.
Thus $F_{t,\rho}$ is now a flow defined on all of $\T^1(\R^d)$, the unit tangent bundle of $\R^d$.
Let $\Pac(\T^1(\R^d))$ be the set of Borel probability measures on $\T^1(\R^d)$
which are absolutely continuous 
with respect to the Liouville measure $\vol\times\sigma$.

The following theorem 
is a consequence of 
Theorem \ref{MAINTHM} below,
in combination with %
the main results of \cite{jMaS2019}
and in particular 
\cite[Sec.\ 4.5]{jMaS2019}.
\begin{thm}\label{limitflightprocessTHM}
Let the scatterer configuration $\scrP$ be a finite union of grids.
Then, for any $\Lambda\in\Pac(\T^1(\R^d))$,
there exists a random flight process $\Xi=\{\Xi(t)\col t\geq 0\}$
such that the random process
\begin{align*}
\Xi^{(\rho)}:t\mapsto\Xi^{(\rho)}(t)=F_{t,\rho}(\vecQ_0,\vecV_0)
\end{align*}
obtained by taking the initial data $(\vecQ_0,\vecV_0)$ random with respect to $\Lambda$,
converges to $\Xi$ in distribution, as $\rho\to0$.
\end{thm}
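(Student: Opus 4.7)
The plan is to deduce this statement directly from the general framework developed in \cite{jMaS2019}. That framework takes as input a scatterer configuration $\scrP$ satisfying a specific list of structural hypotheses (principally the existence of an admissible marking of $\scrP$ and a uniform spherical equidistribution property in a suitable homogeneous space), and outputs precisely the type of convergence statement claimed here: the existence of a limit random flight process $\Xi$ and the convergence $\Xi^{(\rho)} \to \Xi$ in distribution for every initial distribution $\Lambda \in \Pac(\T^1(\R^d))$. Thus the content of the proof is to verify that all these hypotheses hold when $\scrP$ is a finite union of grids.

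First, I would invoke Theorem \ref{MAINTHM}, the main technical result of the paper, which asserts exactly this: for any $\scrP$ of the form $\bigcup_{i=1}^N \scrL_i$ with each $\scrL_i$ a grid, the full list of hypotheses required in \cite{jMaS2019} is satisfied. Granting Theorem \ref{MAINTHM}, the conclusion of Theorem \ref{limitflightprocessTHM} follows by directly quoting \cite[Sec.\ 4.5]{jMaS2019}, where a random flight process $\Xi$ is constructed on an extended phase space (recording position, velocity, impact parameter and distance to the next scatterer), and the convergence in distribution of the rescaled Lorentz process to $\Xi$ for every absolutely continuous initial distribution is established. Projecting onto the $(\vecQ,\vecV)$-component and restoring the extension of the flow to the interior of each scatterer via $F_{t,\rho}=\id$ yields the statement as written.

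A minor bookkeeping point is that $\Lambda$ is a measure on $\T^1(\R^d)$ rather than on $\T^1(\scrK_\rho)$, and the scattering configuration in principle has overlapping scatterers for every $\rho>0$ (by Remark \ref{overlapREMARK}); however the conventions described in that remark ensure that $F_{t,\rho}$ is well-defined $\vol\times\sigma$-almost everywhere for each $\rho$, and the probability of ever hitting an overlap region tends to zero in the Boltzmann-Grad scaling, so this does not affect the limit $\Xi$.

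The only genuinely hard part of the proof lies in Theorem \ref{MAINTHM} itself, not in the deduction above. There the main obstacle is the uniform spherical equidistribution condition: when $\scrP$ contains grids from several distinct commensurability classes, the natural ambient homogeneous space is a product $\prod_j \Gamma_j \backslash G_j$ indexed by the commensurability classes, and one must establish joint equidistribution of a curve of translates that is \emph{not} embedded diagonally. This is precisely where the new phenomena alluded to in Section \ref{SigmaremarksexSEC} arise, and where the incommensurable case of \cite{jMaS2013a} has to be extended. Once Theorem \ref{MAINTHM} is in place, however, Theorem \ref{limitflightprocessTHM} is a formal consequence.
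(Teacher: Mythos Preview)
Your proposal is correct and matches the paper's own argument essentially verbatim: the paper simply states that Theorem \ref{limitflightprocessTHM} is a consequence of Theorem \ref{MAINTHM} in combination with the main results of \cite{jMaS2019}, in particular \cite[Sec.\ 4.5]{jMaS2019}. Your additional remarks about overlapping scatterers and about where the actual difficulty lies (in Theorem \ref{MAINTHM}) are accurate elaborations, but the core deduction is exactly what the paper does.
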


The next result, Theorem \ref{limitflightprocessexplTHM},
gives a %
description
of the limiting process $\Xi$ in Theorem \ref{limitflightprocessTHM}.
The key point is that, after introducing a certain \textit{marking} of the points in $\scrP$,
the process $\Xi$ %
can be described as the flow with unit speed along a random piecewise linear curve, whose
path segments, %
when considered in combination with the marks of the scatterers involved in the collisions,
are generated by a Markov process with memory two.
This is a special case of a corresponding result in
\cite{jMaS2019};
in the setting of the present paper with $\scrP$ being a finite union of grids,
the set of marks can be taken to be a certain concrete \textit{finite} set,
which we now turn to describe.

From now on, we will express $\scrP$ as
\begin{align}\label{Ppsidec1}
\scrP=\bigcup_{\psi\in\Psi}\scrL_\psi,
\end{align}
where $\Psi$ is a finite set of indices, and each $\scrL_\psi$ is a grid in $\R^d$.
In the statement of Theorem~\ref{limitflightprocessexplTHM}
we must require that the above presentation of $\scrP$ is \textit{admissible.}
This is a somewhat technical notion which we will define in 
Section \ref{SSjpsi0Ojpsi0sec};
we prove in Section \ref{ATTAINADMsec} that every finite union of grids possesses an admissible presentation.
We give a couple of simple examples:
If the grids $\scrL_\psi$ are pairwise incommensurable,
then the presentation in \eqref{Ppsidec1} is always admissible.
On the other hand,
for any $q\in\Z_{\geq2}$ and $\vecv\in\R^d\setminus \Z^d$,
the presentation $\scrP=\Z^d\cup (q\Z^d+\vecv)$
is admissible only if $\vecv\notin\Q^d$;
but in any case, regardless of whether $\vecv$ belongs to $\Q^d$ or not,
an admissible presentation of $\scrP=\Z^d\cup (q\Z^d+\vecv)$ 
can be given as the union of the grids $q\Z^d+\vecalf$ with $\vecalf$ running through $\{1,\ldots,q\}^d\sqcup\{\vecv\}$.
However, the discussion of admissibility in the general case is considerably more complicated
(see in particular the proof of Lemma \ref{ADMproplem2new}
and the comments at the end of Section \ref{ATTAINADMsec}).

Assuming that \eqref{Ppsidec1} 
is an admissible presentation of $\scrP$,
let us now fix a choice of a function
\begin{align}\label{psimarkdef}
\psi:\scrP\to\Psi,
\qquad\text{subject to $\vecp\in\scrL_{\psi(\vecp)}\:$ ($\forall\vecp\in\scrP$).}
\end{align}
We will call this function %
a \textit{crude marking}\footnote{See Section \ref{outlineSEC} regarding why we call it ``crude''.}
of $\scrP$.
Note that the condition
in \eqref{psimarkdef}
forces the choice of $\psi(\vecp)$ for 
every $\vecp\in\scrP$ 
which does not lie in more than one of the sets $\scrL_\psi$.
That is, %
the flexibility involved in the choice of the crude marking %
$\psi:\scrP\to\Psi$
concerns only the points in the union
$\bigcup_{\psi\neq\psi'\in\Psi}(\scrL_\psi\cap\scrL_{\psi'})$.
It will turn out that %
the exact choice of the mark $\psi(\vecp)$ at these points has no %
influence on the description of the limiting flight process.

\vspace{5pt}

For each $\psi\in\Psi$,
let $\nbar_\psi$
be the asymptotic density of $\scrL_\psi$. \label{nbarDEF0}
It follows that 
the asymptotic density of $\scrP$
is given by
$\nbar_{\scrP}:=\sum_{\psi\in\Psi}\nbar_{\psi}.$ \label{nbarPDEF0} 
We equip $\Psi$ with the probability measure $\mm$ defined by
\begin{align}\label{mmpsiDEF}
\mm(\psi)=\frac{\nbar_\psi}{\nbar_\scrP}
\qquad(\psi\in\Psi).
\end{align}
Thus $\mm(\psi)$ gives the relative density of scatterers belonging to $\scrL_{\psi}$.
(We are abusing notation for
increased readability; naturally, ``$\mm(\psi)$'' 
denotes $\mm(\{\psi\})$.)

\vspace{5pt}

The following result
follows from %
\cite[Theorem 4.6]{jMaS2019},
in view of Theorem \ref{MAINTHM} below.
\begin{thm}\label{limitflightprocessexplTHM}
Let $\scrP=\bigcup_{\psi\in\Psi}\scrL_\psi$ be an admissible presentation of $\scrP$,
and let $\psi:\scrP\to\Psi$
be a corresponding crude marking.
Let $\Lambda\in\Pac(\T^1(\R^d))$.
For $(\vecQ_0,\vecV_0)$ random with respect to $\Lambda$,
let the corresponding random trajectory
$t\mapsto F_{t,\rho}(\vecQ_0,\vecV_0)$
be described by
the random variables
$\xi_j^{(\rho)}\in\R_{>0}$,
$\psi_j^{(\rho)}\in\Psi$
and $\vecV_j^{(\rho)}\in\US$,
where $\xi_j^{(\rho)}$ is the length of the $j$th path segment,
$\psi_j^{(\rho)}$ is the mark of the scatterer involved in the $j$th collision
and $\vecV_j^{(\rho)}\in\US$ is the velocity after the $j$th collision.
Then as $\rho\to0$, the random process
\begin{align*}
\bigl(\big\langle\xi_j^{(\rho)},\psi_j^{(\rho)},\vecV_j^{(\rho)}\big\rangle\bigr)_{j=1,2,\ldots}
\end{align*}
converges in distribution to the second-order Markov process
\begin{align}\label{limitingMarkovprocess}
\bigl(\big\langle\xi_j,\psi_j,\vecV_j\big\rangle\bigr)_{j=1,2,\ldots},
\end{align}
where for any Borel set $A\subset\R_{>0}\times\Psi\times\US$,
\begin{equation}\label{Markovprocexpl1}
\PP\Big( \langle\xi_1,\psi_1,\vecV_1\rangle \in A \,\Big|\, (\vecQ_0,\vecV_0) \Big) 
=\int_{A} p^{(\psi)}(\vecV_0;\xi,\vecV)\,  d\xi\,d\mm(\psi)\,d\sigma(\vecV),
\end{equation}
and for $j\geq 2$,
\begin{align}\label{Markovprocexpl2}
\PP\Big( \langle\xi_j,\psi_j,\vecV_j\rangle \in A \,\Big|\, (\vecQ_0,\vecV_0),\; \big\langle (\xi_i,{\psi}_i,\vecV_i)\big\rangle_{i=1}^{j-1} \Big) 
\hspace{60pt}
\\\notag
= \int_{A} p^{(\psi_{j-1}\to\psi)}(\vecV_{j-2},\vecV_{j-1};\xi,\vecV)\,  d\xi\,d\mm(\psi)\,d\sigma(\vecV).
\end{align}
The functions $p^{(\psi)}$ and $p^{(\psi'\to\psi)}$
depend on $\scrP$ but are independent of $\Lambda$,
and for any fixed $\vecV',\vecV'',\psi'$,
both $p^{(\psi)}(\vecV';\xi,\vecV)$
and $p^{(\psi'\to\psi)}(\vecV'',\vecV';\xi,\vecV)$
are probability densities on $\R_{>0}\times\Psi\times\US$
with respect to the measure 
$d\xi\times d\mm(\psi)\times d\sigma(\vecV)$.
\end{thm}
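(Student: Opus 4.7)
The plan is to derive this result as a direct application of Theorem 4.6 of \cite{jMaS2019}, whose hypotheses are verified in Theorem \ref{MAINTHM} of the present paper. The general framework in \cite{jMaS2019} says: given a point set $\scrP$ equipped with an admissible marking by a (possibly infinite) mark space, and satisfying the uniform spherical equidistribution hypothesis, the rescaled Lorentz flow converges in distribution to a flow with unit speed along a piecewise linear random curve whose successive segments $(\xi_j,\psi_j,\vecV_j)$ form a second-order Markov process, with transition kernels obtained from the limiting equidistribution measure. So the work consists of matching the objects of \cite{jMaS2019} to the concrete ones appearing in our statement.

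First I would observe that, by assumption, $\scrP=\bigcup_{\psi\in\Psi}\scrL_\psi$ is an admissible presentation with $\Psi$ finite, and that $\psi:\scrP\to\Psi$ is a crude marking in the sense of \eqref{psimarkdef}. Under the definition of admissibility given in Section \ref{SSjpsi0Ojpsi0sec}, this marking satisfies the ``admissible marking'' hypothesis of \cite{jMaS2019}; this is precisely what is required so that the equidistribution statement used there has a well-defined limit. Next, Theorem \ref{MAINTHM} guarantees that the uniform spherical equidistribution condition of \cite{jMaS2019} holds for $(\scrP,\psi)$. At that point Theorem 4.6 of \cite{jMaS2019} applies and produces a limiting process of the form \eqref{limitingMarkovprocess} whose transition laws are given by \eqref{Markovprocexpl1} and \eqref{Markovprocexpl2}, for some transition densities $p^{(\psi)}$ and $p^{(\psi'\to\psi)}$ on $\R_{>0}\times\Psi\times\US$. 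The pushforward measure $\mm$ on $\Psi$ then arises naturally: the probability that the scatterer involved in the next collision carries mark $\psi$ is proportional to the asymptotic density of $\scrL_\psi$, which gives \eqref{mmpsiDEF}.

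The last point to justify is that the densities $p^{(\psi)}$ and $p^{(\psi'\to\psi)}$ depend on $\scrP$ but are independent of the starting distribution $\Lambda$, and that they are genuine probability densities in the triple $(\xi,\psi,\vecV)$. Both properties are built into \cite[Theorem 4.6]{jMaS2019}: the kernels are extracted from a limit transition measure defined purely in terms of the equidistribution limit (which in turn is a function of $\scrP$ alone), and probability mass conservation is a consequence of Boltzmann-Grad flux being asymptotically distributed according to this measure. The absolute continuity of $\Lambda$ with respect to $\vol\times\sigma$ is used only in the initial step, via \eqref{Markovprocexpl1}, to guarantee that the initial collision parameters are distributed according to the same kernel as the stationary one.

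The main obstacle in this deduction, which is dispatched by the groundwork of earlier sections rather than by new work in this proof, is the fact that crude markings are genuinely non-unique at the overlap set $\bigcup_{\psi\neq\psi'\in\Psi}(\scrL_\psi\cap\scrL_{\psi'})$, yet the theorem asserts that the limiting process does not see this ambiguity. This is built into the very definition of admissibility: the overlap set has zero asymptotic density, and the admissibility conditions imposed in Section \ref{SSjpsi0Ojpsi0sec} are exactly what is needed to ensure that the probability of the scaled trajectory colliding with an overlap point vanishes in the Boltzmann-Grad limit, so that altering $\psi$ on that set does not change the limiting kernels. Once this is invoked, the statement of the theorem follows.
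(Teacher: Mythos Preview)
Your high-level strategy is right and matches the paper: Theorem~\ref{limitflightprocessexplTHM} is indeed obtained by applying \cite[Theorem 4.6]{jMaS2019} once Theorem~\ref{MAINTHM} verifies the hypotheses [P1]--[P3] and [Q1]--[Q3]. But there is a genuine gap in how you connect the pieces.

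The marking for which Theorem~\ref{MAINTHM} verifies the hypotheses of \cite{jMaS2019} is \emph{not} the crude marking $\psi:\scrP\to\Psi$. It is the refined marking $\vs:\scrP\to\Sigma$ constructed in Section~\ref{spaceofmarksandmapsSEC}, where $\Sigma$ is a certain compact subset of $\Psi\times\Omega$ with $\Omega=\prod_{j=1}^N P(\TT_j^d)'$. In fact, as explained in Remark~\ref{sigmaplimitpointsREM}, for a general union of grids one \emph{cannot} satisfy [P1]--[P3], [Q1]--[Q3] with a finite mark space; the crude marking into $\Psi$ does not in general satisfy the spherical equidistribution condition [P2]. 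So your sentence ``this marking satisfies the `admissible marking' hypothesis of \cite{jMaS2019}'' conflates two different things: ``admissible'' in this paper refers to the \emph{presentation} of $\scrP$ (Definition~\ref{admissibleDEF}), not to any hypothesis in \cite{jMaS2019}, and the object satisfying those hypotheses is $\vs$, not $\psi$.

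The missing step, spelled out at the start of Section~\ref{transkerdefSEC}, is a reduction from $\Sigma$ back down to $\Psi$: the measure $\mm$ on $\Sigma$ is supported on the finite set $\tPsi=\{\sigma^\psi:\psi\in\Psi\}$, so any integral over $\Sigma$ against $\mm$ collapses to a sum over $\Psi$ via the bijection $\psi\leftrightarrow\sigma^\psi$. This is what lets one restate \cite[Theorem 4.6]{jMaS2019} (whose mark variable lives in $\Sigma$) in the form of Theorem~\ref{limitflightprocessexplTHM} (whose mark variable lives in $\Psi$). Finally, your explanation of why the ambiguity of $\psi$ on overlaps is harmless misattributes this to admissibility; the actual reason is simply that the overlap set has density zero by Lemma~\ref{THINDISJcondLEM}, independently of admissibility.
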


We call the functions $p^{(\psi)}$ and $p^{(\psi'\to\psi)}$ 
\textit{collision kernels.}
Explicit formulas for these will be given in Section~\ref{transkerdefSEC},
as pushforwards of invariant measures 
on certain homogeneous spaces %
associated to the given point set $\scrP$. %
Both $p^{(\psi)}$ and $p^{(\psi'\to\psi)}$
are rotationally invariant %
in the sense that
$p^{(\psi)}(\vecV'K;\xi,\vecV K)=p^{(\psi)}(\vecV';\xi,\vecV)$
and %
$p^{(\psi'\to\psi)}(\vecV''K,\vecV'K;\xi,\vecV K)
=p^{(\psi'\to\psi)}(\vecV'',\vecV';\xi,\vecV)$
for any $K\in\SO(d)$.
We also mention that by definition we have
$p^{(\psi)}(\vecV';\xi,\vecV)=0$ 
unless $\vecV\in\scrV_{\vecV'}$,
and $p^{(\psi'\to\psi)}(\vecV'',\vecV';\xi,\vecV)=0$
unless $\vecV'\in\scrV_{\vecV''}$ and $\vecV\in\scrV_{\vecV'}$.
Here, for any $\vecV\in\US$,
$\scrV_{\vecV}$ is the set of possible exit velocities
in a scatterer collision with entry velocity $\vecV$
\cite[Secs.\ 3.4--5]{jMaS2019}.

\vspace{5pt}

The limiting random flight process $\Xi$ %
discussed above
is closely related to the dynamics of a particle cloud 
in the Boltzmann-Grad limit of the Lorentz gas.
Indeed, the time evolution of an initial particle density $f_0\in\L^1(\T^1(\R^d))$
in the Lorentz gas with fixed scatterer radius $\rho$ is given by
$f_t=L_\rho^t f_0$ where $L_\rho^t$ is the Liouville operator defined by
\begin{align*}
[L_\rho^tf_0](\vecQ,\vecV):=f_0(F_{-t,\rho}(\vecQ,\vecV)).
\end{align*}
The existence of the limiting stochastic process $\Xi(t)$ implies that for every $t>0$
there exists a %
linear operator $L_t:\L^1(\T^1(\R^d))\to\L^1(\T^1(\R^d))$
such that for every $f_0\in\L^1(\T^1(\R^d))$ and every set $A\subset\T^1(\R^d)$
with boundary of Liouville measure zero,
\begin{equation*}
\lim_{\rho\to 0} \int_{A}  [L_t^{(\rho)}f_0](\vecQ,\vecV) \, d\vecQ\,d\sigma(\vecV) =
 \int_{A}  L_t f_0(\vecQ,\vecV) \, d\vecQ\,d\sigma(\vecV)
\end{equation*}
\cite[Cor.\ 1.4]{jMaS2019}.
Under suitable continuity assumptions 
we can in fact express $L_tf_0$ as
\begin{align*}
[L_tf_0](\vecQ,\vecV)=
\int_{\R_{>0}\times\Psi\times\US}f^{(\psi)}(t,\vecQ,\vecV,\xi,\vecVp)\,d\xi\,d\mm(\psi)\,d\sigma(\vecVp),
\end{align*}
where the functions $f^{(\psi)}(t,\vecQ,\vecV,\xi,\vecVp)$ %
form the unique solution of the system of differential equations
\begin{align}\notag
\bigl(\partial_t+\vecV &\cdot\nabla_\vecQ-\partial_{\xi}\bigr)
f^{(\psi_+)}(t,\vecQ,\vecV,\xi,\vecVp) 
\\\label{genlinBOLTZMANN}
&=\int_{\Psi\times\US} f^{(\psi)}\bigl(t,\vecQ,\vecV_0,0,\vecV\bigr)
\,p^{(\psi\to\psi_+)}(\vecV_0,\vecV;\xi,\vecVp)\,d\mm(\psi)\,d\sigma(\vecV_0),
\end{align}
subject to the initial condition
$f^{(\psi_+)}(0,\vecQ,\vecV,\xi,\vecVp)=f_0(\vecQ,\vecV)\, p^{(\psi_+)}(\vecV;\xi,\vecVp).$
See \cite[Sections 1.4 and 4.6]{jMaS2019}.

Equation \eqref{genlinBOLTZMANN}
may be viewed as a generalization of the linear Boltzmann equation;
it is the forward Kolmogorov equation
(or Fokker-Planck-Kolmogorov equation)
of a natural extension of $\Xi$ to a 
Markov flight process on the space $\T^1(\R^d)\times\R_{>0}\times\Psi\times\US$
\cite[(1.24)]{jMaS2019}.

In the special case when the grids $\scrL_i$ in
\eqref{Pform0} are pairwise incommensurable,
the limiting random flight process
described in Theorem \ref{limitflightprocessexplTHM},
as well as the generalized linear Boltzmann equation in 
\eqref{genlinBOLTZMANN},
agree with the corresponding
limits conjectured in
\cite{jMaS2013a};
see Remark \ref{oldprodformulacompareREM} below.

\subsection{Expressing $\Xi$ %
via the commensurability classes} %
\label{prodformulasSEC}
An important result of the present paper is that 
the collision kernels can be expressed as 
products over collision kernels associated to 
the commensurability classes of 
the grids appearing in $\scrP$.
In the special case when the grids in $\scrP$ are pairwise incommensurable,
such product formulas were given in
\cite[Sec.\ 5]{jMaS2013a}.

Recall that two grids $\scrL$ and $\scrL'$ in $\R^d$ are said to be 
\textit{commensurable} if there exist ${\delta}>0$ and $\vecv\in\R^d$
such that $\scrL\cap({\delta}\scrL'+\vecv)$ is a grid.
This is an equivalence relation on the family of grids in $\R^d$.
Let us say that two marks $\psi,\psi'\in\Psi$ 
are \textit{equivalent}
(and write $\psi\sim\psi'$) when
$\scrL_{\psi}$ and $\scrL_{\psi'}$ are commensurable.
For any $\psi\in\Psi$ we denote by
$[\psi]:=\{\psi'\in\Psi\col\psi'\sim\psi\}$  %
the corresponding 
equivalence class.

Let $C_{\Psi}$ be the family of equivalence classes in $\Psi$, \label{CPsidef}
and set, for any $c\in C_{\Psi}$,
\begin{align}\label{Pcdef}
\scrP_c:=\bigcup_{\psi\in\, c}\scrL_{\psi}.
\end{align}
Note that $\scrP_c$ is a subset of $\scrP$,
and $\scrP_c$ is itself
a finite union of grids, 
so that all the results discussed so far
also hold for the Lorentz gas \textit{with $\scrP_c$}
as scatterer configuration.
The asymptotic density of $\scrP_c$ is
\begin{align}
\nbar_c:=\sum_{\psi\in\,c}\nbar_\psi,
\end{align}
and it will be immediate from our definition of admissibility that
for any $c\in C_\Psi$, 
the presentation $\scrP_c=\cup_{\psi\in c}\scrP_\psi$ is admissible;
hence Theorem \ref{limitflightprocessexplTHM} applies %
to the scatterer configuration $\scrP_c$
with this presentation. %
For any two $\psi,\psi'\in c$,    %
let us denote by
$\cp^{(\psi)}$
and $\cp^{(\psi'\to\psi)}$
the collision kernels for the corresponding limiting flight process.\label{cpcollkernelDEF}
Thus $\cp^{(\psi)}$ is defined for any $\psi\in\Psi$, 
and $\cp^{(\psi'\to\psi)}$
is defined for any $\psi',\psi\in\Psi$ satisfying $\psi'\sim\psi$.
Also, for any $c\in C_\Psi$, letting $\mm_c$ be the probability measure on $c$ defined by
\begin{align}\label{mcDEF}
\mm_c(\psi)=\frac{\nbar_\psi}{\nbar_c},
\end{align}
we have that for any fixed $\vecV',\vecV''$ and $\psi'\in c$,
both $\cp^{(\psi)}(\vecV';\xi,\vecV)$ and 
$\cp^{(\psi'\to\psi)}(\vecV'',\vecV';\xi,\vecV)$
are probability densities on $\R_{>0}\times c\times\US$ 
with respect to the measure $d\xi\times d\mm_c(\psi)\times d\sigma(\vecV)$.

The product formula for collision kernels is stated in
Theorem \ref{Productformulasthm} below
(see also Remark~\ref{Productformulasthm2}).
Here we give an equivalent formulation
saying that the Markov process which generates $\Xi$
can be obtained by \textit{merging} %
in a certain way mutually independent versions of the Markov
processes which arise %
in the Boltzmann-Grad limit for Lorentz gases which scatterer configurations $\scrP_c$.
To prepare for the statement, let $\vecV',\vecV''$ be arbitrary, fixed  %
vectors in $\US$ subject to $\vecV'\in\scrV_{\vecV''}$,
that is, $\vecV'$ is a possible exit velocity in a scatterer collision with entry velocity $\vecV''$.
For every $c\in C_\Psi$,
let $\langle \txi_{c},\tpsi_{c},\tvecV_{c}\rangle$ 
be a random variable 
in $\R_{>0}\times c\times\US$,  %
with distribution %
\begin{align}\label{Markovprocexpl1forPc}
\cp^{(\psi)}(\vecV';\xi,\vecV)\,  d\xi\,d\mm_c(\psi)\,d\sigma(\vecV).
\end{align}
Also for every 
$\psi'\in\Psi$, let
$\langle \txi_{[\psi'],\psi'},\tpsi_{[\psi'],\psi'},\tvecV_{[\psi'],\psi'}\rangle$
be a random variable
in $\R_{>0}\times [\psi']\times\US$,  %
with distribution
\begin{align}\label{Markovprocexpl2forPc}
\cp^{(\psi'\to\psi)}(\vecV'',\vecV';\xi,\vecV)\,  d\xi\,d\mm_{[\psi']}(\psi)\,d\sigma(\vecV).
\end{align}
We take all these random triples to be 
mutually independent. %
Furthermore, for any $\psi'\in\Psi$ and $c\in C_{\Psi}$ with $\psi'\notin c$, we set
$\langle \txi_{c,\psi'},\tpsi_{c,\psi'},\tvecV_{c,\psi'}\rangle
:=\langle \txi_{c},\tpsi_{c},\tvecV_{c}\rangle$.
Thus the random variable 
$\langle \txi_{c,\psi'},\tpsi_{c,\psi'},\tvecV_{c,\psi'}\rangle$ is now defined for all $\psi'\in\Psi$ and all $c\in C_\Psi$.
Note that 
\eqref{Markovprocexpl1forPc} and \eqref{Markovprocexpl2forPc}
are exactly the distributions which appear
in \eqref{Markovprocexpl1} and \eqref{Markovprocexpl2}
when Theorem \ref{limitflightprocessexplTHM} is applied 
\textit{to $\scrP_c$.}
Next let the random variable $\tc$ be equal to
that $c\in C_\Psi$ which \textit{minimizes}
$\txi_{c}$, and let 
the random variable $\tc_{\psi'}$ 
be equal to that $c$ which minimizes $\txi_{c,\psi'}$.
(These $\tc$ and $\tc_{\psi'}$ are almost surely uniquely defined.)
\begin{thm}\label{indepmergeTHM}
For given $\vecV''\in\US$ and $\vecV'\in\scrV_{\vecV''}$
and $\psi'\in\Psi$,
introduce the random variables
$\langle \txi_{c},\tpsi_{c},\tvecV_{c}\rangle$
and $\langle \txi_{c,\psi'},\tpsi_{c,\psi'},\tvecV_{c,\psi'}\rangle$
(for all $c\in C_\Psi$)
and $\tc$ and $\tc_{\psi'}$ as above.
Then the conditional distributions appearing in 
\eqref{Markovprocexpl1} and \eqref{Markovprocexpl2} of
Theorem \ref{limitflightprocessexplTHM}
are given by the following formulas:
For any Borel set $A\subset\R_{>0}\times\Psi\times\US$,
\begin{align}\label{indepmergeCORres1}
\PP\Big( \langle\xi_1,\psi_1,\vecV_1\rangle \in A \,\Big|\, (\vecQ_0,\vecV_0),\:\vecV_0=\vecV' \Big) 
=\PP\Big(\langle \txi_{\tc},\tpsi_{\tc},\tvecV_{\tc}\rangle\in A \Big),
\end{align}
and for $j\geq 2$,
\begin{align}\notag
\PP\Big( \langle\xi_j,\psi_j,\vecV_j\rangle \in A \,\Big|\, (\vecQ_0,\vecV_0),\; \big\langle (\xi_i,{\psi}_i,\vecV_i)\big\rangle_{i=1}^{j-1},
\: \vecV_{j-2}=\vecV'',\: \vecV_{j-1}=\vecV',\:\psi_{j-1}=\psi'\Big) \hspace{20pt}
\\\label{indepmergeCORres2}
= \PP\Big( \langle \txi_{\tc_{\psi'},\psi'},\tpsi_{\tc_{\psi'},\psi'},\tvecV_{\tc_{\psi'},\psi'}\rangle \in A  \Big).
\end{align}
\end{thm}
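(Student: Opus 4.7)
The plan is to derive Theorem \ref{indepmergeTHM} as a direct consequence of the product formulas for the collision kernels that will be established in Theorem \ref{Productformulasthm}. Those formulas express $p^{(\psi)}(\vecV';\xi,\vecV)$, for $\psi$ lying in a commensurability class $c\in C_\Psi$, as $\cp^{(\psi)}(\vecV';\xi,\vecV)$ multiplied by survival-type factors indexed by the remaining classes $c'\neq c$, and they give an analogous expression for $p^{(\psi'\to\psi)}(\vecV'',\vecV';\xi,\vecV)$, with the distinguished class $[\psi']$ contributing the transition kernel $\cp^{(\psi'\to\psi)}$ and each $c'\neq[\psi']$ contributing the same single-step survival factor as before. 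Theorem \ref{indepmergeTHM} will follow by matching these product formulas against the output of a standard ``competing processes'' calculation on the right-hand sides of \eqref{indepmergeCORres1}--\eqref{indepmergeCORres2}.

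For \eqref{indepmergeCORres1}, I fix $\psi\in c$ and compute directly the joint density of the event $\tc=c$, $\tpsi_c=\psi$, $\txi_c\in d\xi$, $\tvecV_c\in d\vecV$. By mutual independence of the triples $\langle\txi_{c'},\tpsi_{c'},\tvecV_{c'}\rangle$ over $c'\in C_\Psi$, this joint density equals
\begin{align*}
\cp^{(\psi)}(\vecV';\xi,\vecV)\,d\xi\,d\mm_c(\psi)\,d\sigma(\vecV)\,\prod_{c'\neq c}\PP(\txi_{c'}>\xi),
\end{align*}
and each marginal tail $\PP(\txi_{c'}>\xi)$ is obtained by integrating $\cp^{(\psi'')}(\vecV';\xi'',\vecV'')$ against $d\xi''\,d\mm_{c'}(\psi'')\,d\sigma(\vecV'')$ over the range $\xi''>\xi$, which is precisely the survival factor appearing in the product formula. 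Using the identity $d\mm(\psi)=(\nbar_c/\nbar_\scrP)\,d\mm_c(\psi)$ for $\psi\in c$ to convert the density weights, and observing that the remaining density ratios $\nbar_{c'}/\nbar_\scrP$ are exactly absorbed into the survival factors as they are formulated in Theorem \ref{Productformulasthm}, the two sides of \eqref{indepmergeCORres1} coincide. The argument for \eqref{indepmergeCORres2} is verbatim the same, once one observes that the definition $\langle\txi_{c,\psi'},\tpsi_{c,\psi'},\tvecV_{c,\psi'}\rangle:=\langle\txi_c,\tpsi_c,\tvecV_c\rangle$ for $\psi'\notin c$ is precisely the statement that the classes $c\neq[\psi']$ carry no memory of the previous collision — which is exactly the independence across classes built into the product formula — while the class $[\psi']$ retains memory through the use of the transition triple $\langle\txi_{[\psi'],\psi'},\tpsi_{[\psi'],\psi'},\tvecV_{[\psi'],\psi'}\rangle$.

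The real work lies in Theorem \ref{Productformulasthm} itself rather than in the present theorem: once the product formula is in hand, Theorem \ref{indepmergeTHM} is a reformulation asserting that ``minimum of independent across commensurability classes'' produces the ``product of survival factors'' appearing in that formula. The only point requiring care in writing up the above plan is to verify that the \emph{exit-velocity} and \emph{entry-velocity} constraints encoded by the sets $\scrV_{\vecV'}$ are respected consistently across the competing classes (so that the relation $\vecV_{j-1}\in\scrV_{\vecV_{j-2}}$ enters the conditioning correctly), but this is automatic from the fact that the $\cp$-densities vanish outside the appropriate $\scrV$-sets.
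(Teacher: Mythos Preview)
Your proposal is correct and takes essentially the same approach as the paper: both deduce the theorem from the product formula (Theorem \ref{Productformulasthm}) by matching it against the ``minimum of independent competing classes'' computation of the right-hand sides. The paper carries out the computation at the level of the transition kernels $k,\,k^{\g}$ (passing through the impact-parameter substitution and splitting explicitly into the cases $[\psi_0]=[\psi']$ and $[\psi_0]\neq[\psi']$, using Lemma \ref{kkgrelLEM} to identify the $[\psi']$-survival factor with $\ck^{\g}(\xi,-\vecw',\psi')$ in the latter case), whereas you describe the argument directly at the collision-kernel level, but the logic is identical.
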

Note that Theorem \ref{indepmergeTHM} is a tautology
when $\#C_{\Psi}=1$, i.e.\ when all the grids $\scrL_\psi$ ($\psi\in\Psi$) are commensurable;
but when $\#C_{\Psi}>1$,
Theorem \ref{indepmergeTHM} 
in effect expresses the  %
collision kernel
of the limiting flight process for the scatterer configuration $\scrP$
in terms of the collision kernels
for the subconfigurations $\scrP_c$, $c\in C_{\Psi}$
(again, see Remark \ref{Productformulasthm2} below).

\vspace{5pt}

We next point out a consequence of 
Theorem \ref{indepmergeTHM}
concerning the limiting free path length distribution in the 
Boltzmann-Grad limit. %
Set, for any $\vecV'\in\US$,
\begin{align}\label{PhiPdef}
\Phi_{\scrP}(\xi)=\int_{\Psi\times\US}p^{(\psi)}(\vecV';\xi,\vecV)\,d\mm(\psi)\,d\sigma(\vecV).
\end{align}
The right hand side in \eqref{PhiPdef}
is independent of $\vecV'$ in view of the 
rotational invariance of $p^{(\psi)}$,
and $\Phi_{\scrP}(\xi)$ is a probability density on $\R_{>0}$ with respect to
Lebesgue measure.
In fact $\Phi_{\scrP}(\xi)$ is the density function for the free path length in the 
limiting flight process $\Xi$
when starting from a generic point inside the billiard domain.
We will show in Section~\ref{transkerSEC}
that the function $\Phi_{\scrP}(\xi)$ is %
continuous, decreasing, and $0\leq\Phi_{\scrP}(\xi)\leq\nbar_{\scrP}\,v_{d-1}$
for all $\xi\in\R_{>0}$, where $v_{d-1}:=\vol(\UB)$. \label{vdm1DEF}
Note that for any $c\in C_\Psi$, the formula \eqref{PhiPdef} applied to $\scrP_c$ reads
\begin{align}\label{PhiPcformula}
\Phi_{\scrP_c}(\xi)=\int_{c\times\US}\cp^{(\psi)}(\vecV';\xi,\vecV)\,d\mm_c(\psi)\,d\sigma(\vecV).
\end{align}
We now have:
\begin{cor}\label{ProductformulasthmCOR}
\begin{align*}
\Phi_{\scrP}(\xi)=
-\frac d{d\xi}\prod_{c\in C_{\Psi}}\int_\xi^\infty\Phi_{\scrP_{c}}(\xi')\,d\xi'.
\end{align*}
\end{cor}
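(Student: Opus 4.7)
The plan is to read off the corollary directly from Theorem \ref{indepmergeTHM} by identifying $\Phi_{\scrP}$ as the density of a minimum of independent random variables, and then computing that density via survival functions.

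First, I would note that $\Phi_{\scrP}(\xi)$, as defined in \eqref{PhiPdef}, is by \eqref{Markovprocexpl1} precisely the conditional density of the first free path length $\xi_1$ given $(\vecQ_0,\vecV_0)$ (with $\psi_1,\vecV_1$ integrated out). By the rotational invariance, this conditional density does not depend on $\vecV'=\vecV_0$. The analogous identity \eqref{PhiPcformula} identifies $\Phi_{\scrP_c}(\xi)$ as the density of the first free path length in the Lorentz process associated to the subconfiguration $\scrP_c$.

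Next, I would invoke Theorem \ref{indepmergeTHM}, specifically the identity \eqref{indepmergeCORres1}. For each $c\in C_\Psi$, let $\langle\txi_c,\tpsi_c,\tvecV_c\rangle$ be independent random triples with distribution \eqref{Markovprocexpl1forPc}. Integrating out $\tpsi_c\in c$ and $\tvecV_c\in\US$ shows that $\txi_c$ has density $\Phi_{\scrP_c}(\xi)$ on $\R_{>0}$. By \eqref{indepmergeCORres1}, for any $\xi>0$,
\begin{align*}
\PP(\xi_1>\xi\mid(\vecQ_0,\vecV_0))
=\PP\bigl(\txi_{\tc}>\xi\bigr)
=\PP\Bigl(\min_{c\in C_\Psi}\txi_c>\xi\Bigr),
\end{align*}
since $\tc$ was defined as the (almost surely unique) minimizer of $c\mapsto\txi_c$.

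Finally, by mutual independence of the $\txi_c$,
\begin{align*}
\PP\Bigl(\min_{c\in C_\Psi}\txi_c>\xi\Bigr)
=\prod_{c\in C_\Psi}\PP(\txi_c>\xi)
=\prod_{c\in C_\Psi}\int_\xi^\infty\Phi_{\scrP_c}(\xi')\,d\xi'.
\end{align*}
Differentiating with respect to $\xi$ (which is justified since each $\Phi_{\scrP_c}$ is continuous on $\R_{>0}$, as stated just before the corollary) and using that $\Phi_{\scrP}(\xi)$ is the density corresponding to the survival function $\PP(\xi_1>\xi\mid(\vecQ_0,\vecV_0))$ yields the claimed formula. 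No step is a real obstacle here; the only subtlety worth flagging is the almost sure uniqueness of the minimizer $\tc$, which is immediate since each $\txi_c$ is absolutely continuous, so ties occur with probability zero.
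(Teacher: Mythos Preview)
Your proof is correct and follows essentially the same route as the paper: both apply \eqref{indepmergeCORres1} from Theorem \ref{indepmergeTHM} with $A=(\xi,\infty)\times\Psi\times\US$, identify the two sides as $\int_\xi^\infty\Phi_{\scrP}$ and $\prod_{c}\int_\xi^\infty\Phi_{\scrP_c}$ respectively via independence of the $\txi_c$, and then differentiate using continuity. Your added remark on the almost sure uniqueness of $\tc$ is fine but not needed here, since the event $\{\txi_{\tc}>\xi\}$ equals $\{\min_c\txi_c>\xi\}$ regardless of uniqueness of the minimizer.
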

In the special case when the grids in $\scrP$ are pairwise incommensurable,
the formula in Corollary \ref{ProductformulasthmCOR} was given in
\cite[(2.8)]{jMaS2013a}.

\subsection{Asymptotic estimates for the free path length distribution}

We will %
prove the following upper and lower bounds
showing that the free path length distribution
in the Boltzmann-Grad limit 
has a power-law tail.
\begin{thm}\label{AsymptTHM}
Let $\scrP$ be a finite union of grids,
and let $N=\# C_\Psi$ be the number of commensurability classes 
of grids appearing in $\scrP$.
Then there exist constants $0<c_1<c_2$
such that
\begin{align}\label{genoPhiPhirel7disc4}
c_1\xi^{-(N+1)}<\Phi_{\scrP}(\xi)< c_2\xi^{-(N+1)},
\qquad\forall \xi\geq1.
\end{align}
\end{thm}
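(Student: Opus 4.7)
The plan is to apply Corollary \ref{ProductformulasthmCOR} and reduce to tail estimates on the per-commensurability-class densities $\Phi_{\scrP_c}$. Setting $\Psi_c(\xi) := \int_\xi^\infty \Phi_{\scrP_c}(\xi')\,d\xi'$ and differentiating the product formula gives
\[
\Phi_{\scrP}(\xi) \;=\; \sum_{c \in C_{\Psi}} \Phi_{\scrP_c}(\xi)\,\prod_{c' \neq c} \Psi_{c'}(\xi).
\]
If for each commensurability class $c$ one can produce constants $0 < a_c < b_c$ with
\[
a_c\,\xi^{-2} \;\le\; \Phi_{\scrP_c}(\xi) \;\le\; b_c\,\xi^{-2} \qquad (\xi \ge 1),
\]
then integrating yields $\Psi_c(\xi) \asymp \xi^{-1}$ on $[1,\infty)$ (the lower bound uses that $\Phi_{\scrP_c}$ is continuous and strictly positive), and each of the $N = \#C_{\Psi}$ summands above is of order $\xi^{-2}\cdot \xi^{-(N-1)} = \xi^{-(N+1)}$. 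Bounding the sum term-by-term from above gives the upper bound in \eqref{genoPhiPhirel7disc4}, and retaining any single summand gives the lower bound.

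Thus the theorem reduces to the two-sided estimate $\Phi_{\scrP_c}(\xi) \asymp \xi^{-2}$ for $\xi \ge 1$, where $\scrP_c$ is a finite union of pairwise commensurable grids. I would prove this using the explicit description of $\cp^{(\psi)}$ from Section \ref{transkerdefSEC} as a pushforward of an invariant measure on the homogeneous space associated to $\scrP_c$. For the upper bound, the key input is a Siegel-type volume estimate on that homogeneous space: the set of elements carrying a nonzero point of $\scrP_c$ in a cylinder of length $\xi$ and fixed unit cross-section has Haar measure $O(\xi^{-2})$ uniformly for $\xi \ge 1$, which translates directly to $\Phi_{\scrP_c}(\xi) \le b_c \xi^{-2}$. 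For the lower bound, the plan is to construct, within a single commensurability class, explicit corridors --- long tubular regions of macroscopic length $\asymp \xi$ simultaneously avoiding every grid $\scrL_\psi$ ($\psi \in c$) --- and to show that the set of initial conditions leading into such a corridor has measure bounded below by $a_c \xi^{-2}$.

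The main obstacle I expect is the lower bound. The upper bound is essentially a general first-moment/volume estimate available in the framework of \cite{jMaS2019} and does not actually use commensurability. The lower bound, however, hinges on it: the commensurability hypothesis ensures that the grids in $\scrP_c$ share a common rational structure (in the sense of the definition in footnote~\ref{commensurableDEF}), so that a single rational direction produces simultaneous gaps in all of them of comparable length, just as in the single-lattice analysis of \cite{jMaS2010b}. Carrying this construction out uniformly across all grids in a class, with control of the implicit constants, is the technical heart of the argument. It is precisely the failure of such simultaneous alignment across distinct commensurability classes that causes the product formula in Corollary \ref{ProductformulasthmCOR} to amplify each class's $\xi^{-1}$ tail into the combined $\xi^{-(N+1)}$ density, explaining the appearance of $N$ in the exponent.
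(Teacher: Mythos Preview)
Your reduction via the differentiated product formula is correct and matches the paper's route: both reduce (via Corollary~\ref{ProductformulasthmCOR}) to two-sided estimates for a single commensurability class. The paper organizes this slightly differently, working throughout with the tail integral $\Psi(\xi)=\int_\xi^\infty\Phi_{\scrP}(\xi')\,d\xi'$: it proves $\Psi(\xi)\asymp\xi^{-N}$ (Proposition~\ref{AsymptPROP}), reducing by the product formula to $\Psi_c(\xi)\asymp\xi^{-1}$ for $N=1$, and only at the end converts to the pointwise bound on $\Phi_{\scrP}$ using that $\Phi_{\scrP}$ is \emph{decreasing} (a consequence of \eqref{PhioPhirel}), via $(\xi/2)\Phi_{\scrP}(\xi)\le\int_{\xi/2}^{\xi}\Phi_{\scrP}$. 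Your version needs this same monotonicity trick inside each class to pass from $\Psi_c(\xi)=O(\xi^{-1})$ to $\Phi_{\scrP_c}(\xi)=O(\xi^{-2})$; you have not mentioned it, and without it the pointwise upper bound does not follow.

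There is also a concrete misstatement in your upper bound. The set of homogeneous-space elements for which the length-$\xi$ cylinder \emph{contains} a point of (the translate of) $\scrP_c$ has measure close to $1$, not $O(\xi^{-2})$. What is small is the complementary event $\{x:J(x)\cap(\fZ_\xi\times\Sigma)=\emptyset\}$, whose $\overline{\omega^{\g}}$-measure is exactly $\Psi_c(\xi)$; the paper bounds this by $O(\xi^{-1})$ by projecting to a single grid via $\tr_{i*}(\omega^{\g})=$ Lebesgue on $(\R/\Z)^d$ and quoting the single-lattice result from \cite{jMaS2010b}. For the lower bound the paper's construction is more specific than ``rational directions'': after rescaling the cylinder to $\tfZ_\xi=(0,\xi^{1/d})\times\scrB^{d-1}_{\xi^{1/d}}$, a Siegel-domain argument (Lemma~\ref{AsymptPROPauxLEM1}) produces a set $M_B\subset F_d$ of Haar measure $\gg\xi^{-1}$ such that for every $A\in M_B$ the lattice $\Z^dA$ lies in parallel hyperplanes separated by $\gg\xi^{1/d}$; since all the grids $c_{1,i}(\Z^d+\tr_i(U))A$ inherit this slab structure, one checks directly that they simultaneously miss $\tfZ_\xi$ with $\omega^{\g}$-probability $\ge\tfrac12$.
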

In particular it follows that the expected value of the free path length
from starting at a generic point, $\int_0^\infty\xi\Phi_{\scrP}(\xi)\,d\xi$,
is finite if and only if $N\geq2$.

In the special case when the 
grids %
in $\scrP$ are pairwise incommensurable,
a more precise asymptotic formula for
$\Phi_{\scrP}(\xi)$ as $\xi\to\infty$
was given in
\cite[Theorem 2]{jMaS2013a};
the proof uses the product formula and the
asymptotics for $\Phi_{\scrP}(\xi)$ in the special case when $\scrP$ is a lattice,
which was obtained in 
\cite[Theorem 1.13]{jMaS2010b}.
It would be interesting to try to generalize this asymptotic formula,
and also the precise asymptotic estimates for the collision kernels
obtained in \cite{jMaS2010b}, 
to the case when $\scrP$ is a general union of grids as considered in the present paper.

We remark that %
in the proof of Theorem \ref{AsymptTHM},
the critical case to handle is $N=1$,
as the case of general $N\geq2$ 
then follows immediately %
via the product formula in Corollary \ref{ProductformulasthmCOR}.

Next we point out a consequence of
Theorem \ref{AsymptTHM}
concerning the density function 
$\oPhi_{\scrP}(\xi)$ 
of the free path length between consecutive collisions
in the random flight process $\Xi$
describing the Boltzmann-Grad limit of the Lorentz gas.
We give an explicit formula 
for $\oPhi_{\scrP}(\xi)$
in \eqref{genoPhiPhirel5} in Section \ref{transkerSEC};
here we merely point out the relations
\begin{align}\label{PhioPhirel}
\Phi_{\scrP}(\xi)=\nbar_{\scrP}\,v_{d-1}\int_\xi^\infty\oPhi_{\scrP}(\xi')\,d\xi'
\end{align}
and 
\begin{align}\label{genoPhiPhirel6}
\int_0^\infty\xi\,\oPhi_{\scrP}(\xi)\,d\xi=\frac1{\nbar_{\scrP}\,v_{d-1}}.
\end{align}
Note that the value in \eqref{genoPhiPhirel6} is
the expected value of the free path length between consecutive collisions.
The following result is an easy consequence of 
Theorem \ref{AsymptTHM} and the relation \eqref{PhioPhirel}.
\begin{cor}\label{Asympt2momentCOR}
The second moment of the free path length between consecutive collisions,
$\int_0^\infty\xi^2\,\oPhi_{\scrP}(\xi)\,d\xi$,
is finite if and only if $N\geq2$.
\end{cor}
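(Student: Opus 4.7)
The plan is to reduce the second moment of $\oPhi_{\scrP}$ to the first moment of $\Phi_{\scrP}$, for which Theorem \ref{AsymptTHM} already provides the needed tail information. The bridge between the two is the identity \eqref{PhioPhirel}, which expresses the tail integral of $\oPhi_{\scrP}$ as (a constant multiple of) $\Phi_{\scrP}$ itself.

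First I would write $\xi^2=\int_0^{\xi}2u\,du$ and use Fubini's theorem (justified by non-negativity of $\oPhi_{\scrP}$) to swap the order of integration:
\begin{align*}
\int_0^\infty \xi^2\,\oPhi_{\scrP}(\xi)\,d\xi
= 2\int_0^\infty u \left(\int_u^\infty \oPhi_{\scrP}(\xi)\,d\xi\right) du.
\end{align*}
Substituting \eqref{PhioPhirel} into the inner integral yields
\begin{align*}
\int_0^\infty \xi^2\,\oPhi_{\scrP}(\xi)\,d\xi
= \frac{2}{\nbar_{\scrP}\,v_{d-1}} \int_0^\infty u\,\Phi_{\scrP}(u)\,du,
\end{align*}
so finiteness of the second moment of $\oPhi_{\scrP}$ is equivalent to finiteness of the first moment of $\Phi_{\scrP}$.

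The remaining step is to apply Theorem \ref{AsymptTHM} to the integrand $u\,\Phi_{\scrP}(u)$. On $(0,1]$ it is bounded, using the uniform estimate $\Phi_{\scrP}(u)\leq \nbar_{\scrP}\,v_{d-1}$ stated just after \eqref{PhiPdef}, so the piece over $(0,1]$ is always finite. On $[1,\infty)$, Theorem \ref{AsymptTHM} gives $c_1 u^{-N}<u\,\Phi_{\scrP}(u)<c_2 u^{-N}$, and $\int_1^\infty u^{-N}\,du$ converges exactly when $N\geq 2$. Combining these two ranges proves both directions of the equivalence.

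There is no substantive obstacle here: the corollary is essentially a two-step bookkeeping exercise, combining the integration-by-parts identity between the two free-path-length densities with the power-law tail from Theorem \ref{AsymptTHM}. The only point that requires a brief remark is the Fubini swap, which is automatic by non-negativity.
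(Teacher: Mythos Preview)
Your proof is correct and follows essentially the same route as the paper: both reduce the second moment of $\oPhi_{\scrP}$ to the first moment of $\Phi_{\scrP}$ via \eqref{PhioPhirel}, and then invoke Theorem~\ref{AsymptTHM}. The only cosmetic difference is that the paper uses integration by parts on $[0,A]$ (picking up a boundary term $-A^2\Phi_{\scrP}(A)$ that is handled by the tail bound), whereas your Fubini/layer-cake computation works directly on $[0,\infty)$ and avoids the boundary term altogether.
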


It is interesting to ask about the long-time asymptotics of the limiting random flight
process $t\mapsto\Xi(t)$;
we expect that %
this process converges to Brownian motion
under an appropriate rescaling.
In the special case when $\scrP$ is a lattice,
such a convergence result has been proved in
\cite{jMbT2014};
the correct scaling factor in this case is $t\log t$ (superdiffusion).
In view of Theorem~\ref{AsymptTHM} and 
Corollary \ref{Asympt2momentCOR}, %
it is natural to guess
that a similar superdiffusive central limit result holds
whenever $\scrP$ is a finite union of \textit{commensurable} grids ($N=1$);
however if the grids in $\scrP$ are not all commensurable ($N\geq2$),
we expect that a standard
diffusive central limit result holds, i.e.\ 
with the rescaling factor being linear in $t$.

\subsection*{Acknowledgement}
We are grateful to Jens Marklof for valuable comments and suggestions.
We are also grateful to the referees for several valuable suggestions 
helping to improve %
exposition of the paper.

\section{The  hypotheses from [\ref{jMaS2019label}] on the scatterer configuration $\scrP$}  %
\label{KINTHEORYrecapsec}

We now %
recall the precise formulation of the hypotheses on 
the scatterer configuration $\scrP$ 
which in the paper \cite{jMaS2019}
were proved to imply %
the convergence of the Lorentz gas particle dynamics in the Boltzmann-Grad limit.
We first need to introduce some technical notation.

For any topological space $S$, we write $P(S)$ for the set of Borel probability measures on $S$,  
\label{PSdef} 
equipped with the weak topology. We will only consider $P(S)$
when $S$ is separable and metrizable;
recall that then also $P(S)$ is metrizable
\cite[pp.\ 72-73]{pB99}.
Also, given any locally compact second countable Hausdorff space $\scrX$,
we let $N(\scrX)$ be the family of locally finite counting measures on $\scrX$,
equipped with the vague topology (then $N(\scrX)$ is a Polish space),\label{NscrXDEF}
and let $N_s(\scrX)$ be the subset of \textit{simple} counting measures;
this is a Borel subset of $N(\scrX)$.
The elements of $N_s(\scrX)$ may be identified with the family
of locally finite subsets of $\scrX$ through $\nu\mapsto\supp(\nu)$. The inverse map is
$\{x_i\}\mapsto\sum_i\delta_{x_i}$.
We will use this identification between point sets and simple counting measures
throughout this work, often using the same notation for point set and counting measure.
A \textit{point process} in $\scrX$ is,
by definition,
a random element $\xi$ in $N(\scrX)$.
It is called \textit{simple} if $\xi\in N_s(\scrX)$ almost surely.
We identify $P(N_s(\scrX))$ with the set of probability measures $\nu\in P(N(\scrX))$ with 
$\nu(N_s(\scrX))=1$.
Then a point process $\xi$ is simple if and only if its law is in $P(N_s(\scrX))$.

In the present paper %
we will always have 
\begin{align}\label{scrXDEF}
\scrX:=\R^d\times\Sigma
\end{align}
for some compact metric space $\Sigma$.
We will view vectors in $\R^d$ as \textit{row} vectors;
thus $\GL_d(\R)$ acts on $\R^d$ from the right. %
We extend the natural actions on $\R^d$ of $\R^d$ (by translation),
of $\R^\times$ (by dilation)
and of $\GL_d(\R)$ (by multiplication from the right)
by %
the trivial action on $\Sigma$;
thus for any $(\vecw,\vs)\in\scrX$ and any
$\vecx\in\RR^d$, $T\in\RR^\times$ and $A\in\GL_d(\RR)$ 
we set
$(\vecw,\vs)+\vecx:=(\vecw+\vecx,\vs)$, $T(\vecw,\vs):=(T\vecw,\vs)$ and $(\vecw,\vs)A:=(\vecw A,\vs)$. 
These actions also give rise to natural, continuous, actions on %
$N_s(\scrX)$.
For example, for any $A\in\GL_d(\RR)$
and $\scrQ\in N_s(\scrX)$ (viewed as a locally finite subset of $\scrX$),
$\scrQ A:=\{x A\col x\in\scrQ\}$.

For $\rho>0$ we denote by $D_\rho$ the diagonal matrix \label{Drhodef}
\begin{align}
D_\rho= %
\diag(\rho^{d-1},\rho^{-1},\cdots,\rho^{-1})\in\SL_d(\R).
\end{align}
We also fix, once and for all, a map $R:\US\to\SO(d)$ 
with the property %
$\vecv R(\vecv)=\vece_1$ for all $\vecv\in\US$, \label{Rdef} \label{vece1DEF}
where $\vece_1:=(1,0,\ldots,0)$. 
We assume that $R$ is continuous when restricted to $\US$ minus one point; the choice of $R$ is otherwise arbitrary.

\vspace{5pt}

Now let %
$\scrP$ be an arbitrary
fixed locally finite subset of $\R^d$ with constant asymptotic density $\nbar_{\scrP}$. 
We also assume given a compact metric space $\Sigma$, a map $\vs:\scrP\to\Sigma$, a Borel probability measure $\mm$ on $\Sigma$,
and a continuous map $\vs\mapsto\mu_\vs$ from $\Sigma$ to $P(N(\scrX))=P(N(\R^d\times\Sigma))$.\label{muvsmapDEF}
Set\label{mmDEF2}
\begin{align}\label{tPDEF}
\tP=\{(\vecp,{\vs}(\vecp))\col\vecp\in\scrP\}\subset\scrX.
\end{align}
We refer to $\vs$ as a {\em marking} of $\scrP$, and $\Sigma$ as the corresponding {\em space of marks}.\footnote{For the
point sets $\scrP$ which we consider in the present paper, %
the marking $\vs:\scrP\to\Sigma$ will be constructed to be a certain 
refinement of the \textit{crude} marking
$\psi:\scrP\to\Psi$ which we introduced in Section \ref{LIMITFLIGHTPROCESS}.}

For any $\vecq\in\R^d$, $\vecv\in\US$ and $\rho>0$, we set
\begin{equation}\label{repPqdef}
\tP_\vecq=
\begin{cases}
\tP\setminus\{(\vecq,\vs(\vecq))\} & (\vecq\in\scrP)\\
\tP & (\vecq\notin\scrP) 
\end{cases}
\end{equation}
and
\begin{align}\label{repXIRHOqv}
\scrQ_\rho(\vecq,\vecv)=(\tP_\vecq-\vecq)\,R(\vecv)\,D_\rho .
\end{align}
This point set $\scrQ_\rho(\vecq,\vecv)$ 
represents the (marked) scatterer configuration
in a coordinate system which is 
normalized in a natural way from the point of view of  %
a point particle
at position $\vecq$ and direction of travel $\vecv$.
Namely, in this coordinate system the direction of travel has been
rotated so that it equals $\vece_1=(1,0,\ldots,0)$;
lengths in the direction of travel are measured in units of $\rho^{1-d}$
(which is proportional to the mean free path length),
whereas lengths perpendicular to the direction of travel are measured in units of $\rho$.

Given any $\vecq\in\R^d$ and $\lambda\in P(\US)$,
if we take $\vecv$ random in $(\US,\lambda)$ then $\scrQ_\rho(\vecq,\vecv)$ becomes a 
point process;
we write $\mu_{\vecq,\rho}^{(\lambda)}\in P(N_s(\scrX))$ for its distribution.\label{repmuqrholambdaDEF}
Finally set $\mu_{\scrX}=\vol\times\mm$. \label{muscrXDEF}

\vspace{5pt}

The assumption %
on the scatterer configuration $\scrP$,
under which the convergence of the rescaled Lorentz process
to a limiting flight process was proved in \cite{jMaS2019},
is that $\scrP$ can be equipped with data 
$\vs,\Sigma,\mm$ and $\vs\mapsto\mu_\vs$ as above,
in such a way that the following six 
conditions [P1]--[P3] and [Q1]--[Q3] are fulfilled
(see \cite[Sec.\ 2.3]{jMaS2019}):
\begin{enumerate}[{\bf [P1]}]
\item {\em Uniform density:} For any bounded $B \subset\scrX$ with $\mu_\scrX(\partial B)=0$, we have
\begin{align}\label{repASYMPTDENSITY1b}
\lim_{T\to\infty}\frac{\#(\tP\cap TB)}{T^d}=\nbar_{\scrP}\,\mu_\scrX(B).
\end{align}
\item {\em Spherical equidistribution:}
Let $\Pac(\US)$ be the set of $\lambda\in P(\US)$ which are absolutely continuous with respect to $\sigma$.\label{PacUSdef}
There exists a subset $\scrE\subset\scrP$ of \label{scrEDEF}
density zero\footnote{We say that 
a subset $\scrE$ of $\R^d$ has \textit{density zero}
if it has asymptotic density zero in the sense of \eqref{density000};
note that this holds if and only if 
it holds for $\scrD=\scrB_1^d$,
viz., if and only if
$\lim_{R\to\infty} R^{-d}\#(\scrE\cap\scrB_R^d)=0$.}
such that
for any fixed $T\geq1$ and $\lambda\in \Pac(\US)$, we have
\begin{align}\label{repASS:KEY}
\mu_{\vecq,\rho}^{(\lambda)}\xrightarrow[]{\textup{ w }}\mu_{{\vs}(\vecq)}
\quad\text{as }\:\rho\to0,\:\text{ uniformly for $\vecq\in\scrP_T(\rho):=\scrP\cap\scrB^d_{T\rho^{1-d}}\setminus\scrE$. }\footnotemark
\end{align}
\footnotetext{The statement in \eqref{repASS:KEY} means by definition:
$\bigl[\forall\ve>0$: $\exists\rho_0>0$:
$\forall\rho\in(0,\rho_0)$: $\forall\vecq\in\scrP_T(\rho)$:
$d(\mu_{\vecq,\rho}^{(\lambda)},\mu_{{\vs}(\vecq)})<\ve\bigr]$,
where $d$ is some metric on $P(N(\scrX))$ realizing the weak topology.
This definition is independent of the choice of $d$;
indeed see \cite[Lemma 2.1]{jMaS2019} and note that
$\{\mu_{\vs}\col\vs\in\Sigma\}$ is a compact subset of $P(N(\scrX))$, since it is the continuous image of the compact set $\Sigma$.}

\item {\em No escape of mass:} For every bounded Borel set $B\subset\RR^d$,
\begin{align*}%
\lim_{\xi\to\infty}\limsup_{\rho\to0}\hspace{7pt}
[\vol\times\sigma]\bigl(\bigl\{(\vecq,\vecv)\in B\times\US\col
\scrQ_{\rho}(\rho^{1-d}\vecq,\vecv)\cap(\fZ_\xi\times\Sigma)=\emptyset\bigr\}\bigr)=0,
\end{align*}
with the open cylinder\label{fZxidef} $\fZ_\xi=(0,\xi)\times\scrB_1^{d-1}\subset\RR^d$.
\end{enumerate}

\vspace{3pt}

\begin{enumerate}[{\bf [Q1]}]
\item {\em $\SO(d-1)$-invariance:}\hspace{10pt} For every $\vs\in\Sigma$, 
\begin{align*}
\text{$\mu_{\vs}$ is invariant under the action of
$\SO(d-1):=\{k\in\SO(d)\col\vece_1k=\vece_1\}$.} \label{repASS:sodm1inv}
\end{align*}
\item {\em Coincidence-free first coordinates:} For every $\vs\in\Sigma$, 
\begin{equation*}%
\mu_{\vs}(\{\nu\in N(\scrX)\col \exists x_1\in\R\text{ s.t.\ }\nu(\{x_1\}\times\R^{d-1}\times\Sigma)>1\})=0 . 
\end{equation*}
\item {\em Small probability of large voids:}  For every $\ve>0$ there exists $R>0$ such that for all $\vs\in\Sigma$ and $\vecx\in\RR^d$ we have 
\begin{equation} \label{Q3cond}
\mu_{\vs}\bigl(\bigl\{\nu\in N(\scrX)\col \nu\bigl(\scrB^d_R(\vecx)\times\Sigma\bigr)=0\bigr\}\bigr)<\ve .
\end{equation}
Here $\scrB_R^d(\vecx):=\vecx+\scrB_R^d$, the open ball of radius $R$ centered at $\vecx$.\label{scrBRxdef}
\end{enumerate}

\begin{remark}
Among  [P1--3] and [Q1--3], the \textit{key} assumption is [P2],
the (uniform) spherical equidistribution.
It postulates the convergence in distribution
of the random point set $\scrQ_\rho(\vecq,\vecv)$,
which represents the scatterer configuration
in the natural coordinate system %
adapted to a point particle
starting from a scatterer center $\vecq$ in a random direction $\vecv$.
This condition [P2] is also the one which, by far, is the most difficult to verify, for every example of %
point sets $\scrP$ which are aware of that provably satisfy [P1--3].
\end{remark}

\vspace{5pt}

For later usage, %
we here point out %
an immediate %
consequence of %
[P2],
which partly motivates
the construction of $\Sigma$
in the present paper
(see the discussion in Section \ref{SigmaremarksexSEC}).
\begin{lem}\label{MAMScondsimpleconsLEM}
Assume given a locally finite subset $\scrP\subset\R^d$,
a map $\vs$ from $\scrP$ to a compact metric space $\Sigma$,
and a continuous map $\vs\mapsto\mu_\vs$ from $\Sigma$ to $P(N(\R^d\times\Sigma))$,
such that the condition [P2] holds. %
Let $p$ be the natural projection from $N(\R^d\times\Sigma)$ to $N(\R^d)$.
Then the map $\vs\mapsto\hmu_{\vs}:=p_*(\mu_{\vs})$
from $\Sigma$ to $P(N(\R^d))$ is continuous,
and for each fixed $\vecq\in\scrP\setminus\scrE$
(with $\scrE$ as in [P2])
and each $\lambda\in\Pac(\US)$,
we have $p_*(\mu_{\vecq,\rho}^{(\lambda)})
\xrightarrow[]{\textup{ w }} \hmu_{\vs(\vecq)}$ as $\rho\to0$.
\end{lem}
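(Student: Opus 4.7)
The plan is to deduce both parts of the lemma from the general fact that pushforward along a continuous map is continuous for weak convergence of probability measures (the continuous mapping theorem).

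First I would verify that the projection $p\colon N(\R^d\times\Sigma)\to N(\R^d)$ is continuous in the vague topology. Since $\Sigma$ is compact, for any $g\in C_c(\R^d)$ the composition $g\circ\pi$, where $\pi\colon\R^d\times\Sigma\to\R^d$ is the spatial projection, lies in $C_c(\R^d\times\Sigma)$: it is continuous, and its support $\supp(g)\times\Sigma$ is compact. Consequently, if $\nu_n\to\nu$ vaguely on $\R^d\times\Sigma$, then
\[
[p(\nu_n)](g)=\nu_n(g\circ\pi)\longrightarrow\nu(g\circ\pi)=[p(\nu)](g),\qquad g\in C_c(\R^d),
\]
which is vague convergence $p(\nu_n)\to p(\nu)$ in $N(\R^d)$. (Compactness of $\Sigma$ also ensures local finiteness of $p(\nu)$, via $p(\nu)(K)=\nu(K\times\Sigma)<\infty$ for compact $K\subset\R^d$, so that $p$ is indeed well defined as a map into $N(\R^d)$.) By the continuous mapping theorem, the induced pushforward $p_*\colon P(N(\R^d\times\Sigma))\to P(N(\R^d))$ is continuous with respect to the weak topologies.

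The first claim, continuity of $\vs\mapsto\hmu_\vs$, is then immediate: the map $\vs\mapsto\hmu_\vs$ is the composition $\vs\mapsto\mu_\vs\mapsto p_*(\mu_\vs)=\hmu_\vs$ of two continuous maps. For the second claim, fix $\vecq\in\scrP\setminus\scrE$ and $\lambda\in\Pac(\US)$, and take $T=1$ in hypothesis [P2]. Since $T\rho^{1-d}\to\infty$ as $\rho\to0$, for all sufficiently small $\rho$ the point $\vecq$ lies in $\scrB^d_{T\rho^{1-d}}$; together with $\vecq\notin\scrE$ this gives $\vecq\in\scrP_T(\rho)$ for all small enough $\rho$. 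Hypothesis [P2] then yields $\mu_{\vecq,\rho}^{(\lambda)}\xrightarrow[]{\textup{ w }}\mu_{\vs(\vecq)}$ as $\rho\to0$, and applying the continuous operation $p_*$ produces $p_*(\mu_{\vecq,\rho}^{(\lambda)})\xrightarrow[]{\textup{ w }}p_*(\mu_{\vs(\vecq)})=\hmu_{\vs(\vecq)}$, as required.

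The argument is entirely formal; the only substantive point is the compactness of $\Sigma$, which is what makes the projection well behaved in the vague topology. I do not anticipate any serious obstacle beyond noting this compactness.
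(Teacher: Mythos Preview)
Your argument is correct and follows essentially the same route as the paper: establish continuity of $p$ on $N(\R^d\times\Sigma)$ using compactness of $\Sigma$, and then deduce both statements via the continuous mapping theorem. You have simply written out in more detail what the paper compresses into a couple of lines.
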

Note here that %
$p_*(\mu_{\vecq,\rho}^{(\lambda)})$ is the distribution of the random point set
$((\scrP\setminus\{\vecq\})-\vecq)R(\vecv)D_\rho$ in $\R^d$,
for $\vecv$ random in $(\US,\lambda)$.
\begin{proof}
As a map of counting measures, we have
$p=(p_1)_*$ where $p_1$ is the projection map $\R^d\times\Sigma\to\R^d$;
and one immediately verifies that $p$ is continuous 
(using the fact that $\Sigma$ is compact).
Now the lemma follows by the continuous mapping theorem.
\end{proof}

\subsection{Main technical result, and outline of the remainder of the paper}
\label{outlineSEC}

From a technical point of view, %
the following is  %
the main result of the present paper:
\begin{thm}\label{MAINTHM}
Let $\scrP$ be a union of grids in $\R^d$.
Then there exists a compact metric space $\Sigma$,
a marking $\vs:\scrP\to\Sigma$,
a continuous map $\vs\mapsto\mu_\vs$
from $\Sigma$ to $P(N(\scrX))$,
and a Borel probability measure $\mm$ on $\Sigma$,
such that all the assumptions [P1]--[P3] and [Q1]--[Q3]
hold.
\end{thm}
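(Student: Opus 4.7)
The plan is to construct the marking $\vs$ by encoding, at each $\vecp\in\scrP$, an affine frame for $\scrP$ near $\vecp$, split across commensurability classes. Fix an admissible presentation $\scrP=\bigcup_{\psi\in\Psi}\scrL_\psi$ and partition $\Psi$ into its commensurability classes $C_\Psi$. For each class $c\in C_\Psi$, the grids $\{\scrL_\psi:\psi\in c\}$ share a common refinement lattice $L_c\subset\R^d$, and $\scrP_c$ can be expressed as a finite disjoint union of labelled $L_c$-cosets. Writing $\Omega_{c}=\Gamma_{c}\bs\ASL(d,\R)$ for the relevant moduli space of affine lattices commensurable with $L_{c}$, the mark $\vs(\vecp)$ will record the crude mark $\psi(\vecp)\in\Psi$ together with, for each $c'\in C_\Psi$, the point of $\Omega_{c'}$ representing the affine lattice $\scrP_{c'}-\vecp$ modulo $\Gamma_{c'}$. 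Then $\Sigma$ is a compact metric space formed as a suitable closed subset of $\Psi\times\prod_{c'\in C_\Psi}\Omega_{c'}$, and $\mu_\vs$ is defined as the pushforward of Haar measure on $\SO(d-1)$ under the map that rotates the tuple of frames by $k$ and reads off the resulting labelled point set in $\scrX=\R^d\times\Sigma$.

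The conditions [P1], [Q1], [Q2], [Q3] should fall out of this construction in a relatively routine way. [P1] reduces to additivity of asymptotic density across the labelled grid cosets. [Q1] follows because the construction is manifestly equivariant under the right $\SO(d-1)$-action fixing $\vece_1$, and Haar measure is $\SO(d-1)$-invariant. [Q2] holds because, for Haar-almost every frame, no two lattice points share a first coordinate. [Q3] (small probability of large voids) follows from a uniform lower bound on the mean density of $\mu_\vs$ on balls, which is inherited factor-by-factor from the single-lattice case.

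The crux is [P2], the uniform spherical equidistribution. For $\vecq\in\scrP$ and $\vecv\in\US$, the rescaled point process $\scrQ_\rho(\vecq,\vecv)$ decomposes, class by class, as the union of the rescaled sets $(\scrP_{c'}-\vecq)R(\vecv)D_\rho$, each of which is naturally an element $g_{c',\vecq}R(\vecv)D_\rho$ of $\Omega_{c'}$. For a single class, the uniform equidistribution of such frames as $\rho\to0$ (with $\vecv$ distributed $\sigma$-absolutely continuously) is the main equidistribution result of \cite{jMaS2010a,jMaS2011a}, proved via Ratner's theorem applied to $\ASL(d,\R)$. The new content is the \emph{joint} equidistribution across all classes simultaneously, i.e.\ equidistribution of the tuple $(g_{c',\vecq}R(\vecv)D_\rho)_{c'\in C_\Psi}$ in the product $\prod_{c'\in C_\Psi}\Omega_{c'}$. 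The key algebraic fact is that if $c\neq c'$ then $L_c$ and $L_{c'}$ are incommensurable, which prevents any nontrivial coupling between the corresponding unipotent orbit closures; Ratner's measure-classification theorem then identifies the joint orbit closure as the full product of single-factor orbit closures, yielding convergence to the appropriate product Haar measure. Points $\vecq$ for which this identification degenerates — typically those lying in abnormal intersections of multiple grids of a single class, or where coset-representative vectors are rationally related to the lattice basis — are absorbed into the exceptional set $\scrE$; admissibility of the presentation ensures $\scrE$ has density zero.

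Condition [P3] follows from [P1]--[P2] combined with a uniform tail bound of the form $\mu_\vs(\{\nu:\nu(\fZ_\xi\times\Sigma)=0\})=O(\xi^{-1})$, which is inherited factor-by-factor from the single-lattice estimates of \cite{jMaS2010a,jMaS2010b}. The main obstacle is the joint equidistribution step in [P2]: identifying the correct orbit closure in $\prod_{c'}\Omega_{c'}$ under the diagonal flow, and verifying that incommensurability between classes translates into the algebraic independence required by Ratner's theorem. Once this identification is in hand, the uniform convergence over $\vecq\in\scrP_T(\rho)$ follows from the standard quantitative equidistribution machinery, and the verification of the remaining hypotheses is essentially bookkeeping.
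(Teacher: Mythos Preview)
Your construction of $\Sigma$ rests on a false premise: you assert that for each commensurability class $c$, the grids $\{\scrL_\psi:\psi\in c\}$ admit a common refinement lattice $L_c$ so that $\scrP_c$ is a finite union of $L_c$-cosets. This fails as soon as two commensurable grids have an irrational scale ratio, which the paper's setup explicitly allows (see \eqref{GENPOINTSET1pre}--\eqref{THINDISJcond1}: the condition there only restricts pairs with \emph{rational} ratio $c_{j,i}/c_{j,i'}$). For instance, $\Z^d$ and $\sqrt{2}\,\Z^d$ are commensurable, but their union is not periodic. Consequently $\scrP_{c}-\vecp$ is not in general an affine lattice, and there is no point of $\Gamma_c\bs\ASL(d,\R)$ to record. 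The paper's actual mark space is built from the compact set $\Omega=\prod_{j}P(\TT_j^d)'$ of $\SL_d(\Z)$-invariant probability \emph{measures} on tori (see \eqref{OMEGAdef}, \eqref{Sigmadef}); the mark $\vs(\vecq)$ records $\psi(\vecq)$ together with the measure $\omega^{(\vecq)}$ encoding the closed orbit that supports the spherical equidistribution limit $\mu^{(\vecq)}$ (Section~\ref{reexprmuqSEC}). Remark~\ref{sigmaplimitpointsREM} gives an explicit two-dimensional example where the limits $\mu_{\vecq,0}$ take infinitely many distinct values as $\vecq$ varies over a single grid, so no finite marking can work and your scheme of recording points in a fixed homogeneous space breaks down.

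A second gap is your treatment of the uniformity in [P2]. You write that ``the uniform convergence over $\vecq\in\scrP_T(\rho)$ follows from the standard quantitative equidistribution machinery,'' but the paper uses no effective Ratner-type input. Instead, Sections~\ref{UNIPOTAPPLsec}--\ref{NEWISOsec} prove a uniform statement (Theorem~\ref{MAINcleanUNIFCONVthm}) by a soft compactness argument: one takes weak limits of the relevant probability measures, applies Ratner's measure classification together with the Mozes--Shah theorem to pin down the possible limit supports, and uses an Elkies--McMullen-type estimate (Lemma~\ref{ELKIESMCMULLENtypeLEMcleaned}) to rule out concentration on lower-dimensional orbits, the exclusion being controlled by the sets $\Delta_k^{(\eta)}$. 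The uniformity over $\vecq$ then comes from Proposition~\ref{UksetsaregoodPROP} (density-zero exceptional sets) and Lemma~\ref{densityzeroconstrLEM}, not from any rate of convergence. Your sketch also defines $\mu_\vs$ as a pushforward of Haar measure on $\SO(d-1)$, whereas the correct $\mu_\vs$ (see \eqref{muvsDEF}) is the pushforward under $J_\psi$ of the $\SL_d(\R)$-invariant measure $\overline{\omega}$ on $\XX$; this full $\SL_d(\R)$-invariance is what makes [Q1] and [Q2] immediate (Lemmas~\ref{muvsSLdinvLEM}--\ref{Q2frominvarianceLEM}).
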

The explicit description of the 
data 
$\bigl[\Sigma,\vs,\vs\mapsto\mu_\vs$, $\mm\bigr]$
which we use in the proof of %
Theorem~\ref{MAINTHM}
is given in Section \ref{KINTHEORYsec}.

\begin{remark}
In the special case when $\scrP$ is \textit{periodic},
i.e.\ when $\scrP$ may be represented as in 
\eqref{Pform0} with all the $\scrL_i$s being translates of one and the same fixed lattice,
Theorem~\ref{MAINTHM}
was proved in 
\cite[Prop.\ 5.6]{jMaS2019}.
See Remark \ref{MAMSSec5p2comparisonREM} below for a more detailed comparison.
\end{remark}

\vspace{3pt}

We next describe the content of the rest %
of the paper.
In short, Sections \ref{SETUPsec}--\ref{NEWISOsec} are devoted to the proof of
Theorem~\ref{MAINTHM},
from which also Theorems \ref{limitflightprocessTHM} and \ref{limitflightprocessexplTHM} follow,
and Section~\ref{transkerSEC} contains the proofs of
Theorems \ref{indepmergeTHM} and \ref{AsymptTHM},
and their corollaries.

\vspace{2pt}

Specifically, %
in Section \ref{SETUPsec} we introduce 
some further basic set-up and notation.
In particular we fix the index set $\Psi$
in the presentation $\scrP=\bigcup_{\psi\in\Psi}\scrL_\psi$
to be
\begin{align*}
\Psi=\{\psi=(j,i)\col j\in\{1,\ldots,N\},\: i\in\{1,\ldots,r_j\}\},
\end{align*}
for some positive integers $N$ and $r_1,\ldots,r_N$,
with the indices chosen in such a way that the grids
$\scrL_{(j,i)}$ and $\scrL_{(j',i')}$ are commensurable if and only if $j=j'$.
We also introduce 
a certain   %
product of homogeneous spaces $\XX=\XX_1\times\cdots\times\XX_N$
which naturally parametrizes the families of unions of grids which appear 
when studying the spherical equidistribution condition [P2].
Here each space $\XX_j$ is a torus fiber bundle over the space 
$\SL_d(\Z)\bs\SL_d(\R)$ of $d$-dimensional lattices of covolume one in $\R^d$;
the fiber space is an $r_jd$-dimensional torus 
which we will denote by $\TT_j^d$;
it is equipped with a natural action by $\SL_d(\Z)$ from the right.

In Section \ref{nonunifsphericalequidistrSEC} we state an 
equidistribution result in the space $\XX$,
Theorem \ref{HOMDYNintrononunifTHM},
which can be viewed as a precise description of
the limit considered in the spherical equidistribution condition [P2].
In fact Theorem \ref{HOMDYNintrononunifTHM} applies for an arbitrary 
translation vector $\vecq$ in $\R^d$; %
however the theorem 
includes no statement about uniformity
with respect to $\vecq$,
as is required in [P2].
The corresponding limit probability measure on $\XX$ is called $\mu^{(\vecq)}$.
In the remainder of 
Section \ref{nonunifsphericalequidistrSEC} we give an alternative way
to represent these measures $\mu^{(\vecq)}$
via certain measures $\omega_1^{(\vecq)},\ldots,\omega_N^{(\vecq)}$,
where for each $j$, $\omega_j^{(\vecq)}$ 
is an $\SL_d(\Z)$-invariant probability measure on $\TT_j^d$.
We will denote by $P(\TT_j^d)'$
the set of all $\SL_d(\Z)$-invariant probability measures on $\TT_j^d$;
this is a compact subset of $P(\TT_j^d)$.

In Section \ref{omegajqlimitSEC} we study the behaviour of
each measure $\omega_j^{(\vecq)}$ as $\vecq$ tends to infinity
within a given subgrid $\scrL_\psi$ of $\scrP$, and we explicitly identify 
the limit measure(s) obtained.   %
In order for this limit to be \textit{unique} in a certain sense,
we need to introduce the
admissibility property of the presentation
$\scrP=\bigcup_{\psi\in\Psi}\scrL_\psi$ mentioned in Section \ref{LIMITFLIGHTPROCESS}.
The notion of admissibility is defined in Section \ref{SSjpsi0Ojpsi0sec},
and we prove in Section \ref{ATTAINADMsec} that an admissible presentation 
of $\scrP$ always exists.

In Section \ref{KINTHEORYsec}, building on the results from the previous sections,
we are finally able to give the precise definition of the space of marks
$\Sigma$, the marking $\vs:\scrP\to\Sigma$,
and the map $\vs\mapsto\mu_\vs$, %
which we will use in the proof of %
Theorem \ref{MAINTHM}.
The construction of the space of marks $\Sigma$
is one of the key steps in the present paper; 
this construction is more complicated and non-intuitive than 
for any of the previous types of scatterer configurations for which %
the framework from \cite{jMaS2019} is known to apply 
(see \cite[Ch.\ 5]{jMaS2019}).
The crude marking $\psi:\scrP\to\Psi$
which we introduced in
Section \ref{LIMITFLIGHTPROCESS} %
will appear as a \textit{factor} of $\vs:\scrP\to\Sigma$;
in fact we will define $\Sigma$ to be a subset of 
the Cartesian product of $\Psi$ and 
the compact set $\Omega:=\prod_{j=1}^N P(\TT_j^d)'$.
In Section \ref{SigmaremarksexSEC} we give a
discussion motivating our construction of $\Sigma$.

In Section \ref{verQ1Q2etcSEC} we prove that the conditions
[Q1]--[Q3] and [P1] and [P3] hold, %
and we reduce %
the remaining condition, [P2],
to a certain uniform equidistribution result taking place in the homogeneous space $\XX$,
Theorem \ref{HOMDYNMAINTHM}.
In Section \ref{UNIPOTAPPLsec} we prove a key result,
Theorem \ref{MAINcleanUNIFCONVthm},
on equidistribution of certain expanding unipotent orbits in a slightly
generalized version of the homogeneous space
$\XX=\GaG$; the proof
builds on Ratner's classification of ergodic measures invariant under unipotent flows
\cite{mR91a}.
Theorem \ref{MAINcleanUNIFCONVthm}
is then used in 
Section \ref{NEWISOsec}
in order to complete the proof of 
Theorem \ref{HOMDYNMAINTHM},
and thus also of Theorem~\ref{MAINTHM}.

Finally, in Section \ref{transkerSEC}
we start by recalling the formulas for the collision kernels
$p^{(\psi)}$ and $p^{(\psi'\to\psi)}$ %
(see Theorem \ref{limitflightprocessexplTHM})
in the general setting of the paper \cite{jMaS2019},
and we then give the proofs of
Theorem \ref{Productformulasthm} (which we discussed above in Section \ref{prodformulasSEC}),
Theorem \ref{indepmergeTHM}, Theorem \ref{AsymptTHM}
and Corollary \ref{Asympt2momentCOR}.

\section{Further setup and notation}
\label{SETUPsec}

\subsection{Representing the point set $\scrP$}
\label{Preprsec}
As mentioned in the introduction, we will always view vectors in $\R^d$ as \textit{row} vectors.\footnote{But note that
for other spaces $\R^r$  %
appearing below,
it will be natural to instead consider the vectors to be 
\textit{column} vectors.
The first instance of this is %
on p.\ \pageref{TTjdef},
for the space $\R^{r_j}$ in the definition of the torus $\TT_j=\R^{r_j}/\Z^{r_j}$.}

Recall that we are assuming that the scatterer configuration $\scrP$ is of the form
$\scrP=\bigcup_{i=1}^M\scrL_i$,
where each $\scrL_i$ is a grid
(viz., a translate of a lattice)
in $\R^d$.
Recall also that two grids $\scrL$ and $\scrL'$ in $\R^d$ are said to be 
\textit{commensurable} if there exist $c>0$ and $\vecv\in\R^d$
such that $\scrL\cap(c\scrL'+\vecv)$ is a grid.
This is an equivalence relation on the family of grids in $\R^d$.
By collecting the given grids $\scrL_1,\ldots,\scrL_M$ %
into commensurability classes,
and then applying Lemma \ref{PformredLEM1} below to each class,
one sees that the set $\scrP$ can be represented as follows:
\begin{align}\label{GENPOINTSET1pre}
\scrP=\bigcup_{j=1}^N\bigcup_{i=1}^{r_j}c_{j,i}(\scrL_j+\vecv_{j,i}),
\end{align}
where $\scrL_1,\ldots,\scrL_N$ are \textit{pairwise incommensurable lattices} 
in $\R^d$ of \textit{covolume one},
$r_1,\ldots,r_N$ are positive integers,
$c_{j,i}$ are positive real numbers
and $\vecv_{j,i}$ are vectors in $\R^d$,
and furthermore:
\begin{align}\label{THINDISJcond1}
\forall j\in\{1,\ldots,N\}:\:\:\forall i\neq i'\in\{1,\ldots,r_j\}:\:\: c_{j,i}/c_{j,i'}\in\Q\Rightarrow 
\bigl[c_{j,i}(\scrL_j+\vecv_{j,i})\cap c_{j,i'}(\scrL_j+\vecv_{j,i'})=\emptyset\bigr].
\end{align}
Our reason for requiring \eqref{THINDISJcond1} is that if
$c_{j,i}/c_{j,i'}\in\Q$ and the two grids 
$c_{j,i}(\scrL_j+\vecv_{j,i})$ and $c_{j,i'}(\scrL_j+\vecv_{j,i'})$ 
were \textit{not} disjoint,
then their intersection would be a grid and in particular would have positive aymptotic density;
this is a situation that we wish to avoid 
(see Lemma~\ref{THINDISJcondLEM} and Remark \ref{psiaedef} below).

In order to arrive at the statement above, %
we made use of the following lemma.
\begin{lem}\label{PformredLEM1}
Let $\scrL_1,\ldots,\scrL_m$ be commensurable grids in $\R^d$.  %
Then there exists a lattice $\scrL$ in $\R^d$ of covolume one,
a positive integer $r$, 
positive real numbers $c_1,\ldots,c_r$
and vectors $\vecv_1,\ldots,\vecv_r\in\R^d$ 
such that
$\scrL_1\cup\cdots\cup\scrL_m=\bigcup_{i=1}^r c_i(\scrL+\vecv_i).$
In this representation, we may furthermore assume that for any $i\neq i'$ in $\{1,\ldots,r\}$,
if $c_i/c_{i'}\in\Q$ 
then $c_i(\scrL+\vecv_i)\cap c_{i'}(\scrL+\vecv_{i'})=\emptyset$.
\end{lem}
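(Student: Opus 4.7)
The plan is to prove the lemma in two stages: first, produce \emph{some} representation of $\scrL_1\cup\cdots\cup\scrL_m$ in the form $\bigcup_i c_i(\scrL+\vecv_i)$ with $\scrL$ a lattice of covolume one, ignoring for the moment the disjointness condition; second, rearrange this representation by splitting grids into finer pieces and removing duplicates to enforce the required disjointness.

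For the first stage, I would write each grid as $\scrL_k=\Lambda_k+\vecw_k$ for a full-rank lattice $\Lambda_k$ and a translate $\vecw_k\in\R^d$. Pairwise commensurability of the grids means, after a base-point change, that for each $k$ there exists $\delta_k>0$ such that $\Lambda_1$ and $\delta_k\Lambda_k$ are commensurable as lattices, i.e.\ their intersection has finite index in both. Since commensurability of full-rank lattices is transitive, the rescaled lattices $\delta_1\Lambda_1,\ldots,\delta_m\Lambda_m$ are pairwise commensurable, and hence $\Lambda_0:=\bigcap_k\delta_k\Lambda_k$ is itself a full-rank lattice of finite index in each $\delta_k\Lambda_k$. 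Decomposing each $\delta_k\Lambda_k$ into its $\Lambda_0$-cosets and scaling back by $\delta_k^{-1}$ yields a finite disjoint-union expression $\scrL_k=\bigsqcup_j\delta_k^{-1}(\Lambda_0+\vecbeta_{k,j})$, which upon normalising $\Lambda_0$ to covolume one (and absorbing the normalisation factor $V^{1/d}$ into the $c_i$ and the $\vecv_i$) gives an initial representation of the desired form.

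For the second stage, I would partition the multiset $\{c_1,\ldots,c_r\}$ into equivalence classes under the relation $c\sim c'\iff c/c'\in\Q$. Within a single class $I_\alpha$ all scalings are positive rational multiples of each other, so by clearing a common integer denominator I can choose a base scale $c_\alpha^*$ for which each $c_i$ ($i\in I_\alpha$) equals a positive integer multiple $n_ic_\alpha^*$. Setting $N_\alpha:=\operatorname{lcm}_{i\in I_\alpha}n_i$, the lattice $N_\alpha\scrL$ is a finite-index sublattice of each $n_i\scrL$, so every grid $c_i(\scrL+\vecv_i)$ in the class refines to a disjoint union of translates all sharing the single common scale $\tc_\alpha:=c_\alpha^*N_\alpha$. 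At this common scale, two grids are either equal (when their translates differ by an element of $\scrL$) or disjoint; I would simply discard any duplicates. Performing this class by class, the resulting representation has within-class disjointness by construction, while ratios of scales across different classes remain irrational (since $\tc_\alpha$ is a positive rational multiple of any original $c_i$ with $i\in I_\alpha$), so the required property holds vacuously across classes and by construction within them.

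The main obstacle I anticipate is the transitivity assertion invoked in the first stage, namely that pairwise commensurable full-rank lattices admit a common finite-index sublattice, so that $\Lambda_0=\bigcap_k\delta_k\Lambda_k$ is again a full-rank lattice. This reduces to the standard fact that the intersection of two commensurable full-rank lattices is a full-rank lattice of finite index in each, but the verification requires one to carefully track how the translates $\vecw_k$ and the scalings $\delta_k$ interact with the grid (as opposed to lattice) structure; in particular, nonempty intersection of the relevant grids must be arranged by an initial base-point change. The remaining manipulations in the second stage are essentially bookkeeping with sublattice indices and rational equivalence classes and present no further conceptual difficulty.
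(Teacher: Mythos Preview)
Your proposal is correct and follows essentially the same two-stage approach as the paper's proof: first obtain a common lattice $\scrL$ via intersecting suitably rescaled and translated copies of the underlying lattices, then partition the scaling factors into rational equivalence classes, pass to a common scale within each class, and remove duplicates. The paper phrases the first stage slightly differently (working directly with grid intersections rather than reducing to lattice commensurability), and in the second stage takes $c_J=\operatorname{lcm}\{c_i:i\in J\}$ directly, but these are cosmetic differences; your ``main obstacle'' about the intersection being a full-rank lattice is exactly the step the paper passes over with the phrase ``it follows that also $\cap_{j=2}^m(\delta_j\scrL_j+\vecw_j)$ intersects $\scrL'$ in a lattice,'' and is indeed routine once one has arranged nonempty intersection.
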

\begin{proof}
Choose $\vecw_1\in\R^d$ such that $\scrL':=\scrL_1+\vecw_1$ is a lattice.
Now for each $j\in\{2,\ldots,m\}$,
the fact that $\scrL_1$ and $\scrL_j$ are commensurable implies that we can choose
$\delta_j>0$ and $\vecw_j\in\R^d$
such that $\scrL'\cap(\delta_j\scrL_j+\vecw_j)$ is a lattice.
(In particular then $\bn\in \delta_j\scrL_j+\vecw_j$, meaning that $\delta_j\scrL_j+\vecw_j$ is a lattice.)
It follows that also $\cap_{j=2}^m(\delta_j\scrL_j+\vecw_j)$
intersects $\scrL'$ in a lattice,
and we can express this intersection as $\delta\scrL$, where $\scrL$ is a lattice of covolume one
and $\delta$ is a positive real number.
The fact that $\delta\scrL$ is a finite index subgroup of $\scrL'$ implies that
$\scrL'$ is a union of a finite number of translates of $\delta\scrL$;
hence so is $\scrL_1$. %
Similarly for each $j\in\{2,\ldots,m\}$,
since $\delta\scrL$ is a finite index subgroup of $\delta_j\scrL_j+\vecw_j$,
the grid $\scrL_j$ 
is a union of a finite number of translates of 
$(\delta/\delta_j)\scrL$.
The first statement of the lemma follows by collecting all these unions into a single union.

To prove the second statement, 
we may for example proceed as follows.
We start from a union
$\bigcup_{i=1}^r c_i(\scrL+\vecv_i)$ with arbitrary numbers $c_1,\ldots,c_r\in\R_{>0}$
and vectors $\vecv_1,\ldots,\vecv_r\in\R^d$.
Partition the index set $\{1,\ldots,r\}$ by the equivalence relation 
$i\sim i'\stackrel{\tdef}{\Leftrightarrow} c_i/c_{i'}\in\Q$,
and for each equivalence class $J\subset\{1,\ldots,r\}$,
let $c_J$ be the least common multiple of the numbers $\{c_i\col i\in J\}$
(this exists since $c_i/c_{i'}\in\Q$ for all $i,i'\in J$).
Then for each $i\in J$ we have $n_i:=c_J/c_i\in\Z^+$,
and now
$\bigcup_{i\in J}c_i(\scrL+\vecv_i)=\bigcup_{i\in J}\bigcup_{\vecw\in R_i}c_J(\scrL+\vecw+n_i^{-1}\vecv_i)$,
where $R_i\subset n_i^{-1}\scrL$ is any fixed set of 
representatives for the quotient $n_i^{-1}\scrL/\scrL$.
Removing any duplicate translate of $c_J\scrL$ from the last union,
we arrive at an expression of the form
$\bigcup_{i\in J}c_i(\scrL+\vecv_i)=\bigcup_{i=1}^{r_J}c_J(\scrL+\vecv_i')$
for some vectors $\vecv_1',\ldots,\vecv_{r_J}'\in\R^d$ which are pairwise incongruent modulo $\scrL$.
Applying this rewriting procedure to each equivalence class $J\subset\{1,\ldots,r\}$,
we arrive at a representation of $\scrL_1\cup\cdots\cup\scrL_m$
which has the required property.
\end{proof}

\begin{remark}
Note that for a given $\scrP$,
the representation \eqref{GENPOINTSET1pre} is far %
from unique,
even when we require \eqref{THINDISJcond1}.
\end{remark}

\vspace{5pt}

Let us rewrite \eqref{GENPOINTSET1pre} slightly:
Choose $M_1,\ldots,M_N\in\SL_d(\R)$ so that $\scrL_j=\Z^dM_j$
(this is possible since $\scrL_j$ has covolume one).
Set
\begin{align}\label{PSIdef}
\Psi=\{(j,i)\col j\in\{1,\ldots,N\},\: i\in\{1,\ldots,r_j\}\}.
\end{align}
Also for each $\psi=(j,i)\in\Psi$,
we set $c_\psi=c_{j,i}$,
$\vecw_\psi=\vecw_{j,i}=\vecv_{j,i}M_j^{-1}\in\R^d$,
and
\begin{align}\label{LpsiDEF}
\scrL_\psi:=c_{j,i}(\scrL_j+\vecv_{j,i})=c_{\psi}(\Z^d+\vecw_{\psi})M_{j}.
\end{align}
With this notation,
the asymptotic density of $\scrL_\psi$ is
$\nbar_\psi=c_\psi^{-d}$,
and we have: %
\begin{align}\label{GENPOINTSET1}
\scrP=\bigcup_{\psi\in\Psi}\scrL_\psi, %
\end{align}
as in \eqref{Ppsidec1}.

Given $\psi\in\Psi$, we denote by $j_\psi$ and $i_\psi$
the indices such that $\psi=(j_\psi,i_\psi)$. %
The condition\label{ipsijpsidef}
\eqref{THINDISJcond1} now takes the following form:
\begin{align}\label{THINDISJcond2}
\forall \psi\neq\psi'\in\Psi:\:\:
\bigl[j_\psi=j_{\psi'}\text{ and }c_\psi/c_{\psi'}\in\Q\bigr]
\Rightarrow 
\scrL_\psi\cap\scrL_{\psi'}=\emptyset.
\end{align}
Also, the fact that $\scrL_1,\ldots,\scrL_N$ are pairwise incommensurable
implies that 
\begin{align}\label{GENPOINTSET1req}
M_jM_{j'}^{-1}\notin\scrS,\qquad\forall j\neq j'\in\{1,\ldots,N\},
\end{align}
where $\scrS$ is the commensurator of $\SL_d(\Z)$ in $\SL_d(\R)$,
i.e.
\begin{align}\label{COMMENSURATOR}
\scrS=
\{(\det T)^{-1/d}T\col T\in\GL(d,\Q),\:\det T>0\}.
\end{align}
The presentation of $\scrP$ in \eqref{GENPOINTSET1} is the one which we will work with
throughout the paper,
and the conditions \eqref{THINDISJcond2} and \eqref{GENPOINTSET1req}
will always be assumed to hold.
\begin{lem}\label{THINDISJcondLEM}
In the above representation of $\scrP$,
for any two $\psi\neq\psi'\in\Psi$,
the intersection $\scrL_\psi\cap\scrL_{\psi'}$ is contained in an affine hyperplane of $\R^d$.
\end{lem}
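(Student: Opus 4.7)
The plan is to reduce the problem to showing that the intersection of two full-rank lattices $\Lambda_\psi \cap \Lambda_{\psi'}$ has $\R$-span of dimension at most $d-1$, and then to verify this by case analysis using the two structural assumptions \eqref{THINDISJcond2} and \eqref{GENPOINTSET1req}.

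First I would write $\scrL_\psi = \vecu_\psi + \Lambda_\psi$ for each $\psi\in\Psi$, where $\Lambda_\psi := c_\psi\scrL_{j_\psi}$ is a full-rank lattice in $\R^d$ and $\vecu_\psi := c_\psi\vecw_\psi M_{j_\psi}\in\R^d$. Then, by a standard fact about intersections of cosets, if $\scrL_\psi\cap\scrL_{\psi'}$ is non-empty and $\vecx_0$ is any element in it, one has $\scrL_\psi\cap\scrL_{\psi'} = \vecx_0 + (\Lambda_\psi\cap\Lambda_{\psi'})$. Since a discrete subgroup of $\R^d$ whose $\R$-span is a proper linear subspace is contained in any linear hyperplane through that span, it suffices to show that $\Lambda_\psi\cap\Lambda_{\psi'}$ fails to be a full-rank lattice (equivalently, fails to be a grid).

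Next I would split on whether $j_\psi = j_{\psi'}$. Assume first that $j_\psi = j_{\psi'} =: j$, so that $\Lambda_\psi\cap\Lambda_{\psi'} = (c_\psi\Z^d\cap c_{\psi'}\Z^d)\,M_j$. If $c_\psi/c_{\psi'}\in\Q$, then by \eqref{THINDISJcond2} the intersection $\scrL_\psi\cap\scrL_{\psi'}$ is empty and the conclusion holds vacuously. If instead $c_\psi/c_{\psi'}\notin\Q$, a coordinatewise check shows that any $\vecx\in c_\psi\Z^d\cap c_{\psi'}\Z^d$ with a non-zero coordinate would force $c_\psi/c_{\psi'}$ to be rational; hence $c_\psi\Z^d\cap c_{\psi'}\Z^d = \{\vecnull\}$ and therefore $\Lambda_\psi\cap\Lambda_{\psi'}=\{\vecnull\}$.

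Finally I would handle $j_\psi\neq j_{\psi'}$: by construction $\scrL_{j_\psi}$ and $\scrL_{j_{\psi'}}$ are incommensurable. I would verify (a direct manipulation of the definition of commensurability) that if $c\scrL \sim c'\scrL'$ with witnesses $(\delta',\vecv')$, then $\scrL\sim\scrL'$ with witnesses $((c'/c)\delta',c^{-1}\vecv')$; by contrapositive, $\Lambda_\psi=c_\psi\scrL_{j_\psi}$ and $\Lambda_{\psi'}=c_{\psi'}\scrL_{j_{\psi'}}$ are incommensurable. Specializing the definition to $\delta=1$, $\vecv=\vecnull$ then gives that $\Lambda_\psi\cap\Lambda_{\psi'}$ is not a grid, so as a discrete subgroup of $\R^d$ it has rank at most $d-1$. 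This concludes all cases. There is no real obstacle here; the only mildly subtle step is the invariance of (in)commensurability under independent rescaling, which is a one-line manipulation of the definition in the footnote on p.\pageref{commensurableDEF}.
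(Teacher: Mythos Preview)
Your proof is correct. The paper takes a different, more unified route: it argues by contradiction, assuming the intersection is not contained in any affine hyperplane. Then it picks $d+1$ affinely independent points $\vecq_0,\ldots,\vecq_d$ in $\scrL_\psi\cap\scrL_{\psi'}$, so that $\vecq_0+\Z(\vecq_1-\vecq_0)+\cdots+\Z(\vecq_d-\vecq_0)$ is a grid contained in $\scrL_\psi\cap\scrL_{\psi'}$. This immediately forces $\scrL_\psi$ and $\scrL_{\psi'}$ to be commensurable (hence $j_\psi=j_{\psi'}$), and a short further argument---using that this common subgrid, after applying $M_{j_\psi}^{-1}$, meets $\Z^d$ in more than one point---shows $c_\psi/c_{\psi'}\in\Q$, contradicting \eqref{THINDISJcond2}. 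So the paper avoids your explicit case split: the incommensurable case and the irrational-ratio case are ruled out in one sweep by the existence of a full-rank subgrid. Your approach, by contrast, is more structural---first reducing to the underlying lattices $\Lambda_\psi\cap\Lambda_{\psi'}$ and then dispatching each case by a dedicated elementary argument---which makes the role of each hypothesis (\eqref{THINDISJcond2} versus \eqref{GENPOINTSET1req}) more transparent, at the cost of slightly more bookkeeping.
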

\begin{proof}
Assume that $\scrL_\psi\cap\scrL_{\psi'}$ is not contained in an affine hyperplane.
Then there exist points $\vecq_0,\ldots,\vecq_d\in\scrL_\psi\cap\scrL_{\psi'}$
such that the vectors $\vecq_j-\vecq_0$ for $j=1,\ldots,d$ %
form a linear basis of $\R^d$.
Then $\vecq_0+\Z(\vecq_1-\vecq_0)+\cdots+\Z(\vecq_d-\vecq_0)$ is a grid
contained in $\scrL_\psi\cap\scrL_{\psi'}$,
and so $\scrL_\psi$ and $\scrL_{\psi'}$ are commensurable,
i.e.\ $j_\psi=j_{\psi'}$.
Hence by \eqref{LpsiDEF},
both $c_{\psi}(\Z^d+\vecw_\psi)$ and 
$c_{\psi'}(\Z^d+\vecw_{\psi'})$ contain the points 
$\vecq_iM_{j_\psi}^{-1}$ ($i=0,1,\ldots,d$).
Therefore $(c_\psi/c_{\psi'})(\Z^d+\vecw_\psi-(c_{\psi'}/c_\psi)\vecw_{\psi'})$ 
intersects $\Z^d$
in more than one point, and hence
$c_\psi/c_{\psi'}\in\Q$.
When combined with \eqref{THINDISJcond2},
this leads to a contradiction.
\end{proof}

Given $\scrP$ as in \eqref{GENPOINTSET1}
we now also fix a choice of a crude marking
$\psi:\scrP\to\Psi$, as in
\eqref{psimarkdef}.
\begin{remark}\label{psiaedef}
As we discussed below \eqref{psimarkdef},
$\psi(\vecp)$ is uniquely defined for 
every point $\vecp\in\scrP$ outside the union
$\bigcup_{\psi\neq\psi'\in\Psi}(\scrL_\psi\cap\scrL_{\psi'})$.
By Lemma \ref{THINDISJcondLEM}, this union is contained in a finite union of affine hyperplanes.
In particular $\psi(\vecp)$ is uniquely defined for 
every $\vecp\in\scrP$ away from a subset of density zero.
\end{remark}

\subsection{Lie groups and homogeneous spaces}
\label{LIEGPHOMSPsec}
We now introduce 
appropriate homogeneous spaces 
to keep track of the unions of grids which will appear in our discussion. %
(This is very standard;
cf.\ \cite[Sections 5.2 \& 5.3.3]{jMaS2019},
\cite[Sec.\ 7]{cDjMaS2016}
as well as many other references.)

For any positive integer $r$, we let $\M_{r\times d}(\R)$ be the space of real $r\times d$ matrices,
and let $\S_r(\R)$ be the semidirect product
\begin{align}\label{Srdefrep}
\S_r(\R):=\SL_d(\R)\ltimes\M_{r\times d}(\R)
\end{align}
with multiplication law
\begin{align}\label{Srgrouplaw}
(M,U)(M',U')=(MM',UM'+U').
\end{align}
In the special case $r=1$, we have $\S_1(\R)=\ASL_d(\R)$, %
the affine special linear group.
This group acts on $\R^d$ %
from the right through
\begin{align*}
\vecw (M,\vecu):=\vecw M+\vecu
\qquad(\vecw\in\R^d,\: (M,\vecu)\in\ASL_d(\R)),
\end{align*}
that is, by affine linear transformations preserving volume and orientation.
Similarly, for general $r$, 
$\S_r(\R)$ can be thought of as the group of such affine linear transformations
acting on $r$ \textit{copies} of $\R^d$, 
with the constraint that the linear part (``$M$'') of the action is the same on each copy.
To make this precise,
let $\r_i:\M_{r\times d}(\R)\to\R^d$ ($i=1,\ldots,r$) %
be the projection map which takes any matrix to its $i$th row,\label{riDEF}
and let $\a_i$ be the following Lie group homomorphism:
\begin{align*}
\a_i:\S_r(\R)\to\ASL_d(\R);\qquad
\a_i(M,U):=(M,\r_i(U)).
\end{align*}
Taken together, these maps $\a_1,\ldots,\a_r$ give %
an isomorphism of
$\S_r(\R)$ with the subgroup of $\ASL_d(\R)^r$
consisting of all $\langle (M_i,\vecu_i)\rangle_{i=1,\ldots,r}$ with $M_1=\cdots=M_r$.

It is also natural to identify %
$\S_r(\R)$ with a subgroup of $\SL_{d+r}(\R)$
through $(M,U)\leftrightarrow\matr M0UI$,
since the group law \eqref{Srgrouplaw} then corresponds to multiplication of matrices;
however we will not make use of this identification in the present paper.

We will always consider $\SL_d(\R)$ to be embedded in $\S_r(\R)$ through 
$M\mapsto (M,0)$;
in other words any $M\in\SL_d(\R)$ is understood to also denote the element $(M,0)$ in $\S_r(\R)$.
In the opposite direction, we denote by $\iota$ the projection homomorphism\label{IOTAdef}
\begin{align*}
\iota:\S_r(\R)\to\SL_d(\R);\qquad\iota(M,U):=M.
\end{align*}
For any $U\in \M_{r\times d}(\R)$
we let $\I_U$ be the element 
\begin{align}\label{IUdef}
\I_U:=(\I,U).  %
\end{align}
in $\S_r(\R)$.
Note that the map $U\mapsto\I_U$ is a homomorphism from 
$\M_{r\times d}(\R)$ to $\S_r(\R)$.

\vspace{5pt}

From now on we assume a point set $\scrP$ with presentation as in \eqref{PSIdef}--\eqref{GENPOINTSET1}
to be \textit{fixed.}
In particular this means that the numbers $N,r_1,\ldots,r_N$ are fixed,
and so are the numbers $c_\psi$,
vectors $\vecw_\psi$ ($\psi\in\Psi$)
and matrices $M_1,\ldots,M_N\in\SL_d(\R)$.
Associated to this fixed presentation of $\scrP$, we now introduce 
the following homogeneous space 
$\XX$ which will be of fundamental importance throughout the rest of the paper:
\begin{align*}
\XX:=\GaG,
\end{align*}
where \label{GXDEF}
\begin{align*}
& G:=\S_{r_1}(\R)\times\cdots\times\S_{r_N}(\R)
\qquad\text{and}\qquad %
\Gamma:=\S_{r_1}(\Z)\times\cdots\times\S_{r_N}(\Z),
\end{align*}
with  %
\begin{align*}
\S_r(\Z):=\SL_d(\Z)\ltimes\M_{r\times d}(\Z).
\end{align*}
The key point of this space $\XX$ is that it %
parametrizes   %
the set of all point sets that can be obtained from
$\scrP=\cup_{\psi\in\Psi}\scrL_\psi$
by applying arbitrary translations to each individual grid $\scrL_{\psi}$,
and arbitrary linear maps of determinant one to %
each individual commensurability class,
$\cup_{i=1}^{r_j}\scrL_{(j,i)}$.
To describe this parametrization, %
let us write $G_j=\S_{r_j}(\R)$, \label{GjDEF}
$\Gamma_j=\S_{r_j}(\Z)$ and $\XX_j=\Gamma_j\bs G_j$,
so that $G=G_1\times\cdots\times G_N$ 
and $\XX=\XX_1\times\cdots\times\XX_N$;
also write $p_j$ and $\tp_j$ for the projection maps onto the $j$th factor:
\begin{align}\label{pjdef}
p_j:G\to\S_{r_j}(\R)
\qquad\text{and}\qquad
\tp_j:\XX\to\XX_j
\qquad (j\in\{1,\ldots,N\}).
\end{align}
Then for each $\psi=(j,i)\in\Psi$, let $p_\psi$ be the Lie group homomorphism
\begin{align}\label{ppsiDEF}
p_\psi:=\a_i\circ p_j:G\to\ASL_d(\R).
\end{align}
Now for any $g\in G$, we take the point $\Gamma g$ in $\XX$ to parametrize 
the point set $J_0(\Gamma g)$ in $\R^d$,
where
\begin{align}\label{Gammagpointset}
J_0:\XX\to N_s(\R^d);\qquad
J_0(\Gamma g):=\bigcup_{\psi\in\Psi} c_\psi\bigl(\Z^d\,p_\psi(g)\bigr)\qquad(g\in G).
\end{align}

To see that the above map $J_0$ %
is well-defined,
note that for each $\psi$ we have $p_\psi(\Gamma)=\ASL_d(\Z)$;
hence if $\Gamma g=\Gamma g'$ then $p_\psi(g')=\gamma\,p_\psi(g)$ for some $\gamma\in\ASL_d(\Z)$,
implying that $\Z^d p_\psi(g') %
=\Z^d p_\psi(g)$.
However it should be noted that the map $J_0$ is discontinuous
at any point $\Gamma g\in\XX$
for which the grids $c_\psi\bigl(\Z^d\,p_\psi(g)\bigr)$ ($\psi\in\Psi$) are not pairwise disjoint.

We next introduce notation
for expressing an arbitrary translate 
of $\scrP$
in terms of the parametrization by $\XX$.
Namely, for any $\vecq\in\R^d$ 
we will define an element $g_0^{(\vecq)}\in G$ such that
\begin{align}\label{J0UqtMeqPmq}
J_0(\Gamma\,g_0^{(\vecq)})=\scrP-\vecq
\qquad (\forall\vecq\in\R^d).
\end{align}
This is done as follows:
For any $\vecq\in\R^d$ and $j\in\{1,\ldots,N\}$
we define the matrix $U_j^{(\vecq)}$ by specifying its row vectors:
\begin{align}\label{Ujqdef}
U_j^{(\vecq)}\in\M_{r_j\times d}(\R);
\qquad
\p_i(U_j^{(\vecq)})=\vecw_{j,i}-c_{j,i}^{-1}\vecq M_j^{-1}
\qquad(i=1,\ldots,r_j),
\end{align}
and we also write
\begin{align}\label{tUqdef}
U^{(\vecq)}:=(U_1^{(\vecq)},\cdots,U_N^{(\vecq)})\in 
\prod_{j=1}^N\M_{r_j\times d}(\R).   %
\end{align}
We also set %
(cf.\ \eqref{IUdef})
\begin{align}\label{IVdef}
\I_V:=\bigl(\I_{V_1},\ldots,\I_{V_N}\bigr)\in G
\qquad\text{for any }\: V=(V_1,\ldots,V_N)\in\prod_{j=1}^N\M_{r_j\times d}(\R),
\end{align}
and
\begin{align}\label{tMdef}
\tM:=(M_1,\ldots,M_N)\in G.
\end{align}
Finally, we define:
\begin{align}\label{g0qdef}
g_0^{(\vecq)}:=\I_{U^{(\vecq)}}\hspace{0pt}\tM\in G\qquad(\vecq\in\R^d).
\end{align}
We then compute that, %
for any $\psi=(j,i)\in\Psi$:
\begin{align}\label{KEYUjqdefmotivation}
c_\psi\bigl(\Z^d\,p_\psi(g_0^{(\vecq)})\bigr)
=
c_\psi\bigl(\Z^d\,\a_i(\I_{U_j^{(\vecq)}} M_j)\bigr)
=
c_\psi\,(\Z^d+\vecw_\psi-c_\psi^{-1}\vecq M_j^{-1})\,M_j
=
\scrL_\psi-\vecq
\end{align}
(cf.\ \eqref{LpsiDEF}).
Using \eqref{KEYUjqdefmotivation} and \eqref{Gammagpointset},
it now follows that the key relation \eqref{J0UqtMeqPmq} holds.

\vspace{5pt}

Finally we introduce some further notation
relating to the structure of the homogeneous spaces $\XX_j$ and $\XX$.
Recall that for each $j\in\{1,\ldots,N\}$
we have a projection map $\iota:\S_{r_j}(\R)\to\SL_d(\R)$; 
this map takes $\Gamma_j$ to $\SL_d(\Z)$, and hence induces a projection map
from $\XX_j$ to $\SL_d(\Z)\bs\SL_d(\R)$, which we will denote by $\tiota$:\label{tIOTAdef}
\begin{align}\label{tiotaDEF}
\tiota:\XX_j\to\SL_d(\Z)\bs\SL_d(\R),
\qquad\tiota(\Gamma_jg)=\SL_d(\Z)\,\iota(g)\quad (g\in G_j).
\end{align}
We also set\label{TTjdef}
\begin{align*}
\TT_j:=\R^{r_j}/\Z^{r_j}
\qquad\text{and}\qquad
\TT_j^d:=\underbrace{\TT_j\times\cdots\times\TT_j}_{d\: \text{ copies}}.
\end{align*}
Note that $\TT_j^d$ is an $r_jd$-dimensional torus.
We will use ``$\pi$'' to denote any of the standard projection maps\label{piXprojDEF}
\begin{align}\label{pitorusprojDEF}
\pi:\R^{r_j}\to\TT_j;
\qquad %
\pi:(\R^{r_j})^d\to\TT_j^d;
\qquad
\pi:G_j\to\XX_j,
\qquad\text{and}\qquad
\pi:G\to\XX.
\end{align}
We will identify $\M_{r_j\times d}(\R)$ with $(\R^{r_j})^d$,
by identifying any matrix in $\M_{r_j\times d}(\R)$ with the sequence of its $d$ column vectors
(in order).
This also induces an identification
\begin{align*}
\TT_j^d=%
\M_{r_j\times d}(\R/\Z).
\end{align*}
We write $\tr_i:\TT_j^d\to(\R/\Z)^d$ for the projection induced by\label{triDEF}
the map $\r_i$. %
For each $j$ we also introduce the embedding
\begin{align}\label{xmapDEF}
x:\TT_j^d\to\XX_j;
\qquad
x(\pi(U))=\Gamma_j\hspace{1pt} \I_U,
\quad\forall U\in\M_{r_j\times d}(\R).
\end{align}
This map is well-defined since $\I_U\in\Gamma_j$ for all $U\in\M_{r_j\times d}(\Z)$.

\section{Spherical equidistribution (without uniformity)}
\label{nonunifsphericalequidistrSEC}

\subsection{The limit measure $\mu^{(\vecq)}$}
\label{HOMDYNintrononunifTHMSEC}

In this section we state a result in homogeneous dynamics,
Theorem \ref{HOMDYNintrononunifTHM} below,
which gives a precise description of
the limit considered in the spherical equidistribution condition [P2]
in Section \ref{KINTHEORYrecapsec}.
Theorem \ref{HOMDYNintrononunifTHM} 
expresses the answer
in terms of the parametrizing space $\XX$,
and in fact applies for an arbitrary point $\vecq$ in $\R^d$; %
however the theorem 
includes no statement about uniformity
with respect to 
the translation vector $\vecq$,
as is required in [P2].

This Theorem \ref{HOMDYNintrononunifTHM} is a special case of 
more general equidistribution results which we will prove in Sections
\ref{UNIPOTAPPLsec} and \ref{NEWISOsec}
(see in particular Theorem \ref{nonunifTHM1}),
and the main reason for stating 
this special case %
already here is 
to help motivating
the introduction of the measure $\mu^{(\vecq)}$
(see \eqref{muqDEF} below),
which will be a crucial object of study in the remainder of
Section \ref{nonunifsphericalequidistrSEC} as well as in Section \ref{omegajqlimitSEC}.
Note that we will not make use of the statement %
of Theorem \ref{HOMDYNintrononunifTHM}
before Section \ref{NEWISOsec}. %

For a given $\vecq\in\R^d$
and each $j\in\{1,\ldots,N\}$,
we let $U_{j,1}^{(\vecq)},\ldots,U_{j,d}^{(\vecq)}\in\R^{r_j}$
be the column vectors of
the matrix $U_j^{(\vecq)}$,
and then define 
$L_j^{(\vecq)}$ to be the identity component of the 
smallest closed subgroup of $\R^{r_j}$
containing both $\Z^{r_j}$ and $U_{j,1}^{(\vecq)},\ldots,U_{j,d}^{(\vecq)}$.\label{Ljqfirstdef}
This $L_j^{(\vecq)}$ 
is a \textit{rational} (linear) subspace of $\R^{r_j}$,
i.e., $L_j^{(\vecq)}\cap\Z^{r_j}$ is a lattice in $L_j^{(\vecq)}$.
Next, given any linear subspace $L$ of $\R^{r_j}$, we let $\S_L(\R)$ be the closed connected subgroup of $\S_{r_j}(\R)$ given by
\begin{align}\label{SVdefrepnew}
\S_L(\R):=\SL_d(\R)\ltimes L^d=\bigl\{(M,U)\in\S_{r_j}(\R)\col %
U\in L^d\bigr\}.
\end{align}
Here %
``$L^d$'' should be understood via our identification
$\M_{r_j\times d}(\R)=(\R^{r_j})^d$,
viz., $L^d$ is the set of matrices in 
$\M_{r_j\times d}(\R)$ all of whose column vectors lie in $L$.

\begin{lem}\label{mujqjustifyLEM}
For any $\vecq\in\R^d$, the $\S_{L_j^{(\vecq)}}(\R)$-orbit of the point
$\Gamma_j\I_{U_j^{(\vecq)}}$ in $\XX_j=\Gamma_j\bs G_j$
is a closed embedded submanifold of $\XX_j$
which carries a unique $\S_{L_j^{(\vecq)}}(\R)$-invariant
probability measure.
\end{lem}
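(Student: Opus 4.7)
Set $g := \I_{U_j^{(\vecq)}}$ and $L := L_j^{(\vecq)}$. The plan is to reduce the lemma to showing that $\Lambda := g^{-1}\Gamma_j g \cap \S_L(\R)$ is a lattice in $\S_L(\R)$; everything else will then follow from standard homogeneous dynamics, since $\S_L(\R) = \SL_d(\R) \ltimes L^d$ is unimodular (the $\SL_d(\R)$ action on $L^d$ has determinant one) and is generated by one-parameter unipotent subgroups (as $\SL_d(\R)$ is, and $L^d$ itself is additive).

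The first step is to verify that $L$ is a \emph{rational} subspace of $\R^{r_j}$, meaning $L \cap \Z^{r_j}$ is a lattice in $L$. Writing $\bar H_0$ for the closure of $\Z^{r_j} + \sum_i \Z U_{j,i}^{(\vecq)}$, the quotient $\bar H_0/L$ is discrete by the definition of $L$, so the image of $\Z^{r_j}\subseteq\bar H_0$ in $\R^{r_j}/L$ is discrete as well, which forces the rationality of $L$. I will then compute $\Lambda$ explicitly: the group law gives $g^{-1}(M,V)g = (M, V + U(I-M))$ with $U := U_j^{(\vecq)}$, and membership in $\S_L(\R)$ reduces, modulo the closed subgroup $\tilde L := L + \Z^{r_j}$, to the single condition $\hat U M = \hat U$ in the torus $T^d = (\R^{r_j}/\tilde L)^d$. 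The crucial observation is that the $d$ column images $\hat U_1,\ldots,\hat U_d$ all lie in the subgroup $\bar H_0/\tilde L \subset T$, which is finite: it is a finitely generated subgroup of a compact torus whose identity component is trivial by construction of $L$. Hence the condition $\hat U M = \hat U$ is automatic on the principal congruence subgroup $\Gamma(n) \subset \SL_d(\Z)$, where $n$ is the exponent of $\bar H_0/\tilde L$. This exhibits $\Lambda$ as an extension $1 \to (L \cap \Z^{r_j})^d \to \Lambda \to \SL_d(\Z)_{\hat U} \to 1$ whose kernel is a lattice in $L^d$ (by rationality of $L$) and whose quotient is a finite-index subgroup of $\SL_d(\Z)$, so $\Lambda$ is indeed a lattice in the unimodular group $\S_L(\R)$.

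To finish, the pushforward $\mu$ of the normalized Haar measure on $\Lambda\bs\S_L(\R)$ under the orbit map $\Lambda h \mapsto \Gamma_j g h$ is an $\S_L(\R)$-invariant (and $\S_L(\R)$-ergodic, by transitivity of the right action) probability measure on $\XX_j$ concentrated on the orbit $\Gamma_j g\,\S_L(\R)$. By Ratner's measure classification applied to the group $\S_L(\R)$, $\mu$ must equal the Haar measure on some closed orbit $\Gamma_j g\,L'$ with $L' \supseteq \S_L(\R)$. Since $\mu$ assigns full measure to $\Gamma_j g\,\S_L(\R)$, while a strict inclusion $\S_L(\R)\subsetneq L'$ would make this a positive-codimension submanifold of $\Gamma_j g L'$ of measure zero, one is forced to have $L' = \S_L(\R)$. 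Hence the orbit is closed; the orbit map then descends to an injective immersion $\Lambda\bs\S_L(\R)\to\XX_j$ with closed image and is therefore an embedding, and $\mu$ is the unique $\S_L(\R)$-invariant probability measure on it. I expect the main technical obstacle to be the finiteness of the subgroup $\bar H_0/\tilde L$ of $T$ generated by the $\hat U_i$, as this is what prevents $\Lambda$ from being merely discrete and underlies the entire argument; the invocation of Ratner is then essentially a black-box step once the lattice property is in hand.
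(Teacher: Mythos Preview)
Your proof is correct. The core of both your argument and the paper's is the same: show that the conjugate $g^{-1}\Gamma_j g \cap \S_L(\R)$ (equivalently $\Gamma_j \cap g\S_L(\R)g^{-1}$) is a lattice in $\S_L(\R)$, by exhibiting a subgroup built from a principal congruence subgroup $\Gamma(n)$ together with $(L\cap\Z^{r_j})^d$.

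The paper streamlines this step slightly by first observing (via its Lemma~\ref{IDCLcharLEM}) that there exists $X\in\M_{r_j\times d}(\Q)$ with $U-X\in L^d$, so that $\I_U\S_L(\R)=\I_X\S_L(\R)$ and one may work with the \emph{rational} translation $\I_X$ from the outset. Your route achieves the same end by passing to the torus $\R^{r_j}/(L+\Z^{r_j})$ and noting that the columns of $U$ land in a finite subgroup there; the two devices are equivalent, and your exponent $n$ is precisely a common denominator for such an $X$.

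The one substantive divergence is your final step. You invoke Ratner's measure classification to deduce closedness of the orbit from the existence of a finite invariant measure on it. This works (your $\mu$ is $\S_L(\R)$-ergodic, $\S_L(\R)$ is generated by unipotents, and the codimension argument excludes $L'\supsetneq\S_L(\R)$), but it is considerable overkill. The paper instead cites Raghunathan \cite[Theorem~1.13]{mR72}, which gives directly and elementarily that if a discrete subgroup $\Gamma$ of a Lie group $G$ meets a closed subgroup $H$ in a lattice of $H$, then $\Gamma\backslash\Gamma H$ is a closed embedded submanifold of $\Gamma\backslash G$. Once you have established that $\Lambda$ is a lattice, this is all that is required; no ergodic theory is needed.
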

We remark that this $\S_{L_j^{(\vecq)}}(\R)$-orbit,
i.e.\ $\Gamma_j\bs\Gamma_j\I_{U_j^{(\vecq)}}\S_{L_j^{(\vecq)}}(\R)$,
equals the closure in $\XX_j$ of the $\SL_d(\R)$-orbit of 
$\Gamma_j\I_{U_j^{(\vecq)}}$ (this is an easy consequence of Theorem \ref{HOMDYNintrononunifTHM} below).
\begin{proof}[Proof of Lemma \ref{mujqjustifyLEM}]
Set $U:=U_j^{(\vecq)}$ and $L:=L_j^{(\vecq)}$.
It follows from the construction of $L$ %
that there exists 
a matrix 
$X\in\M_{r_j\times d}(\Q)$
such that $U-X\in L^d$ %
(indeed, see Lemma \ref{IDCLcharLEM} below).
This implies 
$\I_{U}\I_X^{-1}\in\S_{L}(\R)$,
and hence the $\S_{L}(\R)$-orbit in the statement of the lemma can be expressed as
$\Gamma_j\bs\Gamma_j\I_{U}\S_{L}(\R)
=\Gamma_j\bs\Gamma_j\I_{X}\S_{L}(\R)$.
Hence by 
\cite[Theorem 1.13]{mR72},
it suffices to verify that
$\Gamma_j$
intersects
$\I_X \S_{L}(\R)\I_X^{-1}$ 
in a lattice.
To show this, let $n$ be a denominator of $X$,
i.e.\ a positive integer such that $X\in n^{-1}\M_{r_j\times d}(\Z)$,
and then set
\begin{align*}
\Lambda:=\{(M,V)\col M\in\Gamma(n),\: V\in XM-X+(L\cap\Z^{r_j})^d\},
\end{align*}
where $\Gamma(n)$ is the 
principal congruence subgroup of $\SL_d(\Z)$ of level $n$.
One verifies that $\Lambda$ is a subgroup of $\Gamma_j\cap\I_X \S_{L}(\R)\I_X^{-1}$,
and a lattice in 
$\I_X \S_{L}(\R)\I_X^{-1}$.
Hence the lemma is proved.
\end{proof}

In view of Lemma \ref{mujqjustifyLEM},
it makes sense to define 
$\mu_j^{(\vecq)}\in P(\XX_j)$
to be the unique 
$\S_{L_j^{(\vecq)}}(\R)$-invariant
probability measure
on the orbit
$\Gamma_j\bs\Gamma_j\I_{U_j^{(\vecq)}}\S_{L_j^{(\vecq)}}(\R)$
in $\XX_j$.
Finally we set 
\begin{align}\label{muqDEF}
\mu^{(\vecq)}:=\mu_1^{(\vecq)}\otimes\cdots\otimes\mu_N^{(\vecq)}\in P(\XX).
\end{align}

Let $\varphi:\SL_d(\R)\to G$ be the diagonal embedding.\label{varphiDEF}
For any topological space $S$, we denote by $\C_b(S)$ the space
of bounded continuous real-valued functions on $S$.
\begin{thm}\label{HOMDYNintrononunifTHM}
Given any $\vecq\in\R^d$, $f\in\C_b(\XX)$ and $\lambda\in\Pac(\US)$,
we have
\begin{align}\label{HOMDYNintrononunifTHMRES}
\int_{\US}f(\Gamma g_0^{(\vecq)}\varphi(R(\vecv)D_\rho))\,d\lambda(\vecv)
\to \int_{\XX} f \,d\mu^{(\vecq)}
\end{align}
as $\rho\to0$.
\end{thm}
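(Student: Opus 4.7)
The plan is to convert the spherical average on the left-hand side of \eqref{HOMDYNintrononunifTHMRES} into an equidistribution statement for an expanding unipotent orbit in $\XX$, and then apply Ratner's measure classification theorem to identify the limit.  Near the reference direction $\vece_1$, I would parametrize $\vecv\in\US$ via an open neighbourhood of the identity in the opposite horospherical subgroup $N^-\subset\SL_d(\R)$ (the subgroup contracted by conjugation by $D_\rho$ as $\rho\to 0$), writing $R(\vecv)=k(\vecv)n(\vecv)$ with $k(\vecv)\in\SO(d-1)$ and $n(\vecv)\in N^-$.  The key rescaling identity is $n(\vecv)D_\rho=D_\rho\bigl(D_\rho^{-1}n(\vecv)D_\rho\bigr)$, and $\tn:=D_\rho^{-1}n(\vecv)D_\rho$ ranges over an ever-growing region of $N^-$ as $\rho\to 0$.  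Using absolute continuity of $\lambda$ and a change of variables, the sphere integral (up to negligible contributions from the complement of the neighbourhood) is transformed into an average of $f\bigl(\Gamma\,g_0^{(\vecq)}\varphi(k)\varphi(D_\rho)\varphi(\tn)\bigr)$ with $\tn$ in a box of volume tending to infinity, weighted by a smooth density with uniformly bounded Jacobian.

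By a standard averaging argument, convergence of the rescaled sphere integral follows once one establishes equidistribution (with respect to $\mu^{(\vecq)}$) of the orbit $\{\Gamma\,g_0^{(\vecq)}\varphi(k)\varphi(D_\rho)\varphi(\tn)\col\tn\in E_\rho\}$ for a.e.\ $k\in\SO(d-1)$, where $E_\rho$ is an exhausting family of boxes in $N^-$.  Since $\varphi(N^-)$ is unipotent in $G$, Ratner's theorem is available: every orbit closure of the $\varphi(N^-)$-action is homogeneous and carries a unique invariant probability measure.  Combined with the Dani--Margulis non-divergence of unipotent averages (to rule out escape of mass into the cusp of $\XX$), this reduces the problem to the algebraic identification of the smallest closed connected subgroup $H\subseteq G$ containing $\varphi(\SL_d(\R))$ such that $\Gamma\bs\Gamma\,g_0^{(\vecq)}H$ is closed, together with the verification that the corresponding $H$-invariant probability measure pushes forward to $\mu^{(\vecq)}$.

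Identifying $H$ factors naturally through the product structure $\XX=\XX_1\times\cdots\times\XX_N$.  Within a single factor $\XX_j$, Lemma \ref{mujqjustifyLEM} already guarantees that $\Gamma_j\bs\Gamma_j\I_{U_j^{(\vecq)}}\S_{L_j^{(\vecq)}}(\R)$ is closed and supports $\mu_j^{(\vecq)}$; and by the minimality built into the definition of $L_j^{(\vecq)}$ as the smallest rational subspace of $\R^{r_j}$ containing the columns of $U_j^{(\vecq)}$ modulo $\Z^{r_j}$, this is in fact the smallest such orbit through the chosen base point.  Across the factors, one must show that the diagonal $\varphi(\SL_d(\R))$-orbit through $(\SL_d(\Z)M_1,\ldots,\SL_d(\Z)M_N)$ is dense in the product $\prod_{j=1}^N\SL_d(\Z)\bs\SL_d(\R)$; this is a joining-rigidity statement following from Ratner's theorem on the product space, using that the commensurator of $\SL_d(\Z)$ in $\SL_d(\R)$ equals $\scrS$ from \eqref{COMMENSURATOR} together with the incommensurability hypothesis $M_jM_{j'}^{-1}\notin\scrS$ for $j\neq j'$ from \eqref{GENPOINTSET1req}.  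Combining these two ingredients yields $H=\varphi(\SL_d(\R))\cdot\prod_{j=1}^N\S_{L_j^{(\vecq)}}(\R)$ and shows that the $H$-invariant probability measure pushes forward to $\mu^{(\vecq)}=\mu_1^{(\vecq)}\otimes\cdots\otimes\mu_N^{(\vecq)}$.

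The main obstacle will be the joinings-style density statement for the diagonal orbit in the product of lattice spaces.  Ratner's theorem in the product setting permits an orbit closure that is a proper homogeneous subvariety whenever some rational algebraic relation between the $N$ projections is preserved by a common subgroup; ruling every such intermediate subgroup out amounts to a commensurator computation, showing that any $g\in\SL_d(\R)$ whose diagonal image sends each arithmetic stabiliser $M_j\SL_d(\Z)M_j^{-1}$ to a commensurable subgroup must lie in $\scrS$, and this is precisely what \eqref{GENPOINTSET1req} forbids for $j\neq j'$.  Once this ``no accidental joinings'' step is settled, the rest of the argument is bookkeeping: pulling the equidistribution back through the change of variables, integrating over $k\in\SO(d-1)$, and using boundedness of $f\in\C_b(\XX)$ together with the non-divergence bound to truncate the target, yields the stated convergence.
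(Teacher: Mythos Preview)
Your outline is essentially correct and captures the same strategy as the paper: reduce the spherical average to an expanding unipotent average and invoke Ratner-type rigidity, with the incommensurability hypothesis \eqref{GENPOINTSET1req} supplying the joining rigidity across the factors. The paper's formal proof, however, does not argue directly: it derives Theorem~\ref{HOMDYNintrononunifTHM} as the special case $V=\pi(U^{(\vecq)})$ of the more general Theorem~\ref{nonunifTHM1}, which in turn is proved by passing (via an explicit isomorphism $\Phi:H\xrightarrow{\sim}\tG$) to a quotient $\tGamma\bs\tG$ of smaller semidirect products $\S_{s_j}(\R)$ and applying Corollary~\ref{MAINcleanCONVcor2}. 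That corollary comes from the machinery of Section~\ref{UNIPOTAPPLsec}, where the limit is identified using Ratner's measure classification together with the Mozes--Shah theorem on limits of homogeneous measures (Lemma~\ref{PkweakconvLEMclean}), the joining rigidity in the base $\prod_j\SL_d(\Z)\bs\SL_d(\R)$ (Lemma~\ref{PkmeasrhoprojLEM}, via \cite[Thm.~5]{jMaS2013a}), and a structural classification (Lemma~\ref{charHsbgpsLEM}) showing that any connected $H\subset G$ with $\iota(H)=G'$ is necessarily a product $\prod_j\S_{L_j}^{X_j}(\R)$. The paper explicitly remarks that a more direct argument along your lines is possible, citing \cite[Thm.~10]{jMaS2013a} and \cite[Lemma~5.22]{jMaS2019}; your sketch is close to that alternative route.

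One point where your sketch needs sharpening: Ratner's measure classification plus Dani--Margulis non-divergence alone do not close the argument. The base point $\Gamma g_0^{(\vecq)}\varphi(k)\varphi(D_\rho)$ moves with $\rho$ (and typically diverges in $\XX$ as $\rho\to0$), so the relevant statement concerns limits of a \emph{sequence} of unipotent averages from varying base points, not a single orbit closure. This is precisely where Shah's theorem on expanding translates \cite[Thm.~1.4]{nS96} or the Mozes--Shah limit theory \cite{sM95} enters; the paper uses the latter in Lemma~\ref{PkweakconvLEMclean}. Also note that your $H=\varphi(\SL_d(\R))\cdot\prod_j\S_{L_j^{(\vecq)}}(\R)$ simplifies to $\prod_j\S_{L_j^{(\vecq)}}(\R)$, since $\varphi(\SL_d(\R))$ already sits inside each factor $\S_{L_j^{(\vecq)}}(\R)\supset\SL_d(\R)$.
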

In the left hand side of 
\eqref{HOMDYNintrononunifTHMRES},
note that the point 
$\Gamma g_0^{(\vecq)}\varphi(R(\vecv)D_\rho)$ in $\XX$
corresponds to the point set
$(\scrP-\vecq)R(\vecv)D_\rho$
in our parametrization, because of \eqref{J0UqtMeqPmq}.
Later we will introduce a refined parametrization which also keeps track of the marking,
and which removes the point $\vecq$ itself if $\vecq\in\scrP$
(see \eqref{Jpsidef} below); through this parametrization
the point $\Gamma g_0^{(\vecq)}\varphi(R(\vecv)D_\rho)$ in $\XX$
will correspond to the point set $\scrQ_\rho(\vecq,\vecv)$
which we defined in \eqref{repXIRHOqv}.
Therefore, as we mentioned previously, the limit result of
Theorem \ref{HOMDYNintrononunifTHM} can be viewed as giving a 
description of the limit considered in the spherical equidistribution condition [P2]
in Section \ref{KINTHEORYrecapsec};
in particular we will see that the limit measure $\mu_{\vs(\vecq)}$ in \eqref{repASS:KEY}
equals the appropriate push-forward of the measure $\mu^{(\vecq)}$ defined in \eqref{muqDEF}.

\vspace{3pt} 

We will prove Theorem \ref{HOMDYNintrononunifTHM}
at the end of Section \ref{nonunifequidistrSEC},
as a corollary of
more general %
results which we prove in Section \ref{UNIPOTAPPLsec}.
We remark that it is possible to give a considerably simpler
deduction of Theorem \ref{HOMDYNintrononunifTHM} 
by combining %
the ideas in the
proof of 
\cite[Theorem 10]{jMaS2013a}
and the proof of \cite[Lemma 5.22]{jMaS2019}.
In any case, the crucial ingredient in either of these two proofs
is the deep measure classification theorem of Ratner \cite{mR91a},
and strong use is also made of the work of Shah \cite{nS96}.

\subsection{Re-expressing %
the limit measures $\mu_j^{(\vecq)}$ as $\overline{\omega_j^{(\vecq)}}$}
\label{reexprmuqSEC}

The task of finding the appropriate 
space of marks $\Sigma$ and marking $\vs:\scrP\to\Sigma$
to satisfy the conditions in 
Section \ref{KINTHEORYrecapsec}
is closely related to the task of
understanding the limit measures 
$\mu^{(\vecq)}=\mu_1^{(\vecq)}\otimes\cdots\otimes\mu_N^{(\vecq)}$
appearing in Theorem~\ref{HOMDYNintrononunifTHM},
and in particular how these vary as $\vecq$ varies within the set $\scrP$.
As a first step towards this goal,
in this section we will introduce an
alternative way to 
define the measures $\mu_j^{(\vecq)}$.
Recall that each space $\XX_j$ is a torus bundle
over the space $\SL_d(\Z)\bs\SL_d(\R)$
(cf.\ \eqref{tiotaDEF}),
and the point here is
that each measure $\mu_j^{(\vecq)}$
can be expressed as a \textit{product measure}, 
of the normalized Haar measure on $\SL_d(\Z)\bs\SL_d(\R)$
and some fixed measure on the fiber $\TT_j^d$.
We make this precise in
Lemma \ref{oomegapartDEFlem}
and
Proposition \ref{SLRinvprobmeasLEM} below.

We keep $j\in\{1,\ldots,N\}$ throughout this section.
Our first step will be to introduce the relevant measures which can appear on the fiber $\TT_j^d$.

Recall that $\pi$ denotes (among other things) the projection map
$\R^{r_j}\to\TT^{r_j}:=\R^{r_j}/\Z^{r_j}$.
For any %
subset $S$ of $\R^{r_j}$ (resp.\ $\TT^{r_j}$),
we write $\langle S\rangle$ for the subgroup of $\R^{r_j}$ (resp.\ $\TT^{r_j}$)
generated by $S$,
and $\overline{\langle S\rangle}$ for its closure. %
For any topological group $H$ we write $H^\circ$ for
the connected component of the identity. %
Now for any non-empty subset $S$ of $\TT^{r_j}$, we introduce the notation:
\begin{align}\label{fLdef}
\fL(S)
:=\overline{\big\langle \pi^{-1}(S)\big\rangle}^{\hspace{3pt}\circ}.
\end{align}
This is a \textit{rational} subspace of $\R^{r_j}$,
i.e.\ a linear subspace of $\R^{r_j}$ which is spanned by its vectors in $\Z^{r_j}$.
If $S$ is a non-empty subset \textit{of} $\R^{r_j}$ then we also write
$\fL(S):=\fL(\pi(S))=\overline{\langle S+\Z^{r_j}\rangle}^{\hspace{3pt}\circ}$,
and if $V_1,\ldots,V_m$ %
is any finite sequence of points in $\TT^{r_j}$ or $\R^{r_j}$,
then we also write $\fL(V_1,\ldots,V_m):=\fL(\{V_1,\ldots,V_m\})$.
One reason why this notation ``$\fL\,$'' is useful is that
the rational subspace 
$L_j^{(\vecq)}\subset\R^{r_j}$ introduced in
Section \ref{HOMDYNintrononunifTHMSEC}
can now be expressed as:
\begin{align}\label{Ljqexplformula}
L_j^{(\vecq)}=\fL\bigl(U_{j,1}^{(\vecq)},\ldots,U_{j,d}^{(\vecq)}\bigr)
\qquad (\forall\vecq\in\R^d).
\end{align}

\begin{lem}\label{IDCLcharLEM}
For any %
non-empty subset $S\subset\R^{r_j}$,
$\fL(S)$
is %
the unique
smallest rational subspace $L\subset\R^{r_j}$ 
with the property that %
there is some $n\in\Z^+$ such that $\langle S\rangle\subset n^{-1}\Z^{r_j}+L$.
\end{lem}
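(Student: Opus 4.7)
The plan is to prove the lemma in three steps: first verify that $\fL(S)$ is a rational subspace, then show that $\fL(S)$ itself has the stated property, and finally show that $\fL(S)$ is contained in any rational subspace $L$ with that property (whence uniqueness of the smallest such subspace follows tautologically).

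First I would work on the torus side. Let $H:=\overline{\langle S+\Z^{r_j}\rangle}$, a closed subgroup of $\R^{r_j}$ containing $\Z^{r_j}$; then $\fL(S)=H^\circ$. Projecting to $\TT^{r_j}=\R^{r_j}/\Z^{r_j}$, the image $\pi(H)$ is a closed subgroup of the torus, and by the standard classification, $\pi(H)=T_0+F$ where $T_0$ is a closed connected subtorus and $F$ is a finite subgroup. Moreover $T_0=\pi(H)^\circ=\pi(H^\circ)$, so $\fL(S)=H^\circ=\pi^{-1}(T_0)^\circ$; since subtori of $\TT^{r_j}$ correspond bijectively to rational subspaces of $\R^{r_j}$, this shows $\fL(S)$ is a rational subspace.

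Next, to check that $\fL(S)$ satisfies the stated property, I would use the finite group $F$ above. Pick $n\in\Z^+$ with $nF\subset\{0\}$ in $\TT^{r_j}$, which means every element of $F$ is represented by a vector in $n^{-1}\Z^{r_j}$. Then
\begin{equation*}
\langle S\rangle\subset\langle S+\Z^{r_j}\rangle\subset H,
\qquad\pi(H)=T_0+F\subset\pi(\fL(S))+\pi(n^{-1}\Z^{r_j}),
\end{equation*}
so lifting yields $\langle S\rangle\subset\fL(S)+n^{-1}\Z^{r_j}$, as required.

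For the minimality step, suppose $L\subset\R^{r_j}$ is a rational subspace with $\langle S\rangle\subset L+n^{-1}\Z^{r_j}$ for some $n\in\Z^+$. The key observation is that, because $L$ is rational, $L\cap\Z^{r_j}$ is a lattice in $L$, hence the image of $\Z^{r_j}$ (and therefore of $n^{-1}\Z^{r_j}$) in the Euclidean quotient $\R^{r_j}/L$ is a discrete subgroup. Consequently $L+n^{-1}\Z^{r_j}$ is a closed subgroup of $\R^{r_j}$ whose identity component is $L$. Since $L+n^{-1}\Z^{r_j}$ is closed and contains $S+\Z^{r_j}$, it contains $\overline{\langle S+\Z^{r_j}\rangle}=H$; taking identity components yields $\fL(S)=H^\circ\subset L$, which is the required minimality.

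The only part that requires a bit of care is the discreteness of $\pi_{\R^{r_j}/L}(\Z^{r_j})$, but this is a standard consequence of rationality of $L$ (equivalently, the quotient $\Z^{r_j}/(L\cap\Z^{r_j})$ injects as a lattice into $\R^{r_j}/L\cong\R^{r_j-\dim L}$); no other step involves more than elementary manipulations with closed subgroups of $\R^{r_j}$ and $\TT^{r_j}$.
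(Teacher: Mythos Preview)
Your proof is correct and follows essentially the same route as the paper's: both arguments pass to the closed subgroup $G=\overline{\langle\pi(S)\rangle}$ of $\TT^{r_j}$ (you call its lift $H$), use finiteness of $G/G^\circ$ to produce the integer $n$, and for minimality use that $n^{-1}\Z^{r_j}+L$ is closed with identity component $L$ whenever $L$ is rational. Your write-up is slightly more explicit about why $\fL(S)$ is rational and why $n^{-1}\Z^{r_j}+L$ is closed, but the underlying ideas are identical.
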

\begin{proof}
Set $G=\overline{\langle \pi(S)\rangle}$;
this is a closed subgroup of $\TT^{r_j}$;
hence its group of components, $G/G^\circ$,
is finite,
i.e.\ there exists a positive integer $n$ such that 
$n\vecy\in G^\circ$ for all $\vecy\in G$.
We also have $G^\circ=\pi(\fL(S))$.
It follows that $n\cdot\pi(S)\subset\pi(\fL(S))$,
i.e.\ $n\cdot S\subset\Z^{r_j}+\fL(S)$.
This is equivalent with
$S\subset n^{-1}\Z^{r_j}+\fL(S)$,
and also with
$\langle S\rangle\subset n^{-1}\Z^{r_j}+\fL(S)$.

Next assume that $L$ is \textit{any} rational subspace of $\R^{r_j}$ satisfying 
$\langle S\rangle\subset n^{-1}\Z^{r_j}+L$ for some $n\in\Z^+$.
Now $n^{-1}\Z^{r_j}+L$ is a closed subgroup of $\R^{r_j}$
which contains $S+\Z^{r_j}$;
hence $\overline{\langle S+\Z^{r_j}\rangle}\subset n^{-1}\Z^{r_j}+L$,
and so 
$\fL(S):=\overline{\langle S+\Z^{r_j}\rangle}^{\hspace{3pt}\circ}\subset(n^{-1}\Z^{r_j}+L)^\circ=L$.
\end{proof}

Next, for any $V=(V_1,\ldots,V_d)\in\TT_j^d$,
we introduce the notation
\begin{align}\label{SSjVdef}
\SS_j^{(V)}:=\overline{\langle V_1,\ldots,V_d\rangle}
\end{align}
and
\begin{align}\label{LjDEF}
L_j^{(V)}:=\fL(V_1,\ldots,V_d)=\bigl(\pi^{-1}(\SS_j^{(V)})\bigr)^\circ.
\end{align}
Thus $\SS_j^{(V)}$ is a closed subgroup of $\TT_j$
and $L_j^{(V)}$ is a rational subspace of $\R^{r_j}$.
An important special choice of $V$ is $V=\pi(U_j^{(\vecq)})\in\TT_j^d$,
and by \eqref{Ljqexplformula}
and \eqref{LjDEF} we now have:
\begin{align}\label{Ljqexplformula2}
L_j^{(\vecq)}=L_j^{(\pi(U_j^{(\vecq)}))}
\qquad (\forall\vecq\in\R^d).
\end{align}
Let us also define $\SS_j^{(\vecq)}:=\SS_j^{(\pi(U_j^{(\vecq)}))}$,
so that $L_j^{(\vecq)}=\bigl(\pi^{-1}(\SS_j^{(\vecq)})\bigr)^\circ$.

Recall that we have identified $\M_{r_j\times d}(\R)$ with $(\R^{r_j})^d$;
this means that for any linear subspace $L\subset\R^{r_j}$,
$L^d=L\times\cdots\times L$ is a linear subspace of $\M_{r_j\times d}(\R)$.
Similarly for any subgroup $\SS\subset\TT_j$, $\SS^d$ is a subgroup 
of $\TT_j^d=\M_{r_j\times d}(\R)/\M_{r_j\times d}(\Z)$.
Note that $\SL_d(\R)$ acts from the right on $\M_{r_j\times d}(\R)$ 
by matrix multiplication,
and this action preserves the subspace $L^d$ for any $L\subset\R^{r_j}$.
Furthermore, the subgroup $\SL_d(\Z)$ preserves
$\M_{r_j\times d}(\Z)$,
and hence we obtain an induced right action of $\SL_d(\Z)$ on
$\TT_j^d=\M_{r_j\times d}(\R/\Z)$. %
This action preserves the subgroup $\SS^d$, for any subgroup $\SS\subset\TT_j$.

Now for any $V\in\TT_j^d$,
we define:
\begin{align}\label{OjVDEF}
\scrO_j^{(V)}=\bigcup_{\gamma\in\SL_d(\Z)}\bigl(V\gamma+\bigl(\SS_j^{(V)}\bigr)^{\circ\,d}\bigr).
\end{align}
The point of this definition is that, as we will see below,
if $V=\pi(U_j^{(\vecq)})$ then $\scrO_j^{(V)}$
equals \textit{the support of $\mu_j^{(\vecq)}$ in each torus fiber in $\XX_j$.}
It should be noted that for any $V\in\TT_j^d$ we have
$V\in\bigl(\SS_j^{(V)}\bigr)^d$;
hence $\scrO_j^{(V)}$ is a union of some of the connected components of
the Lie group $\bigl(\SS_j^{(V)}\bigr)^d$.
In particular $\scrO_j^{(V)}$ is open and closed in $\bigl(\SS_j^{(V)}\bigr)^d$.
Note also that since $\bigl(\SS_j^{(V)}\bigr)^d$ is compact, 
its total number of components is finite.
We also define
$\omega_j^{(V)}\in P(\TT_j^d)$
\label{mujVDEF}
to be the restriction to $\scrO_j^{(V)}$ of the Haar measure on $\bigl(\SS_j^{(V)}\bigr)^d$.
normalized so that $\omega_j^{(V)}(\scrO_j^{(V)})=1$.
Note that for each $V\in\TT_j^d$, the measure $\omega_j^{(V)}$ is 
$\SL_d(\Z)$-invariant by construction.
We denote by 
$P(\TT_j^d)'$ the subset of all $\SL_d(\Z)$-invariant measures $\omega\in P(\TT_j^d)$.\label{PTjdprimDEF}
This is a closed, and hence compact, subset of $P(\TT_j^d)$.

Next, for any $\vecq\in\R^d$, we write 
$\scrO_j^{(\vecq)}$
and $\omega_j^{(\vecq)}$
to denote 
$\scrO_j^{(\pi(U_j^{(\vecq)}))}$
and $\omega_j^{(\pi(U_j^{(\vecq)}))}$,
respectively.
Thus in particular,
$\scrO_j^{(\vecq)}$ is a union of some of the components of
the closed subgroup $\bigl(\SS_j^{(\vecq)}\bigr)^d\subset\TT_j^d$,
and $\omega_j^{(\vecq)}$ is a measure in $P(\TT_j^d)'$
supported on $\scrO_j^{(\vecq)}$.
We will see that $\omega_j^{(\vecq)}$
is the measure on the fiber $\TT_j^d$
which gives back the measure $\mu_j^{(\vecq)}$
on $\XX_j$ via the product construction which we will presently describe.
\label{SSjdef}
\label{Ljqdef}
\label{OjqDEF}
\label{mujDEF}

\vspace{5pt}

We now introduce a map
\begin{align}\label{mapPTtoPXjrep}
P(\TT_j^d)'\to P(\XX_j),\qquad\omega\mapsto\oomega,
\end{align}
as follows. 
For any $\omega\in P(\TT_j^d)$,
let $\tomega$ be corresponding 
$\M_{r_j\times d}(\Z)$-invariant
Borel measure on $\M_{r_j\times d}(\R)$,
and define the Borel measure $\toomega$ on $G_j$ through
\begin{align}\label{toomegaDEF}
d\toomega(g)=d\tomega(U)\,d\nu(A)
\qquad\text{when }\: g=\I_UA\in G_j
\quad (U\in\M_{r_j\times d}(\R),\: A\in\SL_d(\R)),
\end{align}
where $\nu$ denotes the Haar measure on $\SL_d(\R)$, 
normalized by $\nu(\SL_d(\Z)\bs\SL_d(\R))=1$.  \label{nuDEF}
By the following lemma,
if $\omega\in P(\TT_j^d)'$
then the measure $\toomega$ is left $\Gamma_j$-invariant,
and we finally define $\oomega$ to be the corresponding Borel measure on $\XX_j$.
\label{oomegapartDEF}

\begin{lem}\label{oomegapartDEFlem}
For any $\omega\in P(\TT_j^d)'$, the measure 
$\toomega$ is left $\Gamma_j$-invariant,
and the corresponding measure $\oomega$ on $\XX_j$ satisfies,
for any Borel set $E\subset\XX_j$,
\begin{align}\label{oomegapartDEFlemres}
\oomega(E)=\int_{F_d}\int_{\TT_j^d}I\bigl(x(U)A\in E\bigr)\,d\omega(U)\,d\nu(A),
\end{align}
where $F_d$ is any fixed Borel set in $\SL_d(\R)$ which is a fundamental domain for $\SL_d(\Z)\bs\SL_d(\R)$,
and where $I(\cdot)$ is the indicator function.\label{indicatorfunction}
\end{lem}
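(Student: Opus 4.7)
The plan is to work throughout in the coordinates $g = \I_U A$, where $A = \iota(g) \in \SL_d(\R)$ and $U \in \M_{r_j\times d}(\R)$; this decomposition is unique (explicitly, $g=(A,V)$ gives $U=VA^{-1}$), so the measure $\toomega$ defined in \eqref{toomegaDEF} makes sense. First I compute, directly from the semidirect product law \eqref{Srgrouplaw}, how left multiplication by $\gamma = (\gamma_1, U_\gamma) \in \Gamma_j$ acts in these coordinates. A short calculation gives
\begin{align*}
\gamma\cdot\I_U A = (\gamma_1 A,\,(U_\gamma+U)A),\qquad\text{i.e.}\qquad (A,U)\mapsto\bigl(\gamma_1 A,\,(U+U_\gamma)\gamma_1^{-1}\bigr);
\end{align*}
the twist by $\gamma_1^{-1}$ on the right is the crucial feature arising from the semidirect product.

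For the left $\Gamma_j$-invariance of $\toomega$, I need to check that this change of variables preserves $d\tomega(U)\,d\nu(A)$. The factor $d\nu(A)$ is preserved by $A \mapsto \gamma_1 A$ because $\nu$ is left $\SL_d(\R)$-invariant. For $d\tomega(U)$, decompose the map as $U\mapsto U\gamma_1^{-1}\mapsto U\gamma_1^{-1}+U_\gamma\gamma_1^{-1}$. Since $\gamma_1^{-1}\in\SL_d(\Z)$ preserves the lattice $\M_{r_j\times d}(\Z)$, the translation $U_\gamma\gamma_1^{-1}$ still lies in $\M_{r_j\times d}(\Z)$, and the translation step is preserved because $\tomega$ is by construction $\M_{r_j\times d}(\Z)$-translation-invariant as the lift of $\omega$. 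The remaining right multiplication $U\mapsto U\gamma_1^{-1}$ is preserved precisely because $\omega\in P(\TT_j^d)'$, i.e.\ the hypothesis of $\SL_d(\Z)$-invariance of $\omega$; this lifts to $\SL_d(\Z)$-right-invariance of $\tomega$ (again using that $\SL_d(\Z)$ preserves $\M_{r_j\times d}(\Z)$). This is the step where the $\SL_d(\Z)$-invariance of $\omega$ is essential rather than merely convenient.

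For the integration formula \eqref{oomegapartDEFlemres}, I first verify it for a specific fundamental domain and then argue independence of $F_d$. Let $F_T\subset\M_{r_j\times d}(\R)$ be a Borel fundamental domain for $\M_{r_j\times d}(\Z)$, and set $F:=\{\I_U A\col U\in F_T,\,A\in F_d\}$. A two-stage reduction shows $F$ is a Borel fundamental domain for $\Gamma_j\bs G_j$: given $g=\I_U A$, first choose the unique $\gamma_1\in\SL_d(\Z)$ with $\gamma_1 A\in F_d$; then choose the unique $U_\gamma\in\M_{r_j\times d}(\Z)$ for which $(U+U_\gamma)\gamma_1^{-1}\in F_T$, which is possible because $U_\gamma\mapsto U_\gamma\gamma_1^{-1}$ permutes $\M_{r_j\times d}(\Z)$. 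Unfolding $\oomega(E)=\toomega(F\cap\pi^{-1}(E))$ and using that $\tomega|_{F_T}$ projects to $\omega$ on $\TT_j^d$ then produces \eqref{oomegapartDEFlemres} for this specific $F_d$. Independence from the choice of $F_d$ follows by observing that the integrand $I(x(U)A\in E)$, viewed as a function on $\SL_d(\R)\times\TT_j^d$, is invariant under the diagonal $\SL_d(\Z)$-action $(A,[U])\mapsto(\gamma_1 A,[U]\gamma_1^{-1})$: indeed $x([U]\gamma_1^{-1})\,\gamma_1 A=\Gamma_j(\gamma_1 A,UA)=\Gamma_j(A,UA)=x([U])A$, where the middle equality uses $(\gamma_1,0)\in\Gamma_j$. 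Combined with the invariance of $\nu\otimes\omega$ under the same diagonal action, a standard unfolding argument yields the claimed $F_d$-independence. The one point to take care with throughout is the bookkeeping of the semidirect-product twist, which is what forces the $\SL_d(\Z)$-invariance of $\omega$ into the argument.
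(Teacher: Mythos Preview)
Your proof is correct and follows essentially the same approach as the paper's: verify left $\Gamma_j$-invariance by decomposing the semidirect-product action and invoking the $\M_{r_j\times d}(\Z)$- and $\SL_d(\Z)$-invariance of $\tomega$, then unfold via the product fundamental domain $\{\I_U A\col U\in F_T,\,A\in F_d\}$. The only cosmetic differences are that the paper treats the generators $\I_M$ ($M\in\M_{r_j\times d}(\Z)$) and $\gamma\in\SL_d(\Z)$ separately rather than handling a general $(\gamma_1,U_\gamma)$ at once, and that your final $F_d$-independence paragraph is redundant since your fundamental-domain construction already works for any Borel $F_d$.
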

Note that \eqref{oomegapartDEFlemres}
in particular shows that 
$\oomega$ is a probability measure,
i.e.\ $\oomega\in P(\XX_j)$,
and so we indeed have a map as in \eqref{mapPTtoPXjrep}.
One may think of $\oomega$ as the product measure 
of the normalized Haar measure on $\SL_d(\Z)\bs\SL_d(\R)$
and the measure $\omega$ on the fiber $\TT_j^d$.
\begin{proof}
Let $\omega\in P(\TT_j^d)'$.
In order to verify that $\toomega$ is left $\Gamma_j$-invariant,
it suffices to verify that for any Borel set $E\subset G_j$ we have
$\toomega(\I_M %
E)=\toomega(E)$ for all $M\in\M_{r_j\times d}(\Z)$
and $\toomega(\gamma\,E)=\toomega(E)$ for all $\gamma\in\SL_d(\Z)$.
The first of these two relations is immediate from \eqref{toomegaDEF} and
the fact that $\tomega$ is $\M_{r_j\times d}(\Z)$-invariant.
The second relation follows by noticing that, for any
$U\in\M_{r_j\times d}(\R)$ and $A\in\SL_d(\R)$,
$\I_UA\in\gamma E$ holds if and only if
$\gamma^{-1}\I_UA\in E$,
viz., $\I_{U\gamma}\gamma^{-1}A\in E$,
and then using the 
invariance of the Haar measure $\nu$, and
the fact that
$\tomega$ is $\SL_d(\Z)$-invariant,
since 
$\omega\in P(\TT_j^d)'$.

Next, in order to verify \eqref{oomegapartDEFlemres},
note that if $C_d$ is the set %
of matrices in $\M_{r_j\times d}(\R)$ all of whose entries lie in $[0,1)$,
then the set
\begin{align*}
F_d':=\{\I_UA\col U\in C_d,\: A\in F_d\}\quad\subset G_j
\end{align*}
is a fundamental domain for $\Gamma_j\bs G_j$.
Hence for any Borel set $E\subset\XX_j$
we have 
\begin{align*}
\oomega(E)=\toomega(\pi^{-1}(E)\cap F_d')
=\int_{F_d}\int_{C_d}I\bigl(\I_UA\in \pi^{-1}(E)\bigr)\,d\tomega(U)\,d\nu(A),
\end{align*}
which is the same as \eqref{oomegapartDEFlemres}.
\end{proof}

For later use we record a few simple properties of the map $\omega\mapsto\oomega$ just defined.
\begin{lem}\label{rhosoomegaeqnuLEM}
For any $\omega\in P(\TT_j^d)'$,
the measure $\oomega$ is $\SL_d(\R)$ invariant,
and $\tiota_*\,\oomega=\nu$.
\end{lem}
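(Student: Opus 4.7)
The plan is to deduce both statements directly from the explicit formula \eqref{oomegapartDEFlemres} in Lemma~\ref{oomegapartDEFlem}—or equivalently, from the definition $d\toomega(g) = d\tomega(U)\,d\nu(A)$ on $G_j$ for $g = \I_U A$. The two key ingredients will be the semidirect product law $\I_U A \cdot A_0 = \I_U(A A_0)$ for $A, A_0 \in \SL_d(\R)$, which follows from \eqref{Srgrouplaw}, and the fact that Haar measure $\nu$ on the unimodular group $\SL_d(\R)$ is bi-invariant.

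For the $\SL_d(\R)$-invariance of $\oomega$, I will work at the level of the lift $\toomega$ on $G_j$. In the coordinates $(U, A) \mapsto \I_U A$, right translation by $A_0 \in \SL_d(\R)$ acts by $(U, A) \mapsto (U, A A_0)$. Since $\nu$ is right-invariant on $\SL_d(\R)$, the product measure $d\tomega(U)\,d\nu(A)$ is preserved, so $\toomega$ is right $\SL_d(\R)$-invariant on $G_j$. This descends immediately to right $\SL_d(\R)$-invariance of $\oomega$ on $\XX_j = \Gamma_j \bs G_j$.

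For the identity $\tiota_* \oomega = \nu$, I will compute directly using \eqref{oomegapartDEFlemres}. Since $\iota(\I_U A) = A$, we have $\tiota(x(U) A) = \SL_d(\Z)\, A$, independent of $U$. Thus for any Borel set $F \subset \SL_d(\Z) \bs \SL_d(\R)$,
\begin{align*}
(\tiota_* \oomega)(F) = \int_{F_d} \int_{\TT_j^d} I\bigl(\SL_d(\Z) A \in F\bigr)\, d\omega(U)\, d\nu(A) = \omega(\TT_j^d) \cdot \nu(F) = \nu(F),
\end{align*}
using that $\omega$ is a probability measure and that $F_d$ is a fundamental domain with $\nu$ normalized so that $\nu(\SL_d(\Z)\bs\SL_d(\R)) = 1$.

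I do not anticipate any real obstacle: both parts are immediate unfoldings of the definitions established in Lemma~\ref{oomegapartDEFlem}, and neither uses any property of $\omega$ beyond being a probability measure on $\TT_j^d$. The only point worth flagging is to be careful and reason via $\toomega$ on $G_j$ for the invariance statement, since directly rewriting $\oomega(EA_0)$ via \eqref{oomegapartDEFlemres} would require reducing $\I_U A A_0$ back to a point of $F_d'$, which is unnecessary once one uses right-invariance of $\nu$ on all of $\SL_d(\R)$.
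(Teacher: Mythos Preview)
Your proof is correct and follows essentially the same approach as the paper: the paper's proof simply asserts that right $\SL_d(\R)$-invariance of $\toomega$ is ``obvious from \eqref{toomegaDEF}'' and that $\tiota_*\,\oomega=\nu$ is ``immediate from \eqref{oomegapartDEFlemres},'' and you have unpacked exactly these two observations with the requisite detail.
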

(Here, by abuse of notation, 
we write $\nu$ also for the measure on $\SL_d(\Z)\bs\SL_d(\R)$
induced by the Haar measure $\nu$.)
\begin{proof}
It is obvious from 
\eqref{toomegaDEF}
that $\toomega$ is right $\SL_d(\R)$ invariant;
hence also $\oomega$ is $\SL_d(\R)$ invariant.
The fact that $\tiota_*\,\oomega=\nu$
is immediate from \eqref{oomegapartDEFlemres}.
\end{proof}

\begin{lem}\label{mapPTtoPXjcontLEM}
The map in \eqref{mapPTtoPXjrep} is continuous.
(Here $P(\TT_j^d)'$ is equipped with the subspace topology from $P(\TT_j^d)$.)
\end{lem}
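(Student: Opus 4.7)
The plan is to reduce the lemma to the integral representation furnished by Lemma \ref{oomegapartDEFlem}, which already exhibits $\overline{\omega}$ as an "affine" functional of $\omega$ tested against continuous functions in the fiber variable.

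First, I would upgrade \eqref{oomegapartDEFlemres} from indicator functions to bounded continuous functions by the standard linearity/approximation argument: for any $f \in \C_b(\XX_j)$ and $\omega \in P(\TT_j^d)'$,
\begin{equation*}
\int_{\XX_j} f \, d\oomega = \int_{F_d}\int_{\TT_j^d} f(x(U)A)\, d\omega(U)\, d\nu(A).
\end{equation*}
Note that $\nu(F_d) = 1$ since by definition $\nu(\SL_d(\Z)\bs\SL_d(\R))=1$. By the portmanteau theorem, to prove continuity of the map in \eqref{mapPTtoPXjrep} it suffices to show that whenever $\omega_n \to \omega$ in $P(\TT_j^d)'$, we have $\int f\, d\overline{\omega_n} \to \int f\, d\oomega$ for every $f \in \C_b(\XX_j)$.

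Fix such an $f$. For each fixed $A \in F_d$, the map $U \mapsto f(x(U)A)$ is bounded by $\|f\|_\infty$ and continuous on $\TT_j^d$, since $x:\TT_j^d\to\XX_j$ is continuous (see \eqref{xmapDEF}) and right multiplication by $A$ is a continuous automorphism of $\XX_j$. The assumed weak convergence $\omega_n \to \omega$ therefore yields
\begin{equation*}
F_n(A) := \int_{\TT_j^d} f(x(U)A)\, d\omega_n(U) \;\longrightarrow\; \int_{\TT_j^d} f(x(U)A)\, d\omega(U) =: F(A)
\end{equation*}
for every $A \in F_d$, with the uniform bound $|F_n(A)|,|F(A)| \leq \|f\|_\infty$. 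Since $\nu$ is finite on $F_d$, the dominated convergence theorem gives $\int_{F_d} F_n\, d\nu \to \int_{F_d} F\, d\nu$, i.e.\ $\int f\, d\overline{\omega_n} \to \int f\, d\oomega$, completing the proof.

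There is no real obstacle: the only conceptual point to verify is that the double-integral formula from Lemma \ref{oomegapartDEFlem} persists for continuous test functions, and that the inner integrand depends continuously on $U$ for each fixed $A$ — both of which are immediate from the construction of $x$ and the definition of $\oomega$. The result is then a routine two-step limit (pointwise in $A$, then dominated convergence in $A$).
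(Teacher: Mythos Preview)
Your proof is correct and rests on the same integral representation from Lemma~\ref{oomegapartDEFlem} as the paper's proof, but you interchange the order of the two passages to the limit. The paper first reduces from $\C_b(\XX_j)$ to $\C_c(\XX_j)$, integrates over $A\in F_d$ to form $\alpha(U):=\int_{F_d}\varphi(x(U)A)\,d\nu(A)$, checks that $\alpha$ is continuous on $\TT_j^d$ (this is where compact support of $\varphi$ is invoked), and then applies weak convergence once. You instead work directly with $f\in\C_b(\XX_j)$, apply weak convergence for each fixed $A$ (using continuity of $U\mapsto f(x(U)A)$), and then pass through the $A$-integral by dominated convergence using $\nu(F_d)=1$. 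Both routes are equally short; yours has the mild advantage of avoiding the reduction to compactly supported test functions. One small terminological slip: right multiplication by $A$ on $\XX_j$ is a homeomorphism, not an ``automorphism'' (there is no group structure on $\XX_j$), but this does not affect the argument.
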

\begin{proof}
Recall that $P(S)$ is metrizable 
for any separable and metrizable topological space $S$
\cite[pp.\ 72--73]{pB99};
in particular $P(\TT_j^d)$ and $P(\XX_j)$ are metrizable.
Hence it suffices to prove that if $(\omega_k)$ is any sequence in $P(\TT_j^d)$ converging
to $\omega\in P(\TT_j^d)$,
we have $\oomega_k\to\oomega$ in $P(\XX_j)$.
To prove this, we have to prove that $\oomega_k(\varphi)\to\oomega(\varphi)$ for any $\varphi\in \C_b(\XX_j)$;
but it is a well-known fact that it suffices to prove this for $\varphi\in \C_c(\XX_j)$.
Thus for a fixed
$\varphi\in \C_c(\XX_j)$,
our task is to prove that
\begin{align*}
\int_{\TT_j^d}\int_{F_d}\varphi(x(U) A)\,d\nu(A)\,d\omega_k(U)
\end{align*}
converges to the corresponding integral with $\omega$, as $k\to\infty$.
The fact that $\varphi$ has compact support in $\XX_j$
implies that $\alpha(U):=\int_{F_d}\varphi(x(U)A)\,d\nu(A)$
is a continuous function on $\TT_j^d$.

Now the task is to prove that
$\int_{\TT_j^d}\alpha \,d\omega_k\to
\int_{\TT_j^d}\alpha \,d\omega$,
and this holds by definition
since $\omega_k\to\omega$ in $P(\TT_j^d)$.
\end{proof}

Finally, for any $V\in\TT_j^d$
we will now identify the measure 
$\overline{\omega^{(V)}}\in P(\XX_j)$
as an invariant measure on a certain homogeneous submanifold of $\XX_j$.
Given $V\in\TT_j^d$,
it follows from Lemma~\ref{IDCLcharLEM}
that there exists some $X\in\M_{r_j\times d}(\Q)$ 
such that 
$V-\pi(X)\in\bigl(\SS_j^{(V)}\bigr)^{\circ\, d}$,
and this means that we can choose some
$\tV\in X+\bigl(L_j^{(V)}\bigr)^d\subset\M_{r_j\times d}(\R)$ with $\pi(\tV)=V$.
Let $L=L_j^{(V)}$ and recall the definition of $\S_L(\R)$ in \eqref{SVdefrepnew}.
It follows that
the $\S_L(\R)$-orbit of the point $x(V)$ in $\XX_j$
can be expressed as:
\begin{align}\label{xVSLR}
x(V)\cdot \S_L(\R)
=\Gamma_j\bs\Gamma_j\I_{\tV}\S_L(\R)
=\Gamma_j\bs\Gamma_j\I_{X}\S_L(\R).
\end{align}
By the proof of Lemma \ref{mujqjustifyLEM}
(applied to $\tV$ in place of $U_j^{(\vecq)}$),
the orbit in \eqref{xVSLR}
is a closed embedded submanifold of $\XX_j$
which carries a unique $\S_{L}(\R)$-invariant
probability measure.

\begin{prop}\label{SLRinvprobmeasLEM}
For any $V\in\TT_j^d$, %
the unique
$\S_{L_j^{(V)}}(\R)$-invariant probability measure on 
the orbit $x(V)\cdot \S_{L_j^{(V)}}(\R)$
equals $\overline{\omega_j^{(V)}}$.
In particular, for any $\vecq\in\R^d$ we have
$\mu_j^{(\vecq)}=\overline{\omega_j^{(\vecq)}}$.
\end{prop}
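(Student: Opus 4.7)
The plan is to invoke the uniqueness of the $\S_L(\R)$-invariant probability measure on the closed embedded submanifold $x(V)\cdot \S_L(\R)$ of $\XX_j$, where $L:=L_j^{(V)}$, as recorded in the discussion just before the proposition. Thus it suffices to verify that $\overline{\omega_j^{(V)}}$ (i) is supported on $x(V)\cdot \S_L(\R)$, and (ii) is right $\S_L(\R)$-invariant; together with the fact that $\overline{\omega_j^{(V)}}$ is a probability measure (Lemma \ref{oomegapartDEFlem}), these force the identification. The final assertion about $\mu_j^{(\vecq)}$ then follows by specializing to $V=\pi(U_j^{(\vecq)})$, since $L_j^{(V)}=L_j^{(\vecq)}$ and $\omega_j^{(V)}=\omega_j^{(\vecq)}$ by definition.

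For (i), by Lemma \ref{IDCLcharLEM} we can fix $X \in \M_{r_j\times d}(\Q)$ with $V - \pi(X) \in (\SS_j^{(V)})^{\circ d}$ and a corresponding lift $\tV \in X + L^d$ of $V$. By the definition of $\scrO_j^{(V)}$, any point in the support of $\omega_j^{(V)}$ admits a representative $\tV\gamma + W + M \in \M_{r_j\times d}(\R)$ with $\gamma \in \SL_d(\Z)$, $W \in L^d$, $M \in \M_{r_j\times d}(\Z)$. A direct computation with the group law \eqref{Srgrouplaw} gives the factorization
\begin{align*}
\I_{\tV\gamma + W + M} = (\gamma^{-1}, M\gamma^{-1})\cdot \I_{\tV}\cdot (\gamma, W),
\end{align*}
whose left factor lies in $\Gamma_j$ (using $\gamma \in \SL_d(\Z)$ to ensure $M\gamma^{-1} \in \M_{r_j\times d}(\Z)$) and whose right factor lies in $\S_L(\R)$. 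Combining this with formula \eqref{oomegapartDEFlemres} and the $\SL_d(\R)$-invariance of $x(V)\cdot \S_L(\R)$, we confine the support of $\overline{\omega_j^{(V)}}$ to $x(V)\cdot \S_L(\R)$.

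For (ii), $\S_L(\R)$ is generated by $\SL_d(\R)$ and $\{\I_{U'} : U' \in L^d\}$. Invariance under $\SL_d(\R)$ is already given by Lemma \ref{rhosoomegaeqnuLEM}, so it remains to check right $\I_{U'}$-invariance for each $U' \in L^d$. The group law yields the conjugation identity
\begin{align*}
\I_U \cdot A \cdot \I_{U'} = \I_{U + U'A^{-1}}\cdot A,
\end{align*}
so right translation by $\I_{U'}$ acts on the coordinates $(U, A)$ used in \eqref{toomegaDEF} by translating $U$ by $U'A^{-1}$ (and fixing $A$). Since $U' \in L^d$ forces $U'A^{-1} \in L^d$, its projection to $\TT_j^d$ lies in $(\SS_j^{(V)})^{\circ d}$; and $\omega_j^{(V)}$, being the normalized restriction of Haar measure of the Lie group $(\SS_j^{(V)})^d$ to a union of its connected components, is invariant under translation by any element of $(\SS_j^{(V)})^{\circ d}$. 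A Fubini argument starting from \eqref{toomegaDEF} transfers this translation-invariance to $\toomega_j^{(V)}$ and hence to $\overline{\omega_j^{(V)}}$.

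The main obstacle I anticipate is the bookkeeping in (i): one must make sure the factorization genuinely places a $\Gamma_j$-element on the left (so that the assertion descends to the quotient $\XX_j=\Gamma_j\bs G_j$) and a truly $\S_L(\R)$-element on the right. The choice of a \emph{rational} lift $\tV$ afforded by Lemma \ref{IDCLcharLEM}, combined with the observation that $M\gamma^{-1}$ remains integral, is precisely what produces the clean splitting above; the corresponding computation in (ii) is then routine.
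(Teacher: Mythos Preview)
Your proof is correct and follows essentially the same approach as the paper's: verify that $\overline{\omega_j^{(V)}}$ is supported on the orbit $x(V)\cdot\S_{L_j^{(V)}}(\R)$ and is right $\S_{L_j^{(V)}}(\R)$-invariant (via the same conjugation identity $\I_U A\I_{U'}=\I_{U+U'A^{-1}}A$ and the $(\SS_j^{(V)})^{\circ d}$-translation-invariance of $\omega_j^{(V)}$), then invoke uniqueness. The only quibble is the phrase ``rational lift $\tV$'' in your final paragraph --- it is $X$ that is rational, while $\tV\in X+L^d$ need not be --- but this is cosmetic and does not affect the argument.
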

\begin{proof}
Note that the second part of the proposition is an immediate consequence of the first part,
since, by the definition in Section \ref{HOMDYNintrononunifTHMSEC},
$\mu_j^{(\vecq)}$ is the 
$\S_{L_j^{(\vecq)}}(\R)$-invariant 
probability measure on
the orbit $x(\pi(U_j^{(\vecq)}))\cdot \S_{L_j^{(\vecq)}}(\R)$ in $\XX_j$,
and furthermore we have $L_j^{(\vecq)}=L_j^{(\pi(U_j^{(\vecq)}))}$
and $\omega_j^{(\vecq)}=\omega_j^{(\pi(U_j^{(\vecq)}))}$.

To prove the first part of the proposition, 
set $\omega:=\omega_j^{(V)}$,
and let $X$, $L$ and $\tV$ be as in \eqref{xVSLR}.
Since 
$\oomega=\overline{\omega_j^{(V)}}$
by definition
is the probability measure on $\XX_j$ 
which corresponds to the
$\Gamma_j$-invariant measure $\toomega$ on $G_j$
given by \eqref{toomegaDEF},
it suffices to verify that $\toomega$ is supported on
$\Gamma_j\I_{\tV}\S_L(\R)$ 
and that $\toomega$ is right $\S_L(\R)$ invariant.
However, it follows from \eqref{toomegaDEF} that
\begin{align}\label{SLRinvprobmeasLEMpf1}
\supp(\toomega)=\{\I_UA\col U\in\pi^{-1}(\scrO_j(V)),\: %
A\in\SL_d(\R)\},
\end{align}
and here we have
\begin{align}\label{SLRinvprobmeasLEMpf2}
\pi^{-1}(\scrO_j(V))
=\{(\tV+W)\gamma\col \gamma\in\SL_d(\Z),\:W\in L^d+\M_{r_j\times d}(\Z)\}
\end{align}
(cf.\ \eqref{OjVDEF}).
Note here that both $L^d$ and $\M_{r_j\times d}(\Z)$
are $\SL_d(\Z)$-invariant subsets of $\M_{r_j\times d}(\R)$.
Using \eqref{SLRinvprobmeasLEMpf1}, \eqref{SLRinvprobmeasLEMpf2}
and $\I_{(\tV+W)\gamma}A=\gamma^{-1}\I_{\tV+W}\gamma A$
it follows that $\supp(\toomega)=\Gamma_j\I_{\tV}\S_L(\R)$, as desired.

Finally we verify that $\toomega$ is right
$\S_L(\R)$-invariant.
We have noted that 
$\toomega$ is right $\SL_d(\R)$ invariant
(cf.\ Lemma \ref{rhosoomegaeqnuLEM});
hence it suffices to verify that $\toomega$ is 
right $\I_W$-invariant for every $W\in L^d$.
However this also follows from %
\eqref{toomegaDEF},
by noticing that
$\I_U A \I_W=\I_{U+WA^{-1}}A$   %
for all $U\in\M_{r_j\times d}(\R)$ and $A\in\SL_d(\R)$,
and using $WA^{-1}\in L^d$
and the fact that $\tomega$ is invariant under $L^d$-translations %
(since $\omega=\omega_j^{(V)}$ is invariant under $(\SS_j(V)^\circ)^d$-translations). 
\end{proof}

\subsection{A Siegel integration formula for the measure $\oomega$}
The classical Siegel integration formula 
\cite{cS45a}
states that for any measurable function $f:\R^d\to\R_{\geq0}$
(or for any $f\in\C_c(\R^d)$),
defining the \textit{Siegel transform} $\hf$ on $\SL_d(\Z)\bs\SL_d(\R)$ by
\begin{align*}
\hf(\SL_d(\Z)g)=\sum_{\vecm\in\Z^d}f(\vecm g)\qquad(g\in\SL_d(\R)),
\end{align*}
then
\begin{align*}
\int_{\SL_d(\Z)\bs\SL_d(\R)}\hf\,d\nu=\int_{\R^d}f(\vecx)\,d\vecx+f(\bn).
\end{align*}
The following proposition gives an analogous formula involving the measure $\oomega$,
for any given $\omega\in P(\TT_j^d)'$.
This formula will be used later in our proof of [P2] (uniform spherical equidistribution);
see the proof of Lemma \ref{P2condthmLEM6} below.
\begin{prop}\label{SIEGELFORMULALEM1}
Let $\psi=(j,i)\in\Psi$.
For any Borel measurable function $f\in\R^d\to\R_{\geq0}$, we
define its ``$\psi$-Siegel transform'' $\hf_\psi:\XX_j\to\R_{\geq0}\cup\{+\infty\}$ by
\begin{align}\label{SIEGELFORMULALEM1res1}
\hf_\psi(\Gamma_jg)=\sum_{\vecm\in\Z^d} f\bigl(c_\psi\cdot(\vecm\,\a_i(g))\bigr)
\qquad(g\in G_j).
\end{align}
Then for any $\omega\in P(\TT_j^d)'$ %
we have
\begin{align}\label{SIEGELFORMULALEM1res2}
\int_{\XX_j}\hf_\psi\,d\oomega=
\nbar_\psi %
\int_{\R^d} f(\vecx)\,d\vecx
+\omega(\tr_i^{-1}(\{\bn\})) f(\bn),
\end{align}
as an equality of extended real numbers in $\R_{\geq0}\cup\{+\infty\}$.
\end{prop}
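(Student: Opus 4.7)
My plan is to realize $\int_{\XX_j}\hf_\psi\,d\oomega$ as the integral of $f$ against a certain Radon measure $\tau$ on $\R^d$, and then identify $\tau$ by exploiting $\SL_d(\R)$-invariance together with the two-orbit structure of the $\SL_d(\R)$-action on $\R^d$.

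First I unfold the integral using Lemma~\ref{oomegapartDEFlem}. A direct computation in $\S_{r_j}(\R)$ using \eqref{Srgrouplaw} gives $\a_i(\I_UA)=\I_{\r_i(U)}A$, so that $\vecm\,\a_i(\I_UA)=(\vecm+\r_i(U))A$ for every $\vecm\in\Z^d$. Substituting into \eqref{SIEGELFORMULALEM1res1} yields
\[
\int_{\XX_j}\hf_\psi\,d\oomega=\int_{F_d}\int_{\TT_j^d}\sum_{\vecm\in\Z^d}f\bigl(c_\psi(\vecm+\r_i(U))A\bigr)\,d\omega(U)\,d\nu(A).
\]
I rewrite the right-hand side as $\int_{\R^d}f\,d\tau$, where $\tau$ is the Radon measure on $\R^d$ defined for Borel $B$ by
\[
\tau(B):=\int_{F_d}\int_{\TT_j^d}\#\bigl\{\vecm\in\Z^d\col c_\psi(\vecm+\r_i(U))A\in B\bigr\}\,d\omega(U)\,d\nu(A),
\]
which is finite on bounded sets (by Siegel-type bounds applied fibrewise).

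The key observation is that $\tau$ is invariant under the right action of $\SL_d(\R)$ on $\R^d$. Indeed, for any $A_0\in\SL_d(\R)$ the group law in $\S_{r_j}(\R)$ gives the equivariance $\a_i(gA_0^{-1})=\a_i(g)A_0^{-1}$, which together with the right-$\SL_d(\R)$-invariance of $\oomega$ from Lemma~\ref{rhosoomegaeqnuLEM} yields $\tau(BA_0)=\tau(B)$. Since $\SL_d(\R)$ acts on $\R^d$ with exactly two orbits, $\{\bn\}$ and $\R^d\setminus\{\bn\}$, and since Lebesgue measure is (up to a positive scalar) the unique invariant Radon measure on $\R^d\setminus\{\bn\}$, I conclude
\[
\tau=\alpha\,\delta_\bn+\beta\,\Leb\qquad\text{for some }\alpha,\beta\geq 0.
\]

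It remains to identify the two constants. The atom $\alpha=\tau(\{\bn\})$ is immediate from the definition: $c_\psi(\vecm+\r_i(U))A=\bn$ iff $\r_i(U)=-\vecm\in\Z^d$, i.e.\ iff $\pi(U)\in\tr_i^{-1}(\{\bn\})$, and then exactly one $\vecm$ works; hence $\alpha=\omega(\tr_i^{-1}(\{\bn\}))$. For $\beta$, I verify the value $\beta=\nbar_\psi$ in the two canonical cases $\omega=\Leb_{\TT_j^d}$ (where $\oomega$ is Haar on $\XX_j$ and the affine Siegel integration formula on $\ASL_d(\Z)\backslash\ASL_d(\R)$ applied via the pushforward $(\bar\a_i)_*\oomega$ gives $\int\hf_\psi\,d\oomega=\nbar_\psi\int f$) and $\omega=\delta_\bn$ (where $\oomega$ is supported on the $\SL_d(\R)$-orbit of $x(\bn)$ with fibre measure $\nu$ and classical Siegel gives $\int\hf_\psi\,d\oomega=\nbar_\psi\int f+f(\bn)$). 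The value of $\beta$ can be read off from $\tau$ on any fixed bounded Borel set disjoint from $\bn$, so to propagate $\beta=\nbar_\psi$ to arbitrary $\omega\in P(\TT_j^d)'$ I test against a scaled bump: for $\phi\in\C_c(\R^d)$ with $\phi(\bn)=0$ and $\int\phi=1$, set $\phi_R(\vecx):=\phi(\vecx/R)$; then $\int\phi_R\,d\tau=\beta R^d$, while the right-hand side equals $\int_{F_d}\int_{\TT_j^d}\sum_\vecm\phi(c_\psi(\vecm+\r_i(U))A/R)\,d\omega\,d\nu$, a Riemann sum for the integral of $\phi$ over the affine lattice $c_\psi(\Z^d+\r_i(U))A/R$ of covolume $(c_\psi/R)^d$, converging as $R\to\infty$ to $R^d\nbar_\psi\int\phi$. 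Combining, $\tau=\omega(\tr_i^{-1}(\{\bn\}))\delta_\bn+\nbar_\psi\,\Leb$, which gives the claimed formula.

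The main obstacle is justifying the scaling limit for $\beta$ uniformly over $A\in F_d$, since $F_d$ is non-compact. One handles this either by dominating the Riemann-sum error near the cusp by a $\nu$-integrable quantity using standard bounds on the Siegel transform of the supporting ball of $\phi$, or by invoking the affine Siegel formula on $\ASL_d(\Z)\backslash\ASL_d(\R)$ applied to the pushforward measure $(\bar\a_i)_*\oomega$ directly and carefully isolating the contribution of the cusp-atom at $\bn$ using the $\SL_d(\Z)$-invariance of $\omega$.
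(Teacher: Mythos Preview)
Your approach is essentially the paper's: define the induced measure on $\R^d$, use $\SL_d(\R)$-invariance to get $\alpha\,\delta_{\bn}+\beta\,\Leb$, compute $\alpha$ directly, and identify $\beta$ via a large-scale limit justified by dominated convergence (the paper makes the dominating function explicit via the Gillet--Soul\'e bounds \cite[Propositions 4 and 5]{hGcS91} combined with classical Siegel, which is exactly the ``standard bound on the Siegel transform of the supporting ball'' you allude to). Your detour through the special cases $\omega=\Leb_{\TT_j^d}$ and $\omega=\delta_{\bn}$ is unnecessary, since the scaling argument applies directly to every $\omega$.
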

\begin{remark}
The reason for the formula \eqref{SIEGELFORMULALEM1res1}
is that this corresponds to the ``$\psi$-part'' of the union in \eqref{Gammagpointset}.
Note that $\widehat{f_\psi}$ is well-defined, in the sense that the right hand side of 
\eqref{SIEGELFORMULALEM1res1} remains the same if $g$ is replaced by $\gamma g$ for any $\gamma\in\Gamma_j$.
\end{remark}
\begin{proof}
Let $\omega\in P(\TT_j^d)'$ be given.
Define the Borel measure $\mu$ on $\R^d$ by
$\mu(B):=\int_{\XX_j}\widehat{(\chi_B)_\psi}\,d\oomega$
for any Borel set $B\subset\R^d$.
Let us prove that $\mu$ is finite on compact sets, so that $\mu$ is a Radon measure.
For this, it suffices to prove that $\mu(B)<\infty$ for any ball $B=\scrB_R^d$. %
In this case we have, for all $g\in G_j$:
\begin{align}\label{SIEGELFORMULALEM1pf1}
\widehat{(\chi_B)_\psi}(\Gamma_jg)=\#\bigl(B\cap c_\psi(\Z^d\a_i(g))\bigr)
\ll\#\bigl(B\cap c_\psi\Z^d\iota(g)\bigr),
\end{align}
by \cite[Proposition 5]{hGcS91}
(the implied constant in the last bound depends only on $d$).
Furthermore,
\begin{align*}
\int_{\XX_j}\#\bigl(B\cap c_\psi\Z^d\iota(g)\bigr)\,d\oomega(g)
=\int_{\SL_d(\Z)\bs\SL_d(\R)}\#\bigl(B\cap c_\psi\Z^dg\bigr)\,d\nu(g)
=1+\vol(c_\psi^{-1}B)<\infty,
\end{align*}
where we applied Siegel's original integration formula
\cite{cS45a}.

Next, one verifies that for any Borel set $B\subset\R^d$ and any $T\in\SL_d(\R)$ and $g\in G_j$,
we have
$\widehat{(\chi_{BT})_\psi}(\Gamma_jg)=\widehat{(\chi_{B})_\psi}(\Gamma_jgT^{-1})$.
Using this identity, and the fact that $\oomega$ is $\SL_d(\R)$ invariant
(cf.\ Lemma \ref{rhosoomegaeqnuLEM}),
it follows that 
$\mu(BT)=\mu(B)$.
Hence $\mu$ is $\SL_d(\R)$ invariant. %
By the well-known characterization of $\SL_d(\R)$ invariant Radon measures on $\R^d$,
it follows that
\begin{align}\label{SIEGELFORMULALEM1pf2}
\mu=c_1\delta_{\bn}+c_2\vol
\end{align}
for some constants $c_1,c_2\in\R_{\geq0}$,
where $\vol$ is the Lebesgue measure on $\R^d$.
Here 
\begin{align*}
c_1=\mu(\{\bn\})=\int_{F_d}\int_{\TT_j^d}\widehat{(\chi_{\{\bn\}})_{\psi}}\bigl(x(U)A\bigr)\,d\omega(U)\,d\nu(A)
=\omega(\tr_i^{-1}(\{\bn\})),
\end{align*}
where the second equality holds by \eqref{oomegapartDEFlemres},
and the last equality holds since 
$\widehat{(\chi_{\{\bn\}})_{\psi}}\bigl(x(U)A\bigr)=I(\tr_i(U)=\bn)$
for all $U\in\TT_j^d$ and all $A\in\SL_d(\R)$.
Furthermore, it follows from \eqref{SIEGELFORMULALEM1pf2} that
\begin{align*}
c_2=\lim_{R\to\infty}\frac{\mu(\scrB_R^d)}{\vol(\scrB_R^d)}
=\lim_{R\to\infty}\int_{\XX_j}\frac{\widehat{(\chi_{\scrB_R^d})_\psi}}{\vol(\scrB_R^d)}\,d\oomega,
\end{align*}
and here for each fixed point in $\Gamma_jg\in\XX_j$, the value of the 
integrand tends to $c_\psi^{-d}=\nbar_\psi$ as $R\to\infty$,
by \eqref{SIEGELFORMULALEM1res1} and
since $c_\psi\cdot(\Z^d\a_i(g))$ is an affine lattice of covolume $c_\psi^d$ in $\R^d$.
Hence by Lebesgue's dominated convergence theorem,
the application of which is justified by the bound 
\begin{align*}
\frac{\widehat{(\chi_{\scrB_R^d})_\psi}(\Gamma_jg)}{\vol(\scrB_R^d)}
\ll\frac{\#(\scrB_R^d\cap c_\psi\Z^d\iota(g))}{\vol(\scrB_R^d)}
\ll\frac{\#(\scrB_1^d\cap c_\psi\Z^d\iota(g))}{\vol(\scrB_1^d)}
\end{align*}
(where we first used \eqref{SIEGELFORMULALEM1pf1}
and then \cite[Proposition 4]{hGcS91})
and %
Siegel's integration formula,
\cite{cS45a},
we conclude that $c_2=\nbar_\psi$.

We have thus proved that \eqref{SIEGELFORMULALEM1res2} holds
whenever $f$ is the characteristic function of a Borel set $B\subset\R^d$.
By taking finite linear combinations, it follows that 
\eqref{SIEGELFORMULALEM1res2} holds whenever $f$ is a simple function,
and finally by a standard approximation argument using the 
monotone convergence theorem one proves
\eqref{SIEGELFORMULALEM1res2} in the general case.
\end{proof}

\section{Limit behaviour of $\omega_j^{(\vecq)}$}
\label{omegajqlimitSEC}

Recall that a central task for us will be to construct an appropriate marking
$\vs:\scrP\to\Sigma$ and a map $\vs\mapsto\mu_\vs$
from $\Sigma$ to $P(N(\scrX))$. %
The composed map %
$\vecq\mapsto\mu_{\vs(\vecq)}$ gives,
for any $\vecq\in\scrP$, 
the limit measure appearing in the spherical equidistribution statement [P2],
and as explained in Section \ref{HOMDYNintrononunifTHMSEC},
we will obtain this limit measure as a push-forward of the measure
$\mu^{(\vecq)}=\mu_1^{(\vecq)}\otimes\cdots\otimes\mu_N^{(\vecq)}$ on $\XX$.
It follows from Proposition \ref{SLRinvprobmeasLEM} that 
$\mu^{(\vecq)}$ can be obtained from the measures $\omega_j^{(\vecq)}\in P(\TT_j^d)'$
($j=1,\ldots,N$).
Therefore, as a crucial preparation for the construction of the maps
$\vs:\scrP\to\Sigma$ and $\vs\mapsto\mu_\vs$ (to be given in Section \ref{spaceofmarksandmapsSEC}),
we will in this section %
study how
each measure $\omega_j^{(\vecq)}$ varies as $\vecq$ varies in $\scrP$;
in particular we are interested in the behaviour of
$\omega_j^{(\vecq)}$ as $\vecq$ tends to infinity
within $\scrP$.
The main result of the present section states that,
under a certain admissibility assumption 
on the presentation of the given point set $\scrP$ as a union of grids,
it holds for each $\psi\in\Psi$
that the measure $\omega_j^{(\vecq)}$ 
tends to a unique limit measure $\omega_j^\psi\in P(\TT_j^d)'$
as $\vecq$ tends to infinity within a full density subset of $\scrL_\psi$.

\subsection{The limit spaces $L_j^{\psi}$ and $L_j$}
\label{Ljpsi0SEC}
As a first step, we will study the %
behaviour of the linear spaces $L_j^{(\vecq)}$
as $\vecq$ varies through a fixed grid $\scrL_\psi$.
(Recall that the space $L_j^{(\vecq)}$ plays a central role in the definition of
the measure $\omega_j^{(\vecq)}$.)
For each $\psi$ and $j$, %
we will define a certain rational space $L_j^\psi\subset\R^{r_j}$,
and will prove that this space is,
in a certain sense,
the limit of the spaces $L_j^{(\vecq)}$ as $\vecq$
tends to infinity within a full density subset of $\scrL_\psi$.

Let $\psi\in\Psi$ and $j\in\{1,\ldots,N\}$.
We start by introducing some auxiliary notation 
for keeping track of the matrix $U_j^{(\vecq)}$ as $\vecq$ varies within $\R^d$ and $\scrL_\psi$,
respectively.
Recall from \eqref{LpsiDEF}
that we have introduced $M_j\in\SL_d(\R)$, $c_\psi\in\R_{>0}$ and $\vecw_\psi\in\R^d$
so that 
$\scrL_\psi=c_{\psi}(\Z^d+\vecw_{\psi})M_{j_\psi}$.
Now set, for any $j\in\{1,\ldots,N\}$ and $\psi\in\Psi$,
\begin{align}\label{cjdef}
\tvecc_j=\begin{pmatrix} c_{j,1}^{-1} \\ \vdots \\ c_{j,r_j}^{-1}\end{pmatrix}\in\R^{r_j}
\qquad\text{and}\qquad
\vecc_j^{\psi}:=c_\psi\,\tvecc_{j}\in\R^{r_j},
\end{align}
and also
\begin{align}\label{Wjdef}
W_j:=\begin{pmatrix}\vecw_{j,1}\\\vdots\\\vecw_{j,r_j}\end{pmatrix}\in\M_{r_j\times d}(\R)
\qquad\text{and}\qquad
W_j^\psi:=W_j-\vecc_j^\psi\vecw_\psi T_j^\psi\in\M_{r_j\times d}(\R),
\end{align}
where
\begin{align*}
T_j^{\psi}:=M_{j_\psi}M_j^{-1}\in\SL_d(\R).
\end{align*}
The point of this notation is that we now have   
(cf.\ \eqref{Ujqdef})
\begin{align}\label{Ujqgenformula}
U_j^{(\vecq)}=W_j-\tvecc_j\vecq M_j^{-1},
\qquad\forall\vecq\in\R^d.
\end{align}
It follows that for any point 
$\vecq$ in $\scrL_\psi=c_{\psi}(\Z^d+\vecw_{\psi})M_{j_\psi}$,
\begin{align}\label{Ujqformula}
U_j^{(\vecq)}=W_j^{\psi}-\vecc_j^{\psi}\vecm T_j^{\psi}
\qquad\text{when }\: \vecq=c_\psi(\vecm+\vecw_\psi)M_{j_\psi}\:\: (\forall\vecm\in\Z^d).
\end{align}

Now let $L_j^{\psi}$ be the rational subspace of $\R^{r_j}$ given by
\begin{align}\label{Ljpsi0DEF}
L_j^{\psi}:=
\begin{cases}
\fL\bigl(\{\vecc_j^{\psi}\}\cup\{W_{j,\ell}^{\psi}\col\ell\in\{1,\ldots,d\}\}\bigr)&\text{if }\: j=j_\psi
\\[3pt]
\fL\bigl(\R\vecc_j^{\psi}\cup\{W_{j,\ell}^{\psi}\col\ell\in\{1,\ldots,d\}\}\bigr)&\text{if }\: j\neq j_\psi,
\end{cases}
\end{align}
where $W_{j,1}^{\psi},\ldots,W_{j,d}^{\psi}\in\R^{r_j}$ are the column vectors of $W_j^{\psi}$.
\begin{lem}\label{Ljpsi0keypropLEM2}
Let $\psi\in\Psi$ and $j\in\{1,\ldots,N\}$.
Let $\scrL'$ be any subgrid of $\scrL_\psi$.
Then
\begin{align*}
L_j^{\psi}=\fL\bigl(\{U_{j,\ell}^{(\vecq)}\col \vecq\in\scrL',\: \ell\in\{1,\ldots,d\}\}\bigr),
\end{align*}
where $U_{j,1}^{(\vecq)},\ldots,U_{j,d}^{(\vecq)}\in\R^{r_j}$ are the column vectors of $U_j^{(\vecq)}$.
\end{lem}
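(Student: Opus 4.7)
The plan is to exploit the explicit formula \eqref{Ujqformula}: parametrize any subgrid as $\scrL'=c_\psi(\vecm_0+\Lambda+\vecw_\psi)M_{j_\psi}$ for some $\vecm_0\in\Z^d$ and a full-rank sublattice $\Lambda\subseteq\Z^d$ (so $n_0\Z^d\subseteq\Lambda$ for some $n_0\in\Z^+$); then a point $\vecq=c_\psi(\vecm+\vecw_\psi)M_{j_\psi}\in\scrL'$ produces the $\ell$-th column
\begin{equation*}
U_{j,\ell}^{(\vecq)}=W_{j,\ell}^\psi-a_\ell\vecc_j^\psi,\qquad\veca:=\vecm T_j^\psi\in\R^d.
\end{equation*}
Let $S$ denote the set on the right-hand side of the lemma. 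I will establish both inclusions between $\fL(S)$ and $L_j^\psi$ by repeatedly invoking Lemma \ref{IDCLcharLEM}, which characterises $\fL(\cdot)$ as the minimal rational subspace containing a prescribed generating set modulo a common integer denominator.

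For $\fL(S)\subseteq L_j^\psi$, Lemma \ref{IDCLcharLEM} supplies $n\in\Z^+$ so that $\vecc_j^\psi$ (or $\R\vecc_j^\psi$) and each $W_{j,\ell}^\psi$ lie in $n^{-1}\Z^{r_j}+L_j^\psi$. When $j=j_\psi$ the factor $T_j^\psi=I$ forces $a_\ell=m_\ell\in\Z$; when $j\neq j_\psi$, $\R\vecc_j^\psi$ is by definition contained in $L_j^\psi$. Either way $a_\ell\vecc_j^\psi\in n^{-1}\Z^{r_j}+L_j^\psi$, so $S\subseteq n^{-1}\Z^{r_j}+L_j^\psi$, and Lemma \ref{IDCLcharLEM} yields the inclusion.

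For the reverse inclusion $L_j^\psi\subseteq\fL(S)$, I fix $\vecq_0=c_\psi(\vecm_0+\vecw_\psi)M_{j_\psi}$ and take differences against the generic $\vecq'$ corresponding to $\vecm_0+\vecalf$ ($\vecalf\in\Lambda$):
\begin{equation*}
U_{j,\ell}^{(\vecq_0)}-U_{j,\ell}^{(\vecq')}=(\vecalf T_j^\psi)_\ell\vecc_j^\psi\in\langle S\rangle.
\end{equation*}
Substituting $\vecalf=n_0\vece_k$ gives $n_0T_{k,\ell}^\psi\vecc_j^\psi\in\langle S\rangle$ for all $k,\ell$. Letting $G\subseteq\R$ be the additive subgroup generated by $\{n_0T_{k,\ell}^\psi\}_{k,\ell}$ and applying Lemma \ref{IDCLcharLEM} to $\fL(S)$, I obtain $G\vecc_j^\psi\subseteq n^{-1}\Z^{r_j}+\fL(S)$ for some $n$. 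When $j=j_\psi$ we have $G=n_0\Z$, so $\vecc_j^\psi\in N^{-1}\Z^{r_j}+\fL(S)$ for $N:=n_0n$; combining with $W_{j,\ell}^\psi=U_{j,\ell}^{(\vecq_0)}+m_{0,\ell}\vecc_j^\psi\in\langle S\rangle+\Z\vecc_j^\psi$ places each $W_{j,\ell}^\psi$ in the same coset, and Lemma \ref{IDCLcharLEM} concludes.

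The main obstacle is the case $j\neq j_\psi$, where $L_j^\psi$ contains the full line $\R\vecc_j^\psi$ rather than just $\Z\vecc_j^\psi$. The key is that $G$ must be dense in $\R$: otherwise $G=c\Z$ for some $c>0$, which forces every entry of $T_j^\psi$ to be an integer multiple of $c/n_0$, so $T_j^\psi=(c/n_0)N$ for some $N\in\M_d(\Z)\cap\GL_d(\Q)$; taking determinants yields $c/n_0=(\det N)^{-1/d}$ and hence $T_j^\psi=(\det N)^{-1/d}N\in\scrS$, contradicting \eqref{GENPOINTSET1req} since $T_j^\psi=M_{j_\psi}M_j^{-1}$ with $j_\psi\neq j$. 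Once $G$ is known to be dense, the containment $G\vecc_j^\psi\subseteq n^{-1}\Z^{r_j}+\fL(S)$ (closed because $\fL(S)$ is rational) upgrades by taking closures to $\R\vecc_j^\psi\subseteq n^{-1}\Z^{r_j}+\fL(S)$; connectedness of $\R\vecc_j^\psi$ then forces it into the identity component $\fL(S)$. With $\R\vecc_j^\psi\subseteq\fL(S)$ in hand, $W_{j,\ell}^\psi=U_{j,\ell}^{(\vecq_0)}+a_\ell^{(\vecq_0)}\vecc_j^\psi$ lies in $n^{-1}\Z^{r_j}+\fL(S)$, and a final appeal to Lemma \ref{IDCLcharLEM} completes the proof.
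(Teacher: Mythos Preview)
Your proof is correct and follows essentially the same strategy as the paper's: both directions are handled via Lemma~\ref{IDCLcharLEM}, the easy inclusion comes from the formula $U_{j,\ell}^{(\vecq)}=W_{j,\ell}^\psi-(\vecm T_j^\psi)_\ell\,\vecc_j^\psi$, and the hard inclusion is obtained by taking differences to isolate multiples of $\vecc_j^\psi$ and then showing these are dense in $\R\vecc_j^\psi$ when $j\neq j_\psi$. The one noteworthy variation is your density argument: the paper simply asserts that $T_j^\psi\notin\scrS$ forces two matrix entries to have irrational ratio, whereas you argue the contrapositive directly---if the additive group $G$ generated by the entries were cyclic then $T_j^\psi=(c/n_0)N$ with $N\in\M_d(\Z)$, and computing determinants yields $T_j^\psi\in\scrS$---which is a slightly more self-contained justification of the same fact.
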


\begin{proof}
Set $S_0=\{W_{j,\ell}^{\psi}\col\ell\in\{1,\ldots,d\}\}$
and $S=\{\vecc_j^{\psi}\}\cup S_0$ if $j=j_\psi$;
otherwise $S=\R\vecc_j^{\psi}\cup S_0$.
Also set
$S':=\{U_{j,\ell}^{(\vecq)}\col \vecq\in\scrL',\: \ell\in\{1,\ldots,d\}\}$.
Then the task is to prove that $\fL(S)=\fL(S')$.
Let $T_{j,1}^{\psi},\ldots,T_{j,d}^{\psi}\in\R^{r_j}$ be the column vectors of $T_j^{\psi}$.
Then by \eqref{Ujqformula} we have
\begin{align}\label{Ujlqformula}
U_{j,\ell}^{(\vecq)}=W_{j,\ell}^{\psi}-\vecc_j^{\psi}\vecm T_{j,\ell}^{\psi}
\qquad\text{when }\: \vecq=c_\psi(\vecm+\vecw_\psi)M_{j_\psi}\:\: (\forall\vecm\in\Z^d, \: \ell\in\{1,\ldots,d\}).
\end{align}
Using \eqref{Ujlqformula} and the fact that $T_j^\psi=I$ if $j=j_\psi$, it follows that 
$S'\subset\langle S\rangle$,
and so
$\fL(S')\subset\fL(S)$.
It remains to prove that
$\fL(S)\subset\fL(S')$.

Let $\scrL''$ be the inverse image of $\scrL'$ under the 
bijection
$\vecm\mapsto c_{\psi}(\vecm+\vecw_{\psi})M_{j_\psi}$
from $\Z^d$ onto $\scrL_{\psi}$.
Then $\scrL''$ is a coset of a full rank sublattice of $\Z^d$;
hence there is some ${n}\in\Z^+$ and some $\vecm_0\in\Z^d$ such that
$\scrL''$ contains $\vecm_0+{n}\Z^d$,
and so, by \eqref{Ujlqformula}:
\begin{align}\label{Ljpsi0keypropLEM2pf10}
W_{j,\ell}^\psi-\vecc_j^\psi\vecm T_{j,\ell}^\psi\in S',
\qquad\forall\vecm\in\vecm_0+{n}\Z^d,\ \ell\in\{1,\ldots,d\}.
\end{align}
By Lemma \ref{IDCLcharLEM} there is some $m\in\Z^+$ such that
$S'\subset m^{-1}\Z^{r_j}+\fL(S')$.
Hence, since every point in $\Z^d$ can be written as an affine linear
combination of points in $\vecm_0+{n}\Z^d$,
with all weights in $n^{-1}\Z$,
it follows that
\begin{align}\label{Ljpsi0keypropLEM2pf10a}
W_{j,\ell}^\psi-\vecc_j^\psi\vecm T_{j,\ell}^\psi\in
\scrA,
\qquad\forall\vecm\in\Z^d,\ \ell\in\{1,\ldots,d\},
\end{align}
where
$\scrA:= ({n}m)^{-1}\Z^{r_j}+\fL(S')$
(this is a closed subgroup of $\R^{r_j}$).
We will prove that $S\subset\scrA$;
by Lemma \ref{IDCLcharLEM} this implies
$\fL(S)\subset\fL(S')$, and so the proof of Lemma \ref{Ljpsi0keypropLEM2}
will be complete.
By taking $\vecm=\bn$ in 
\eqref{Ljpsi0keypropLEM2pf10a}
it follows that $S_0\subset\scrA$;
hence it suffices to prove that 
$\vecc_j^\psi\in\scrA$ if $j=j_\psi$,
and $\R\vecc_j^\psi\subset\scrA$ if $j\neq j_\psi$.

We denote by $\vece_k$ the $k$th standard unit vector in $\R^d$.\label{vecekrowDEF}
If $j=j_\psi$ then
$T_j^\psi$ is the identity matrix,
and applying 
\eqref{Ljpsi0keypropLEM2pf10a} with $\ell=1$ and $\vecm\in\{\vece_1,\bn\}$
it follows that $\scrA$ contains both
$W_{j,\ell}^\psi-\vecc_j^\psi$
and $W_{j,\ell}^\psi$;
hence also
$\vecc_j^\psi\in\scrA$.

Next assume $j\neq j_\psi$.
Then $T_j^{\psi}\notin\scrS$
(cf.\ \eqref{GENPOINTSET1req}),
and so there exist two (non-zero) entries of the matrix 
$T_j^{\psi}$ which have an irrational ratio;
say entries $k,\ell$ and $k',\ell'$, respectively.
Writing $T_j^{\psi}=(t_{r,s})$ we thus have $t_{k,\ell}/t_{k',\ell'}\notin\Q$,
which implies that the set 
$\{at_{k,\ell}+bt_{k',\ell'}\col a,b\in\Z\}$
is dense in $\R$.
By considering the difference of two arbitrary vectors as in
\eqref{Ljpsi0keypropLEM2pf10a} we have
$\vecc_j^\psi\vecm T_{j,\ell}^\psi\in\scrA$
for all 
$\vecm\in\Z^d$.
Taking here $\vecm=\vece_k$ gives
$t_{k,\ell}\vecc_j^\psi\in\scrA$.
Similarly we also have
$t_{k',\ell'}\vecc_j^\psi\in\scrA$,
and hence 
$(at_{k,\ell}+bt_{k',\ell'})\vecc_j^\psi\in\scrA$
for all $a,b\in\Z$.
Hence since $\scrA$ is closed,
we have $\R\vecc_j^\psi\subset\scrA$.
\end{proof}

\begin{lem}\label{LjqsubsetLjpsi0LEM}
Let $\psi\in\Psi$ and $j\in\{1,\ldots,N\}$.
For every $\vecq\in\scrL_{\psi}$ 
we have $L_j^{(\vecq)}\subset L_j^{\psi}$.
\end{lem}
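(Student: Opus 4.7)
The plan is to deduce this inclusion as an immediate corollary of Lemma \ref{Ljpsi0keypropLEM2}, applied with $\scrL'=\scrL_\psi$, once one records a trivial monotonicity property of the operator $\fL$.

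First I would unpack definitions: by comparing the description of $L_j^{(\vecq)}$ given in Section \ref{HOMDYNintrononunifTHMSEC} (the identity component of the smallest closed subgroup of $\R^{r_j}$ containing $\Z^{r_j}$ together with the column vectors of $U_j^{(\vecq)}$) with the definition of $\fL(\cdot)$ in Section \ref{reexprmuqSEC}, one sees directly that
\begin{align*}
L_j^{(\vecq)} = \fL\bigl(U_{j,1}^{(\vecq)},\ldots,U_{j,d}^{(\vecq)}\bigr).
\end{align*}
Next I would record the following monotonicity property of $\fL$: if $S\subset S'$ are non-empty subsets of $\R^{r_j}$, then $\fL(S)\subset\fL(S')$. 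This follows at once from the minimal-subspace characterization in Lemma \ref{IDCLcharLEM}, because any rational subspace $L\subset\R^{r_j}$ satisfying $\langle S'\rangle\subset n^{-1}\Z^{r_j}+L$ for some $n\in\Z^+$ automatically satisfies $\langle S\rangle\subset n^{-1}\Z^{r_j}+L$, and so the minimal such $L$ for $S$ is contained in the minimal such $L$ for $S'$.

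Finally, for any fixed $\vecq\in\scrL_\psi$, I would apply Lemma \ref{Ljpsi0keypropLEM2} with the choice $\scrL':=\scrL_\psi$ (which is trivially a subgrid of itself), producing the identification
\begin{align*}
L_j^{\psi} = \fL\bigl(\bigl\{U_{j,\ell}^{(\vecq')}\col \vecq'\in\scrL_\psi,\: \ell\in\{1,\ldots,d\}\bigr\}\bigr).
\end{align*}
The set $\{U_{j,1}^{(\vecq)},\ldots,U_{j,d}^{(\vecq)}\}$ is plainly contained in the generating set on the right, and the monotonicity property then yields $L_j^{(\vecq)}\subset L_j^\psi$ as desired. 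I do not anticipate any real obstacle in this argument, since all of the substantive content is already packed into Lemma \ref{Ljpsi0keypropLEM2} via its case analysis on whether $j=j_\psi$ or $j\neq j_\psi$.
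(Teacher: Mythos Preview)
Your proposal is correct and takes essentially the same approach as the paper: the paper's proof also invokes Lemma \ref{Ljpsi0keypropLEM2} together with the monotonicity of $\fL(S)$ in $S$, and simply omits the detailed justification of that monotonicity which you spell out via Lemma \ref{IDCLcharLEM}.
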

\begin{proof}
By definition,
$L_j^{(\vecq)}=\fL(\{U_{j,\ell}^{(\vecq)}\col\ell\in\{1,\ldots,d\}\})$;
hence the statement %
follows from 
Lemma \ref{Ljpsi0keypropLEM2},
and the fact that $\fL(S)$ is increasing in $S$.
\end{proof}

Our goal is now to prove Lemma \ref{MuconvauxLEM1} below,
which says that,
in an appropriate sense, 
the space $L_j^{(\vecq)}$ approaches $L_j^\psi$
as $\vecq$ tends to infinity within a full density subset of $\scrL_\psi$.
(Cf.\ also Remark \ref{MuconvauxLEM1rem} below.)
We will need the following auxiliary lemma.
We denote by ``$\:\cdot\:$'' the standard scalar product in the space $\R^{r_j}$.

\begin{lem}\label{IDCLcharLEM2}
Let $\psi\in\Psi$ and $j\in\{1,\ldots,N\}$.
For any non-empty subset $S\subset\R^{r_j}$ and any vector $\veca\in\Q^{r_j}$,
we have $\veca\perp\fL(S)$ if and only if
$\{\veca\cdot\vecv\col\vecv\in S\}\subset n^{-1}\Z$
for some $n\in\Z^+$.
\end{lem}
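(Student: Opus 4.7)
The plan is to derive both directions of the lemma from the characterization of $\fL(S)$ provided by Lemma~\ref{IDCLcharLEM}, namely that $\fL(S)$ is the smallest rational subspace $L\subset\R^{r_j}$ for which there exists $n\in\Z^+$ with $\langle S\rangle\subset n^{-1}\Z^{r_j}+L$. The case $\veca=\bn$ is trivial, so assume $\veca\neq\bn$, and write $\veca=k^{-1}\vecb$ with $\vecb\in\Z^{r_j}\setminus\{\bn\}$ and $k\in\Z^+$.

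For the forward implication, suppose $\veca\perp\fL(S)$. By Lemma~\ref{IDCLcharLEM} there is some $m\in\Z^+$ with $\langle S\rangle\subset m^{-1}\Z^{r_j}+\fL(S)$. Each $\vecv\in S$ may thus be written $\vecv=m^{-1}\vecm+\vecl$ with $\vecm\in\Z^{r_j}$ and $\vecl\in\fL(S)$, so
\[
\veca\cdot\vecv=m^{-1}\,\veca\cdot\vecm=(mk)^{-1}\,\vecb\cdot\vecm\in(mk)^{-1}\Z.
\]
Hence $n:=mk$ works, giving the desired inclusion.

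For the reverse implication, assume $\{\veca\cdot\vecv\col\vecv\in S\}\subset n^{-1}\Z$ for some $n\in\Z^+$. Set $L:=\veca^\perp=\vecb^\perp$; this is a rational subspace of $\R^{r_j}$, since it is defined by the integer linear equation $\vecb\cdot\vecx=0$. For any $\vecv\in S$, decompose $\vecv$ via orthogonal projection onto $\R\vecb$:
\[
\vecv=\vecv_0+(\vecv-\vecv_0),\qquad \vecv_0:=\frac{\vecv\cdot\vecb}{\|\vecb\|^2}\,\vecb,\qquad \vecv-\vecv_0\in L.
\]
Since $\vecv\cdot\vecb=k\,(\veca\cdot\vecv)\in n^{-1}k\Z\subset n^{-1}\Z$ and $\|\vecb\|^2\in\Z^+$, $\vecb\in\Z^{r_j}$, we have $\vecv_0\in(n\|\vecb\|^2)^{-1}\Z^{r_j}$. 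Setting $m:=n\|\vecb\|^2$ gives $S\subset m^{-1}\Z^{r_j}+L$, and hence $\langle S\rangle\subset m^{-1}\Z^{r_j}+L$ because $m^{-1}\Z^{r_j}+L$ is a subgroup. By Lemma~\ref{IDCLcharLEM} applied to the rational subspace $L$, we conclude $\fL(S)\subset L=\veca^\perp$, i.e.\ $\veca\perp\fL(S)$.

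There is no serious obstacle; the only care needed is to handle the denominators cleanly (the factor $k$ introduced when clearing the rational coordinates of $\veca$, and the factor $\|\vecb\|^2$ arising from the orthogonal projection), and to observe that $\veca^\perp$ is itself a rational subspace so that Lemma~\ref{IDCLcharLEM} is applicable.
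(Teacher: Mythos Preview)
Your proof is correct and follows essentially the same approach as the paper: both directions are deduced from Lemma~\ref{IDCLcharLEM} together with the observation that $\veca^\perp$ is a rational subspace. The only cosmetic difference is that the paper first rescales $\veca$ to a primitive integer vector (so that $\{\veca\cdot\vecm:\vecm\in\Z^{r_j}\}=\Z$), which avoids the explicit orthogonal projection and the extra factor $\|\vecb\|^2$ that you carry through.
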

\begin{proof}
The case $\veca=\bn$ is trivial;
hence from now on we assume $\veca\neq\bn$.
The statement of the lemma is
invariant under rescaling of $\veca$ by a non-zero rational number;
hence we may assume that $\veca\in\Z^{r_j}$ and 
$\gcd(a_1,\ldots,a_{r_j})=1$.
It follows that 
$\{\veca\cdot\vecm\col\vecm\in\Z^{r_j}\}=\Z$,
and hence $\{\veca\cdot\vecm\col\vecm\in n^{-1}\Z^{r_j}\}=n^{-1}\Z$
for any $n\in\Z^+$.
Therefore, if
$\{\veca\cdot\vecv\col\vecv\in S\}\subset n^{-1}\Z$
then $S\subset n^{-1}\Z^{r_j}+\veca^\perp$,
and so by Lemma \ref{IDCLcharLEM},
$\fL(S)\subset\veca^\perp$,
i.e.\ $\veca\perp\fL(S)$.
Conversely, 
assume $\veca\perp\fL(S)$.
By Lemma \ref{IDCLcharLEM}
there is some $n\in\Z^+$ such that 
$S\subset n^{-1}\Z^{r_j}+\fL(S)\subset n^{-1}\Z^{r_j}+\veca^\perp$,
and this implies
$\{\veca\cdot\vecv\col\vecv\in S\}\subset n^{-1}\Z$.
\end{proof}
\begin{remark}\label{IDCLcharLEM2rem2}
If $S$ is a \textit{finite} subset of $\R^{r_j}$,
then Lemma \ref{IDCLcharLEM2}
implies that a vector $\veca\in\Q^r$
is orthogonal to $\fL(S)$
if and only if $\{\veca\cdot\vecv\col\vecv\in S\}\subset\Q$.
Furthermore, 
if $S=\R\vecc\cup S'$ for some vector $\vecc\in\R^r$ and a finite subset $S'\subset\R^r$,
then Lemma~\ref{IDCLcharLEM2} implies that
a vector $\veca\in\Q^r$ is orthogonal to $\fL(S)$
if and only if $\veca\perp\vecc$ %
and $\{\veca\cdot\vecv\col\vecv\in S'\}\subset\Q$.
\end{remark}

\begin{lem}\label{MuconvauxLEM1}
Let $\psi\in\Psi$ and $j\in\{1,\ldots,N\}$.
For every $\veca\in\Z^{r_j}$ with $\veca\not\perp L_j^\psi$, we have
\begin{align}\label{MuconvauxLEM1res}
\#\{\vecq\in\scrL_{\psi}\cap \scrB_R^d\col \veca\perp L_j^{(\vecq)}\}\ll R^{d-1}
\qquad\text{as }\: R\to\infty.
\end{align}
\end{lem}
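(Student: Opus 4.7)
The plan is to translate the condition $\veca\perp L_j^{(\vecq)}$ into an explicit Diophantine constraint on the integer parameter describing $\vecq$, and then to exploit the non-commensurability condition \eqref{GENPOINTSET1req} to force the admissible $\vecm$ to lie in a proper affine subspace of $\R^d$.

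First I would apply Remark \ref{IDCLcharLEM2rem2} to the finite set $\{U_{j,1}^{(\vecq)},\ldots,U_{j,d}^{(\vecq)}\}$, whose $\fL$ is $L_j^{(\vecq)}$ by definition: since $\veca\in\Z^{r_j}$, the condition $\veca\perp L_j^{(\vecq)}$ is equivalent to $\veca\cdot U_{j,\ell}^{(\vecq)}\in\Q$ for all $\ell\in\{1,\ldots,d\}$. Writing $\vecq=c_\psi(\vecm+\vecw_\psi)M_{j_\psi}$ with $\vecm\in\Z^d$ and using formula \eqref{Ujqformula}, this rewrites as
\[
\vecb-\alpha\,\vecm T_j^\psi\in\Q^d,\qquad\text{where }\alpha:=\veca\cdot\vecc_j^\psi\in\R\ \text{ and }\ \vecb:=\bigl(\veca\cdot W_{j,\ell}^\psi\bigr)_{\ell=1}^d\in\R^d.
\]
Since $\vecq\in\scrB_R^d$ forces $\|\vecm\|\ll R$, it is enough to bound the number of such $\vecm$.

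Next I would unpack $\veca\not\perp L_j^\psi$ using Remark \ref{IDCLcharLEM2rem2} applied to the set appearing in \eqref{Ljpsi0DEF}. In the case $j=j_\psi$ one has $T_j^\psi=I$ and the hypothesis reads ``$\alpha\notin\Q$ or $\vecb\notin\Q^d$''; a direct inspection (the difference of any two solutions forces $\alpha\in\Q$, and then existence of any solution forces $\vecb\in\Q^d$) bounds the count by $O(1)$. In the case $j\neq j_\psi$ the hypothesis reads ``$\alpha\neq 0$ or $\vecb\notin\Q^d$'': the subcase $\alpha=0$ contributes zero, so I may assume $\alpha\neq 0$. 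The differences of any two solutions $\vecm,\vecm'$ then lie in
\[
V_\Q:=\{\vecn\in\Q^d\col\alpha\,\vecn T_j^\psi\in\Q^d\},
\]
which is a $\Q$-linear subspace of $\Q^d$.

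The main obstacle, and the only step that uses the structural hypothesis \eqref{GENPOINTSET1req}, is to prove $V_\Q\subsetneq\Q^d$. If equality held then $\alpha T_j^\psi$ would have rational entries, whence $\alpha^d=\det(\alpha T_j^\psi)\in\Q_{>0}$; writing $T_j^\psi=\alpha^{-1}Q$ with $Q\in\GL_d(\Q)$ and choosing $Q'\in\{Q,-Q\}$ with $\det Q'>0$ (a choice depending on $\sgn(\alpha)$ and the parity of $d$), one verifies $T_j^\psi=(\det Q')^{-1/d}Q'\in\scrS$, contradicting $T_j^\psi=M_{j_\psi}M_j^{-1}\notin\scrS$. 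Hence $\dim_\Q V_\Q\leq d-1$, so $V_\R:=V_\Q\otimes_\Q\R$ is a proper $\R$-subspace of $\R^d$. The admissible $\vecm$ then form (if non-empty) a single coset of the discrete subgroup $V_\Q\cap\Z^d\subset V_\R$, whose intersection with any ball of radius $O(R)$ has cardinality $O(R^{d-1})$; this is the required bound.
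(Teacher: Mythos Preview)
Your proof is correct and follows essentially the same strategy as the paper: parametrize $\vecq$ by $\vecm\in\Z^d$, translate $\veca\perp L_j^{(\vecq)}$ into the rationality condition $\vecb-\alpha\,\vecm T_j^\psi\in\Q^d$ via Remark~\ref{IDCLcharLEM2rem2}, and split into the cases $j=j_\psi$ and $j\neq j_\psi$.

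The one noteworthy difference is in the case $j\neq j_\psi$, $\alpha\neq 0$. The paper argues directly from ``two non-zero entries of $T_j^\psi$ have irrational ratio'' to produce indices $k,\ell$ with $\alpha\,t_{k,\ell}\notin\Q$, and then observes that along any line parallel to $\vece_k$ there is at most one admissible $\vecm$; this yields the explicit bound $(2R+1)^{d-1}$. You instead package the differences of solutions into the $\Q$-subspace $V_\Q$ and use a determinant argument to show $V_\Q\subsetneq\Q^d$, concluding via a lattice-point count in a proper rational subspace. Both arguments ultimately rest on the same fact (that $\alpha T_j^\psi$ cannot have all entries rational when $T_j^\psi\notin\scrS$), so this is a difference of presentation rather than of substance; the paper's version gives a cleaner explicit constant, while yours is slightly more conceptual.
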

\begin{proof}
As before,
let us parametrize the points in $\scrL_{\psi}=c_{\psi}(\Z^d+\vecw_{\psi})M_{j_\psi}$
as
$\vecq=\vecq(\vecm)=c_{\psi}(\vecm+\vecw_{\psi})M_{j_\psi}$,
with $\vecm$ running through $\Z^d$.
We will prove the following bound, which clearly implies 
\eqref{MuconvauxLEM1res}:
\begin{align}\label{MuconvauxLEM1pf1}
\#\{\vecm\in\Z^d\cap[-R,R]^d\col \veca\perp L_j^{(\vecq(\vecm))}\}\leq(2R+1)^{d-1}
\qquad(\forall R>0).
\end{align}
Using $L_j^{(\vecq)}=\fL(\{U_{j,\ell}^{(\vecq)}\col\ell\in\{1,\ldots,d\}\})$
together with \eqref{Ujlqformula} and Lemma \ref{IDCLcharLEM2},
the bound \eqref{MuconvauxLEM1pf1} can equivalently be stated as:
\begin{align}\label{MuconvauxLEM1pf2}
\#\bigl\{\vecm\in\Z^d\cap[-R,R]^d\col 
\veca\cdot\bigl(W_{j,\ell}^{\psi}-\vecc_j^{\psi}\vecm T_{j,\ell}^{\psi}\bigr)\in\Q
\:\:\forall \ell\in\{1,\ldots,d\}\bigr\}
\leq(2R+1)^{d-1}.
\end{align}
We are assuming that $\veca$ is not orthogonal to $L_j^{\psi}$.
By \eqref{Ljpsi0DEF} and 
Lemma \ref{IDCLcharLEM2} 
(cf.\ also Remark~\ref{IDCLcharLEM2rem2}),
this implies that
\begin{align*}
\begin{cases}
\text{If }j=j_\psi:&
\veca\cdot\vecc_j^{\psi}\notin\Q\:\text{ or }\: \veca\cdot W_{j,\ell}^{\psi}\notin\Q\text{ for some }\ell\in\{1,\ldots,d\},
\\
\text{If }j\neq j_\psi:&
\veca\cdot\vecc_j^{\psi}\neq0\:\text{ or }\: \veca\cdot W_{j,\ell}^{\psi}\notin\Q\text{ for some }\ell\in\{1,\ldots,d\}.
\end{cases}
\end{align*}

Let us first assume $j\neq j_\psi$.
If $\veca\cdot\vecc_j^{\psi}=0$ 
then 
$\veca\cdot W_{j,\ell}^{\psi}\notin\Q$ for some $\ell$,
and it follows that
the set in \eqref{MuconvauxLEM1pf2} is \textit{empty}, for all $R$.
Hence we may assume that
$\veca\cdot\vecc_j^{\psi}\neq0$.
Now $j\neq j_\psi$ implies that $T_j^{\psi}\notin\scrS$,  %
which means that there exist two (non-zero) entries of the matrix 
$T_j^{\psi}$ which have an irrational ratio.
Hence there exist $k,\ell\in\{1,\ldots,d\}$ 
such that $(\veca\cdot\vecc_j^{\psi})(\vece_k T_{j,\ell}^{\psi})\notin\Q$.
This implies that, for any $R$,
the set in \eqref{MuconvauxLEM1pf2}
contains at most one point $\vecm$
along any line %
parallel with $\vece_{k}$.
Hence the bound in \eqref{MuconvauxLEM1pf2} holds.

Next assume $j=j_\psi$.
Then $T_j^{\psi}=\I$.
If $\veca\cdot\vecc_j^{\psi}\in\Q$ then $\veca\cdot W_{j,\ell}^{\psi}\notin\Q$ for some $\ell\in\{1,\ldots,d\}$,
and it follows that
the set in \eqref{MuconvauxLEM1pf2} is \textit{empty}, for all $R$.
Hence we may assume that
$\veca\cdot\vecc_j^{\psi}\notin\Q$.
Then for each $\ell\in\{1,\ldots,d\}$
there is at most one integer $m$
such that $\veca\cdot (W_{j,\ell}^{\psi}-m\vecc_j^{\psi})\in\Q$,
and so the set in \eqref{MuconvauxLEM1pf2} contains at most one point,
and the bound in \eqref{MuconvauxLEM1pf2} holds.
\end{proof}

We end this section by introducing 
a certain rational space $L_j\subset\R^{r_j}$
which equals $L_j^{(\vecq)}$ for \textit{$\vecq$ generic within $\R^d$}
(cf.\ Lemma \ref{LjbasicLEM1} below).
These spaces are closely related %
to %
the spaces $L_j^\psi$ introduced above,
and they appear in %
the explicit description of the
macroscopic limit measure
$\mu^{\g}$
which we discuss in Section \ref{P3proofSEC} below.
For each $j\in\{1,\ldots,N\}$, we set
\begin{align}\label{trueLjDEF}
L_j:=\fL\bigl(\R\tvecc_j\cup\{W_{j,\ell}\col\ell\in\{1,\ldots,d\}\}\bigr),
\end{align}
where $W_{j,1},\ldots,W_{j,d}\in\R^{r_j}$ are the column vectors of $W_j$ (see \eqref{Wjdef}).
\begin{lem}\label{LjbasicLEM1}
Let $j\in\{1,\ldots,N\}$.
For every $\vecq\in\R^d$ we have $L_j^{(\vecq)}\subset L_j$,
and for all except countably many $\vecq\in\R^d$ we even have
$L_j^{(\vecq)}=L_j$.
\end{lem}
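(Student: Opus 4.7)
The plan is to reduce both assertions to the orthogonality characterization provided by Lemma \ref{IDCLcharLEM2} (together with Remark \ref{IDCLcharLEM2rem2}), by exploiting the explicit form of the columns of $U_j^{(\vecq)}$ given in \eqref{Ujqgenformula}. Write $\vecs=\vecq M_j^{-1}=(s_1,\ldots,s_d)$, so that $U_{j,\ell}^{(\vecq)}=W_{j,\ell}-s_\ell\tvecc_j$ for each $\ell\in\{1,\ldots,d\}$.

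For the inclusion $L_j^{(\vecq)}\subset L_j$, each column $U_{j,\ell}^{(\vecq)}$ evidently lies in $\langle\R\tvecc_j\cup\{W_{j,1},\ldots,W_{j,d}\}\rangle$. Since $\fL$ is monotone in its argument (this is immediate from the definition \eqref{fLdef}, or from Lemma \ref{IDCLcharLEM}), it follows that $L_j^{(\vecq)}=\fL(\{U_{j,\ell}^{(\vecq)}\col\ell\in\{1,\ldots,d\}\})\subset\fL(\R\tvecc_j\cup\{W_{j,\ell}\})=L_j$.

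For the second assertion, since $L_j^{(\vecq)}$ and $L_j$ are both rational subspaces, $L_j^{(\vecq)}=L_j$ holds if and only if every $\veca\in\Q^{r_j}$ with $\veca\perp L_j^{(\vecq)}$ also satisfies $\veca\perp L_j$. Fix $\veca\in\Q^{r_j}$ with $\veca\not\perp L_j$. By Remark \ref{IDCLcharLEM2rem2}, this means that either $\veca\cdot\tvecc_j\neq 0$, or else $\veca\cdot\tvecc_j=0$ and some $\veca\cdot W_{j,\ell}\notin\Q$. In the second case, the condition $\veca\perp L_j^{(\vecq)}$ would force $\veca\cdot W_{j,\ell}=\veca\cdot W_{j,\ell}-s_\ell(\veca\cdot\tvecc_j)\in\Q$ for all $\ell$ (using Lemma \ref{IDCLcharLEM2} applied to the finite set $\{U_{j,\ell}^{(\vecq)}\}$), contradicting the choice of $\veca$; hence \emph{no} $\vecq\in\R^d$ satisfies $\veca\perp L_j^{(\vecq)}$. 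In the first case, $\veca\perp L_j^{(\vecq)}$ forces
\begin{equation*}
s_\ell\in\frac{\veca\cdot W_{j,\ell}-\Q}{\veca\cdot\tvecc_j},\qquad\ell=1,\ldots,d,
\end{equation*}
and each right-hand side is a countable subset of $\R$. Hence $\vecs=(s_1,\ldots,s_d)$, and therefore $\vecq=\vecs M_j$, ranges over a countable subset of $\R^d$.

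Taking the union over the countable set $\Q^{r_j}$ yields a countable exceptional set $\scrE\subset\R^d$ such that $L_j^{(\vecq)}=L_j$ for all $\vecq\in\R^d\setminus\scrE$, completing the proof. The only step that requires any thought is organizing the case distinction on whether $\veca\cdot\tvecc_j$ vanishes, but both cases are handled cleanly by the orthogonality criterion in Remark \ref{IDCLcharLEM2rem2}, so no genuine obstacle is expected.
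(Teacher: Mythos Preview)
Your proof is correct and follows essentially the same approach as the paper: both use the explicit formula \eqref{Ujqgenformula} together with monotonicity of $\fL$ for the inclusion, and the orthogonality criterion of Lemma~\ref{IDCLcharLEM2}/Remark~\ref{IDCLcharLEM2rem2} with the same case split on whether $\veca\cdot\tvecc_j$ vanishes for the countability statement. The only cosmetic differences are that the paper works with $\veca\in\Z^{r_j}$ and organizes the exceptional set as a union over pairs $(\veca,\vecb)\in\Z^{r_j}\times\Q^d$ of single points, whereas you work with $\veca\in\Q^{r_j}$ and view the exceptional set coordinate-wise in $\vecs$.
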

\begin{proof}
Let $S_0=\{W_{j,\ell}\col\ell\in\{1,\ldots,d\}$ and $S=\R\tvecc_j\cup S_0$,
so that $L_j=\fL(S)$.
Recall that
for every $\vecq\in\R^d$ we have 
$L_j^{(\vecq)}=\fL(\{U_{j,\ell}^{(\vecq)}\col\ell\in\{1,\ldots,d\}\})$;
and by 
\eqref{Ujqgenformula},
$U_{j,\ell}^{(\vecq)}=W_{j,\ell}-\tvecc_j\vecq M_{j,\ell}'$,
where $M_{j,\ell}'\in\R^{r_j}$ is the $\ell$th column vector of $M_j^{-1}$.
Here $\tvecc_j\vecq M_{j,\ell}'\in\R\tvecc_j$,
and thus $U_{j,\ell}^{(\vecq)}\in\langle S\rangle$ for every $\ell$.
Therefore $L_j^{(\vecq)}\subset L_j$.

Next, since both $L_j^{(\vecq)}$ and $L_j$ are rational subspaces of $\R^{r_j}$,
if $L_j^{(\vecq)}\subsetneq L_j$ then there 
exists some $\veca\in\Z^{r_j}$
satisfying $\veca\perp L_j^{(\vecq)}$ but $\veca\not\perp L_j$.
By Lemma \ref{IDCLcharLEM2} 
(cf.\ also Remark~\ref{IDCLcharLEM2rem2})
this means that
\begin{align}\label{LjbasicLEM1pf1}
\veca\cdot(W_{j,\ell}-\tvecc_j\vecq M_{j,\ell}')\in\Q,\qquad \forall\ell\in\{1,\ldots,d\},
\end{align}
but either $\veca\not\perp\tvecc_j$ or $\veca\cdot W_{j,\ell}\notin\Q$ for some $\ell$.
Clearly this is not possible if $\veca\perp\tvecc_j$;
hence we must have $\veca\not\perp\tvecc_j$.
Noticing also that the condition \eqref{LjbasicLEM1pf1} is equivalent to\footnote{Note:
We view $\veca$ as a column vector, or equivalently as an $r_j\times 1$ matrix;
hence $\veca\trans$ is a $1\times r_j$ matrix.}
$\veca\trans(W_j-\tvecc_j\vecq M_j^{-1})\in\Q^{d}$, 
we conclude that:
\begin{align}\label{LjbasicLEM1pf2}
\{\vecq\in\R^d\col L_j^{(\vecq)}\subsetneq L_j\}
=\bigcup_{\substack{\veca\in\Z^{r_j}\\ (\veca\not\perp\tvecc_j)}}
\bigcup_{\vecb\in\Q^{d}} \bigl\{\vecq\in\R^d\col \veca\trans(W_j-\tvecc_j\vecq M_j^{-1})=\vecb\bigr\}.
\end{align}
But for every $\veca\in\Z^{r_j}$ with $\veca\not\perp\tvecc_j$
and every $\vecb\in\Q^d$,
the set
$\bigl\{\vecq\in\R^d\col \veca\trans(W_j-\tvecc_j\vecq M_j^{-1})=\vecb\bigr\}$
consists of exactly one point,
namely $\vecq=(\veca\trans\tvecc_j)^{-1}(\veca\trans W_j-\vecb)M_j$.
Hence the set in \eqref{LjbasicLEM1pf2} is countable, and the lemma is proved.
\end{proof}

\begin{lem}\label{LjvsLjpsiLEM}
For any $\psi\in\Psi$ and $j\in\{1,\ldots,N\}$
we have $L_j^\psi\subset L_j$,
and if $j\neq j_\psi$
then even $L_j^\psi=L_j$.
\end{lem}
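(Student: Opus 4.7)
The plan is to reduce both containments to elementary relations between the generating sets of $L_j^\psi$ and $L_j$, using Lemma \ref{IDCLcharLEM} as a bridge. The two key algebraic identities are $\vecc_j^\psi = c_\psi\tvecc_j$ (from \eqref{cjdef}) and, by definition of $W_j^\psi$ in \eqref{Wjdef},
\begin{align*}
W_{j,\ell}^\psi = W_{j,\ell} - (\vecw_\psi T_{j,\ell}^\psi)\,\vecc_j^\psi \qquad (\ell\in\{1,\ldots,d\}),
\end{align*}
so each column of $W_j^\psi$ differs from the corresponding column of $W_j$ by a (real) scalar multiple of $\tvecc_j$.

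For the first containment $L_j^\psi \subset L_j$, I would observe that whether $j = j_\psi$ or not, every generator appearing in the definition of $L_j^\psi$ in \eqref{Ljpsi0DEF} lies in the subgroup of $\R^{r_j}$ generated by $\R\tvecc_j \cup \{W_{j,1},\ldots,W_{j,d}\}$: indeed $\vecc_j^\psi\in\R\tvecc_j$, and by the displayed identity each $W_{j,\ell}^\psi$ lies in $W_{j,\ell}+\R\tvecc_j$. By Lemma \ref{IDCLcharLEM} applied to the latter generating set, there exists $n\in\Z^+$ with this ambient subgroup contained in $n^{-1}\Z^{r_j} + L_j$; a second application of Lemma \ref{IDCLcharLEM} (now to the generating set of $L_j^\psi$) then yields $L_j^\psi \subset L_j$.

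For the reverse containment when $j\neq j_\psi$: in this case the generating set of $L_j^\psi$ already contains the whole line $\R\vecc_j^\psi$, which equals $\R\tvecc_j$ since $c_\psi\neq0$. Solving the displayed identity for $W_{j,\ell}$ expresses each generator of $L_j$ as an element of the subgroup generated by the generators of $L_j^\psi$, and Lemma \ref{IDCLcharLEM} delivers $L_j \subset L_j^\psi$ just as before. I do not expect any substantive obstacle; the reason the lemma correctly avoids claiming equality when $j = j_\psi$ is precisely that in that case $L_j^\psi$ uses only the single vector $\vecc_j^\psi$ rather than the full line $\R\vecc_j^\psi$, and $\fL(\{\vecc_j^\psi\})$ can be strictly smaller than $\fL(\R\vecc_j^\psi)$ (for example, it collapses to $\{\bn\}$ whenever $\vecc_j^\psi\in\Q^{r_j}$).
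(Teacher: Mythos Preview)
Your proof is correct and follows essentially the same approach as the paper: both hinge on the identities $\vecc_j^\psi=c_\psi\tvecc_j$ and $W_{j,\ell}^\psi\in W_{j,\ell}+\R\tvecc_j$. The paper's packaging is marginally more direct---it observes that the two sets $\R\vecc_j^\psi\cup\{W_{j,\ell}^\psi\}$ and $\R\tvecc_j\cup\{W_{j,\ell}\}$ generate the \emph{same} subgroup of $\R^{r_j}$ (hence have the same $\fL$), and then gets the $j=j_\psi$ inclusion by monotonicity of $\fL$---whereas you unravel the same reasoning via two explicit appeals to Lemma~\ref{IDCLcharLEM}; but the content is the same.
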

\begin{proof}
Recall that $\vecc_j^\psi=c_\psi \tvecc_j$;
hence $\R\vecc_j^\psi=\R\tvecc_j$.
Furthermore, by comparing the definitions 
\eqref{Wjdef} and \eqref{Wjdef}
we note that
$W_{j,\ell}^\psi=W_{j,\ell}-\vecc_j^\psi \vecw_\psi T_{j,\ell}^\psi
\in W_{j,\ell}+\R\vecc_j^\psi
=W_{j,\ell}+\R\tvecc_j$.
Hence the two sets $\R\vecc_j^{\psi}\cup\{W_{j,\ell}^{\psi}\col\ell\in\{1,\ldots,d\}\}$
and $\R\tvecc_j\cup\{W_{j,\ell}\col\ell\in\{1,\ldots,d\}\}$
generate the same subgroups of $\R^{r_j}$.
In view of this fact,
the lemma now follows by inspecting the definitions of
$L_j^\psi$ and $L_j$, 
\eqref{Ljpsi0DEF} and \eqref{trueLjDEF}.
\end{proof}

\begin{lem}\label{LjvsLjpsiLEM2}
For every $\psi=(j,i)\in\Psi$,
$L_j^\psi=L_j\cap\vece_i^\perp$
and $L_j=L_j^\psi+\R\tvecc_j$.
\end{lem}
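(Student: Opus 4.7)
The plan is to use the orthogonal complement characterisation provided by Lemma \ref{IDCLcharLEM2} (together with Remark \ref{IDCLcharLEM2rem2}) to reduce both claims to direct arithmetic computations. Since $\psi=(j,i)$, one has $j_\psi=j$ and $i_\psi=i$, so $c_\psi=c_{j,i}$, $\vecw_\psi=\vecw_{j,i}$, and $T_j^\psi=M_{j_\psi}M_j^{-1}=I$; in particular the first case of \eqref{Ljpsi0DEF} applies, and $W_j^\psi=W_j-\vecc_j^\psi\vecw_\psi$.

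First I would verify $L_j^\psi\subset\vece_i^\perp$ by two explicit calculations: the $i$-th entry of $\vecc_j^\psi=c_\psi\tvecc_j$ is $c_{j,i}\cdot c_{j,i}^{-1}=1$, so $\vece_i\cdot\vecc_j^\psi=1\in\Q$; and the $i$-th row of $W_j^\psi$ equals $\vecw_{j,i}-1\cdot\vecw_\psi=\bn$, so $\vece_i\cdot W_{j,\ell}^\psi=0$ for every $\ell$. Applying Remark \ref{IDCLcharLEM2rem2} to the finite generating set $\{\vecc_j^\psi\}\cup\{W_{j,\ell}^\psi\}$ of $L_j^\psi$ yields $\vece_i\perp L_j^\psi$. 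Combined with $L_j^\psi\subset L_j$ from Lemma \ref{LjvsLjpsiLEM}, this gives $L_j^\psi\subset L_j\cap\vece_i^\perp$.

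Next I would prove the identity $L_j=L_j^\psi+\R\tvecc_j$ by comparing rational orthogonal complements. The inclusion $L_j^\psi+\R\tvecc_j\subset L_j$ is immediate from Lemma \ref{LjvsLjpsiLEM} and the definition \eqref{trueLjDEF}. For the reverse, assume $\veca\in\Q^{r_j}$ is orthogonal to both $L_j^\psi$ and $\tvecc_j$. Then $\veca\cdot\vecc_j^\psi=c_\psi(\veca\cdot\tvecc_j)=0$; substituting into $W_{j,\ell}^\psi=W_{j,\ell}-(\vecw_\psi)_\ell\vecc_j^\psi$ gives $\veca\cdot W_{j,\ell}=\veca\cdot W_{j,\ell}^\psi\in\Q$ by Remark \ref{IDCLcharLEM2rem2} applied to $L_j^\psi$. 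The same remark applied now to the generating set $\R\tvecc_j\cup\{W_{j,\ell}\}$ of $L_j$ yields $\veca\perp L_j$, as required. With the identity in hand, the first claim is finished by noting that any $v\in L_j$ decomposes as $v=v_0+t\tvecc_j$ with $v_0\in L_j^\psi$, and since $\vece_i\cdot v_0=0$ while $\vece_i\cdot\tvecc_j=c_{j,i}^{-1}\neq 0$, the condition $\vece_i\cdot v=0$ forces $t=0$, hence $v=v_0\in L_j^\psi$.

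The calculations are short and essentially bookkeeping. The only substantive point is to exploit that because $j=j_\psi$ one lands in the first clause of \eqref{Ljpsi0DEF}, where $T_j^\psi=I$ and the choices $c_\psi=c_{j,i}$, $\vecw_\psi=\vecw_{j,i}$ force precisely the two cancellations ($i$-th entry of $\vecc_j^\psi$ equals $1$, and $i$-th row of $W_j^\psi$ vanishes) that make $\vece_i$ rationally orthogonal to all generators of $L_j^\psi$.
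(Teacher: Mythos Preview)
Your argument is essentially correct and runs parallel to the paper's, but there is one step you have glossed over: when you write ``by comparing rational orthogonal complements'' you are implicitly using that $L_j^\psi+\R\tvecc_j$ is itself a \emph{rational} subspace of $\R^{r_j}$. Without this, the implication ``every rational $\veca$ orthogonal to $L_j^\psi+\R\tvecc_j$ is orthogonal to $L_j$'' does not yield $L_j\subset L_j^\psi+\R\tvecc_j$ (consider, e.g., $r_j=2$, $L_j=\R^2$ and the non-rational line $\R(1,\sqrt2)$: their rational orthogonal complements coincide, yet the line is proper). The missing fact is easy to supply: since $\vecc_j^\psi$ belongs to the generating set in \eqref{Ljpsi0DEF}, Lemma~\ref{IDCLcharLEM} gives $\vecc_j^\psi\in\Q^{r_j}+L_j^\psi$, so $\R\tvecc_j=\R\vecc_j^\psi$ can be replaced by $\R\vecq$ for some $\vecq\in\Q^{r_j}$, and then $L_j^\psi+\R\tvecc_j=L_j^\psi+\R\vecq$ is rational.

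With that patched, your proof and the paper's are two sides of the same coin. The paper argues via Lemma~\ref{IDCLcharLEM} directly: it first checks that $\tL:=L_j^\psi+\R\tvecc_j$ is rational (exactly as above), then shows each generator $W_{j,\ell}$ of $L_j$ lies in $\Q^{r_j}+\tL$ and $\R\tvecc_j\subset\tL$, whence $L_j\subset\tL$ by the minimality statement of Lemma~\ref{IDCLcharLEM}. You instead dualise via Lemma~\ref{IDCLcharLEM2}/Remark~\ref{IDCLcharLEM2rem2}, checking that the rational annihilators agree. Both routes rest on the same two computations (the $i$th entry of $\vecc_j^\psi$ is $1$, and $\r_i(W_j^\psi)=\bn$) and on the relation $W_{j,\ell}=W_{j,\ell}^\psi+(\vecw_\psi)_\ell\,\vecc_j^\psi$; the only substantive difference is which of the two equivalent characterisations of $\fL$ is invoked. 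Your final deduction of $L_j\cap\vece_i^\perp\subset L_j^\psi$ from the decomposition $L_j=L_j^\psi+\R\tvecc_j$ is clean and matches the paper's logic.
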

\begin{proof}
Since $\psi=(j,i)$, 
the $i$th coordinate of $\vecc_j^\psi$ is $1$ by \eqref{cjdef}
and $\r_i(W_j^\psi)=\bn$ by \eqref{Wjdef};
hence it follows from \eqref{Ljpsi0DEF} that 
$L_j^\psi\perp\vece_i$,
and so by Lemma \ref{LjvsLjpsiLEM}
we have 
$L_j^\psi\subset L_j\cap\vece_i^\perp$.
On the other hand, $\R\tvecc_j\subset L_j$ by \eqref{trueLjDEF}
and $\tvecc_j\cdot\vece_i=c_\psi^{-1}>0$;
in particular $L_j\not\perp\vece_i$.
Now to complete the proof of the lemma it suffices to prove that
\begin{align}\label{LjvsLjpsiLEM2pf1}
L_j\subset \tL,
\qquad\text{with }\:\tL:=L_j^\psi+\R\tvecc_j.
\end{align}
But it follows from \eqref{Ljpsi0DEF} and Lemma \ref{IDCLcharLEM}
that $\vecc_j^\psi\in \Q^{r_j}+L_j^\psi$;
hence since $\tL$ equals the linear span of $L_j^\psi$ and the vector $\vecc_j^\psi$,
$\tL$ is a rational subspace of $\R^{r_j}$.
It also follows from \eqref{Ljpsi0DEF} and Lemma \ref{IDCLcharLEM}
that each column vector of $W_j^\psi$ lies in
$\Q^{r_j}+L_j^\psi$;
thus $W_j^\psi\in\M_{r_j\times d}(\Q)+(L_j^\psi)^d$,
and hence we have
\begin{align*}
W_j=W_j^\psi+\vecc_j^\psi\vecw_\psi
\in\M_{r_j\times d}(\Q)+(L_j^\psi)^d+\vecc_j^\psi\vecw_\psi
\subset\M_{r_j\times d}(\Q)+\tL^d.
\end{align*}
This implies that $W_{\ell}^\psi\in \Q^{r_j}+\tL$ for each $\ell\in\{1,\ldots,d\}$,
and since also $\R\tvecc_j\subset\tL$,
the inclusion in \eqref{LjvsLjpsiLEM2pf1} 
now follows from \eqref{trueLjDEF} and Lemma \ref{IDCLcharLEM}.
\end{proof}

\subsection{The limit measures $\omega_j^\psi$ and $\omega_j^{\g}$}
\label{SSjpsi0Ojpsi0sec}
The following condition will be of crucial importance for us.
It involves the vectors $\vecc_j^\psi\in\R^{r_j}$
and the subspaces $L_j^\psi$
which were defined in
\eqref{cjdef} and \eqref{Ljpsi0DEF}
in the previous section.
\begin{definition}\label{admissibleDEF}
We say that a presentation of $\scrP$ as in \eqref{GENPOINTSET1},
and satisfying \eqref{THINDISJcond2} and \eqref{GENPOINTSET1req},
is \textit{admissible} if $\vecc_{j}^{\psi}\in L_{j}^{\psi}+\Z^{r_j}$ for all $\psi=(j,i)\in\Psi$.
\end{definition}
It should be carefully noted that the condition in Definition \ref{admissibleDEF}
only involves 
vectors $\vecc_j^\psi$ and spaces $L_j^\psi$
for pairs of $\psi$ and $j$ with $j=j_\psi$.

\vspace{5pt}

As we will see below, %
the admissibility of the presentation of $\scrP$
is a necessary 
(as well as sufficient)
condition to ensure that
all the measures $\omega_j^{(\vecq)}$
have a unique generic limit as $\vecq$ tends to infinity within any of the grids $\scrL_\psi$.
Luckily, it turns out that 
any point set $\scrP$ which is a finite union of grids
possesses an admissible presentation;
we will prove this in 
Section \ref{ATTAINADMsec} below.

\vspace{5pt}

\textit{From now on we assume that the given presentation \eqref{GENPOINTSET1} 
of $\scrP$ is admissible.}

\vspace{5pt}

We will start by defining,
for any $\psi\in\Psi$ and $j\in\{1,\ldots,N\}$,
a measure $\omega_j^\psi\in P(\TT_j^d)'$.
We will then prove that
$\omega_j^\psi$ is in fact the generic limit of the measures $\omega_j^{(\vecq)}$
as $\vecq$ tends to infinity within the grid $\scrL_\psi$.
As we will see, 
the assumption about admissibility is needed already for the definition of $\omega_j^\psi$ to make sense.   %
(Cf.\ the proof of Lemma \ref{Ojpsi0welldefLEM} below.)

Given $\psi\in\Psi$ and $j\in\{1,\ldots,N\}$ we
set $\SS_j^{\psi}:=\pi(L_j^{\psi})$.
\label{Sjpsi0DEF}
This is a closed subtorus of $\TT_j$, since $L_j^\psi$ is a rational subspace of $\R^{r_j}$.
Furthermore, we 
pick an arbitrary point $\vecq\in\scrL_{\psi}$, and define:
\begin{align}\label{Ojpsi0DEF}
&\tSS_j^{\psi}:=\SS_j^{(\vecq)}+\SS_j^{\psi}\subset\TT_j;
\qquad\text{and}\qquad
\scrO_j^{\psi}:=\bigcup_{\gamma\in\SL_d(\Z)}(\pi(U_j^{(\vecq)})\gamma+(\SS_j^{\psi})^d)\subset\TT_j^d.
\end{align}
\begin{lem}\label{Ojpsi0welldefLEM}
Both $\tSS_j^{\psi}$ and 
$\scrO_j^{\psi}$ are well-defined, 
i.e.\ the expressions in \eqref{Ojpsi0DEF} are independent of the choice of $\vecq$.
We have $\scrO_j^{(\vecq)}\subset\scrO_j^{\psi}$ for all $\vecq\in\scrL_{\psi}$.
Furthermore, $\tSS_j^{\psi}$ is a closed subgroup of $\TT_j$ whose connected component subgroup
equals $\SS_j^{\psi}$,
and $\scrO_j^{\psi}$ is a union of some of 
the connected components of $(\tSS_j^\psi)^d$.
\end{lem}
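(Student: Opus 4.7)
The plan is to treat all five assertions by a single direct comparison of the objects for two base points $\vecq,\vecq' \in \scrL_\psi$, combined with a short analysis of the group $\tSS_j^\psi$. Parametrize $\scrL_\psi$ as $\vecq = c_\psi(\vecm+\vecw_\psi)M_{j_\psi}$ with $\vecm \in \Z^d$; then \eqref{Ujqformula} gives
\begin{align*}
U_j^{(\vecq)} - U_j^{(\vecq')} = -\vecc_j^\psi\,(\vecm-\vecm')\,T_j^\psi,
\end{align*}
a rank-one matrix each of whose columns is a scalar multiple of $\vecc_j^\psi$. The key observation is that these column scalars produce elements of $L_j^\psi + \Z^{r_j}$: for $j = j_\psi$ one has $T_j^\psi = I$, so the scalars are integers and the columns lie in $\Z\,\vecc_j^\psi \subset L_j^\psi + \Z^{r_j}$ by admissibility; for $j \neq j_\psi$ the containment $\R\vecc_j^\psi \subset L_j^\psi$ is built into \eqref{Ljpsi0DEF}. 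In either case, $\pi(U_{j,\ell}^{(\vecq)}) - \pi(U_{j,\ell}^{(\vecq')}) \in \SS_j^\psi$ for every $\ell$.

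Well-definedness of $\tSS_j^\psi$ is then immediate, since $\SS_j^{(\vecq)}$ is generated topologically by the $\pi(U_{j,\ell}^{(\vecq)})$ and $\SS_j^\psi$ is a closed subgroup. For $\scrO_j^\psi$, I would note that the right $\SL_d(\Z)$-action on $\TT_j^d$ permutes columns by integer linear combinations, so $(\SS_j^\psi)^d$ is $\SL_d(\Z)$-stable; together with the previous step, each coset $\pi(U_j^{(\vecq)})\gamma + (\SS_j^\psi)^d$ equals $\pi(U_j^{(\vecq')})\gamma + (\SS_j^\psi)^d$, and the union over $\gamma$ is unchanged. The inclusion $\scrO_j^{(\vecq)} \subset \scrO_j^\psi$ follows directly from Lemma \ref{LjqsubsetLjpsi0LEM}, which gives $(\SS_j^{(\vecq)})^\circ = \pi(L_j^{(\vecq)}) \subset \pi(L_j^\psi) = \SS_j^\psi$.

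For the structure claim, in a compact Hausdorff abelian topological group the sum of two closed subgroups is a closed subgroup (being the continuous image of a compact product), so $\tSS_j^\psi$ is a closed subgroup of $\TT_j$. Since $\SS_j^\psi$ is connected (image of a linear subspace), $\SS_j^\psi \subset (\tSS_j^\psi)^\circ$. For the reverse inclusion I would use the standard isomorphism $\tSS_j^\psi / \SS_j^\psi \cong \SS_j^{(\vecq)} / (\SS_j^{(\vecq)} \cap \SS_j^\psi)$; since $(\SS_j^{(\vecq)})^\circ \subset \SS_j^\psi$ by the observation above, the right-hand side is a quotient of the finite component group $\SS_j^{(\vecq)}/(\SS_j^{(\vecq)})^\circ$, hence finite and discrete, forcing $(\tSS_j^\psi)^\circ \subset \SS_j^\psi$. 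Finally, each $\pi(U_j^{(\vecq)})\gamma$ lies in $(\tSS_j^\psi)^d$ (its entries sit in $\SS_j^{(\vecq)} \subset \tSS_j^\psi$ and $(\tSS_j^\psi)^d$ is $\SL_d(\Z)$-stable), and the connected component of $(\tSS_j^\psi)^d$ is $((\tSS_j^\psi)^\circ)^d = (\SS_j^\psi)^d$; so each coset appearing in the union defining $\scrO_j^\psi$ is a single connected component of $(\tSS_j^\psi)^d$, yielding the final claim.

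The main obstacle is really the $j = j_\psi$ case of well-definedness, which is the unique place where the admissibility hypothesis does nontrivial work. Without admissibility, the integer multiples of $\vecc_j^\psi$ that arise from varying $\vecq$ within $\scrL_\psi$ would not be absorbed into $L_j^\psi + \Z^{r_j}$, and neither $\tSS_j^\psi$ nor $\scrO_j^\psi$ would be independent of the choice of base point; this is precisely the phenomenon the admissibility condition is designed to rule out.
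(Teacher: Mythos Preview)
Your proof is correct and follows essentially the same approach as the paper's: the key step is the column-by-column verification that $\pi(U_{j,\ell}^{(\vecq)}) - \pi(U_{j,\ell}^{(\vecq')}) \in \SS_j^\psi$ via the case split $j = j_\psi$ (admissibility) versus $j \neq j_\psi$ ($\R\vecc_j^\psi \subset L_j^\psi$), and everything else is unwound from this. Your treatment of the component structure of $\tSS_j^\psi$ via the second isomorphism theorem is slightly more formal than the paper's direct observation that $\tSS_j^\psi$ is a finite union of $\SS_j^\psi$-cosets, but the content is identical.
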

(It should be noted that since $(\tSS_j^\psi)^d$ is compact,
its total number of connected components is finite.)

In view of Lemma \ref{Ojpsi0welldefLEM},
we may now define 
$\omega_j^{\psi}\in P(\TT_j^d)$
\label{mujpsi0def}
to be the restriction to $\scrO_j^{\psi}$ of the Haar measure on $(\tSS_j^{\psi})^d$,
normalized so that $\omega_j^{\psi}(\scrO_j^{\psi})=1$.
Note that $\omega_j^{\psi}$ is $\SL_d(\Z)$-invariant by construction,
i.e.\ we actually have $\omega_j^{\psi}\in P(\TT_j^d)'$.

\begin{proof}[Proof of Lemma \ref{Ojpsi0welldefLEM}]
Let us first prove that 
\begin{align}\label{Ojpsi0welldefLEMpf1}
U_{j,\ell}^{(\vecq)}-U_{j,\ell}^{(\vecq')}\in L_j^{\psi}+\Z^{r_j},
\qquad
\forall \vecq,\vecq'\in\scrL_{\psi},
\:\:\forall\ell\in\{1,\ldots,d\}.
\end{align}
Indeed, pick $\vecm,\vecm'\in\Z^d$ so that
$\vecq=c_{\psi}(\vecm+\vecw_{\psi})M_{j_\psi}$
and $\vecq'=c_{\psi}(\vecm'+\vecw_{\psi})M_{j_\psi}$;
then by  \eqref{Ujlqformula}
we have $U_{j,\ell}^{(\vecq)}-U_{j,\ell}^{(\vecq')}=\vecc_j^{\psi}(\vecm'-\vecm) T_{j,\ell}^{\psi}$.
If $j\neq j_\psi$ then $\R\vecc_j^{\psi}\subset L_j^{\psi}$
by \eqref{Ljpsi0DEF}
and hence $U_{j,\ell}^{(\vecq)}-U_{j,\ell}^{(\vecq')}\in L_j^{\psi}$ for each $\ell$.
On the other hand if $j=j_\psi$ then $T_j^{\psi}=I$
and so $U_{j,\ell}^{(\vecq)}-U_{j,\ell}^{(\vecq')}\in\Z\,\vecc_j^{\psi}$,
and using the assumption that \textit{$\scrP$ is admissible}
(cf.\ Def.\ \ref{admissibleDEF}),
this implies that \eqref{Ojpsi0welldefLEMpf1} holds.

In order to prove that $\tSS_j^{\psi}$ is well-defined it suffices to
verify that 
for any two $\vecq,\vecq'\in\scrL_{\psi}$
we have
$\SS_j^{(\vecq')}\subset\SS_j^{(\vecq)}+\SS_j^{\psi}$.
However this follows from the definition of
$\SS_j^{(\vecq')}$
and the fact that, by \eqref{Ojpsi0welldefLEMpf1},
$\pi(U_{j,\ell}^{(\vecq')})\in\pi(U_{j,\ell}^{(\vecq)})+\SS_j^{\psi}\subset\SS_j^{(\vecq)}+\SS_j^{\psi}$.
Note also that \eqref{Ojpsi0welldefLEMpf1}
implies that 
$\pi(U_j^{(\vecq)})-\pi(U_j^{(\vecq')})\in(\SS_j^{\psi})^d$;
and this implies that $\scrO_j^{\psi}$ is well-defined.

Now consider an arbitrary point $\vecq\in\scrL_{\psi}$.
Recall that
$L_j^{(\vecq)}\subset L_j^{\psi}$,
by Lemma \ref{LjqsubsetLjpsi0LEM};
hence $(\SS_j^{(\vecq)})^\circ\subset\SS_j^{\psi}$,
and by inspection in 
\eqref{OjVDEF} and \eqref{Ojpsi0DEF}
(and recalling
$\scrO_j^{(\vecq)}:=\scrO_j^{(\pi(U_j^{(\vecq)}))}$),
this implies that $\scrO_j^{(\vecq)}\subset\scrO_j^{\psi}$.
Recall also that $\SS_j^{(\vecq)}$ is a finite union of $(\SS_j^{(\vecq)})^\circ$-cosets in $\TT_j$;
therefore
$\tSS_j^{\psi}=\SS_j^{(\vecq)}+\SS_j^{\psi}$
is a finite union of $\SS_j^{\psi}$-cosets.
Hence $\tSS_j^{\psi}$
is indeed a closed subgroup of $\TT_j$
whose connected component subgroup equals $\SS_j^{\psi}$.
Finally, from the definition of $\scrO_j^{\psi}$
and the fact that $\pi(U_j^{(\vecq)})\in(\SS_j^{(\vecq)})^d$,
it follows that $\scrO_j^{\psi}\subset(\tSS_j^{\psi})^d$;
and $\scrO_j^{\psi}$ is by definition a union of $(\SS_j^{\psi})^d$-cosets,
i.e.\ a union of some of 
the connected components of $(\tSS_j^\psi)^d$.
\end{proof}

This is a convenient point to interject the definition of
a measure $\omega_j^{\g}\in P(\TT_j^d)'$,
which is closely analogous to $\omega_j^\psi$,
and which we will need later for the explicit description of 
the macroscopic limit measure $\mu^{\g}$ 
which we discuss in Section \ref{P3proofSEC} below.

Given $j\in\{1,\ldots,N\}$
we set $\SS_j:=\pi(L_j)$. \label{SjDEF} This is a closed subtorus of $\TT_j$.
Picking an arbitrary $\vecq\in\R^d$, we define:
\begin{align}\label{OjDEF}
\tSS_j:=\SS_j^{(\vecq)}+\SS_j
\qquad\text{and}\qquad
\scrO_j:=\bigcup_{\gamma\in\SL_d(\Z)}(\pi(U_j^{(\vecq)})\gamma+\SS_j^d)\subset\TT_j^d.
\end{align}
\begin{lem}\label{OjwelldefLEM}
Both $\tSS_j$ and 
$\scrO_j$ are well-defined, 
i.e.\ the expressions in \eqref{OjDEF} are independent of the choice of $\vecq$.
We have $\scrO_j^{(\vecq)}\subset\scrO_j$ for all $\vecq\in\R^d$.
Furthermore, $\tSS_j$ is a closed subgroup of $\TT_j$ whose connected component subgroup
equals $\SS_j$,
and $\scrO_j$ is a union of some of 
the connected components of $\,\tSS_j^d$.
\end{lem}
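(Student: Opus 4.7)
The plan is to carry out a proof that mirrors Lemma \ref{Ojpsi0welldefLEM} essentially line by line, with $L_j$, $\SS_j$, $\scrO_j$ playing the roles of $L_j^\psi$, $\SS_j^\psi$, $\scrO_j^\psi$, and with Lemma \ref{LjbasicLEM1} replacing Lemma \ref{LjqsubsetLjpsi0LEM}. In fact this case is strictly easier, because no admissibility assumption is needed: the crucial inclusion $\R\tvecc_j\subset L_j$ is built directly into the definition \eqref{trueLjDEF}.

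The first step is to establish the analog of \eqref{Ojpsi0welldefLEMpf1}, namely
\begin{align*}
U_{j,\ell}^{(\vecq)}-U_{j,\ell}^{(\vecq')}\in L_j+\Z^{r_j},\qquad\forall\vecq,\vecq'\in\R^d,\ \forall \ell\in\{1,\ldots,d\}.
\end{align*}
This is immediate from \eqref{Ujqgenformula}: the difference equals $-\tvecc_j\,(\vecq-\vecq')M_{j,\ell}'$, where $M_{j,\ell}'$ denotes the $\ell$th column of $M_j^{-1}$; so the difference is a real scalar multiple of $\tvecc_j$, and thus lies in $\R\tvecc_j\subset L_j$.

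The remaining steps proceed exactly as in the proof of Lemma~\ref{Ojpsi0welldefLEM}. For the well-definedness of $\tSS_j$ it suffices to verify $\SS_j^{(\vecq')}\subset\SS_j^{(\vecq)}+\SS_j$ for any two $\vecq,\vecq'\in\R^d$, which follows because the previous display gives $\pi(U_{j,\ell}^{(\vecq')})\in\pi(U_{j,\ell}^{(\vecq)})+\SS_j$. The same display shows $\pi(U_j^{(\vecq)})-\pi(U_j^{(\vecq')})\in\SS_j^d$, which gives the well-definedness of $\scrO_j$. The inclusion $\scrO_j^{(\vecq)}\subset\scrO_j$ for every $\vecq\in\R^d$ follows from Lemma~\ref{LjbasicLEM1}: we have $L_j^{(\vecq)}\subset L_j$, so $(\SS_j^{(\vecq)})^\circ\subset\SS_j$, and inspection of the definitions \eqref{OjVDEF} and \eqref{OjDEF} yields the containment.

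Finally, since $\SS_j^{(\vecq)}$ is a finite union of cosets of its identity component $(\SS_j^{(\vecq)})^\circ\subset\SS_j$, the set $\tSS_j=\SS_j^{(\vecq)}+\SS_j$ is a finite union of $\SS_j$-cosets and hence a closed subgroup of $\TT_j$ whose identity component is $\SS_j$. Similarly, since $\pi(U_j^{(\vecq)})\in(\SS_j^{(\vecq)})^d\subset\tSS_j^d$ and $\scrO_j$ is by construction a union of $\SS_j^d$-cosets, $\scrO_j$ is a union of some of the connected components of $\tSS_j^d$ (finitely many, since $\tSS_j^d$ is compact). The only genuine point where one needs to think is verifying the first display above; after that everything is routine bookkeeping transplanted from the proof of Lemma~\ref{Ojpsi0welldefLEM}, so I do not expect any real obstacle.
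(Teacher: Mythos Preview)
The proposal is correct and follows essentially the same approach as the paper: establish $U_{j,\ell}^{(\vecq)}-U_{j,\ell}^{(\vecq')}\in\R\tvecc_j\subset L_j$ directly from \eqref{Ujqgenformula}, deduce well-definedness of $\tSS_j$ and $\scrO_j$, and then carry over the remaining arguments from Lemma~\ref{Ojpsi0welldefLEM} verbatim (using Lemma~\ref{LjbasicLEM1} in place of Lemma~\ref{LjqsubsetLjpsi0LEM}). The paper is simply terser, writing ``the proof of the remaining assertions is essentially the same as in Lemma~\ref{Ojpsi0welldefLEM},'' whereas you have spelled those steps out.
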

In view of Lemma \ref{OjwelldefLEM}
we may now define $\omega_j^{\g}\in P(\TT_j^d)'$\label{omegagjdef}
to be the restriction to $\scrO_j$ of Haar measure on $\tSS_j^d$, normalized so that
$\omega_j^{\g}(\scrO_j)=1$.
\begin{proof}
It follows from 
\eqref{Ujqgenformula}
that for all $\vecq,\vecq'\in\R^d$
we have $U_{j,\ell}^{(\vecq)}-U_{j,\ell}^{(\vecq')}\in\R\vecc_j\subset L_j$,
and thus 
$\pi(U_{j,\ell}^{(\vecq)})-\pi(U_{j,\ell}^{(\vecq')})\in\SS_j$, for every $\ell\in\{1,\ldots,d\}$.
Recall that 
$\SS_j^{(\vecq)}=\overline{\langle\pi(U_{j,1}^{(\vecq)}),\ldots,\pi(U_{j,d}^{(\vecq)})\rangle}$.
It follows that $\SS_j^{(\vecq')}\subset\SS_j^{(\vecq)}+\SS_j$ for any $\vecq,\vecq'\in\R^d$.
Hence we get that $\tSS_j$ is well-defined.
It also follows that $\pi(U_j^{(\vecq)})-\pi(U_j^{(\vecq')})\in\SS_j^d$ for any $\vecq,\vecq'\in\R^d$;
hence $\scrO_j$ is well-defined.
The proof of the remaining assertions is essentially the same as in Lemma \ref{Ojpsi0welldefLEM}.
\end{proof}

\begin{lem}\label{OjqeqOjLEM}
For all except at most countably many $\vecq\in\R^d$ we have
$(\SS_j^{(\vecq)})^\circ=\SS_j$,
$\SS_j^{(\vecq)}=\tSS_j$,
$\scrO_j^{(\vecq)}=\scrO_j$, and 
$\omega_j^{(\vecq)}=\omega_j^{\g}$.
\end{lem}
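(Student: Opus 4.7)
The plan is to observe that all four equalities follow from a single underlying identity, namely $L_j^{(\vecq)} = L_j$, and that Lemma \ref{LjbasicLEM1} already gives this identity outside a countable exceptional set in $\R^d$. So I will fix such a $\vecq$ and deduce the four claims in sequence by unwinding the definitions.

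First, from the definition $L_j^{(V)} = (\pi^{-1}(\SS_j^{(V)}))^\circ$ in \eqref{LjDEF}, one has $\pi(L_j^{(V)}) = (\SS_j^{(V)})^\circ$ for any $V \in \TT_j^d$. Applied to $V = \pi(U_j^{(\vecq)})$, this gives
\[
(\SS_j^{(\vecq)})^\circ \;=\; \pi(L_j^{(\vecq)}) \;=\; \pi(L_j) \;=\; \SS_j,
\]
which is the first claim. For the second claim, I will use the well-definedness from Lemma \ref{OjwelldefLEM}, which allows me to compute $\tSS_j$ with $\vecq$ itself as the chosen reference point: $\tSS_j = \SS_j^{(\vecq)} + \SS_j = \SS_j^{(\vecq)} + (\SS_j^{(\vecq)})^\circ = \SS_j^{(\vecq)}$, where the last step holds because the identity component is contained in the group.

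For the third claim, I will again invoke Lemma \ref{OjwelldefLEM} to re-express $\scrO_j$ using $\vecq$ as the reference point, so that
\[
\scrO_j \;=\; \bigcup_{\gamma \in \SL_d(\Z)} \bigl(\pi(U_j^{(\vecq)})\gamma + \SS_j^d\bigr),
\]
while by \eqref{OjVDEF} and the first claim,
\[
\scrO_j^{(\vecq)} \;=\; \bigcup_{\gamma \in \SL_d(\Z)} \bigl(\pi(U_j^{(\vecq)})\gamma + ((\SS_j^{(\vecq)})^\circ)^d\bigr) \;=\; \bigcup_{\gamma \in \SL_d(\Z)} \bigl(\pi(U_j^{(\vecq)})\gamma + \SS_j^d\bigr),
\]
so the two sets coincide. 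For the fourth claim, both $\omega_j^{(\vecq)}$ and $\omega_j^{\g}$ are by definition the normalized restrictions of Haar measure on a common subgroup ($\tSS_j^d = (\SS_j^{(\vecq)})^d$) to a common subset ($\scrO_j = \scrO_j^{(\vecq)}$), and hence agree.

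There is no genuine obstacle here: the lemma is essentially a packaging result that promotes the generic identity $L_j^{(\vecq)} = L_j$ of Lemma \ref{LjbasicLEM1} through the definitions of $\SS_j^{(\vecq)}, \scrO_j^{(\vecq)}, \omega_j^{(\vecq)}$ and their ``$\g$-limit'' counterparts. The only subtle point to emphasize is the use of the freedom to choose the reference point $\vecq$ in the definitions of $\tSS_j$ and $\scrO_j$, which is precisely what Lemma \ref{OjwelldefLEM} provides.
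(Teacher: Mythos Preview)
Your proof is correct and follows essentially the same approach as the paper's own proof: both reduce all four equalities to the single identity $L_j^{(\vecq)}=L_j$ supplied by Lemma~\ref{LjbasicLEM1}, then unwind the definitions (using the freedom in the choice of reference point in \eqref{OjDEF} provided by Lemma~\ref{OjwelldefLEM}). The paper's argument is simply a more compressed version of yours.
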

\begin{proof}
We will prove that the stated equalities hold whenever
$L_j^{(\vecq)}=L_j$; by Lemma \ref{LjbasicLEM1} this 
gives the statement of the present lemma.
Thus assume that $L_j^{(\vecq)}=L_j$. %
Then $\bigl(\SS_j^{(\vecq)}\bigr)^\circ=\pi(L_j^{(\vecq)})=\pi(L_j)=\SS_j$.
In particular $\SS_j\subset\SS_j^{(\vecq)}$, and so
and $\SS_j^{(\vecq)} %
=\SS_j^{(\vecq)}+\SS_j
=\tSS_j$.
By comparing \eqref{OjDEF} with
\eqref{OjVDEF} (applied with $V=\pi(U_j^{(\vecq)})$),
we have $\scrO_j^{(\vecq)}=\scrO_j$.
Finally now also $\omega_j^{(\vecq)}=\omega_j^{\g}$
is immediate from the definitions.
\end{proof}

\begin{lem}\label{SjpsieqSjLEM}
For any $\psi\in\Psi$ and $j\in\{1,\ldots,N\}$,
if $j\neq j_\psi$
then 
$\SS_j^\psi=\SS_j$, $\tSS_j^\psi=\tSS_j$,
$\scrO_j^\psi=\scrO_j$ and
$\omega_j^\psi=\omega_j^{\g}$.
\end{lem}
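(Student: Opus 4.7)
The plan is to derive all four equalities as direct consequences of Lemma \ref{LjvsLjpsiLEM}, together with the fact (from Lemmas \ref{Ojpsi0welldefLEM} and \ref{OjwelldefLEM}) that both $\tSS_j^\psi, \scrO_j^\psi$ and $\tSS_j,\scrO_j$ are well-defined independently of the choice of base point. The hypothesis $j\neq j_\psi$ is used exactly once: it yields $L_j^\psi = L_j$ via Lemma \ref{LjvsLjpsiLEM}, and every subsequent equality in the lemma is a formal consequence of this together with the definitions.

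First I would fix $\psi\in\Psi$ and $j\in\{1,\ldots,N\}$ with $j\neq j_\psi$, and apply Lemma \ref{LjvsLjpsiLEM} to get $L_j^\psi = L_j$. Applying $\pi$ to both sides gives immediately $\SS_j^\psi = \pi(L_j^\psi) = \pi(L_j) = \SS_j$, which handles the first assertion.

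Next I would choose any point $\vecq\in\scrL_\psi\subset\R^d$; by Lemma \ref{Ojpsi0welldefLEM} this $\vecq$ is an admissible base point for both $\tSS_j^\psi$ and $\scrO_j^\psi$, and by Lemma \ref{OjwelldefLEM} it is also an admissible base point for $\tSS_j$ and $\scrO_j$. Using the same $\vecq$ in both definitions \eqref{Ojpsi0DEF} and \eqref{OjDEF}, the equality $\SS_j^\psi = \SS_j$ just established gives
\begin{align*}
\tSS_j^\psi \;=\; \SS_j^{(\vecq)} + \SS_j^\psi \;=\; \SS_j^{(\vecq)} + \SS_j \;=\; \tSS_j,
\end{align*}
and
\begin{align*}
\scrO_j^\psi \;=\; \bigcup_{\gamma\in\SL_d(\Z)}\bigl(\pi(U_j^{(\vecq)})\gamma + (\SS_j^\psi)^d\bigr) \;=\; \bigcup_{\gamma\in\SL_d(\Z)}\bigl(\pi(U_j^{(\vecq)})\gamma + \SS_j^d\bigr) \;=\; \scrO_j.
\end{align*}

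Finally, for the measures: by definition $\omega_j^\psi$ is the restriction to $\scrO_j^\psi$ of Haar measure on $(\tSS_j^\psi)^d$, normalized to total mass one, and $\omega_j^{\g}$ is analogously the restriction to $\scrO_j$ of Haar measure on $\tSS_j^d$, normalized to total mass one. Since $\tSS_j^\psi = \tSS_j$ (so the ambient Haar measures coincide) and $\scrO_j^\psi = \scrO_j$ (so the restriction and normalization match), we conclude $\omega_j^\psi = \omega_j^{\g}$.

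There is no genuine obstacle here; the only subtlety is to be careful that $\vecq$ is simultaneously a valid choice of base point in both sets of definitions, which is why Lemmas \ref{Ojpsi0welldefLEM} and \ref{OjwelldefLEM} are invoked explicitly.
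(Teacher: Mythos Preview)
Your proof is correct and follows essentially the same approach as the paper's own proof. The paper's argument is just a terse version of yours: it invokes Lemma \ref{LjvsLjpsiLEM} to obtain $L_j^\psi = L_j$ and then declares that the four equalities are ``immediate from this fact, by inspection in the relevant definitions''; you have simply written out that inspection in full, including the useful observation that a single $\vecq\in\scrL_\psi$ serves as a valid base point in both \eqref{Ojpsi0DEF} and \eqref{OjDEF}.
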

\begin{proof}
If $j\neq j_\psi$.
then $L_j^\psi=L_j$ by Lemma \ref{LjvsLjpsiLEM},
and the stated equalities are immediate from this fact,  %
by inspection in the relevant definitions.
\end{proof}

We now return to the discussion on the measures $\omega_j^\psi$.
In Proposition \ref{muconvLEM} and Remark \ref{CONVremark} 
below, we will prove that $\omega_j^\psi$ is the generic limit of the 
measures $\omega_j^{(\vecq)}$
as $\vecq$ tends to infinity within the grid $\scrL_\psi$.
We will make use of the following auxiliary lemma.
\begin{lem}\label{OjpsicosetcarcLEM}
Let $\psi\in\Psi$, $j\in\{1,\ldots,N\}$ and $\vecq\in\scrL_\psi$.
Let $m=m(\vecq)$ be the number of $\bigl(\SS_j^{(\vecq)}\bigr)^{\circ\, d}$-cosets contained in $\scrO_j^{(\vecq)}$,
and let 
$n$ be the number of $(\SS_j^\psi)^d$-cosets contained in $\scrO_j^\psi$.
Then $n$ divides $m$, and every $(\SS_j^\psi)^d$-coset contained in $\scrO_j^\psi$
contains exactly $m/n$ distinct $\bigl(\SS_j^{(\vecq)}\bigr)^{\circ\, d}$-cosets.
\end{lem}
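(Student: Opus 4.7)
The plan is to view the set of cosets making up $\scrO_j^{(\vecq)}$ and the set of cosets making up $\scrO_j^\psi$ as single $\SL_d(\Z)$-orbits, and to exhibit a natural $\SL_d(\Z)$-equivariant surjection between them. The standard uniform-fiber property for equivariant surjections between transitive group actions will then force each fiber to have the same cardinality $m/n$, yielding both $n \mid m$ and the asserted coset count.

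First I would record that $L_j^{(\vecq)} \subset L_j^\psi$ by Lemma \ref{LjqsubsetLjpsi0LEM}; applying the projection $\pi$ gives $(\SS_j^{(\vecq)})^\circ \subset \SS_j^\psi$, and hence $(\SS_j^{(\vecq)})^{\circ\,d} \subset (\SS_j^\psi)^d$ as closed subgroups of $\TT_j^d$. This produces a natural projection
\begin{align*}
q : \TT_j^d \big/ (\SS_j^{(\vecq)})^{\circ\,d} \to \TT_j^d \big/ (\SS_j^\psi)^d.
\end{align*}
Next I would verify that the right action of $\SL_d(\Z)$ on $\TT_j^d$ preserves each of the subgroups $(\SS_j^{(\vecq)})^{\circ\,d}$ and $(\SS_j^\psi)^d$. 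This is because right multiplication sends the column vectors of a matrix in $\M_{r_j \times d}(\R)$ to integer linear combinations of its columns; since $L_j^{(\vecq)}$ and $L_j^\psi$ are linear subspaces of $\R^{r_j}$, the subsets $L_j^{(\vecq)\, d}$ and $L_j^{\psi\, d}$ of $\M_{r_j \times d}(\R)$ are preserved, and so are their images in $\TT_j^d$. Consequently $\SL_d(\Z)$ acts on the two coset spaces and the map $q$ is $\SL_d(\Z)$-equivariant.

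From the definitions \eqref{OjVDEF} and \eqref{Ojpsi0DEF}, the collection of $(\SS_j^{(\vecq)})^{\circ\,d}$-cosets comprising $\scrO_j^{(\vecq)}$ is precisely the $\SL_d(\Z)$-orbit of the distinguished coset $\pi(U_j^{(\vecq)}) + (\SS_j^{(\vecq)})^{\circ\,d}$, and the collection of $(\SS_j^\psi)^d$-cosets comprising $\scrO_j^\psi$ is the $\SL_d(\Z)$-orbit of $\pi(U_j^{(\vecq)}) + (\SS_j^\psi)^d$. Since $q$ sends the first distinguished coset to the second, it restricts to a well-defined $\SL_d(\Z)$-equivariant surjection $\bar q$ between transitive $\SL_d(\Z)$-sets of sizes $m$ and $n$, respectively. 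The proof then concludes with the standard observation that for any two cosets $C, C'$ in the target, choosing $\gamma \in \SL_d(\Z)$ with $C' = C \gamma$ gives $\bar q^{-1}(C') = \bar q^{-1}(C) \cdot \gamma$, so all fibers of $\bar q$ have the same cardinality, necessarily $m/n$. This yields both the divisibility $n \mid m$ and the asserted uniform distribution of small cosets among big ones. There is no serious obstacle; the only technical point is the $\SL_d(\Z)$-equivariance, which reduces to the column-preservation property of right multiplication by $\SL_d(\Z)$.
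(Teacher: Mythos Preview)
Your proof is correct and takes essentially the same approach as the paper's. The paper phrases the argument via the stabilizers $\Lambda\subset\Lambda'$ of the distinguished cosets and the index relation $m=[\SL_d(\Z):\Lambda]$, $n=[\SL_d(\Z):\Lambda']$, while you phrase it via the $\SL_d(\Z)$-equivariant surjection between the two orbits; these are two standard equivalent formulations of the same orbit--stabilizer argument.
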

\begin{proof}
Let $U:=\pi(U_j^{(\vecq)})\in\bigl(\SS_j^{(\vecq)}\bigr)^{d}$,
and let $\Lambda$ be the stabilizer %
of the coset 
$U+\bigl(\SS_j^{(\vecq)}\bigr)^{\circ\, d}$, for the action of $\SL_d(\Z)$ on the finite group 
$\bigl(\SS_j^{(\vecq)}\bigr)^{d}/\bigl(\SS_j^{(\vecq)}\bigr)^{\circ\, d}$.
It then follows from the definition in \eqref{OjqDEF}
that $\scrO_j^{(\vecq)}$ equals the disjoint union of the cosets $U\gamma+\bigl(\SS_j^{(\vecq)}\bigr)^{\circ\, d}$,
when $\gamma$ runs through any set of representatives for $\Lambda\bs\SL_d(\Z)$.
In particular we have $m=\#(\Lambda\bs\SL_d(\Z))$.
Similarly
let $\Lambda'$ be the stabilizer of the coset
$U+(\SS_j^\psi)^d$ in $(\tSS_j^\psi)^d/(\SS_j^\psi)^d$;
then by \eqref{Ojpsi0DEF},
$\scrO_j^\psi$ equals the disjoint union of the cosets $U\gamma+(\SS_j^\psi)^d$,
when $\gamma$ runs through any set of representatives for $\Lambda'\bs\SL_d(\Z)$;
and in particular we have $n=\#(\Lambda'\bs\SL_d(\Z))$.
Note that $\Lambda\subset\Lambda'$,
since $\bigl(\SS_j^{(\vecq)}\bigr)^\circ\subset\SS_j^\psi$.
Therefore $n$ divides $m$,
and the last statement of the lemma follows from the fact that
each right $\Lambda'$-coset in $\SL_d(\Z)$ contains exactly
$m/n$ right $\Lambda$-cosets.
\end{proof}

\begin{prop}\label{muconvLEM}
Let $\psi\in\Psi$ and $j\in\{1,\ldots,N\}$,
and let $U$ be any open neighbourhood
of $\omega_j^\psi$ in $P(\TT_j^d)$.
Then the set $\{\vecq\in\scrL_{\psi}\col\omega_j^{(\vecq)}\notin U\}$
has density zero.
\end{prop}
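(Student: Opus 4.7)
The plan is to reduce weak convergence in $P(\TT_j^d)$ to pointwise convergence of character integrals, and then invoke Lemma~\ref{MuconvauxLEM1}. For $\vecA=(\veca_1,\ldots,\veca_d)\in(\Z^{r_j})^d$, write $\chi_\vecA(V_1,\ldots,V_d)=\exp\bigl(2\pi\i\sum_\ell\veca_\ell\cdot V_\ell\bigr)$. By Stone--Weierstrass together with metrizability of $P(\TT_j^d)$, for the given neighbourhood $U$ of $\omega_j^\psi$ I would choose finitely many characters $\chi_{\vecA_1},\ldots,\chi_{\vecA_K}$ and a $\delta>0$ such that $\mu\in U$ whenever $|\int\chi_{\vecA_i}\,d\mu-\int\chi_{\vecA_i}\,d\omega_j^\psi|<\delta$ for every $i$. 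Since a finite union of density-zero sets has density zero, it suffices to show that for each fixed $\vecA$, the set $\{\vecq\in\scrL_\psi\col \int\chi_\vecA\,d\omega_j^{(\vecq)}\neq\int\chi_\vecA\,d\omega_j^\psi\}$ has density zero in $\scrL_\psi$.

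Next I would compute both integrals explicitly by Haar integration. Since $\omega_j^\psi$ is the uniform probability measure on the union $\scrO_j^\psi$ of $n$ cosets of $(\SS_j^\psi)^d$ (by Lemma~\ref{Ojpsi0welldefLEM}), one finds $\int\chi_\vecA\,d\omega_j^\psi=0$ unless $\veca_\ell\perp L_j^\psi$ for every $\ell$, in which case it equals $n^{-1}\sum_\gamma\chi_\vecA(\pi(U_j^{(\vecq)})\gamma)$ for any fixed $\vecq\in\scrL_\psi$ and $\gamma$ running over appropriate coset representatives. An analogous formula holds for $\omega_j^{(\vecq)}$ with $L_j^\psi$ replaced by $L_j^{(\vecq)}$ and $n$ replaced by the number $m$ of $(\SS_j^{(\vecq)})^{\circ d}$-cosets in $\scrO_j^{(\vecq)}$.

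I would then split into two cases. If $\veca_\ell\perp L_j^\psi$ for all $\ell$, then Lemma~\ref{LjqsubsetLjpsi0LEM} gives $\veca_\ell\perp L_j^{(\vecq)}$ as well, so both integrals take the sum-over-cosets form. Lemma~\ref{OjpsicosetcarcLEM}---which states that each $(\SS_j^\psi)^d$-coset in $\scrO_j^\psi$ contains exactly $m/n$ of the $(\SS_j^{(\vecq)})^{\circ d}$-cosets from $\scrO_j^{(\vecq)}$---combined with the $(\SS_j^\psi)^d$-invariance of $\chi_\vecA$ under our assumption, reorganizes the two averages into identical expressions, giving $\int\chi_\vecA\,d\omega_j^{(\vecq)}=\int\chi_\vecA\,d\omega_j^\psi$ for \emph{every} $\vecq\in\scrL_\psi$, with no exceptional set. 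If instead some $\veca_{\ell_0}\not\perp L_j^\psi$, then $\int\chi_\vecA\,d\omega_j^\psi=0$, and $\int\chi_\vecA\,d\omega_j^{(\vecq)}$ vanishes unless $\veca_{\ell_0}\perp L_j^{(\vecq)}$; Lemma~\ref{MuconvauxLEM1} applied with $\veca=\veca_{\ell_0}$ then restricts the bad $\vecq$ to a density-zero subset of $\scrL_\psi$.

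The main subtlety to flag is that one cannot attempt to bound $\{\vecq\in\scrL_\psi\col L_j^{(\vecq)}\neq L_j^\psi\}$ directly via a finite list of witness vectors $\veca\in\Z^{r_j}$, because $L_j^{(\vecq)}$ may in principle equal any of infinitely many proper rational subspaces of $L_j^\psi$. The Fourier reduction circumvents this because for any given open neighbourhood of $\omega_j^\psi$ only finitely many characters---and hence only finitely many vectors $\veca$---need be considered at once; Case~I then kills the problematic characters identically, while Case~II contributes only a density-zero bad set per character via Lemma~\ref{MuconvauxLEM1}.
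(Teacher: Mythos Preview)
Your proposal is correct and follows essentially the same approach as the paper: reduce to finitely many characters via Stone--Weierstrass, use Lemma~\ref{OjpsicosetcarcLEM} to show the character integrals agree identically when all $\veca_\ell\perp L_j^\psi$, and invoke Lemma~\ref{MuconvauxLEM1} for the remaining case. The paper packages the coset argument slightly differently---it writes both $\omega_j^{(\vecq)}$ and $\omega_j^\psi$ as averages of translated Haar measures over the \emph{same} set $R(\vecq)$ of representatives and then bounds $|\omega_j^{(\vecq)}(\chi_A)-\omega_j^\psi(\chi_A)|\leq|\nu_j^{(\vecq)}(\chi_A)-\nu_j^\psi(\chi_A)|$---but this is just a compact way of expressing the same two-case split you carry out.
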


\begin{proof}
To each matrix $A\in\M_{r_j\times d}(\Z)$ corresponds a character $\chi_A$ on 
$\TT_j^d=\M_{r_j\times d}(\R)/\M_{r_j\times d}(\Z)$ given by
$\chi_A(X)=e^{2\pi i\Tr(AX\trans)}$,
and every character on $\TT_j^d$ can be so expressed.
Using the fact that the set of finite linear combinations of characters on $\TT_j^d$ is dense in $C(\TT_j^d)$,
it follows that there exists a finite subset $S\subset\M_{r_j\times d}(\Z)$ and some $\ve>0$ such that
$U$ contains the set
\begin{align*}
\{\mu\in P(\TT_j^d)\col |\mu(\chi_A)-\omega_j^{\psi}(\chi_A)|<\ve\hspace{8pt} \forall A\in S\}.
\end{align*}
Hence, using also the fact that a finite union of density zero sets again has density zero,
it suffices to prove that for any fixed $A\in\M_{r_j\times d}(\Z)$,
the set
\begin{align}\label{muconvLEMPF1}
\{\vecq\in\scrL_\psi\col |\omega_j^{(\vecq)}(\chi_A)-\omega_j^{\psi}(\chi_A)|\geq\ve\}
\end{align}
has density zero.

For each $\vecq\in\scrL_\psi$,
let $\nu_j^{(\vecq)}\in P(\TT_j^d)$ be the normalized Haar measure on 
$\bigl(\SS_j^{(\vecq)}\bigr)^{\circ\, d}$,
and let $R(\vecq)$ be a set of representatives containing one point in each
$\bigl(\SS_j^{(\vecq)}\bigr)^{\circ\, d}$-coset contained in $\scrO_j^{(\vecq)}$ 
(recall that $\scrO_j^{(\vecq)}$ is a finite union of 
$\bigl(\SS_j^{(\vecq)}\bigr)^{\circ\, d}$-cosets).
Then by the definition of $\omega_j^{(\vecq)}$ around \eqref{OjqDEF},
we have  
\begin{align*}
\omega_j^{(\vecq)}=\frac1{\# R(\vecq)}\sum_{X\in R(\vecq)}\nu_j^{(\vecq)}\circ\tau_{X}^{-1},
\end{align*}
where $\tau_{X}:\TT_j^d\to\TT_j^d$ denotes translation by $X$.
Also let $\nu_j^{\psi}\in P(\TT_j^d)$ be the normalized Haar measure on $(\SS_j^{\psi})^d$.
Comparing \eqref{OjqDEF} with \eqref{Ojpsi0DEF} 
and the definition of $\omega_j^\psi$ 
on p.\ \pageref{mujpsi0def},
and using Lemmas \ref{Ojpsi0welldefLEM} and \ref{OjpsicosetcarcLEM},
we then have:
\begin{align}\label{muconvLEMPF3}
\omega_j^{\psi}=\frac1{\# R(\vecq)}\sum_{X\in R(\vecq)}\nu_j^{\psi_0}\circ\tau_{X}^{-1}.
\end{align}
It follows that
\begin{align}\label{muconvLEMPF4}
\bigl|\omega_j^{(\vecq)}(\chi_A)-\omega_j^{\psi}(\chi_A)\bigr|
&\leq\frac1{\# R(\vecq)}\sum_{X\in R(\vecq)}\bigl|\nu_j^{(\vecq)}(\chi_A\circ\tau_X)-\nu_j^{\psi}(\chi_A\circ\tau_X)\bigr|
=\bigl|\nu_j^{(\vecq)}(\chi_A)-\nu_j^{\psi}(\chi_A)\bigr|,
\end{align}
where the last equality holds since
$\chi_A\circ\tau_X=\chi_A(X)\cdot\chi_A$ for any $X\in\TT_j^d$.

Recall that %
$\bigl(\SS_j^{(\vecq)}\bigr)^{\circ\, d}=\pi\bigl(\bigl(L_j^{(\vecq)}\bigr)^d\bigr)$;
hence $\nu_j^{(\vecq)}(\chi_A)=1$ if 
$A$ is orthogonal to $\bigl(L_j^{(\vecq)}\bigr)^d$,
otherwise $\nu_j^{(\vecq)}(\chi_A)=0$.
Hence, letting $\veca_1,\ldots,\veca_d\in\Z^{r_j}$ be the column vectors of $A$,
we have $\nu_j^{(\vecq)}(\chi_A)=1$ if each $\veca_\ell$ is orthogonal to $L_j^{(\vecq)}$,
otherwise $\nu_j^{(\vecq)}(\chi_A)=0$.
Similarly $\nu_j^\psi(\chi_A)=1$ if each $\veca_\ell$ is orthogonal to $L_j^\psi$,
otherwise $\nu_j^{(\vecq)}(\chi_A)=0$.
Recall also that $L_j^{(\vecq)}\subset L_j^\psi$ for all $\vecq\in\scrL_\psi$
(cf.\ Lemma \ref{LjqsubsetLjpsi0LEM}).
These facts, together with \eqref{muconvLEMPF4}, imply that the set in \eqref{muconvLEMPF1} 
is contained in the following finite union:
\begin{align*}
\bigcup_{\substack{\ell\in\{1,\ldots,d\}\\(\veca_\ell\not\perp L_j^\psi)}}
\bigl\{\vecq\in\scrL_\psi\col \veca_\ell\perp L_j^{(\vecq)}\bigr\}.
\end{align*}
This set has density zero by Lemma \ref{MuconvauxLEM1}.
\end{proof}

\begin{remark}\label{CONVremark}
We have stated Proposition \ref{muconvLEM} in a way which will be convenient for later applications.
However let us note that it can be reformulated as follows:
\textit{We have $\omega_j^{(\vecq)}\to\omega_j^{\psi}$ in $P(\TT_j^d)$ 
as $\vecq$ tends to infinity within a full density subset of $\scrL_{\psi}$.}
Indeed, applying Proposition~\ref{muconvLEM} to a sequence of shrinking open sets $U$
containing no other element than $\omega_j^\psi$ in their intersection,
and using Lemma \ref{densityzeroLEM1} below,
we conclude that there exists a subset $\scrZ\subset \scrL_{\psi}$
of density zero such that 
$\lim_{k\to\infty}\omega_j^{(\vecq_k)}=\omega_j^{\psi}$ %
for any sequence of points $\vecq_1,\vecq_2,\ldots$ in $\scrL_{\psi}\setminus\scrZ$
with $\|\vecq_k\|\to\infty$.
\end{remark}

\begin{lem}\label{densityzeroLEM1}
Let $\scrL$ be a grid in $\R^d$,
and let $\scrZ_1,\scrZ_2,\ldots$ be subsets of $\scrL$
which each have  density zero.
Then there exists a subset $\scrZ$ of $\scrL$ of density zero
which has the property that for every $k$ there exists some $R>0$
such that $\scrZ_k\setminus\scrB_R^d\subset\scrZ$.
\end{lem}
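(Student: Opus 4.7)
The idea is a standard diagonal construction. For each $k$, the hypothesis that $\scrZ_k$ has density zero says that $R^{-d}\#(\scrZ_k\cap\scrB_R^d)\to0$ as $R\to\infty$, so we can pick a strictly increasing sequence $R_1<R_2<\cdots$ tending to infinity such that for every $k$ and every $R\geq R_k$,
\begin{align*}
\#\bigl(\scrZ_k\cap\scrB_R^d\bigr)<R^d/k^2.
\end{align*}
(At stage $k$, we may take $R_k$ large enough both to guarantee this bound and to exceed $R_{k-1}$.) We then define
\begin{align*}
\scrZ:=\bigcup_{k=1}^\infty\bigl(\scrZ_k\setminus\scrB_{R_k}^d\bigr)\subset\scrL.
\end{align*}
By construction, for every $k$ we have $\scrZ_k\setminus\scrB_{R_k}^d\subset\scrZ$, so the second conclusion of the lemma holds with $R=R_k$.

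It remains to show that $\scrZ$ has density zero. Fix $R\geq R_1$, and let $K(R)$ be the largest index $k$ for which $R_k\leq R$. Since $(\scrZ_k\setminus\scrB_{R_k}^d)\cap\scrB_R^d=\emptyset$ whenever $R_k>R$, we obtain the decomposition
\begin{align*}
\#\bigl(\scrZ\cap\scrB_R^d\bigr)\leq\sum_{k=1}^{K(R)}\#\bigl(\scrZ_k\cap\scrB_R^d\bigr).
\end{align*}
Given $\ve>0$, fix $K_0$ large enough that $\sum_{k\geq K_0}1/k^2<\ve/2$. The terms with $K_0\leq k\leq K(R)$ are each bounded by $R^d/k^2$ (since $R\geq R_k$), contributing at most $\ve R^d/2$ in total. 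The remaining head $\sum_{k=1}^{K_0-1}\#(\scrZ_k\cap\scrB_R^d)$ is a fixed finite sum of quantities that are each $o(R^d)$ as $R\to\infty$, and hence is bounded by $\ve R^d/2$ for all sufficiently large $R$. Combining the two estimates gives $\#(\scrZ\cap\scrB_R^d)<\ve R^d$ for $R$ large, which is what we wanted.

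There is no real obstacle here; the only point worth highlighting is that one cannot simply bound $R^{-d}\#(\scrZ\cap\scrB_R^d)$ by $\sum_{k=1}^{K(R)}1/k^2$ and hope for it to vanish, since this upper bound is merely $O(1)$. The two-piece split is essential: the tail exploits the summability of $1/k^2$, while the (fixed-length) head exploits the individual density-zero property of the $\scrZ_k$.
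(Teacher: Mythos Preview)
Your proof is correct and uses essentially the same diagonal construction as the paper. One minor difference worth noting: the paper chooses $R_k$ so that $\sum_{j=1}^k R^{-d}\#(\scrZ_j\cap\scrB_R^d)<1/k$ for all $R\geq R_k$, which makes the density-zero verification a one-liner and renders your head-tail split unnecessary (so the split is not ``essential'' as you say, just a consequence of your particular choice of $R_k$).
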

\begin{proof}
Set $z_j(R)=R^{-d}\#(\scrZ_j\cap\scrB_R^d)$.
Using the fact that $z_j(R)\to0$ for each $j$,
we can pick $0<R_1<R_2<\cdots$ such that
for each $k\in\Z^+$ and each $R\geq R_k$ we have $\sum_{j=1}^kz_j(R)<k^{-1}$.
Now set
\begin{align*}
\scrZ=\bigcup_{j=1}^\infty(\scrZ_j\setminus\scrB_{R_j}^d).
\end{align*}
This set $\scrZ$ has  density zero,
since for each $k\in\Z^+$ and each $R\in[R_k,R_{k+1})$
we have $R^{-d}\#(\scrZ\cap\scrB_R^d)=
R^{-d}\#\cup_{j=1}^k(\scrZ_j\cap\scrB_R^d\setminus\scrB_{R_j^d})
\leq\sum_{j=1}^k z_j(R)<k^{-1}$.
It is also clear that $\scrZ$ has the last property stated in the lemma.
\end{proof}

\begin{remark}\label{MuconvauxLEM1rem}
By analogy with Remark \ref{CONVremark} we also note that
Lemma \ref{MuconvauxLEM1} implies that there
exists a subset $\scrZ\subset \scrL_{\psi}$
of density zero such that 
for every 
$\veca\in\Z^{r_j}$ with $\veca\not\perp L_j^\psi$
there is some $R>0$ such that
$\veca\not\perp L_j^{(\vecq)}$ holds for all $\vecq\in\scrL_{\psi}\setminus\scrZ$
with $\|\vecq\|>R$.
As in the proof of Proposition \ref{muconvLEM}, %
this implies that the normalized Haar measure on $\bigl(\SS_j^{(\vecq)}\bigr)^\circ=\pi(L_j^{(\vecq)})\subset\TT_j$
tends to the normalized Haar measure $\nu_j^\psi$ on $\SS_j^\psi=\pi(L_j^\psi)$
as $\vecq$ tends to infinity within $\scrL_{\psi}\setminus\scrZ$.
In this sense, we may say that ``$L_j^{(\vecq)}$ approaches $L_j^\psi$''
as $\vecq$ tends to infinity within a full density subset of $\scrL_\psi$.
\end{remark}

\subsection{Admissible presentations of $\scrP$}
\label{ATTAINADMsec}

In this section we will prove that an admissible presentation of $\scrP$ can always be obtained:

\begin{prop}\label{ADMpropMAIN}
Let $\scrP$ be a finite union of grids.
Then $\scrP$ possesses an admissible presentation,
i.e.\ there exist $N\in\Z^+$, $r_1,\ldots,r_N\in\Z^+$,
$M_1,\ldots,M_N\in\SL_d(\R)$,
and numbers $c_\psi\in\R$ and vectors $\vecw_\psi\in\R^d$
for $\psi\in\Psi$ (with $\Psi$ as in \eqref{PSIdef}),
such that $\scrP$ is given by \eqref{GENPOINTSET1}, \eqref{LpsiDEF},
and this presentation of $\scrP$
satisfies the admissibility condition in Definition \ref{admissibleDEF}.
\end{prop}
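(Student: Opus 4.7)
My plan is to construct an admissible presentation by refining an arbitrary initial one. The first observation is that admissibility for $\psi=(j,i)$ only involves subgrids within the $j$-th commensurability class (via $\vecc_j^\psi$ and the $W_{j,\ell}^\psi$), so each commensurability class can be treated independently. Starting from the presentation supplied by Lemma~\ref{PformredLEM1}, within class $j$ I would partition the indices $i\in\{1,\ldots,r_j\}$ by the equivalence relation $c_{j,i}/c_{j,i'}\in\Q$. For each rational-equivalence class $K$, I would choose a common scale $C_K$ with $C_K/c_{j,i}\in\Z^+$ for every $i\in K$, and rewrite each grid using the coarsening identity
\[
c_{j,i}(\Z^d+\vecv_{j,i})=\bigcup_{\vecalf\in\{0,\ldots,m-1\}^d}C_K\bigl(\Z^d+(\vecv_{j,i}+\vecalf)/m\bigr),\qquad m=C_K/c_{j,i}.
\]
After discarding duplicate translates modulo $\Z^d$ to restore \eqref{THINDISJcond2}, every grid inside a given rational-equivalence class then has scale exactly $C_K$.

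Given this intermediate presentation, fix $\psi=(j,i_0)$ in class $K_0$. The vector $\vecc_j^\psi$ has its $K_0$-block equal to an all-ones vector (automatically integral) and, for each $K\neq K_0$, its $K$-block equal to $\alpha_K$ times an all-ones vector, with $\alpha_K=C_{K_0}/C_K$ irrational. By Lemma~\ref{IDCLcharLEM}, admissibility of $\psi$ is equivalent to $\pi(\vecc_j^\psi)$ lying in the identity component of the closed subgroup of $\TT^{r_j}$ topologically generated by $\pi(\vecc_j^\psi)$ and the $\pi(W_{j,\ell}^\psi)$. The only possible obstruction is a $\Z$-linear relation $a_0+\sum_{K\neq K_0}a_K\alpha_K=0$ whose normalized constant $a_0/\gcd_K(a_K)$ is rational but not integral; such a relation produces torsion in the closure and can move $\pi(\vecc_j^\psi)$ out of the identity component. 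My strategy is therefore a second refinement: replace each $C_K$ by $N_K C_K$ for a positive integer $N_K$ chosen so that, for every $K_0$ and every generator of the finitely generated $\Z$-module of such relations, the rescaled constant is integral. Only finitely many divisibility conditions on the $N_K$ arise, so they can be satisfied simultaneously.

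The main difficulty will be this second refinement: pinning down precisely which divisibility constraints on the $N_K$ suffice to annihilate every fractional constant appearing in the relations, uniformly over $\psi$ within the commensurability class. Once this is achieved, the orbit $\overline{\Z\pi(\vecc_j^\psi)}$ is connected, and combined with the integrality of the $K_0$-block of $\vecc_j^\psi$ this yields $\vecc_j^\psi\in L_j^\psi+\Z^{r_j}$; adjoining the columns of $W_j^\psi$ can only enlarge $L_j^\psi$ and so cannot spoil admissibility. The conditions \eqref{THINDISJcond2} and \eqref{GENPOINTSET1req} remain easy to preserve, and applying the same two-step refinement to each commensurability class independently produces the desired admissible presentation.
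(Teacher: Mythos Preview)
Your strategy is broadly the same as the paper's: both proofs refine the given presentation by replacing each grid $c_\psi(\Z^d+\vecw_\psi)M_{j_\psi}$ with its $q_\psi^d$ subgrids at scale $q_\psi c_\psi$, and both aim for the stronger condition $\vecc_j^\psi\in\fL(\vecc_j^\psi)+\Z^{r_j}$ (which you phrase as connectedness of $\overline{\Z\pi(\vecc_j^\psi)}$). Your first refinement, equalising scales within each rational-equivalence class, is an extra preprocessing step the paper does not take; instead the paper observes (Lemma~4.8) that strong admissibility depends only on the \emph{set} of scales, so one may pass directly to a vector with distinct entries and back.

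The genuine gap lies in your second refinement. You assert that the obstruction to connectedness is a relation $a_0+\sum_{K\neq K_0}a_K\alpha_K=0$ with $a_0/\gcd(a_K)\notin\Z$, and that one can choose integers $N_K$ clearing all such denominators. But after rescaling, the relation module itself changes: the new $\alpha_K'=(N_{K_0}/N_K)\alpha_K$ admit integer relations that need not come from integer relations among the old $\alpha_K$, so it is not clear that finitely many divisibility constraints on the $N_K$ suffice. Moreover, the constraints must hold simultaneously for \emph{every} choice of base class $K_0$, and you give no mechanism for achieving this compatibility. The paper resolves exactly this point (its Lemma~4.9) by a concrete construction rather different from ``clear denominators'': one sets $q_i$ to be the positive generator of $p_i(\Z^r\cap\fL(\R\tvecu))$, picks $\vech^{(i)}\in\Z^r\cap\fL(\R\tvecu)$ with $i$th coordinate $q_i$, and then uses the inclusion $\fL(\R\tvecu)\subset\R\tvecu+\fL(u_i\tvecu)$ together with orthogonality to $\vece_i$ to show $q_iu_i\tvecu-\vech^{(i)}\in\fL(u_i\tvecu)$. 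This single choice of $(q_i)$ works for all $i$ at once, because $\fL(\R\tvecu)$ is defined independently of any base index. Your outline would need a comparable uniform construction to become a proof.
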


We will prove Proposition \ref{ADMpropMAIN}
by showing that there exists a presentation of $\scrP$
as in \eqref{GENPOINTSET1}, \eqref{LpsiDEF},
such that \eqref{THINDISJcond2} and \eqref{GENPOINTSET1req} hold
and also
\begin{align}\label{strongadmissible}
\vecc_{j}^{\psi}\in\fL\bigl(\{\vecc_j^{\psi}\}\bigr)+\Z^{r_j},
\qquad\forall\psi=(j,i)\in\Psi.
\end{align}
This implies that the presentation is admissible, since
we always have $\fL\bigl(\{\vecc_j^{\psi}\}\bigr)\subset L_j^\psi$ for all $\psi=(j,i)\in\Psi$,
because of \eqref{Ljpsi0DEF} and the fact that
$\fL(S)$ is increasing in $S$.

For any $r\geq1$ and any vector $\vecu=(u_1,\ldots,u_r)\in\R_{>0}^r$,
let us write $\tvecu:=(u_1^{-1},\ldots,u_r^{-1})\in\R_{>0}^r$.
We call the vector $\vecu\in\R_{>0}^r$
\textit{admissible}
if $u_i\tvecu\in\fL(u_i\tvecu)+\Z^r$ 
for every $i\in\{1,\ldots,r\}$.\label{admissibleDEF2}
In view of \eqref{cjdef},
we then have that
\eqref{strongadmissible}
is equivalent with the condition
that the vector
$(c_{j,1},\ldots,c_{j,r_j})$ is admissible, for every $j\in\{1,\ldots,N\}$.

To prove Proposition \ref{ADMpropMAIN},
we will start from an arbitrary presentation of $\scrP$
as obtained in Section \ref{Preprsec},
i.e.\ we assume that $\scrP$ is expressed as in \eqref{GENPOINTSET1}, \eqref{LpsiDEF},
and that the conditions \eqref{THINDISJcond2} and \eqref{GENPOINTSET1req}
hold.
If this presentation is not already admissible,
then we will modify it by making use of the simple fact that
for any positive integer $q$,
we can express
$\Z^d$ as the (disjoint) union of
the grids $q\Z^d+\vecalf$,
with $\vecalf$ running through the set $\{1,\ldots,q\}^d$;
hence for any $\vecw\in\R^d$ and $M\in\SL_d(\R)$,
the grid $(\Z^d+\vecw)M$ equals the union of the grids
$(q\Z^d+\vecalf+\vecw)M$.
It follows that for any choice of positive integers $q_\psi$ ($\psi\in\Psi$),
we have
\begin{align}
\scrP=\bigcup_{\psi\in\Psi}\scrL_\psi
=\bigcup_{\psi\in\Psi}c_\psi(\Z^d+\vecw_\psi)M_{j_\psi}
=\bigcup_{\psi\in\Psi}\bigcup_{\vecalf\in\{1,\ldots,q_\psi\}^d}
q_\psi c_\psi\bigl(\Z^d+q_\psi^{-1}(\vecalf+\vecw_\psi)\bigr)M_{j_\psi}.
\end{align}
In other words, we have obtained a new presentation of $\scrP$,
analogous to the original one: %
\begin{align}\label{Pnewpresentation}
\scrP=\bigcup_{\vartheta\in\Theta} c'_\vartheta(\Z^d+\vecw_\vartheta')M_{j_\vartheta},
\end{align}
where
\begin{align*}
\Theta=\{(j,i)\col j\in\{1,\ldots,N\},\: i\in\{1,\ldots,r_j'\}\}
\end{align*}
with
\begin{align*}
r_j'=\sum_{i=1}^{r_j}q_{j,i}^d
\qquad (j=1,\ldots,N),
\end{align*}
and where 
\begin{align*}
\vecw'_{\vartheta}=q_{\psi(\vartheta)}^{-1}(\vecalf_\vartheta+\vecw_{\psi(\vartheta)});
\qquad c'_\vartheta=q_{\psi(\vartheta)}c_{\psi(\vartheta)},
\end{align*}
with $\psi(\vartheta)\in\Psi$ and $\vecalf_\vartheta\in\Z^d$
chosen in such a way that
$j_{\psi(\vartheta)}=j_\vartheta$ for all $\vartheta\in\Theta$,
and the map $\vartheta\mapsto\langle\psi(\vartheta),\vecalf_\vartheta\rangle$
is a bijection from $\Theta$ onto $\{\langle\psi,\vecalf\rangle\col \psi\in\Psi,\:\vecalf\in\{1,\ldots,q_\psi\}^d\}$.
It is obvious that the new presentation %
again satisfies 
the condition \eqref{GENPOINTSET1req}, since
the matrices $M_1,\ldots,M_N$ are unchanged.
It is also immediate that the %
condition \eqref{THINDISJcond2} remains true for the new presentation.
Hence %
it remains to prove that we can choose the positive integers $q_\psi$
in such a way that the analogue of \eqref{strongadmissible} holds for the new presentation,
i.e.\ so that
$\bigl(c_{j,1}',\ldots,c_{j,r_j'}'\bigr)$ is admissible, for every $j\in\{1,\ldots,N\}$.

The proof of this fact is essentially completed by
lemmas \ref{ADMproplem1} and \ref{ADMproplem2new} below.
\begin{lem}\label{IDCLcharLEM3a}
Let $1\leq r\leq r'$ and let $T\in \M_{r'\times r}(\Q)$.
Then for any non-empty subset $S\subset\R^r$
we have 
$\fL(TS)=T\,\fL(S)$.
\end{lem}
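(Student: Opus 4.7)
My plan is to prove the two inclusions $\fL(TS)\subset T\fL(S)$ and $T\fL(S)\subset\fL(TS)$ separately, exploiting the two equivalent characterizations of $\fL$ available to us, namely Lemma \ref{IDCLcharLEM} (smallest rational subspace containing $\langle S\rangle$ modulo $n^{-1}\Z^r$ for some $n$) and Lemma \ref{IDCLcharLEM2} (orthogonality test via rational covectors).

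For the inclusion $\fL(TS)\subset T\fL(S)$, I would first check that $T\fL(S)$ is a rational subspace of $\R^{r'}$: indeed, $\fL(S)$ is spanned by vectors in $\Z^r$, so $T\fL(S)$ is spanned by vectors in $\Q^{r'}$, and after clearing denominators by vectors in $\Z^{r'}$. Then I would apply Lemma \ref{IDCLcharLEM} to $S$ to get $n\in\Z^+$ with $\langle S\rangle\subset n^{-1}\Z^r+\fL(S)$, apply $T$ to obtain $\langle TS\rangle\subset n^{-1}T\Z^r+T\fL(S)$, and choose $m\in\Z^+$ with $mT\in\M_{r'\times r}(\Z)$ so that $T\Z^r\subset m^{-1}\Z^{r'}$. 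The minimality clause of Lemma \ref{IDCLcharLEM} applied to $TS$ then gives $\fL(TS)\subset T\fL(S)$.

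For the reverse inclusion $T\fL(S)\subset\fL(TS)$, I would use Lemma \ref{IDCLcharLEM2}. Given any $\vecb\in\Q^{r'}$ with $\vecb\perp\fL(TS)$, there is $n\in\Z^+$ with $\{\vecb\cdot(T\vecv)\col\vecv\in S\}\subset n^{-1}\Z$. Rewriting $\vecb\cdot(T\vecv)=(T\trans\vecb)\cdot\vecv$ and noting $T\trans\vecb\in\Q^r$, Lemma \ref{IDCLcharLEM2} yields $T\trans\vecb\perp\fL(S)$, hence $\vecb\perp T\fL(S)$. Since $\fL(TS)$ is rational, $\fL(TS)^{\perp}$ is spanned by its rational vectors, so we get $\fL(TS)^{\perp}\subset(T\fL(S))^{\perp}$, and taking orthogonal complements in the finite-dimensional space $\R^{r'}$ yields $T\fL(S)\subset\fL(TS)$.

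The proof is essentially routine once the two characterizations are on the table; the only mild care is bookkeeping around conventions (column vectors, the transpose $T\trans$, and the fact that rational subspaces coincide with their double perp). No step really constitutes a genuine obstacle—the hardest bit is simply noticing that the adjoint $T\trans$ carries rational covectors annihilating $\fL(TS)$ to rational covectors annihilating $\fL(S)$, which is what makes Lemma \ref{IDCLcharLEM2} fit the second inclusion cleanly.
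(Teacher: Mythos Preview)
Your proof is correct. The paper's proof is a minor variant: it handles both inclusions simultaneously by showing, via Lemma~\ref{IDCLcharLEM2} alone, the equivalence $\veca\perp\fL(TS)\Leftrightarrow\veca\perp T\fL(S)$ for all $\veca\in\Q^{r'}$ (using the same adjoint identity $\veca\cdot T\vecv=(T\trans\veca)\cdot\vecv$ that you use for your second inclusion), whereas you prove $\fL(TS)\subset T\fL(S)$ directly from Lemma~\ref{IDCLcharLEM} by pushing the containment $\langle S\rangle\subset n^{-1}\Z^r+\fL(S)$ through $T$. Both routes are equally short; the paper's is slightly more symmetric, while yours makes the rationality of $T\fL(S)$ explicit before invoking minimality.
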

(Here for any subset $A\subset\R^r$ we write $TA:=\{T\vecv\col\vecv\in A\}$.
Recall also that we view vectors in $\R^r$ as column matrices;
hence $T\vecv\in\R^{r'}$ for every $\vecv\in\R^r$.)

\begin{proof}
Both $\fL(TS)$ and $T\,\fL(S)$ are rational subspaces of $\R^{r'}$;
hence it suffices to prove that 
the equivalence [$\veca\perp\fL(TS)\Leftrightarrow\veca\perp T\,\fL(S)$]
holds for all $\veca\in\Q^{r'}$.
However, $\veca\perp T\,\fL(S)$ holds if and only if
$\veca\cdot T\vecv=0$ for all $\vecv\in\fL(S)$,
or equivalently $(T\trans \veca)\cdot \vecv=0$
for all $\vecv\in\fL(S)$.
By Lemma \ref{IDCLcharLEM2},
this holds if and only if 
there is some $n\in\Z^+$ such that $T\trans\veca\cdot\vecv\in n^{-1}\Z$
for all $\vecv\in S$.
On the other hand, 
Lemma \ref{IDCLcharLEM2} also gives that
$\veca\perp \fL(TS)$
holds if and only if there is some $n\in\Z^+$ such that
$\veca\cdot T\vecv\in n^{-1}\Z$
for all $\vecv\in S$,
or equivalently $T\trans\veca\cdot\vecv\in n^{-1}\Z$
for all $\vecv\in S$.
Hence the equivalence is established.
\end{proof}

\begin{lem}\label{ADMproplem1}
If the vectors $\vecu=(u_1,\ldots,u_r)\in\R_{>0}^r$
and $\vecu'=(u_1',\ldots,u'_{r'})\in\R_{>0}^{r'}$
have the same set of coordinates,
i.e.\
$\{u_1,\ldots,u_r\}=\{u_1',\ldots,u'_{r'}\}$,
then $\vecu$ is admissible if and only if $\vecu'$ is admissible.
\end{lem}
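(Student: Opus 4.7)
The plan is to factor both admissibility conditions through a single ``reduced'' vector whose entries are the distinct coordinate values. Let $\{v_1,\ldots,v_s\}$ denote the common set $\{u_1,\ldots,u_r\}=\{u_1',\ldots,u'_{r'}\}$, written without repetition, and set $\vecv=(v_1,\ldots,v_s)\trans$ and $\tvecv=(v_1^{-1},\ldots,v_s^{-1})\trans$. Define the surjection $\phi\colon\{1,\ldots,r\}\to\{1,\ldots,s\}$ by $u_i=v_{\phi(i)}$, and let $P\in\M_{r\times s}(\Z)$ be the $0$--$1$ matrix whose $i$th row is $\vece_{\phi(i)}\trans$. Then $\vecu=P\vecv$ and $\tvecu=P\tvecv$, and for each $i\in\{1,\ldots,r\}$ with $k:=\phi(i)$ we have $u_i\tvecu=P(v_k\tvecv)$.

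Applying Lemma \ref{IDCLcharLEM3a} to the rational matrix $P$ (note that $s\leq r$) with $S=\{v_k\tvecv\}$ yields $\fL(u_i\tvecu)=P\,\fL(v_k\tvecv)$, so the admissibility condition ``$u_i\tvecu\in\fL(u_i\tvecu)+\Z^r$'' becomes
\begin{equation*}
P(v_k\tvecv)\in P\,\fL(v_k\tvecv)+\Z^r.
\end{equation*}
Next I would prove that for any rational subspace $L\subset\R^s$ and any $\vecx\in\R^s$, one has $P\vecx\in PL+\Z^r$ if and only if $\vecx\in L+\Z^s$. The direction ($\Leftarrow$) is immediate from $P\Z^s\subset\Z^r$. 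For ($\Rightarrow$) the key observation is that $P^{-1}(\Z^r)\cap\R^s=\Z^s$: if $P\vecx\in\Z^r$ then, by surjectivity of $\phi$, every $x_k$ equals some coordinate $(P\vecx)_i$ and thus lies in $\Z$. Writing $P\vecx=Pl+\vecn$ with $l\in L$ and $\vecn\in\Z^r$ then forces $P(\vecx-l)\in\Z^r$, hence $\vecx-l\in\Z^s$ and $\vecx\in L+\Z^s$.

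Taking $L=\fL(v_k\tvecv)$ reduces the admissibility of $\vecu$ at index $i$ to the intrinsic condition $v_k\tvecv\in\fL(v_k\tvecv)+\Z^s$, which depends only on $\vecv$ and on $k=\phi(i)$. Since $\phi$ is surjective, as $i$ ranges over $\{1,\ldots,r\}$ the index $k$ ranges over all of $\{1,\ldots,s\}$; the full admissibility of $\vecu$ is therefore equivalent to the condition ``$v_k\tvecv\in\fL(v_k\tvecv)+\Z^s$ for every $k\in\{1,\ldots,s\}$''. Running the identical reduction with $\vecu'$ and its own (also surjective) map $\phi'$ yields exactly the same condition on $\vecv$, and the equivalence follows. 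The argument has no substantive obstacle; its content lies in recognising that the correct maneuver is to work with the rational matrix $P$ so that Lemma \ref{IDCLcharLEM3a} applies directly, and in observing that the $0$--$1$ structure of $P$ combined with surjectivity of $\phi$ forces $P^{-1}(\Z^r)\cap\R^s=\Z^s$.
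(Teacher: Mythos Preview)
Your proof is correct and rests on the same key ingredient as the paper's: applying Lemma~\ref{IDCLcharLEM3a} to a $0$--$1$ matrix encoding the coordinate correspondence. The organizational difference is that you factor both $\vecu$ and $\vecu'$ through a reduced vector $\vecv$ with distinct entries, which forces you to prove the two-sided equivalence $P\vecx\in PL+\Z^r\Leftrightarrow\vecx\in L+\Z^s$ (the nontrivial direction using surjectivity of $\phi$ to get $P^{-1}(\Z^r)=\Z^s$); the paper instead passes directly from $\vecu$ to $\vecu'$ via a $0$--$1$ matrix $T$ with $\vecu'=T\vecu$, so that only the trivial inclusion $T\Z^r\subset\Z^{r'}$ is needed, and then runs the symmetric argument with $T'$ for the converse. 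Your route is marginally longer but has the agreeable side effect of showing that admissibility depends only on the \emph{set} of coordinate values.
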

\begin{proof}
Assume that
$\{u_1,\ldots,u_r\}=\{u_1',\ldots,u'_{r'}\}$.
This means that there exist uniquely determined
matrices $T\in\M_{r,r'}(\Z)$ and $T'\in\M_{r',r}(\Z)$
with %
exactly one entry of $1$ in each row and $0$s elsewhere,
such that $\vecu'=T\vecu$ and $\vecu=T'\vecu'$;
thus also $\widetilde{\vecu'}=T\tvecu$ and $\tvecu=T'\widetilde{\vecu'}$.
Assume that $\vecu$ is admissible.
Then for every $i\in\{1,\ldots,r\}$
we have $u_i\tvecu\in\fL(u_i\tvecu)+\Z^r$.
Multiplying this relation by $T$ from the left,
and using Lemma \ref{IDCLcharLEM3a}
and $T\Z^r\subset\Z^{r'}$,
we obtain
$u_i\widetilde{\vecu'}\in\fL(u_i\widetilde{\vecu'})+\Z^{r'}$.
This holds for all $i\in\{1,\ldots,r\}$,
and for each $i'\in\{1,\ldots,r'\}$ there exists some
$i\in\{1,\ldots,r\}$ such that $u_{i'}'=u_i$.
Hence $\vecu'$ is admissible.
The opposite implication is proved analogously, using $T'$.
\end{proof}

\begin{lem}\label{ADMproplem2new}
For any $\vecu=(u_1,\ldots,u_r)\in\R_{>0}^r$,
there exist positive integers $q_1,\ldots,q_r$ such that 
the vector $(q_1u_1,\ldots,q_ru_r)$ is admissible.
\end{lem}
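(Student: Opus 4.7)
My plan is to scale in two stages: a first coarse scaling that uses Lemma \ref{ADMproplem1} to reduce to a vector with pairwise irrational ratios, and a finer scaling that fixes admissibility at every index.

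For the first stage, I would partition $\{1,\ldots,r\}$ into equivalence classes under the relation $i \sim i' \iff u_i/u_{i'} \in \Q$, call them $I_1, \ldots, I_k$. Within each class $I_s$ the ratios are positive rationals, so I can pick positive integers $q_i^{(1)}$ ($i \in I_s$) making $q_i^{(1)}u_i$ equal to a common value $v_s$ --- concretely, if $u_i/u_{i_s} = p_i/m_i$ in lowest terms and $C_s := \lcm\{p_i : i \in I_s\}$, then $q_i^{(1)} := C_s m_i/p_i$ does the job, with $v_s := C_s u_{i_s}$. The vector $(q_i^{(1)} u_i)_i$ has the same set of distinct coordinates as $\vecv := (v_1, \ldots, v_k)^T$, so by Lemma \ref{ADMproplem1} it suffices to find $\tilde q_s \in \Z^+$ such that $(\tilde q_s v_s)_{s=1}^k$ is admissible; the combined integers $q_i := \tilde q_s q_i^{(1)}$ for $i \in I_s$ will then yield the lemma. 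By construction $\vecv$ has pairwise irrational ratios $v_s/v_t \notin \Q$ for $s \neq t$.

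For the second stage, I would first unpack what admissibility means concretely for such a $\vecv$. By Lemma \ref{IDCLcharLEM2}, admissibility at index $i$ is equivalent to the implication $\sum_s a_s v_i/v_s \in \Q \implies \sum_s a_s v_i/v_s \in \Z$ for every $\veca \in \Z^k$. Splitting off the $s = i$ term, which contributes $a_i \in \Z$, and using $v_i/v_s \notin \Q$ for $s \neq i$, this rephrases to the condition that the $\Q$-subspace $L := \{\veca \in \Q^k : \sum_s a_s/v_s = 0\}$ satisfies $\pi_i(L \cap \Z^{k}_{\neq i}) \subset \Z$, where $\Z^{k}_{\neq i} := \{\veca \in \Q^k : a_s \in \Z \text{ for all } s \neq i\}$ and $\pi_i$ is projection to the $i$-th coordinate. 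Scaling $v_s \to \tilde q_s v_s$ replaces $L$ by $\tilde L := \diag(\tilde q_s) L$, and admissibility at $i$ becomes $\pi_i(\tilde L \cap \Z^{k}_{\neq i}) \subset \Z$.

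The main and hardest step is to construct the $\tilde q_s$'s so that all $k$ admissibility conditions hold simultaneously. I would pick a $\Z$-basis $\vecf_1, \ldots, \vecf_e$ of the lattice $L \cap \Z^k$ (with $e = \dim_\Q L$), observe that $\tilde L \cap \Z^{k}_{\neq i}$ is a finitely generated subgroup of $\Q^k$ whose $\pi_i$-image is a cyclic subgroup of $\Q$, and compute its generator explicitly in terms of the $\tilde q_s$'s and the integer coordinates $f_{p,j}$. In the simple case $e = 1$, where $L$ is spanned by a single primitive integer vector $\vecg$, admissibility at $i$ reduces to the divisibility $\gcd_{s \neq i}(\tilde q_s g_s) \mid \tilde q_i g_i$, and all $k$ such conditions can be secured simultaneously by taking $\tilde q_s := \prod_{t \neq s}|g_t|$, which makes $|\tilde q_s g_s|$ equal to the common value $\prod_t |g_t|$ independent of $s$. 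I expect the general case to follow by extending this gcd-trick inductively (or via a Smith normal form argument) to higher-dimensional $L$; the main obstacle is that adjusting one $\tilde q_s$ to satisfy a condition at some index may in principle disturb others, but the finiteness of the integer data $f_{p,j}$ together with the freedom to multiply all $\tilde q_s$ by a common factor should permit a uniform choice that handles every index simultaneously.
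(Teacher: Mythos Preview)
Your Stage~1 reduction and your reformulation of admissibility in terms of the $\Q$-subspace $\scrL=\{\veca\in\Q^k:\sum_s a_s/v_s=0\}$ are correct, and your treatment of the case $e:=\dim_\Q\scrL\leq 1$ is essentially right (modulo the easy fix when some $g_s=0$). The gap is the case $e\geq 2$, which you explicitly leave open. This case genuinely occurs: after your Stage~1 reduction one has $e\leq k-2$, so already for $k=4$ one can have $e=2$ (e.g.\ $1/v_1=1$, $1/v_2=\sqrt2$, $1/v_3=\tfrac12+\sqrt2$, $1/v_4=1+\sqrt2$ gives pairwise irrational ratios and $e=2$, and this $\vecv$ is \emph{not} admissible at $i=1$). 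Your suggested fixes are not sufficient as stated: in particular, the ``freedom to multiply all $\tilde q_s$ by a common factor'' is illusory, since admissibility depends only on the ratios $(\tilde q_s v_s)/(\tilde q_t v_t)$, which a common factor leaves unchanged. The inductive/Smith-normal-form idea may be completable, but you have not supplied the missing construction, and the obstacle you name (that fixing one index can disturb another) is exactly the nontrivial part.

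The paper's argument avoids all of this by a direct construction: it sets $q_i$ to be the positive generator of $p_i(\Z^r\cap\fL(\R\tvecu))$ and then verifies $q_iu_i\tvecu\in\fL(u_i\tvecu)+\vech^{(i)}$ for a suitable $\vech^{(i)}\in\Z^r\cap\fL(\R\tvecu)$, using the key inclusion $\fL(\R\tvecu)\subset\R\tvecu+\fL(u_i\tvecu)$. No reduction to pairwise irrational ratios is needed, and no case split on $e$. (Incidentally, your Stage~1 is also unnecessary for your own rephrasing: the equivalence ``admissibility at $i$ $\Leftrightarrow$ $\pi_i(\scrL\cap\Z^k_{\neq i})\subset\Z$'' holds without assuming $v_i/v_s\notin\Q$; the irrationality plays no role there.) If you want to salvage your route, one way is to show that the columns $\vecw_s:=\tilde q_s\,\mathrm{col}_s(F)$ can be scaled so that each $\vecw_i$ lies in the $\Z$-span of $\{\vecw_s:s\neq i\}$; but proving this in general seems to require an idea of comparable weight to the paper's.
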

\begin{proof}
Given $\vecq=(q_1,\ldots,q_r)\in\Z_{>0}^r$ we write $D_\vecq=\diag(q_1,\ldots,q_r)$;
then the task is to prove that we there exists some %
$\vecq\in\Z_{>0}^r$ such that
$q_iu_iD_\vecq^{-1}\tvecu\in\fL(q_iu_iD_\vecq^{-1}\tvecu)+\Z^r$ for all $i\in\{1,\ldots,r\}$.
By Lemma \ref{IDCLcharLEM3a}
applied for the matrix $q_iD_\vecq^{-1}\in\M_r(\Q)$, this is equivalent to:
\begin{align}\label{ADMproplem2newpf2}
q_iu_i\tvecu\in\fL(u_i\tvecu)+D_\vecq\,\Z^r\qquad\text{for all }\: i\in\{1,\ldots,r\}.
\end{align}

For each $i$, 
$\R\tvecu+\fL(u_i\tvecu)$ is a rational subspace of $\R^r$,
and $u_i\tvecu\in\Q^r+\fL(u_i\tvecu)$
by Lemma~\ref{IDCLcharLEM},
and $\R\tvecu=\R u_i\tvecu$.
But $\fL(\R\tvecu)$ is the unique smallest rational subspace of $\R^r$ containing $\R\tvecu$;
hence
\begin{align}\label{ADMproplem2newpf10}
\fL(\R\tvecu)=\R\tvecu+\fL(u_i\tvecu).
\end{align}

We will now describe a choice of $q_1,\ldots,q_r$ which makes \eqref{ADMproplem2newpf2} hold.
Let $p_i:\R^r\to\R$ denote projection onto the $i$th coordinate.
Note that $p_i(\Z^r\cap \fL(\R\tvecu))$ is a subgroup of $\Z$.
By considering the expansion of $\tvecu$ 
with respect to a basis of $\fL(\R\tvecu)$ consisting of vectors in $\Z^r$,
it follows that $p_i(\Z^r\cap \fL(\R\tvecu))\neq\{0\}$ for each $i$,
and hence %
there exist
unique positive integers $q_i$ such that
\begin{align}\label{ADMproplem2newpf3}
p_i(\Z^r\cap \fL(\R\tvecu))=q_i\Z
\qquad\text{for all }\: i\in\{1,\ldots,r\}.
\end{align}
Choose vectors
$\vech^{(i)}\in\Z^r\cap \fL(\R\tvecu)$ with $p_i(\vech^{(i)})=q_i$.
We now claim that
\begin{align}\label{ADMproplem2newpf4}
q_iu_i\tvecu\in\fL(u_i\tvecu)+\vech^{(i)}
\qquad\text{for all }\: i\in\{1,\ldots,r\}.
\end{align}
This implies that \eqref{ADMproplem2newpf2} holds,
since $\vech^{(i)}\in\Z^r\cap \fL(\R\tvecu)\subset D_\vecq\Z^r$,
where the last inclusion follows from \eqref{ADMproplem2newpf3}.

In order to prove \eqref{ADMproplem2newpf4},
let $i$ be given, and set
$\vecw:=q_iu_i\tvecu-\vech^{(i)}$.
We have $\vecw\in\fL(\R\tvecu)$,
since $\tvecu$ and $\vech^{(i)}$ lie in $\fL(\R\tvecu)$;
hence by \eqref{ADMproplem2newpf10} there exists some $t\in\R$ such that
$\vecw-t\tvecu\in\fL(u_i\tvecu)$.
However, both $\vecw$ and $\fL(u_i\tvecu)$
lie in the orthogonal complement of 
$\vece_i$, the $i$th standard unit vector in $\R^r$;\label{vecekcolDEF}
for $\vecw$ this holds since 
$\vece_i\cdot\vecw=\vece_i\cdot(q_iu_i\tvecu- \vech^{(i)})=q_i-p_i(\vech^{(i)})=0$
and for $\fL(u_i\tvecu)$ it holds by Lemma \ref{IDCLcharLEM2}, since
$\vece_i\cdot u_i\tvecu=1$.
On the other hand, $\tvecu$ is not orthogonal to $\vece_i$.
Hence we must have $t=0$, i.e.\
$\vecw\in\fL(u_i\tvecu)$,
and so \eqref{ADMproplem2newpf4} holds.
\end{proof}

Now to finally prove Proposition \ref{ADMpropMAIN} 
we proceed as follows,
for each fixed $j\in\{1,\ldots,N\}$:
Choose 
$0<u_1<\cdots<u_r$ so that
$\{c_{j,1},\ldots,c_{j,r_j}\}=\{u_1,\ldots,u_r\}$,
and let $\tau:\{1,\ldots,r_j\}\to\{1,\ldots,r\}$ be the map so that $c_{j,i}=u_{\tau(i)}$ for all $i\in\{1,\ldots,r_j\}$;
then by Lemma \ref{ADMproplem2new} 
there exist positive integers $q_1,\ldots,q_r$ such that
the vector 
$(q_1u_1,\ldots,q_ru_r)$
is admissible.
Setting now $q_{j,i}=q_{\tau(i)}$,
Lemma \ref{ADMproplem1} implies
that the vector 
$\bigl(q_{j,1}c_{j,1},\ldots,q_{j,r_j}c_{j,r_j}\bigr)$
is admissible;
and since 
$c'_{j,i}=q_{\psi(j,i)}c_{\psi(j,i)}$ for all $i=1,\ldots,r_j'$,
another application of Lemma \ref{ADMproplem1} gives that also
the vector
$\bigl(c_{j,1}',\ldots,c_{j,r_j'}'\bigr)$ is admissible,
which was exactly the desired condition.
This completes the proof of Proposition \ref{ADMpropMAIN}.
\hfill$\square$

\vspace{5pt}

Clearly Lemma \ref{ADMproplem2new} provided %
the key technical step for the proof of Proposition \ref{ADMpropMAIN}.
We end this section by some basic remarks related to the statement of that lemma.

First of all, it should be noted that if $\vecq=(q_1,\ldots,q_r)\in\Z_{>0}^r$
makes $(q_1u_1,\ldots,q_ru_r)$ admissible,
then so does $c\,\vecq=(cq_1,\ldots,cq_r)$ for any $c>0$ such that $c\,\vecq\in\Z^r$.
However, in general there are also other vectors 
satisfying the same condition; that is,
the vector $\vecq$ in the statement Lemma \ref{ADMproplem2new}
is in general \textit{not} unique up to scalar multiplication
-- even though the proof of Lemma \ref{ADMproplem2new} produces a certain
$\vecq\in\Z_{>0}^r$ which is uniquely determined for any given $\vecu$.
For example, if the numbers $u_1^{-1},\ldots,u_r^{-1}\in\R_{>0}$ are linearly independent over $\Q$
then $(q_1u_1,\ldots,q_ru_r)$ is admissible for \textit{every} $\vecq\in\Z_{>0}^r$.

On the other hand it is a nice fact that if the given vector 
$\vecu\in\R_{>0}^r$ satisfies $u_j/u_k\in\Q$ for some pair of indices $j<k$,
then \textit{any} $\vecq\in\Z_{>0}^r$ which makes 
$(q_1u_1,\ldots,q_ru_r)$ admissible must satisfy $q_ju_j=q_ku_k$.
As a special case we have that if $u_j/u_k\in\Q$ for \textit{all} $j,k$,
then $q_1u_1=\cdots=q_ru_r$ holds for any $\vecq\in\Z_{>0}^r$
such that $(q_1u_1,\ldots,q_ru_r)$ is admissible,
and in particular in this case the vector $\vecq$ 
in the statement of Lemma \ref{ADMproplem2new} \textit{is} uniquely determined up to scalar multiplication.

(To prove the claim in the previous paragraph, 
note that if $u_j/u_k\in\Q$,
then every vector $\veca\in\Q^r$ satisfying $a_{\ell}=0$ for all $\ell\notin\{i,j\}$
is orthogonal to both $\fL(u_j\tvecu)$ and $\fL(u_k\tvecu)$,
by Lemma~\ref{IDCLcharLEM2}.
Hence both $\fL(u_j\tvecu)$ and $\fL(u_k\tvecu)$ %
are contained in the orthogonal complement of $\{\vece_j,\vece_k\}$,
and so the condition \eqref{ADMproplem2newpf2}, applied with $i=j$ and $i=k$,
implies that $q_ju_j/u_k\in q_k\Z$
and $q_ku_k/u_j\in q_j\Z$.
Using also $u_j,u_k>0$ and $q_j,q_k\in\Z_{>0}$,
it follows that $q_ju_j=q_ku_k$, as claimed.)

\section{Precise statement of the main result}
\label{KINTHEORYsec}

In this section 
the statement of the main technical result of the paper, %
Theorem \ref{MAINTHM},
will be made more explicit, by
presenting the precise choice of the
data $\vs,\Sigma,\mm$ and $\vs\mapsto\mu_\vs$
for which we will later prove that %
the six conditions [P1]--[P3] and [Q1]--[Q3] 
in Section \ref{KINTHEORYrecapsec} hold.

Let $\scrP$ be a finite union of grids in $\R^d$,
and fix an \textit{admissible} presentation of $\scrP$ as in
\eqref{GENPOINTSET1}, \eqref{LpsiDEF}.
This is possible by Proposition~\ref{ADMpropMAIN}.

\subsection{The subspace $\XX^\psi\subset\XX$}

We first need to make a preparation of a technical nature.
For any $\psi\in\Psi$ we 
let $\XX^\psi$ be the closed set consisting of those $\Gamma g\in\XX$ for which $\Z^d\,p_\psi(g)$ is a \textit{lattice,}
i.e.
\begin{align}\label{Xpsi0def}
\XX^\psi:=\{\Gamma g\in\XX\col g\in G,\: \bn\in\Z^d\,p_\psi(g)\}.
\end{align}
(Recall that the grid $\Z^d\,p_\psi(g)$ is independent of the choice of representative $g$,
i.e.\ if $\Gamma g=\Gamma g'$ then $\Z^d\,p_\psi(g)=\Z^d\,p_\psi(g')$.)
Thus in terms of the parametrization
\eqref{Gammagpointset},
$\XX^\psi$ is the subset of those $\Gamma g$ in $\XX$ 
for which, in the corresponding union of grids in $\R^d$,
the grid indexed by $\psi$ contains the origin. %
Clearly, for $\psi=(j,i)$,
\begin{align}\label{XjiDEF}
\XX^\psi=p_j^{-1}(\XX_j^{(i)}),
\qquad
\text{with }\:\XX_{j}^{(i)}=\{\Gamma_jg\col g\in G_j,\: \bn\in\Z^d\a_i(g)\}.
\end{align}

\begin{lem}\label{g0qinXpsiLEM}
For any $\psi\in\Psi$ and $\vecq\in\scrL_\psi$,
$\Gamma g_0^{(\vecq)}\in\XX^\psi$.
\end{lem}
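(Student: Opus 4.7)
The plan is to use the explicit computation already carried out in \eqref{KEYUjqdefmotivation}. Recall that for $\psi = (j,i)\in\Psi$, the map $p_\psi = \a_i\circ p_j$, and by construction of $g_0^{(\vecq)} = \I_{U^{(\vecq)}}\tM$, the identity
\begin{align*}
c_\psi\bigl(\Z^d\,p_\psi(g_0^{(\vecq)})\bigr) = \scrL_\psi - \vecq
\end{align*}
was verified in \eqref{KEYUjqdefmotivation}. Equivalently,
\begin{align*}
\Z^d\,p_\psi(g_0^{(\vecq)}) = c_\psi^{-1}(\scrL_\psi - \vecq).
\end{align*}

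Now I would simply observe that for any $\vecq\in\scrL_\psi$ we have $\bn = \vecq - \vecq \in \scrL_\psi - \vecq$, and hence $\bn \in c_\psi^{-1}(\scrL_\psi - \vecq) = \Z^d\,p_\psi(g_0^{(\vecq)})$. By the definition of $\XX^\psi$ in \eqref{Xpsi0def}, this gives exactly $\Gamma g_0^{(\vecq)}\in\XX^\psi$, completing the proof.

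There is no real obstacle here: the lemma is essentially immediate from the motivating identity \eqref{KEYUjqdefmotivation}, which was the reason for the definition of $U_j^{(\vecq)}$ in the first place. The only thing worth emphasizing in the write-up is that the condition ``$\bn$ belongs to the grid'' for the $\psi$-component is equivalent, after translation by $-\vecq$, to ``$\vecq$ belongs to $\scrL_\psi$''.
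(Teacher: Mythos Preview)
Your proof is correct and takes essentially the same approach as the paper, which simply states that the lemma is immediate from \eqref{KEYUjqdefmotivation}. You have just unpacked that one-line remark in full detail.
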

\begin{proof}
This is immediate from \eqref{KEYUjqdefmotivation}.
\end{proof}
We will write $\delta_{\bn}$ for the Dirac measure on $(\R/\Z)^d$ at the point $\bn\in(\R/\Z)^d$.
Recall that $\tr_i:\TT_j^d\to(\R/\Z)^d$ is the projection induced by
the $i$th row map
$\r_i:\M_{r_j\times d}(\R)\to\R^d$.
\begin{lem}\label{oomegapinullLEM}
Assume that $\psi=(j,i)\in\Psi$,
$\omega\in P(\TT_j^d)'$
and
$\tr_{i\,*}\,\omega=\delta_{\bn}$.
Then $\oomega(\XX_j^{(i)})=1$.
\end{lem}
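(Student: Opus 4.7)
The plan is to verify the lemma by a direct computation using the integration formula in Lemma \ref{oomegapartDEFlem} together with an explicit criterion for when a point $x(V)\cdot A\in\XX_j$ lies in $\XX_j^{(i)}$.

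First I would work out the membership criterion. By definition, $\XX_j^{(i)}=\{\Gamma_j g\col \bn\in\Z^d\,\a_i(g)\}$, and for $U\in\M_{r_j\times d}(\R)$ and $A\in\SL_d(\R)$ we have $\I_U A=(A,UA)$, so $\a_i(\I_U A)=(A,\r_i(U)A)$. Consequently
\begin{align*}
\Z^d\,\a_i(\I_U A)=\Z^d A+\r_i(U)A=(\Z^d+\r_i(U))A,
\end{align*}
and this contains $\bn$ if and only if $\r_i(U)\in\Z^d$, or equivalently $\tr_i(\pi(U))=\bn$ in $(\R/\Z)^d$. In particular, the condition $x(V)\,A\in\XX_j^{(i)}$ depends only on $V\in\TT_j^d$ (not on $A$), and is equivalent to $V\in\tr_i^{-1}(\{\bn\})$.

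Next I would apply the formula \eqref{oomegapartDEFlemres} with $E=\XX_j^{(i)}$, obtaining
\begin{align*}
\oomega(\XX_j^{(i)})
=\int_{F_d}\int_{\TT_j^d}I\bigl(x(V)A\in\XX_j^{(i)}\bigr)\,d\omega(V)\,d\nu(A)
=\int_{\TT_j^d}I\bigl(\tr_i(V)=\bn\bigr)\,d\omega(V),
\end{align*}
using that $\nu$ is a probability measure on $\SL_d(\Z)\bs\SL_d(\R)$ (equivalently, $\nu(F_d)=1$). The last integral equals $\omega(\tr_i^{-1}(\{\bn\}))=(\tr_{i\,*}\,\omega)(\{\bn\})=\delta_{\bn}(\{\bn\})=1$ by the hypothesis $\tr_{i\,*}\,\omega=\delta_{\bn}$, which yields the claim.

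There is no real obstacle here: the lemma is an essentially bookkeeping statement, and the only mild subtlety is verifying that the condition ``$\Gamma_j \I_U A\in\XX_j^{(i)}$'' is unchanged when $U$ is replaced by a translate in $\M_{r_j\times d}(\Z)$ (which follows because such a translate modifies $\I_U A$ only by a left factor from $\Gamma_j$), so that the criterion descends to a well-defined condition on $\pi(U)\in\TT_j^d$.
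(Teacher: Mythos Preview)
Your proof is correct and follows essentially the same approach as the paper. The paper's proof appeals directly to the definition of $\toomega$ on $G_j$ (noting that $\tr_{i\,*}\,\omega=\delta_{\bn}$ forces $\toomega$ to be supported on elements $\I_U A$ with $\r_i(U)\in\Z^d$, and then verifying that such elements project into $\XX_j^{(i)}$), while you use the derived integration formula \eqref{oomegapartDEFlemres} on $\XX_j$; the underlying computation of the membership criterion is identical.
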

\begin{proof}
In view of the definition of $\oomega$
in \eqref{toomegaDEF},
it suffices to verify that $\Gamma_j \I_UA\in\XX_j^{(i)}$
for all $U\in\r_i^{-1}(\Z^d)\subset\M_{r_j\times d}(\R)$
and all $A\in\SL_d(\R)$.
The verification of this fact is immediate.
\end{proof}

\begin{lem}\label{omegajVtriLEM}
Let $\psi=(j,i)\in\Psi$ and $V\in\TT_j^d$.
If $\tr_i(V)=\bn$ in $(\R/\Z)^d$ then
$\tr_{i\,*}\,\omega_j^{(V)}=\delta_{\bn}$,
while if $\tr_i(V)\neq\bn$ then
$\bigl(\tr_{i\,*}\,\omega_j^{(V)}\bigr)(\{\bn\})=0$.   %
\end{lem}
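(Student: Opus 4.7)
The plan is to exploit the explicit description $\scrO_j^{(V)}=\bigsqcup_\gamma (V\gamma+H)$, with $H:=(\SS_j^{(V)})^{\circ\,d}$ and $\gamma$ running over a suitable finite set of representatives in $\SL_d(\Z)$, and to view $\omega_j^{(V)}$ as the uniform average over these cosets of the normalized Haar measure on $V\gamma+H$; it will then suffice to analyse the $\tr_i$-pushforward on each individual coset. As auxiliary notation, let $\pr_i:\TT_j\to\R/\Z$ denote the $i$-th coordinate projection, so that $\tr_i(U)=(\pr_i(U_1),\ldots,\pr_i(U_d))$ for any $U=(U_1,\ldots,U_d)\in\TT_j^d$.

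For the first assertion, the hypothesis $\tr_i(V)=\bn$ gives $\pr_i(V_\ell)=0$ for every $\ell$. Since $\SS_j^{(V)}=\overline{\langle V_1,\ldots,V_d\rangle}$ and $\pr_i$ is a continuous group homomorphism, this forces $\pr_i(\SS_j^{(V)})=\{0\}$, so $\tr_i$ vanishes identically on $(\SS_j^{(V)})^d$. Consequently every point of each coset $V\gamma+H$ is sent by $\tr_i$ to $\tr_i(V)\gamma=\bn$, and $\tr_i\equiv\bn$ on all of $\scrO_j^{(V)}$; hence $\tr_{i\,*}\omega_j^{(V)}=\delta_{\bn}$.

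For the second assertion, the strategy is to split on the structure of the closed subgroup $\pr_i(\SS_j^{(V)})\subset\R/\Z$. If $\pr_i(\SS_j^{(V)})=\R/\Z$, then since $(\SS_j^{(V)})^\circ$ is connected and has finite index in $\SS_j^{(V)}$, one deduces $\pr_i((\SS_j^{(V)})^\circ)=\R/\Z$, so that $\tr_i:H\to(\R/\Z)^d$ is a continuous surjective group homomorphism. Uniqueness of Haar measure then shows that the pushforward of the normalized Haar measure on any coset $V\gamma+H$ under $\tr_i$ is normalized Haar measure on $(\R/\Z)^d$, which gives $\{\bn\}$ zero mass. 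If instead $\pr_i(\SS_j^{(V)})$ is a proper closed subgroup of $\R/\Z$, then it is finite; so $(\SS_j^{(V)})^\circ\subset\ker\pr_i$, and $\tr_i$ is constant on each coset $V\gamma+H$ with value $\tr_i(V)\gamma$. Since $\gamma\in\SL_d(\Z)$ acts invertibly on $(\R/\Z)^d$ and $\tr_i(V)\neq\bn$, each of these values is nonzero, so $\tr_{i\,*}\omega_j^{(V)}$ is a convex combination of point masses all supported off $\bn$; in particular it vanishes at $\bn$.

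The only potential subtlety is the bookkeeping that each coset in the decomposition of $\scrO_j^{(V)}$ carries equal weight, which is transparent from the definition of $\omega_j^{(V)}$ as the restriction to the open-and-closed subset $\scrO_j^{(V)}$ of the Haar measure on the compact group $(\SS_j^{(V)})^d$, together with the fact that $\scrO_j^{(V)}$ is a union of full connected components of this group.
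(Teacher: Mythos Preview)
Your proof is correct and follows essentially the same approach as the paper's: the first part is identical, and for the second part your dichotomy on whether $\pr_i(\SS_j^{(V)})$ equals $\R/\Z$ or is finite is equivalent to the paper's dichotomy on whether $L_j^{(V)}\not\perp\vece_i$ or $L_j^{(V)}\perp\vece_i$ (since $(\SS_j^{(V)})^\circ=\pi(L_j^{(V)})$), and the arguments in each case are the same. The final paragraph about equal weights on cosets, while true, is unnecessary: in every case you treat, \emph{each} coset individually contributes zero mass at $\bn$ (or full mass at $\bn$), so any convex combination does as well.
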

\begin{proof}
Write $V=(V_1,\ldots,V_d)$.
If $\tr_i(V)=\bn$, then the $i$th coordinate of each $V_\ell$
vanishes;
hence all points in $\SS_j^{(V)}$ have vanishing $i$th coordinate,
and so $\r_i\bigl(\bigl(\SS_j^{(V)}\bigr)^d\bigr)=\{\bn\}$ in $(\R/\Z)^d$,
which forces $\omega_j^{(V)}(\tr_i^{-1}(\{\bn\}))=1$,
i.e.,
$\tr_{i\,*}\,\omega_j^{(V)}=\delta_{\bn}$.

Next assume $\tr_i(V)\neq\bn$.
If $L_j^{(V)}\not\perp\vece_i$,
then $\tr_i\bigl(\bigl(\SS_j^{(V)}\bigr)^{\circ\,d}\bigr)=(\R/\Z)^d$
and so
by \eqref{OjVDEF} and the definition of $\omega_j^{(V)}$, we have
$\omega_j^{(V)}(\tr_i^{-1}(\{\bn\}))=0$.
In the remaining case, when 
$L_j^{(V)}\perp\vece_i$
and therefore 
$\tr_i\bigl(\bigl(\SS_j^{(V)}\bigr)^{\circ\,d}\bigr)=\{\bn\}$,
we again have $\omega_j^{(V)}(\tr_i^{-1}(\{\bn\}))=0$,
since $\tr_i(V\gamma)=\tr_i(V)\gamma\neq\bn$ for all $\gamma\in\SL_d(\Z)$.
\end{proof}

\begin{lem}\label{omegajqpiinv0LEM}
Let $\psi=(j,i)\in\Psi$ and $\vecq\in\R^d$.
If $\vecq\in\scrL_\psi$ then 
$\tr_{i\,*}\,\omega_j^{(\vecq)}=\delta_{\bn}$,
while if $\vecq\notin\scrL_\psi$ then
$\bigl(\tr_{i\,*}\,\omega_j^{(\vecq)}\bigr)(\{\bn\})=0$.   %
\end{lem}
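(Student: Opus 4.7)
The plan is to deduce this from Lemma \ref{omegajVtriLEM} applied to $V:=\pi(U_j^{(\vecq)})\in\TT_j^d$, since by definition $\omega_j^{(\vecq)}=\omega_j^{(\pi(U_j^{(\vecq)}))}$. The only thing to check is that the dichotomy $\vecq\in\scrL_\psi$ versus $\vecq\notin\scrL_\psi$ corresponds exactly to the dichotomy $\tr_i(V)=\bn$ versus $\tr_i(V)\neq\bn$ appearing in Lemma \ref{omegajVtriLEM}.

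To verify this, I would recall the formula \eqref{Ujqdef}: the $i$th row of $U_j^{(\vecq)}$ is $\r_i(U_j^{(\vecq)})=\vecw_{j,i}-c_{j,i}^{-1}\vecq M_j^{-1}$. Hence $\tr_i(\pi(U_j^{(\vecq)}))=\pi(\vecw_{j,i}-c_{j,i}^{-1}\vecq M_j^{-1})$ in $(\R/\Z)^d$, and this equals $\bn$ if and only if
\begin{align*}
\vecw_{j,i}-c_{j,i}^{-1}\vecq M_j^{-1}\in\Z^d,
\quad\text{i.e.,}\quad \vecq\in c_{j,i}(\Z^d+\vecw_{j,i})M_j.
\end{align*}
Since $\psi=(j,i)$, we have $j_\psi=j$, $c_\psi=c_{j,i}$ and $\vecw_\psi=\vecw_{j,i}$, so the right-hand side is exactly $\scrL_\psi$ by \eqref{LpsiDEF}. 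Thus $\tr_i(V)=\bn$ iff $\vecq\in\scrL_\psi$, and the lemma follows immediately from Lemma \ref{omegajVtriLEM}. There is no real obstacle here; the statement is essentially a translation of Lemma \ref{omegajVtriLEM} through the definition of $U_j^{(\vecq)}$.
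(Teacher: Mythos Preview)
Your proof is correct and takes essentially the same approach as the paper: both reduce the statement to Lemma \ref{omegajVtriLEM} by showing that $\tr_i(\pi(U_j^{(\vecq)}))=\bn$ if and only if $\vecq\in\scrL_\psi$, via \eqref{Ujqdef} and \eqref{LpsiDEF}. Your version just spells out the equivalence in slightly more detail.
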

\begin{proof}
Note that $\vecq\in\scrL_\psi$ holds if and only if
$\r_i(U_j^{(\vecq)})\in\Z^d$
(cf.\ \eqref{LpsiDEF} and \eqref{Ujqdef}),
that is, if and only if $\tr_i(\pi(U_j^{(\vecq)}))=\bn$ in $(\R/\Z)^d$.
Hence the lemma follows from
Lemma \ref{omegajVtriLEM}.
\end{proof}

\begin{lem}\label{LjpsispecLEM}
Let $j\in\{1,\ldots,N\}$
and $i\neq i'\in\{1,\ldots,r_j\}$.
Then $L_j^{(j,i')}\perp\vece_{i}$
holds if and only if $L_j^{(j,i)}\perp\vece_{i'}$,
and in this case also $L_j^{(j,i')}=L_j^{(j,i)}$ and $c_{j,i}=c_{j,i'}$
and $\vecw_{j,i}-\vecw_{j,i'}\notin\Z^d$.
\end{lem}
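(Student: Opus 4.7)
My plan is to translate both orthogonality conditions into explicit rationality statements via Lemma \ref{IDCLcharLEM2} / Remark \ref{IDCLcharLEM2rem2}, then use admissibility to upgrade rationality to integrality, and finally combine everything. First I would note that for $\psi=(j,i)$ we have $j_\psi=j$, so $T_j^\psi=\I$ and hence $W_j^{(j,i)}=W_j-\vecc_j^{(j,i)}\vecw_{j,i}$ with columns $W_{j,\ell}^{(j,i)}=W_{j,\ell}-\vecc_j^{(j,i)}(\vecw_{j,i})_\ell$. A direct computation using $\vecc_j^{(j,i)}=c_{j,i}\tvecc_j$ and $\r_{i'}(W_j)=\vecw_{j,i'}$ gives $\vece_{i'}\cdot\vecc_j^{(j,i)}=c_{j,i}/c_{j,i'}$ and $\vece_{i'}\cdot W_{j,\ell}^{(j,i)}=(\vecw_{j,i'}-(c_{j,i}/c_{j,i'})\vecw_{j,i})_\ell$. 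Then by Remark \ref{IDCLcharLEM2rem2} applied to the finite generating set of $L_j^{(j,i)}$ given in \eqref{Ljpsi0DEF}, we see that $\vece_{i'}\perp L_j^{(j,i)}$ is equivalent to the conjunction
\begin{equation*}
c_{j,i}/c_{j,i'}\in\Q\qquad\text{and}\qquad \vecw_{j,i'}-(c_{j,i}/c_{j,i'})\vecw_{j,i}\in\Q^d.
\end{equation*}

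Next I would prove the equivalence of the two orthogonality conditions. If the condition above holds, then multiplying the second relation by the rational scalar $c_{j,i'}/c_{j,i}$ yields $\vecw_{j,i}-(c_{j,i'}/c_{j,i})\vecw_{j,i'}\in\Q^d$, which together with $c_{j,i'}/c_{j,i}\in\Q$ is exactly the condition for $\vece_i\perp L_j^{(j,i')}$; the converse implication is identical by symmetry. Now, assuming both orthogonalities hold, the \emph{admissibility} of the presentation gives $\vecc_j^{(j,i)}\in L_j^{(j,i)}+\Z^{r_j}$; pairing with $\vece_{i'}$ yields $c_{j,i}/c_{j,i'}=\vece_{i'}\cdot\vecc_j^{(j,i)}\in\Z$. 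Applying the same argument with the roles of $i,i'$ swapped gives $c_{j,i'}/c_{j,i}\in\Z$, and hence $c_{j,i}=c_{j,i'}$ as claimed.

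From $c_{j,i}=c_{j,i'}$ we immediately get $\vecc_j^{(j,i)}=\vecc_j^{(j,i')}$, and the difference of column vectors is $W_{j,\ell}^{(j,i')}-W_{j,\ell}^{(j,i)}=(\vecw_{j,i}-\vecw_{j,i'})_\ell\,\vecc_j^{(j,i)}$. Since $\vecw_{j,i}-\vecw_{j,i'}\in\Q^d$ (from the rationality derived above, now with ratio $1$), each such difference lies in $\Q\,\vecc_j^{(j,i)}$; by Remark \ref{IDCLcharLEM2rem2} this means that $L_j^{(j,i)}$ and $L_j^{(j,i')}$ have the same orthogonal complement in $\Q^{r_j}$, so (both being rational subspaces) they coincide. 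Finally, if we had $\vecw_{j,i}-\vecw_{j,i'}\in\Z^d$, then by \eqref{LpsiDEF} and $c_{j,i}=c_{j,i'}$ the grids $\scrL_{(j,i)}$ and $\scrL_{(j,i')}$ would be equal, contradicting \eqref{THINDISJcond2} (which forces disjointness because $c_{j,i}/c_{j,i'}=1\in\Q$); this yields $\vecw_{j,i}-\vecw_{j,i'}\notin\Z^d$. The entire proof is essentially bookkeeping once the rationality criterion is in hand; the only subtle step is the invocation of admissibility to promote the ratio $c_{j,i}/c_{j,i'}\in\Q$ to $\Z$, which is precisely the role admissibility was designed to play.
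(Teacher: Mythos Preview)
Your proof is correct, but it takes a different route from the paper's. The paper argues structurally via Lemma \ref{LjvsLjpsiLEM2}, which says $L_j^{(j,i)}=L_j\cap\vece_i^\perp$: assuming $L_j^{(j,i')}\perp\vece_i$, one gets $L_j^{(j,i')}\subset L_j\cap\vece_i^\perp=L_j^{(j,i)}$, and since both are codimension-one subspaces of $L_j$ they must coincide; the orthogonality $L_j^{(j,i)}\perp\vece_{i'}$ then comes for free. You instead work entirely through the explicit rationality criterion of Remark \ref{IDCLcharLEM2rem2}, computing $\vece_{i'}\cdot\vecc_j^{(j,i)}$ and $\vece_{i'}\cdot W_{j,\ell}^{(j,i)}$ directly and showing the two orthogonality conditions are equivalent rational constraints; the equality $L_j^{(j,i)}=L_j^{(j,i')}$ is then obtained by comparing generating sets. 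Your approach is self-contained and bypasses Lemma \ref{LjvsLjpsiLEM2} altogether, at the cost of a little more bookkeeping; the paper's approach is quicker once that structural lemma is in hand. The use of admissibility and of \eqref{THINDISJcond2} for the last two conclusions is essentially identical in both proofs.
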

\begin{proof}
Assume $L_{j}^{(j,i')}\perp\vece_{i}$.
Then by Lemma \ref{LjvsLjpsiLEM2},
$L_j^{(j,i')}\subset L_j\cap\vece_i^\perp=L_j^{(j,i)}$,
and both $L_j^{(j,i')}$ and $L_j^{(j,i)}$ are subspaces of $L_j$
of codimension one; hence $L_j^{(j,i')}=L_j^{(j,i)}$,
and since $L_j^{(j,i')}\perp\vece_{i'}$
by Lemma \ref{LjvsLjpsiLEM2}, we conclude that
$L_j^{(j,i)}\perp\vece_{i'}$.
Of course, the converse implication,
$L_j^{(j,i)}\perp\vece_{i'}\Rightarrow
L_{j}^{(j,i')}\perp\vece_{i}$,
holds by symmetry.
When both these orthogonality relations hold,
admissibility
(see Definition \ref{admissibleDEF})
implies that 
both the $i\,$th coordinate of $\vecc_j^{(j,i')}$
and the $i'$th coordinate of $\vecc_j^{(j,i)}$ are \textit{integers},
i.e.\ both $c_{j,i'}/c_{j,i}$ and $c_{j,i}/c_{j,i'}$ are integers;
thus $c_{j,i'}=c_{j,i}$.
Finally, by \eqref{THINDISJcond2},
$\vecw_{j,i}-\vecw_{j,i'}\notin\Z^d$.
\end{proof}

\begin{lem}\label{omegajpsiLEM}
For every $\psi\in\Psi$ we have
$\tr_{i\,*}\,\omega_j^{\psi}=\delta_{\bn}$ when $(j,i)=\psi$,
while $\bigl(\tr_{i\,*}\,\omega_{j}^{\psi}\bigr)(\{\bn\})=0$ for every $(j,i)\in\Psi\setminus\{\psi\}$.
\end{lem}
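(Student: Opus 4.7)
The plan is to exploit the explicit description of $\omega_j^\psi$ as the normalized Haar measure on the finite disjoint union of cosets $\scrO_j^\psi = \bigsqcup_\gamma(\pi(U_j^{(\vecq)})\gamma + (\SS_j^\psi)^d)$, for any fixed $\vecq \in \scrL_\psi$ (with $\gamma$ running through a transversal in $\SL_d(\Z)$). Two elementary observations will drive the analysis: the $\SL_d(\Z)$-equivariance $\tr_i(V\gamma) = \tr_i(V)\gamma$ (since $\SL_d(\Z)$ acts by right matrix multiplication on columns while $\tr_i$ extracts the $i$-th row), and the dichotomy that $\tr_i((\SS_j^\psi)^d) = \{\bn\}$ precisely when $L_j^\psi \perp \vece_i$, while otherwise the surjective map $v \mapsto \vece_i \cdot v$ from $L_j^\psi$ onto $\R$ makes $\tr_i \colon (\SS_j^\psi)^d \to (\R/\Z)^d$ surjective with proper kernel of Haar measure zero.

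In the case $(j,i) = \psi$, Lemma~\ref{LjvsLjpsiLEM2} gives $L_j^\psi \subset \vece_i^\perp$, so $\tr_i$ annihilates $(\SS_j^\psi)^d$. Fixing $\vecq \in \scrL_\psi$, the identification in the proof of Lemma~\ref{omegajqpiinv0LEM} yields $\tr_i(\pi(U_j^{(\vecq)})) = \bn$, and equivariance then forces $\tr_i \equiv \bn$ on every coset of $\scrO_j^\psi$, hence $\tr_{i\,*}\omega_j^\psi = \delta_{\bn}$.

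In the case $(j,i) \neq \psi$, I split according to the dichotomy. If $L_j^\psi \not\perp \vece_i$, then $\tr_i^{-1}(\{\bn\}) \cap (V + (\SS_j^\psi)^d)$ is a translate of the proper closed subgroup $(\pi(L_j^\psi \cap \vece_i^\perp))^d$ inside $(\SS_j^\psi)^d$, hence has Haar measure zero in that coset; summing over the finitely many cosets yields $\omega_j^\psi(\tr_i^{-1}(\{\bn\})) = 0$. If instead $L_j^\psi \perp \vece_i$, the subcase $j \neq j_\psi$ is excluded: Lemma~\ref{LjvsLjpsiLEM} would force $L_j^\psi = L_j \supset \R\tvecc_j$, contradicting $\vece_i \cdot \tvecc_j = c_{j,i}^{-1} \neq 0$. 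Thus $\psi = (j, i_\psi)$ with $i_\psi \neq i$, and Lemma~\ref{LjpsispecLEM} (which leverages admissibility) yields $c_{j,i} = c_{j,i_\psi}$ and $\vecw_{j,i} - \vecw_{j,i_\psi} \notin \Z^d$; combined with \eqref{THINDISJcond2} this gives $\scrL_\psi \cap \scrL_{(j,i)} = \emptyset$. Any $\vecq \in \scrL_\psi$ then satisfies $\vecq \notin \scrL_{(j,i)}$, so by the same identification $\tr_i(\pi(U_j^{(\vecq)})) \neq \bn$. Each coset of $(\SS_j^\psi)^d$ in $\scrO_j^\psi$ is collapsed by $\tr_i$ to the single point $\tr_i(\pi(U_j^{(\vecq)}))\gamma$, never equal to $\bn$ since $\gamma$ acts on $(\R/\Z)^d$ as an automorphism. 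Thus $\omega_j^\psi(\tr_i^{-1}(\{\bn\})) = 0$.

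The only non-routine step is the subcase $L_j^\psi \perp \vece_i$ with $(j,i) \neq \psi$, which requires chaining Lemmas~\ref{LjvsLjpsiLEM}, \ref{LjvsLjpsiLEM2}, and \ref{LjpsispecLEM} with the disjointness clause \eqref{THINDISJcond2} to exclude $\tr_i(\pi(U_j^{(\vecq)})) = \bn$; the rest is a direct pushforward computation on cosets of a subtorus.
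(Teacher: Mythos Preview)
Your proof is correct and follows essentially the same route as the paper: the same case split on whether $(j,i)=\psi$, and for $(j,i)\neq\psi$ the same sub-split on whether $L_j^\psi\perp\vece_i$, invoking Lemma~\ref{LjpsispecLEM} in the delicate subcase $L_j^\psi\perp\vece_i$ with $j=j_\psi$ to force $\tr_i(\pi(U_j^{(\vecq)}))\neq\bn$. One minor imprecision: in the subcase $L_j^\psi\not\perp\vece_i$, the kernel of $\tr_i$ on $(\SS_j^\psi)^d$ need not be exactly $(\pi(L_j^\psi\cap\vece_i^\perp))^d$ but rather a finite union of its cosets (the $i$th-coordinate map on $L_j^\psi\cap\Z^{r_j}$ may hit only $m\Z$ for some $m>1$); this does not affect your conclusion, since it is still a proper closed subgroup of the torus $(\SS_j^\psi)^d$ and hence has Haar measure zero.
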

\begin{proof}
The first statement follows from Lemma \ref{omegajqpiinv0LEM},
since $\omega_j^\psi$ can be obtained as a limit of
measures $\omega_j^{(\vecq)}$ for a sequence of points $\vecq$ in $\scrL_\psi$
(cf.\ Remark \ref{CONVremark}),
and since the map $\tr_{i\,*}:P(\TT_j^d)\to P((\R/\Z)^d)$ is continuous.
Alternatively it is easy to give a direct verification: %
In \eqref{cjdef} and \eqref{Wjdef} one notes that
the $i$th coordinate of $\vecc_j^\psi$ equals $1$;
thus $\r_i(W_j^\psi)=\bn$,
and so by \eqref{Ljpsi0DEF}, $L_j^\psi\perp\vece_i$,
meaning that every point in $\SS_j^\psi$ has a vanishing $i$th coordinate.
We know from the proof of Lemma \ref{omegajqpiinv0LEM}
that the same fact holds for every point in $\SS_j^{(\vecq)}$, for any $\vecq\in\scrL_\psi$;
hence by \eqref{Ojpsi0DEF} it also holds for every point in $\tSS_j^\psi$,
and so $\tr_i((\tSS_j^\psi)^d)=\{\bn\}$
and $\tr_{i\,*}\,\omega_j^{\psi}=\delta_{\bn}$.

We turn to the proof of the second statement
(this is similar to the proof of the second half of Lemma \ref{omegajVtriLEM}).
Thus let $(j,i)\in\Psi$, $(j,i)\neq\psi$.
If $L_{j}^\psi\not\perp\vece_{i}$ then
$\tr_{i}\bigl((\SS_{j}^\psi)^d\bigr)=(\R/\Z)^d$
and hence $\omega_{j}^{\psi}\bigl(\tr_{i}^{-1}(\{\bn\})\bigr)=0$
(cf.\ \eqref{Ojpsi0DEF}).
Now assume $L_{j}^\psi\perp\vece_{i}$.
Then $j=j_{\psi}$, by \eqref{Ljpsi0DEF}.
Pick an arbitrary point $\vecq\in\scrL_\psi$,
so that \eqref{Ojpsi0DEF} holds.
Now by Lemma \ref{LjpsispecLEM},
$c_\psi=c_{j,i}$ and $\vecw_\psi-\vecw_{j,i}\notin\Z^d$,
and so via \eqref{Wjdef} and \eqref{Ujqformula},
$\r_i(U_j^{(\vecq)})\notin\Z^d$,
and hence $\tr_i(\pi(U_j^{(\vecq)})\gamma)\neq\bn$ for all $\gamma\in\SL_d(\Z)$.
Also $\tr_i\bigl(\SS_j^\psi\bigr)=\{\bn\}$, since 
$L_{j}^\psi\perp\vece_{i}$.
Hence again $\omega_{j}^{\psi}\bigl(\tr_{i}^{-1}(\{\bn\})\bigr)=0$
(see \eqref{Ojpsi0DEF}).
\end{proof}

\subsection{The space of marks $\Sigma$ and the associated maps}
\label{spaceofmarksandmapsSEC}

We are now finally in position to introduce our precise choice of space of marks $\Sigma$
and the associated maps and measure.
To prepare for this, %
let $\Omega$ be the following Cartesian product:
\begin{align}\label{OMEGAdef}
\Omega=\prod_{j=1}^N P(\TT_{\!j}^d)'.
\end{align}
The point of the set $\Omega$ is that it
parametrizes a family of homogeneous probability measures on $\XX$ 
from which the limit measures appearing in the spherical equidistribution conditon [P2] 
in Section \ref{KINTHEORYrecapsec} can in turn be obtained.
Indeed, if $\omega=(\omega_1,\ldots,\omega_N)\in\Omega$
then $\overline{\omega_j}\in P(\XX_j)$ by the discussion near Lemma \ref{oomegapartDEFlem},
and so we have a 
natural map $\omega\mapsto\oomega$ from $\Omega$ to $P(\XX)$,
defined by
\begin{align}\label{oomegaDEF}
\oomega:=\overline{\omega_1}\otimes\cdots\otimes\overline{\omega_N}\in P(\XX) \qquad (\omega\in\Omega).
\end{align}
In particular, defining
\begin{align}\label{omegaqDEF}
\omega^{(\vecq)}:=\bigl(\omega_1^{(\vecq)},\ldots,\omega_N^{(\vecq)}\bigr)\in\Omega
\end{align}
for any $\vecq\in\R^d$,
we now have $\overline{\omega^{(\vecq)}}=\mu^{(\vecq)}$
(see \eqref{muqDEF});
and as described below Theorem \ref{HOMDYNintrononunifTHM},
it will turn out to be possible to obtain 
the limit measure in %
[P2]
as a pushforward of this mesure $\mu^{(\vecq)}$.

Now we define our space of marks $\Sigma$ through:
\begin{align}\label{Sigmadef}
\Sigma:=&\Bigl\{((j,i),\omega)\in\Psi\times\Omega\col %
\tr_{i\,*}\,\omega_{j}=\delta_{\bn}\Bigr\},
\end{align}
where $\omega_{j}\in P(\TT_j^d)'$ is the $j$th entry of $\omega$,
and where $\delta_{\bn}$ is the Dirac measure at the point $\bn\in(\R/\Z)^d$.
It is to be understood that in \eqref{Sigmadef}, 
$\Psi$ is equipped with the discrete topology,
and $\Psi\times\Omega$ with the product topology;
this makes $\Sigma$ a closed and hence compact subset of $\Psi\times\Omega$.

\begin{remark}
The reason why we cannot simply choose $\Sigma$ to be $\Psi\times\Omega$ is that
then the map $\vs\mapsto\mu_{\vs}$ which we define below would in general not be continuous.
\end{remark}

Next we define our marking $\vs$
through
\begin{align}\label{vsmapDEF}
\vs:\scrP\to\Sigma,\qquad
\vs(\vecq)=(\psi(\vecq),\omega^{(\vecq)}),
\end{align}
where $\psi:\scrP\to\Psi$ is as in \eqref{psimarkdef},
i.e.\ a fixed function such that $\vecq\in\scrL_{\psi(\vecq)}$ for all $\vecq\in\scrP$.
It follows from Lemma \ref{omegajqpiinv0LEM} that $\vs$ is indeed a map into $\Sigma$.

For each $\psi\in\Psi$ we set
\begin{align}\label{OMEGAPSIdef} 
\omega^\psi:=\bigl(\omega_1^{\psi},\ldots,\omega_N^{\psi}\bigr)\in\Omega
\end{align}
and
\begin{align}\label{sigmapsiDEF}
\sigma^{\psi}:=(\psi,\omega^\psi)\in\Sigma.
\end{align}
It follows from Lemma~\ref{omegajpsiLEM} that $\sigma^\psi$ indeed lies in $\Sigma$.
Next we define the Borel probability measure $\mm$ on $\Sigma$
to be the atomic
measure on $\Sigma$ supported on the points
$\sigma^\psi$,
with
\begin{align}\label{mmdef}
\mm\bigl(\sigma^\psi\bigr):=\frac{\nbar_{\psi}}{\nbar_{\scrP}}
\qquad(\psi\in\Psi).
\end{align}
Note that $\mm$ is supported on the finite 
subset
\begin{align}\label{tPsiDEF}
\tPsi:=\{\sigma^\psi\col\psi\in\Psi\}
\end{align}
of $\Sigma$.

As in Section \ref{KINTHEORYrecapsec} we set
\begin{align*}
\scrX=\R^d\times\Sigma
\qquad\text{and}\qquad
\mu_\scrX=\vol\times\mm.
\end{align*}

\vspace{5pt}

Finally we will make our choice of the map $\vs\mapsto\mu_{\vs}$ from $\Sigma$ to $P(N(\scrX))$.
To prepare for this, we first 
introduce the following map:   %
\begin{align}\label{JmapFULL}
&J:\XX\to N_s(\scrX),
\qquad
J(\Gamma g)=\bigcup_{\psi\in\Psi} c_\psi\bigl(\Z^d\,p_\psi(g)\bigr)\times \{\sigma^{\psi}\}\qquad(g\in G).
\end{align}
This map extends the map $J_0$ defined in \eqref{Gammagpointset}
in an obvious sense.
\begin{lem}\label{JcontLEM}
$J$ is continuous.
\end{lem}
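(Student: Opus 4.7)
The plan is to reduce continuity of $J$ to continuity of finitely many Siegel transforms, one for each $\psi \in \Psi$. By definition of the vague topology on $N(\scrX)$, it suffices to prove that for every $f \in \C_c(\scrX)$, the real-valued function
\begin{align*}
\XX \to \R, \qquad \Gamma g \mapsto \int f \, dJ(\Gamma g) = \sum_{\psi \in \Psi}\sum_{\vecm \in \Z^d} f\bigl(c_\psi \vecm \, p_\psi(g), \sigma^\psi\bigr)
\end{align*}
is continuous (well-definedness on $\XX$ follows from $p_\psi(\Gamma) = \ASL_d(\Z)$, exactly as for $J_0$). Since $\Psi$ is finite, it is enough to handle each $\psi$ separately.

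Fix $\psi \in \Psi$ and define $f_\psi(\vecx) := f(c_\psi \vecx, \sigma^\psi) \in \C_c(\R^d)$. Then the inner sum equals $\sum_{\vecm \in \Z^d} f_\psi(\vecm \, p_\psi(g))$, i.e.\ the classical Siegel transform of $f_\psi$ evaluated at $p_\psi(g) \in \ASL_d(\R)$. The map $p_\psi : G \to \ASL_d(\R)$ is a continuous group homomorphism, so it suffices to show that for any fixed $h_0 \in \ASL_d(\R)$, the function $h \mapsto \sum_{\vecm \in \Z^d} f_\psi(\vecm h)$ is continuous at $h_0$. This is the standard local finiteness argument: since $\supp(f_\psi)$ is compact and the action $(h, \vecm) \mapsto \vecm h$ is jointly continuous and proper in $\vecm$ for $h$ varying in a compact neighborhood $U$ of $h_0$, there is a finite set $S \subset \Z^d$ such that $\vecm h \in \supp(f_\psi)$ with $h \in U$ forces $\vecm \in S$. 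On $U$ the sum reduces to $\sum_{\vecm \in S} f_\psi(\vecm h)$, which is a finite sum of continuous functions.

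The main conceptual point, and the reason this lemma is not a mere reprise of the discontinuity remark after \eqref{Gammagpointset}, is that the marks $\{\sigma^\psi : \psi \in \Psi\}$ are pairwise distinct points of $\Sigma$. A collision $c_\psi \vecm\, p_\psi(g) = c_{\psi'}\vecm'\, p_{\psi'}(g)$ in $\R^d$ with $\psi \neq \psi'$ does \emph{not} produce a coincidence in $\scrX = \R^d \times \Sigma$, because the two points $(c_\psi \vecm\, p_\psi(g), \sigma^\psi)$ and $(c_{\psi'}\vecm'\, p_{\psi'}(g), \sigma^{\psi'})$ differ in their second coordinate. This is precisely the reason to split the union in \eqref{JmapFULL} by $\psi$ on $\Sigma$, and it dissolves the obstruction that made $J_0$ discontinuous; no further obstacle arises, and I expect no serious difficulty in executing the steps above.
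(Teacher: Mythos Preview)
Your proof is correct and follows the same approach as the paper's: both reduce to the formula $\sum_{p\in J(\Gamma g)}f(p)=\sum_{\psi\in\Psi}\sum_{\vecm\in\Z^d}f(c_\psi(\vecm\,p_\psi(g)),\sigma^\psi)$ and observe that this is continuous in $g$; the paper simply declares this ``immediate'' while you spell out the standard local-finiteness argument for the Siegel transform. Your closing paragraph explaining why the distinct marks $\sigma^\psi$ dissolve the obstruction that made $J_0$ discontinuous is a nice clarification not present in the paper's proof.
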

\begin{proof}
Recall that $N_s(\scrX)$ is equipped with the vague topology;
hence the task is to prove that if 
$x_1,x_2,\ldots$ is a sequence in $\XX$ converging to
$x\in \XX$,
then for any $f\in\C_c(\scrX)$
we have $\sum_{p\in J(x_k)}f(p)\to\sum_{p\in J(x)}f(p)$.
This is immediate using the formula %
\begin{align*}
\sum_{p\in J(\Gamma g)}f(p)=
\sum_{\psi\in\Psi}\sum_{\vecm\in\Z^d}
f\Bigl(c_{\psi}\bigl(\vecm\, p_{\psi}(g)\bigr),\sigma^{\psi}\Bigr),
\qquad \forall g\in G.
\end{align*}
\end{proof}
Next, for every $\psi\in\Psi$ we introduce the following modification of the map $J$:
\begin{align}\label{Jpsidef}
J_{\psi}:\XX^\psi\to N_s(\scrX);
\qquad
J_{\psi}(\Gamma g):=J(\Gamma g)\setminus\{(\bn,\sigma^\psi)\}.
\end{align}
This map $J_\psi$ is also continuous;
this is proved in the same way as Lemma \ref{JcontLEM},
using also the fact that $(\bn,\sigma^\psi)\in J(x)$ for all $x\in\XX^\psi$
(see \eqref{Xpsi0def} and \eqref{JmapFULL}).
At last, we now define the map $\vs\mapsto\mu_{\vs}$
from $\Sigma$ to $P(N_s(\scrX))$ by setting
\begin{align}\label{muvsDEF}
\mu_{\vs}=J_{\psi\,*}\hspace{2pt}\oomega
\in P(N_s(\scrX))
\qquad \text{for }\:\vs=(\psi,\omega)\in\Sigma.
\end{align}
To see that $\mu_{\vs}$ is indeed a probability measure,
note that
\begin{align}\label{oomegaXpsieq1}
\oomega(\XX^\psi)=1,\qquad\text{for all }\: \vs=(\psi,\omega)\in\Sigma;
\end{align}
cf.\ \eqref{XjiDEF},
Lemma \ref{oomegapinullLEM},
\eqref{Sigmadef}
and \eqref{oomegaDEF}.

\begin{lem}\label{muvscontLEM}
The map $\vs\mapsto\mu_{\vs}$ is continuous.
\end{lem}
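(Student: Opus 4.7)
The plan is to factor the map $\vs \mapsto \mu_\vs$ through a chain of continuous maps, exploiting the discrete topology on $\Psi$ to reduce to the case of a fixed first coordinate. Since $\Psi$ carries the discrete topology and $\Sigma \subset \Psi \times \Omega$ has the subspace topology, any convergent sequence $\vs_k = (\psi_k, \omega_k) \to \vs = (\psi, \omega)$ in $\Sigma$ satisfies $\psi_k = \psi$ for all sufficiently large $k$. Hence it suffices to verify that, for any fixed $\psi \in \Psi$, the restricted map $\omega \mapsto J_{\psi\,*}\,\oomega$ is continuous on $\{\omega \in \Omega \col (\psi,\omega) \in \Sigma\}$.

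First I would check that the assignment $\omega = (\omega_1,\ldots,\omega_N) \mapsto \oomega = \overline{\omega_1}\otimes\cdots\otimes\overline{\omega_N}$ is a continuous map $\Omega \to P(\XX)$. Each coordinate map $\omega_j \mapsto \overline{\omega_j}$ from $P(\TT_j^d)'$ to $P(\XX_j)$ is continuous by Lemma \ref{mapPTtoPXjcontLEM}, and the product-of-measures map $\prod_j P(\XX_j) \to P(\XX)$ is jointly continuous (a standard fact for weak convergence on products of separable metric spaces, verified on test functions in $\C_c(\XX)$ of product form, which are dense).

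Next, the subtle point is that $J_\psi$ is only defined on $\XX^\psi$, but by \eqref{oomegaXpsieq1} every measure $\oomega$ with $(\psi,\omega)\in\Sigma$ is concentrated on the closed subset $\XX^\psi \subset \XX$. The plan is to restrict: if $\overline{\omega_k} \to \oomega$ in $P(\XX)$ and all these probability measures are supported on the closed set $\XX^\psi$, then $\overline{\omega_k} \to \oomega$ also in the sense of $P(\XX^\psi)$. This follows since any $f \in \C_b(\XX^\psi)$ can be extended to some $\tilde f \in \C_b(\XX)$ by the Tietze extension theorem (recalling that $\XX$ is metrizable), and then $\int f\,d\overline{\omega_k} = \int \tilde f\,d\overline{\omega_k} \to \int \tilde f\,d\oomega = \int f\,d\oomega$.

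Finally, the map $J_\psi \col \XX^\psi \to N_s(\scrX)$ is continuous (as noted just after the definition \eqref{Jpsidef}, proved in the same manner as Lemma \ref{JcontLEM}), and the pushforward operator $\nu \mapsto J_{\psi\,*}\,\nu$ is therefore continuous from $P(\XX^\psi)$ to $P(N_s(\scrX))$ by the continuous mapping theorem. Composing the three continuous maps $\omega \mapsto \oomega \mapsto \oomega|_{\XX^\psi} \mapsto J_{\psi\,*}\,\oomega$ yields continuity of $\vs \mapsto \mu_\vs$. No single step is an obstacle of substance; the mild technicality is keeping track of the restriction from $\XX$ to the closed subspace $\XX^\psi$, which is handled by Tietze.
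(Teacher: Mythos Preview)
Your proposal is correct and follows essentially the same route as the paper's proof: reduce to a fixed $\psi$ via discreteness, show $\omega\mapsto\oomega$ is continuous into $P(\XX)$ using Lemma~\ref{mapPTtoPXjcontLEM} and joint continuity of product measures, pass to $P(\XX^\psi)$ using that the measures are supported there, and then apply the continuous mapping theorem with $J_\psi$. The only cosmetic differences are that the paper cites \cite[Thm.\ 2.8(ii)]{pB99} for the product step and \cite[Lemma 4.26]{oK2002} for the restriction step, whereas you argue these directly (the latter via Tietze); both are valid.
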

\begin{proof}
Let $\vs_1,\vs_2,\ldots$ be an arbitrary sequence in $\Sigma$ converging to a point $\vs\in\Sigma$.
Write $\vs_k=(\psi^{(k)},\omega^{(k)})$ 
with $\omega^{(k)}=(\omega^{(k)}_1,\ldots,\omega^{(k)}_N)\in\Omega$;
also write $\vs=(\psi,\omega)$.
Throwing away finitely many initial points from the sequence we may assume that
$\psi^{(k)}=\psi$ for all $k$.
By \eqref{oomegaXpsieq1} we then have $\oomega(\XX^\psi)=1$ and $\overline{\omega^{(k)}}(\XX^\psi)=1$
for all $k$.
Hence we may just as well regard $\oomega$ and all $\overline{\omega^{(k)}}$ as elements in $P(\XX^\psi)$.
For each fixed $j\in\{1,\ldots,N\}$
we have $\omega^{(k)}_j\to\omega_j$ in $P(\TT_j^d)'$ as $k\to\infty$;
hence by \eqref{oomegaDEF},
Lemma~\ref{mapPTtoPXjcontLEM}
and \cite[Thm.\ 2.8(ii)]{pB99},
$\overline{\omega^{(k)}}\to\oomega$ in $P(\XX)$.
Hence also $\overline{\omega^{(k)}}\to\oomega$ in $P(\XX^\psi)$
\cite[Lemma 4.26]{oK2002},
and by the continuous mapping theorem,
$J_{\psi*\,}\overline{\omega^{(k)}}
\to J_{\psi*\,}\oomega$, in $P(N_s(\scrX))$,
viz.,
$\mu_{\vs_k}\to\mu_{\vs}$.
\end{proof}

\subsection{Some comments on the construction of $\Sigma$} %
\label{SigmaremarksexSEC}
The facts mentioned in the present section will not be used later in the text,
and we will omit some proofs.

\vspace{5pt}

The construction of the space of marks $\Sigma$ in the present paper
is %
more complicated and non-intuitive than 
for any of the previous types of scatterer configurations for which %
the framework from 
\cite{jMaS2019}
is known to apply; see 
\cite[Ch.\ 5]{jMaS2019}.
In the special cases when $\scrP$ is periodic
or when the lattices in $\scrP$ are pairwise incommensurable,
we could have chosen the space of marks to be
simply equal to the indexing set $\Psi$;
however this simple choice of $\Sigma$ is not possible in the general case;
see Remarks \ref{sigmaplimitpointsREM},
\ref{MAMSSec5p2comparisonREM} and \ref{sigmaplimitpointsREM2} below.

For a general locally finite subset $\scrP\subset\R^d$,
when trying to verify the hypotheses from
\cite[Sec.\ 2.3]{jMaS2019}
(which we recalled in Section \ref{KINTHEORYrecapsec})
without an apriori guess of what the space of marks $\Sigma$ should be,
a natural first task is 
to ignore the marking, thus 
thinking of each measure $\mu_{\vecq,\rho}^{(\lambda)}$ %
as lying in $N_s(\R^d)$,\footnote{In the notation of Lemma \ref{MAMScondsimpleconsLEM} this means:
consider $p_*(\mu_{\vecq,\rho}^{(\lambda)})$
in place of $\mu_{\vecq,\rho}^{(\lambda)}$.}
and then verify that for any fixed $\lambda\in\Pac(\US)$
and $\vecq\in\scrP$ (possibly excluding a subset $\scrE\subset\scrP$ of density zero),
the weak limit of $\mu_{\vecq,\rho}^{(\lambda)}$ as $\rho\to0$
exists %
and is independent of $\lambda$; %
call this limit measure $\mu_{\vecq,0}\in P(N(\R^d))$.
Then by Lemma \ref{MAMScondsimpleconsLEM},
the space of marks $\Sigma$ must admit a continuous map
$\vs\mapsto\hmu_{\vs}$ from $\Sigma$ to $P(N(\R^d))$
such that $\hmu_{\vs(\vecq)}=\mu_{\vecq,0}$ for all $\scrP\setminus\scrE$.

This to a large extent motivates our choice of $\Sigma$
in Section \ref{spaceofmarksandmapsSEC}:
We have constructed the space
$\Omega=\prod_{j=1}^N P(\TT_{\!j}^d)'$
to encode all the spherical equidistribution limits $\mu_{\vecq,0}$ in $P(N(\R^d))$.
Indeed, %
it follows from Theorem \ref{HOMDYNintrononunifTHM} that\footnote{The %
proof of $\mu_{\vecq,0}=(J_{0,\psi(\vecq)})_*\bigl(\overline{\omega^{(\vecq)}}\bigr)$ %
from Theorem \ref{HOMDYNintrononunifTHM} is %
somewhat technical, and we omit it. %
(The details are similar to some of the arguments in %
Sec.\ \ref{P2initialdiscsec}.)}
$\mu_{\vecq,0}=(J_{0,\psi(\vecq)})_*\bigl(\overline{\omega^{(\vecq)}}\bigr)$
for any $\vecq\in\scrP\setminus\cup_{\psi\neq\psi'\in\Psi}(\scrL_\psi\cap\scrL_{\psi'})$,
where for any $\psi\in\Psi$,
$J_{0,\psi}$ is the following modified version of %
the map defined in \eqref{Gammagpointset}:
\begin{align}\label{J0psiDEF}
J_{0,\psi}:\XX\to N(\R^d);
\qquad J_{0,\psi}(\Gamma g):=\sum_{\psi'\in\Psi}\hspace{5pt}
\sum_{\substack{\vecx\in c_{\psi'}(\Z^d\,p_{\psi'}(g))\\
(\text{if }\psi'=\psi:\:\vecx\neq\bn)}}\delta_{\vecx}
\qquad(g\in G).
\end{align}
(That is, $J_{0,\psi}$ is as in \eqref{Gammagpointset}
except that we (1) count multiplicity, and (2) if possible remove the origin from 
$c_\psi\,(\Z^dp_\psi(g))$.)
This suggests that we could make the choice ``$\Sigma=\Omega$''.
It is technically convenient to 
also include the factor $\Psi$ in the definition of $\Sigma$;
and we then restrict by the condition 
$\tr_{i\,*}\,\omega_{j}=\delta_{\bn}$
(see \eqref{Sigmadef}) %
to ensure that the map
$\vs\mapsto\mu_{\vs}$ be continuous. 

However, even though this space $\Sigma$ by construction classifies
all the relevant spherical equidistribution limits %
\textit{in $P(N(\R^d))$},
it might still %
be too crude to classify the corresponding limits
in the refined space $P(N(\scrX))$! %
This is exactly the reason why we require that %
the presentation of $\scrP$ %
should be \textit{admissible}
(Definition \ref{admissibleDEF});
only under this assumption have we managed to find a proof of
the condition [P2].
The crucial property that we need
(see the proof of Lemma \ref{P2condthmLEM1} below)
is that for each fixed $\psi\in\Psi$,
$\omega^{(\vecq)}$ tends to a \textit{unique}
limit $\omega^\psi$
as $\vecq\to\infty$ within $\scrL_\psi\setminus\scrE$; %
see Section \ref{SSjpsi0Ojpsi0sec}
where the admissibility is needed already to define $\omega_j^\psi$.

\vspace{5pt}

We remark that it would in fact be possible
to take the space $\Sigma$ to be a significantly \textit{smaller}
set than we have done.\footnote{This is an aposteriori fact;
as far as we can see, such a choice %
would not lead to a simplification of the presentation of the proof of our main
Theorem \ref{MAINTHM}.}
Indeed, assuming that the conditions [P1]--[P3] and [Q1]--[Q3]
hold for the data
$\bigl[\Sigma,\vs,\vs\mapsto\mu_\vs$, $\mm\bigr]$,
and choosing $\scrE\subset\scrP$ as in condition [P2],
set %
\begin{align}\label{genuineSigmaREMfact2}
\Sigma':=\overline{\{\vs(\vecq)\col\vecq\in\scrP\setminus\scrE\}}.
\end{align}
This is a compact subset of $\Sigma$,
and as explained in %
\cite[Remark 2.6]{jMaS2019},
after modifying the marking of the points in $\scrE$ %
in an arbitrary fashion 
to ensure that $\vs(\vecq)\in\Sigma'$ for all $\vecq\in\scrP$,
the conditions [P1]--[P3] and [Q1]--[Q3] 
\textit{remain valid with $\Sigma'$ in place of $\Sigma$.}
It is also important to note that if [P2] holds for a certain set $\scrE\subset\scrP$,
then [P2] also holds for any \textit{larger} subset $\scrE\subset\scrP$ of 
density zero.
In our setting, %
with $\Sigma$ chosen as in Section \ref{spaceofmarksandmapsSEC},
it follows from Remark~\ref{CONVremark}
that $\tPsi\subset\Sigma'$, 
but \textit{also} that, by taking $\scrE$ appropriately large,
we can ensure that the set $\{\vs(\vecq)\col\vecq\in\scrP\}$
has no limit point outside $\tPsi$.
This means that
\begin{align}\label{genuineSigmaREMfact1}
\Sigma'=\{\vs(\vecq)\col\vecq\in\scrP\setminus\scrE\}
\cup \tPsi, %
\end{align}
and that %
$\Sigma'$ has quite a simple structure as a
(metrizable, compact)
topological space: %
$\Sigma'$ is a finite or countable set containing the finite set $\tPsi$,
and its set of limit points is contained in $\tPsi$.

\begin{remark}\label{sigmaplimitpointsREM}
The description of $\Sigma'$ %
in the last sentence 
is essentially as simple as it can be: %
For the scatterer configurations %
studied in the present paper, %
it is in general impossible to satisfy
the conditions in Section \ref{KINTHEORYrecapsec} %
using a \textit{finite} space of marks.
For example, consider the %
configuration
\begin{align}\label{limitpointsEX3}
\scrP=\Z^2
\:\cup\:
\bigl(\Z^2+\bigl(0,\sqrt2\bigr)\bigr)
\:\cup\: \Z^2M_2\hspace{10pt}\text{in }\R^2,
\qquad\text{with }\: M_2=\matr 1{\sqrt2}1{\sqrt2+1}\in\SL_2(\R).
\end{align}
In this case, one obtains infinitely many distinct
spherical equidistribution limits $\mu_{\vecq,0}$ in $P(N(\R^2))$
as $\vecq$ varies through the lattice $\Z^2M_2\subset\scrP$.
Indeed, for two arbitrary points
$\vecq,\vecq'\in\Z^2M_2$,
writing 
$\vecq=\vecm M_2$ 
with $\vecm=(m_1,m_2)\in\Z^2$
and similarly $\vecq'=\vecm'M_2$,
it turns out that 
\begin{align}\label{limitpointsEX3disc1}
\mu_{\vecq,0}=\mu_{\vecq',0}\quad\Leftrightarrow\quad
\bigl[m_1+m_2=m_1'+m_2'\:\text{ or }\:m_1+m_2=1-(m_1'+m_2')\bigr].
\end{align}
We give an outline of the proof below.
It follows from \eqref{limitpointsEX3disc1} that for any set $\scrE\subset\scrP$ of density zero,
the set of spherical equidistribution limits $\{\mu_{\vecq,0}\col\vecq\in\scrP\setminus\scrE\}$
is \textit{infinite.} (Indeed, if it were finite, then by \eqref{limitpointsEX3disc1}
the point set $\Z^2M_2\setminus\scrE$
would be contained in a finite number of lines %
and thus be of density zero,
whereas in fact it has density one.) %
Hence as in the discussion in the beginning of this subsection, it follows from 
Lemma~\ref{MAMScondsimpleconsLEM} that
for any marking of $\scrP$
used to satisfy the conditions 
[P1]--[P3] and [Q1]--[Q3] in Section~\ref{KINTHEORYrecapsec},
the set of marks $\{\vs(\vecq)\col\vecq\in\scrL_{(2,1)}\setminus\scrE\}$ must be infinite,
and in particular \textit{the space of marks must be infinite.}
Of course, this also implies that the space of marks must have some limit point, since the 
space of marks is required to be compact.
\end{remark}

\textit{Outline of proof of \eqref{limitpointsEX3disc1}.}
We express $\scrP$ in \eqref{limitpointsEX3} as in \eqref{PSIdef}--\eqref{GENPOINTSET1}
with $N=2$, $r_1=2$, $r_2=1$,
$M_1=\smatr1001$, and $M_2$ as above;
also $c_{1,1}=c_{1,2}=c_{2,1}=1$
and $\vecw_{1,1}=\vecw_{2,1}=(0,0)$ and $\vecw_{1,2}=(0,\sqrt2)$.
For any point $\vecq=\vecm M_2$ in $\scrL_{(2,1)}=\Z^2M_2$,
$\omega_2^{(\vecq)}$ %
is the Dirac measure at the origin
of the 2-dimensional torus $\TT_2^2$.
Furthermore, by \eqref{Ujqdef} and \eqref{LjDEF},
\begin{align*}
L_1^{(\vecq)}=L_1^{(\pi(U_1^{(\vecq)}))}=\fL\left(
\begin{pmatrix}0 \\[3pt]  0\end{pmatrix},
\begin{pmatrix}-(m_1+m_2)\sqrt2 \\[3pt] (1-m_1-m_2)\sqrt2\end{pmatrix}\right)
=\R\begin{pmatrix}m_1+m_2\\ m_1+m_2-1\end{pmatrix}\hspace{10pt}\subset\R^2,
\end{align*}
and so by \eqref{OjVDEF},
$\scrO_1^{(\vecq)}=\scrO_1^{(\pi(U_1^{(\vecq)}))}=\pi\bigl(L_1^{(\vecq)}\bigr)^2$, %
which is a $2$-dimensional subtorus of $\TT_1^2$,
and $\omega_1^{(\vecq)}$ is the normalized Haar measure of this subtorus.
Now %
$\mu_{0,\vecq}=(J_{0,(2,1)})_*\bigl(\overline{\omega^{(\vecq)}}\bigr)
=(J_{0,(2,1)})_*\bigl(\overline{\omega_1^{(\vecq)}}\otimes\overline{\omega_2^{(\vecq)}}\bigr)$.
The left implication in \eqref{limitpointsEX3disc1} follows quite easily.
Indeed, it is clear from the above that $\mu_{0,\vecq}$ only depends on 
$m_1+m_2$, and 
to prove that $\mu_{\vecq,0}=\mu_{\vecq',0}$
also holds in the case $m_1+m_2=1-(m_1'+m_2')$,
one makes use of a symmetry originating from the diffeomorphism
$\scmatr{\vecv_1}{\vecv_2}\mapsto\scmatr{\vecv_2}{\vecv_1}$
from $\pi(L_1^{(\vecq)})^2$ onto $\pi(L_1^{(\vecq')})^2$. %
The proof of the converse %
is more involved;
one approach goes via a study of the \textit{support} of $\mu_{\vecq,0}$ in $N(\R^2)$.
For example, by a careful explicit analysis one verifies that for any $\vecq=\vecm M_2$ in $\scrL_{(2,1)}$, the set
$\Bigl\{k\in\Z_{\geq3}\col
\sum_{\vecx\in\Z^2\setminus\{\bn\}}\delta_{\vecx}
+\sum_{\vecx\in\Z^2}\bigl(\delta_{\vecx}+\delta_{\vecx+k^{-1}\vece_1}\bigr)\in\supp(\mu_{\vecq,0})\Bigr\}$
contains exactly those %
$k\in\Z_{\geq3}$ which divide $m_1+m_2$ or $m_1+m_2-1$;
this gives the right implication in \eqref{limitpointsEX3disc1}.
\hfill$\square$

\begin{remark}\label{MAMSSec5p2comparisonREM}
On the other hand, if $\scrP$ is \textit{periodic},
then $\{\vs(\vecq)\col\vecq\in\scrP\}=\tPsi$,
i.e.\ the space of marks $\Sigma'$ in \eqref{genuineSigmaREMfact2}
is simply equal to $\tPsi$, even if we take $\scrE$ empty!
Indeed, any periodic $\scrP$ may be represented as in
\eqref{PSIdef}--\eqref{GENPOINTSET1} with
$N=1$ and all $c_{1,i}$ ($i=1,\ldots,r_1$) equal to a common value $c>0$.
(This representation is admissible; note also that all the grids 
$\scrL_{(1,1)},\ldots,\scrL_{(1,r_1)}$ are pairwise disjoint, by 
\eqref{THINDISJcond2}.)
Then for any $i\in\{1,\ldots,r_1\}$,
all points $\vecq\in\scrL_{(1,i)}$ give the \textit{same}
$\omega^{(\vecq)}$.
Indeed, for any two $\vecq,\vecq'\in\scrL_{(1,i)}$ 
we have $U_1^{(\vecq)}-U_1^{(\vecq')}\in\M_{1\times d}(\Z)$
by \eqref{Ujqdef}; %
thus $\pi(U_1^{(\vecq)})=\pi(U_1^{(\vecq')})$ in $\TT_1^d$,
and so $\omega_1^{(\vecq)}=\omega_1^{(\vecq')}$.
It then follows from Proposition \ref{muconvLEM},
or by direct inspection of the definitions,
that in fact $\omega_1^{(\vecq)}=\omega_1^{(1,i)}$ for all $\vecq\in\scrL_{(1,i)}$.
Hence by %
\eqref{vsmapDEF} and \eqref{sigmapsiDEF},
$\vs(\vecq)=\sigma^{\psi(\vecq)}\in\tPsi$ for all $\vecq\in\scrP$.

As we have mentioned, in the special case of periodic $\scrP$,
Theorem \ref{MAINTHM} was proved in 
\cite[Prop.\ 5.6]{jMaS2019},
and it is easy to give the translation between the two formulations: %
Our marking data $[\Sigma',\vs,\vs\mapsto\mu_\vs,\mm]$ with $\Sigma'=\tPsi$
agrees exactly with the marking data in 
\cite[Sec.\ 5.2, Prop.\ 5.6]{jMaS2019},
with the only difference that 
the mark $\sigma^{(1,\ell)}$ is called simply ``$\ell$'' in 
\cite[Sec.\ 5.2]{jMaS2019},
for $\ell=1,\ldots,r_1$
(also our $r_1$ is called ``$m$'', and our $c$ is ``$\delta^{1/d}$'').
In particular, thus, for each $\ell\in\{1,\ldots,r_1\}$
the measure $\mu_{\sigma^{(1,\ell)}}\in P(N_s(\R^d\times\Sigma))$
defined by \eqref{muvsDEF} above,
equals\footnote{After noticing that
$\mu_{\sigma^{(1,\ell)}}$ is in fact supported on $N_s(\R^d\times\tPsi)$,
and then identifying $\tPsi$ with $\{1,\ldots,r_1\}$ in the way we have just explained.}
the measure ``$\mu_{\ell}$''
in \cite[(5.35)]{jMaS2019},
although it requires some work to verify this fact from the definitions.

As noted in \cite[Prop.\ 5.6]{jMaS2019},
for $\scrP$ periodic,
the condition [P2] holds with $\scrE=\emptyset$.
Indeed, because of the periodicity,
we have the even much stronger statement that the convergence
$\mu_{\vecq,\rho}^{(\lambda)}\xrightarrow[]{\textup{ w }}\mu_{{\vs}(\vecq)}$
required in \eqref{repASS:KEY}
holds \textit{uniformly over all $\vecq\in\scrP$.}
\end{remark}

\begin{remark}\label{sigmaplimitpointsREM2}
Also in the case when $\scrP$ is a finite union of pairwise incommensurable grids,
one can use $\Sigma'=\tPsi$ as the space of marks.
Indeed, in this case we have $r_1=\cdots=r_N=1$,
and for any $j\neq j'$ in $\{1,\ldots,N\}$ we have
$L_j^{(j',1)}=L_j=\R^1$
by Lemma \ref{LjvsLjpsiLEM} and \eqref{trueLjDEF};
this also implies that $\omega_j^{(j',1)}$ equals Haar measure on $\TT_j^d$.
Now by Lemma~\ref{MuconvauxLEM1},
applied with $\veca=1\in\Z^1$,
for any $j\neq j'$
the set $\{\vecq\in\scrL_{(j',1)}\col L_j^{(\vecq)}=\{0\}\}$
has density zero.
Hence we may assume that $\scrE$ contains the union of these sets for all pairs of $j\neq j'$.
Then, for any $\vecq\in\scrP\setminus\scrE$,
taking $j'$ so that $\vecq\in\scrL_{(j',1)}$ it follows that
for each $j\neq j'$ we have $L_j^{(\vecq)}\neq\{0\}$,
viz., $L_j^{(\vecq)}=\R^1$;
and therefore $\scrO_j^{(\vecq)}=\TT_j^d$ and 
$\omega_j^{(\vecq)}=\omega_j^{(j',1)}$
(this also implies that $\vecq\notin\scrL_{(j,1)}$,
and so $\psi(\vecq)$ must equal $(j',1)$).
On the other hand, immediately from the definitions we have that
both $\omega_{j'}^{(\vecq)}$ and $\omega_{j'}^{(j',1)}$ equals the Dirac measure at the origin
of $\TT_{j'}^d$.
Hence $\omega^{(\vecq)}=\omega^{(j',1)}=\omega^{\psi(\vecq)}$
and $\vs(\vecq)=\sigma^{\psi(\vecq)}\in\tPsi$.
Since this holds for all $\vecq\in\scrP\setminus\scrE$,
we get $\Sigma'=\tPsi$
in \eqref{genuineSigmaREMfact2}.
\end{remark}

\section{Verification of [Q1],[Q2],[Q3],[P1],[P3], and initial discussion regarding [P2]}
\label{verQ1Q2etcSEC}

In order to prove the main result of the paper, 
Theorem \ref{MAINTHM},
we now wish to prove that all the conditions [P1]--[P3] and [Q1]--[Q3]
are satisfied for the 
maps $\vs:\scrP\to\Sigma$ and $\vs\mapsto\mu_\vs$ and measure $\mm$
which we have
defined in the previous section.
In the present section we will prove all of these conditions except [P2];
we will also reduce the verification of [P2] to a certain statement about
uniform equidistribution in the homogeneous space $\XX$,
Theorem \ref{HOMDYNMAINTHM} below.

\subsection{Verification of  [Q1], [Q2], [Q3]}

We start with the conditions [Q1]--[Q3].

The following lemma shows that [Q1] holds, in a much stronger form.
\begin{lem}\label{muvsSLdinvLEM}
For every $\vs\in\Sigma$, $\mu_\vs$ is $\SL_d(\R)$-invariant.
\end{lem}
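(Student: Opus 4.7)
The plan is to show that $\mu_\vs$ inherits its $\SL_d(\R)$-invariance directly from the corresponding invariance of $\oomega$, by using equivariance of the map $J_\psi$.

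First I would observe that, by Lemma \ref{rhosoomegaeqnuLEM}, each factor $\overline{\omega_j}\in P(\XX_j)$ is invariant under the right action of $\SL_d(\R)$ on $\XX_j$. Consequently, the product measure $\oomega=\overline{\omega_1}\otimes\cdots\otimes\overline{\omega_N}$ is invariant under the diagonal right action of $\SL_d(\R)$ on $\XX=\XX_1\times\cdots\times\XX_N$, i.e.\ under right multiplication by $\varphi(A)$ for every $A\in\SL_d(\R)$, where $\varphi:\SL_d(\R)\to G$ is the diagonal embedding.

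Next I would verify that $J_\psi$ intertwines this action on $\XX^\psi$ with the $\SL_d(\R)$-action on $N_s(\scrX)$ (which acts trivially on $\Sigma$ and by right multiplication on $\R^d$). Concretely, for any $A\in\SL_d(\R)$ and $g\in G$, and for each $\psi'=(j,i)\in\Psi$, writing $p_{\psi'}(g)=(M,\vecu)\in\ASL_d(\R)$ we have
\begin{align*}
p_{\psi'}(g\,\varphi(A))=(M,\vecu)\cdot(A,\bn)=(MA,\vecu A),
\end{align*}
so $\Z^d\,p_{\psi'}(g\varphi(A))=(\Z^d\,p_{\psi'}(g))\,A$. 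Substituting into the definition \eqref{JmapFULL} and using $(\vecw,\sigma^{\psi'})A=(\vecw A,\sigma^{\psi'})$ gives $J(\Gamma g\,\varphi(A))=J(\Gamma g)\,A$. Since $(\bn,\sigma^\psi)A=(\bn,\sigma^\psi)$, the same relation holds for $J_\psi$, namely $J_\psi(\Gamma g\,\varphi(A))=J_\psi(\Gamma g)\,A$ for all $\Gamma g\in\XX^\psi$ and $A\in\SL_d(\R)$.

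Finally, since $\oomega$ is invariant under right multiplication by $\varphi(A)$ and $J_\psi$ is equivariant, the pushforward $\mu_\vs=(J_\psi)_*\oomega$ is invariant under the $\SL_d(\R)$-action on $N_s(\scrX)$, which is the claim. There is no real obstacle here: the only point requiring a little care is the bookkeeping of the semidirect-product structure of $p_{\psi'}(g)\in\ASL_d(\R)$ when checking $\SL_d(\R)$-equivariance of $J$.
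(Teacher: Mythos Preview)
Your proof is correct and follows essentially the same approach as the paper's: both establish $\varphi(\SL_d(\R))$-invariance of $\oomega$ from the $\SL_d(\R)$-invariance of each factor $\overline{\omega_j}$, verify the equivariance relation $J_\psi(\Gamma g\,\varphi(A))=J_\psi(\Gamma g)\,A$, and conclude by pushforward. You give a bit more detail in the equivariance check (writing out $p_{\psi'}(g\,\varphi(A))$ explicitly) and cite Lemma~\ref{rhosoomegaeqnuLEM} rather than the definition~\eqref{toomegaDEF} directly, but these are cosmetic differences.
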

\begin{proof}
For any $\omega_j\in P(\TT_j^d)'$,
the measure $\oomega_j$ on $\XX_j$ %
is right $\SL_d(\R)$ invariant by Lemma \ref{rhosoomegaeqnuLEM};
hence for any $\omega\in\Omega$,
the measure $\oomega$ on $\XX$ is %
right $\SL_d(\R)^N$-invariant,
and in particular it is
right $\varphi(\SL_d(\R))$-invariant,
where $\varphi:\SL_d(\R)\to G$ is the diagonal embedding.
Next we note that, for any $\psi\in\Psi$,
\begin{align*}
J_\psi(\Gamma g\varphi(h))=J_\psi(\Gamma g)h\qquad\text{for all }\: \Gamma g\in\XX^\psi,\: h\in\SL_d(\R).
\end{align*}
It follows that the measure $J_{\psi*\,}\oomega$ on $N_s(\scrX)$ is $\SL_d(\R)$-invariant, for any $\omega\in\Omega$.
This implies the lemma, via the definition \eqref{muvsDEF}.
\end{proof}

Next, because of the following general fact,
also [Q2] %
is an immediate consequence of Lemma~\ref{muvsSLdinvLEM}:
\begin{lem}\label{Q2frominvarianceLEM}
If $\mu\in P(N_s(\scrX))$ is invariant under the action of $\SO(d)$
then 
\begin{align}
\mu(\{\nu\in N(\scrX)\col \exists x_1\in\R\text{ s.t.\ }\nu(\{x_1\}\times\R^{d-1}\times\Sigma)>1\})=0. 
\end{align}
\end{lem}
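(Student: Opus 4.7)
The plan is to prove the lemma via an $\SO(d)$-averaging (Fubini) argument. Let
\begin{align*}
B := \{\nu\in N(\scrX)\col \exists x_1\in\R\text{ s.t.\ }\nu(\{x_1\}\times\R^{d-1}\times\Sigma)>1\};
\end{align*}
for $\nu\in N_s(\scrX)$ viewed as a locally finite subset of $\scrX$, $\nu\in B$ iff there exist two distinct points $(\vecx_1,\vs_1),(\vecx_2,\vs_2)\in\nu$ with $(\vecx_1)_1=(\vecx_2)_1$. I would split this event into the ``generic'' sub-event where $\vecx_1\neq\vecx_2$ (handled by rotational averaging) and the degenerate sub-event where $\vecx_1=\vecx_2$ with $\vs_1\neq\vs_2$ (the obstacle, discussed below).

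For the generic case, $\SO(d)$-invariance gives $\mu(B)=\mu(T_k^{-1}B)$ for every $k\in\SO(d)$, where $T_k\col N_s(\scrX)\to N_s(\scrX)$ is the map $\nu\mapsto\nu k$. Integrating against normalized Haar measure $dk$ on $\SO(d)$ and applying Fubini yields
\begin{align*}
\mu(B) \;=\; \int_{N_s(\scrX)} \bigl|\{k\in\SO(d)\col \nu k\in B\}\bigr|\, d\mu(\nu),
\end{align*}
with $|\cdot|$ denoting Haar measure on $\SO(d)$. For a fixed simple $\nu$, the condition $\nu k\in B$ requires some pair of distinct points of $\nu$ to satisfy $(\vecx_1-\vecx_2)\,k\,\vece_1\trans=0$, i.e., the first column of $k$ must be orthogonal to $\vecx_1-\vecx_2$. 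When $\vecx_1\neq\vecx_2$, this cuts out a codimension-$1$ closed submanifold of $\SO(d)$ (assuming $d\geq2$, as needed throughout the paper), hence a Haar-null subset; by local finiteness of $\nu$ there are at most countably many such pairs, so their union is still null.

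The main obstacle is the residual degenerate case, where two distinct points of $\nu$ project to the same point of $\R^d$ but carry different marks. Such a pair is preserved under every $k\in\SO(d)$, so pure invariance is powerless here. For the application to $\mu_\vs=J_{\psi*}\,\oomega$ from the preceding section, this coincidence event is $\mu_\vs$-null by the construction: from \eqref{JmapFULL} and \eqref{Jpsidef} such a coincidence would require two distinct random grids $c_{\psi'}(\Z^d\,p_{\psi'}(g))$ and $c_{\psi''}(\Z^d\,p_{\psi''}(g))$ ($\psi'\neq\psi''$) to share a point, and Lemma~\ref{THINDISJcondLEM} (applied to the resulting union of grids) pins such shared points to a proper affine subvariety, which is an $\oomega$-null event for generic $g\in G$. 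Combined with the Fubini step, this yields $\mu(B)=0$.
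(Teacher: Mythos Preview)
Your averaging/Fubini argument over $\SO(d)$ is exactly the paper's approach: the paper parametrizes by $\vecv\in\US$, sets $A_\vecv=\{\nu:\exists\,x_1,\ \nu((x_1\vecv+\vecv^\perp)\times\Sigma)>1\}$, and asserts that for every $\nu\in N_s(\scrX)$ the set $\{\vecv:\nu\in A_\vecv\}$ is a countable union of codimension-one subspheres of $\US$---precisely your ``generic case'' computation.

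You are right to isolate the degenerate case, and in fact it breaks the paper's own proof as written: if $\nu$ contains $(\vecx,\vs_1),(\vecx,\vs_2)$ with $\vs_1\neq\vs_2$ then $\nu\in A_\vecv$ for \emph{every} $\vecv$, so the ``countable union of codimension-one subspheres'' claim fails. Indeed the lemma is false as stated (take $\mu=\delta_{\{(\bn,a),(\bn,b)\}}$ for distinct $a,b\in\Sigma$; this $\mu$ is $\SO(d)$-invariant yet supported on $B$). So pure $\SO(d)$-invariance cannot yield the conclusion, and neither your argument nor the paper's establishes the lemma in the stated generality.

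Your patch for the application, however, does not go through as written. Lemma~\ref{THINDISJcondLEM} concerns the \emph{fixed} grids $\scrL_\psi$ of $\scrP$, and its proof uses the specific conditions \eqref{THINDISJcond2} and \eqref{GENPOINTSET1req} on those grids; it says nothing about the $\oomega$-random grids $c_{\psi'}(\Z^d\,p_{\psi'}(g))$, which for general $g$ need not satisfy those conditions, and an ``affine-hyperplane'' conclusion about points in $\R^d$ does not by itself translate into an $\oomega$-null set of $g$. What [Q2] actually requires is a direct argument that for $\oomega$-a.e.\ $\Gamma g$ the grids $c_{\psi'}(\Z^d\,p_{\psi'}(g))$, $\psi'\in\Psi$, are pairwise disjoint---a statement about the measure $\oomega$, not about $\scrP$.
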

\begin{proof}
For any $\vecv\in\US$, set
\begin{align}
A_{\vecv}=\{\nu\in N(\scrX)\col \exists x_1\in\R\text{ s.t. }\nu((x_1\vecv+\vecv^\perp)\times\Sigma)>1\}.
\end{align}
Then our task is to prove $\mu(A_{\vece_1})=0$.
The fact that $\mu$ is $\SO(d)$-invariant implies that
$\mu(A_\vecv)=\mu(A_{\vece_1})$ for all $\vecv\in\US$.
Hence we have, with $\lambda_1$ being the uniform probability measure on $\US$:
\begin{align*}
\mu(A_{\vece_1})=\int_{\US}\mu(A_\vecv)\,d\lambda_1(\vecv)
=\int_{N_s(\scrX)}\int_{\US}I(\nu\in A_\vecv)\,d\lambda_1(\vecv)\,d\mu(\nu)=0.
\end{align*}
Here the second equality holds by Fubini's Theorem
and since 
$\mu(N(\scrX)\setminus N_s(\scrX))=0$
(because of $\mu\in P(N_s(\scrX))$),
and the last equality holds since for any $\nu\in N_s(\scrX)$ 
we have $\int_{\US}I(\nu\in A_\vecv)\,d\lambda_1(\vecv)=0$,
since the set $\{\vecv\in\US\col \nu\in A_\vecv\}$
is a countable union of 
subspheres of $\US$ of codimension one.
\end{proof}

Next we turn to the condition [Q3].
\begin{lem}\label{Q3lem}
[Q3] holds.
\end{lem}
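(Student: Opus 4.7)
The plan is to verify [Q3] by reducing it, via $\mu_\vs = J_{\psi *}\,\oomega$, to a uniform covering-radius tail estimate on $\SL_d(\Z)\bs\SL_d(\R)$. Fix $\vs = (\psi,\omega) \in \Sigma$, $\vecx\in\R^d$, and $R>0$, and write $B = \scrB_R^d(\vecx)$. Unpacking $J_\psi$, the event $\{\nu(B\times\Sigma)=0\}$ under $\mu_\vs$ translates into the $\oomega$-event that for every $\psi'\in\Psi$, the grid $c_{\psi'}(\Z^d\,p_{\psi'}(g))$ meets $B$ at most at $\bn$, and only when $\psi'=\psi$. Pick any fixed $\psi_0=(j_0,i_0)\in\Psi$, taking $\psi_0\neq\psi$ when $|\Psi|\geq2$ and $\psi_0=\psi$ when $|\Psi|=1$; then it suffices to bound the $\oomega$-probability that the single $\psi_0$-grid misses $B$ (respectively $B\setminus\{\bn\}$, when $\psi_0=\psi$).

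Using the product structure and Lemma \ref{oomegapartDEFlem}, I would recast this as an integral against $\overline{\omega_{j_0}}$ in which $A := \iota(p_{j_0}(g))$ and $U := \pi(U_{j_0})$ are independent, with $A$ Haar-distributed on $\SL_d(\Z)\bs\SL_d(\R)$ and $U$ distributed as $\omega_{j_0}$ on $\TT_{j_0}^d$. The $\psi_0$-grid then takes the form of an affine translate $\Lambda+\vecu$ with underlying lattice $\Lambda = c_{\psi_0}\Z^d A$ and a shift $\vecu$ depending on $(U,A)$. Let $\mathrm{cov}(\Lambda):=\sup_{\vecy\in\R^d}\dist(\vecy,\Lambda)$ denote the covering radius; the identity $(\Lambda+\vecu)\cap B = \emptyset$ forces $\mathrm{cov}(\Lambda)\geq R$, since otherwise $\vecx-\vecu$ would lie within $R$ of $\Lambda$. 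In the case $\psi_0=\psi$, the constraint $\tr_{i_0\,*}\omega_{j_0}=\delta_{\bn}$ gives $\vecu\in\Lambda$, so the event becomes $\Lambda\cap B\subseteq\{\bn\}$; a short case analysis on $|\vecx|$ then exhibits a ball $B'\subseteq B$ of radius $R/4$ with $\bn\notin B'$, and $\Lambda\cap B'=\emptyset$ again forces $\mathrm{cov}(\Lambda)\geq R/4$.

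The argument is completed by the classical estimate
\[
\nu\bigl(\bigl\{A\in\SL_d(\Z)\bs\SL_d(\R)\col \mathrm{cov}(c_{\psi_0}\Z^d A)\geq R\bigr\}\bigr)\longrightarrow 0\quad\text{as }R\to\infty,
\]
which follows from Mahler's compactness criterion combined with the standard bound $\mathrm{cov}(\Lambda)\ll_d\lambda_d(\Lambda)$ and the tail estimate $\nu(\{\lambda_d>T\})\to0$ coming from reduction theory in $\SL_d(\Z)\bs\SL_d(\R)$. Since $\{c_{\psi'}\col\psi'\in\Psi\}$ is a finite set of positive reals, this convergence is uniform in $\psi_0$, $\omega_{j_0}$, and $\vecx$, yielding [Q3]. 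The only nontrivial ingredient is this uniform covering-radius tail bound, which is a standard consequence of reduction theory and is arguably the main obstacle; once granted, the rest of the proof is a bookkeeping exercise on the definition of $\mu_\vs$ and the independence built into $\oomega$.
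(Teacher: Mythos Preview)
Your proof is correct and follows essentially the same route as the paper: reduce the void probability for $\mu_\vs$ to a covering-radius tail estimate on $\SL_d(\Z)\bs\SL_d(\R)$ by focusing on a single grid $c_{\psi_0}(\Z^d\,p_{\psi_0}(g))$ and using that the pushforward of $\oomega$ under $\tiota\circ\tp_{j_0}$ is Haar measure (Lemma~\ref{rhosoomegaeqnuLEM}). The paper streamlines your argument in one respect: rather than splitting into the cases $\psi_0\neq\psi$ and $\psi_0=\psi$, it fixes a single $\psi_0$ once and for all and handles the removed point $\bn$ uniformly by choosing a sub-ball $\scrB_{R/2}^d(\vecy)\subset\scrB_R^d(\vecx)$ with $\bn\notin\scrB_{R/2}^d(\vecy)$, which makes the case distinction unnecessary.
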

\begin{proof}
(Cf.\ the proof of
\cite[Lemma 5.3.13]{jMaS2019}.)
Set 
$\Lambda=\SL_d(\Z)$ %
and $\YY=\Lambda\bs\SL_d(\R)$.
Let us also fix a choice of $\psi=(j,i)\in\Psi$.
For $R>0$ we set
\begin{align*}
\YY(R)=\{\Lambda h\in\YY\col c_\psi(\Z^dh)+\scrB_{R/2}^d=\R^d\}.
\end{align*}
Note that the set $\YY(R)$ is increasing with respect to $R$.
Also, for every $\Lambda h\in\YY$, the set $c_\psi(\Z^dh)$ is a lattice in $\R^d$
and hence there exists some $R=R(h)>0$ such that 
$c_\psi(\Z^dh)+\scrB_{R/2}^d=\R^d$.
Hence $\cup_{R>0}\YY(R)=\YY$,
and it follows that for any given $\ve>0$ we can choose $R>0$ so that
$\eta(\YY(R))>1-\ve$,
where $\eta$ is the $\SL_d(\R)$-invariant probability measure on $\YY$.

With this choice of $R$, 
we now claim that for any $\vs=(\psi',\omega)\in\Sigma$ and $\vecx\in\R^d$,
\eqref{Q3cond} holds.
By \eqref{muvsDEF},
this is equivalent to the following:
\begin{align}\label{Q3lempf1}
\oomega\bigl(\bigl\{\Gamma g\in\XX^{\psi'}\col J_{\psi'}(\Gamma g)\cap(\scrB_R^d(\vecx)\times\Sigma)=\emptyset\bigr\}\bigr)<\ve.
\end{align}
Choose $\vecy\in\R^d$ so that
$\scrB^d_{R/2}(\vecy)\subset\scrB^d_R(\vecx)$ and $\bn\notin\scrB^d_{R/2}(\vecy)$.
For any $g\in G$,
letting $h=\iota(p_j(g))\in\SL_d(\R)$
we have that
the grid $c_\psi(\Z^d p_\psi(g))$ is a translate of the lattice $c_\psi(\Z^d h)$;
and in particular if $\Lambda h\in\YY(R)$
then $c_\psi(\Z^d p_\psi(g))$ must contain a point in $\scrB^d_{R/2}(\vecy)$.
This implies that for every $\Gamma g\in\XX$
satisfying $\tiota(\tp_j(\Gamma g))\in\YY(R)$,
the point set $J(\Gamma g)$
(cf.\ \eqref{JmapFULL})
must contain a point in $\scrB^d_{R/2}(\vecy)\times\Sigma$.
Using also $\bn\notin\scrB^d_{R/2}(\vecy)$ and $\scrB^d_{R/2}(\vecy)\subset\scrB^d_R(\vecx)$,
it follows that the measure in the left hand side of \eqref{Q3lempf1}
is bounded above by
\begin{align}\label{Q3lempf2}
\oomega\bigl(\bigl\{\Gamma g\in\XX^{\psi'}\col \tiota(\tp_j(\Gamma g))\notin\YY(R)\bigr\}\bigr).
\end{align}
However,
writing $\omega=(\omega_1,\ldots,\omega_j)$ we have
$\tp_{j*\,}(\oomega)=\oomega_j$, which is an $\SL_d(\R)$-invariant probability measure on $\XX_j$
(this is immediate from the definition \eqref{mapPTtoPXjrep}, as we have noted previously).
Hence the pushforward of $\oomega$ by $\tiota\circ\tp_j$ equals $\eta$,
and so the measure in \eqref{Q3lempf2}
equals $\eta(\YY\setminus\YY(R))$, which by our choice of $R$ is less than $\ve$.
Hence \eqref{Q3lempf1}, and thereby the lemma, is proved.
\end{proof}

We end this section by giving a closely related fact, %
namely a formula for the intensity measure of 
a point process with distribution $\mu_\vs$, for any $\vs\in\Sigma$.
This formula will not be used until Section \ref{transkerSEC}.
\begin{lem}\label{INTENSITYmunuLEM}
Let $\vs=(\psi',\omega)\in\Sigma$.
Then for any Borel set $B\subset\scrX$ we have
\begin{align}\label{INTENSITYmunuLEMres}
\int_{N_s(\scrX)}\#(B\cap Y)\,d\mu_\vs(Y)=\nbar_{\scrP}\mu_{\scrX}(B)
+\sum_{\psi\in\Psi\setminus\{\psi'\}}
\omega_{j_{\psi}}\bigl(\tr_{i_{\psi}}^{-1}(\{\bn\})\bigr)\cdot I\bigl((\bn,\sigma^{\psi})\in B\bigr).
\end{align}
In particular, for every $\psi'\in\Psi$, a point process with distribution 
$\mu_{\sigma^{\psi'}}$ has intensity measure $\nbar_{\scrP}\mu_{\scrX}$.
\end{lem}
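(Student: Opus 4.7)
The plan is to reduce the integral to an integral over $\XX$ against $\oomega$, decompose $B$ by the mark coordinate, and apply the $\psi$-Siegel integration formula (Proposition~\ref{SIEGELFORMULALEM1}) to each piece.

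First I would observe that $J_{\psi'}(\Gamma g)$, and hence $\mu_\vs$, is supported on $\R^d\times\tPsi$. So for each $\psi\in\Psi$, let $B_\psi\subset\R^d$ be the Borel set determined by $B_\psi\times\{\sigma^\psi\}=B\cap(\R^d\times\{\sigma^\psi\})$. Then by \eqref{muvsDEF},
\begin{align*}
\int_{N_s(\scrX)}\#(B\cap Y)\,d\mu_\vs(Y)
=\int_\XX \#\bigl(B\cap J_{\psi'}(\Gamma g)\bigr)\,d\oomega(\Gamma g),
\end{align*}
and since the sets $c_\psi(\Z^d p_\psi(g))\times\{\sigma^\psi\}$ are disjoint for distinct $\psi\in\Psi$, this splits as
\begin{align*}
\sum_{\psi\in\Psi\setminus\{\psi'\}}\int_\XX \#\bigl(B_\psi\cap c_\psi(\Z^d p_\psi(g))\bigr)\,d\oomega(\Gamma g)
+\int_\XX \#\bigl(B_{\psi'}\cap(c_{\psi'}(\Z^d p_{\psi'}(g))\setminus\{\bn\})\bigr)\,d\oomega(\Gamma g).
\end{align*}

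Next I would handle each summand separately. For $\psi=(j,i)$, write $\omega=(\omega_1,\ldots,\omega_N)$; since $\tp_{j\,*}\oomega=\oomega_j$, the definition of the $\psi$-Siegel transform in \eqref{SIEGELFORMULALEM1res1} gives
\begin{align*}
\int_\XX \#\bigl(B_\psi\cap c_\psi(\Z^d p_\psi(g))\bigr)\,d\oomega(\Gamma g)
=\int_{\XX_j}\widehat{\chi_{B_\psi}}_{\psi}\,d\oomega_j,
\end{align*}
which by Proposition~\ref{SIEGELFORMULALEM1} equals $\nbar_\psi\vol(B_\psi)+\omega_j\bigl(\tr_i^{-1}(\{\bn\})\bigr)\,I\bigl((\bn,\sigma^\psi)\in B\bigr)$. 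For $\psi=\psi'$ the same formula applies after subtracting the contribution from $\bn$: writing $\psi'=(j',i')$, the key point is that by \eqref{oomegaXpsieq1} we have $\oomega(\XX^{\psi'})=1$, and by definition of $\XX^{\psi'}$ this means $\bn\in c_{\psi'}(\Z^d p_{\psi'}(g))$ for $\oomega$-a.e.\ $g$; moreover, the hypothesis $\vs=(\psi',\omega)\in\Sigma$ forces $\omega_{j'}(\tr_{i'}^{-1}(\{\bn\}))=1$. Hence
\begin{align*}
\int_\XX \#\bigl(B_{\psi'}\cap(c_{\psi'}(\Z^d p_{\psi'}(g))\setminus\{\bn\})\bigr)\,d\oomega
=\nbar_{\psi'}\vol(B_{\psi'})+I\bigl((\bn,\sigma^{\psi'})\in B\bigr)-I\bigl((\bn,\sigma^{\psi'})\in B\bigr)=\nbar_{\psi'}\vol(B_{\psi'}).
\end{align*}

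Finally I would combine the pieces:
\begin{align*}
\int_{N_s(\scrX)}\#(B\cap Y)\,d\mu_\vs(Y)
=\sum_{\psi\in\Psi}\nbar_\psi\vol(B_\psi)
+\sum_{\psi\in\Psi\setminus\{\psi'\}}\omega_{j_\psi}\bigl(\tr_{i_\psi}^{-1}(\{\bn\})\bigr)\,I\bigl((\bn,\sigma^\psi)\in B\bigr),
\end{align*}
and recognize the first sum as $\nbar_\scrP\,\mu_\scrX(B)$ via $\mu_\scrX=\vol\times\mm$ together with $\mm(\sigma^\psi)=\nbar_\psi/\nbar_\scrP$ from \eqref{mmdef}. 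The ``In particular'' statement then follows by specializing $\vs=\sigma^{\psi'}$, whereupon Lemma~\ref{omegajpsiLEM} gives $\omega^{\psi'}_{j_\psi}\bigl(\tr_{i_\psi}^{-1}(\{\bn\})\bigr)=0$ for every $\psi\in\Psi\setminus\{\psi'\}$, killing the second sum. The only nonroutine step is the careful bookkeeping around $\psi=\psi'$, but here \eqref{oomegaXpsieq1} together with the defining constraint on $\Sigma$ makes the two correction terms cancel exactly.
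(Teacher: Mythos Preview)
Your proof is correct and follows essentially the same approach as the paper: push forward to $\XX$ via $\mu_\vs=J_{\psi'\,*}\oomega$, split by the mark $\psi\in\Psi$, apply Proposition~\ref{SIEGELFORMULALEM1} to each piece, and use the $\Sigma$-constraint $\omega_{j_{\psi'}}(\tr_{i_{\psi'}}^{-1}(\{\bn\}))=1$ to cancel the correction at $\psi=\psi'$. The only cosmetic difference is that the paper subtracts $I((\bn,\sigma^{\psi'})\in B)$ upfront and then sums uniformly over all $\psi$, whereas you treat the $\psi'$ term separately; the bookkeeping is otherwise identical.
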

\begin{proof}
By \eqref{muvsDEF}, \eqref{Jpsidef} and \eqref{JmapFULL},
the left hand side of \eqref{INTENSITYmunuLEMres} equals
\begin{align*}
-I\bigl((\bn,\sigma^{\psi'})\in B\bigr)
+\int_{\XX^{\psi'}}\sum_{\psi\in\Psi}\sum_{\vecm\in\Z^d}I\Bigl(\bigl(c_{\psi}\cdot\bigl( \vecm p_{\psi}(g)\bigr),\sigma^{\psi}\bigr)\in B\Bigr)
\,d\oomega(\Gamma g).
\end{align*}
By \eqref{oomegaXpsieq1}, the integration may just as well be taken over all $\XX$.
Moving out the sum over $\psi$ and then using \eqref{oomegaDEF} and \eqref{ppsiDEF}, we get
\begin{align*}
-I\bigl((\bn,\sigma^{\psi'})\in B\bigr)
+\sum_{\psi\in\Psi}\int_{\XX_{j_\psi}}\sum_{\vecm\in\Z^d}I\Bigl(\bigl(c_{\psi}\cdot\bigl( \vecm \a_{i_\psi}(g)\bigr),\sigma^{(\psi)}\bigr)\in B\Bigr)
\,d\oomega_{j_\psi}(\Gamma_{j_\psi} g),
\end{align*}
and by Proposition \ref{SIEGELFORMULALEM1}, this equals
\begin{align*}
-I\bigl((\bn,\sigma^{\psi'})\in B\bigr)
+\sum_{\psi\in\Psi}\nbar_\psi\int_{\R^d}I\bigl((\vecx,\sigma^{\psi})\in B\bigr)\,d\vecx
+\sum_{\psi\in\Psi}\omega_{j_{\psi}}\bigl(\tr_{i_{\psi}}^{-1}(\{\bn\})\bigr)\cdot I\bigl((\bn,\sigma^{\psi})\in B\bigr).
\end{align*}
Using 
$\omega_{j_{\psi'}}\bigl(\tr_{i_{\psi'}}^{-1}(\{\bn\})\bigr)=1$,
which holds since $(\psi',\omega)\in\Sigma$,
together with $\mu_{\scrX}=\vol\times\mm$ and \eqref{mmdef},
we obtain the right hand side of \eqref{INTENSITYmunuLEMres}.

To obtain the last statement of the lemma, we apply 
\eqref{INTENSITYmunuLEMres} for $\vs=\sigma^{\psi'}=(\psi',\omega^{\psi'})$;
in this case the sum over $\Psi\setminus\{\psi'\}$ in \eqref{INTENSITYmunuLEMres}
vanishes, by Lemma \ref{omegajpsiLEM};
hence the right hand side of \eqref{INTENSITYmunuLEMres}
equals $\nbar_{\scrP}\,\mu_{\scrX}(B)$.
\end{proof}

\subsection{Verification of [P1] (uniform density)}

\begin{prop}\label{P1prop}
[P1] holds,
i.e.\ for any bounded subset $B \subset\scrX$ with $\mu_\scrX(\partial B)=0$, we have
\begin{align}\label{P1lemRES}
\lim_{T\to\infty}\frac{\#(\tP\cap TB)}{T^d}=\nbar_{\scrP}\mu_\scrX(B).
\end{align}
\end{prop}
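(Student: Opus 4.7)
The plan is to prove [P1] by first establishing the vague convergence $\nu_T \to \nbar_\scrP \mu_\scrX$ of Radon measures on $\scrX$, where
\[
\nu_T := T^{-d} \sum_{\vecp \in \scrP} \delta_{(T^{-1}\vecp,\,\vs(\vecp))},
\]
and then extracting \eqref{P1lemRES} from it. Since $\scrX = \R^d \times \Sigma$ is locally compact Hausdorff and second countable, vague convergence of Radon measures combined with the assumption $\mu_\scrX(\partial B) = 0$ and the boundedness of $B$ yields $\nu_T(B) \to \nbar_\scrP \mu_\scrX(B)$ by a standard continuity-set argument, and this is exactly \eqref{P1lemRES} after noting that $\nu_T(B) = T^{-d}\#(\tP \cap TB)$.

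To test the vague convergence I would fix $f \in \C_c(\scrX)$ and decompose $\scrP$ along the finite presentation $\scrP = \bigcup_{\psi \in \Psi}\scrL_\psi$. Set $\scrE_0 := \bigcup_{\psi \neq \psi'}(\scrL_\psi \cap \scrL_{\psi'})$; by Lemma \ref{THINDISJcondLEM} this set lies in a finite union of affine hyperplanes and hence has density zero, while on $\scrP \setminus \scrE_0$ each point lies in a unique $\scrL_\psi$ and satisfies $\psi(\vecp) = \psi$. Because only the $\R^d$-coordinate of $\scrX$ is rescaled, the density-zero property of $\scrE_0$ gives
\[
\nu_T(f) = T^{-d} \sum_{\psi \in \Psi}\sum_{\vecp \in \scrL_\psi \setminus \scrE_0} f(T^{-1}\vecp,\,\vs(\vecp)) + o(1).
\]

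Next I would invoke Proposition \ref{muconvLEM} in its strengthened form (Remark \ref{CONVremark}), combined with Lemma \ref{densityzeroLEM1} to align the exceptional sets, to obtain for each $\psi \in \Psi$ a density-zero set $\scrZ_\psi \subset \scrL_\psi$ such that $\vs(\vecp) \to \sigma^\psi$ in $\Sigma$ as $\vecp \to \infty$ in $\scrL_\psi \setminus \scrZ_\psi$. Uniform continuity of $f$ on its compact support then lets me replace $f(T^{-1}\vecp, \vs(\vecp))$ by $f(T^{-1}\vecp, \sigma^\psi)$ on $\scrL_\psi \setminus (\scrE_0 \cup \scrZ_\psi)$ at the cost of an $o(1)$ error after dividing by $T^d$; the contribution of $\scrE_0 \cup \scrZ_\psi$ itself is also $o(1)$, again by the density-zero property. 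What is left is a classical Riemann sum which, using the asymptotic density of the grid $\scrL_\psi$ and $f(\cdot,\sigma^\psi) \in \C_c(\R^d)$, converges to
\[
T^{-d} \sum_{\vecp \in \scrL_\psi} f(T^{-1}\vecp, \sigma^\psi) \ \longrightarrow\  \nbar_\psi \int_{\R^d} f(\vecw,\sigma^\psi)\,d\vecw.
\]
Summing over $\psi$ and recalling $\mm(\sigma^\psi) = \nbar_\psi/\nbar_\scrP$ yields $\nu_T(f) \to \nbar_\scrP \int_\scrX f\,d\mu_\scrX$, completing the vague-convergence step.

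The main obstacle is the joint bookkeeping of the two density-zero exceptional sets $\scrE_0$ and $\scrZ_\psi$ inside a single Riemann-sum estimate: one needs both the trivial bound $\#(\scrL_\psi \cap \scrB_R^d) \ll R^d$ to convert ``density zero'' into a genuine $o(T^d)$-estimate after the $T$-rescaling of $\R^d$, and the uniform continuity of $f$ on $\supp f$ to make the mark-approximation $\vs(\vecp) \mapsto \sigma^\psi$ go through uniformly in $\vecp$. Both tools are already in hand, so after the bookkeeping is done the argument runs straightforwardly.
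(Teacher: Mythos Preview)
Your proposal is correct and rests on the same key input as the paper's proof, namely Proposition~\ref{muconvLEM} (via Remark~\ref{CONVremark}): for each $\psi\in\Psi$ the marks $\vs(\vecp)$ converge to $\sigma^\psi$ along a full-density subset of $\scrL_\psi$. The two arguments differ only in packaging. The paper works directly with sets: it first reduces to $B\subset\scrX_\psi$ for a single $\psi$, verifies \eqref{P1lemRES} for product boxes $\bigl(\prod_i[\alpha_i,\beta_i)\bigr)\times U$ with $U\ni\sigma^\psi$ open (again via Proposition~\ref{muconvLEM}), and then runs an explicit inner/outer approximation to reach arbitrary $B$ with $\mu_\scrX(\partial B)=0$. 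You instead route through vague convergence $\nu_T\to\nbar_\scrP\mu_\scrX$ against $\C_c$ test functions and invoke the standard continuity-set consequence at the end. Your path is perhaps slightly cleaner in that the Portmanteau step is quoted rather than reproved, while the paper's hands-on approximation keeps everything self-contained; substantively, the two are equivalent reformulations of the same argument.
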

\begin{proof}
Note that $\scrX$ decomposes as the disjoint union $\sqcup_{\psi\in\Psi}\scrX_\psi$,
where $\scrX_\psi=\R^d\times\Sigma_\psi$ with
$\Sigma_\psi:=(\{\psi\}\times\Omega)\cap\Sigma$;
note also that each set $\scrX_\psi$ is both open and closed is $\scrX$.
It follows that it suffices to prove \eqref{P1lemRES} under the extra assumption that
$B\subset\scrX_\psi$ for some fixed $\psi$.
Using also the fact that 
the set $\{\vecq\in\scrL_\psi\col\psi(\vecq)\neq\psi\}$
has  density zero
(cf.\ Remark \ref{psiaedef}),
it follows that our task is to prove the following,
for any bounded set $B\subset\scrX_\psi$ with
$\mu_\scrX(\partial B)=0$:
\begin{align}\label{P1lempf1}
\lim_{T\to\infty}\frac{\#\{\vecq\in\scrL_{\psi}\col(\vecq,(\psi,\omega^{(\vecq)}))\in TB\}}{T^d}=\nbar_{\scrP}\mu_\scrX(\scrX_\psi).
\end{align}

Let us first verify that \eqref{P1lempf1} holds for any set $B$ of the form
\begin{align}\label{P1lempf2}
B=\Bigl(\prod_{i=1}^d[\alpha_i,\beta_i)\Bigr)\times U,
\end{align}
for any real numbers $\alpha_i<\beta_i$ ($i=1,\ldots,d$)
and any open neighbourhood
$U$ of $\sigma^\psi$ in %
$\Sigma_\psi$.
Indeed, given such a $U$, there exist open neighbourhoods $U_j$ of $\omega_j^\psi$ in $P(\TT_j^d)$ for $j=1,\ldots,N$
such that $\Sigma_\psi\cap(\{\psi\}\times\prod_{j=1}^NU_j)\subset U$.
Applying now Proposition \ref{muconvLEM} to the set $U_j$,
for each $j=1,\ldots,N$,
it follows that there exists a subset $\scrZ'\subset\scrL_{\psi}$
of  density zero such that
$(\psi,\omega^{(\vecq)})\in U$
for all $\vecq\in\scrL_{\psi}\setminus\scrZ'$.
Furthermore, for $B$ as in \eqref{P1lempf2},
we have $\nbar_{\scrP}\mu_\scrX(B)
=\nbar_\psi \prod_{i=1}^d(\beta_i-\alpha_i)$,
and so \eqref{P1lempf1}
follows from the fact that 
the grid $\scrL_\psi$ has asymptotic density $\nbar_\psi$ in $\R^d$.

Next, if $B$ is any bounded \textit{open} subset of $\scrX_\psi$,
then there exists a sequence $B_1\subset B_2\subset\cdots$
of subsets of $B$ such that
each $B_k$ is a finite disjoint union of sets of the form \eqref{P1lempf2},
and $B\cap(\R^d\times\{\sigma^\psi\})\subset\cup_{k=1}^\infty B_k$.
Using $\mu_{\scrX}(B)=\mu_{\scrX}(B\cap(\R^d\times\{\sigma^\psi\}))$,
it follows that $\mu_\scrX(B_k)\to\mu_\scrX(B)$ as $k\to\infty$,
and hence since \eqref{P1lempf1} holds for each of our sets $B_k$,
it follows that
\begin{align}\label{P1lempf3}
\liminf_{T\to\infty}\frac{\#\{\vecq\in\scrL_{\psi}\col(\vecq,(\psi,\omega^{(\vecq)}))\in TB\}}{T^d}\geq \nbar_{\scrP}\mu_\scrX(B).
\end{align}
Next if $\tB$ is any bounded \textit{closed} subset of $\scrX_\psi$,
then by taking $R>0$ so that $\tB\subset B':=\scrB_R^d\times\Sigma_\psi$,
and noticing that \eqref{P1lempf1}
holds for $B'$ 
(since the grid $\scrL_\psi$ has asymptotic density $\nbar_\psi$ in $\R^d$),
and also \eqref{P1lempf3} holds for the bounded open set $B'\setminus\tB$,
it follows that
\begin{align}\label{P1lempf4}
\limsup_{T\to\infty}\frac{\#\{\vecq\in\scrL_{\psi}\col(\vecq,(\psi,\omega^{(\vecq)}))\in T\tB\}}{T^d}
\leq \nbar_{\scrP}\mu_\scrX(\tB).
\end{align}

Finally consider an arbitrary bounded subset $B \subset\scrX_\psi$ with $\mu_\scrX(\partial B)=0$.
Let $B^\circ$ and $\oB$ be the interior and the closure of $B$, respectively.
Then \eqref{P1lempf3} holds for $B^\circ$
and \eqref{P1lempf4} holds for $\oB$,
and furthermore $\mu_\scrX(B^\circ)=\mu_\scrX(\oB)$,
since $\mu_\scrX(\partial B)=0$.
Hence \eqref{P1lempf1} holds.
\end{proof}

\subsection{Initial discussion regarding [P2] (uniform spherical equidistribution)}
\label{P2initialdiscsec}

We have the following result:
\begin{thm}\label{P2condthm}
[P2] holds, i.e.\
there exists a subset $\scrE\subset\scrP$ of  density zero
such that
for any fixed $T\geq1$ and $\lambda\in \Pac(\US)$, we have
$\mu_{\vecq,\rho}^{(\lambda)}\xrightarrow[]{\textup{ w }}\mu_{{\vs}(\vecq)}$
as $\rho\to0$, uniformly for $\vecq\in\scrP\cap\scrB^d_{T\rho^{1-d}}\setminus\scrE$.
\end{thm}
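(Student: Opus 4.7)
The strategy is to route the comparison through the homogeneous space $\XX$ and the canonical intermediate measure $\mu_{\sigma^{\psi(\vecq)}}$, and then apply the triangle inequality. I first enlarge $\scrE$ so as to contain: (i) $\bigcup_{\psi\neq\psi'}(\scrL_\psi\cap\scrL_{\psi'})$, which has density zero by Lemma~\ref{THINDISJcondLEM} and Remark~\ref{psiaedef}, so that every $\vecq\in\scrP\setminus\scrE$ has a uniquely defined index $\psi(\vecq)$; and (ii) for each $\psi\in\Psi$, the density-zero exceptional sets of Proposition~\ref{muconvLEM} (applied to each $j$ and a countable basis of open neighbourhoods of $\omega_j^\psi$), combined via Lemma~\ref{densityzeroLEM1}, so that $\omega^{(\vecq)}\to\omega^{\psi(\vecq)}$ in $\Omega$ along any sequence in $\scrP\setminus\scrE$ tending to infinity. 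Admissibility of the presentation of $\scrP$ (Proposition~\ref{ADMpropMAIN}) is precisely what makes both $\omega^{\psi}$ and the enlargement in (ii) meaningful (see Section~\ref{SSjpsi0Ojpsi0sec}). By the triangle inequality it then suffices to bound, uniformly for $\vecq\in\scrP_T(\rho)$,
\[
d\bigl(\mu_{\vecq,\rho}^{(\lambda)},\mu_{\sigma^{\psi(\vecq)}}\bigr)
\quad\text{and}\quad
d\bigl(\mu_{\sigma^{\psi(\vecq)}},\mu_{\vs(\vecq)}\bigr).
\]

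The second term is handled by continuity of the pushforward: since $\mu_{\vs(\vecq)}=J_{\psi(\vecq),*}\overline{\omega^{(\vecq)}}$ and $\mu_{\sigma^{\psi(\vecq)}}=J_{\psi(\vecq),*}\overline{\omega^{\psi(\vecq)}}$, Proposition~\ref{muconvLEM} combined with Lemma~\ref{mapPTtoPXjcontLEM} and the continuity of $J_\psi$ forces this term to tend to $0$ as $\|\vecq\|\to\infty$ within $\scrP\setminus\scrE$; a further enlargement of $\scrE$ to absorb a suitable finite ball of exceptional $\vecq$'s (carried out together with the enlargements above via Lemma~\ref{densityzeroLEM1}) makes the term uniformly small for $\vecq\in\scrP_T(\rho)$ as $\rho\to 0$. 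For the first term I parametrize via \eqref{J0UqtMeqPmq} and \eqref{JmapFULL}: for $\vecq\in\scrL_\psi\setminus\scrE$ the point process $\scrQ_\rho(\vecq,\vecv)$ has the same unmarked support as $J_\psi\bigl(\Gamma g_0^{(\vecq)}\varphi(R(\vecv)D_\rho)\bigr)$, the only discrepancy being that a point arising from $\vecp\in\scrP$ carries the actual mark $\vs(\vecp)=(\psi(\vecp),\omega^{(\vecp)})$ rather than the canonical mark $\sigma^{\psi(\vecp)}=(\psi(\vecp),\omega^{\psi(\vecp)})$. The uniform homogeneous-dynamics result (Theorem~\ref{HOMDYNMAINTHM}, proved in Sections \ref{UNIPOTAPPLsec}--\ref{NEWISOsec}) will state that the pushforward of $\lambda$ by the map $\vecv\mapsto\Gamma g_0^{(\vecq)}\varphi(R(\vecv)D_\rho)$ converges weakly in $P(\XX)$ to $\overline{\omega^{\psi(\vecq)}}$, uniformly for $\vecq\in\scrP_T(\rho)$; combined with the continuity of $J_\psi$, this gives uniform convergence of the canonically-marked pushforward to $\mu_{\sigma^{\psi(\vecq)}}$. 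To pass from canonical marks to the actual marks $\vs(\vecp)$, a first-moment argument is used: the set $\{\vecp\in\scrP:d_\Sigma(\vs(\vecp),\sigma^{\psi(\vecp)})\geq\delta\}$ has density zero for every $\delta>0$ by Proposition~\ref{muconvLEM}, so the intensity formula of Lemma~\ref{INTENSITYmunuLEM} applied to the limit measures together with a Markov-type estimate shows that with probability tending to $1$ uniformly in $\vecq$, no such ``bad'' $\vecp$ contributes to $\scrQ_\rho(\vecq,\vecv)$ inside any fixed compact region of $\scrX$, whence the mark discrepancy is negligible in the weak topology.

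The hard part will be Theorem~\ref{HOMDYNMAINTHM}: upgrading the pointwise Theorem~\ref{HOMDYNintrononunifTHM} to a statement that is uniform in $\vecq\in\scrP_T(\rho)$ \emph{and} whose limit is the $\vecq$-independent canonical measure $\overline{\omega^{\psi(\vecq)}}$ rather than the genuinely $\vecq$-dependent $\overline{\omega^{(\vecq)}}$. This uniform replacement of the generic limit is precisely what the admissibility of the presentation is engineered to guarantee (cf.\ Remark~\ref{CONVremark}); its proof (Sections \ref{UNIPOTAPPLsec}--\ref{NEWISOsec}) rests on Ratner's classification of measures invariant under unipotent flows, applied to the expanding orbits $\Gamma g_0^{(\vecq)}\varphi(R(\US)D_\rho)$ in $\XX$, and is significantly more delicate than the pointwise version which itself already leans on Ratner's theorem and the work of Shah. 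A secondary technical obstacle is the first-moment control above, which must be carried out uniformly in $\vecq$ and therefore requires a quantitative handle on the density-zero ``bad-mark'' set.
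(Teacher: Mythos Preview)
Your proposal has a genuine gap in the choice of intermediate measure. You route the comparison through $\mu_{\sigma^{\psi(\vecq)}}$ and assert that Theorem~\ref{HOMDYNMAINTHM} gives uniform convergence on $\XX$ to $\overline{\omega^{\psi(\vecq)}}$. This misstates the theorem: what Theorem~\ref{HOMDYNMAINTHM} actually proves is
\[
\int_{\US}f\bigl(\Gamma g_0^{(\vecq)}\varphi(R(\vecv)D_\rho)\bigr)\,d\lambda(\vecv)-\int_{\XX^\psi}f\,d\overline{\omega^{(\vecq)}}\to0
\]
uniformly in $\vecq$, with the \emph{$\vecq$-dependent} limit $\overline{\omega^{(\vecq)}}$. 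The version you describe, with limit $\overline{\omega^{\psi(\vecq)}}$, is in general false: for any fixed $\vecq\in\scrL_\psi$ the pointwise limit is $\overline{\omega^{(\vecq)}}$ by Theorem~\ref{HOMDYNintrononunifTHM}, and the set $\{\vecq\in\scrL_\psi:\omega^{(\vecq)}\neq\omega^\psi\}$ can have \emph{full} density in $\scrL_\psi$ --- for instance in the example of Remark~\ref{sigmaplimitpointsREM}, where $L_1^{(\vecq)}$ is one-dimensional for every $\vecq\in\scrL_{(2,1)}$ while $L_1^{(2,1)}=\R^2$. For such $\vecq$, neither of the two terms in your triangle inequality tends to zero as $\rho\to0$; they cancel. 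Your suggestion to ``absorb a suitable finite ball of exceptional $\vecq$'s'' into $\scrE$ cannot repair this, since the exceptional set is neither finite, nor bounded, nor of density zero.

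The paper avoids the problem by never using $\mu_{\sigma^{\psi(\vecq)}}$ as an intermediate target. It invokes the sequential characterization of uniform convergence \cite[Lemma~2.1]{jMaS2019}: given any $\rho_n\to0$ and $\vecq_n\in\scrP_T(\rho_n)$ with $\vs(\vecq_n)\to\vs=(\psi,\omega)$, Theorem~\ref{HOMDYNMAINTHM} (with the correct limit $\overline{\omega^{(\vecq_n)}}$) gives $\nu_{\vecq_n,\rho_n}^{(\lambda)}(f)-\overline{\omega^{(\vecq_n)}}(f)\to0$, while the hypothesis $\vs(\vecq_n)\to\vs$ itself supplies $\omega^{(\vecq_n)}\to\omega$, whence $\overline{\omega^{(\vecq_n)}}(f)\to\overline{\omega}(f)$ by Lemma~\ref{mapPTtoPXjcontLEM}; combining yields $\nu_{\vecq_n,\rho_n}^{(\lambda)}\to\overline{\omega}$, and pushing forward by $J_\psi$ gives the canonical-mark convergence. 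The final step --- replacing the canonical marks $\sigma^{\psi(\vecp)}$ by the actual marks $\vs(\vecp)$ --- is carried out essentially as you sketch (Lemmas~\ref{P2condthmLEM6}, \ref{P2condthmLEM5}, \ref{P2condthmLEM1}). The key conceptual point you are missing is that the moving target $\overline{\omega^{(\vecq)}}$ is precisely what makes the uniform statement provable; the canonical measure $\overline{\omega^{\psi}}$ only appears after one has already passed to a subsequence along which $\vs(\vecq_n)$ converges.
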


In this section we will 
prove that Theorem \ref{P2condthm}
follows from the following 
theorem on uniform equidistribution in the homogeneous space $\XX$,
the proof of which is the main goal of the later sections in this paper.

Recall that $\varphi:\SL_d(\R)\to G$ is the diagonal embedding.

\begin{thm}\label{HOMDYNMAINTHM}
Given any $\psi\in\Psi$ and any decreasing function $\scrT:(0,1)\to\R^+$,
there exists a 
subset $\scrE\subset\scrL_\psi$ of  density zero
such that for any fixed $f\in\C_b(\XX^\psi)$ and $\lambda\in\Pac(\US)$,
we have
\begin{align}\label{HOMDYNMAINTHMres}
\int_{\US}f\bigl(\Gamma g_0^{(\vecq)}\varphi(R(\vecv)D_\rho)\bigr)\,d\lambda(\vecv)
-\int_{\XX^\psi} f \,d\overline{\omega^{(\vecq)}}\to0
\end{align}
as $\rho\to0$,
uniformly over all $\vecq\in\scrL_\psi\cap\scrB_{\scrT(\rho)}^d\setminus\scrE$.
\end{thm}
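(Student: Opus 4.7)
The plan is to reduce Theorem~\ref{HOMDYNMAINTHM} to the uniform unipotent orbit equidistribution result Theorem \ref{MAINcleanUNIFCONVthm}, announced as proved in Section \ref{UNIPOTAPPLsec} via Ratner's measure classification.

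First I would convert the spherical integral in \eqref{HOMDYNMAINTHMres} into an integral over an expanding piece of a unipotent orbit in $\XX$ issuing from $\Gamma g_0^{(\vecq)}$. After a smooth partition of unity on $\US$ and $\SO(d-1)$-averaging, for $\vecv$ in a small neighborhood of $\vece_1$ one has a $KAN$-style factorization $R(\vecv)D_\rho = D_\rho\,n(\vecy)(I+O(\rho))$, where $\vecv = \vece_1+\rho\vecy+O(\rho^2)$ with $\vecy\in\R^{d-1}$, and $n:\R^{d-1}\to\SL_d(\R)$ embeds onto the abelian unipotent subgroup whose only nontrivial off-diagonal entries lie in the first column. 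The left-hand side of \eqref{HOMDYNMAINTHMres} then becomes, up to an error that vanishes as $\rho\to0$ uniformly in $\vecq$, a finite sum of integrals of the form
\begin{align*}
\int_{\R^{d-1}} f\bigl(\Gamma g_0^{(\vecq)}\,\varphi(D_\rho\,n(\vecy))\bigr)\,h(\vecy)\,d\vecy,
\end{align*}
with $h$ smooth and compactly supported. Conjugating $n(\vecy)$ past $\varphi(D_\rho)$ exhibits this as an expanding unipotent orbit integral in $\XX$ with starting point $\Gamma g_0^{(\vecq)}$, in precisely the form to which Theorem \ref{MAINcleanUNIFCONVthm} applies.

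Next, Theorem \ref{MAINcleanUNIFCONVthm} identifies the limit of each such integral as $\int_\XX f\,d\mu_\infty$ for an invariant measure $\mu_\infty$ supported on the closed orbit of $\Gamma g_0^{(\vecq)}$ under the relevant invariance group in $G$. Lemma \ref{mujqjustifyLEM} shows that this closed orbit equals $\Gamma\bs\Gamma g_0^{(\vecq)}\prod_j\S_{L_j^{(\vecq)}}(\R)$, and Proposition \ref{SLRinvprobmeasLEM} identifies the unique invariant probability measure on it as $\overline{\omega^{(\vecq)}}=\mu^{(\vecq)}$. When $\vecq\in\scrL_\psi$, Lemma \ref{omegajqpiinv0LEM} combined with Lemma \ref{oomegapinullLEM} guarantees that $\overline{\omega^{(\vecq)}}$ is supported on $\XX^\psi$, so the right-hand side of \eqref{HOMDYNMAINTHMres} is well-defined for $f\in\C_b(\XX^\psi)$, and the claimed limit is matched.

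The main obstacle is the uniformity in $\vecq$, since both the starting point $\Gamma g_0^{(\vecq)}$ and the invariance group $\prod_j\S_{L_j^{(\vecq)}}(\R)$ vary with $\vecq$, and equidistribution produced directly from Ratner's theorem is generically not uniform in the starting point. The exceptional set $\scrE$ must contain those $\vecq\in\scrL_\psi$ at which $L_j^{(\vecq)}\subsetneq L_j^\psi$ for some $j$, because at such points the orbit closure collapses onto a strictly smaller invariant subvariety and the limit measure jumps; Lemma \ref{MuconvauxLEM1} (see also Remark \ref{MuconvauxLEM1rem}) ensures this bad set has density zero, and on its complement Proposition \ref{muconvLEM} guarantees the stability $\omega_j^{(\vecq)}\to\omega_j^\psi$. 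The interplay between the permitted growth $\|\vecq\|\leq\scrT(\rho)$ and the quantitative strength required in Theorem \ref{MAINcleanUNIFCONVthm} should be encoded in the formulation of that theorem, with the uniformity then extracted by a compactness/subsequence-extraction argument in the spirit of the end of the proof of \cite[Lemma 5.22]{jMaS2019}.
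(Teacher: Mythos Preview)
Your proposal has a genuine gap: you have misread what Theorem~\ref{MAINcleanUNIFCONVthm} actually delivers. That theorem asserts equidistribution toward the \emph{full} $G$-invariant probability measure $\mu$ on $\XX$, not toward a measure on a smaller orbit closure. If you apply it directly at the starting point $\Gamma g_0^{(\vecq)}$ (that is, with $V=\pi(U^{(\vecq)})$ in the ambient space $\XX$), the limit is $\int_\XX f\,d\mu$, not $\int_{\XX^\psi} f\,d\overline{\omega^{(\vecq)}}$. For $\vecq\in\scrL_\psi$ these two are genuinely different: by Lemma~\ref{omegajqpiinv0LEM} one has $L_{j_\psi}^{(\vecq)}\perp\vece_{i_\psi}$, so $L_{j_\psi}^{(\vecq)}\subsetneq\R^{r_{j_\psi}}$ and $\overline{\omega^{(\vecq)}}$ is supported on a proper closed submanifold of $\XX$. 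Your identification of ``the'' limit measure via Lemma~\ref{mujqjustifyLEM} and Proposition~\ref{SLRinvprobmeasLEM} is therefore not what Theorem~\ref{MAINcleanUNIFCONVthm} produces.

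The paper resolves this by applying Theorem~\ref{MAINcleanUNIFCONVthm} (in its rotation form, Theorem~\ref{MAINcleanUNIFCONVROTthm}) not to $\XX$ but to a \emph{reduced} homogeneous space $\tGamma\bs\tG$ with $\tG=\prod_j\S_{s_j}(\R)$, $s_j=\dim L_j^\psi$; this space is identified, via an explicit isomorphism $\Phi$, with the closed $H$-orbit in $\XX$ for $H=\prod_j\S_{L_j^\psi}^{X_j^\psi}(\R)$. On the reduced space the Haar measure pulls back to $\overline{\omega^\psi}$, which gives Theorem~\ref{MAINUNIFPROP4new}; combining with the non-uniform Theorem~\ref{nonunifTHM1} then replaces $\overline{\omega^\psi}$ by $\overline{\omega^{(\vecq)}}$ (Theorem~\ref{MAINUNIFPROP4COR}). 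The singular set becomes $\Delta_{\psi,k}^{(\eta)}\subset\YY^\psi$, and the key density input is Proposition~\ref{UksetsaregoodPROP}, which shows that $\{\vecq\in\scrL_\psi:\pi(U^{(\vecq)})\in\Delta_{\psi,k}^{(\eta)}\}$ has upper density tending to $0$ as $\eta\to0$. Finally, the density-zero exceptional set $\scrE$ and the passage from countable dense families to all of $\C_b(\XX^\psi)\times\Pac(\US)$ are produced by the abstract Lemma~\ref{densityzeroconstrLEM}, not by a compactness argument. Your proposed $\scrE$ (the locus where $L_j^{(\vecq)}\subsetneq L_j^\psi$) is neither the set the paper uses nor sufficient: the singular sets $\Delta_{\psi,k}^{(\eta)}$ in the reduced space are not captured by the jump in $L_j^{(\vecq)}$ alone.
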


To see that the statement of Theorem \ref{HOMDYNMAINTHM}
makes sense, note that for every $\vecq\in\scrL_\psi$ we have
$\Gamma g_0^{(\vecq)}\in\XX^\psi$
by Lemma \ref{g0qinXpsiLEM},
and so 
$\Gamma g_0^{(\vecq)}\varphi(R(\vecv)D_\rho)\in\XX^\psi$ for all $\vecv\in\US$ and $\rho>0$;
furthermore, $\overline{\omega^{(\vecq)}}(\XX^{\psi})=1$
by Lemmas \ref{omegajqpiinv0LEM} and \ref{oomegapinullLEM},
meaning that $\overline{\omega^{(\vecq)}}\in P(\XX)$ 
gives a probability measure on the subset $\XX^\psi$.
It is also worth noticing that 
without the \textit{uniformity over} $\vecq$,
the statement of Theorem \ref{HOMDYNMAINTHM}
would be an immediate consequence of Theorem \ref{HOMDYNintrononunifTHM}.

\vspace{5pt}

We will now give the proof of Theorem \ref{P2condthm},
assuming Theorem \ref{HOMDYNMAINTHM}.
As the very first step, let us apply Theorem \ref{HOMDYNMAINTHM}
with $\scrT(\rho)=\rho^{-d}$
and for each $\psi\in\Psi$;
this gives the existence of subsets $\scrE_\psi\subset\scrL_\psi$ of  density zero
such that
for any fixed $f\in\C_b(\XX^\psi)$ and $\lambda\in\Pac(\US)$,
\begin{align}\label{HOMDYNMAINTHMresREPpre}
\int_{\US}f(\Gamma g_0^{(\vecq)}\varphi(R(\vecv)D_\rho))\,d\lambda(\vecv)
-\int_{\XX^\psi} f \,d\overline{\omega^{(\vecq)}}\to0
\end{align}
as $\rho\to0$,
uniformly over all $\vecq\in\scrL_\psi\cap\scrB_{\rho^{-d}}^d\setminus\scrE_\psi$.
Let us now set
\begin{align}\label{scrEass}
\scrE:=\bigcup_{\psi\in\Psi}\scrE_\psi \: \cup \:
\bigcup_{\psi\neq\psi'\in\Psi}(\scrL_\psi\cap\scrL_{\psi'}).
\end{align}
By Lemma \ref{THINDISJcondLEM}, %
this set $\scrE$ is a subset of $\scrP$ of density zero.
We will keep this set $\scrE$ \textit{fixed} in the rest of the section,
and we will prove that the statement of 
Theorem \ref{P2condthm} holds with \textit{this} set $\scrE$.
For any $T\geq1$ and $\rho>0$ we write,
as in \eqref{repASS:KEY},
\begin{align*}
\scrP_T(\rho):=\scrP\cap\scrB_{T\rho^{1-d}}^d\setminus\scrE.
\end{align*}
Note that for any given $T$ we have $T\rho^{1-d}\leq\rho^{-d}$ for all sufficiently small $\rho$.
Recall also that $\scrP=\cup_{\psi\in\Psi}\scrL_\psi$.
Hence
the statement around \eqref{HOMDYNMAINTHMresREPpre}
now implies that the following holds:
\begin{align}\label{HOMDYNMAINTHMresREP}
\begin{cases}
\text{for any fixed $T\geq1$, $f\in\C_b(\XX)$ and $\lambda\in\Pac(\US)$,}
\\[9pt]
{\displaystyle
\rule{0pt}{0pt}
\hspace{80pt}\int_{\US}f(\Gamma g_0^{(\vecq)}\varphi(R(\vecv)D_\rho))\,d\lambda(\vecv)
-\int_{\XX} f \,d\overline{\omega^{(\vecq)}}\to0}
\hspace{50pt}
\\[14pt]
\text{as $\rho\to0$,
uniformly over all $\vecq\in\scrP_T(\rho)$.}
\end{cases}
\end{align}

\begin{lem}\label{P2condthmLEM6}
Let $k\in\Z_{>0}$ and let $B$ be a bounded subset of $\R^d$ with $\vol(\partial B)=0$.
Then for any 
$V>\vol(B)$,
$T\geq1$ and $\lambda\in\Pac(\US)$,
there exists $\rho_0\in(0,1)$ such that
\begin{align}\label{P2condthmLEM6res}
\lambda(\{\vecv\in\US\col\#((\scrP-\vecq)R(\vecv)D_\rho\cap B\setminus\{\bn\})\geq k\})
<\nbar_{\scrP}V/k
\end{align}
for all $\rho\in(0,\rho_0)$ and $\vecq\in\scrP_T(\rho)$.
\end{lem}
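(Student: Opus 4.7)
The plan is to use Markov's inequality to reduce \eqref{P2condthmLEM6res} to a first-moment estimate, and then evaluate this moment using Siegel's formula (Proposition~\ref{SIEGELFORMULALEM1}) together with the uniform equidistribution \eqref{HOMDYNMAINTHMresREP}. First I would fix $V'\in(\vol(B),V)$ and pick $h\in\C_c(\R^d)$ with $h\geq\chi_B$ and $\int_{\R^d}h\,d\vecx<V'$; this is possible since $\vol(\partial B)=0$. Writing $g:=g_0^{(\vecq)}\varphi(R(\vecv)D_\rho)$ and using \eqref{J0UqtMeqPmq} to identify $(\scrP-\vecq)R(\vecv)D_\rho$ with $J_0(\Gamma g)$, I would establish the pointwise bound
\begin{align*}
\#\bigl((\scrP-\vecq)R(\vecv)D_\rho\cap B\setminus\{\bn\}\bigr)\leq F(\Gamma g)-h(\bn),
\qquad F(\Gamma g):=\sum_{\psi'\in\Psi}\widehat{h}_{\psi'}\bigl(\Gamma_{j_{\psi'}}p_{j_{\psi'}}(g)\bigr);
\end{align*}
the subtraction $-h(\bn)$ appears because, for $\vecq\in\scrP_T(\rho)\subset\scrP\setminus\scrE$, the definition of $\scrE$ in \eqref{scrEass} forces $\vecq$ to lie in exactly one grid $\scrL_{\psi(\vecq)}$, so the equation $c_{\psi'}\vecm\,p_{\psi'}(g)=\bn$ in $(\psi',\vecm)\in\Psi\times\Z^d$ has a unique solution (namely the one coming from $\psi'=\psi(\vecq)$). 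Applying Proposition~\ref{SIEGELFORMULALEM1} to $\widehat{h}_{\psi'}$ integrated against $\overline{\omega_{j_{\psi'}}^{(\vecq)}}$ and summing over $\psi'$, and using Lemma~\ref{omegajqpiinv0LEM} to see that $\omega_{j_{\psi'}}^{(\vecq)}(\tr_{i_{\psi'}}^{-1}(\{\bn\}))=1$ if $\psi'=\psi(\vecq)$ and $=0$ otherwise, one obtains
\begin{align*}
\int_{\XX}\bigl(F-h(\bn)\bigr)\,d\overline{\omega^{(\vecq)}}=\sum_{\psi'\in\Psi}\nbar_{\psi'}\int_{\R^d}h\,d\vecx=\nbar_\scrP\int_{\R^d}h\,d\vecx<\nbar_\scrP V',
\end{align*}
uniformly for $\vecq\in\scrP\setminus\scrE$.

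The main obstacle is that $F$ is unbounded on $\XX$ (the Siegel transforms $\widehat{h}_{\psi'}$ blow up as the underlying lattice $\iota(p_{j_{\psi'}}(g))$ enters the cusps of $\SL_d(\Z)\bs\SL_d(\R)$), so the uniform equidistribution \eqref{HOMDYNMAINTHMresREP} cannot be applied to $F$ directly. I would handle this via truncation: set $F_R:=\min(F,R)\in\C_b(\XX)$ for $R>0$, and establish the uniform tail bound
\begin{align*}
\lim_{R\to\infty}\,\sup_{0<\rho<1}\,\sup_{\vecq\in\R^d}\int_{\US}(F-F_R)\bigl(\Gamma g_0^{(\vecq)}\varphi(R(\vecv)D_\rho)\bigr)\,d\lambda(\vecv)=0.
\end{align*}
This would follow from Schmidt's pointwise estimate for the Siegel transform \cite[Prop.~5]{hGcS91} (which dominates $\widehat{h}_{\psi'}(\Gamma_jg)$ by a function on $\SL_d(\Z)\bs\SL_d(\R)$ lying in $\L^p$ for some $p>1$), combined with the fact that the pushforward of $\lambda\in\Pac(\US)$ under $\vecv\mapsto R(\vecv)D_\rho$ has a density bounded uniformly in $\rho$ with respect to a natural reference measure on the expanding horospherical orbit, and Siegel's classical mean value theorem applied on $\SL_d(\Z)\bs\SL_d(\R)$.

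Given the tail bound, the conclusion is routine: for any $\eta>0$ I would pick $R$ so that the tail integral above is $<\eta$ for all $\rho\in(0,1)$ and $\vecq\in\R^d$, and then apply \eqref{HOMDYNMAINTHMresREP} to $F_R\in\C_b(\XX)$ to produce some $\rho_0>0$ such that, for all $\rho\in(0,\rho_0)$ and $\vecq\in\scrP_T(\rho)$,
\begin{align*}
\int_{\US}F(g)\,d\lambda(\vecv)\leq\int_{\US}F_R(g)\,d\lambda(\vecv)+\eta\leq\int_{\XX}F_R\,d\overline{\omega^{(\vecq)}}+2\eta\leq\nbar_\scrP V'+h(\bn)+2\eta.
\end{align*}
Choosing $\eta<\nbar_\scrP(V-V')/2$, subtracting $h(\bn)$ from both sides and combining with the pointwise bound on the left, I obtain $\int_{\US}\#((\scrP-\vecq)R(\vecv)D_\rho\cap B\setminus\{\bn\})\,d\lambda<\nbar_\scrP V$ uniformly in $\vecq\in\scrP_T(\rho)$; Markov's inequality then yields \eqref{P2condthmLEM6res}.
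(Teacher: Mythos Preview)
Your overall strategy—majorize the count by a sum of Siegel transforms, evaluate the limit via Proposition~\ref{SIEGELFORMULALEM1} and the uniform equidistribution~\eqref{HOMDYNMAINTHMresREP}, then conclude by Markov—is the same as the paper's, and your treatment of the $h(\bn)$ contribution via Lemma~\ref{omegajqpiinv0LEM} and~\eqref{scrEass} is correct. The genuine difference is in how the unboundedness of $F$ is handled. The paper's argument is simpler: rather than truncating $F$ at a large level $R$ and then proving a uniform tail bound, it truncates each individual Siegel transform at level $k+1$, setting $\tf_{\psi'}:=\min(k+1,\hf_{\psi'})\in\C_b(\XX_{j_{\psi'}})$, and applies~\eqref{HOMDYNMAINTHMresREP} to each $\tf_{\psi'}\circ p_{j_{\psi'}}$ directly. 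The elementary pointwise inequality
\[
\tf_{\psi'}\bigl(\Gamma_{j_{\psi'}}p_{j_{\psi'}}(g)\bigr)-\delta_{\vecq\in\scrL_{\psi'}}\,h(\bn)
\;\geq\;\min\Bigl(k,\ \#\bigl((\scrL_{\psi'}-\vecq)R(\vecv)D_\rho\cap B\setminus\{\bn\}\bigr)\Bigr),
\]
summed over $\psi'\in\Psi$, then yields
$\int_{\US}\min\bigl(k,\#((\scrP-\vecq)R(\vecv)D_\rho\cap B\setminus\{\bn\})\bigr)\,d\lambda<\nbar_{\scrP}V$,
which is already enough for Markov at level $k$; no limit $R\to\infty$ is needed.

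Your route would work too, but the uniform tail bound you invoke is a genuine result rather than a formality: it amounts to uniform integrability of the (lattice) Siegel transform along the family $\{M_jR(\vecv)D_\rho:\vecv\in\US\}$, uniformly in $\rho\in(0,1)$. The justification you sketch (``bounded density with respect to a natural reference measure on the expanding horospherical orbit'') is too vague—the image of $\US$ under $\vecv\mapsto R(\vecv)D_\rho$ is not itself a horospherical orbit, and to make this precise one must first pass to the $n_-(\vecu)$-parametrization (as in the proof of Theorem~\ref{MAINcleanUNIFCONVROTthm}) and then invoke known non-divergence or $\L^{1+\delta}$ estimates for Siegel transforms along expanding unipotent translates. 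The paper's $k$-adapted truncation sidesteps all of this.
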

\begin{proof}
The assumptions imply that $B$ is Jordan measurable,
and hence there is a function $f\in\C_c(\R^d)$ such that $f=1$ on $B$,
$0\leq f\leq 1$ everywhere, and $V_f:=\int_{\R^d}f\,d\vol<V$.
For each $\psi=(j,i)\in\Psi$,
let $\hf_\psi\in\C(\XX_j)$ be the ``$\psi$th Siegel transform'' of $f$,
as defined in \eqref{SIEGELFORMULALEM1res1}.
The function $\hf_\psi$ is typically unbounded;
therefore we set $\tf_\psi=\min(k+1,\hf_\psi)$; this is a nonnegative function in $\C_b(\XX_j)$,
and hence $\tf_\psi\circ p_j\in\C_b(\XX)$.
Hence by \eqref{HOMDYNMAINTHMresREP},
and since $p_j(g_0^{(\vecq)})=\I_{U_j^{(\vecq)}}M_j$,
\begin{align*}
\int_{\US}\tf_\psi\bigl(\Gamma_j \I_{U_j^{(\vecq)}}M_j R(\vecv)D_\rho\bigr)\,d\lambda(\vecv)
-\int_{\XX} \tf_\psi\circ p_j \,d\overline{\omega^{(\vecq)}}\to0
\end{align*}
as $\rho\to0$,
uniformly over all
$\vecq\in\scrP_T(\rho)$.
Here $p_{j*}\bigl(\overline{\omega^{(\vecq)}}\bigr)=\overline{\omega_j^{(\vecq)}}$
(cf.\ \eqref{oomegaDEF});
hence by Proposition~\ref{SIEGELFORMULALEM1}
and Lemma \ref{omegajqpiinv0LEM},
\begin{align*}
\int_{\XX} \tf_\psi\circ p_j \,d\overline{\omega^{(\vecq)}}
\leq\int_{\XX} \hf_\psi\circ p_j \,d\overline{\omega^{(\vecq)}}
=\nbar_\psi V_f+\delta_{\vecq\in\scrL_\psi}\cdot f(\bn).
\end{align*}
Adding the above over all $\psi$
and using $\nbar_{\scrP}:=\sum_{\psi\in\Psi}\nbar_{\psi}$, it follows that 
there exists some $\rho_0\in(0,1)$ such that
\begin{align}\label{P2condthmLEM6pf1}
\int_{\US}\sum_{\psi\in\Psi}\Bigl(\tf_\psi(\Gamma_j \I_{U_j^{(\vecq)}}M_j R(\vecv)D_\rho)
-\delta_{\vecq\in\scrL_\psi}\cdot f(\bn)\Bigr)
\,d\lambda(\vecv)<
\nbar_{\scrP}V
\end{align}
for all $\rho\in(0,\rho_0)$ and $\vecq\in\scrP_T(\rho)$.
Here, by \eqref{SIEGELFORMULALEM1res1} and 
\eqref{KEYUjqdefmotivation},
we have for every $\vecv$ and $\psi$:
\begin{align*}
\tf_\psi\bigl(\Gamma_j \I_{U_j^{(\vecq)}}M_j R(\vecv)D_\rho\bigr)-\delta_{\vecq\in\scrL_\psi}\cdot f(\bn)
=\min\biggl(k+1,\sum_{\vecp\in(\scrL_\psi-\vecq)R(\vecv)D_\rho}f(\vecp)\biggr)
-\delta_{\vecq\in\scrL_\psi}\cdot f(\bn)
\\
\geq\min\biggl(k,\sum_{\vecp\in(\scrL_\psi-\vecq)R(\vecv)D_\rho\setminus\{\bn\}}f(\vecp)\biggr).
\end{align*}
Recalling also that $\cup_{\psi\in\Psi}\scrL_\psi=\scrP$,
it follows that
\begin{align*}
\int_{\US}\min\biggl(k,\sum_{\vecp\in(\scrP-\vecq)R(\vecv)D_\rho\setminus\{\bn\}}f(\vecp)\biggr)\,d\lambda(\vecv)
<\nbar_{\scrP}V
\end{align*}
for all $\rho\in(0,\rho_0)$ and $\vecq\in\scrP_T(\rho)$.
Here, for every 
$\vecv$ such that
$\#((\scrP-\vecq)R(\vecv)D_\rho\cap B\setminus\{\bn\})\geq k$,
the integrand equals $k$.
Hence we obtain the statement of the lemma.
\end{proof}

\begin{lem}\label{P2condthmLEM5}
Let $\scrZ$ be any subset of $\scrP$ of  density zero,
and let $T\geq1$, $\lambda\in\Pac(\US)$ and $S>0$.
Then 
\begin{align}\label{P2condthmLEM5res}
\lambda(\{\vecv\in\US\col\scrZ\cap(\vecq+\scrB_S^d D_\rho^{-1}R(\vecv)^{-1})\setminus\{\vecq\}\neq\emptyset\})\to0
\end{align}
as $\rho\to0$, uniformly over all $\vecq\in\scrP_T(\rho)$.
\end{lem}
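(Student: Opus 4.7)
The plan is to apply Markov's inequality combined with Fubini's theorem to reduce the bound to an expected-count estimate, handle the general $\lambda\in\Pac(\US)$ by splitting off a bounded-density part, and then bound the resulting sum via a geometric estimate on spherical caps. Writing $A_\vecp := \{\vecv\in\US\col (\vecp-\vecq) R(\vecv)D_\rho\in\scrB_S^d\}$, Markov's inequality and Fubini yield
\begin{align*}
\lambda\bigl(\bigl\{\vecv\in\US\col\scrZ\cap(\vecq+\scrB_S^d D_\rho^{-1}R(\vecv)^{-1})\setminus\{\vecq\}\neq\emptyset\bigr\}\bigr)
\leq \sum_{\vecp\in\scrZ\setminus\{\vecq\}}\lambda(A_\vecp),
\end{align*}
so it suffices to show this sum tends to $0$ uniformly in $\vecq\in\scrP_T(\rho)$. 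A direct analysis shows that, with $r := \|\vecp-\vecq\|$ and $\vecu := (\vecp-\vecq)/r$, the condition $(\vecp-\vecq)R(\vecv)D_\rho\in\scrB_S^d$ translates to $r\vecu R(\vecv)D_\rho\in \scrB_S^d$, whose squared magnitude splits as $r^2(\vecu\cdot\vecv)^2\rho^{2(d-1)} + r^2(1-(\vecu\cdot\vecv)^2)\rho^{-2}$. This forces $A_\vecp=\emptyset$ when $r>S\rho^{1-d}$, $A_\vecp=\US$ when $r\leq S\rho$, and otherwise pins $\vecv$ to a union of two spherical caps of angular radius $\asymp S\rho/r$ about $\pm\vecu$, yielding $\sigma(A_\vecp)\leq C_d(S\rho/r)^{d-1}$.

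To accommodate arbitrary $\lambda\in\Pac(\US)$, I would split $\lambda=\lambda_1+\lambda_2$ with $\lambda_1\leq M\sigma$ (for some $M=M_\epsilon$) and $\|\lambda_2\|<\epsilon$; the $\lambda_2$-contribution is at most $\epsilon$, and for $\lambda_1$ one has $\lambda_1(A_\vecp)\leq M\sigma(A_\vecp)$. The sum $M\sum_\vecp\sigma(A_\vecp)$ splits into a close-shell contribution $M\cdot\#(\scrZ\cap\scrB_{S\rho}^d(\vecq)\setminus\{\vecq\})$ and a far-shell sum bounded by $C_d M(S\rho)^{d-1}\sum_{S\rho<\|\vecp-\vecq\|\leq S\rho^{1-d}}\|\vecp-\vecq\|^{-(d-1)}$. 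The latter is controlled by Abel summation together with the translated density-zero bound $N(r) := \#(\scrZ\cap\scrB_r^d(\vecq)\setminus\{\vecq\})\leq\delta(r+T\rho^{1-d})^d$, valid for small $\rho$ via $\|\vecq\|\leq T\rho^{1-d}$, leading to a bound of the form $O_{S,T,M}(\delta)$.

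The hard part will be the close-shell contribution. Since $\sigma(A_\vecp)=\sigma(\US)$ whenever $\|\vecp-\vecq\|\leq S\rho$, a single $\scrZ$-point inside this small ball would force the $\lambda$-measure in question to equal $1$, so density zero of $\scrZ$ alone (a large-scale condition) does not preclude local failure. The resolution is to absorb into $\scrE$ the density-zero subset of $\scrP$ consisting of those $\vecq$ for which $\scrZ\setminus\{\vecq\}$ meets $\scrB_{S\rho}^d(\vecq)$ for some small $\rho$: each $\vecz\in\scrZ$ has only $O(1)$ nearby points of $\scrP$ by local boundedness of the latter, so the bad set in $\scrB_R^d$ has cardinality $O(\#(\scrZ\cap\scrB_{R+1}^d))=o(R^d)$ and hence density zero. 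This augmentation of $\scrE$ is compatible with Theorem~\ref{HOMDYNMAINTHM} and \eqref{scrEass}; after it the close-shell contribution vanishes for small $\rho$, and combining with the far-shell estimate and letting $\delta,\epsilon\to0$ yields the claim.
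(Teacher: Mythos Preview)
Your geometric analysis of the caps $A_\vecp$ and the reduction to $\sum_\vecp\lambda(A_\vecp)$ are fine, but both the far-shell estimate and the close-shell fix contain genuine gaps.

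For the far shell, the Abel-summation bound you claim, using only $N(r)\leq\delta(r+T\rho^{1-d})^d$, does not give $O_{S,T,M}(\delta)$. The dominant contribution to $(S\rho)^{d-1}\int_{S\rho}^{S\rho^{1-d}} r^{-d}N(r)\,dr$ comes from $r$ near $S\rho$, where the translated bound only yields $N(r)\leq\delta(2T)^d\rho^{-d(d-1)}$ (the term $T\rho^{1-d}$ swamps $r$); the resulting expression is $\asymp\delta(2T)^d\rho^{-d(d-1)}$, which blows up as $\rho\to0$. The point is that density zero is a large-scale statement about balls centred at the \emph{origin}; since $\vecq$ may sit as far as $T\rho^{1-d}$ from the origin, it gives no useful control on $N(r)$ for $r\ll\rho^{1-d}$.

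For the close shell, augmenting $\scrE$ by the density-zero set of $\vecq$'s with a nearby $\scrZ$-point is not permissible: the set $\scrE$ is fixed in \eqref{scrEass} \emph{before} $\scrZ$ is given, and in the subsequent application (Lemma~\ref{P2condthmLEM1}) the relevant $\scrZ$ depends on the test function $f\in\C_c(\scrX)$ and on $\ve$, so no single augmentation can serve all cases. Your description of the bad set is also ambiguous: if ``for some small $\rho$'' means arbitrarily small, the bad set is empty (since $\scrZ\subset\scrP$ is locally finite), and if it means a fixed cutoff radius, you only clear a bounded neighbourhood of $\vecq$, which still leaves the problematic range $r\ll\rho^{1-d}$ in the far-shell estimate.

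The paper's proof sidesteps both problems by invoking Lemma~\ref{P2condthmLEM6}, itself a consequence of the equidistribution input \eqref{HOMDYNMAINTHMresREP}: uniformly over $\vecq\in\scrP_T(\rho)$, the $\lambda$-probability that \emph{any} point of $\scrP$ (hence of $\scrZ\subset\scrP$) lands in a small ellipsoid $\tB\subset\scrB_S^d$ is $<\ve$. Points landing in $\scrB_S^d\setminus\tB$ are then forced to satisfy $\|\vecp-\vecq\|\gtrsim\rho^{1-d}$, and at that scale the density-zero bound on $\scrZ$ is effective and the residual sum is $o(1)$. Thus the near/mid-range is controlled not by counting $\scrZ$-points but by the uniform scarcity of $\scrP$-points in small regions, which is where the equidistribution theorem is genuinely needed.
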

\begin{proof}
(This is similar to the proof of 
\cite[Lemma 2.17]{jMaS2019}.)
Let $\lambda_1$ be the normalized Lebesgue measure on $\US$.
By a standard approximation argument,
using the fact that $C_c(\US)$ is dense in $L^1(\US)$,
it suffices to prove \eqref{P2condthmLEM5res} for those $\lambda$
which have a continuous density with respect to $\lambda_1$;
and thus in fact it suffices to prove \eqref{P2condthmLEM5res} for the single case
$\lambda=\lambda_1$.

Let $T\geq1$, $S>0$ and $\ve>0$ be given.
Take $0<r<S$ so small that $\nbar_{\scrP}\vol(\scrB_r^d)<\ve$.
Set $k=2S/r>2$ and $T'=k^{d-1}T$.
By Lemma \ref{P2condthmLEM6},
there exists $\rho_0\in(0,1)$
such that
\begin{align}\label{P2condthmLEM5pf1}
\lambda_1(\{\vecv\in\US\col(\scrP-\vecq)R(\vecv)D_\rho\cap\scrB_r^d\setminus\{\bn\}\neq\emptyset\})
<\ve
\end{align}
for all $\rho\in(0,\rho_0)$ and $\vecq\in\scrP_{T'}(\rho)$.
Set $\tB:=\scrB_r^dD_k^{-1}$.
Replacing $\rho$ by $k\rho$ in \eqref{P2condthmLEM5pf1},
it follows that
for all $\rho\in(0,\rho_0/k)$ and $\vecq\in\scrP_{T'}(k\rho)=\scrP_T(\rho)$ 
we have
\begin{align}\label{P2condthmLEM5pf5}
\lambda_1(\{\vecv\in\US\col(\scrP-\vecq)R(\vecv)D_\rho\cap\tB\setminus\{\bn\}\neq\emptyset\})
<\ve.
\end{align}
One verifies that $|x_1|\geq k_1:=(r/2)^dS^{1-d}$
for all $\vecx\in\scrB_S^d\setminus\tB$,
and hence 
\begin{align}\label{P2condthmLEM5pf5a}
(\scrB_S^d\setminus\tB){D}_\rho^{-1}\subset A(\rho):=\scrB_{S\rho^{1-d}}^d\setminus\scrB_{k_1\rho^{1-d}}^d,
\qquad\forall \rho>0.
\end{align}
Now for any $\rho\in(0,\rho_0/k)$ and $\vecq\in\scrP_{T}(\rho)$ we have,
using \eqref{P2condthmLEM5pf5} and \eqref{P2condthmLEM5pf5a}:
\begin{align}\label{P2condthmLEM5pf3}
\lambda_1(\{\vecv\in\US\col (\scrZ-\vecq)R(\vecv)D_\rho\cap\scrB_S^d\setminus\{\bn\}\neq\emptyset\})
\hspace{150pt}
\\\notag
<\ve+\sum_{\vecp\in \scrZ\cap(\vecq+A(\rho))}\lambda_1\bigl(\bigl\{
\vecv\in\US\col (\vecp-\vecq)R(\vecv)D_\rho\in \scrB_S^d\bigr\}\bigr).
\end{align}
But if
$(\vecp-\vecq)R(\vecv)D_\rho\in \scrB_S^d$,
or equivalently
$\vecp\in\vecq+\scrB_S^d{D}_\rho^{-1} R(\vecv)^{-1}$,
then $\vecp$ has a distance less than $S\rho$ to  the line $\vecq+\R\vecv$;
and if also $\vecp\in\vecq+A(\rho)$ then
the angle $\varphi(\vecv,\vecp-\vecq)$ between the vectors $\vecv$ and $\vecp-\vecq$
satisfies $\sin\varphi(\vecv,\vecp-\vecq)<(S/k_1)\rho^d$.
The measure of the set of such points $\vecv\in\US$ with respect to $\lambda_1$
is
bounded above by $C_1\rho^{d(d-1)}$,
where $C_1$ depends on $d,S,r$ but not on $\rho$ or $\vecp$.
Hence \eqref{P2condthmLEM5pf3} is
\begin{align*}
\leq\ve+\#(\scrZ\cap(\vecq+\scrB_{S\rho^{1-d}}^d))\cdot C_1\rho^{d(d-1)}
\leq\ve+\#(\scrZ\cap\scrB_{(T+S)\rho^{1-d}}^d)\cdot C_1\rho^{d(d-1)},
\end{align*}
and since $\scrZ$ has  density zero,
the last term is less than $\ve$ for $\rho$ sufficiently small.
\end{proof}

Recall that 
$\mu_{\vecq,\rho}^{(\lambda)}$ is the distribution of $\scrQ_\rho(\vecq,\vecv)$
for $\vecv$ random in $(\US,\lambda)$.
We now introduce a certain approximation
$\scrQ'_\rho(\vecq,\vecv)$ to $\scrQ_\rho(\vecq,\vecv)$,
which will be easier to handle.
We set
\begin{align}\label{tPpdef}
\tP'=\bigcup_{\psi\in\Psi}\{(\vecp,\sigma^\psi)\col\vecp\in\scrL_\psi\}.
\end{align}
Note that, unlike the projection $\tP\to\scrP$, the projection $\tP'\to\scrP$ 
is not necessarily injective!
(However, by Remark \ref{psiaedef}, it becomes injective after removing a set of density zero from $\tP'$.)
For any $\vecq\in\scrP$, we set
\begin{align}\label{tPpqDEF}
\tP'_{\vecq}=
\begin{cases}
\tP'\setminus\{(\vecq,\sigma^{\psi(\vecq)})\} & (\vecq\in\scrP)
\\
\tP' & (\vecq\notin\scrP) 
\end{cases}
\end{align}
and %
\begin{align}\label{QprhoqvDEF}
\scrQ'_\rho(\vecq,\vecv)=(\tP_{\vecq}'-\vecq)R(\vecv)D_\rho.
\end{align}

\begin{lem}\label{QrhopeqJpsiqofLEM}
For every $\vecq\in\scrP$ 
we have 
$\scrQ_\rho'(\vecq,\vecv)=J_{\psi(\vecq)}(\Gamma g_0^{(\vecq)}\varphi(R(\vecv)D_\rho))$.
\end{lem}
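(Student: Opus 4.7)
The plan is to prove this equality by a direct unpacking of both sides, with the key intermediate identity being that $J(\Gamma g_0^{(\vecq)}\varphi(R(\vecv)D_\rho)) = (\tP'-\vecq)R(\vecv)D_\rho$. Once this is established, removing $(\bn,\sigma^{\psi(\vecq)})$ from both sides gives the claimed formula immediately, since $(\vecq,\sigma^{\psi(\vecq)})\in\tP'$ (because $\vecq\in\scrL_{\psi(\vecq)}$ by the defining property of the crude marking) and this point is mapped to $(\bn,\sigma^{\psi(\vecq)})$ by translation by $-\vecq$ followed by the action of $R(\vecv)D_\rho$.

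The main computation is to verify, for each $\psi\in\Psi$, the equality
\begin{align*}
c_\psi\bigl(\Z^d\,p_\psi(g_0^{(\vecq)}\varphi(R(\vecv)D_\rho))\bigr)=(\scrL_\psi-\vecq)R(\vecv)D_\rho.
\end{align*}
First I would observe that $\varphi:\SL_d(\R)\to G$ is the diagonal embedding and that for any $h\in\SL_d(\R)$, viewed as $(h,\bn)\in\S_{r_j}(\R)$, the group law \eqref{Srgrouplaw} gives $(M,U)\cdot(h,\bn)=(Mh,Uh)$; applying $\a_i$ to both sides shows that $p_\psi(g\,\varphi(h))=p_\psi(g)\cdot h$ in $\ASL_d(\R)$ (where $h$ is understood as $(h,\bn)$). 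Hence $\Z^d\,p_\psi(g\,\varphi(h))=\bigl(\Z^d\,p_\psi(g)\bigr)h$ as a subset of $\R^d$. Applying this with $g=g_0^{(\vecq)}$ and $h=R(\vecv)D_\rho$, and then invoking the identity $c_\psi(\Z^d\,p_\psi(g_0^{(\vecq)}))=\scrL_\psi-\vecq$ recorded in \eqref{KEYUjqdefmotivation}, yields the displayed equality.

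Combining over all $\psi\in\Psi$ via the definition \eqref{JmapFULL} of $J$ then gives
\begin{align*}
J(\Gamma g_0^{(\vecq)}\varphi(R(\vecv)D_\rho))=\bigcup_{\psi\in\Psi}(\scrL_\psi-\vecq)R(\vecv)D_\rho\times\{\sigma^{\psi}\}=(\tP'-\vecq)R(\vecv)D_\rho,
\end{align*}
where the last equality uses the definition \eqref{tPpdef} of $\tP'$ together with the fact that the action of $R(\vecv)D_\rho$ on $\scrX$ fixes the $\Sigma$-coordinate.

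Finally, by \eqref{Jpsidef} we have $J_{\psi(\vecq)}(\Gamma g_0^{(\vecq)}\varphi(R(\vecv)D_\rho))$ equal to the above set minus $\{(\bn,\sigma^{\psi(\vecq)})\}$, while by \eqref{tPpqDEF} and \eqref{QprhoqvDEF} we have $\scrQ'_\rho(\vecq,\vecv)=(\tP'-\vecq)R(\vecv)D_\rho$ minus the image of $(\vecq,\sigma^{\psi(\vecq)})-\vecq$ under $R(\vecv)D_\rho$, which is exactly $(\bn,\sigma^{\psi(\vecq)})$. This completes the proof. There is no real obstacle here beyond the bookkeeping for the semidirect product structure; the main thing to check carefully is that $p_\psi$ respects right multiplication by $\varphi(h)$ in the way claimed, which follows directly from the definitions of $\varphi$, $p_j$, and $\a_i$.
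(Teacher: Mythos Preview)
Your proof is correct and follows essentially the same approach as the paper. The paper's proof is slightly more terse: it reduces immediately to showing $\tP'-\vecq=J(\Gamma g_0^{(\vecq)})$ (without the $R(\vecv)D_\rho$ factor), implicitly using the equivariance $J(x\,\varphi(h))=J(x)\,h$, whereas you carry the $R(\vecv)D_\rho$ through explicitly and verify the equivariance of $p_\psi$ by hand; but the substance is identical.
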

\begin{proof}
By parsing the definitions
\eqref{tPpqDEF},
\eqref{QprhoqvDEF}
and \eqref{Jpsidef},
we see that it suffices to prove 
\begin{align}\label{QrhopeqJpsiqofLEMpf1}
\tP'-\vecq=J(\Gamma g_0^{(\vecq)}).
\end{align}
However, comparing 
\eqref{JmapFULL}
and
\eqref{tPpdef},
we see that \eqref{QrhopeqJpsiqofLEMpf1}
is an immediate consequence of
\eqref{KEYUjqdefmotivation}
(just as is \eqref{J0UqtMeqPmq}).
\end{proof}

The following lemma shows that 
$\scrQ'_\rho(\vecq,\vecv)$ approximates
$\scrQ_\rho(\vecq,\vecv)$ in a sense that is appropriate for us.
\begin{lem}\label{P2condthmLEM1}
Let $f\in\C_c(\scrX)$,
$T\geq1$, $\lambda\in\Pac(\US)$
and $\ve>0$.
Then
\begin{align}\label{P2condthmLEM1res}
\lambda\biggl(\biggl\{\vecv\in\US\col 
\biggl|\sum_{x\in \scrQ_\rho(\vecq,\vecv)}f(x)-\sum_{x\in \scrQ'_\rho(\vecq,\vecv)}f(x)\biggr|>\ve\biggr\}\biggr)\to0
\end{align}
as $\rho\to0$,
uniformly over all $\vecq\in\scrP_T(\rho)$.
\end{lem}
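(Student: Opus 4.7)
The plan is to show that, away from a density-zero subset of $\scrP$, the marking $\vs(\vecp)=(\psi(\vecp),\omega^{(\vecp)})$ appearing in $\tP$ is close in $\Sigma$ to the marking $\sigma^{\psi(\vecp)}=(\psi(\vecp),\omega^{\psi(\vecp)})$ appearing in $\tP'$, and then to control both the exceptional points and the total number of points contributing to the sums by combining Lemmas \ref{P2condthmLEM5} and \ref{P2condthmLEM6}.

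Fix $S>0$ with $\supp(f)\subset\scrB_S^d\times\Sigma$. Given a target $\eta_0>0$ for the measure in \eqref{P2condthmLEM1res}, pick an integer $k$ with $2\nbar_{\scrP}\vol(\scrB_S^d)/k<\eta_0/2$, and use the uniform continuity of $f$ (which has compact support in $\scrX$) to select $\delta>0$ with $|f(\vecx,\vs)-f(\vecx,\vs')|<\ve/k$ whenever $d_\Sigma(\vs,\vs')<\delta$. By Remark \ref{CONVremark} applied coordinate-wise and then intersected over $j\in\{1,\ldots,N\}$, for each $\psi\in\Psi$ there is a density-zero set $\scrZ_\psi\subset\scrL_\psi$ and a radius $R_\psi>0$ such that $d_\Sigma(\vs(\vecp),\sigma^{\psi})<\delta$ for every $\vecp\in\scrL_\psi\setminus\scrZ_\psi$ with $\|\vecp\|>R_\psi$. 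Setting
\begin{align*}
\scrZ^*:=\scrE\cup\bigcup_{\psi\in\Psi}\bigl(\scrZ_\psi\cup(\scrL_\psi\cap\scrB_{R_\psi}^d)\bigr),
\end{align*}
we obtain a density-zero subset of $\scrP$ (using Lemma \ref{THINDISJcondLEM} for the fact that $\scrE$ has density zero), and by the definition of $\scrE$ in \eqref{scrEass} this set contains every point of $\scrP$ lying in more than one $\scrL_\psi$. Consequently, each $\vecp\in\scrP\setminus\scrZ^*$ lies in a unique $\scrL_{\psi(\vecp)}$ and satisfies $d_\Sigma(\vs(\vecp),\sigma^{\psi(\vecp)})<\delta$.

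Applying Lemma \ref{P2condthmLEM5} to $\scrZ^*$ with the above $S$ yields $\rho_1>0$ such that for every $\rho<\rho_1$ and $\vecq\in\scrP_T(\rho)$, the set
\begin{align*}
A_1(\rho,\vecq):=\bigl\{\vecv\in\US\col\scrZ^*\cap(\vecq+\scrB_S^dD_\rho^{-1}R(\vecv)^{-1})\setminus\{\vecq\}\neq\emptyset\bigr\}
\end{align*}
has $\lambda$-measure less than $\eta_0/2$. Applying Lemma \ref{P2condthmLEM6} with $B=\scrB_S^d$, $V=2\vol(\scrB_S^d)$ and the chosen $k$ yields $\rho_2>0$ with $\lambda(A_2(\rho,\vecq))<\eta_0/2$ for all $\rho<\rho_2$ and $\vecq\in\scrP_T(\rho)$, where
\begin{align*}
A_2(\rho,\vecq):=\bigl\{\vecv\in\US\col\#\bigl((\scrP-\vecq)R(\vecv)D_\rho\cap\scrB_S^d\setminus\{\bn\}\bigr)\geq k\bigr\}.
\end{align*}
For $\vecv\notin A_1\cup A_2$ the two sums in \eqref{P2condthmLEM1res} run over exactly the same finite set of contributing points $\vecp\in\scrP\setminus\scrZ^*\setminus\{\vecq\}$: the exclusion of $\scrZ^*$ both prevents duplication of any such $\vecp$ in $\tP'$ and guarantees $d_\Sigma(\vs(\vecp),\sigma^{\psi(\vecp)})<\delta$, while the avoidance of $A_2$ bounds the number of such points by $k$. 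The term-by-term difference is thus at most $k\cdot(\ve/k)=\ve$, so the set in \eqref{P2condthmLEM1res} is contained in $A_1\cup A_2$, giving the desired uniform bound.

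The main obstacle is engineering the exceptional set $\scrZ^*$ so that it simultaneously absorbs (i) the density-zero points where $\omega^{(\vecp)}$ fails to be close to $\omega^{\psi(\vecp)}$, (ii) the bounded initial portion of each grid (so the threshold $\|\vecp\|>R_{\psi(\vecp)}$ is activated), and (iii) the points appearing with multiplicity in $\tP'$. All three conditions must hold simultaneously for the comparison on the complement of $A_1\cup A_2$ to reduce to a clean term-by-term perturbation in the $\Sigma$-coordinate.
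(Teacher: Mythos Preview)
Your proof is correct and follows essentially the same strategy as the paper's: both arguments combine Lemma~\ref{P2condthmLEM5} (to sweep a density-zero exceptional set of scatterer centers out of the relevant window) with Lemma~\ref{P2condthmLEM6} (to bound the number of remaining contributing points), and both rely on the convergence $\omega^{(\vecp)}\to\omega^{\psi}$ along $\scrL_\psi$ away from a density-zero set. The only organizational difference is that the paper packages the $\Sigma$-closeness via the auxiliary function $d(\psi,\omega)=\sup_{\vecx}|f(\vecx,(\psi,\omega))-f(\vecx,\sigma^{\psi})|$ and defines the exceptional set as $\{\vecp:d(\vs(\vecp))\ge\ve/K\}$ (invoking Proposition~\ref{muconvLEM} directly to see that this set has density zero), whereas you go through uniform continuity of $f$ and Remark~\ref{CONVremark}, and then add in the bounded balls $\scrL_\psi\cap\scrB_{R_\psi}^d$ by hand; this is the same content expressed slightly differently.

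Two minor points of precision worth tightening: first, your intermediate claim ``$d_\Sigma(\vs(\vecp),\sigma^{\psi})<\delta$ for every $\vecp\in\scrL_\psi\setminus\scrZ_\psi$ with $\|\vecp\|>R_\psi$'' tacitly assumes $\psi(\vecp)=\psi$, which only holds once $\vecp\notin\scrE$ --- you do fold $\scrE$ into $\scrZ^*$ and state the correct conclusion on $\scrP\setminus\scrZ^*$, but the intermediate sentence should be qualified. Second, on the complement of $A_2$ you have strictly fewer than $k$ contributing points, so the termwise bound is $(k-1)\cdot(\ve/k)<\ve$ rather than $k\cdot(\ve/k)=\ve$; this is what actually gives containment of the set $\{|\,\cdot\,|>\ve\}$ in $A_1\cup A_2$.
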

\begin{proof}
Choose $S>0$ so that $\supp(f)\subset\scrB_S^d\times\Sigma$.
Note that for every $\vecq\in\scrP\setminus\scrE$ and every $\vecv\in\US$ we have,
using the fact that $\vecq\notin\scrL_{\psi'}$ for all $\psi'\neq\psi(\vecq)$
(which follows from $\vecq\notin\scrE$ and \eqref{scrEass}):
\begin{align}\notag
\sum_{x\in \scrQ_\rho(\vecq,\vecv)}f(x)- &\sum_{x\in \scrQ'_\rho(\vecq,\vecv)}f(x)
\\\label{P2condthmLEM1pf1}
&=\sum_{\vecp\in\scrP\setminus\{\vecq\}} \biggl(f\bigl((\vecp-\vecq)R(\vecv)D_\rho,\vs(\vecp)\bigr)
-\sum_{\substack{\psi\in\Psi\\(\vecp\in\scrL_\psi)}}f\bigl((\vecp-\vecq)R(\vecv)D_\rho,\sigma^{\psi}\bigr)\biggr).
\end{align}
Set 
\begin{align*}
A(\vecq,\vecv,\rho):=\scrP\cap(\vecq+\scrB_S^dD_\rho^{-1}R(\vecv)^{-1})\setminus\{\vecq\}.
\end{align*}
Note that for every $\vecp\in\scrP\setminus A(\vecq,\vecv,\rho)$
we have $(\vecp-\vecq)R(\vecv)D_\rho\notin\scrB_S^d$,
so that the corresponding term in \eqref{P2condthmLEM1pf1} vanishes.
Also for every $\vecp\notin\scrE$ we have
$\vecp\notin\scrL_{\psi'}$ for all $\psi'\neq\psi(\vecp)$
(cf.\ \eqref{scrEass}),
which implies that the corresponding term in \eqref{P2condthmLEM1pf1}
is bounded in absolute value by $d(\vs(\vecp))$,
where the function $d:\Sigma\to\R_{\geq0}$ is defined 
by
\begin{align}\label{P2condthmLEM1pf5}
d(\psi,\omega)=\sup\{|f(\vecx,(\psi,\omega))-f(\vecx,\sigma^{\psi})|\col\vecx\in\R^d\}
\qquad(\psi,\omega)\in\Sigma.
\end{align}
Hence %
for every $\vecq\in\scrP\setminus\scrE$ and $\vecv\in\US$
such that $\scrE\cap A(\vecq,\vecv,\rho)=\emptyset$, we have
\begin{align}\label{P2condthmLEM1pf4}
\biggl|\sum_{x\in \scrQ_\rho(\vecq,\vecv)}f(x)- &\sum_{x\in \scrQ'_\rho(\vecq,\vecv)}f(x)\biggr|
\leq\sum_{\vecp\in A(\vecq,\vecv,\rho)}d(\vs(\vecp)).
\end{align}

Now let $\ve'>0$ be given.
Take $K\in\Z^+$ and $\rho_0\in(0,1)$ such that
\begin{align}\label{P2condthmLEM1pf2}
\lambda(\{\vecv\in\US\col \#((\scrP-\vecq)R(\vecv)D_\rho\cap \scrB_S^d)>K\})<\ve'
\end{align}
for all $\rho\in(0,\rho_0)$ and all $\vecq\in\scrP_T(\rho)$.
This is possible by Lemma \ref{P2condthmLEM6}.
Next set
\begin{align*}
\scrZ:=\{\vecp\in\scrP\col d(\vs(\vecp))\geq\ve/K\}.
\end{align*}
We claim that the set $\scrZ$ has  density zero.
To prove this,
set
\begin{align*}
U_\psi:=\{\omega\in\Omega\col (\psi,\omega)\in\Sigma\text{ and }d(\psi,\omega)<\ve/K\},
\end{align*}
so that $\scrZ\subset\cup_{\psi\in\Psi}\{\vecq\in\scrL_\psi\col \omega^{(\vecq)}\notin U_\psi\}$.
Note that $\{\psi\}\times U_\psi$ is an open neighbourhood of $\sigma^\psi$ in $\Sigma$,
since the function $d$ is continuous.
Hence as in the proof of Proposition \ref{P1prop}
(making crucial use of Proposition \ref{muconvLEM}),
the set $\{\vecq\in\scrL_\psi\col\omega^{(\vecq)}\notin U_\psi\}$ has density zero.
Hence also $\scrZ$ has density zero, as claimed.

It follows that also $\scrE\cup\scrZ$ has  density zero,
and so by Lemma \ref{P2condthmLEM5},
after possibly shrinking $\rho_0$, we have
\begin{align}\label{P2condthmLEM1pf3}
\lambda(\{\vecv\in\US\col(\scrE\cup\scrZ)\cap   A(\vecq,\vecv,\rho) %
\neq\emptyset\})<\ve'
\end{align}
for all $\rho\in(0,\rho_0)$ and all $\vecq\in\scrP_T(\rho)$.
Now note that for any $\vecq\in\scrP\setminus\scrE$ and for any $\vecv\in\US$ which belongs to neither of the two sets in
\eqref{P2condthmLEM1pf2} and \eqref{P2condthmLEM1pf3},
the set $A(\vecq,\vecv,\rho)$
has cardinality at most $K$ and is disjoint from $\scrE\cup\scrZ$;
therefore the inequality in \eqref{P2condthmLEM1pf4} holds,
and the right hand side in that inequality is $\leq\ve$.
It follows that for all
$\rho\in(0,\rho_0)$ and $\vecq\in\scrP_T(\rho)$,
the measure in \eqref{P2condthmLEM1res} is less than $2\ve'$.
\end{proof}

\begin{proof}[Proof of Theorem \ref{P2condthm}]
Let $T\geq1$ and $\lambda\in\Pac(\US)$ be given.
Let $\rho_1,\rho_2\ldots$ be an arbitrary sequence in $(0,1)$ with $\rho_n\to0$,
and let $\vecq_n\in\scrP_T(\rho_n)$
for $n=1,2,\ldots$ 
be such that 
the limit $\vs=(\psi,\omega):=\lim_{n\to\infty}\vs(\vecq_n)\in\Sigma$ exists.
By \cite[Lemma 2.1.2]{jMaS2019},
it suffices to prove that 
in this situation
we have
\begin{align}\label{P2condthmNEWpf1}
\mu_{\vecq_n,\rho_n}^{(\lambda)\hspace{10pt}}\xrightarrow[]{\textup{ w }}\mu_{\vs}\qquad\text{as }\: n\to\infty.
\end{align}
Since $\vs(\vecq_n)\to(\psi,\omega)$
implies that  
$\psi(\vecq_n)=\psi$
for all large $n$,
we may without loss of generality assume that
$\psi(\vecq_n)=\psi$ for \textit{all} $n$.
This means that $\vecq_n\in\scrL_{\psi}$ for all $n$.

For any $\vecq\in\scrP$,  
$\rho>0$ and $\lambda\in P(\US)$,
let
$\nu_{\vecq,\rho}^{(\lambda)}\in P(\XX)$
be the distribution of $\Gamma g_0^{(\vecq)}\varphi(R(\vecv)D_\rho)$
for $\vecv$ random in $(\US,\lambda)$.
As a first step, let us note that 
\eqref{HOMDYNMAINTHMresREP}
implies that
\begin{align}\label{P2condthmNEWpf3}
\nu_{\vecq_n,\rho_n}^{(\lambda)}
\xrightarrow[]{\textup{ w }}\overline{\omega}\qquad\text{as }\: n\to\infty.
\end{align}
Indeed, let $f\in\C_b(\XX)$ be given.
Then by \eqref{HOMDYNMAINTHMresREP}
we have
$\nu_{\vecq_n,\rho_n}^{(\lambda)}(f)-\overline{\omega^{(\vecq_n)}}(f)\to0$ as $n\to\infty$.
Also $\vs(\vecq_n)\to(\psi,\omega)$ 
implies that $\omega_j^{(\vecq_n)}\xrightarrow[]{\textup{ w }}\omega_{j}$ in $P(\TT_j^d)'$
for each $j\in\{1,\ldots,N\}$;
hence by Lemma~\ref{mapPTtoPXjcontLEM}
and \cite[Thm.\ 2.8(ii)]{pB99},
we have 
$\overline{\omega^{(\vecq_n)}}\xrightarrow[]{\textup{ w }}\overline{\omega}$
in $P(\XX)$,
and thus 
$\overline{\omega^{(\vecq_n)}}(f)\to\overline{\omega}(f)$.
Hence
$\nu_{\vecq_n,\rho_n}^{(\lambda)}(f)\to\overline{\omega}(f)$,
and \eqref{P2condthmNEWpf3} is proved.

Next, for each $n$ we have
$\Gamma g_0^{(\vecq_n)}\in\XX^\psi$
by Lemma \ref{g0qinXpsiLEM},
and hence
$\nu_{\vecq_n,\rho_n}^{(\lambda)}(\XX^{\psi})=1$;
also 
$\overline{\omega}(\XX^{\psi})=1$
since $(\psi,\omega)\in\Sigma$;
cf.\ \eqref{oomegaXpsieq1}.
Hence all $\nu_{\vecq_n,\rho_n}^{(\lambda)}$ as well as $\oomega$
may be regarded as elements in $P(\XX^{\psi})$,
and \eqref{P2condthmNEWpf3}
implies that
$\nu_{\vecq_n,\rho_n}^{(\lambda)}
\xrightarrow[]{\textup{ w }}\overline{\omega}$
also in $P(\XX^{\psi})$
\cite[Lemma 4.26]{oK2002}.
Hence by the continuous mapping theorem,
\begin{align}\label{P2condthmNEWpf2}
J_{\psi\, *\,}\nu_{\vecq_n,\rho_n}^{(\lambda)}
\xrightarrow[]{\textup{ w }}J_{\psi\,*\,}\overline{\omega}\qquad\text{as }\: n\to\infty.
\end{align}
Here
$J_{\psi\,*\,}\overline{\omega}=\mu_{\vs}$,
by \eqref{muvsDEF}.
Now let $f\in\C_c(\scrX)$ be given,
and let $\pi_f$ be the continuous map from $N_s(\scrX)$ to $\R$ given by
$\pi_f(Q)=\sum_{x\in Q}f(x)$.
Then \eqref{P2condthmNEWpf2} implies that
\begin{align}\label{P2condthmLEM2pf4}
\pi_{f*\,}J_{\psi\, *\,}\nu_{\vecq_n,\rho_n}^{(\lambda)}
\xrightarrow[]{\textup{ w }}
\pi_{f*\,}\mu_{\vs}\qquad\text{as }\: n\to\infty.
\end{align}
But note that for each $\vecq\in\scrP$,
by Lemma \ref{QrhopeqJpsiqofLEM},
$J_{\psi(\vecq)\hspace{1pt}*\,}\nu_{\vecq,\rho}^{(\lambda)}$
is the distribution of $\scrQ_\rho'(\vecq,\vecv)$ 
in $N_s(\scrX)$
for $\vecv$ random in $(\US,\lambda)$.
Hence
$\pi_{f*\,}J_{\psi\, *\,}\nu_{\vecq_n,\rho_n}^{(\lambda)}$
is the distribution of the real-valued random variable
$F'(\vecv)=\sum_{x\in\scrQ'_{\rho_n}(\vecq_n,\vecv)}f(x)$,
for $\vecv$ random in $(\US,\lambda)$.
Similarly,
$\pi_{f*\,}\mu_{\vecq_n,\rho_n}^{(\lambda)}$
is the distribution of the real-valued random variable
$F(\vecv)=\sum_{x\in\scrQ_{\rho_n}(\vecq_n,\vecv)}f(x)$.
By Lemma~\ref{P2condthmLEM1},
$|F(\vecv)-F'(\vecv)|$ converges in probability to $0$.\label{P2condthmendofpf}
Hence by \cite[Thm.\ 3.1]{pB99},
\eqref{P2condthmLEM2pf4} implies that
\begin{align}\label{P2condthmLEM2pf10}
\pi_{f*\,}\mu_{\vecq_n,\rho_n}^{(\lambda)}
\xrightarrow[]{\textup{ w }}
\pi_{f*\,}\mu_{\vs}\qquad\text{as }\: n\to\infty.
\end{align}
We have proved that this holds for any given
$f\in\C_c(\scrX)$.
By \cite[Thm.\ 16.16(ii)$\Rightarrow$(i)]{oK2002},
this implies that \eqref{P2condthmNEWpf1} holds.
\end{proof}

\subsection{Verification of [P3], and the macroscopic limit}
\label{P3proofSEC}

\begin{prop}\label{propP3}
[P3] holds, i.e.\ for every bounded Borel set $B\subset\RR^d$ we have
\begin{align}\label{propP3res}
\lim_{\xi\to\infty}\limsup_{\rho\to0}\hspace{7pt}
[\vol\times\sigma]\bigl(\bigl\{(\vecq,\vecv)\in B\times\US\col
\scrQ_{\rho}(\rho^{1-d}\vecq,\vecv)\cap(\fZ_\xi\times\Sigma)=\emptyset\bigr\}\bigr)=0.
\end{align}
\end{prop}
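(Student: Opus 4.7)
The plan is to reduce [P3] for the union-of-grids configuration $\scrP$ to the corresponding statement for a \emph{single} grid $\scrL_\psi\subseteq\scrP$, and then to invoke the fact that the single-grid (periodic) case was already established in \cite[Prop.\ 5.6]{jMaS2019}. Two observations make this reduction transparent: (i) the emptiness event in \eqref{propP3res} depends on $\scrQ_\rho(\rho^{1-d}\vecq,\vecv)$ only through its $\R^d$-component, because we intersect with the full $\fZ_\xi\times\Sigma$; in particular, the event is insensitive to the choice of marking data $(\Sigma,\vs,\vs\mapsto\mu_\vs,\mm)$. (ii) The "void" event is monotone decreasing in the scatterer set: if a larger configuration has no point in the cylinder, then every subset trivially has none either.

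Concretely, fix any $\psi\in\Psi$, so that $\scrL_\psi\subseteq\scrP$. For any $(\vecq,\vecv)\in B\times\US$ with $\rho^{1-d}\vecq\notin\scrP$ -- which holds outside a Lebesgue-null set of $B$, since $\rho^{d-1}(\scrP\cap\rho^{1-d}B)$ is at most countable -- we have $\tP_{\rho^{1-d}\vecq}=\tP$, and therefore
$\scrQ_\rho(\rho^{1-d}\vecq,\vecv)\cap(\fZ_\xi\times\Sigma)=\emptyset$ directly implies
$(\scrL_\psi-\rho^{1-d}\vecq)R(\vecv)D_\rho\cap\fZ_\xi=\emptyset$. (For the remaining exceptional $\vecq$, if $\rho^{1-d}\vecq\in\scrL_\psi$ the only removed point corresponds to the origin, which lies outside the open cylinder $\fZ_\xi=(0,\xi)\times\scrB_1^{d-1}$, so the same implication holds.) Consequently the measure in \eqref{propP3res} is bounded above by
\begin{align*}
[\vol\times\sigma]\bigl(\bigl\{(\vecq,\vecv)\in B\times\US\col (\scrL_\psi-\rho^{1-d}\vecq)R(\vecv)D_\rho\cap\fZ_\xi=\emptyset\bigr\}\bigr).
\end{align*}

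This upper bound is precisely the quantity whose double limit one needs to control in [P3] for the Lorentz gas with scatterer configuration equal to the single grid $\scrL_\psi$. Since $\scrL_\psi=c_\psi(\Z^d+\vecw_\psi)M_{j_\psi}$ is a translate of a lattice, it is a periodic point set, and all six hypotheses [P1]--[P3] and [Q1]--[Q3] were verified for periodic configurations in \cite[Prop.\ 5.6]{jMaS2019} (with the marking data recalled in Remark \ref{MAMSSec5p2comparisonREM}). The instance of [P3] from that reference therefore yields $\lim_{\xi\to\infty}\limsup_{\rho\to0}[\,\cdots]=0$ for $\scrL_\psi$, and \eqref{propP3res} follows. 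There is essentially no obstacle here: the real work lies elsewhere (in the verification of [P2], where uniformity in $\vecq$ and admissibility of the presentation are crucial); the present condition is insensitive to the rich marking structure and succumbs to a direct monotonicity/domination argument.
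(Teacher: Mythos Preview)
Your proof is correct and follows essentially the same approach as the paper: fix one grid $\scrL_\psi\subset\scrP$, use monotonicity together with $\bn\notin\fZ_\xi$ to bound the void probability for $\scrP$ by that for $\scrL_\psi$, and then appeal to the known single-grid result. The only minor difference is the final citation: the paper reduces further by translation and rescaling to a covolume-one lattice and invokes \cite[Theorem 1.2]{jMaS2010a} (the limit distribution for the free path length), whereas you cite \cite[Prop.\ 5.6]{jMaS2019} directly for the periodic case; both are valid endpoints of the same reduction.
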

\begin{proof}
Using the fact that $\scrP$ contains at least one grid,
the proposition follows from %
the existence of a
limit distribution 
on $\R_{>0}$ (with zero mass at $+\infty$)
for the macroscopic free path length
in the Boltzmann-Grad limit of
the Lorentz gas on a lattice scatterer configuration,
\cite[Theorem 1.2]{jMaS2010a}.
Indeed, fix an arbitrary $\psi\in\Psi$.
For any $\rho>0$ and $\vecq\in\R^d$, $\vecv\in\US$,
set
\begin{align*}
Q_\rho'(\vecq,\vecv)=(\scrL_\psi-\vecq)R(\vecv)D_\rho \qquad(\subset\R^d).
\end{align*}
Comparing with the definition of
$\scrQ_\rho(\vecq,\vecv)$ in
\eqref{repXIRHOqv},
\eqref{repPqdef},
and using $\scrL_\psi\subset\scrP$ and $\bn\notin\fZ_\xi$,
one verifies that for any $\xi>0$,
$\scrQ_{\rho}(\vecq,\vecv)\cap(\fZ_\xi\times\Sigma)=\emptyset$
forces $Q_\rho'(\vecq,\vecv)\cap\fZ_\xi=\emptyset$.
Hence to prove the proposition it suffices to prove that
\begin{align}\label{propP3pf1}
\lim_{\xi\to\infty}\limsup_{\rho\to0}\hspace{7pt}
[\vol\times\sigma]\bigl(\bigl\{(\vecq,\vecv)\in B\times\US\col
Q'_{\rho}(\rho^{1-d}\vecq,\vecv)\cap\fZ_\xi=\emptyset\bigr\}\bigr)=0.
\end{align}
By a simple translation and rescaling argument we may reduce to the case when $\scrL_\psi$ has covolume 
one and is a lattice,
and then \eqref{propP3pf1} is a simple consequence of 
\cite[Theorem 1.2]{jMaS2010a}.
\end{proof}

In \cite[Sec.\ 2.5]{jMaS2019},
the condition [P3] is used to prove,
for an arbitrary fixed point set $\scrP\subset\R^d$
satisfying the hypotheses in Section \ref{KINTHEORYrecapsec},
the existence of a 
canonical measure $\mu^{\g}\in P(N_s(\scrX))$ \footnote{In \cite{jMaS2019}\label{mugfootnote}
this measure is called ``$\mu$''.}
giving the limit distribution of $\scrQ_\rho(\vecq,\vecv)$ in the case of a 
\textit{macroscopic} initial condition.
That is, $\mu^{\g}$
equals the limit distribution
of $\scrQ_\rho(\rho^{1-d}\vecq,\vecv)$ for $(\vecq,\vecv)$ random in 
$(\T^1(\R^d),\Lambda)$,
for any fixed probability measure $\Lambda\in P(\T^1(\R^d))$
absolutely continuous with respect to the Liouville measure $\vol\times\sigma$
\cite[Theorem 2.19]{jMaS2019}.
This measure $\mu^{\g}$ also appears in the definition of
the transition kernel for generic initial data, $k^{\g}$;
see Section \ref{transkerdefSEC} below. %

In our case of 
$\scrP$ being a finite union of grids as in
\eqref{GENPOINTSET1},
the macroscopic limit measure $\mu^{\g}$
can be explicitly defined as follows: Set
\begin{align}\label{omegagdef}
\omega^{\g}:=\bigl(\omega_1^{\g},\ldots,\omega_N^{\g}\bigr)\in\Omega,
\end{align}
and then let
\begin{align}\label{mugdef}
\mu^{\g}=J_*(\overline{\omega^{\g}}),
\end{align}
with $J:\XX\to N_s(\scrX)$ being the map in \eqref{JmapFULL}.

We next state without proof a limit result
which significantly strengthens the above mentioned
\cite[Theorem 2.19]{jMaS2019}
for our special class of $\scrP$.
For any $\Lambda\in P(\T^1(\R^d))$, $s>0$ and $\rho\in(0,1)$,
let $\mu_\rho^{(\Lambda,s)}$ be the distribution of
$\scrQ_\rho(s\vecq,\vecv)$ for $(\vecq,\vecv)$ random in $(\T^1(\R^d),\Lambda)$.
\begin{thm}\label{MACROSCOPICNsTHM}
For any $\Lambda\in P(\T^1(\R^d))$
which is absolutely continuous with respect to Liouville measure,
and any $s_0>0$,
we have $\mu_\rho^{(\Lambda,s)}\xrightarrow[]{\textup{ w }}\mu^{\g}$
as $\rho\to0$, uniformly over all $s\geq s_0$.
\end{thm}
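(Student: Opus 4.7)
The plan is to deduce Theorem~\ref{MACROSCOPICNsTHM} from the pointwise homogeneous-dynamics statement of Theorem~\ref{HOMDYNintrononunifTHM}, using Lemma~\ref{OjqeqOjLEM} to identify the generic limit measure and the continuous map $J$ from \eqref{JmapFULL} to transfer equidistribution from $\XX$ to $N_s(\scrX)$. The crucial observation is that for Lebesgue-a.e.\ $\vecy \in \R^d$ one has $\overline{\omega^{(\vecy)}} = \overline{\omega^{\g}}$, whose $J$-pushforward is exactly $\mu^{\g}$ by \eqref{mugdef}.

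First I would replace $\scrQ_\rho(s\vecq,\vecv)$ by the auxiliary process $\scrQ'_\rho(s\vecq,\vecv) = (\tP'_{s\vecq}-s\vecq)R(\vecv)D_\rho$ of \eqref{QprhoqvDEF}, via a macroscopic analog of Lemma~\ref{P2condthmLEM1}. This replacement is actually easier in the macroscopic setting: absolute continuity of $\Lambda$ together with Lemma~\ref{THINDISJcondLEM} ensure that $s\vecq$ almost surely lies outside $\scrP \cup \bigcup_{\psi\neq\psi'}(\scrL_\psi\cap\scrL_{\psi'})$, so $\tP'_{s\vecq} = \tP'$ and the only genuine task is replacing each true mark $\vs(\vecp) = (\psi(\vecp),\omega^{(\vecp)})$ by the asymptotic mark $\sigma^{\psi(\vecp)}$. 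Since Proposition~\ref{muconvLEM} gives a density-zero exceptional set of $\vecp \in \scrP$ where $\omega^{(\vecp)}$ is far from $\omega^{\psi(\vecp)}$, and Lemma~\ref{P2condthmLEM6} controls the number of $\vecp$ hit in any bounded test region, the approximation argument from Lemma~\ref{P2condthmLEM1} transplants verbatim.

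Second, for any $\vecy \notin \scrP$ one has $\scrQ'_\rho(\vecy,\vecv) = J(\Gamma g_0^{(\vecy)}\varphi(R(\vecv)D_\rho))$, by the same computation as \eqref{QrhopeqJpsiqofLEMpf1} together with the $\SL_d(\R)$-equivariance of $J$. Hence by Theorem~\ref{HOMDYNintrononunifTHM}, Lemma~\ref{OjqeqOjLEM}, and the continuity of $J$ (Lemma~\ref{JcontLEM}), the continuous mapping theorem yields $\int_{\US} F(\scrQ'_\rho(\vecy,\vecv))\,d\lambda(\vecv) \to \int F\,d\mu^{\g}$ for Lebesgue-a.e.\ $\vecy$ and every $F \in \C_c(N_s(\scrX))$, $\lambda \in \Pac(\US)$. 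Writing $d\Lambda = h(\vecq,\vecv)\,d\vecq\,d\sigma(\vecv)$ and performing the change of variable $\vecy = s\vecq$, dominated convergence (with $\|F\|_\infty$ bounding the $\vecv$-integral) then delivers $\int F\, d\mu^{(\Lambda,s)}_\rho \to \int F\, d\mu^{\g}$ for each fixed $s \geq s_0$.

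The main obstacle is the uniformity in $s \geq s_0$. As $s$ increases the density $\vecy \mapsto s^{-d}h(s^{-1}\vecy,\vecv)$ spreads its mass over a region of radius $\asymp s$, so the family $\{h_s\}_{s \geq s_0}$ is not tight and pointwise-a.e.\ convergence in $\vecy$ does not transfer to uniformity in $s$ via naive dominated convergence. I expect to handle this by proving a uniform analog of Theorem~\ref{HOMDYNintrononunifTHM} in which the base point $\vecy$ varies over $\R^d$ outside a Lebesgue-null set, uniformly over bounded subsets --- essentially the macroscopic counterpart of Theorem~\ref{HOMDYNMAINTHM}. Such a statement should follow from the unipotent equidistribution Theorem~\ref{MAINcleanUNIFCONVthm} combined with a covering argument on the $\vecy$-parameter, in the same spirit as the passage from Theorem~\ref{HOMDYNintrononunifTHM} to Theorem~\ref{HOMDYNMAINTHM}. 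Once this uniform equidistribution is in hand, the change of variable $\vecy = s\vecq$ absorbs $s$ into the rescaling of the base-point domain and uniformity in $s \geq s_0$ follows.
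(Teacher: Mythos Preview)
The paper does not actually prove Theorem~\ref{MACROSCOPICNsTHM}; it states it without proof and remarks that it can be deduced, by arguments similar to those in Section~\ref{P2initialdiscsec}, from Theorem~\ref{HOMDYNmacroscopicTHM} --- a uniform-in-$s$ equidistribution statement in $\XX$, also stated without proof but with the note that its proof is ``to a large extent similar'' to that of Theorem~\ref{HOMDYNMAINTHM}. Your outline is therefore essentially the route the paper indicates: pass to the auxiliary process $\scrQ'_\rho$ via the approximation lemmas of Section~\ref{P2initialdiscsec}, reduce to an equidistribution result in $\XX$ using the map $J$, and obtain the required uniform version by imitating the proof of Theorem~\ref{HOMDYNMAINTHM} via Theorem~\ref{MAINcleanUNIFCONVthm}.

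There is one formulation issue worth correcting. The intermediate uniform statement you propose --- convergence uniform in $\vecy$ over \emph{bounded} subsets of $\R^d$ --- is too weak to yield uniformity in $s\geq s_0$, for precisely the reason you yourself identify (the rescaled densities $h_s$ spread over regions of radius $\asymp s$, so no bounded set captures all $s$ simultaneously). The paper's Theorem~\ref{HOMDYNmacroscopicTHM} sidesteps this by integrating over $\Lambda$ \emph{before} demanding uniformity in $s$: the statement is that $\int f\bigl(\Gamma g_0^{(s\vecq)}\varphi(R(\vecv)D_\rho)\bigr)\,d\Lambda(\vecq,\vecv)\to\int f\,d\overline{\omega^{\g}}$ uniformly in $s\geq s_0$. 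What makes this tractable is that the map $\vecy\mapsto\pi(U^{(\vecy)})$ sends $\R^d$ into the \emph{compact} torus $\tTT$, so Theorem~\ref{MAINcleanUNIFCONVROTthm} already delivers uniformity over all $V$ outside $\Delta_k^{(\eta)}$; one then only needs a macroscopic analog of Proposition~\ref{UksetsaregoodPROP} showing that the $\Lambda$-probability of $s\vecq$ landing in the preimage of $\Delta_k^{(\eta)}$ is small uniformly in $s$. That density estimate replaces your covering argument and is not tied to bounded subsets of $\R^d$.
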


Note that \cite[Theorem 2.19]{jMaS2019}
corresponds to the particular choice $s=\rho^{1-d}$ in Theorem~\ref{MACROSCOPICNsTHM}.
The formulation of Theorem \ref{MACROSCOPICNsTHM} is inspired by
\cite[Theorem 1.1]{jMaS2013}.

As mentioned, we will not give the %
proof of Theorem \ref{MACROSCOPICNsTHM} in the present paper.
However we remark that Theorem \ref{MACROSCOPICNsTHM}
can be deduced,
by similar arguments as in Section \ref{P2initialdiscsec},
from the following equidistribution result in the homogeneous space $\XX$,
which is a kind of 
macroscopic analogue of Theorem \ref{HOMDYNintrononunifTHM}.
\begin{thm}\label{HOMDYNmacroscopicTHM}
For any $\Lambda\in P(\T^1(\R^d))$
which is absolutely continuous with respect to Liouville measure,
and any $f\in\C_b(\XX)$ and $s_0>0$,
we have
\begin{align}\label{HOMDYNmacroscopicTHMres}
\int_{\T^1(\R^d)}f\bigl(\Gamma g_0^{(s\vecq)}\varphi(R(\vecv)D_\rho)\bigr)\,d\Lambda(\vecq,\vecv)
\to\int_{\XX}f\,d\overline{\omega^{\g}}
\end{align}
as $\rho\to0$, uniformly over all $s\geq s_0$.
\end{thm}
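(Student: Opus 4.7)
The plan is to reduce Theorem \ref{HOMDYNmacroscopicTHM} to the pointwise equidistribution statement of Theorem \ref{HOMDYNintrononunifTHM} by exploiting the almost-sure coincidence $\overline{\omega^{(\vecq)}} = \overline{\omega^{\g}}$ (Lemma \ref{OjqeqOjLEM}), and then to upgrade the resulting pointwise-in-$(\vecq,s)$ convergence to the integrated form uniformly in $s \geq s_0$ via dominated convergence combined with a parametrized version of the underlying homogeneous dynamics.

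The first step is to pin down the Lebesgue-null set of $\vecq$ for which the limit measure is anomalous. By Lemma \ref{OjqeqOjLEM} there is a countable exceptional set $\scrN \subset \R^d$ such that $\overline{\omega^{(\vecq)}} = \overline{\omega^{\g}}$ for every $\vecq \in \R^d \setminus \scrN$. The dilation-closed set $\scrR := \bigcup_{s>0} s^{-1}\scrN$ is a countable union of rays through the origin and hence has Lebesgue measure zero, and for every $\vecq \notin \scrR$ one has $\overline{\omega^{(s\vecq)}} = \overline{\omega^{\g}}$ for \emph{all} $s > 0$. Writing $d\Lambda(\vecq,\vecv) = h(\vecq,\vecv)\,d\vecq\,d\sigma(\vecv)$ with $h \in L^1$ and disintegrating, for a.e.\ $\vecq$ with $c(\vecq) := \int_\US h(\vecq,\cdot)\,d\sigma > 0$ the renormalization $\lambda_\vecq := c(\vecq)^{-1} h(\vecq,\cdot)\,d\sigma$ belongs to $\Pac(\US)$. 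Fixing such a $\vecq \notin \scrR$ and any $s > 0$, Theorem \ref{HOMDYNintrononunifTHM} applied at the translation $s\vecq$ with $\lambda = \lambda_\vecq$ yields the pointwise convergence
\[
\int_\US f\bigl(\Gamma g_0^{(s\vecq)}\varphi(R(\vecv)D_\rho)\bigr)\,d\lambda_\vecq(\vecv) \longrightarrow \int_\XX f\,d\overline{\omega^{\g}} \qquad (\rho \to 0).
\]
Multiplying by $c(\vecq)$ and integrating over $\vecq$ by dominated convergence (with the uniform bound $\|f\|_\infty$) yields \eqref{HOMDYNmacroscopicTHMres} for each fixed $s$.

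The main obstacle, and the heart of the proof, is obtaining the uniformity in $s \geq s_0$. For $s$ in a bounded range $[s_0, S]$ this should follow from a parametrized form of the homogeneous dynamics underlying Theorem \ref{HOMDYNintrononunifTHM}: the family $(s,\vecq,\vecv) \mapsto \Gamma g_0^{(s\vecq)}\varphi(R(\vecv))$ traces a smooth submanifold of $\XX$ whose $\varphi(D_\rho)$-translates equidistribute uniformly in the compact parameter set, by the uniform equidistribution of expanding unipotent orbits in $\XX=\GaG$ provided by Theorem \ref{MAINcleanUNIFCONVthm} of Section \ref{UNIPOTAPPLsec}, built upon Ratner's measure classification \cite{mR91a} and Shah's uniformity machinery \cite{nS96}. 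To handle all $s \geq s_0$ one exploits the scaling structure: since $U_j^{(s\vecq)} = W_j - s\tvecc_j\vecq M_j^{-1}$ is affine in $s$, a large factor of $s$ can be partially absorbed into the expanding diagonal $D_\rho$, reducing the tail $s \gg 1$ to the bounded-$s$ regime at the cost of replacing $\rho$ by a scale that still tends to zero. Combining the a.e.\ coincidence of limit measures, the parametrized equidistribution on compact $s$-ranges, and the scaling absorption at large $s$ then completes the argument.
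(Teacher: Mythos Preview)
The paper does not actually prove Theorem \ref{HOMDYNmacroscopicTHM}; it only remarks that the proof ``is to a large extent similar to the proof of Theorem \ref{HOMDYNMAINTHM}'', so the comparison is against that template. Your first steps are fine: Lemma \ref{OjqeqOjLEM} gives $\overline{\omega^{(s\vecq)}}=\overline{\omega^{\g}}$ for all $s>0$ whenever $\vecq$ avoids a Lebesgue-null union of rays, and Theorem \ref{HOMDYNintrononunifTHM} plus dominated convergence then yields \eqref{HOMDYNmacroscopicTHMres} for each \emph{fixed} $s$. The gap is entirely in the uniformity over $s\geq s_0$. Your ``scaling absorption'' step is incorrect: since $\Gamma g_0^{(s\vecq)}\varphi(R(\vecv)D_\rho)=x\bigl(\pi(U^{(s\vecq)})\bigr)\,\tM\,\varphi(R(\vecv)D_\rho)$, the parameter $s$ enters only through the torus coordinate $\pi(U^{(s\vecq)})\in\tTT$, on which right multiplication by $\varphi(D_\rho)$ has no effect; there is no algebraic relation that lets one trade $s$ against $\rho$. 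Your compact-$s$-range argument via Theorem \ref{MAINcleanUNIFCONVthm} is in the right direction but incomplete, since that theorem requires a uniform margin $\eta>0$ from the singular set $\Delta_k^{(\eta)}$, and you give no control on how close $\pi(U^{(s\vecq)})$ may come to it as $(\vecq,s)$ varies.

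What the paper intends is to follow the structure of Sections \ref{firstunifresultSEC}--\ref{P2prooflaterSEC}: prove a ``generic'' analogue of Theorem \ref{MAINUNIFPROP4new} (with $L_j,\omega_j^{\g}$ replacing $L_j^\psi,\omega_j^\psi$), giving equidistribution uniform over all $V$ outside a singular set; then prove an analogue of Proposition \ref{UksetsaregoodPROP} bounding, \emph{uniformly in $s\geq s_0$}, the $\Lambda$-measure of $\vecq$ with $\pi(U^{(s\vecq)})\in\Delta_k^{(\eta)}$ (this is where the affine structure $U_j^{(s\vecq)}=W_j-s\tvecc_j\vecq M_j^{-1}$ and the lower bound $s\geq s_0$ are actually used, via Fubini in the $\vecq$-integration); and finally assemble these as in the proof of Theorem \ref{HOMDYNMAINTHM}. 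The uniformity in $s$ thus comes from the built-in uniformity in $V$ of Theorem \ref{MAINcleanUNIFCONVthm} together with a measure estimate on the singular locus---not from any rescaling of $\rho$.
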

We will not give the proof of Theorem \ref{HOMDYNmacroscopicTHM} either;
however we note that it
is to a large extent similar to the proof of
Theorem \ref{HOMDYNMAINTHM}
which we give in Section \ref{NEWISOsec} below.

\vspace{5pt}

We end this section by pointing out a couple of invariance properties of $\mu^{\g}$.
By the same argument as in the proof of Lemma \ref{muvsSLdinvLEM},
the measure $\overline{\omega^{\g}}$ on $\XX$ is $\varphi(\SL_d(\R))$-invariant,
and thus $\mu^{\g}$ is invariant under the action of $\SL_d(\R)$ on $N_s(\scrX)$.
We also have:
\begin{lem}\label{mugtranslinvLEM}
The measure $\mu^{\g}$ is invariant under the action of $\R^d$ on $N_s(\scrX)$ by translations.
\end{lem}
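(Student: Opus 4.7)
The plan is to exhibit, for each $\vecx\in\R^d$, a concrete element $h(\vecx)\in G$ such that right translation by $h(\vecx)$ on $\XX$ intertwines, via the map $J$ of \eqref{JmapFULL}, with translation by $\vecx$ on $N_s(\scrX)$; the lemma then reduces to showing that $\overline{\omega^{\g}}$ is right $h(\vecx)$-invariant. Concretely, I would set $V_j(\vecx):=\tvecc_j\vecx\in\M_{r_j\times d}(\R)$, so that $\r_i(V_j(\vecx))=c_{j,i}^{-1}\vecx$ for each $i=1,\ldots,r_j$, and
\begin{align*}
h(\vecx):=\bigl(\I_{V_1(\vecx)},\ldots,\I_{V_N(\vecx)}\bigr)\in G.
\end{align*}
A direct computation with the group law \eqref{Srgrouplaw} gives, for any $g\in G$ and $\psi=(j,i)\in\Psi$, the identity $c_\psi\bigl(\Z^d\,p_\psi(g\cdot h(\vecx))\bigr)=c_\psi\bigl(\Z^d\,p_\psi(g)\bigr)+\vecx$, and hence by \eqref{JmapFULL} we obtain $J(\Gamma g\cdot h(\vecx))=J(\Gamma g)+\vecx$, where the translation on the right acts on the $\R^d$-factor of $\scrX$. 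Pushing forward and using $\mu^{\g}=J_*(\overline{\omega^{\g}})$, the required invariance follows once we establish $R_{h(\vecx)\,*}\,\overline{\omega^{\g}}=\overline{\omega^{\g}}$, where $R_{h(\vecx)}$ denotes right multiplication on $\XX$.

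By the product structure $\overline{\omega^{\g}}=\overline{\omega_1^{\g}}\otimes\cdots\otimes\overline{\omega_N^{\g}}$, this reduces to showing that each $\overline{\omega_j^{\g}}\in P(\XX_j)$ is invariant under right multiplication by $\I_{V_j(\vecx)}$. Passing to the $\Gamma_j$-invariant lift $\toomega_j^{\g}$ of \eqref{toomegaDEF}, and using the semidirect product identity $(\I_U A)\cdot\I_V=\I_{U+VA^{-1}}\,A$ in $\S_{r_j}(\R)$, the right action by $\I_{V_j(\vecx)}$ corresponds, fiberwise in $A\in\SL_d(\R)$, to translating the $U$-variable by $V_j(\vecx)A^{-1}$. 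But every column of $V_j(\vecx)A^{-1}=\tvecc_j(\vecx A^{-1})$ lies in $\R\tvecc_j$, so \eqref{trueLjDEF} gives $V_j(\vecx)A^{-1}\in L_j^d$ and therefore $\pi(V_j(\vecx)A^{-1})\in\SS_j^d$ for every $A$.

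Finally, by Lemma \ref{OjwelldefLEM} the set $\scrO_j$ is a union of cosets of $\SS_j^d$ inside $\tSS_j^d$, and $\omega_j^{\g}$ is by construction the normalized restriction to $\scrO_j$ of Haar measure on $\tSS_j^d$; hence $\omega_j^{\g}$ is invariant under translation by any element of $\SS_j^d$. This gives the required invariance of $\toomega_j^{\g}$ under right multiplication by $\I_{V_j(\vecx)}$, hence of $\overline{\omega_j^{\g}}$, and the lemma follows. The only mildly non-routine step is the semidirect-product bookkeeping that produces the shift $V_j(\vecx)A^{-1}$; the $\SS_j^d$-invariance of $\omega_j^{\g}$ itself is automatic from its definition, and no further use of admissibility or of Ratner-type input is needed.
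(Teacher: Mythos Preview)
Your proof is correct and follows essentially the same approach as the paper: both introduce the element $h(\vecx)=(\I_{\tvecc_1\vecx},\ldots,\I_{\tvecc_N\vecx})$, verify the intertwining relation $J(\cdot\,h(\vecx))=J(\cdot)+\vecx$, and reduce to the $\SS_j^d$-invariance of $\omega_j^{\g}$ via $\R\tvecc_j\subset L_j$ from \eqref{trueLjDEF}. Your version is slightly more explicit in spelling out the semidirect-product identity $(\I_U A)\I_V=\I_{U+VA^{-1}}A$ and checking that the resulting shift $V_j(\vecx)A^{-1}$ still lies in $L_j^d$, a step the paper leaves implicit.
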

\begin{proof}
This is part of 
\cite[Prop.\ 2.24]{jMaS2019};
however let us note that it also follows directly from 
the explicit definition in \eqref{mugdef}.
Indeed, it follows from \eqref{trueLjDEF} that $\R\tvecc_j\subset L_j$;
hence $\tvecc_j\vecv\in L_j^d$ for all $\vecv\in\R^d$
(where $\tvecc_j\vecv$ is the matrix product of $\tvecc_j\in\M_{r_j\times 1}(\R)$
and $\vecv\in\M_{1\times d}(\R)$),
and so $\omega_j^{\g}$ is invariant under the translation
$X\mapsto X+\tvecc_j\vecv$ on $\TT_j^d$, for every $\vecv\in\R^d$.
This implies that the measure
$\widetilde{\overline{\omega_j^{\g}}}$ on $G_j$
(see \eqref{toomegaDEF}) is invariant under
$g\mapsto g\I_{\tvecc_j\vecv}$ for every $\vecv\in\R^d$,
and hence $\overline{\omega_j^{\g}}$ is invariant under
the translation $x\mapsto x\I_{\tvecc_j\vecv}$ on $\XX_j$.
Now the lemma follows from the definition of
$\mu^{\g}$ in \eqref{mugdef}
by using the formula
\begin{align*}
J\bigl(x_1\I_{\tvecc_1\vecv},\ldots,x_N\I_{\tvecc_N\vecv}\bigr)=J(x_1,\ldots,x_N)+\vecv
\qquad(\forall (x_1,\ldots,x_N)\in\XX,\: \vecv\in\R^d),
\end{align*}
which is immediate from
\eqref{cjdef} and \eqref{JmapFULL}.
\end{proof}

\section{Application of the classification of invariant measures of unipotent flows} %
\label{UNIPOTAPPLsec}

In this section we will state and prove a result,
Theorem \ref{MAINcleanUNIFCONVthm},
on the equidistribution of certain expanding unipotent orbits
in a slightly
generalized version of the homogeneous space
$\XX$ introduced in Section \ref{LIEGPHOMSPsec}.
This theorem is tailor-made to serve as 
the main ingredient in the proof of Theorem \ref{HOMDYNMAINTHM}
which we give in Section \ref{NEWISOsec} below;
in particular, it will be crucial for us to have %
a certain uniformity with respect to the position of the initial point in the torus fiber
(that is, uniformity with respect to the variable ``$V$'' in Theorem \ref{MAINcleanUNIFCONVthm} below).
The proof of Theorem \ref{MAINcleanUNIFCONVthm}
builds on Ratner's classification of ergodic measures invariant under unipotent flows
\cite{mR91a}
and further characterization results by Mozes and Shah
\cite{sM95}.
We also remark that if it were not for the uniformity requirement,
Theorem \ref{MAINcleanUNIFCONVthm}
could be deduced as a consequence of
Shah \cite[Theorem 1.4]{nS96}.

We start by introducing some notation.
We stress that in this %
Section \ref{UNIPOTAPPLsec},
some of our notation
(for example, ``$\XX$'', ``$G$'', ``$\Gamma\,$'', ``$\Gamma_j$'' and ``$\tM$'')
will be used in a slightly different and more general way
than in all the other sections of the paper.
The reason is that the results of the present section
will be applied, in Section~\ref{NEWISOsec},
to certain %
homogeneous \textit{submanifolds}
of our original space ``$\XX$'' %
(see the proofs of Theorems \ref{nonunifTHM1} and \ref{MAINUNIFPROP4new}).
To start with, similarly as before,
we set
\begin{align*}
G=G_1\times\cdots\times G_N=\S_{r_1}(\R)\times\cdots\times\S_{r_N}(\R);
\end{align*}
however now we allow $r_1,\ldots,r_N$ to be arbitrary (fixed)
\textit{non-negative} integers.
That is, unlike all the other sections,
we allow one or several of the $r_j$s to be \textit{zero,}
with the natural convention that $\S_0(\R):=\SL_d(\R)$.
Next, we fix $\Gamma_1',\ldots,\Gamma_N'$ to be arbitrary, fixed, finite index subgroups %
of $\SL_d(\Z)$,
and set
\begin{align}\label{GammajdefinUNIPOTAPPLsec}
\Gamma_j=\Gamma_j'\ltimes\M_{r_j\times d}(\Z)
=\bigl\{(M,U)\in\S_{r_j}(\Z)\col M\in\Gamma_j'\bigr\}
\qquad (j=1,\ldots,N)
\end{align}
(if $r_j=0$, this should be understood as $\Gamma_j=\Gamma_j'$),
and
\begin{align*}
\Gamma=\Gamma_1\times\cdots\times\Gamma_N.
\end{align*}
Then, as before, we set $\XX_j:=\Gamma_j\bs G_j$ and 
\begin{align*}
\XX:=\GaG=\XX_1\times\cdots\times\XX_N,
\end{align*}
and write $p_j:G\to\S_{r_j}(\R)$ and 
$\tp_j:\XX\to\XX_j$ ($j=1,\ldots,N$) for the projection maps.

Recall that we consider $\SL_d(\R)$ to be an embedded subgroup of each group $G_j$,
through $M\mapsto(M,0)$.
Now we also set $G':=\SL_d(\R)^N$; this is an embedded subgroup of $G$.\label{Gpdef}
We also set
\begin{align*}
\Gamma':=\Gamma_1'\times\cdots\times\Gamma_N'\subset G'
\end{align*}
and
\begin{align*}
\XX'=\Gamma'\bs G'.
\end{align*}
Recall that we have a projection morphism $\iota: G_j\to\SL_d(\R)$ for each $j$,
and $\iota$ induces a projection map
\begin{align}\label{tiotaDEFgen}
\tiota:\XX_j\to\Gamma_j'\bs\SL_d(\R),
\qquad\tiota(\Gamma_jg)=\Gamma_j'\,\iota(g)\quad (g\in G_j),
\end{align}
generalizing \eqref{tiotaDEF}.
In the present section we will also write $\iota$ for the product morphism from $G$ to $G'$,
and write $\tiota$ for the induced projection map from $\XX$ to $\XX'$.

As before we set
\begin{align*}
\TT_j:=\R^{r_j}/\Z^{r_j}
\qquad\text{and}\qquad
\TT_j^d:=\underbrace{\TT_j\times\cdots\times\TT_j}_{d\: \text{ copies}};
\end{align*}
if $r_j=0$ this should be understood as $\TT_j=\TT_j^d=\{\bn\}$, 
the trivial group.
The definition in \eqref{xmapDEF} of the embedding $x:\TT_j^d\to\XX_j$ 
carries over unchanged to our present setting,
although our ``$\XX_j$'' is now more general.
(If $r_j=0$ then we set $x(\{\bn\}):=\Gamma_j\in\XX_j$.)
We now also set
\begin{align}\label{tTTdef2}
\tTT:=\TT_1^d\times\TT_2^d\times\cdots\times\TT_N^d,
\end{align}
and let $\tp_j:\tTT\to\TT_j^d$ ($j=1,\ldots,N$) be the projection maps;\label{pjdef2}
and we will write ``$x$'' also for the map $\tTT\to\XX$ 
which is the product of the maps $x:\TT_j^d\to\XX_j$. \label{xmapDEF2}
The fact that both ``$x$'' and ``$\tp_j$'' now denote more than one map should not cause any confusion;
in particular note that
with this abuse of notation we have $x\circ\tp_j=\tp_j\circ x:\tTT\to\XX_j$ for each $j$.

We now come to the statement of the main result of the present section,
Theorem \ref{MAINcleanUNIFCONVthm}.
It concerns the equidistribution of pieces of expanding unipotent orbits in $\XX$ of the form
\begin{align}\label{MAINcleanUNIFCONVthmorbits}
\bigl\{x(V)\tM\,\varphi(n_-(\vecu)D_\rho)\col \vecu\in\R^{d-1}\bigr\},
\end{align}
where $V\in\tTT$;
$\tM$ is an arbitrary element in $G'$ not belonging to the subset
\begin{align}\label{DSdef}
\fD_{\scrS}:=
\bigcup_{i<j}\bigl\{(M_1,\ldots,M_N)\in G'\col M_iM_j^{-1}\in\scrS\bigr\},
\end{align}
with $\scrS$ as in \eqref{COMMENSURATOR};
$\varphi$ is the diagonal embedding of $\SL_d(\R)$ in $G$;
and finally
\begin{align}\label{nmuDEF}
n_-(\vecu):=\matr 1{\vecu}0{I_{d-1}}\in\SL_d(\R)
\end{align}
(block diagonal notation).
The equidistribution is with respect to the 
$G$-invariant probability measure on $\XX$,
which we call $\mu$.

In order for orbits of the form
\eqref{MAINcleanUNIFCONVthmorbits}
to equidistribute in $(\XX,\mu)$ as $\rho\to0$,
we have to assume that $V$ avoids a certain
'singular' subset $\Delta_k^{(\eta)}$ of $\tTT$,
which we now introduce.
For each $j\in\{1,\ldots,N\}$ with $r_j\neq0$, 
let us write $\pi_j$ for the projection from $(\R^{r_j})^d$ to $\TT_j^d$
(it was called ``$\pi$'' in \eqref{pitorusprojDEF}).
Then for any %
$q\in\Z^+$ and $\vecm\in\Z^{r_j}\setminus\{\bn\}$,
we set
\begin{align}\label{DELTAjqLclean}
\Delta_{j,q,\vecm}:=
\pi_j((q^{-1}\Z^{r_j}+\vecm^\perp)^d)
\subset\TT_j^d,
\end{align}
where $\vecm^\perp$ is the orthogonal complement of $\vecm$ in $\R^{r_j}$.
Also, for any $k\in\Z^+$, we set
\begin{align}\label{DeltajkDEF}
\Delta_{j,k}:=\bigcup_{q=1}^k\bigcup_{\substack{\vecm\in\Z^{r_j}\\ 0<\|\vecm\|\leq k}}\Delta_{j,q,\vecm}.
\end{align}
Note that $\Delta_{j,q,\vecm}$ and $\Delta_{j,k}$ are only defined when $r_j\neq0$,
in which case they are both closed regular submanifolds of $\TT_j^d$ of codimension $d$.
Next, for any $\eta>0$ we define $\Delta_{j,k}^{(\eta)}$ to be the open
$\eta$-neighbourhood of $\Delta_{j,k}$ in $\TT_j^d$,
with respect to the metric induced by the standard Euclidean metric on $\M_{r_j\times d}(\R)=(\R^{r_j})^d$.
Finally, we set
\begin{align}\label{scrDkveDEF1}
\Delta_k^{(\eta)}=\bigcup_{\substack{j=1\\(r_j\neq0)}}^N\tp_j^{\hspace{3pt}-1}\bigl(\Delta_{j,k}^{(\eta)}\bigr)\subset\tTT.
\end{align}

Let $\Pac(\R^{d-1})$ be the set of Borel probability measures on $\R^{d-1}$
which are absolutely continuous with respect to Lebesgue measure.
\begin{thm}\label{MAINcleanUNIFCONVthm}
Let $f\in\C_b(\XX)$
and $\ve>0$ be given.
Then there exists some $k\in\Z^+$ such that for every 
$\lambda\in\Pac(\R^{d-1})$, $\eta>0$ and 
$\tM\in G'\setminus\fD_{\scrS}$, there exists some $\rho_0\in(0,1)$ such that
\begin{align}\label{MAINcleanUNIFCONVthmRES}
\biggl|\int_{\R^{d-1}}f\Bigl(x(V)\tM\varphi(n_-(\vecu)D_\rho)\Bigr)\,d\lambda(\vecu)-
\int_{\XX} f \,d\mu\biggr|<\ve
\end{align}
for all $\rho\in(0,\rho_0)$ and all $V\in\tTT\setminus \Delta_k^{(\eta)}$.
\end{thm}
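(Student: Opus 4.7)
The plan is to argue by contradiction, using Ratner's measure classification \cite{mR91a} together with the extension to limits of algebraic measures due to Mozes--Shah \cite{sM95}. Suppose the theorem fails: then for some $f\in\C_b(\XX)$ and $\ve>0$, for every $k\in\Z^+$ there exist data $\lambda_k\in\Pac(\R^{d-1})$, $\eta_k>0$, $\tM_k\in G'\setminus\fD_{\scrS}$, $\rho_k\to 0$ and $V_k\in\tTT\setminus\Delta_k^{(\eta_k)}$ violating \eqref{MAINcleanUNIFCONVthmRES} by at least $\ve$. First, using the conjugation relation $D_\rho^{-1}n_-(\vecu)D_\rho=n_-(\rho^{-d}\vecu)$ to rewrite the orbit averages as averages over expanding pieces of the horospherical subgroup $\{\varphi(n_-(\vecu'))\}$, I would combine the Dani--Margulis non-divergence theorem with the Siegel-type integration bound of Proposition~\ref{SIEGELFORMULALEM1} to conclude that the resulting probability measures $\nu_k\in P(\XX)$ form a tight family; after passing to a subsequence, $\nu_k\to\nu$ weakly for some $\nu\in P(\XX)$.

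By Mozes--Shah, $\nu$ must be an algebraic homogeneous measure: there is a closed connected Ratner subgroup $H\le G$ and a point $y\in\XX$ such that $yH$ is closed of finite $H$-volume, and $\nu$ is the unique $H$-invariant probability measure on $yH$. The expanding horospherical direction forces $\varphi(\SL_d(\R))\subset H$. The next step is to analyze such $H$ in two layers matching the decomposition $G \cong G' \ltimes \prod_j\M_{r_j\times d}(\R)$. On the $G'$-level, $\iota(H)\le\SL_d(\R)^N$ contains the diagonal; by Ratner's analysis of unipotent flows on products (as exploited in \cite{jMaS2013a}), the closedness and finite-volume conditions on $\iota(y)\iota(H)\subset\XX'$ force $\iota(H)$ to be a partial diagonal associated with a partition of $\{1,\ldots,N\}$, whose non-singleton blocks $\{i,j\}$ would require $M_iM_j^{-1}\in\scrS$; since $\tM_k\notin\fD_{\scrS}$, every block must be a singleton, so $\iota(H)=G'$. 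Any deviation of $H$ from the full $G$ must then lie in the translation kernel $\prod_j\M_{r_j\times d}(\R)$, and a proper rational subspace there is cut out by a non-zero integer functional $\vecm\in\Z^{r_j}\setminus\{\bn\}$ of some denominator $q\in\Z^+$; the condition that $\nu$ be supported on a translate of $yH$ through $x(V_k)\tM_k$ then forces $V_k$ (after an $\SL_d(\Z)$-translation encoded by the union in \eqref{DeltajkDEF}) to lie in $\Delta_{j,q,\vecm}$.

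The main obstacle is to bound the complexity $(q,\|\vecm\|)$ of these rationality conditions in terms of $f$ and $\ve$ alone. The hard part will be establishing that for the fixed function $f$, only algebraic measures $\nu_H$ coming from $(q,\|\vecm\|)\le k_0=k_0(f,\ve)$ can satisfy $|\nu_H(f)-\mu(f)|\ge\ve/2$; I expect this to follow from the compactness of the space of algebraic measures of bounded complexity (a consequence of Mozes--Shah combined with the Dani--Margulis avoidance principle), together with a density argument reducing the problem to test functions $f$ of compact support. Taking $k=k_0$ then excludes every proper $H$: since $V_k\notin\Delta_k^{(\eta_k)}\supset\Delta_k\supset\Delta_{k_0}$, no algebraic condition of complexity at most $k_0$ can hold for $V_k$ even in the limit, forcing $\nu=\mu$ and contradicting $|\nu_k(f)-\mu(f)|\ge\ve$. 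The possible unboundedness of $\tM_k$ in $G'\setminus\fD_{\scrS}$ does not cause additional trouble, because the $G'$-level exclusion argument operates pointwise in $\tM_k$ and uses only the hypothesis $\tM_k\notin\fD_{\scrS}$.
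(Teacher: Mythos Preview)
Your overall strategy---contradiction, Ratner, Mozes--Shah, and a structural analysis of the limiting invariance group $H$---is the right framework, and your Lie-theoretic classification of $H$ (first $\iota(H)=G'$, then $H$ cut out by rational functionals $\vecm\in\Z^{r_j}$) matches the paper's Lemmas~\ref{iotaHeqGpLEM}--\ref{tscrHcharacterizationLEM}. However, there are two linked gaps.

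\textbf{The quantifier structure of the contradiction is wrong.} By negating the full statement you allow $\lambda_k,\eta_k,\tM_k$ to vary with $k$. This breaks every tool you need for the limit $\nu$: $W$-invariance of $\nu$ comes from $\|\tau_{\rho^d\vecw}\lambda'-\lambda'\|_{\L^1}\to 0$, which requires $\lambda$ fixed (Lemma~\ref{Winvlem}); the identity $\tiota_*\nu=\tmu$ comes from \cite[Thm.~5]{jMaS2013a} applied with a fixed $\tM$ (Lemma~\ref{PkmeasrhoprojLEM}), and without it you have neither tightness nor $\iota(H)=G'$; and most critically, the geometric estimate that transfers the Diophantine hypothesis on $V_k$ to a constraint on $\nu$ (Lemma~\ref{ELKIESMCMULLENtypeLEMcleaned}) requires a \emph{fixed} $\eta>0$. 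With $\eta_k\to 0$ the $V_k$ can drift into the closure of $\Delta_{k_0}$ and the argument collapses. The paper avoids this by a two-step limit: first define $P_k$ as the set of \emph{all} weak limits arising from sequences with fixed (but arbitrary) $\lambda,\eta,\tM$---so every $\nu\in P_k$ already enjoys $W$-invariance, $\tiota_*\nu=\tmu$, and $\nu(\pi(p_j^{-1}(K_{j,\vecm})))=0$ for $\|\vecm\|^2\le k$---and only then run the Mozes--Shah compactness argument on a sequence $\nu_k\in P_k$ (Lemma~\ref{PkweakconvLEMclean}).

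\textbf{The bridge from ``$\nu$ supported on a proper $H$-orbit'' to ``$V_k\in\Delta_{j,q,\vecm}$'' is missing.} You assert that support of $\nu$ on $yH$ forces $V_k$ into $\Delta_{j,q,\vecm}$, but $\nu$ is only a \emph{weak} limit of the $\nu_k$, and the $\nu_k$ are line averages, not algebraic measures; Mozes--Shah gives no inclusion $\supp(\nu_k)\subset\supp(\nu)\cdot s_k$ here. The paper supplies the bridge in the opposite direction via the explicit Elkies--McMullen type computation (Lemma~\ref{ELKIESMCMULLENtypeLEMcleaned}): if all $V_m$ stay $\eta$-away from $\Delta_k$, then any weak limit puts zero mass on the tubular sets $K_{j,\vecm}$ for $\|\vecm\|^2\le k$; combined with Lemma~\ref{heightLEM} this shows $\supp(\alpha_{k_\ell})\subset\pi(p_j^{-1}(K_{j,\vecm}))$ is impossible once $k_\ell\ge\|\vecm\|^2$, yielding the contradiction. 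Your proposed ``compactness of algebraic measures of bounded complexity'' is a reasonable heuristic for why such a $k_0(f,\ve)$ should exist, but it does not replace this quantitative input, and as stated (``only $\nu_H$ of complexity $\le k_0$ can satisfy $|\nu_H(f)-\mu(f)|\ge\ve/2$'') it is not what the paper proves---the argument instead uses the Mozes--Shah containment $\supp(\alpha_{k_\ell})\subset\supp(\nu)\cdot s_\ell$ to pin down a \emph{single} $\vecm$ coming from the limit $\nu$, and then lets $k_\ell$ exceed $\|\vecm\|^2$.
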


\vspace{5pt}

The rest of this section is devoted to the proof of Theorem \ref{MAINcleanUNIFCONVthm}.
\begin{definition}\label{Pkdef}
For each $k\in\Z^+$,
we let $P_k$ be the set of all measures $\nu\in P(\XX)$ which can be obtained
as a weak limit of a sequence of probability measures
$\nu_1,\nu_2,\ldots$ given by
\begin{align}\label{numDEFclean}
\nu_m:\quad f\mapsto \int_{\R^{d-1}}f\bigl(x(V_m)\tM\varphi(n_-(\vecu)D_{\rho_m})\bigr)\,d\lambda(\vecu)
\qquad (f\in\C_b(\XX)),
\end{align}
for some 
$\lambda\in\Pac(\R^{d-1})$,
$\tM\in G'\setminus\fD_{\scrS}$,
real numbers $\rho_1>\rho_2>\cdots\to0$,
and points $V_1,V_2,\ldots$ in $\tTT$
such that [$\exists\eta>0$: $\forall m\in\Z^+$: $V_m\notin\Delta_k^{(\eta)}$].
\end{definition}

Note that $P_1\supset P_2\supset\cdots$,
since $\Delta_k^{(\eta)}\subset\Delta_{k'}^{(\eta)}$ 
whenever $k<k'$. %

\vspace{5pt}

Throughout the rest of this section, we will let $W$ denote the following subgroup of $G$:
\begin{align*}
W:=\bigl\{\varphi(n_-(\vecw))\col\vecw\in\R^{d-1}\bigr\}.
\end{align*}
\begin{lem}\label{Winvlem}
Every $\nu\in P_k$ is $W$-invariant.
\end{lem}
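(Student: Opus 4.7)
The plan is to exploit the well-known conjugation identity
\[
D_\rho\, n_-(\vecw)\, D_\rho^{-1} = n_-(\rho^d\vecw) \qquad(\vecw\in\R^{d-1}),
\]
which is immediate from the block form of $D_\rho=\diag(\rho^{d-1},\rho^{-1},\ldots,\rho^{-1})$ and $n_-(\vecw)=\bigl(\begin{smallmatrix}1&\vecw\\0&I_{d-1}\end{smallmatrix}\bigr)$. Applying $\varphi$ (the diagonal embedding) preserves this identity inside $G$, so for any $\vecw\in\R^{d-1}$ and any $\vecu\in\R^{d-1}$ we have
\[
\varphi(n_-(\vecu) D_\rho)\cdot\varphi(n_-(\vecw)) = \varphi\bigl(n_-(\vecu+\rho^d\vecw)\, D_\rho\bigr).
\]
This means that right translation by $g:=\varphi(n_-(\vecw))\in W$ acts on the integrand of \eqref{numDEFclean} by the affine change of variables $\vecu\mapsto\vecu+\rho^d\vecw$ in the $\vecu$-integral.

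Concretely, I would fix $f\in\C_b(\XX)$ and $\vecw\in\R^{d-1}$, let $\nu_m$ be a defining sequence for $\nu\in P_k$ as in Definition~\ref{Pkdef}, and compute
\[
\nu_m(f\circ R_g)=\int_{\R^{d-1}} f\bigl(x(V_m)\tM\varphi(n_-(\vecu)D_{\rho_m})\bigr)\,d\lambda_m(\vecu),
\]
where $\lambda_m$ is the translate of $\lambda$ by $\rho_m^d\vecw$ (obtained by the substitution $\vecu\mapsto\vecu+\rho_m^d\vecw$). The key step is then to observe that since $\lambda\in\Pac(\R^{d-1})$ has a density in $\L^1(\R^{d-1})$ and $\rho_m^d\vecw\to 0$, the translates $\lambda_m$ converge to $\lambda$ in \emph{total variation}, by the standard continuity of translation on $\L^1$.

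Combined with the uniform bound $\|f\|_\infty<\infty$, this total-variation convergence gives
\[
\bigl|\nu_m(f\circ R_g)-\nu_m(f)\bigr|\leq \|f\|_\infty\cdot\|\lambda_m-\lambda\|_{\mathrm{TV}}\longrightarrow 0,
\]
uniformly in the data $V_m,\tM$ (which do not appear in this bound). Passing to the weak limit $\nu_m\to\nu$ and using $f\circ R_g\in\C_b(\XX)$, we conclude $\nu(f\circ R_g)=\nu(f)$. Since this holds for all $f\in\C_b(\XX)$ and all $\vecw\in\R^{d-1}$, the measure $\nu$ is $W$-invariant.

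The argument is essentially routine; there is no real obstacle, since the conjugation identity is explicit, the contraction factor $\rho_m^d$ is in our favor, and the hypotheses on $V_m$ (namely $V_m\notin\Delta_k^{(\eta)}$) and on $\tM$ are not used at all in this step. In particular the admissibility/commensurator conditions will only enter at the later stage when one has to classify which $W$-invariant measures can actually occur as elements of $P_k$.
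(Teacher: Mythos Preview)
Your proposal is correct and follows essentially the same approach as the paper: exploit the conjugation identity $D_\rho n_-(\vecw)=n_-(\rho^d\vecw)D_\rho$ to convert right translation by $\varphi(n_-(\vecw))$ into a shift of the $\vecu$-variable by $\rho_m^d\vecw$, and then use continuity of translation in $\L^1$ to conclude. Your remark that the conditions on $V_m$ and $\tM$ play no role here is also in accord with the paper's treatment.
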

\begin{proof}
This is a (very) standard consequence of the fact that 
for $\rho<1$,
the action (from the right) of $\varphi(D_\rho)$ on $\XX$ expands
any $W$-orbit.
The details are as follows.
Let $\nu\in P_k$ be given.
Then the task is to prove that for any given $\vecw\in\R^{d-1}$
and $f\in\C_b( \XX)$ we have $\nu(f\circ R_{\vecw})=\nu(f)$,
where $R_{\vecw}:\XX\to \XX$ denotes right multiplication by $\varphi(n_-(\vecw))$.
Choose $\lambda,\tM,(\rho_m),(V_m)$ as in
Definition \ref{Pkdef},
so that $\nu$ is the weak limit of the measures $\nu_m$ given by \eqref{numDEFclean}.
Using the relation $D_\rho \, n_-(\vecw)=n_-(\rho^d\vecw) \, D_\rho$,
and writing $\lambda'\in\L^1(\R^{d-1})$ for the density of $\lambda$ with respect to Lebesgue measure, %
we now have:  %
\begin{align*}
\nu_m(f\circ R_{\vecw}) %
=\int_{\R^{d-1}}f\Bigl(x(V_m)\tM \varphi(n_-(\vecu)D_{\rho_m})\Bigr)\,
\lambda'(\vecu-\rho_m^d\vecw)\,d\vecu.
\end{align*}
Hence $|\nu_m(f\circ R_{\vecw})-\nu_m(f)|\leq
\|f\|_{\L^\infty}\cdot \|\tau_{\rho_m^d\vecw}\lambda'-\lambda'\|_{\L^1(\R^{d-1})}$,
and so by \cite[Prop.\ 8.5]{gF99} we have
$\lim_{m\to\infty}\nu_m(f\circ R_{\vecw})=\lim_{m\to\infty}\nu_m(f)$,
that is, $\nu(f\circ R_{\vecw})=\nu(f)$.
\end{proof}
Recall that we write $\tiota$
for the natural projection map from $\XX$ to $\XX'$;
in particular $\tmu:=\tiota_*\,\mu$
is the unique $G'$-invariant probability measure on $\XX'$.
\begin{lem}\label{PkmeasrhoprojLEM}
Every $\nu\in P_k$ satisfies $\tiota_*\,\nu=\tmu$.
\end{lem}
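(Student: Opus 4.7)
My plan is to reduce the claim to a standard equidistribution result for expanding unipotent translates in $\XX' = \Gamma' \bs G'$, whose hypothesis is captured exactly by the condition $\tM \in G' \setminus \fD_\scrS$.

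First I will compute the pushforward $\tiota_* \nu_m$ explicitly. For any $V \in \tTT$ and any representative $U_j \in \M_{r_j \times d}(\R)$ of $V_j = \pi_j(U_j)$, the group law \eqref{Srgrouplaw} yields $\I_{U_j}(M_j, 0) = (M_j, U_j M_j)$, whence $\iota(\I_{U_j}(M_j,0)) = M_j$. Thus $\tiota(x(V) \tM) = \Gamma' \tM$, and since $\iota \circ \varphi$ equals the diagonal embedding $\varphi': \SL_d(\R) \to G'$, I obtain for any $\tilde f \in \C_b(\XX')$
\begin{align*}
\tiota_* \nu_m(\tilde f) = \int_{\R^{d-1}} \tilde f\bigl(\Gamma' \tM \varphi'(n_-(\vecu) D_{\rho_m})\bigr) \, d\lambda(\vecu).
\end{align*}
The critical feature of this formula is that the right-hand side depends only on $\tM$ and $\lambda$, and not on the torus data $V_m$ (nor on $k$ or the exclusion parameter $\eta$ used to define $P_k$).

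It therefore suffices to prove that, for any $\tM \in G' \setminus \fD_\scrS$ and any $\lambda \in \Pac(\R^{d-1})$,
\begin{align*}
\int_{\R^{d-1}} \tilde f\bigl(\Gamma' \tM \varphi'(n_-(\vecu) D_\rho)\bigr) \, d\lambda(\vecu) \longrightarrow \tmu(\tilde f) \quad \text{as } \rho \to 0, \qquad \forall \tilde f \in \C_b(\XX').
\end{align*}
I will establish this by a standard Ratner-style argument: extract a subsequential weak limit $\nu'$ on $\XX'$; by the same commutation calculation used in Lemma \ref{Winvlem} (applied this time to $\varphi'(W) \subset G'$), $\nu'$ is $\varphi'(W)$-invariant. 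The Dani--Margulis non-divergence theorem, applied factor-by-factor, ensures that no mass escapes to infinity, so $\nu'$ is a probability measure. Ratner's measure classification \cite{mR91a} together with the linearization techniques of Mozes--Shah \cite{sM95} then decompose $\nu'$ as a convex combination of algebraic measures supported on closed orbits $\Gamma' \tM H$, where $H \leq G'$ runs over connected Lie subgroups containing $\varphi'(W)$.

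The heart of the argument, and what I expect to be the main obstacle, is ruling out all proper such $H$ using the hypothesis $\tM \notin \fD_\scrS$. In the product group $G' = \SL_d(\R)^N$, any nontrivial candidate $H$ couples two distinct factors $j < j'$ either through a product of proper $\varphi'(W)$-invariant pieces (excluded because the single-factor horospherical orbit on $\Gamma_j' \bs \SL_d(\R)$ is already equidistributed, by the classical horospherical equidistribution theorem) or through a ``twisted diagonal'' isomorphism $g \mapsto T g T^{-1}$ for some $T \in \SL_d(\R)$. In the twisted-diagonal case, closedness of the orbit $\Gamma' \tM H$ forces $M_{j'} T M_j^{-1}$ to lie in the commensurator of $\SL_d(\Z)$ (possibly up to the finite-index modification introduced by the $\Gamma_k'$), which by \eqref{COMMENSURATOR} is equivalent to $M_j M_{j'}^{-1} \in \scrS$, i.e.\ to $\tM \in \fD_\scrS$, contradicting the hypothesis. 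Consequently every ergodic component of $\nu'$ must coincide with the $G'$-invariant probability measure $\tmu$, so $\nu' = \tmu$, which yields the lemma. The subgroup enumeration and the translation of the coupling condition into the defining condition of $\fD_\scrS$ is the technically delicate step, and essentially reproduces what appears in the proofs of \cite[Theorem 10]{jMaS2013a} and \cite[Lemma 5.22]{jMaS2019}.
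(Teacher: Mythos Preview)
Your reduction is correct and matches the paper's first step exactly: you compute $\tiota_*\nu_m$ and observe that the torus data $V_m$ disappears, leaving only the integral $\int_{\R^{d-1}}\tilde f\bigl(\Gamma'\tM\varphi'(n_-(\vecu)D_{\rho_m})\bigr)\,d\lambda(\vecu)$.

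The difference lies in the second step. The paper does not carry out the Ratner/Mozes--Shah/non-divergence argument you sketch; instead it simply invokes \cite[Thm.~5]{jMaS2013a}, applied to the function $g\mapsto \tilde f(\Gamma'\tM g)$ (which is left $\prod_{j}(M_j^{-1}\Gamma_j'M_j)$-invariant), to conclude directly that this integral tends to $\tmu(\tilde f)$. Your proposed route is essentially a re-derivation of that cited theorem, and you acknowledge as much in your final sentence. So your argument is correct in outline, but it does considerably more work than is needed here: the equidistribution on the base $\XX'=\Gamma'\bs G'$ (with no torus fibers) is already available off the shelf, and the paper treats it as a black box. The genuinely new content of Section~\ref{UNIPOTAPPLsec} is the analysis of the \emph{fiber} direction (Lemmas~\ref{ELKIESMCMULLENtypeLEMcleaned}--\ref{PkweakconvLEMclean}), not the base.
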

\begin{proof}
Let $\nu\in P_k$ be given, and let $(\nu_m)$ be a sequence as in Definition \ref{Pkdef},
tending weakly to $\nu$.
For any $f\in\C_b(\XX')$ we have
\begin{align*}
\nu_m(f\circ\tiota)
=\int_{\R^{d-1}}f\bigl(\tiota\bigl(x(V_m)\tM \varphi(n_-(\vecu)D_{\rho_m})\bigr)\bigr)\,d\lambda(\vecu)
=\int_{\R^{d-1}}f\bigl(\Gamma'\tM \varphi(n_-(\vecu)D_{\rho_m})\bigr)\,d\lambda(\vecu).
\end{align*}
This integral tends to $\tmu(f)$ as $m\to\infty$,
by \cite[Thm.\ 5]{jMaS2013a}
applied to the function
$g\mapsto f(\Gamma'\tM g)$
(which is left $\prod_{j=1}^N (M_j^{-1}\Gamma_j'M_j)$--invariant).
On the other hand, by the definition of $\nu$ we have
$\nu_m(f\circ\tiota)\to\nu(f\circ\tiota)=(\tiota_*\,\nu)(f)$.  %
Hence $(\tiota_*\,\nu)(f)=\tmu(f)$.
\end{proof}

Recall that a subgroup $U$ of $G$ is said to be \textit{unipotent},
if the linear automorphism $\Ad(u)$ of the Lie algebra of $G$ is unipotent for all $u\in U$.
For any $h\in G$ let us write $R_h:\XX\to\XX$ for the map 
$\Gamma g\mapsto\Gamma gh$.
For any $\alpha\in P(\XX)$, let us define $H_\alpha$ 
to be the identity component of the subgroup of $G$ consisting of all $g$
which preserve $\alpha$;
\begin{align}\label{Halphadef}
H_\alpha:=\{g\in G\col R_{g*}\alpha=\alpha\}^\circ.
\end{align}
This is a closed connected Lie subgroup of $G$.
We let $\scrQ(\XX)$ be the set of all $\alpha\in P(\XX)$
such that the group generated by all unipotent one-parameter subgroups of $G$
contained in $H_\alpha$ acts ergodically on $\XX$ with respect to $\alpha$.
(Note that this definition of $\scrQ(\XX)$ is equivalent to the one in 
\cite[p.\ 150]{sM95},
although our $H_\alpha$ equals ``$\Lambda(\alpha)^\circ\,$'' in the notation of
\cite{sM95}.)

A key ingredient in our proof of Theorem \ref{MAINcleanUNIFCONVthm}
will be Ratner's classification of invariant measures of unipotent flows,
\cite[Thm.\ 1]{mR91a}.
Applied in our setting, this result says that for every $\alpha\in\scrQ(\XX)$,
there is some $g_\alpha\in G$ such that
$\alpha(\Gamma\bs\Gamma g_\alpha H_\alpha)=1$.
Note that in this situation, 
$\Gamma\cap g_\alpha H_\alpha g_\alpha^{-1}$
is a lattice in $g_\alpha H_\alpha g_\alpha^{-1}$,
and the support of $\alpha$ equals $\Gamma\bs\Gamma g_\alpha H_\alpha$,
which is a smooth embedded submanifold of
$\XX$.

\begin{lem}\label{iotaHeqGpLEM}
For any $\alpha\in\scrQ(\XX)$ such that $\tiota_*\,\alpha=\tmu$,
we have $\iota(H_\alpha)=G'$.
\end{lem}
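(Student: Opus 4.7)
I would combine Ratner's measure classification with properness of $\tiota:\XX\to\XX'$. Write $H:=H_\alpha$ and set $H':=\iota(H)\subset G'$. Since $\iota$ is a smooth Lie group homomorphism and $H$ is a connected Lie subgroup of $G$, $H'$ is a connected (immersed) Lie subgroup of $G'$. The hypothesis $\alpha\in\scrQ(\XX)$ activates Ratner's theorem (as recalled right after the definition of $\scrQ(\XX)$), giving some $g_\alpha\in G$ such that $\supp(\alpha)$ equals the closed embedded submanifold $\Gamma\bs\Gamma g_\alpha H$ of $\XX$.

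Next I would observe that $\tiota$ is a \emph{proper} map: for each $j$, the projection $\XX_j\to\Gamma_j'\bs\SL_d(\R)$ is a fiber bundle whose fiber over $\Gamma_j'M$ is the compact torus $\M_{r_j\times d}(\R)/\M_{r_j\times d}(\Z)M$, so preimages of compact sets are compact. Consequently the image
\begin{align*}
\tiota(\supp(\alpha))=\Gamma'\bs\Gamma'\iota(g_\alpha)H'
\end{align*}
is a closed subset of $\XX'$. On the other hand, from $\tiota_*\alpha=\tmu$ we have $\supp(\tmu)\subseteq\tiota(\supp(\alpha))$, and $\supp(\tmu)=\XX'$ since $\tmu$ is the $G'$-invariant probability measure. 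Therefore $\Gamma'\iota(g_\alpha)H'=G'$.

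To finish I would argue by a dimension count. If $\dim H'<\dim G'$, then $H'$, being a properly immersed Lie subgroup of $G'$ of positive codimension, is a countable union of pieces of lower-dimensional submanifolds of $G'$ and therefore has Haar measure zero; the countable union $\Gamma'\iota(g_\alpha)H'$ would then also have Haar measure zero, contradicting its equality with $G'$. Hence $\dim H'=\dim G'$, so $H'$ has the same Lie algebra as $G'$ and, being connected, contains an open neighborhood of the identity — i.e., $H'$ is an \emph{open} subgroup of $G'$. Finally $G'=\SL_d(\R)^N$ is connected (for $d\geq 2$), so every open subgroup must equal $G'$; thus $\iota(H_\alpha)=H'=G'$, as claimed. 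The only nonroutine ingredient is the properness of $\tiota$, but this is immediate from the torus-fiber-bundle structure; the rest is a direct combination of Ratner's theorem with elementary Lie theory.
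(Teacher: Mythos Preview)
Your proof is correct and follows essentially the same route as the paper's: use properness of $\tiota$ (compact torus fibers) to get $\tiota(\supp\alpha)=\XX'$, hence $\Gamma'\iota(g_\alpha)\iota(H_\alpha)=G'$, and then conclude $\iota(H_\alpha)=G'$. The paper states this last implication in one line (referring to \cite[Lemma 6]{cDjMaS2016}), whereas you spell it out via the dimension/measure argument; this is a harmless elaboration, not a different approach.
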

\begin{proof}
(This generalizes \cite[Lemma 6]{cDjMaS2016},
and the proof is the same.)
Using the fact that the map $\iota:\XX\to\XX'$ has compact fibers,
we have $\tiota(\supp\alpha)=\supp\tiota_*\alpha=\supp\tmu=\XX'$.
But $\supp\alpha=\Gamma\bs\Gamma g_\alpha H_\alpha$.
Hence $\Gamma'\iota(g_\alpha)\iota(H_\alpha)=G'$,
and thus $\iota(H_\alpha)=G'$.
\end{proof}

Next, using basic Lie group and Lie algebra theory,
we will derive a completely explicit description of any 
Lie subgroup $H_\alpha$ as in Lemma \ref{iotaHeqGpLEM};
cf.\ Lemma \ref{charHsbgpsLEM} %
below.

For any $r\in\Z_{\geq0}$, let $\is_r(\R)$ be the Lie algebra of $\S_r(\R)$,
which we represent as the set of pairs $(A,X)\in\lsl_d(\R)\times \M_{r\times d}(\R)$,
with the Lie bracket given by 
\begin{align}\label{LIEBRACKETrep}
[(A_1,X_1),(A_2,X_2)]=([A_1,A_2],X_1A_2-X_2A_1).
\end{align}
(For $r=0$ we have $\is_0(\R)=\lsl_d(\R)$, and in \eqref{LIEBRACKETrep} 
we should view $\M_{0\times d}(\R)$ as a singleton set containing only the ``empty matrix''.)
Just as for the Lie groups,
we always consider $\lsl_d(\R)$ to be embedded in $\is_r(\R)$
through $A\mapsto(A,0)$.
We also set $\ig=\is_{r_1}(\R)\oplus\cdots\oplus\is_{r_N}(\R)$
and $\ig'=\lsl_d(\R)^N$;
these are the Lie algebras of $G$ and of $G'$, respectively.
Next, as in \eqref{SVdefrepnew},
given any linear subspace $L$ of $\R^r$, we let $\S_L(\R)$ be the closed connected subgroup of $\S_r(\R)$ given by
\begin{align}\label{SVdefrep}
\S_L(\R):=\SL_d(\R)\ltimes L^d=\bigl\{(M,U)\in\S_r(\R)\col %
U\in L^d\bigr\}
\end{align}
Recall here that 
via our identification
$\M_{r\times d}(\R)=(\R^r)^d$,
$L^d$ is the set of matrices in 
$\M_{r\times d}(\R)$ all of whose column vectors lie in $L$.
For any matrix $X\in\M_{r\times d}(\R)$,
we also write:
\begin{align}\label{SVXdefrep}
&\S_L^X(\R):=\I_X\S_L(\R)\,\I_X^{-1}.
\end{align}
(For $r=0$ we have $\R^r=\{\bn\}$, the only linear subspace $L\subset\R^0$ is $L=\R^0$,
and the only matrix in $\M_{0\times d}(\R)$ is $X=$ the empty matrix,
and for these $L,X$ we have $\S_L(\R)=\S_L^X(\R)=\SL_d(\R)$.)
We write $\is_L(\R)$ and $\is_L^X(\R)$ for the Lie subalgebras of $\is_r(\R)$
corresponding to $\S_L(\R)$ and $\S_L^X(\R)$, respectively.
Thus in particular,
\begin{align}\label{isLformula}
\is_L(\R)=\{(A,Y)\in\is_r(\R)\col Y\in L^d\}.
\end{align}
Recall that we write $\iota$ for the natural projection $\S_r(\R)\to\SL_d(\R)$;
hence $d\iota$ is the natural projection $\is_r(\R)\to\lsl_d(\R)$.
\begin{lem}\label{DMSlem7}
If $\ih$ is a Lie subalgebra of $\is_r(\R)$ satisfying $d\iota(\ih)=\lsl_d(\R)$,
then there exist a linear subspace $L\subset\R^r$ and a matrix $X\in \M_{r\times d}(\R)$
such that $\ih=\is_L^X(\R)$.
\end{lem}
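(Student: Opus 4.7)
I would first pin down $\ih\cap\ker(d\iota)$ as $\{0\}\times L^d$ for some linear subspace $L\subset\R^r$, and then invoke Whitehead's first lemma on the vanishing of $H^1$ for semisimple Lie algebras to locate the translation parameter $X$. Set $\ik:=\ih\cap\ker(d\iota)$, which necessarily has the form $\{0\}\times V$ for some subspace $V\subset\M_{r\times d}(\R)$. Since $\ih$ is a Lie subalgebra and $\ik$ is the kernel of $d\iota|_{\ih}$, $\ik$ is an ideal of $\ih$; for any $(A,Y')\in\ih$ (with $A\in\lsl_d(\R)$ arbitrary thanks to the hypothesis $d\iota(\ih)=\lsl_d(\R)$) and $(0,Z)\in\ik$, the bracket formula \eqref{LIEBRACKETrep} gives $[(A,Y'),(0,Z)]=(0,-ZA)\in\ik$, so $V$ is closed under right multiplication by $\lsl_d(\R)$. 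Identifying $\M_{r\times d}(\R)\cong\R^r\otimes\R^d$ via columns, this action is trivial on the first tensor factor and acts by the row representation on the second; since $\R^d$ is irreducible as an $\lsl_d(\R)$-module and the whole space is isotypic, the invariant subspaces are exactly those of the form $L\otimes\R^d$, i.e.\ $L^d\subset\M_{r\times d}(\R)$ for a unique $L\subset\R^r$. Thus $V=L^d$.

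Next, for each $A\in\lsl_d(\R)$ pick $Y_A\in\M_{r\times d}(\R)$ with $(A,Y_A)\in\ih$; by Step~1 the coset $Y_A+L^d$ is independent of this choice, defining a linear map $\phi\col\lsl_d(\R)\to\M_{r\times d}(\R)/L^d$, $\phi(A):=Y_A+L^d$. Applying \eqref{LIEBRACKETrep} inside $\ih$ shows
\begin{align*}
\phi([A,B])\equiv Y_AB-Y_BA\equiv\phi(A)B-\phi(B)A\pmod{L^d},
\end{align*}
which is precisely the 1-cocycle condition when $\M_{r\times d}(\R)/L^d$ is equipped with the left $\lsl_d(\R)$-module structure $A\cdot v:=-vA$. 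Since $\lsl_d(\R)$ is semisimple and the target module is finite-dimensional, Whitehead's first lemma yields $H^1(\lsl_d(\R),\M_{r\times d}(\R)/L^d)=0$, so $\phi$ is a coboundary: there exists $X\in\M_{r\times d}(\R)$ with $\phi(A)\equiv XA\pmod{L^d}$ for every $A\in\lsl_d(\R)$.

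Finally, a direct computation (e.g.\ differentiating the group identity $\I_X(M,U)\I_X^{-1}=(M,XM+U-X)$ at the identity) gives $\Ad(\I_X)(A,Y)=(A,Y+XA)$, which combined with \eqref{isLformula} yields $\is_L^X(\R)=\{(A,XA+Y)\col A\in\lsl_d(\R),\,Y\in L^d\}$. The description of $\ih$ obtained from Steps~1--2 says exactly that $\ih$ consists of all $(A,Z)\in\is_r(\R)$ with $Z-XA\in L^d$, so $\ih=\is_L^X(\R)$ as required.

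\emph{Main obstacle.} The substantive input is Whitehead's first lemma, relying crucially on the semisimplicity of $\lsl_d(\R)$; it is what allows us to find a single matrix $X$ that works simultaneously for all $A\in\lsl_d(\R)$, rather than having to solve the cocycle equation pointwise. The other nontrivial algebraic ingredient is the classification of $\lsl_d(\R)$-invariant subspaces of $\M_{r\times d}(\R)$ as exactly the $L^d$'s, which relies only on the irreducibility of $\R^d$ under $\lsl_d(\R)$ but must be set up carefully; the sign conventions for the 1-cocycle/coboundary identities also need to be tracked to ensure that the coboundaries are indeed of the desired form $A\mapsto XA$.
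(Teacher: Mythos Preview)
Your proof is correct. Step~1 coincides with the paper's argument. For Step~2 the paper takes a slightly different route: rather than writing down the quotient cocycle $\phi:\lsl_d(\R)\to\M_{r\times d}(\R)/L^d$ and invoking Whitehead's first lemma, it passes to the complementary subspace $(L^\perp)^d$, observes that $\is_{L^\perp}(\R)\cap\ih$ is a Levi subalgebra of $\is_{L^\perp}(\R)$, and then applies Malcev's theorem on the conjugacy of Levi subalgebras to produce $X\in(L^\perp)^d$ with $\is_{L^\perp}(\R)\cap\ih=(\Ad\I_X)(\lsl_d(\R))$. Since Malcev's theorem is itself proved via Whitehead's first lemma, the two arguments rest on the same cohomological input; your direct cocycle formulation is arguably more transparent and avoids the auxiliary choice of the complement $L^\perp$, while the paper's phrasing has the minor advantage of yielding $X\in(L^\perp)^d$ automatically (though this is not needed for the lemma as stated).
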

\begin{proof}
This is a Lie algebra version of 
\cite[Lemma 7]{cDjMaS2016},
and essentially the same proof works.
Therefore we here give a rather terse presentation, referring to the proof in 
\cite{cDjMaS2016}
for further details.
Of course if $r=0$
then $d\iota(\ih)=\lsl_d(\R)$
implies $\ih=\lsl_d(\R)$ and the lemma is trivial;
hence in the following we may assume $r>0$.

Set $L'=\{X\in \M_{r\times d}(\R)\col (0,X)\in\ih\}$;
this is a linear subspace of $\M_{r\times d}(\R)$.
Using $d\iota(\ih)=\lsl_d(\R)$
it follows that $XA\in L'$ for all $X\in L'$ and all $A\in\lsl_d(\R)$;
and this in turn implies that $L'$ must be of the form
$L'=L^d$ for some linear subspace $L\subset\R^r$.
Let $L^\perp$ be the orthogonal complement of $L$ in $\R^r$;
then $\M_{r\times d}(\R)=L^d\oplus(L^\perp)^d$,
and for each $A\in\lsl_d(\R)$ there exists a unique $Y\in(L^\perp)^d$ such that $(A,Y)\in\ih$.
This implies that the Lie subalgebra $\is_{L^\perp}(\R)\cap\ih$ is a Levi subalgebra of $\is_{L^\perp}(\R)$,
and so by Malcev's Theorem
\cite[Ch.\ III.9]{nJ62},
there exists some $X\in(L^\perp)^d$ such that
$\is_{L^\perp}(\R)\cap\ih=\bigl(\Ad\I_X\bigr)(\lsl_d(\R))$.
But $\ih$ is the vector space direct sum of 
$\is_{L^\perp}(\R)\cap\ih$
and $\{(0,U)\col U\in L^d\}$;
hence in fact $\ih=\is_L^X(\R)$.
\end{proof}

\begin{lem}\label{charHsbgpsLEM}
Assume that $H$ is a connected Lie subgroup of $G$ satisfying $\iota(H)=G'$.
Then there exist linear subspaces $L_j\subset\R^{r_j}$ 
and matrices $X_j\in \M_{r_j\times d}(\R)$ %
such that
\begin{align}\label{charHsbgpsLEMres}
H=\S_{L_1}^{X_1}(\R)\times\cdots\times\S_{L_N}^{X_N}(\R).
\end{align}
\end{lem}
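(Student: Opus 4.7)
The plan is to pass to Lie algebras. Set $\ih := \mathrm{Lie}(H) \subset \ig := \bigoplus_{j=1}^N \is_{r_j}(\R)$. The hypothesis $\iota(H) = G'$, combined with the connectedness of $H$ and $G'$, translates to $d\iota(\ih) = \ig' := \lsl_d(\R)^N$. Since each $\S_{L_j}^{X_j}(\R)$ is connected, it is enough to establish the Lie algebra identity $\ih = \bigoplus_{j=1}^N \is_{L_j}^{X_j}(\R)$ for appropriate subspaces $L_j \subset \R^{r_j}$ and matrices $X_j \in \M_{r_j\times d}(\R)$; the group statement then follows by passing to the connected subgroups.

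First I would identify the Levi decomposition of the ambient algebra $\ig$. The kernel
\begin{align*}
\iu_0 := \ker(d\iota) = \bigoplus_j \bigl\{(0,Y) : Y \in \M_{r_j\times d}(\R)\bigr\}
\end{align*}
is an abelian ideal of $\ig$ (immediate from the bracket formula \eqref{LIEBRACKETrep}), and $\ig/\iu_0 \cong \ig'$ is semisimple, so $\iu_0$ is the radical of $\ig$. Since $\ig'$ is semisimple and $d\iota(\ih) = \ig'$, the radical $\mathrm{rad}(\ih)$ must lie in $\iu_0$; hence any Levi subalgebra $\il \subset \ih$ is mapped isomorphically onto $\ig'$ by $d\iota$ and therefore complements $\iu_0$ inside all of $\ig$, making $\il$ also a Levi subalgebra of $\ig$. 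Malcev's theorem applied in $\ig$ then produces some $Z \in \iu_0$ with $\il = \Ad(\exp Z)(\ig')$, where the embedded $\ig'$ is the standard Levi $(A_1,\ldots,A_N) \mapsto ((A_1,0),\ldots,(A_N,0))$. Writing $Z$ coordinate-wise as $((0, X_1), \ldots, (0, X_N))$, one has $\exp Z = (\I_{X_1}, \ldots, \I_{X_N})$ and a direct computation gives $\Ad(\I_{X_j})(A,0) = (A, X_j A)$, so
\begin{align*}
\il = \bigl\{\bigl((A_1, X_1 A_1), \ldots, (A_N, X_N A_N)\bigr) : A_j \in \lsl_d(\R)\bigr\},
\end{align*}
which is precisely the Levi part of $\bigoplus_j \is_{L_j}^{X_j}(\R)$.

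Next I would determine $\iu := \mathrm{rad}(\ih) \subset \iu_0$ as an $\il$-submodule. Because the factors $\is_{r_j}(\R)$ of $\ig$ pairwise commute, the $j$-th factor of $\il$ acts non-trivially only on the $j$-th summand $\M_{r_j\times d}(\R)$ of $\iu_0$, via the right-multiplication rule $Y \mapsto -YA$. As already used in the proof of Lemma \ref{DMSlem7}, the $\lsl_d(\R)$-invariant subspaces of $\M_{r_j\times d}(\R)$ under this action are exactly the column subspaces $L^d$ for linear $L \subset \R^{r_j}$; combined with factor-independence, every $\il$-submodule of $\iu_0$ splits as $\bigoplus_j L_j^d$. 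Thus $\iu = \bigoplus_j \{(0, Y) : Y \in L_j^d\}$, and assembling $\ih = \il \oplus \iu = \bigoplus_j \is_{L_j}^{X_j}(\R)$ finishes the proof.

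The main obstacle is the submodule classification in the third step: one has to argue both the factor-independence (that $\il$-invariant subspaces of $\iu_0$ automatically split as products of pieces in each $\M_{r_j\times d}(\R)$) and the identification of $\lsl_d(\R)$-invariant column subspaces as the obvious $L^d$'s. Once these two facts are in hand, the Malcev-based description of $\il$ glues the component-wise picture into the desired global product decomposition, and the passage to connected subgroups is routine.
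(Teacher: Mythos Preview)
Your proposal is correct and takes a genuinely different route from the paper's proof. The paper works factor by factor: it first applies Lemma~\ref{DMSlem7} to each projection $dp_j(\ih)$ to obtain $dp_j(\ih)=\is_{L_j}^{X_j}(\R)$, which immediately gives the inclusion $\ih\subset\bigoplus_j\is_{L_j}^{X_j}(\R)$; it then establishes equality by showing $\varphi_j(\is_{L_j}^{X_j}(\R))\subset\ih$ for each $j$ separately, via an ideal argument combined with an explicit bracket computation using a single invertible element $Y\in\lsl_d(\R)$. Your approach instead runs Levi--Malcev once globally in $\ig$ to conjugate the Levi part of $\ih$ onto the standard diagonal $\ig'$, and then identifies $\mathrm{rad}(\ih)$ as an $\il$-submodule of $\iu_0$; the ``factor-independence'' you flag is precisely the isotypic decomposition (the $\il$-modules $\M_{r_j\times d}(\R)$ are pairwise non-isomorphic since the $j$-th copy of $\lsl_d(\R)$ acts nontrivially only on the $j$-th summand), so any submodule automatically splits. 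Your argument is more structural and sidesteps the hands-on bracket trick in the paper, at the cost of invoking the isotypic decomposition; the paper's argument is more explicit and self-contained but longer. Both are valid and short, and the representation-theoretic input you single out as the main obstacle is indeed standard.
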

\begin{proof}
Let $\ih$ be the Lie subalgebra of
$\ig=\is_{r_1}(\R)\times\cdots\times\is_{r_N}(\R)$
corresponding to $H$.
It follows from $\iota(H)=G'$ that 
$d\iota(\ih)=\ig'$. %
Recall that $p_j:G\to G_j=\S_{r_j}(\R)$ denotes the projection onto the $j$th factor.
It follows from $d\iota(\ih)=\ig'$ that, for each $j$,
the Lie subalgebra $dp_j(\ih)$ of $\is_{r_j}(\R)$
satisfies $d\iota(dp_j(\ih))=\lsl_d(\R)$,
and so by Lemma \ref{DMSlem7} there exist
a linear subspace $L_j\subset\R^{r_j}$ and a matrix $X_j\in \M_{r_j\times d}(\R)$
such that 
\begin{align}\label{charHsbgpsLEMpf2}
dp_j(\ih)=\is_{L_j}^{X_j}(\R)
\qquad (\forall j\in\{1,\ldots,N\}).
\end{align}
This implies:
\begin{align}\label{charHsbgpsLEMpf3}
\ih\subset\is_{L_1}^{X_1}(\R)\times\cdots\times\is_{L_N}^{X_N}(\R).
\end{align}

We claim that the two sides of \eqref{charHsbgpsLEMpf3} are in fact equal.
In order to prove this equality,
it suffices to prove that $\varphi_j(\is_{L_j}^{X_j}(\R))\subset\ih$ for each $j$,
where
\begin{align*}
\varphi_j:\is_{r_j}(\R)\to\ig
\end{align*}
is the Lie group homomorphism mapping $X$ to $(0,\cdots,X,\cdots,0)$
($0$s in all positions except the $j$th).
Let $j$ be fixed, and set
\begin{align*}
\il=\{Z\in\is_{L_j}^{X_j}(\R)\col\varphi_j(Z)\in\ih\}.
\end{align*}
Using \eqref{charHsbgpsLEMpf2} it follows that
$\il$ is an ideal of $\is_{L_j}^{X_j}(\R)$.
Hence also $d\iota(\il)$ is an ideal of $\lsl_d(\R)$.
But given any two elements $Y,Y'\in\lsl_d(\R)$,
it follows from $d\iota(\ih)=\ig'$
that there exist $Z,Z'\in\ih$ such that
$d\iota(dp_j(Z))=Y$, $d\iota(dp_j(Z'))=Y'$,
and $d\iota(dp_i(Z))=d\iota(dp_i(Z'))=0$ for all $i\neq j$.
Then also $[Z,Z']\in\ih$,
and one computes that $[Z,Z']=\varphi_j(([Y,Y'],C))$ for some
$C\in\M_{r_j\times d}(\R)$
(and then in fact $([Y,Y'],C)\in \is_{L_j}^{X_j}$, because of \eqref{charHsbgpsLEMpf3}).
Hence we conclude
that 
$[Y,Y']\in d\iota(\il)$, for all $Y,Y'\in\lsl_d(\R)$.
Since $\lsl_d(\R)$ is a simple Lie algebra,
it follows that $d\iota(\il)=\lsl_d(\R)$.

Next fix some $Y\in\lsl_d(\R)$ which is invertible as a $d\times d$ matrix.
Because of $d\iota(\il)=\lsl_d(\R)$ there is some $C\in L_j^d$
such that $\varphi_j((Y,C))\in\ih$.
Using also \eqref{charHsbgpsLEMpf2} it follows that for any
$C'\in L_j^d$ there exists some $Z\in\ih$
satisfying $p_j(Z)=(Y,C')$.
Then $\ih$ also contains the Lie product
$[\varphi_j((Y,C)),Z]=\varphi_j([(Y,C),(Y,C')])=\varphi_j((0,(C-C')Y))$.
Hence $(0,(C-C')Y)\in\il$.
Since $C'$ is an arbitrary element in $L_j^d$ 
and $Y$ is invertible,
it follows that $(0,C)\in\il$ for \textit{all} $C\in L_j^d$.
Combining this fact with $d\iota(\il)=\lsl_d(\R)$,
we finally conclude that
$\il=\is_{L_j}^{X_j}(\R)$.
Hence 
$\varphi(\is_{L_j}^{X_j}(\R))\subset\ih$.
We have proved that this holds for all $j$;
hence 
\begin{align}\label{charHsbgpsLEMpf4}
\ih=\is_{L_1}^{X_1}(\R)\times\cdots\times\is_{L_N}^{X_N}(\R),
\end{align}
and so \eqref{charHsbgpsLEMres} holds.
\end{proof}

\begin{lem}\label{tscrHcharacterizationLEM}
Let $H$ be as in \eqref{charHsbgpsLEMres},
and assume that $\Gamma\cap H$ is a lattice in $H$.
Then for each $j$, $L_j$ is a \textrm{rational} subspace of $\R^{r_j}$,
and there exist $Y_j\in \M_{r_j\times d}(\Q)$ such that
\begin{align}\label{tscrHcharacterizationLEMres}
H=\S_{L_1}^{Y_1}(\R)\times\cdots\times\S_{L_N}^{Y_N}(\R).
\end{align}
\end{lem}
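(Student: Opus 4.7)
The plan is to reduce to a single factor and then use the lattice hypothesis to extract rationality of $L_j$ (from the kernel of $\iota$) and of $X_j$ modulo $L_j^d$ (from the image of $\iota$). Concretely: since $\Gamma = \prod_j \Gamma_j$ and $H = \prod_j \S_{L_j}^{X_j}(\R)$ are both products, $\Gamma \cap H = \prod_j(\Gamma_j \cap \S_{L_j}^{X_j}(\R))$, and this is a lattice in $H$ if and only if each factor $\Lambda_j := \Gamma_j \cap \S_{L_j}^{X_j}(\R)$ is a lattice in $\S_{L_j}^{X_j}(\R)$. This reduces the problem to proving, for each fixed $j$ with $r_j \geq 1$ (the case $r_j = 0$ being trivial, with $Y_j$ the empty matrix), that $L_j$ is a rational subspace of $\R^{r_j}$ and that there exists $Y_j \in \M_{r_j\times d}(\Q)$ with $X_j - Y_j \in L_j^d$. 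Drop the index $j$ from here on, writing $\Lambda = \Gamma_0 \cap \S_L^X(\R)$ with $\Gamma_0 = \Gamma' \ltimes \M_{r\times d}(\Z)$.

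First I would conjugate by $\I_X^{-1}$ to obtain a lattice $\widetilde\Lambda := \I_X^{-1}\Lambda\,\I_X$ in $\S_L(\R)$, and compute via $\I_X^{-1}(M,V)\I_X = (M, V + X(I-M))$ that a pair $(M,V) \in \Gamma_0$ lies in $\Lambda$ if and only if $V = W + X(M-I)$ for some $W \in L^d$ and $V \in \M_{r\times d}(\Z)$; its conjugate in $\S_L(\R)$ is then $(M,W)$. In particular $\widetilde\Lambda \cap \ker\iota = \{(I,W) : W \in L^d \cap \Z^{r\times d}\}$. Since $\widetilde\Lambda$ is a lattice in $\S_L(\R) = \SL_d(\R) \ltimes L^d$, its intersection with the closed normal subgroup $\ker\iota = \{I\}\times L^d$ must be a lattice in $L^d$, forcing $L^d \cap \Z^{r\times d}$ to be a lattice in $L^d$; equivalently, $L$ is a rational subspace of $\R^r$.

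For the rationality of $X$ modulo $L^d$, note that $\iota(\Lambda) \subset \Gamma' \subset \SL_d(\Z)$ is a lattice in $\SL_d(\R)$ (as the image of the lattice $\widetilde\Lambda$ under $\iota$, with kernel being a lattice in $\ker\iota$), hence has finite index in $\SL_d(\Z)$; in particular $\iota(\Lambda)$ contains $(I + E_{i,j})^n = I + nE_{i,j}$ for every $i \neq j$ and some common $n \in \Z^+$. By the description of $\Lambda$, every $M \in \iota(\Lambda)$ satisfies $X(M - I) \in L^d + \M_{r\times d}(\Z)$. Using that $L$ is now known to be rational, the quotient $V := \R^r/L$ carries the full lattice $\Lambda_V := (\Z^r + L)/L$, and the induced linear map $\bar X : \R^d \to V$ satisfies $\bar X((M-I)\vecv) \in \Lambda_V$ for all $M \in \iota(\Lambda)$ and $\vecv \in \Z^d$. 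Taking $M = I + nE_{i,j}$ and $\vecv = \vece_j$ gives $\bar X(n\vece_i) \in \Lambda_V$ for each $i$, so $\bar X(\Z^d) \subset n^{-1}\Lambda_V$, i.e.\ each column of $X$ lies in $n^{-1}\Z^r + L \subset \Q^r + L$. Hence $X = Y + Z$ with $Y \in \M_{r\times d}(\Q)$ and $Z \in L^d$; since $\I_Z \in \S_L(\R)$ normalizes $\S_L(\R)$, conjugation gives $\S_L^X(\R) = \I_Y\I_Z \S_L(\R)\I_Z^{-1}\I_Y^{-1} = \I_Y\S_L(\R)\I_Y^{-1} = \S_L^Y(\R)$, as required.

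The main obstacle will be the second step: upgrading the scattered conditions ``$X(M-I) \in L^d + \Z^{r\times d}$ for $M \in \iota(\Lambda)$'' into a clean rationality statement on $X$ modulo $L^d$. The key leverage is that the finite index of $\iota(\Lambda)$ in $\SL_d(\Z)$ produces elementary matrices $I + nE_{i,j}$ inside $\iota(\Lambda)$ for a common $n$, and these are precisely the ones that act on the standard basis to generate a full-rank sublattice $n\Z^d \subset \Z^d$ of conditions on $\bar X$, which is exactly what is needed to force each column of $\bar X$ into the discrete set $n^{-1}\Lambda_V$.
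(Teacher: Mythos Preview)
Your argument is correct and follows the same route as the paper: reduce to a single factor $\Gamma_j\cap\S_{L_j}^{X_j}(\R)$, then extract rationality of $L_j$ from the unipotent kernel and of $X_j$ modulo $L_j^d$ from the projection to $\SL_d(\R)$. The paper simply cites \cite[(5.58),(5.59)]{jMaS2019} for this step, whereas you give a self-contained argument; the one point you leave implicit is that $\widetilde\Lambda\cap\ker\iota$ is a lattice in $\ker\iota$, which follows (as you effectively use later) from the standard fact that a lattice in $G$ intersects a closed normal subgroup $N$ in a lattice whenever its image in $G/N$ is discrete---here guaranteed by $\iota(\Lambda)\subset\SL_d(\Z)$.
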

\begin{proof} %
The assumption implies that $\Gamma_j\cap\S_{L_j}^{X_j}(\R)$ is a lattice in 
$\S_{L_j}^{X_j}(\R)$, for each $j$.
As in
\cite[(5.58), (5.59)]{jMaS2019},
this implies that $L_j\cap\Z^{r_j}$ is a lattice in $L_j$,
i.e.\ $L_j$ is a rational subspace of $\R^{r_j}$,
and furthermore $X_j\in\M_{r_j\times d}(\Q)+L_j^d$.
Choosing now any $Y_j\in\M_{r_j\times d}(\Q)$ such that $X_j\in Y_j+L_j^d$,
we have $\S_{L_j}^{X_j}(\R)=\S_{L_j}^{Y_j}(\R)$.
Carrying this out for each $j$, we obtain \eqref{tscrHcharacterizationLEMres}.
\end{proof}
The following very basic observation will also be useful for us:
\begin{lem}\label{SLXcontainmentcritLEM}
Let $L$ and $L'$ be linear subspaces of $\R^r$, and $X,X'\in\M_{r\times d}(\R)$.
Then $\S_{L'}^{X'}(\R)\subset\S_L^X(\R)$ holds if and only if
$L'\subset L$ and $X'-X\in L^d$.
\end{lem}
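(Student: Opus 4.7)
The plan is to proceed by unwinding the definition of $\S_L^X(\R) = \I_X \S_L(\R) \I_X^{-1}$ using the semidirect product law \eqref{Srgrouplaw}, and then verifying each implication by direct computation.

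First I would compute the conjugation explicitly. Using $\I_X^{-1}=\I_{-X}$ and the group law, one obtains $\I_X(M,U)\I_X^{-1} = (M,\, U + X(M-I))$, so an element $(M,W)\in\S_r(\R)$ lies in $\S_L^X(\R)$ if and only if $W - X(M-I) \in L^d$. This characterization is the technical heart of the argument and reduces the problem to a purely linear-algebraic statement about matrix columns.

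For the ``if'' direction, given $(M,W)\in\S_{L'}^{X'}(\R)$, so that $W - X'(M-I)\in (L')^d$, I would write
\[
W - X(M-I) = \bigl(W - X'(M-I)\bigr) + (X'-X)(M-I).
\]
The first summand lies in $(L')^d\subset L^d$ by hypothesis, and the second lies in $L^d$ because $X'-X\in L^d$ and $L^d$ is preserved under right-multiplication by \emph{any} $d\times d$ matrix (the columns of $YB$ are linear combinations of columns of $Y$).

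For the ``only if'' direction I would extract the two required conclusions from two specialised choices in the inclusion. Setting $M=I$ shows that $\S_{L'}^{X'}(\R)$ contains $\{(I,U):U\in(L')^d\}$, and these elements can belong to $\S_L^X(\R)$ only if $(L')^d\subset L^d$, giving $L'\subset L$. Setting $U=0$ (i.e.\ considering the element $(M, X'(M-I))$ of $\S_{L'}^{X'}(\R)$) and applying the characterization in $\S_L^X(\R)$ yields $(X'-X)(M-I)\in L^d$ for every $M\in\SL_d(\R)$; choosing $M = I+tE_{ij}$ with $i\ne j$ (a unipotent element, hence in $\SL_d(\R)$) isolates the $i$th column of $X'-X$ as an element of $L$, so $X'-X\in L^d$. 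No serious obstacle is expected; the only delicate bookkeeping is keeping track of the semidirect product law carefully enough to produce the conjugation formula above.
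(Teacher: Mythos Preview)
The proposal is correct and follows essentially the same approach as the paper: both derive the characterization $(M,W)\in\S_L^X(\R)\Leftrightarrow W-X(M-I)\in L^d$ (equivalently, $U+Y(M-I)\in L^d$ with $Y=X'-X$), then set $M=I$ to extract $L'\subset L$ and vary $M$ to force $X'-X\in L^d$. Your use of the unipotents $I+tE_{ij}$ is a minor variant of the paper's choice (the paper lets the first column of $M-I$ run over $\R^d\setminus\{-\vece_1\}$), and your ``if'' direction is just a spelled-out version of what the paper calls ``immediate''.
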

\begin{proof}
Let $Y:=X'-X$.
Then $\S_{L'}^{X'}(\R)\subset\S_L^X(\R)$ holds 
if and only if 
$\I_Y(M,U)\I_Y^{-1}\in\S_{L}(\R)$ for all 
$(M,U)\in\S_{L'}(\R)$,
that is,
$U+Y(M-I)\in L^d$ for all
$M\in\SL_d(\R)$ and all $U\in{L'}^d$.
Assume that this holds.
Then, considering first only $M=I$ %
it follows that $L'\subset L$, thus ${L'}^d\subset L^d$,
and using this fact it follows that we must have $Y(M-I)\in L^d$ for all $M\in\SL_d(\R)$.
Considering only the first column of $M-I$ we conclude that
$Y\veca\in L$ for all column vectors $\veca\in\R^d\setminus\{-\vece_1\}$,
and this in turn implies $Y\in L^d$, i.e.\ $X'-X\in L^d$.
The converse direction %
is immediate.
\end{proof}
In the next lemma we derive an important consequence of the condition
``[$\exists\eta>0$: $\forall m\in\Z^+$: $V_m\notin\Delta_k^{(\eta)}$]''
in Definition \ref{Pkdef}.
Given any $j\in\{1,\ldots,N\}$ with $r_j\neq0$
and $\vecm\in\Z^{r_j}\setminus\{\bn\}$,
we set: %
\begin{align}\label{KjLXclean}
K_{j,\vecm}:=\left\{\I_B A \I_Y
\col B\in \|\vecm\|^{-2}\M_{r_{j}\times d}(\Z),\:
A\in\SL_d(\R),\:
Y\in \vecm^\perp\times (\R^{r_{j}})^{d-1}\right\}.
\end{align}
Note that $K_{j,\vecm}$ is left $\Gamma_j$-invariant $F_\sigma$ set in $\S_{r_j}(\R)$;
hence $p_j^{-1}(K_{j,\vecm})$ is a left $\Gamma$-invariant $F_\sigma$ set in $G$,
and $\pi(p_j^{-1}(K_{j,\vecm}))$ is an $F_\sigma$ set in $\XX$.
\begin{lem}\label{ELKIESMCMULLENtypeLEMcleaned}
For any $k\in\Z^+$, $\nu\in P_k$,
$j\in\{1,\ldots,N\}$ with $r_j\neq0$ and $\vecm\in\Z^{r_j}\setminus\{\bn\}$, if $\|\vecm\|^2\leq k$
then $\nu(\pi(p_j^{-1}(K_{j,\vecm})))=0$.
\end{lem}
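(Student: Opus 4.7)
Fix $\nu\in P_k$ realized as a weak limit $\nu=\lim_m\nu_m$ as in Definition \ref{Pkdef}, with data $(\lambda,\tM,\rho_m,V_m)$ subject to $V_m\notin\Delta_k^{(\eta)}$ for some $\eta>0$; write $E:=\pi(p_j^{-1}(K_{j,\vecm}))$.

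I begin with a direct algebraic reduction of the membership condition. Expanding $\I_BA\I_Y=(A,BA+Y)$ in $\S_{r_j}(\R)$ and solving $\Gamma_j\I_{V_m^{(j)}}M_j^*n_-(\vecu)D_{\rho_m}\in\Gamma_j K_{j,\vecm}$ for $\gamma=(\sigma,C)\in\Gamma_j$, $B$, $A$, $Y$, the identity $n_-(\vecu)D_{\rho_m}\vece_1^T=\rho_m^{d-1}\vece_1^T$ (independent of $\vecu$) forces the condition ``first column of $Y$ lies in $\vecm^\perp$'' to become the $\vecu$-free equation
\begin{equation*}
\vecm^T\bigl(V_m^{(j)}-D\sigma^{-1}\bigr)M_j^*\vece_1^T=0,\qquad \sigma\in\Gamma_j',\ D:=C+B\in\|\vecm\|^{-2}\M_{r_j\times d}(\Z).
\end{equation*}
In particular $\nu_m(E)\in\{0,1\}$. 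Since some $\nu_m(E)$ may equal $1$, the conclusion $\nu(E)=0$ cannot come from a pointwise vanishing; it must instead reflect that the limit $\nu$ is structurally much thicker than its approximands, and we extract this via the measure-classification machinery developed earlier in the section.

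By Lemma \ref{Winvlem}, $\nu$ is $W$-invariant, so each member of its $W$-ergodic decomposition lies in $\scrQ(\XX)$, and since $\tmu$ is $G'$-ergodic, Lemma \ref{PkmeasrhoprojLEM} descends to give $\tiota_*\nu_\alpha=\tmu$ for almost every component $\nu_\alpha$. Ratner's theorem together with Lemmas \ref{iotaHeqGpLEM}, \ref{charHsbgpsLEM}, and \ref{tscrHcharacterizationLEM} then realizes each such $\nu_\alpha$ as the $H$-invariant probability measure on a closed orbit $\Gamma\bs\Gamma g_\alpha H$ with $H=\prod_i\S_{L_i}^{Y_i}(\R)$ for rational $L_i\subset\R^{r_i}$ and $Y_i\in\M_{r_i\times d}(\Q)$. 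Replacing $\nu$ by one such ergodic component and pushing to the $j$-th factor, $\tp_{j*}\nu$ becomes the $\S_{L_j}^{Y_j}(\R)$-invariant measure on its closed orbit in $\XX_j$; a direct computation using the $\Gamma_j$-absorption of $B$ in $K_{j,\vecm}$ from the first step identifies the pullback of $\pi(K_{j,\vecm})$ to this orbit as a codimension-one locus in the fibre torus $\pi(L_j^d+Y_j)$ cut out by $\vecm^T V M_j^*\vece_1^T$ lying in a countable $\Z$-module. Positivity $\nu(E)>0$ therefore forces $\vecm^TL_j^d\cdot M_j^*\vece_1^T=\{0\}$, which by integrality of $\vecm$ and rationality of $L_j$ and $Y_j$ places $\pi(L_j^d+Y_j)$ inside $\Delta_{j,q,\vecm}\subset\Delta_{j,k}$ for some $q\leq\|\vecm\|^2\leq k$.

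The hard part will be the final geometric convergence step. Since $\supp(\nu)=\Gamma\bs\Gamma g_\alpha H$ and $\nu_m\to\nu$ weakly, a non-divergence argument must show that the initial points $x(V_m)\tM\in\XX$ lie within $o_m(1)$ of $\Gamma\bs\Gamma g_\alpha H$; projecting to $\TT_j^d$ this places $V_m^{(j)}$ within $o_m(1)$ of $\pi(L_j^d+Y_j)$. Combined with $\pi(L_j^d+Y_j)\subset\Delta_{j,k}$ this contradicts $V_m\notin\Delta_k^{(\eta)}$ for large $m$ and completes the proof. The hypothesis $\tM\notin\fD_\scrS$ enters precisely to rule out anomalous limits in which the $j$-th factor decouples from the others in a way that would allow $V_m^{(j)}$ to float free of the subtorus forced upon it by the ergodic component.
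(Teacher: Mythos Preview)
Your opening computation that $\nu_m(E)\in\{0,1\}$ is correct and insightful, but the main argument has a genuine gap at the step you yourself flag as ``the hard part''. From weak convergence $\nu_m\to\nu$ and an ergodic decomposition $\nu=\int\nu_\alpha\,d\omega(\alpha)$ there is no mechanism that forces the basepoints $V_m^{(j)}$ to lie within $o_m(1)$ of the fibre torus $\pi(L_j^d+Y_j)$ attached to any particular component $\nu_\alpha$: the approximating measures $\nu_m$ are not $W$-invariant, so Mozes--Shah does not apply to them, and even if it did, $\supp(\nu)$ is the closure of the union of \emph{all} the $\supp(\nu_\alpha)$, not any single one. (Your step 3 is also not right as stated: membership in $\pi(K_{j,\vecm})$ on the orbit $\Gamma_j\backslash\Gamma_j g_\alpha\S_{L_j}^{Y_j}(\R)$ is not a condition purely on the torus fibre, since the $\SL_d(\R)$-part $A$ enters through $\vecm^T(\cdots)A\vece_1^T$.) More broadly, the logical flow in the paper runs the opposite way: the present lemma is an \emph{input} to the Ratner/Mozes--Shah argument in Lemma~\ref{PkweakconvLEMclean}, where it supplies precisely the contradiction $\alpha_k(\pi(p_j^{-1}(K_{j,\vecm})))=0$ at the end. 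Reversing that dependency leaves you without the tool you need.

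The paper's proof avoids measure classification entirely and is an elementary Elkies--McMullen style volume estimate. One introduces open thickenings $K_{T,\delta}\supset K_T$ (with $K_{j,\vecm}=\bigcup_T K_T$) and shows directly that $\nu_m(\pi(p_j^{-1}(K_{T,\delta})))<\ve$ for $m$ large and $\delta$ small. Projecting along $\vecm$, the condition becomes: some point of a fixed grid $L_m=(p_{\vecb}(U_{m,j})+q^{-1}\Z^d)M_j$ lies in a thin cylinder determined by $\vecu$, $\rho_m$, $T$, $\delta$. The hypothesis $V_m\notin\Delta_k^{(\eta)}$ (with $q=\|\vecm\|^2\leq k$) yields a uniform lower bound $\|\vecz\|\geq\eta'$ for all $\vecz\in L_m$, and a direct dyadic count over $|z_1|$ then bounds the $\lambda$-measure of bad $\vecu$'s by $C'(\rho_m^{d-1}+\delta)$. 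Portmanteau on the open set $\pi(p_j^{-1}(K_{T,\delta}))$ finishes the job. This is what you need to do here; the Ratner machinery comes only afterwards.
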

\begin{proof}
(This is similar to
\cite[pp.\ 114--115]{nEcM2001}
and \cite[Lemma 9]{cDjMaS2016}.)
Let $k,\nu,j,\vecm$ be given as in the statement of the lemma.
Set $q:=\|\vecm\|^2$ (thus $0<q\leq k$).
Let us fix a vector $\vecb\in\Z^{r_j}$ satisfying $\vecb\cdot\vecm=\gcd(m_1,\ldots,m_{r_j})$.
Then we have
\begin{align}\label{ELKIESMCMULLENtypeLEMcleanedpf9}
\Z^{r_{j}}=(\vecm^\perp\cap\Z^{r_j})\oplus\Z\vecb.
\end{align}
Let $p_{\vecb}:\R^{r_j}\to\R$ be the linear map
such that $\vecv-p_{\vecb}(\vecv)\vecb\in\vecm^\perp$ for all $\vecv\in\R^{r_j}$.
For any matrix $Z\in\M_{r_j\times e}(\R)$,
we write $Z_1,\ldots,Z_e\in\R^{r_j}$ for its column vectors (in order),
and we define 
$p_{\vecb}(Z)=(p_\vecb(Z_1),\ldots,p_\vecb(Z_e))$,
i.e.\ $p_\vecb(Z)$ is the vector in $\R^e$
obtained by applying $p_\vecb$ individually to each column of $Z$.
Furthermore, for any matrix $Z\in\M_{r_j\times d}(\R)$ we will write
$Z'=(Z_2,\ldots,Z_d)$ for the matrix in $\M_{r_j\times(d-1)}(\R)$ formed by 
removing the first column vector from $Z$.
For any $T>0$ %
and $\delta>0$ %
we now introduce the following subsets of $\M_{r_{j}\times d}(\R)$:
\begin{align*}
&\Omega_{T}:=\{Z\in\M_{r_j\times d}(\R)\col p_{\vecb}(Z_1)=0,\: \|p_\vecb(Z')\|<T\};
\\
&\Omega_{T,\delta}:=
\{Z\in\M_{r_j\times d}(\R)\col |p_\vecb(Z_1)|<\delta,\: \|p_\vecb(Z')\|<T\}.
\end{align*}
We also introduce the following subsets of $\S_{r_j}(\R)$:
\begin{align}\notag
K_{T}=\left\{\I_{B} A \I_Y\col B\in q^{-1}\M_{r_{j}\times d}(\Z),
A\in\SL_d(\R), 
Y\in \Omega_{T}\right\}
\end{align}
and
\begin{align}\notag
K_{T,\delta}=\left\{\I_{B} A \I_Y\col B\in q^{-1}\M_{r_{j}\times d}(\Z),
A\in\SL_d(\R), 
Y\in \Omega_{T,\delta}\right\}.
\end{align}
Note that both $K_T$ and $K_{T,\delta}$ are left $\Gamma_j$-invariant;
also $K_{T,\delta}$ is open; %
hence $p_j^{-1}(K_{T,\delta})$ is an open subset of $G$,
and
$\pi(p_j^{-1}(K_{T,\delta}))$ is an open subset of $\XX$.

Now our goal is to prove that %
for any given 
$\eta>0$,
$\lambda\in\Pac(\R^{d-1})$,
$\tM=(M_1,\ldots,M_N)\in G'\setminus\fD_{\scrS}$, real numbers $\rho_1>\rho_2>\cdots\to0$,
and points $V_1,V_2,\ldots$ in $\tTT\setminus\Delta_k^{(\eta)}$,
if $\nu_m\in P(\XX)$ is defined as in \eqref{numDEFclean},
then we have:
\begin{align}\label{ELKIESMCMULLENtypeLEMcleanedpf3}
\forall T,\ve>0:\hspace{5pt}
\exists\delta,m_0>0:\hspace{5pt}
\forall m\geq m_0:\hspace{20pt}
\nu_m(\pi(p_j^{-1}(K_{T,\delta})))<\ve.
\end{align}
This will complete the proof of Lemma \ref{ELKIESMCMULLENtypeLEMcleaned}.
Indeed, if $\nu$ is any weak limit of $\nu_1,\nu_2,\ldots$,
then \eqref{ELKIESMCMULLENtypeLEMcleanedpf3} together with 
the Portmanteau Theorem implies that
for any $T,\ve>0$ there exists $\delta>0$ such that
$\nu(\pi(p_j^{-1}(K_{T,\delta})))\leq\ve$.
This forces
$\nu(\pi(p_j^{-1}(K_{T})))=0$ for all $T>0$,
and so 
$\nu(\pi(p_j^{-1}(K_{j,\vecm})))=0$, 
i.e.\ the lemma is proved.

Let us note that it suffices to prove \eqref{ELKIESMCMULLENtypeLEMcleanedpf3}
for special choices of $\lambda$.
Indeed, since $\C_c(\R^{d-1})$ is dense in $L^1(\R^{d-1})$,
it suffices to prove \eqref{ELKIESMCMULLENtypeLEMcleanedpf3} for 
measures $\lambda\in\Pac(\R^{d-1})$ of the form
$d\lambda(\vecx)=\lambda'(\vecx)\,d\vecx$ with $\lambda'\in\C_c(\R^{d-1})$.
Next, using the fact that any such function $\lambda'$ is bounded, it follows that 
it actually suffices to prove \eqref{ELKIESMCMULLENtypeLEMcleanedpf3}
when $\lambda=\vol\big|_{\scrB_R^{d-1}}$,
i.e.\ Lebesgue measure restricted to a ball $\scrB_R^{d-1}$,\footnote{This measure 
should really be normalized
by a factor $\vol(\scrB_R^{d-1})^{-1}$, to make $\lambda$ and $\nu_m$ probability measures;
however such a normalizing factor clearly does not affect the validity of \eqref{ELKIESMCMULLENtypeLEMcleanedpf3},
and so we will ignore it.} %
with $R>1$ arbitrary and fixed.
Hence from now on we assume that $\lambda$ is of this form.

Let us write $V_m=(V_{m,1},\ldots,V_{m,N})$ with $V_{m,j}\in\TT_j^d$.
Then for any $T,\delta,m$, %
\begin{align}\label{ELKIESMCMULLENtypeLEMcleanedpf7}
\nu_m(\pi(p_j^{-1}(K_{T,\delta})))
=
\vol\Bigl(\Bigl\{\vecu\in\scrB_R^{d-1}\col
x(V_{m,j})M_{j} n_-(\vecu)D_{\rho_m}\in \pi(K_{T,\delta})\Bigr\}\Bigr).
\end{align}
Take $U_{m,j}\in\M_{r_j\times d}(\R)$ 
with $\pi(U_{m,j})=V_{m,j}$.
Since $K_{T,\delta}$ is %
$\Gamma_j$-invariant,
the condition
$x(V_{m,j})M_{j} n_-(\vecu)D_{\rho_m}\in \pi(K_{T,\delta})$
is equivalent with
$\I_{U_{m,j}}M_{j} n_-(\vecu)D_{\rho_m}\in K_{T,\delta}$,
which in turn is equivalent with
\begin{align}\label{ELKIESMCMULLENtypeLEMcleanedpf4}
\bigl(U_{m,j}-q^{-1}\M_{r_j\times d}(\Z)\bigr)M_{j} n_-(\vecu)D_{\rho_m}
\:\bigcap\:
\Omega_{T,\delta}\neq\emptyset.
\end{align}
But for any $Z\in\M_{r_j\times d}(\R)$ the condition
$Z n_-(\vecu)D_{\rho_m}\in\Omega_{T,\delta}$ holds if and only if
the vector $\vecz:=p_\vecb(Z)$ satisfies
$|z_1|<\delta\rho_m^{1-d}$
and $\|z_1\vecu+(z_2,\ldots,z_d)\|<T\rho_m$.
We also have $p_\vecb(AM_j)=p_\vecb(A)M_j$ for all $A\in\M_{r_j\times d}(\R)$;
furthermore $p_\vecb(\Z^{r_j})=\Z$, by \eqref{ELKIESMCMULLENtypeLEMcleanedpf9},
and thus $p_{\vecb}(\M_{r_j\times d}(\Z))=\Z^d$;
therefore
\begin{align}\label{ELKIESMCMULLENtypeLEMcleanedpf20}
\bigl\{p_\vecb(Z)\col Z\in(U_{m,j}-q^{-1}\M_{r_j\times d}(\Z))M_{j}\bigr\}
=\bigl(p_\vecb(U_{m,j})+q^{-1}\Z^d\bigr)M_j=:L_m.
\end{align}
Note that this set $L_m$ is a grid in $\R^d$.
It now follows that the measure in \eqref{ELKIESMCMULLENtypeLEMcleanedpf7} equals
\begin{align}\notag
\vol\Bigl(\Bigl\{\vecu\in\scrB_R^{d-1}\col 
\bigl[\exists\vecz\in L_m:\: 
|z_1|<\delta\rho_m^{1-d}
\text{ and }
\|z_1\vecu+(z_2,\ldots,z_d)\|<T\rho_m
\bigr]\Bigr\}\Bigr)
\\\label{ELKIESMCMULLENtypeLEMcleanedpf21}
\leq\sum_{\substack{\vecz\in L_m\\(|z_1|<\delta\rho_m^{1-d})}}
\vol\Bigl(\Bigl\{\vecu\in\scrB_R^{d-1}\col \|z_1\vecu+(z_2,\ldots,z_d)\|<T\rho_m
\Bigr\}\Bigr).
\end{align}

Recall that we are assuming
$V_m\notin \Delta_k^{(\eta)}$ for all $m$;
this implies that 
for all $q'\in\{1,\ldots,k\}$
and all $\vecn\in\Z^{r_j}$ with $0<\|\vecn\|\leq k$,
the point $V_{m,j}$ in $\TT_j^d$ has distance $\geq\eta$ from
the set $\Delta_{j,q',\vecn}$,
and therefore $U_{m,j}$ has distance $\geq\eta$ from
the set 
$({q'}^{-1}\Z^{r_j}+\vecn^\perp)^d$
in 
in $\M_{r_{j}\times d}(\R)=(\R^{r_j})^d$.
In particular,
$U_{m,j}$ has distance $\geq\eta$ from
$(q^{-1}\Z^{r_j}+\vecm^\perp)^d$,
and this is seen to be equivalent to the statement that
$\|p_\vecb(U_{m,j})-q^{-1}\veca\|\geq\frac{\|\vecm\|}{\gcd(\vecm)}\eta$
for all $\veca\in\Z^{r_j}$.
Combining this with the definition of $L_m$ in \eqref{ELKIESMCMULLENtypeLEMcleanedpf20},
it follows that there exists a constant $\eta'>0$,
independent of $m$, such that
\begin{align}\label{ELKIESMCMULLENtypeLEMcleanedpf22}
\forall m\in\Z^+:\hspace{5pt}
\forall\vecz\in L_m:\hspace{5pt}
\|\vecz\|\geq\eta'.
\end{align}

Now let $T>0$ be given, and keep $m\in\Z^+$ so large that
$T\rho_m<\eta'/6$.
Consider any vector $\vecz\in L_m$ which gives a non-zero contribution in the sum in 
\eqref{ELKIESMCMULLENtypeLEMcleanedpf21}.
This means that there exists some 
$\vecu\in\scrB_R^{d-1}$
such that 
$\|z_1\vecu+(z_2,\ldots,z_d)\|<T\rho_m<\eta'/6$,
and thus
$\|(z_2,\ldots,z_d)\|<\eta'/6+R|z_1|$.
If $R|z_1|\leq\eta'/3$ then it would follow that
$\|\vecz\|<\eta'$,
which is impossible by \eqref{ELKIESMCMULLENtypeLEMcleanedpf22}.
Hence we have proved that
every $\vecz\in L_m$ which gives a non-zero contribution in the sum in 
\eqref{ELKIESMCMULLENtypeLEMcleanedpf21} satisfies
\begin{align}\label{ELKIESMCMULLENtypeLEMcleanedpf23}
|z_1|>\frac{\eta'}{3R}
\quad\text{and}\quad
\|(z_2,\ldots,z_d)\|<2R|z_1|.
\end{align}
Let $L_{m,0}$ be the set of all $\vecz\in L_m$ satisfying 
\eqref{ELKIESMCMULLENtypeLEMcleanedpf23}
and $|z_1|\leq2$,
and for each $\ell\in\Z^+$ let
$L_{m,\ell}$ be the set of all $\vecz\in L_m$ satisfying 
\eqref{ELKIESMCMULLENtypeLEMcleanedpf23}
and $2^\ell<|z_1|\leq2^{\ell+1}$.
Then the sum in \eqref{ELKIESMCMULLENtypeLEMcleanedpf21} is
\begin{align*}
\leq\sum_{0\leq\ell<\log_2(\delta\rho_m^{1-d})}\sum_{\vecz\in L_{m,\ell}}\vol(\scrB_1^{d-1})\cdot\Bigl(\frac{T\rho_m}{|z_1|}\Bigr)^{d-1}.
\end{align*}
But for every $\ell\geq0$ and every $\vecz\in L_m$ we have
\begin{align*}
\|\vecz\|\leq|z_1|+\|(z_2,\ldots,z_d)\|
< (1+2R)|z_1| %
<2^{\ell+3}R.
\end{align*}
Recall also that each grid $L_m$ is a translate of the fixed lattice $q^{-1}\Z^dM_j$.
It follows that there exists a constant $C>0$ which is independent of $m$ and $\ell$
(but which depends on $R,q$ and $M_j$)
such that
$\#L_{m,\ell}< C2^{d\ell}$ for all $\ell\geq0$.
It follows that our sum is
\begin{align*}
\leq 
C\cdot \vol(\scrB_1^{d-1})\cdot (T\rho_m)^{d-1}\cdot\biggl(
\Bigl(\frac{\eta'}{3R}\Bigr)^{1-d}
+ \sum_{1\leq\ell<\log_2(\delta\rho_m^{1-d})}2^{\ell}\biggr)
<C'\bigl(\rho_m^{d-1}+\delta\bigr),
\end{align*}
where $C'$ is a constant which is independent of $m$ or $\delta$
(but which depends on $R,T,\eta'$).

To sum up, we have proved that for any $T,\delta>0$,
and all $m\in\Z^+$ so large that $T\rho_m<\eta'/6$,
we have 
$\nu_m(\pi(p_j^{-1}(K_{T,\delta})))
\leq C'(\rho_m^{d-1}+\delta)$,
with a constant $C'$ which may depend on $T$, but is independent of $m$ and $\delta$.
This bound implies that \eqref{ELKIESMCMULLENtypeLEMcleanedpf3} holds,
and hence Lemma \ref{ELKIESMCMULLENtypeLEMcleaned} is proved.
\end{proof}

\begin{lem}\label{heightLEM}
Let $j\in\{1,\ldots,N\}$,
let $L$ be a rational subspace of $\R^{r_j}$,
$L\neq\R^{r_j}$ (thus $r_j\neq0$),
and let $X\in\M_{r_j\times d}(\Q)$.
Then for any 
$\vecm\in\Z^{r_j}\cap L^\perp\setminus\{\bn\}$
satisfying $X\trans\,\vecm\in\Z^d$,
and any $Y\in\M_{r_j\times d}(\R)$
satisfying $n_-(\R^{d-1})\subset \S_L^{X-Y}(\R)$,
we have %
$\Gamma_j \S_L^X(\R)\I_Y\subset K_{j,\vecm}$.
\end{lem}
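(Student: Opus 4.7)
The plan is to first observe that $K_{j,\vecm}$ is left $\Gamma_j$-invariant (immediate from its definition, since multiplication by an element $(\gamma', N) \in \Gamma_j' \ltimes \M_{r_j\times d}(\Z)$ only shifts the $B$-coordinate by an integer matrix and the $A$-coordinate inside $\SL_d(\R)$), so it suffices to prove $\S_L^X(\R)\I_Y \subset K_{j,\vecm}$. Writing out a general element using the group law \eqref{Srgrouplaw}, one computes $\I_X(M,U)\I_X^{-1} = (M, X(M-I)+U)$, so every $g \in \S_L^X(\R)\I_Y$ has the form
\begin{align*}
g = (M, V), \qquad V = X(M-I) + U + Y,
\end{align*}
with $M \in \SL_d(\R)$ and $U \in L^d$. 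Similarly, for any candidate $B, Y'$ one has $\I_B A \I_{Y'} = (A, BA + Y')$, so $g = \I_B A \I_{Y'}$ forces $A = M$ and $Y' = V - BM$; the task reduces to producing $B \in q^{-1}\M_{r_j\times d}(\Z)$ (with $q := \|\vecm\|^2$) such that the first column of $V - BM$ is $\vecm$-orthogonal.

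Next I would extract the content of the hypothesis $n_-(\R^{d-1}) \subset \S_L^{X-Y}(\R)$. A direct computation (using that $n_-(\vecu)-I$ has first column zero and $k$-th column $u_{k-1}\vece_1$ for $k \geq 2$) shows that $(n_-(\vecu), 0) \in \S_L^{X-Y}(\R)$ for all $\vecu$ if and only if $u_{k-1}(Y-X)_1 \in L$ for all $u_{k-1} \in \R$ and all $k$, i.e., $(Y - X)_1 \in L$, where $(\cdot)_1$ denotes the first column. Combined with $U_1 \in L$ (since $U \in L^d$) and $\vecm \in L^\perp$, this yields
\begin{align*}
\vecm \cdot V_1 \;=\; \vecm^{\!\top}(XM_1 - X_1 + U_1 + Y_1) \;=\; \vecm^{\!\top} X M_1 \;=\; (X^{\!\top}\vecm)\cdot M_1,
\end{align*}
where $M_1$ is the first column of $M$ and the cross-terms vanish because $\vecm \perp U_1$ and $\vecm \perp (Y_1 - X_1)$.

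The explicit choice is $B := q^{-1}\vecm\,(X^{\!\top}\vecm)^{\!\top}$. Since $\vecm \in \Z^{r_j}$ and $X^{\!\top}\vecm \in \Z^d$ by hypothesis, $B$ lies in $q^{-1}\M_{r_j\times d}(\Z)$; and from $\vecm^{\!\top}\vecm = q$ we get $B^{\!\top}\vecm = q^{-1}(X^{\!\top}\vecm)(\vecm^{\!\top}\vecm) = X^{\!\top}\vecm$. Setting $A := M$ and $Y' := V - BM$ then gives $\I_B A \I_{Y'} = g$, and
\begin{align*}
\vecm \cdot (Y')_1 \;=\; \vecm \cdot V_1 - (B^{\!\top}\vecm)\cdot M_1 \;=\; (X^{\!\top}\vecm)\cdot M_1 - (X^{\!\top}\vecm)\cdot M_1 \;=\; 0,
\end{align*}
so $Y' \in \vecm^\perp \times (\R^{r_j})^{d-1}$. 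Hence $g \in K_{j,\vecm}$, which completes the proof.

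There is no real obstacle: the statement is essentially an exercise in unwinding the semidirect-product group law together with the rank-one rational approximation $B = q^{-1}\vecm(X^{\!\top}\vecm)^{\!\top}$. The only point requiring care is bookkeeping around the row/column conventions and checking that the hypothesis $n_-(\R^{d-1}) \subset \S_L^{X-Y}(\R)$ is used in exactly the right place, namely to cancel the $\vecm$-projection of $Y_1 - X_1$.
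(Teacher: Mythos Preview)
Your proof is correct and follows essentially the same approach as the paper: the paper's $\tX:=\|\vecm\|^{-2}\vecm\,(X\trans\vecm)\trans$ is exactly your $B$, and the verification that the first column of $Y'=V-BM$ lies in $\vecm^\perp$ proceeds identically. The only cosmetic difference is that you first invoke the (already stated) left $\Gamma_j$-invariance of $K_{j,\vecm}$ to reduce to $\S_L^X(\R)\I_Y$, whereas the paper carries a general $(\gamma,B)\in\Gamma_j$ through the computation directly.
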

\begin{proof}
Set 
$\tX=\|\vecm\|^{-2}\vecm\,(X\trans\,\vecm)\trans\in \|\vecm\|^{-2}\M_{r_j\times d}(\Z)$,
and note that $\tX\trans\,\vecm=X\trans\vecm$.
Let $Y\in\M_{r_j\times d}(\R)$ 
be such that $n_-(\R^{d-1})\subset \S_L^{X-Y}(\R)$.
This implies that 
$(Y-X)(n_-(\vecw)-I)\subset L^d$ for all $\vecw\in\R^{d-1}$,
which forces the first column of $Y-X$ lies in $L$.

Now consider an arbitrary element in 
$\Gamma_j \S_L^X(\R)\I_Y$.
This element can be expressed as follows,
for some $(\gamma,B)\in\Gamma_j$ and 
$(A,U)\in\S_L(\R)$:
\begin{align}\label{heightLEMpf1}
(\gamma,B)\I_X(A,U)\I_{\,-X}\I_Y
=\bigl(\gamma A,(B+X)A+U+Y-X\bigr)
=\I_{(B+\tX)\gamma^{-1}}\, \gamma A\,
\I_W,
\end{align}
where $W:=(X-\tX)A+U+Y-X$.
Here $(B+\tX)\gamma^{-1}\in \|\vecm\|^{-2}\M_{r_j\times d}(\Z)$.
Furthermore, $\tX\trans\,\vecm=X\trans\vecm$
implies $X-\tX\in(\vecm^\perp)^d$ and so $(X-\tX)A\in(\vecm^\perp)^d$.
Also $U\in L^d\subset(\vecm^\perp)^d$,
and finally the first column of $Y-X$ lies in $L$, hence in $\vecm^\perp$.
It follows that the first column of $W$ lies in $\vecm^\perp$.
Hence the element in \eqref{heightLEMpf1} lies in $K_{j,\vecm}$,
and we have proved that 
$\Gamma_j \S_L^X(\R)\I_Y\subset K_{j,\vecm}$.
\end{proof}

Let us write $\scrE$ for the set of ergodic $W$-invariant probability measures on $\XX$.
\begin{lem}\label{PkweakconvSUBLEM}
Given any $f\in\C_b(\XX)$ and $\ve>0$,
there exists $k\in\Z^+$ such that
every $\alpha\in\scrE$ 
which satisfies
$\tiota_*\alpha=\tmu$ and
\begin{align}\label{PkweakconvSUBLEMass}
\forall j\in\{1,\ldots,N\}:\:\: r_j\neq0\Rightarrow
\:\:\:
\forall \vecm\in\Z^{r_j}\setminus\{\bn\}:\:\:
\|\vecm\|^2\leq k
\Rightarrow
\alpha(\pi(p_j^{-1}(K_{j,\vecm})))=0,
\end{align}
also satisfies $\bigl|\alpha(f)-\mu(f)\bigr|<\ve$.
\end{lem}
\begin{proof} %
Assume the opposite;
this means that there exist some
$f\in\C_b(\XX)$, $\ve>0$
and measures $\alpha_1,\alpha_2,\ldots$ in $\scrE$ %
such that for every $k$ we have
$\tiota_*\alpha_k=\tmu$ and 
$\bigl|\alpha_k(f)-\mu(f)\bigr|\geq\ve$,
and $\alpha=\alpha_k$ satisfies \eqref{PkweakconvSUBLEMass}.

Clearly $\scrE\subset\scrQ(\XX)$,
and hence for each $k$, we can apply Ratner's theorem,
\cite[Thm.\ 1]{mR91a},
to $\alpha_k$.
As discussed below \eqref{Halphadef},
this implies that there exists some element 
$g_k\in G$ such that,
writing $H_k:=H_{\alpha_k}$,
$\Gamma\cap g_kH_kg_k^{-1}$ is a lattice in $g_kH_kg_k^{-1}$
and
the support of $\alpha_k$ equals $\Gamma\bs\Gamma g_kH_k$.
The validity of the previous statements remain intact when replacing
$g_k$ by any other element from the double coset $\Gamma g_kH_k$.
Hence, since
$\iota(H_{k})=G'$ by Lemma \ref{iotaHeqGpLEM},
after right multiplying $g_k$ by an appropriate element in $H_k$
we may assume that
$g_k$ is of the form
$g_k=(\I_{Y_{k,1}},\cdots,\I_{Y_{k,N}})$ for some 
matrices $Y_{k,j}\in\M_{r_j\times d}(\R)$.
Next, by left multiplying $g_k$ by an appropriate element in
$\Gamma$ (in fact in $\Gamma\cap\iota^{-1}(\{I\})$),
we may furthermore assume 
that every entry of every matrix $Y_{k,j}$ lies in the interval $[0,1)$.

Note that the sequence $\alpha_1,\alpha_2,\ldots$ in $P(\XX)$
is tight, since $\tiota_*\alpha_k=\tmu$ for all $k$
and the map $\tiota:\XX\to\XX'$ is proper.
Hence by Prohorov's Theorem,
there exists a subsequence, say $\alpha_{k_1},\alpha_{k_2},\ldots$
where $1\leq k_1<k_2<\cdots$,
which converges to some limit measure $\nu\in P(\XX)$.
In view of our assumption on the entries of the matrices $Y_{k,j}$,
we may \textit{also}
assume that $g_{k_\ell}$ converges to some element
$\tg$ in $G$ as $\ell\to\infty$.
We have $\tiota_*\nu=\tmu$, by the continuous mapping theorem.
Furthermore, by \cite[Cor.\ 1.1]{sM95}
we have $\nu\in\scrQ(\XX)$,
and hence by Ratner's 
\cite[Thm.\ 1]{mR91a},
there exists some $g_\nu\in G$ such that
$\Gamma\cap g_\nu H_\nu g_\nu^{-1}$
is a lattice in $g_\nu H_\nu g_\nu^{-1}$
and $\supp(\nu)=\Gamma\bs\Gamma g_\nu H_\nu$.
Note also that $\iota(H_\nu)=G'$, by Lemma \ref{iotaHeqGpLEM}.
Next we will apply
\cite[Thm.\ 1.1(2)]{sM95}.
As a preparation,
note that by \cite[Lemma 2.3]{sM95},
for each $\ell$ we can find a one-parameter subgroup $\{u_\ell(t)\}_{t\in\R}$ of
$W$ which %
acts ergodically with respect to $\alpha_{k_\ell}$,
and we can then find an element 
$s_\ell\in G$ such that the trajectory $\{\Gamma g_\nu s_\ell u_\ell(t)\col t>0\}$
is uniformly distributed with respect to $\alpha_{k_\ell}$.
This last property remains valid if $s_\ell$ is replaced by
$s_\ell u_\ell(t)$ for any $t>0$,
and in this way we may modify the elements $s_1,s_2,\ldots$
so that $s_\ell\to e$ in $G$ as $\ell\to\infty$
(this is possible since $\alpha_{k_\ell}\to\nu$ in $P(\XX)$
and since the point $\Gamma g_\nu$ lies in the support of $\nu$).
Hence by \cite[Thm.\ 1.1(2)]{sM95},
for all sufficiently large $\ell$
we have $\supp(\alpha_{k_\ell})\subset\supp(\nu)\cdot s_\ell$,
or equivalently,
$\Gamma\bs\Gamma g_{k_\ell} H_{k_\ell}\subset\Gamma\bs\Gamma g_\nu H_\nu s_\ell$,
viz.,
$\Gamma (g_{k_\ell} H_{k_\ell}g_{k_\ell}^{-1}) g_{k_\ell}s_\ell^{-1}g_\nu^{-1} 
\subset\Gamma (g_\nu H_\nu g_\nu^{-1})$.
But we know that $\Gamma (g_\nu H_\nu g_\nu^{-1})$
is a closed regular submanifold of $G$,
and for each $\gamma\in\Gamma$,
$\gamma (g_\nu H_\nu g_\nu^{-1})$ is a connected component of this submanifold
(cf.\ \cite[Theorem 1.13]{mR72}).
Hence for each large $\ell$
there exists some $\gamma_\ell\in\Gamma$ such that
\begin{align}\label{PkweakconvLEMcleanpf11}
(g_{k_\ell} H_{k_\ell}g_{k_\ell}^{-1}) g_{k_\ell}s_\ell^{-1}g_\nu^{-1} 
\subset \gamma_\ell (g_\nu H_\nu g_\nu^{-1}).
\end{align}
Recall that we also have $g_{k_\ell}\to\tg$ as $\ell\to\infty$;
hence 
$g_{k_\ell}s_\ell^{-1}g_\nu^{-1}\to \tg g_\nu^{-1}$,
and since 
$g_{k_\ell}s_\ell^{-1}g_\nu^{-1}\in\gamma_\ell(g_\nu H_\nu g_\nu^{-1})$
for all large $\ell$
it follows that
there is some $\tgamma\in\Gamma$ such that
$\gamma_\ell(g_\nu H_\nu g_\nu^{-1})
=\tgamma(g_\nu H_\nu g_\nu^{-1})$
for all sufficiently large $\ell$.
For these $\ell$,
\eqref{PkweakconvLEMcleanpf11} implies
\begin{align}\label{PkweakconvLEMcleanpf12}
g_{k_\ell}s_\ell^{-1}g_\nu^{-1} \tgamma^{-1}
\in \tgamma g_\nu H_\nu g_\nu^{-1}\tgamma^{-1}
\qquad\text{and}\qquad
g_{k_\ell} H_{k_\ell}g_{k_\ell}^{-1}
\subset \tgamma g_\nu H_\nu g_\nu^{-1}\tgamma^{-1}.
\end{align}
We have proved that \eqref{PkweakconvLEMcleanpf12} holds for all sufficiently large $\ell$;
however by removing the initial elements from the sequence $k_1<k_2<\cdots$,
we may from now on assume that \eqref{PkweakconvLEMcleanpf12} holds for \textit{all} $\ell\in\Z^+$.

Next let us apply Lemmas \ref{charHsbgpsLEM} and \ref{tscrHcharacterizationLEM}
to the group
$\tgamma g_\nu H_\nu g_\nu^{-1}\tgamma^{-1}$
and
the groups $g_{k_\ell} H_{k_\ell}g_{k_\ell}^{-1}$ for all $\ell$.
This gives that 
there exist rational subspaces $L_{j}$ and $L_{\ell,j}$ of $\R^{r_j}$ 
and matrices $X_{j}$ and $X_{\ell,j}$ in $\M_{r_j\times d}(\Q)$
(for all $j\in\{1,\ldots,N\}$ and $\ell\in\Z^+$),
such that
\begin{align}\label{PkweakconvLEMcleanpf13a}
\tgamma g_\nu H_\nu g_\nu^{-1}\tgamma^{-1}=\S_{L_{1}}^{X_{1}}(\R)\times\cdots\times\S_{L_{N}}^{X_{N}}(\R).
\end{align}
and
\begin{align}\label{PkweakconvLEMcleanpf13}
g_{k_\ell} H_{k_\ell}g_{k_\ell}^{-1}
=\S_{L_{\ell,1}}^{X_{\ell,1}}(\R)\times\cdots\times\S_{L_{\ell,N}}^{X_{\ell,N}}(\R)
\qquad
(\forall \ell\in\Z^+).
\end{align}
It now follows from %
\eqref{PkweakconvLEMcleanpf12} 
that $\S_{L_{\ell,j}}^{X_{\ell,j}}(\R)\subset \S_{L_{j}}^{X_{j}}(\R)$,
and hence by Lemma \ref{SLXcontainmentcritLEM},
\begin{align}\label{PkweakconvLEMcleanpf9}
L_{\ell,j}\subset L_j\quad\text{and}\quad
X_{\ell,j}-X_j\in L_j^d,\qquad
\forall \ell\in\Z^+,\: j\in\{1,\ldots,N\}.
\end{align}
Let us also note that \eqref{PkweakconvLEMcleanpf13}
and 
$g_k=(\I_{Y_{k,1}},\cdots,\I_{Y_{k,N}})$ imply that
\begin{align}\label{PkweakconvLEMcleanpf13b}
H_{k_\ell}=\S_{L_{\ell,1}}^{X_{\ell,1}-Y_{k_{\ell},1}}(\R)\times\cdots\times\S_{L_{\ell,N}}^{X_{\ell,N}-Y_{k_{\ell},N}}(\R).
\end{align}

Now we obtain a contradiction as follows:
We have 
$\nu\neq\mu$,
since
$\nu(f)=\lim_{\ell\to\infty}\alpha_{k_\ell}(f)$
and $|\alpha_k(f)-\mu(f)|\ge\ve$ for all $k$.
Hence $H_\nu\neq G$,
and so by \eqref{PkweakconvLEMcleanpf13a} there is some $j\in\{1,\ldots,N\}$
such that $L_j\neq\R^{r_j}$ (this implies in particular $r_j>0$).
Hence we can choose some
$\vecm\in\Z^{r_j}\cap L_{j}^\perp\setminus\{\bn\}$
with $X_{j}\trans\,\vecm\in\Z^d$.
Now by \eqref{PkweakconvLEMcleanpf9}
we also have $\vecm\perp L_{\ell,j}$
and $X_{\ell,j}\trans\,\vecm=X_j\trans\,\vecm\in\Z^d$
for all $\ell$.
Let us apply this for some fixed choice of $\ell$ so large that
$k_\ell\geq\|\vecm\|^2$.
It is immediate from the definition of 
$H_{k_\ell}=H_{\alpha_{k_\ell}}$
(cf.\ \eqref{Halphadef})
that $W\subset H_{k_\ell}$;
thus 
$n_-(\R^{d-1})\subset\S_{L_{\ell,j}}^{X_{\ell,j}-Y_{k_{\ell},j}}(\R)$
(cf.\ \eqref{PkweakconvLEMcleanpf13b}),
and so by Lemma~\ref{heightLEM},
the set
$\Gamma_j\I_{Y_{k_{\ell},j}}\S_{L_{\ell,j}}^{X_{\ell,j}-Y_{k_{\ell},j}}(\R)$
is contained in $K_{j,\vecm}$.
By \eqref{PkweakconvLEMcleanpf13b},
this implies that $\Gamma\bs\Gamma g_{k_\ell}H_{k_\ell}$,
i.e.\ the support of $\alpha_{k_\ell}$,
is contained in 
$\pi(p_j^{-1}(K_{j,\vecm}))$,
and so $\alpha_{k_\ell}(\pi(p_j^{-1}(K_{j,\vecm})))=1$.
Since $\|\vecm\|^2\leq k_\ell$,
this gives a contradiction against
our assumption that 
\eqref{PkweakconvSUBLEMass} holds for $k=k_\ell$ and 
$\alpha=\alpha_{k_\ell}$.

Hence the lemma is proved.
\end{proof}

\begin{remark}
Our proof of Lemma \ref{PkweakconvSUBLEM} made use of
tools developed in \cite{sM95}.
In view of the explicit format of the groups $H_k$
known from Lemma \ref{charHsbgpsLEM},
it seems that it might be possible to 
alternatively give a more direct proof,
circumventing the use of 
\cite{sM95}, and that this may also lead to 
more explicit information on the admissible $k=k(f,\ve)$ in the statement of the lemma
(it will be seen below that this $k=k(f,\ve)$ is also admissible in the statement of Theorem \ref{MAINcleanUNIFCONVthm}).
By contrast, the number $\rho_0=\rho_0(f,\ve,\lambda,\eta,\tM)$ in
the statement of Theorem \ref{MAINcleanUNIFCONVthm}
is genuinly non-explicit, because of the application
of  %
Ratner's measure classification
in our proof.
\end{remark}

\begin{lem}\label{PkweakconvLEMclean}
Given any $f\in\C_b(\XX)$ and $\ve>0$,
there exists $k\in\Z^+$ such that
$\bigl|\nu(f)-\mu(f)\bigr|<\ve$ holds for all
$\nu\in P_k$.
\end{lem}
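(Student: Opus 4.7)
I argue by contradiction: assume that for every $k\in\Z^+$ there exists $\nu_k\in P_k$ with $|\nu_k(f)-\mu(f)|\geq\ve$. By Lemma \ref{PkmeasrhoprojLEM} each $\nu_k$ projects to the fixed Haar measure $\tmu$ on $\XX'$, and since $\tiota$ has compact fibers (each isomorphic to a quotient of $\TT_1^d\times\cdots\times\TT_N^d$), the sequence $(\nu_k)$ is tight; after extraction, $\nu_k\xrightarrow{\textup{w}}\nu_\infty$ in $P(\XX)$. The limit $\nu_\infty$ then satisfies $|\nu_\infty(f)-\mu(f)|\geq\ve$, is $W$-invariant by Lemma \ref{Winvlem}, and projects to $\tmu$ under $\tiota$.

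The key additional property I need for $\nu_\infty$ is that $\nu_\infty(\pi(p_j^{-1}(K_{j,\vecm})))=0$ for every $j$ with $r_j\neq 0$ and every $\vecm\in\Z^{r_j}\setminus\{\bn\}$. Lemma \ref{ELKIESMCMULLENtypeLEMcleaned} supplies this for each individual $\nu_k$ as soon as $\|\vecm\|^2\leq k$, but since each $\pi(p_j^{-1}(K_{j,\vecm}))$ is only $F_\sigma$, the standard Portmanteau theorem does not transport the vanishing to the weak limit. To bridge this gap I plan a diagonal construction: for each $k$ I replace $\nu_k$ by an elementary measure $\xi_k=\nu_k^{(m_k)}$ of the form \eqref{numDEFclean}, with $m_k$ large enough to preserve $|\xi_k(f)-\mu(f)|\geq\ve/2$ and, for every pair $(j,\vecm)$ with $\|\vecm\|^2\leq k$, to satisfy the quantitative bound \eqref{ELKIESMCMULLENtypeLEMcleanedpf3} produced inside the proof of Lemma \ref{ELKIESMCMULLENtypeLEMcleaned} on the \emph{open} sets appearing there. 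Re-extracting a weak subsequential limit $\nu_\infty$ of $(\xi_k)$, Portmanteau on these open sets (letting first $\delta\to 0$ and then the parameter $T\to\infty$, as at the end of the proof of Lemma \ref{ELKIESMCMULLENtypeLEMcleaned}) yields the required vanishing for $\nu_\infty$.

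I next invoke Ratner's measure classification theorem. Disintegrating the $W$-invariant measure as $\nu_\infty=\int\alpha\,d\tlambda(\alpha)$ into $W$-ergodic components, for $\tlambda$-a.e.\ $\alpha$ we have $\alpha\in\scrQ(\XX)$, and there is a representative $g_\alpha$ such that $\alpha$ is the unique $H_\alpha$-invariant probability measure on the closed orbit $\Gamma\bs\Gamma g_\alpha H_\alpha$, with $W\subset H_\alpha$. Since $\tiota_*\nu_\infty=\tmu$ has full support on $\XX'$ and each $\tiota_*\alpha$ is concentrated on the closed orbit $\Gamma'\bs\Gamma'\iota(g_\alpha)\iota(H_\alpha)$, components supported on proper subvarieties contribute nothing, so $\tlambda$-a.e.\ $\alpha$ has $\tiota_*\alpha=\tmu$ and hence $\iota(H_\alpha)=G'$ by Lemma \ref{iotaHeqGpLEM}. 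Lemmas \ref{charHsbgpsLEM} and \ref{tscrHcharacterizationLEM} then force $H_\alpha=\S_{L_1}^{Y_1}(\R)\times\cdots\times\S_{L_N}^{Y_N}(\R)$ with each $L_j\subset\R^{r_j}$ a rational subspace and $Y_j\in\M_{r_j\times d}(\Q)$.

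Finally I rule out proper $L_j$. Suppose some $L_j\neq\R^{r_j}$ with $r_j\neq 0$; choose a nonzero $\vecm\in L_j^\perp\cap\Z^{r_j}$, multiplied by a positive integer so that $Y_j^\trans\vecm\in\Z^d$. The inclusion $W\subset H_\alpha$ projects to $n_-(\R^{d-1})\subset\S_{L_j}^{Y_j}(\R)$, so Lemma \ref{heightLEM} (applied with $X=Y_j$ and $Y$ chosen to absorb the representative $g_\alpha$) shows that the $j$th-coordinate projection of $\supp(\alpha)$ is contained in $\pi(K_{j,\vecm})$; hence $\alpha(\pi(p_j^{-1}(K_{j,\vecm})))=1$, and integrating against $\tlambda$ contradicts the vanishing from the previous step. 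Therefore $L_j=\R^{r_j}$ for every $j$ and $\tlambda$-a.e.\ $\alpha$, forcing $H_\alpha=G$ and $\alpha=\mu$, so $\nu_\infty=\mu$, contradicting $|\nu_\infty(f)-\mu(f)|\geq\ve$. The main obstacle is the vanishing step: one must reach into the proof of Lemma \ref{ELKIESMCMULLENtypeLEMcleaned} to extract uniform open-set bounds along the diagonal subsequence, since the clean statement of that lemma alone is insufficient to pass to a weak limit.
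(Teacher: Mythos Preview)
Your overall architecture is sound, and you correctly isolate the crux: passing the vanishing $\nu(\pi(p_j^{-1}(K_{j,\vecm})))=0$ from the individual $\nu_k$'s to a weak limit. However, the diagonal construction you propose does not close this gap. The quantitative estimate \eqref{ELKIESMCMULLENtypeLEMcleanedpf3} inside the proof of Lemma~\ref{ELKIESMCMULLENtypeLEMcleaned} has the form $\nu_k^{(m)}(\pi(p_j^{-1}(K_{T,\delta})))\leq C'_k(\rho_m^{d-1}+\delta)$, where the constant $C'_k$ depends on the parameters $\eta_k$, $\tM_k$ and the support radius $R_k$ of $\lambda_k$ attached to the particular $\nu_k\in P_k$. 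These parameters vary freely with $k$ (nothing in Definition~\ref{Pkdef} bounds $\eta_k$ away from zero or controls $\tM_k$, $R_k$), so $C'_k$ may blow up. Consequently you cannot produce a single $\delta>0$ for which $\liminf_k\xi_k(\pi(p_j^{-1}(K_{T,\delta})))$ is small; the Portmanteau step ``let $\delta\to0$ then $T\to\infty$'' breaks down because the $\delta$ furnished by \eqref{ELKIESMCMULLENtypeLEMcleanedpf3} is $k$-dependent.

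The paper circumvents this entirely by never attempting to establish vanishing for the limit measure. Instead it first extracts, via ergodic decomposition of each $\nu_k$, a $W$-ergodic $\alpha_k$ that inherits both $|\alpha_k(f)-\mu(f)|>\ve$ and the vanishing on $K_{j,\vecm}$ for $\|\vecm\|^2\leq k$ directly from Lemma~\ref{ELKIESMCMULLENtypeLEMcleaned}. A subsequential weak limit $\nu$ of the $\alpha_k$'s is then shown, via Mozes--Shah \cite[Cor.~1.1 and Thm.~1.1(2)]{sM95}, to lie in $\scrQ(\XX)$ and to satisfy $\supp(\alpha_{k_\ell})\subset\supp(\nu)\cdot s_\ell$ for large $\ell$. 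This structural containment forces $L_{\ell,j}\subset L_j$ and $X_{\ell,j}^\trans\vecm=X_j^\trans\vecm$, so the \emph{same} vector $\vecm$ chosen from the limit structure works for $\alpha_{k_\ell}$. Lemma~\ref{heightLEM} then gives $\alpha_{k_\ell}(\pi(p_j^{-1}(K_{j,\vecm})))=1$, and for $k_\ell\geq\|\vecm\|^2$ this contradicts the vanishing for $\alpha_{k_\ell}$ that was established directly (not through any limit). Thus Mozes--Shah, which your plan does not invoke, is the device that replaces the limit-level vanishing you are trying to manufacture.
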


\begin{proof}
For the given $f$ and $\ve>0$,
we choose $k$ as in Lemma \ref{PkweakconvSUBLEM};
then consider an arbitrary measure $\nu\in P_k$.
Since $\nu$ is $W$-invariant by Lemma \ref{Winvlem},
we can apply ergodic decomposition;
this gives that there exists 
a unique Borel probability measure $\omega$ 
on $\scrE$ such that
\begin{align}\label{nuERGDEC}
\nu=\int_{\scrE}\alpha\,d\omega(\alpha).
\end{align}
Cf., e.g., \cite[Theorem 4.4]{vV63}.
Note that \eqref{nuERGDEC} together with Lemma \ref{PkmeasrhoprojLEM}
implies $\tmu=\tiota_*\nu=\int_{\scrE}\tiota_*\alpha\,d\omega(\alpha)$,
and for each $\alpha\in\scrE$,
$\tiota_*\alpha$ is an ergodic $W$-invariant measure on $\XX'$.
Hence in fact $\tiota_*\alpha=\tmu$ for $\omega$-almost all $\alpha\in\scrE$, by uniqueness of the 
ergodic decomposition of $\tmu$. 
Furthermore, 
for every 
$j\in\{1,\ldots,N\}$ with $r_j\neq0$
and every $\vecm\in\Z^{r_j}$
with $0<\|\vecm\|\leq k$,
we have
$\int_{\scrE}\alpha(\pi(p_j^{-1}(K_{j,\vecm})))\,d\omega(\alpha)=\nu(\pi(p_j^{-1}(K_{j,\vecm})))=0$,
by Lemma \ref{ELKIESMCMULLENtypeLEMcleaned},
and hence $\alpha(\pi(p_j^{-1}(K_{j,\vecm})))=0$ for $\omega$-almost all $\alpha$. %
Therefore, by Lemma \ref{PkweakconvSUBLEM},
$\bigl|\alpha(f)-\mu(f)\bigr|<\ve$ for $\omega$-almost all $\alpha$.
Hence
$\bigl|\nu(f)-\mu(f)\bigr|
=\bigl|\int_{\scrE}\alpha(f)\,d\omega(\alpha)-\mu(f)\bigr|
\leq \int_{\scrE}\bigl|\alpha(f)-\mu(f)\bigr|\,d\omega(\alpha)<\ve$.
\end{proof}

\vspace{5pt}

\begin{proof}[Proof of Theorem \ref{MAINcleanUNIFCONVthm}]
Given $f\in\C_b(\XX)$ and $\ve>0$, we choose 
$k\in\Z^+$ as in 
Lemma \ref{PkweakconvLEMclean}.
Now also let arbitrary $\lambda\in\Pac(\R^{d-1})$, $\eta>0$, $\tM\in G'\setminus\fD_{\scrS}$
be given.
Assume that there does \textit{not} exist any $\rho_0\in(0,1)$ 
such that \eqref{MAINcleanUNIFCONVthmRES} holds for all
$\rho\in(0,\rho_0)$ and $V\in\tTT\setminus \Delta_k^{(\eta)}$.
This means that there exist sequences 
$\rho_1>\rho_2>\cdots\to0$ 
and $V_1,V_2,\ldots$
in $\tTT\setminus\Delta_k^{(\eta)}$ satisfying
\begin{align}\label{MAINUNIFcleanPROP4step3pf1}
\biggl|\int_{\R^{d-1}}f\bigl(x(V_m)\tM\varphi(n_-(\vecu)D_{\rho_m})\bigr)\,d\lambda(\vecu)-
\int_{\XX} f \,d\mu\biggr|\geq\ve,
\qquad\forall m\in\Z^+.
\end{align}
Define $\nu_m\in P(\XX)$
through $\nu_m(g)=\int_{\R^{d-1}}g\bigl(x(V_m)\tM\varphi(n_-(\vecu)D_{\rho_m})\bigr)\,d\lambda(\vecu)$
for all $g\in \C_b(\XX)$
(just as in \eqref{numDEFclean}).
By \cite[Thm.\ 5]{jMaS2013a}
(applied in the same way as in the proof of Lemma \ref{PkmeasrhoprojLEM})
we have $\tiota_*\,\nu_m\to\tmu$ in $P(\XX')$.
Hence the sequence $\nu_1,\nu_2,\ldots$ in $P(\XX)$
is tight, and so by Prohorov's Theorem,
after passing to a subsequence we may assume that $\nu_1,\nu_2,\ldots$ converges to some
$\nu\in P(\XX)$. 
Then $\nu\in P_k$ (cf.\ Def.\ \ref{Pkdef}),
and thus by our choice of $k$ we have
$\bigl|\nu(f)-\mu(f)\bigr|<\ve$.
But $\nu_m$ converges weakly to $\nu$;
in particular $\nu_m(f)\to\nu(f)$ as $m\to\infty$,
and hence %
we conclude that $\bigl|\nu_m(f)-\mu(f)\bigr|<\ve$ for all sufficiently large $m$.
This is a contradiction against \eqref{MAINUNIFcleanPROP4step3pf1},
and thus Theorem \ref{MAINcleanUNIFCONVthm} is proved.
\end{proof}

Next we establish a variant of Theorem \ref{MAINcleanUNIFCONVthm},
where %
the unipotent element $n_-(\vecu)$ is replaced by a rotation:
\begin{thm}\label{MAINcleanUNIFCONVROTthm}
Let $f\in\C_b(\XX)$
and $\ve>0$ be given.
Then there exists some $k\in\Z^+$ such that for every 
$\lambda\in\Pac(\US)$, $\eta>0$ and 
$\tM\in G'\setminus\fD_{\scrS}$, there exists some $\rho_0\in(0,1)$ such that
\begin{align}\label{MAINcleanUNIFCONVROTthmRES}
\biggl|\int_{\US}f\Bigl(x(V)\tM\varphi(R(\vecv)D_\rho)\Bigr)\,d\lambda(\vecv)-
\int_{\XX} f \,d\mu\biggr|<\ve
\end{align}
for all $\rho\in(0,\rho_0)$ and all $V\in\tTT\setminus \Delta_k^{(\eta)}$.
\end{thm}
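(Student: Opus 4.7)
The plan is to deduce Theorem \ref{MAINcleanUNIFCONVROTthm} from Theorem \ref{MAINcleanUNIFCONVthm} via a horospherical reparametrization of the sphere $\US$, a standard device in this literature (compare \cite[Sec.\ 5]{jMaS2010a} and \cite[Sec.\ 5]{jMaS2013a}). I would take $k \in \Z^+$ to be the integer supplied by Theorem \ref{MAINcleanUNIFCONVthm} for $(f, \ve/4)$; the work is then to convert the rotation integral on the left-hand side of \eqref{MAINcleanUNIFCONVROTthmRES} into an integral of the unipotent form appearing in \eqref{MAINcleanUNIFCONVthmRES}, modulo controlled error terms.

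First I would reduce to the case where $\lambda$ is concentrated near a fixed direction. Since continuous compactly supported densities are dense in $L^1(\US, \sigma)$, a partition of unity on $\US$ reduces the claim to the case where $\lambda$'s density is supported in an arbitrarily small neighbourhood of some fixed $\vecv_0 \in \US$. The identity $R(\vecv k^{-1}) = k R(\vecv)$ for $k \in \SO(d)$ (immediate from $\vecv R(\vecv) = \vece_1$) permits the change of variables $\vecv \mapsto \vecv R(\vecv_0)$, which replaces $\tM$ by $\tM \varphi(R(\vecv_0))$; since right-multiplication of each $M_j$ by a common element preserves every product $M_i M_j^{-1}$, we have $\tM \varphi(R(\vecv_0)) \notin \fD_\scrS$. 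Hence I may assume $\vecv_0 = \vece_1$ and parametrize a neighbourhood of $\vece_1$ in $\US$ by $\vecu \in \R^{d-1}$ via $\vecv(\vecu) := (1, \vecu)/\sqrt{1 + \|\vecu\|^2}$, pushing $\lambda$ forward to some $\widetilde\lambda \in \Pac(\R^{d-1})$ with continuous compactly supported density.

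Second, using the explicit form of $R(\vecv(\vecu))$ as a rotation in the $2$-plane spanned by $\vece_1$ and $(0, \vecu)$ together with the conjugation identity $D_\rho\, n_-(\vecw)\, D_\rho^{-1} = n_-(\rho^d \vecw)$, one derives a factorization
\begin{align*}
R(\vecv(\vecu))\, D_\rho \;=\; \Phi(\vecu, \rho)\, n_-(\tilde\vecu(\vecu, \rho))\, D_\rho\, K(\vecu, \rho),
\end{align*}
with $K(\vecu, \rho) \in \SO(d-1)$, $\Phi(\vecu, \rho) \in \SL_d(\R)$ tending to $I$ as $\rho \to 0$ uniformly on compacta of $\vecu$, and $\tilde\vecu(\cdot, \rho) : \R^{d-1} \to \R^{d-1}$ a smooth diffeomorphism such that the pushforward of $\widetilde\lambda$ under $\vecu \mapsto \tilde\vecu(\vecu, \rho)$ (with its Jacobian) converges in total variation to some $\lambda_\infty \in \Pac(\R^{d-1})$. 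Such a computation is of the same kind as those in \cite[Sec.\ 6]{jMaS2010a}. After substituting and changing variables, and after a standard tightness argument (with tightness secured by $\tiota_*\,\nu_\rho \to \tmu$ from \cite[Thm.\ 5]{jMaS2013a} as used in Lemma \ref{PkmeasrhoprojLEM}, together with uniform continuity of $f$ on a large compact subset of $\XX$) that allows $\varphi(\Phi(\tilde\vecu, \rho))$ to be replaced by the identity with vanishing error, the left-hand side of \eqref{MAINcleanUNIFCONVROTthmRES} reduces to an expression of the form
\begin{align*}
\int_{\R^{d-1}} f\bigl(x(V) \tM \varphi(n_-(\tilde\vecu) D_\rho)\, \varphi(K(\tilde\vecu, \rho))\bigr)\, d\lambda_\rho(\tilde\vecu),
\end{align*}
with $\lambda_\rho \in \Pac(\R^{d-1})$ and $\lambda_\rho \to \lambda_\infty$.

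Third and finally, I would dispose of the remaining factor $\varphi(K(\tilde\vecu, \rho))$ using the $\SO(d)$-invariance of $\mu$ and the compactness of $\SO(d-1)$. Cover $\SO(d-1)$ by small balls $B_1, \ldots, B_M$ centred at points $K_1, \ldots, K_M$ of diameter less than some $\delta > 0$, split the integral according to which $B_i$ contains $K(\tilde\vecu, \rho)$, and replace $f(y \varphi(K))$ by $f(y\varphi(K_i))$ on the $i$-th piece using uniform continuity of $f$ on a compactum (diameter $< \delta$). Each resulting piece has the form appearing in Theorem \ref{MAINcleanUNIFCONVthm} applied to the function $g_i(x) := f(x\varphi(K_i)) \in \C_b(\XX)$ and the Pac probability measure $I_{K(\tilde\vecu, \rho) \in B_i}\, d\lambda_\rho/\lambda_\rho(\{K \in B_i\})$, yielding convergence to $\mu(g_i) = \mu(f)$ by $\SO(d)$-invariance of $\mu$; summing over $i$ gives \eqref{MAINcleanUNIFCONVROTthmRES}. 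Uniformity in $V \in \tTT \setminus \Delta_k^{(\eta)}$ is preserved throughout, since none of the above reductions touches $V$. The main obstacle is the verification of the matrix factorization and the associated convergence $\lambda_\rho \to \lambda_\infty$; both are routine but technical, in the template of earlier Marklof–Strömbergsson reductions.
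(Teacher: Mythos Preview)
Your overall strategy—localize $\lambda$ near a direction, rewrite $R(\vecv)D_\rho$ in terms of $n_-(\cdot)D_\rho$ up to a correction, then invoke Theorem~\ref{MAINcleanUNIFCONVthm}—is exactly the paper's. But the factorization you propose places the correction $\Phi$ on the \emph{left} of $n_-(\tilde\vecu)D_\rho$, and this is a genuine obstruction. To discard $\varphi(\Phi)$ from $f\bigl(x(V)\tM\varphi(\Phi)\varphi(n_-(\tilde\vecu)D_\rho K)\bigr)$ you would need the conjugate $D_\rho^{-1}n_-(\tilde\vecu)^{-1}\Phi\,n_-(\tilde\vecu)D_\rho$ to be close to $I$; but conjugation by $D_\rho^{-1}$ multiplies the $(1,j)$ entries ($j\ge2$) by $\rho^{-d}$, so ``$\Phi\to I$ uniformly'' is far from enough—those entries of $n_-^{-1}\Phi n_-$ would have to be $o(\rho^d)$, which nothing in your setup forces. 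Tightness and uniform continuity of $f$ on a compactum do not help: the two points are genuinely far apart in $\XX$. (Separately, the identity $R(\vecv k^{-1})=kR(\vecv)$ is false in general—$R$ is an arbitrary section satisfying $\vecv R(\vecv)=\vece_1$, not an equivariant map; one only gets $R(\vecv k^{-1})=kR(\vecv)h$ for some $h\in\SO(d-1)$. This is minor, but it is not ``immediate''.)

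The paper's remedy is to put the correction on the \emph{right} of $D_\rho$. After reducing to $f\in\C_c(\XX)$, one fixes $\delta>0$ with $|f(x\varphi(T))-f(x)|<\ve$ for all $T\in\scrU_\delta$, covers (a large compact piece of) $\US$ by finitely many patches $\Omega_{\vecv_0}$ on which $R(\vecv_0)^{-1}R(\vecv)\in\scrU_{\delta/2}$, and uses the exact identity
\[
R(\vecv)D_\rho \;=\; R(\vecv_0)\,n_-(\vecx(\vecv))\,D_\rho\,\smatr{a(\vecv)^{-1}}{\bn}{\rho^d\vecc(\vecv)\trans}{D(\vecv)},
\qquad \vecx(\vecv)=-a(\vecv)^{-1}\vecc(\vecv),
\]
where $R(\vecv_0)^{-1}R(\vecv)=\smatr{a}{\vecb}{\vecc\trans}{D}$. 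The right factor lies in $\scrU_\delta$ for small $\rho$, so uniform continuity of $f$ removes it directly; the left factor $R(\vecv_0)$ is absorbed into $\tM\mapsto\tM\varphi(R(\vecv_0))\in G'\setminus\fD_\scrS$. One then applies Theorem~\ref{MAINcleanUNIFCONVthm} on each patch with a \emph{single} function $f$ and a \emph{single} pushforward measure $\tlambda_{\vecv_0}=\vecx_*\lambda_{\vecv_0}\in\Pac(\R^{d-1})$, and sums. No $\SO(d-1)$ partition, no $\mu$-invariance, and no $\rho$-dependent family of measures or test functions is needed.
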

\begin{proof}
This follows from Theorem \ref{MAINcleanUNIFCONVthm}
via a fairly standard approximation argument;
cf., e.g.,
\cite[Thm.\ 5.3 and Cor.\ 5.4]{jMaS2010a}.
Some care is needed to ensure that we can obtain a uniform statement as in the theorem.

To start with,
we restrict to functions $f$ of compact support.
Thus let $f\in\C_c(\XX)$ and $\ve>0$ be given.
Fix a corresponding positive integer $k$ as in Theorem \ref{MAINcleanUNIFCONVthm}.
Now also let $\lambda\in\Pac(\US)$, $\eta>0$ and 
$\tM\in G'\setminus\fD_{\scrS}$ be given.

For $\delta>0$ we set
$\scrU_\delta=\{T\in\SL_d(\R)\col \|T-I\|<\delta\}$,
where $\|\cdot\|$ denotes the entrywise maximum norm on $d\times d$ matrices.
Thus $\scrU_\delta$ is an open neighborhood of the identity in $\SL_d(\R)$.
Since $f$ has compact support, we can fix %
$0<\delta<1$ so small that
\begin{align}\label{MAINUNIFPROP4variation2pf1}
|f(x\varphi(T))-f(x)|<\ve,   \qquad\forall x\in\XX,\: T\in\scrU_\delta.
\end{align}
Recall that $R$ is continuous when restricted to $\US$ minus one point;
it follows that there exists a compact subset
$S\subset\US$ such that the restriction of $R$ to $S$ is continuous,
and
\begin{align}\label{MAINcleanUNIFCONVROTthmpf1}
\|f\|_\infty\cdot\lambda(\US\setminus S)\leq\ve.
\end{align}
For each $\vecv_0\in S$ we set
$\Omega_{\vecv_0}=\{\vecv\in S\col R(\vecv_0)^{-1}R(\vecv)\in\scrU_{\delta/2}\}$;
this is a relatively open neighborhood of $\vecv_0$ in $S$.
Since $S$ is compact, we
can fix a finite subset $\fQ_0\subset\US$
such that the sets $\Omega_{\vecv_0}$ for $\vecv_0\in\fQ_0$ cover $S$.
Let us 
fix an arbitrary total order $\prec$ on $\fQ_0$,
and set $\Omega_{\vecv_0}':=\Omega_{\vecv_0}\setminus
\bigl(\cup_{\substack{\vecv_0'\in\fQ_0\\\vecv_0'\prec\vecv_0}}\Omega_{\vecv_0'}\bigr)$.
Then the sets %
$\Omega_{\vecv_0}'$ for $\vecv_0\in\fQ_0$ form a partition of $S$.
Set $\fQ_0'=\{\vecv_0\in\fQ_0\col\lambda(\Omega_{\vecv_0}')>0\}$,
and for each $\vecv_0\in\fQ_0'$ let
$\lambda_{\vecv_0}:=\lambda(\Omega_{\vecv_0}')^{-1}\,\lambda\big|_{\Omega_{\vecv_0}'}\in\Pac(\US)$.
Note that we now have
\begin{align}\label{MAINcleanUNIFCONVROTthmpf1a}
\lambda|_S=\sum_{\vecv_0\in\fQ_0'}\lambda(\Omega_{\vecv_0}')\lambda_{\vecv_0}.
\end{align}

Let us fix $\vecv_0\in\fQ_0'$ temporarily, and consider the functions
$E:\US\to\M_{d}(\R)$,
$a:\US\to\R$,
$\vecb,\vecc:\US\to\R^{d-1}$,
$D:\US\to\M_{d-1}(\R)$ defined by
\begin{align*}
E(\vecv)=\matr{a(\vecv)}{\vecb(\vecv)}{\vecc(\vecv)\trans}{D(\vecv)}:=R(\vecv_0)^{-1}R(\vecv)
\qquad(\vecv\in\US).
\end{align*}
Note that 
$(a(\vecv),\vecc(\vecv))=\vece_1 E(\vecv)\trans=\vecv R(\vecv_0)$
for all $\vecv\in\US$;
in particular $a(\vecv)=\vecv\cdot\vecv_0$,
and, since $\delta<1$, it follows that 
$\Omega_{\vecv_0}$ is contained in the open disc
$\scrH_{\vecv_0}=\{\vecv\in\US\col\vecv\cdot\vecv_0>\frac12\}$.
We introduce the function
$\vecx:\scrH_{\vecv_0}\to\R^{d-1}$,
$\vecx(\vecv)=-a(\vecv)^{-1}\vecc(\vecv)$;
this is a diffeomorphism of $\scrH_{\vecv_0}$ onto the open ball $\scrB_{\sqrt3}^{d-1}$.
We set $\tlambda_{\vecv_0}=\vecx_*(\lambda_{\vecv_0})\in\Pac(\R^{d-1})$.

Note that $\tM\notin\fD_{\scrS}$ implies that
$\tM\varphi(R(\vecv_0))\notin\fD_{\scrS}$ for every $\vecv_0\in\fQ_0'$.
Hence by Theorem~\ref{MAINcleanUNIFCONVthm} and our choice of $k$,
there exists $\rho_0\in(0,1)$
such that %
for all $\vecv_0\in\fQ_0'$, $\rho\in(0,\rho_0)$
and $V\in\tTT\setminus\Delta_k^{(\eta)}$, we have:
\begin{align}\label{MAINUNIFPROP4variation2pf2}
\biggl|\int_{\R^{d-1}}f\Bigl(x(V)\tM\varphi(R(\vecv_0))\,\varphi(n_-(\vecx)D_\rho)\Bigr)\,d\tlambda_{\vecv_0}(\vecx)
-\int_{\XX} f \,d\mu
\biggr|<\ve.
\end{align}
Here in the integral over $\R^{d-1}$,
we substitute $\vecx=\vecx(\vecv)$
and then use 
\eqref{MAINUNIFPROP4variation2pf1}
combined with
the fact that 
for every $\vecv\in\Omega_{\vecv_0}'$
and $\rho\in(0,\rho_0)$
we have $\smatr{a(\vecv)^{-1}}{\bn}{\rho^d\vecc(\vecv)\trans}{D(\vecv)}\in\scrU_\delta$
(this is immediate from the fact that
$E(\vecv)\in\scrU_{\delta/2}$,
once we note that $|a-1|<\delta/2$ implies $|a^{-1}-1|<2|1-a|<\delta$).
This gives:
\begin{align}\notag
\Biggl|\int_{\R^{d-1}}f(x(V)\tM\varphi(R(\vecv_0)n_-(\vecx)D_\rho))\,d\tlambda_{\vecv_0}(\vecx)
\hspace{170pt}
\\\label{MAINUNIFPROP4variation2pf3}
-\int_{\Omega_{\vecv_0}'}f\biggl(x(V)\tM\varphi\biggl(R(\vecv_0)n_-(\vecx(\vecv))D_\rho
\matr{a(\vecv)^{-1}}{\bn}{\rho^d\vecc(\vecv)\trans}{D(\vecv)}\biggr)\biggr)\,d\lambda_{\vecv_0}(\vecv)
\Biggr|<\ve.
\end{align}
Here the last integral can be simplified using
$R(\vecv_0)n_-(\vecx(\vecv))D_\rho\smatr{a(\vecv)^{-1}}{\bn}{\rho^d\vecc(\vecv)\trans}{D(\vecv)}
=R(\vecv)D_\rho$.
Hence, by \eqref{MAINUNIFPROP4variation2pf2}
and \eqref{MAINUNIFPROP4variation2pf3},
we have 
for any $\vecv_0\in\fQ_0'$, $\rho\in(0,\rho_0)$
and $V\in\tTT\setminus\Delta_k^{(\eta)}$:
\begin{align}\label{MAINUNIFPROP4variation2pf5}
\biggl|\int_{\Omega_{\vecv_0}'}f(x(V)\tM\varphi(R(\vecv)D_\rho))\,d\lambda_{\vecv_0}(\vecv)
-\int_{\XX} f \,d\mu
\biggr|<2\ve.
\end{align}
Multiplying this inequality by $\lambda(\Omega_{\vecv_0}')$
and then adding over all $\vecv_0\in\fQ_0'$
and using \eqref{MAINcleanUNIFCONVROTthmpf1a}
and \eqref{MAINcleanUNIFCONVROTthmpf1}, 
we conclude that \eqref{MAINcleanUNIFCONVROTthmRES}
holds with $4\ve$ in place of $\ve$,
for all 
$\rho\in(0,\rho_0)$
and $V\in\tTT\setminus\Delta_k^{(\eta)}$.

Thus the theorem is proved under the extra assumption that $f\in\C_c(\XX)$.
Finally, the extension to the case of arbitrary functions $f\in\C_b(\XX)$
is achieved by a completely standard approximation argument.
\end{proof}

The following is an immediate corollary of
Theorem \ref{MAINcleanUNIFCONVROTthm}:
\begin{cor}\label{MAINcleanCONVcor2}
Let $V$ be an arbitrary, fixed point in $\tTT\setminus\bigcup\, \tp_j^{\hspace{3pt}-1}(\Delta_{j,q,\vecm})$,
where the union is taken over all triples $\langle j,q,\vecm\rangle$ with
$j\in\{1,\ldots,N\}$, $r_j\neq0$,
$q\in\Z^+$ and $\vecm\in\Z^{r_j}\setminus\{\bn\}$.
Then for any $\tM\in G'$, 
$f\in\C_b(\XX)$
and
$\lambda\in\Pac(\US)$,
we have 
\begin{align}\label{MAINcleanCONVcor2RES}
\int_{\US}f\Bigl(x(V)\tM\varphi(R(\vecv)D_\rho)\Bigr)\,d\lambda(\vecv)
\to
\int_{\XX} f \,d\mu
\qquad\text{as }\:\rho\to0.
\end{align}
\end{cor}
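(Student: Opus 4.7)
The plan is to read the corollary as the natural pointwise specialization of Theorem \ref{MAINcleanUNIFCONVROTthm}. Fix $f\in\C_b(\XX)$, $\lambda\in\Pac(\US)$, and $\tM\in G'$ (with $\tM\in G'\setminus\fD_{\scrS}$, which is what the theorem requires); it suffices to show that for every $\ve>0$ the integral on the left of \eqref{MAINcleanCONVcor2RES} differs from $\int_{\XX} f\,d\mu$ by at most $\ve$ for all sufficiently small $\rho$.

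First I would apply Theorem \ref{MAINcleanUNIFCONVROTthm} to the pair $(f,\ve)$ to obtain a positive integer $k$. The key step is then to produce some $\eta>0$ with $V\notin \Delta_k^{(\eta)}$. By \eqref{DeltajkDEF}, for each $j$ with $r_j\neq 0$ the set $\Delta_{j,k}$ is a finite union of the sets $\Delta_{j,q,\vecm}$ indexed by $q\in\{1,\ldots,k\}$ and $\vecm\in\Z^{r_j}$ with $0<\|\vecm\|\le k$. By the hypothesis on $V$ we have $\tp_j(V)\notin \Delta_{j,q,\vecm}$ for every such triple, hence $\tp_j(V)\notin\Delta_{j,k}$. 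Moreover, each $\Delta_{j,q,\vecm}=\pi_j((q^{-1}\Z^{r_j}+\vecm^\perp)^d)$ is closed in $\TT_j^d$: its preimage under $\pi_j$ is $(q^{-1}\Z^{r_j}+\vecm^\perp)^d$, a $\Z^{r_j}$-invariant, locally finite union of configurations of affine rational hyperplanes, hence closed in $(\R^{r_j})^d$. Therefore $\Delta_{j,k}$ is closed in $\TT_j^d$, and $\dist(\tp_j(V),\Delta_{j,k})>0$ for each of the finitely many indices $j$ with $r_j\neq 0$. Taking $\eta$ to be a positive number strictly less than the minimum of these distances, we obtain $\tp_j(V)\notin \Delta_{j,k}^{(\eta)}$ for every such $j$, i.e.\ $V\notin\Delta_k^{(\eta)}$ by \eqref{scrDkveDEF1}.

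With $k$ and $\eta$ fixed, I would invoke Theorem \ref{MAINcleanUNIFCONVROTthm} with this $\lambda$, $\eta$, and $\tM$ to supply some $\rho_0\in(0,1)$ such that \eqref{MAINcleanUNIFCONVROTthmRES} holds for all $\rho\in(0,\rho_0)$; the convergence \eqref{MAINcleanCONVcor2RES} then follows by letting $\ve\to 0$. There is no serious obstacle: the entire argument reduces to a topological verification — closedness of the singular sets $\Delta_{j,k}$ in $\TT_j^d$ and the resulting positive separation of each coordinate $\tp_j(V)$ from $\Delta_{j,k}$ — followed by a direct invocation of the theorem, consistent with the paper's characterization of the corollary as immediate.
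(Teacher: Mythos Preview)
Your proof is correct and follows essentially the same approach as the paper's own proof, which simply observes that the hypothesis on $V$ ensures that for every $k\in\Z^+$ there is some $\eta>0$ with $V\notin\Delta_k^{(\eta)}$, and then invokes Theorem~\ref{MAINcleanUNIFCONVROTthm}. Your version supplies the details of this topological step (closedness of the finite union $\Delta_{j,k}$ and positivity of the distance from $\tp_j(V)$), and you correctly flag that the argument requires $\tM\in G'\setminus\fD_{\scrS}$, matching what the theorem provides.
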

\begin{proof}
The assumption on $V$ implies that for any $k\in\Z^+$
there is some $\eta>0$ such that $V\notin\Delta_k^{(\eta)}$.
Using this fact, the corollary is an immediate consequence of 
Theorem \ref{MAINcleanUNIFCONVROTthm}.
\end{proof}

\section{Proof of [P2] (uniform spherical equidistribution)}
\label{NEWISOsec}

We now return to using the same notation as in 
Sections \ref{SETUPsec}--\ref{verQ1Q2etcSEC};
in particular $\scrP$ is a finite union of grids in $\R^d$,
and we assume that an admissible presentation of $\scrP$ has been fixed
(cf.\ \eqref{GENPOINTSET1}, \eqref{LpsiDEF}),
and corresponding to this presentation we let the homogeneous space $\XX=\GaG$
be as defined in Section \ref{LIEGPHOMSPsec}.
In particular we have $r_1,\ldots,r_N>0$ and
\begin{align*}
\Gamma=\S_{r_1}(\Z)\times\cdots\times\S_{r_N}(\Z),
\end{align*}
which is a stricter requirement than what was imposed in 
Section \ref{UNIPOTAPPLsec}.

Our goal in this section is to complete the proof of [P2]. %
Recall that by our initial discussion %
in Section \ref{P2initialdiscsec},
the task which remains is to prove Theorem \ref{HOMDYNMAINTHM}.
A key tool in our proof will be Theorem \ref{MAINcleanUNIFCONVROTthm};
note that %
this theorem will in general be applied to a certain 
homogeneous \textit{submanifold} of our present homogeneous space $\XX$.

\subsection{Equidistribution without uniformity}
\label{nonunifequidistrSEC}

We will start by proving the following non-uniform result,
which as we will see fairly easily implies Theorem \ref{HOMDYNintrononunifTHM},
and which will also play an important role in our proof of 
Theorem \ref{HOMDYNMAINTHM}.

As in Section \ref{UNIPOTAPPLsec}, we let the subset $\fD_{\scrS}\subset G'$ be given by \eqref{DSdef}.
Recall that $\Omega=\prod_{j=1}^N P(\TT_j^d)'$; cf.\ \eqref{OMEGAdef}.
As in \eqref{tTTdef2} we set $\tTT=\TT_1^d\times\TT_2^d\times\cdots\times\TT_N^d$,
and we let $x:\tTT\to\XX$ be the natural embedding.
Finally, for any $V=\bigl( V_1,\ldots,V_N\bigr)\in\tTT$ we define:
\begin{align}\label{omegaVDEF}
\omega^{(V)}:=\bigl(\omega_1^{(V_1)},\ldots,\omega_N^{(V_N)}\bigr)\in\Omega.
\end{align}
\begin{thm}\label{nonunifTHM1}
For any $V\in\tTT$, $\tM\in G'\setminus\fD_{\scrS}$, $f\in\C_b(\XX)$ and 
$\lambda\in\Pac(\US)$
we have
\begin{align}\label{nonunifTHM1res}
\int_{\US}f(x(V) \tM\varphi(R(\vecv)D_\rho))\,d\lambda(\vecv)
\to \int_{\XX} f \,d\overline{\omega^{(V)}}
\end{align}
as $\rho\to0$.
\end{thm}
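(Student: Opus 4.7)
The plan is to realize the support of $\overline{\omega^{(V)}}$ as a closed homogeneous submanifold of $\XX$ and then transport the equidistribution problem onto that submanifold in a form directly amenable to Corollary \ref{MAINcleanCONVcor2}.

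By Proposition \ref{SLRinvprobmeasLEM} applied factorwise, $\overline{\omega^{(V)}}$ is the unique $H$-invariant probability measure on $x(V)\cdot H$, where $H := \prod_j \S_{L_j^{(V_j)}}(\R) \supset G'$; since $\tM$ and every $\varphi(R(\vecv)D_\rho)$ lie in $G' \subset H$, the whole trajectory stays inside $x(V)\cdot H$.

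The key construction is to identify a finite cover of $x(V)\cdot H$ with a homogeneous space of the type treated in Section \ref{UNIPOTAPPLsec}. With $r_j' := \dim L_j^{(V_j)}$, choose a $\Z$-basis of the rational lattice $L_j^{(V_j)} \cap \Z^{r_j}$ to produce a linear isomorphism $\phi_j: \R^{r_j'} \to L_j^{(V_j)}$; via Lemma \ref{IDCLcharLEM}, pick $X_j \in \M_{r_j\times d}(\Q)$ with $\tV_j - X_j \in L_j^{(V_j)d}$ for some lift $\tV_j$ of $V_j$; set $\Gamma_j' := \{M\in\SL_d(\Z) : X_j(M-I) \in \M_{r_j\times d}(\Z)\}$ (a congruence subgroup of $\SL_d(\Z)$), $\tG := \prod_j \S_{r_j'}(\R)$, and $\tGamma := \prod_j (\Gamma_j' \ltimes \M_{r_j'\times d}(\Z))$. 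The induced product isomorphism $\phi_*: \tG \to H$, $(M,U)\mapsto(M,\phi_j(U))$, intertwines the $\SL_d(\R)$-factors, and the map $\tg\mapsto\Gamma\cdot\I_X\cdot\phi_*(\tg)$ descends to a finite covering $\Phi: \tGamma\bs\tG \to x(V)\cdot H$. Under $\Phi$, the trajectory lifts to $\widetilde{x}(V'')\,\tM\,\varphi(R(\vecv)D_\rho)$ in $\tGamma\bs\tG$, where $V'' := \pi\bigl(\phi^{-1}(\tV - X)\bigr)$ lies in the torus fiber of $\tGamma\bs\tG$ and $\widetilde{x}$ is the analogue of \eqref{xmapDEF2}.

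The decisive point is that $V''$ avoids the singular set $\bigcup \tp_j^{-1}(\Delta_{j,q,\vecm})$ of Corollary \ref{MAINcleanCONVcor2}: if $V''_j$ lay in some $\Delta_{j,q,\vecm}$ with $\vecm\in\Z^{r_j'}\setminus\{\bn\}$, applying $\phi_j$ and absorbing the rational shift $X_j\Z^d$ would give $\tV_j\Z^d \subset N^{-1}\Z^{r_j} + \phi_j(\vecm^\perp)$ for some $N\in\Z^+$, which by the minimality characterization in Lemma \ref{IDCLcharLEM} would force $L_j^{(V_j)}\subset\phi_j(\vecm^\perp)$, a proper rational subspace of $L_j^{(V_j)}$ --- contradiction. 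Combined with $\tM\in G'\setminus\fD_\scrS = \tG'\setminus\fD_\scrS$, Corollary \ref{MAINcleanCONVcor2} applied to $\tilde f := f\circ\Phi \in \C_b(\tGamma\bs\tG)$ then yields
\begin{align*}
\int_\US \tilde f\bigl(\widetilde{x}(V'')\,\tM\,\varphi(R(\vecv)D_\rho)\bigr)\,d\lambda(\vecv) \;\to\; \int_{\tGamma\bs\tG}\tilde f\,d\mu_{\tGamma\bs\tG} = \int_\XX f\,d(\Phi_*\mu_{\tGamma\bs\tG}),
\end{align*}
whose left-hand side equals the integral in \eqref{nonunifTHM1res} by construction of $\Phi$. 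Finally, $\Phi_*\mu_{\tGamma\bs\tG}$ factors as a product (in $j$) of $\S_{L_j^{(V_j)}}(\R)$-invariant probability measures on the orbits $x(V_j)\cdot\S_{L_j^{(V_j)}}(\R) \subset \XX_j$, hence equals $\overline{\omega^{(V)}}$ by the uniqueness asserted in Proposition \ref{SLRinvprobmeasLEM}. The central obstacle is the singular-set verification above, which is what rescues the non-generic starting point $x(V)$ (whose very torus coordinate would otherwise lie in every $\Delta_{j,q,\vecm}$ in the original space $\XX$) once the problem is passed to the smaller homogeneous space $\tGamma\bs\tG$.
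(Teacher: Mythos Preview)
Your proof is correct and follows essentially the same strategy as the paper: both arguments identify the support of $\overline{\omega^{(V)}}$ as a closed $H$-orbit via Proposition \ref{SLRinvprobmeasLEM}, build an isomorphism (using a $\Z$-basis of each $L_j^{(V_j)}\cap\Z^{r_j}$ and a rational shift $X_j$) onto a homogeneous space $\tGamma\bs\tG$ of the type treated in Section \ref{UNIPOTAPPLsec}, verify via the minimality characterization in Lemma \ref{IDCLcharLEM} that the transported torus coordinate avoids every $\Delta_{j,q,\vecm}$, and conclude by Corollary \ref{MAINcleanCONVcor2}. The only cosmetic difference is that the paper works with the conjugated group $\prod_j\S_{L_j}^{X_j}(\R)$ and a specific principal congruence subgroup, whereas you work with the unconjugated $\prod_j\S_{L_j}(\R)$ and the (slightly larger) congruence subgroup $\{M:X_j(M-I)\in\M_{r_j\times d}(\Z)\}$; neither choice affects the argument.
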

\begin{proof} %
Let $V=\bigl( V_1,\ldots,V_N\bigr)\in\tTT$ be given.
For each $j\in\{1,\ldots,N\}$
we write $L_j:=L_j^{(V_j)}$,
fix some $X_j\in\M_{r_j\times d}(\Q)$ 
such that 
$V_j-\pi(X_j)\in\bigl(\SS_j^{(V_j)}\bigr)^{\circ\,d}$,
and fix some 
$\tV_j\in X_j+L_j^d\subset\M_{r_j\times d}(\R)$ with $\pi(\tV_j)=V_j$
(cf.\ the discussion above \eqref{xVSLR}).
Using the notation in \eqref{SVXdefrep},
we then set:
\begin{align*}
H:=\S_{L_1}^{X_1}(\R)\times\cdots\times\S_{L_N}^{X_N}(\R)
=\S_{L_1}^{\tV_1}(\R)\times\cdots\times\S_{L_N}^{\tV_N}(\R).
\end{align*}
As in \eqref{xVSLR} and the proof of Lemma \ref{mujqjustifyLEM},
we have for each $j$ that
$\Gamma_j$ intersects $\S_{L_j}^{\tV_j}(\R)$ in a lattice,
and the orbit $x(V_j)\cdot\S_{L_j}(\R)=\Gamma_j\bs\Gamma_j\S_{L_j}^{\tV_j}(\R)\I_{\tV_j}$ %
is a closed embedded submanifold of $\XX_j$ which carries a unique
$\S_{L_j}(\R)$-invariant probability measure;
and by Proposition \ref{SLRinvprobmeasLEM}
this measure equals $\overline{\omega_j^{(V_j)}}$.
Taking the product over all $j$,
and writing $\tV:=\bigl(\tV_1,\ldots,\tV_N\bigr)\in\prod_{j=1}^N\M_{r_j\times d}(\R)$,
it follows that $\Gamma\bs\Gamma H\I_{\tV}$ is a closed embedded submanifold of $\XX$
and, using also \eqref{omegaVDEF}
and \eqref{oomegaDEF},
that $\overline{\omega^{(V)}}$ is the unique 
$\prod_{j=1}^N\S_{L_j}(\R)$-invariant probability measure on 
$\Gamma\bs\Gamma H\I_{\tV}$.

Let $\Gamma_{\! H}:=\Gamma\cap H$,
and let $\mu$ be the unique $H$-invariant probability measure on 
the homogeneous submanifold $\Gamma\bs\Gamma H=\Gamma_{\! H}\bs H$ of $\XX$.
In order to prove the theorem, %
we will prove that for any
$\tM\in G'\setminus\fD_{\scrS}$, $F\in\C_b(\Gamma_{\! H}\bs H)$ and $\lambda\in\Pac(\US)$,
we have
\begin{align}\label{nonunifTHM1pf1}
\int_{\US}F
\Bigl(\Gamma_{\! H} \I_{\tV}\tM\varphi(R(\vecv)D_\rho)\I_{\tV}^{-1}\Bigr)\,d\lambda(\vecv)
\to
\int_{\Gamma_{\! H}\bs H} F \,d\mu
\end{align}
as $\rho\to0$.
(To see that the integral to the left in \eqref{nonunifTHM1pf1} is well-defined,
note that $\I_{\tV} g\I_{\tV}^{-1}\in H$ for all $g\in G'$.)

To see that the convergence in \eqref{nonunifTHM1pf1}
implies the statement of the theorem,
let  $\tau:\XX\to\XX$ be right multiplication by $\I_{\tV}$;
then %
$\overline{\omega^{(V)}}=\tau_*(\mu)$,
and the left side in \eqref{nonunifTHM1res} can be expressed as
\begin{align*}
\int_{\US}(f\circ\tau)\Bigl(\Gamma\I_{\tV} \tM\varphi(R(\vecv)D_\rho)\I_{\tV}^{-1}\Bigr)\,d\lambda(\vecv).
\end{align*}
Hence \eqref{nonunifTHM1res} follows from
\eqref{nonunifTHM1pf1} if we let $F$ be the restriction of $f\circ\tau$
to $\Gamma\bs\Gamma H$.

\vspace{4pt}

We now turn to the proof of \eqref{nonunifTHM1pf1}.
We will start by fixing an isomorphism from $H$ onto the Lie group
\begin{align*}
\tG:=\S_{s_1}(\R)\times\cdots\times\S_{s_N}(\R),
\end{align*}
where $s_j:=\dim L_j$.
Given any linear bijection $\varphi:L\stackrel{\sim}{\rightarrow}\R^s$,
where $L$ is a linear subspace of $\R^r$ (for some $r\in\Z^+$) of dimension $s\geq0$,
we write $\S_\varphi$ for the following Lie group isomorphism:
\begin{align}\label{SphiDEFnew}
\S_\varphi:\S_L(\R)\stackrel{\sim}{\rightarrow}\S_s(\R),
\qquad \S_\varphi\bigl((M,U)\bigr)=(M,\varphi^d(U)),
\end{align}
where $\varphi^d$ is the linear bijection
from $L^d$ onto $\M_{s\times d}(\R)$
given by applying $\varphi$ to each column of the matrix.
(If $s=0$ so that $L=\{\bn\}$ and $\S_s(\R)=\SL_d(\R)$,
the definition in \eqref{SphiDEFnew} should of course be interpreted to say $\S_\varphi\bigl((M,0)\bigr)=M$.)
One verifies immediately that $\S_\varphi$ is indeed a Lie group isomorphism.
Also for any $X\in\M_{r\times d}(\R)$, we introduce the following
Lie group isomorphism: %
\begin{align}\label{SphiXDEFnew}
\S_\varphi^X:\S_L^X(\R)\stackrel{\sim}{\rightarrow}\S_s(\R),
\qquad \S_\varphi^X(g)=\S_\varphi\bigl(\I_X^{-1}\,g\,\I_X\bigr).
\end{align}
Next, for each $j$, we fix, once and for all,
a linear bijection $\varphi_j:L_j\stackrel{\sim}{\rightarrow}\R^{s_j}$
with the property that
$\varphi_j(L_j\cap\Z^{r_j})=\Z^{s_j}$;
this is possible since $L_j$ is a rational subspace of $\R^{r_j}$.
Finally we let $\Phi$ be the Lie group isomorphism
\begin{align*}
\Phi:=\S_{\varphi_1}^{X_1}\times\cdots\times\S_{\varphi_N}^{X_N}:
\quad H\stackrel{\sim}{\rightarrow}\tG.
\end{align*}

For each $j$, fix a positive integer $q_j$ such that $X_j\in q_j^{-1}\M_{r_j\times d}(\Z)$,
and let $\Gamma_j'$ be the principal congruence subgroup of $\SL_d(\Z)$ of level $q_j$:
\begin{align}\label{princcongrsubgpdef}
\Gamma_j':=\{M\in\SL_d(\Z)\col M\equiv I\mod q_j\}.
\end{align}
Then set
\begin{align}\label{tGammachoice}
\tGamma_j:=\Gamma_j'\ltimes\M_{s_j\times d}(\Z)
\hspace{12pt} (j=1,\ldots,N)
\qquad\text{and}\quad
\tGamma:=\tGamma_1\times\cdots\times\tGamma_N.
\end{align}
We now claim that
\begin{align}\label{nonunifTHM1pf2}
\tGamma\subset\Phi(\Gamma_H).
\end{align}
To verify this, it suffices to verify that
$\tGamma_j\subset\S_{\varphi_j}^{X_j}\bigl(\S_{r_j}(\Z)\cap\S_{L_j}^{X_j}(\R)\bigr)$
for each $j$.
To do so, note that given any 
$(M,U)\in \tGamma_j$,
we have 
$\bigl(\S_{\varphi_j}^{X_j}\bigr)^{-1}(M,U)\in\S_{L_j}^{X_j}(\R)$
and
\begin{align*}
\bigl(\S_{\varphi_j}^{X_j}\bigr)^{-1}(M,U)
=\bigl(M,(\varphi_j^d)^{-1}(U)+X_j(M-I)\bigr).
\end{align*}
Here
$(\varphi_j^d)^{-1}(U)\in\M_{r_j\times d}(\Z)$
since $U\in\M_{s_j\times d}(\Z)$ and $\varphi_j^{-1}(\Z^{s_j})=L_j\cap\Z^{r_j}$;
also $X_j(M-I)\in\M_{r_j\times d}(\Z)$ since 
$X_j\in q_j^{-1}\M_{r_j\times d}(\Z)$ and 
$M\in\Gamma_j'$;
hence 
$(\S_{\varphi_j}^{X_j})^{-1}(M,U)\in\S_{r_j}(\Z)$.
This completes the proof of \eqref{nonunifTHM1pf2}.
Note that it follows from \eqref{nonunifTHM1pf2}
that we have a well-defined covering map
\begin{align}\label{PHIGammaHdisc3new}
J:\tGamma\bs \tG\to \Gamma_{\!H}\bs H,
\qquad J(\tGamma g)=\Gamma_{\!H}\,\Phi^{-1}(g).
\end{align}

Next, the result of Corollary \ref{MAINcleanCONVcor2},
applied to the homogeneous space $\tGamma\bs \tG$,
can be stated as follows:
Let $\tmu$ be the invariant probability measure on
$\tGamma\bs \tG$.
Let $W=(W_1,\ldots,W_N)$ be an arbitrary element in $\prod_{j=1}^N\M_{s_j\times d}(\R)$
such that for every 
$j\in\{1,\ldots,N\}$ 
and every rational subspace $L'\subsetneq\R^{s_j}$,
we have
\begin{align}\label{MAINcleanCONVcor2rep2cond}
W_j\notin \M_{s_j\times d}(\Q)+(L')^d.
\end{align}
Then for any $\tM\in G'\setminus\fD_{\scrS}$,
$F_1\in\C_b(\tGamma\bs \tG)$ and $\lambda\in\Pac(\US)$,
we have
\begin{align}\label{MAINcleanCONVcor2rep2}
\int_{\US}F_1\Bigl(\tGamma \I_W\tM\varphi(R(\vecv)D_\rho)\Bigr)\,d\lambda(\vecv)
\to
\int_{\tGamma\bs \tG} F_1 \,d\tmu
\qquad\text{as }\:\rho\to0.
\end{align}
(Note that $\I_W\in\tG$ since $W\in\prod_{j=1}^N\M_{s_j\times d}(\R)$;
cf.\ \eqref{IVdef}.)
Starting from \eqref{MAINcleanCONVcor2rep2} and
applying the 
continuous mapping theorem
with the covering map $J$
(cf.\ \eqref{PHIGammaHdisc3new}),
we conclude:
For any element $W=(W_1,\ldots,W_N)$ in
$\prod_{j=1}^N L_j^d$
such that for every $j\in\{1,\ldots,N\}$
and every rational subspace $L'\subsetneq\R^{s_j}$,
\begin{align}\label{MAINcleanCONVcor2rep2condtransl}
\varphi_j^d(W_j)\notin \M_{s_j\times d}(\Q)+(L')^d,
\end{align}
and for any $\tM\in G'\setminus\fD_{\scrS}$,
$F_2\in\C_b(\Gamma_{\!H}\bs H)$ and $\lambda\in\Pac(\US)$,
\begin{align}\label{MAINcleanCONVcor2reptransl2}
\int_{\US}F_2\Bigl(\Gamma_{\!H}\I_W \I_X
\tM\varphi(R(\vecv)D_\rho)
\I_X^{-1}\Bigr)\,d\lambda(\vecv)
\to
\int_{\Gamma_{\!H}\bs H} F_2\,d\mu
\qquad\text{as }\:\rho\to0,
\end{align}
where $X:=(X_1,\ldots,X_N)\in\prod_{j=1}^N\M_{r_j\times d}(\Q)$.
In the above deduction we used the fact that
$J_*(\tmu)=\mu$,
the unique $H$-invariant probability measure on
$\Gamma_{\!H}\bs H$.

We wish to apply the last convergence relation 
with $W:=\tV-X$, i.e.\ $W_j:=\tV_j-X_j$ for each $j$.
This $W$ lies in $\prod_{j=1}^N L_j^d$,
and we proceed to verify that also the condition \eqref{MAINcleanCONVcor2rep2condtransl}
holds for every $j\in\{1,\ldots,N\}$
and every rational subspace $L'\subsetneq\R^{s_j}$.
Assume the opposite,
i.e.\ assume that there exists $j\in\{1,\ldots,N\}$ and a 
rational subspace $L'\subsetneq\R^{s_j}$
such that $\varphi_j^d(W_j)\in \M_{s_j\times d}(\Q)+(L')^d$.
Using the fact that
$\varphi_j^{-1}(\Q^{s_j})\subset\Q^{r_j}$
(since $\varphi_j^{-1}(\Z^{s_j})=L_j\cap\Z^{r_j}$),
we conclude $W_j\in\M_{r_j\times d}(\Q)+(L'')^d$,
where $L'':=\varphi_j^{-1}(L')$.
Since $W_j:=\tV_j-X_j$ and $X_j\in\M_{r_j\times d}(\Q)$,
it follows that
$\tV_j\in\M_{r_j\times d}(\Q)+(L'')^d$,
or equivalently
\begin{align}\label{MAINcleanCONVcor2rep2condtranslpf}
\tV_{j,1},\ldots,\tV_{j,d}\in\Q^{r_j}+L'',
\end{align}
where
$\tV_{j,1},\ldots,\tV_{j,d}\in\R^{r_j}$ are the column vectors of $\tV_j$.
But we have $L''\subsetneq L_j$ and $L''$ is a rational subspace of $\R^{r_j}$;
also $L_j=L_j^{(V_j)}=\fJ(\{\tV_{j,1},\ldots,\tV_{j,d}\})$
(cf.\ \eqref{LjDEF}),
and thus by Lemma \ref{IDCLcharLEM},
$L_j$ is the smallest rational subspace of $\R^{r_j}$ with the property that 
$\tV_{j,1},\ldots,\tV_{j,d}\in\Q^{r_j}+L_j$.
This is a contradiction against \eqref{MAINcleanCONVcor2rep2condtranslpf}.
This completes the proof that the condition
\eqref{MAINcleanCONVcor2rep2condtransl} is fulfilled for our choice of $W$.

Note also that $\I_W\in H$ since $W\in\prod_{j=1}^N L_j^d$,
and $\I_W\I_X=\I_{\tV}$.
Hence for any given $F\in\C_b(\Gamma_{\! H}\bs H)$,
also the function $F_2$ defined by
$F_2(\Gamma_{\! H}h)=F(\Gamma_{\!H} h\I_W^{-1})$
lies in $\C_b(\Gamma_{\! H}\bs H)$;
and applying \eqref{MAINcleanCONVcor2reptransl2} to this function $F_2$
we conclude that \eqref{nonunifTHM1pf1} holds for the given function $F$.
\end{proof}
\begin{remark}\label{XpsireplXrem}
If the point $V=\bigl( V_1,\ldots,V_N\bigr)\in\tTT$ satisfies
$\tr_{i_\psi}(V_{j_\psi})=\bn$ for some $\psi\in\Psi$,
then the statement of Theorem \ref{nonunifTHM1} also 
holds with ``$\XX^\psi$'' in place of ``$\XX$'',
i.e.\ for any $\tM\in G'\setminus\fD_{\scrS}$, $f\in\C_b(\XX^\psi)$ and $\lambda\in\Pac(\US)$
we have
\begin{align}\label{XpsireplXremres}
\int_{\US}f(x(V) \tM\varphi(R(\vecv)D_\rho))\,d\lambda(\vecv)
\to \int_{\XX^\psi} f \,d\overline{\omega^{(V)}}
\end{align}
as $\rho\to0$.
\end{remark}
\begin{proof}
By \cite[Lemma 4.26]{oK2002},
this follows from Theorem \ref{nonunifTHM1}
if we can only verify that 
$\overline{\omega^{(V)}}(\XX^\psi)=1$
and %
$x(V) \tM\varphi(R(\vecv)D_\rho)\in\XX^\psi$ for all $\rho$ and $\vecv$.
The first of these statements %
is immediate from
\eqref{XjiDEF}, Lemma \ref{oomegapinullLEM} and Lemma \ref{omegajVtriLEM}.
For the second statement,
note that $\XX^\psi$ is preserved by right multiplication of any $G'$-element;
hence it suffices to verify that $x(V)\in\XX^\psi$.
But writing $\psi=(j,i)$,
and taking $\tV_j\in\M_{r_j\times d}(\R)$ so that $V_j=\pi(\tV_j)$,
we have $\r_i(\tV_j)\in\Z^d$ since 
$\tr_{i}(V_j)=\bn$.
Now $x(V_j)=\Gamma_j\I_{\tV_j}$ and
$\Z^d\,\a_i(\I_{\tV_j})=\Z^d\,(\I,\r_i(\tV_j))=\Z^d$,
which is a lattice containing $\bn$.
Hence, by \eqref{XjiDEF},
$x(V)\in\XX^\psi$,
and the proof is complete.
\end{proof}

Let us note that 
Theorem \ref{HOMDYNintrononunifTHM}
is an immediate consequence of 
Theorem \ref{nonunifTHM1}:
\begin{proof}[Proof of Theorem \ref{HOMDYNintrononunifTHM}]
By \eqref{g0qdef},
$g_0^{(\vecq)}:=\I_{U^{(\vecq)}}\,\tM$,
where $\tM=(M_1,\ldots,M_N)$ with the $M_j$s coming from the fixed presentation of
$\scrP$ in \eqref{LpsiDEF}, \eqref{GENPOINTSET1}.
This $\tM$ lies outside $\fD_{\scrS}$,
by \eqref{GENPOINTSET1req}.
Hence the left hand side of \eqref{nonunifTHM1res}
equals the left hand side of \eqref{HOMDYNintrononunifTHMRES}
if we choose $V:=\pi(U^{(\vecq)})$,
i.e.\ $V=(V_1,\ldots,V_N)$ with $V_j=\pi(U_j^{(\vecq)})$.
With this choice,
$\omega_j^{(\vecq)}=\omega_j^{(V_j)}$ %
holds by definition,
and hence by Proposition \ref{SLRinvprobmeasLEM}
and
\eqref{muqDEF}, \eqref{oomegaDEF}, \eqref{omegaVDEF},
we have 
$\overline{\omega^{(V)}}=\mu^{(\vecq)}$,
meaning that also the right hand sides of \eqref{nonunifTHM1res}
and \eqref{HOMDYNintrononunifTHMRES} agree.
\end{proof}

\subsection{A first uniform result}
\label{firstunifresultSEC}
We will now prove a %
uniform equidistribution result,
Theorem~\ref{MAINUNIFPROP4new} below,
which, in combination with the non-uniform result of Theorem \ref{nonunifTHM1},
will play a key role in our proof of Theorem \ref{HOMDYNMAINTHM}.

For any $\psi\in\Psi$ and $j\in\{1,\ldots,N\}$ we pick an arbitrary point $\vecq\in\scrL_\psi$,
and define
\begin{align}\label{Yjpsidef}
\YY^\psi_j:=\pi(U_j^{(\vecq)})+(\SS_j^\psi)^d\subset\TT_j^d.
\end{align}
This is a connected component %
of the group 
$(\tSS_j^\psi)^d$;
cf.\ Lemma \ref{Ojpsi0welldefLEM}.
Note that 
$\YY^\psi_j$ is independent of the choice of $\vecq$,
since $\pi\bigl(U_j^{(\vecq)}\bigr)-\pi\bigl(U_j^{(\vecq')}\bigr)\in(\SS_j^{\psi})^d$ for any two $\vecq,\vecq'\in\scrL_\psi$,
as was noted in the proof of Lemma \ref{Ojpsi0welldefLEM}.
Let us also fix %
a matrix $X_j^\psi\in\M_{r_j\times d}(\Q)$ with the property that
\begin{align}\label{Xjpsireq}
\YY_j^\psi=\pi\bigl(X_j^\psi+(L_j^\psi)^d\bigr).
\end{align}
(Proof of existence: Choose any $\vecq\in\scrL_\psi$;
then by Lemma \ref{IDCLcharLEM} 
we can choose $X_j^\psi\in\M_{r_j\times d}(\Q)$ 
so that $U_j^{(\vecq)}-X_j^\psi\in \bigl(L_j^{(\vecq)}\bigr)^d$;
using \eqref{Yjpsidef} and Lemma \ref{LjqsubsetLjpsi0LEM}
it then follows that
\eqref{Xjpsireq} holds.)
Furthermore, we fix
a linear bijection $\varphi_j^\psi:L_j^\psi\stackrel{\sim}{\rightarrow}\R^{s}$
(with $s=s(\psi,j)=\dim L_j^\psi$)
with the property that
$\varphi_j^\psi(L_j^\psi\cap\Z^{r_j})=\Z^{s}$.
These matrices $X_j^\psi$ and bijections $\varphi_j^\psi$
will be kept fixed throughout the present section.
We also introduce the following map:
\begin{align}\label{tvarphijpsiDEF}
\widetilde{\varphi_j^\psi}:\YY_j^\psi\to\M_{s\times d}(\R/\Z);\qquad
\widetilde{\varphi_j^\psi}\bigl(\pi(X_j^\psi+W)\bigr)=\pi'\bigl((\varphi_j^\psi)^d(W)\bigr)
\quad \bigl(W\in(L_j^\psi)^d\bigr),
\end{align}
where $\pi'$ is the projection map $\M_{s\times d}(\R)\to\M_{s\times d}(\R/\Z)$.
It follows from the defining property of $\varphi_j^\psi$ that the map
$\widetilde{\varphi_j^\psi}$ is well-defined,
and that $\widetilde{\varphi_j^\psi}$ is a diffeomorphism from $\YY_j^\psi$ onto the torus $\M_{s\times d}(\R/\Z)$.

Next, for any $\psi\in\Psi$, 
$j\in\{1,\ldots,N\}$, %
$k\in\Z^+$ and $\eta>0$, we set
\begin{align}\label{DeltapsijketaDEF}
\Delta_{\psi,j,k}^{(\eta)}:=
\begin{cases}
\bigl(\widetilde{\varphi_j^\psi}\bigr)^{-1}\bigl(\Delta_{j,k}^{(\eta)}\bigr)
&\text{if }\:L_j^\psi\neq\{\bn\},
\\
\emptyset
&\text{if }\:L_j^\psi=\{\bn\},
\end{cases}
\end{align}
where if $L_j^\psi\neq\{\bn\}$, %
the set $\Delta_{j,k}^{(\eta)}\subset\M_{s\times d}(\R/\Z)$
is defined as on p.\ \pageref{DeltajkDEF},
but using $s=\dim L_j^\psi$ ($>0$)
in the place of $r_j$, so that ``$\TT_j^d$'' on p.\ \pageref{DeltajkDEF}
becomes %
$\M_{s\times d}(\R/\Z)$.

Next, for each $\psi\in\Psi$, we define 
\begin{align*}
\YY^\psi=\YY^\psi_1\times\cdots\times\YY^\psi_N\subset\tTT
\end{align*}
and for any $k\in\Z^+$ and $\eta>0$:
\begin{align}\label{DeltapsiketaDEF}
\Delta_{\psi,k}^{(\eta)}
:=\bigl\{V=\bigl( V_1,\ldots,V_N\bigr)\in\YY^\psi\col V_j\in\Delta_{\psi,j,k}^{(\eta)}
\text{ for some }j\bigr\}.
\end{align}

\begin{thm}\label{MAINUNIFPROP4new}
Let $\psi\in\Psi$,
$f\in\C_b(\XX^{\psi})$
and $\ve>0$ be given.
Then there exists some $k\in\Z^+$ such that for every 
$\lambda\in\Pac(\US)$, $\eta>0$ and 
$\tM\in G'\setminus\fD_{\scrS}$, 
there exists some $\rho_0\in(0,1)$ such that
\begin{align}\label{MAINUNIFPROP4newres2}
\biggl|\int_{\US}f\bigl(x(V)\tM\varphi(R(\vecv)D_\rho)\bigr)\,d\lambda(\vecv)-
\int_{\XX^{\psi}} f \,d\overline{\omega^{\psi}}\biggr|<\ve
\end{align}
for all $\rho\in(0,\rho_0)$ and all $V\in\YY^{\psi}\setminus \Delta_{\psi,k}^{(\eta)}$.
\end{thm}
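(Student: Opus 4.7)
The plan is to reduce uniformly over $V$ to equidistribution on a ``cleaner'' homogeneous space $\tGamma\bs\tG$, for which Theorem~\ref{MAINcleanUNIFCONVROTthm} directly supplies the needed uniform estimate; this mirrors the proof of Theorem~\ref{nonunifTHM1}, where the fixed-$V$ case was treated via the simpler non-uniform Corollary~\ref{MAINcleanCONVcor2}. The key preliminary observation is that all the trajectories under consideration live in one and the same submanifold of $\XX^\psi$: for any $V\in\YY^\psi$, picking a representative $\tV = X^\psi + W'$ with $W'\in(L^\psi)^d := \prod_j(L_j^\psi)^d$, one has $\I_{\tV} = \I_{X^\psi}\I_{W'}$, and since each of $\I_{W'}$, $\tM = ((M_j,0))_j$, and $\varphi(R(\vecv)D_\rho)$ has its ``matrix part'' in $\prod_j(L_j^\psi)^d$ (trivially $\bn\in L_j^\psi$ for the latter two), the product $\I_{W'}\tM\varphi(R(\vecv)D_\rho)$ lies in $\S_{L^\psi}(\R) := \prod_j\S_{L_j^\psi}(\R)$. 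Hence $x(V)\tM\varphi(R(\vecv)D_\rho)\in\scrM := \Gamma\bs\Gamma\I_{X^\psi}\S_{L^\psi}(\R)$ for every $V,\vecv,\rho$, and $\scrM$ is a closed embedded submanifold of $\XX^\psi$. By an adaptation of Proposition~\ref{SLRinvprobmeasLEM} (using the description of $\omega_j^\psi$ as normalized Haar measure on $\scrO_j^\psi\subset(\tSS_j^\psi)^d$), $\overline{\omega^\psi}$ is supported on $\scrM$ and is the unique right $\S_{L^\psi}(\R)$-invariant probability measure on $\scrM$.

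Next I transfer to the cleaner space. Let $s_j := \dim L_j^\psi$, pick a positive integer $q_j$ with $X_j^\psi\in q_j^{-1}\M_{r_j\times d}(\Z)$, let $\Gamma_j'\subset\SL_d(\Z)$ be the principal congruence subgroup of level $q_j$, and set $\tGamma_j := \Gamma_j'\ltimes\M_{s_j\times d}(\Z)$, $\tGamma := \prod_j\tGamma_j$, and $\tG := \prod_j\S_{s_j}(\R)$. The Lie group isomorphism $\Phi_0 := \prod_j\S_{\varphi_j^\psi}:\S_{L^\psi}(\R)\xrightarrow{\sim}\tG$ (cf.~\eqref{SphiDEFnew}) satisfies $\I_{X^\psi}\Phi_0^{-1}(\tGamma)\I_{X^\psi}^{-1}\subset\Gamma$, by the same congruence-level computation as in~\eqref{nonunifTHM1pf2}, so the formula $J_0(\tGamma g) := \Gamma\,\I_{X^\psi}\,\Phi_0^{-1}(g)$ gives a well-defined covering map $J_0:\tGamma\bs\tG\to\scrM$, which intertwines the right $\tG$-action on $\tGamma\bs\tG$ with the right $\S_{L^\psi}(\R)$-action on $\scrM$ via $\Phi_0$. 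Consequently $J_{0\,*}\,\tmu = \overline{\omega^\psi}$, with $\tmu$ the invariant probability measure on $\tGamma\bs\tG$. For $V\in\YY^\psi$, setting $V'' := \varphi^d(W') := ((\varphi_1^\psi)^d(W_1'),\ldots,(\varphi_N^\psi)^d(W_N'))$ and $V' := \pi'(V'') = \widetilde{\varphi^\psi}(V)\in\tTT' := \prod_j\M_{s_j\times d}(\R/\Z)$, the identities $\Phi_0(\I_{W'}) = \I_{V''}$, $\Phi_0(\tM) = \tM$, and $\Phi_0(\varphi(R(\vecv)D_\rho)) = \varphi(R(\vecv)D_\rho)$ together give
\[
J_0\bigl(\tGamma\,\I_{V''}\,\tM\,\varphi(R(\vecv)D_\rho)\bigr) = x(V)\,\tM\,\varphi(R(\vecv)D_\rho).
\]

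Putting $\tf := f\circ J_0 \in \C_b(\tGamma\bs\tG)$, the LHS of~\eqref{MAINUNIFPROP4newres2} therefore equals $\int_{\US}\tf\bigl(\tGamma\,\I_{V''}\,\tM\,\varphi(R(\vecv)D_\rho)\bigr)\,d\lambda(\vecv)$. Applying Theorem~\ref{MAINcleanUNIFCONVROTthm} to the homogeneous space $\tGamma\bs\tG$ and the function $\tf$ (with the tori $\M_{s_j\times d}(\R/\Z)$ playing the role of $\TT_j^d$) produces an integer $k = k(\tf,\ve)$ such that for every $\lambda\in\Pac(\US)$, $\eta>0$, and $\tM\in G'\setminus\fD_{\scrS}$ there exists $\rho_0\in(0,1)$ with
\[
\biggl|\int_{\US}\tf\bigl(\tGamma\,\I_{V''}\,\tM\,\varphi(R(\vecv)D_\rho)\bigr)\,d\lambda(\vecv) - \int_{\tGamma\bs\tG}\tf\,d\tmu\biggr| < \ve
\]
for all $\rho\in(0,\rho_0)$ and all $V'\in\tTT'\setminus\Delta_k^{(\eta)}$. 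Since $\widetilde{\varphi^\psi} = \prod_j\widetilde{\varphi_j^\psi}$ is a componentwise diffeomorphism $\YY^\psi\to\tTT'$, the very definitions~\eqref{DeltapsijketaDEF}--\eqref{DeltapsiketaDEF} give the exact equivalence $V\in\Delta_{\psi,k}^{(\eta)}\iff V'\in\Delta_k^{(\eta)}$ (factors with $L_j^\psi=\{\bn\}$, i.e.\ $s_j=0$, contribute nothing to either singular set). Combined with $\int\tf\,d\tmu = \int f\,d\overline{\omega^\psi}$ from the identity $J_{0\,*}\,\tmu = \overline{\omega^\psi}$, this is exactly~\eqref{MAINUNIFPROP4newres2}. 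The main technical work lies not in any single step but in orchestrating the identifications: adapting Proposition~\ref{SLRinvprobmeasLEM} to yield the characterisation of $\overline{\omega^\psi}$ on $\scrM$, verifying the well-definedness of the covering map $J_0$ (which uses only the denominator condition $q_j$, together with admissibility implicitly via the existence of $X_j^\psi$ with~\eqref{Xjpsireq}), and checking the commutativity of the diagram relating $x(V)$ to $\tGamma\I_{V''}$---since all the deep analytic content (Ratner's measure classification and the Mozes--Shah uniformity) is already packaged inside Theorem~\ref{MAINcleanUNIFCONVROTthm}.
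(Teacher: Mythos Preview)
Your proof is correct and follows essentially the same strategy as the paper's: transfer the problem to the ``cleaner'' homogeneous space $\tGamma\bs\tG$ with $\tG=\prod_j\S_{s_j}(\R)$ and $\tGamma_j=\Gamma_j'\ltimes\M_{s_j\times d}(\Z)$ (principal congruence level $q_j$), and invoke Theorem~\ref{MAINcleanUNIFCONVROTthm} there. The only difference is bookkeeping: the paper conjugates first, working with $H=\I_{X^\psi}\S_{L^\psi}(\R)\I_{X^\psi}^{-1}$, the intermediate quotient $\Gamma_H\bs H$, and the twisted isomorphism $\Phi=\prod_j\S_{\varphi_j^\psi}^{X_j^\psi}$, whereas you keep the untwisted $\S_{L^\psi}(\R)$ and $\Phi_0$ and build the shift by $\I_{X^\psi}$ directly into your covering map $J_0$; composing the paper's maps gives exactly your $J_0$. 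Your identification of $\overline{\omega^\psi}$ as the unique $\S_{L^\psi}(\R)$-invariant probability measure on $\scrM$ is the same claim the paper proves (there via a generic $W\in X_j^\psi+(L_j^\psi)^d$ and a direct application of Proposition~\ref{SLRinvprobmeasLEM}, so ``adaptation'' slightly undersells how directly it applies), and the equivalence $V\in\Delta_{\psi,k}^{(\eta)}\Leftrightarrow V'\in\Delta_k^{(\eta)}$ is indeed immediate from the definitions, exactly as in the paper's final paragraph.
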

To see that the statement of Theorem \ref{MAINUNIFPROP4new} makes sense,
note that for every $V\in\YY^\psi$ we have $x(V)\in\XX^\psi$
by the following Lemma \ref{xofYpsilem};
thus also $x(V)g\in\XX^\psi$ for all $g\in G'$;
and we also have $\overline{\omega^\psi}(\XX^\psi)=1$, by 
Lemma \ref{oomegapinullLEM} and Lemma \ref{omegajpsiLEM}.

\begin{lem}\label{xofYpsilem}
For any $\psi\in\Psi$ and $V\in\YY^\psi$, 
we have $\tr_{i_\psi}(V_{j_\psi})=\bn$ 
and $x(V)\in\XX^\psi$.
\end{lem}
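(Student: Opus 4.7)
The plan is to establish the two claims in order, with the second following directly from the first.

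For the first claim, I would pick any $\vecq \in \scrL_\psi$ and write $V_{j_\psi} = \pi(U_{j_\psi}^{(\vecq)}) + W$ for some $W \in (\SS_{j_\psi}^\psi)^d$, which is possible by the definition \eqref{Yjpsidef} of $\YY_{j_\psi}^\psi$. The key is then to show that each of the two summands is killed by $\tr_{i_\psi}$. For $\pi(U_{j_\psi}^{(\vecq)})$, this follows because $\vecq \in \scrL_\psi$ implies $\r_{i_\psi}(U_{j_\psi}^{(\vecq)}) \in \Z^d$, as observed in the proof of Lemma \ref{omegajqpiinv0LEM} (which in turn follows from \eqref{LpsiDEF} and \eqref{Ujqdef}). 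For $W$, I would invoke the fact from the first paragraph of the proof of Lemma \ref{omegajpsiLEM}: since $(j_\psi, i_\psi) = \psi$, the $i_\psi$-th coordinate of $\vecc_{j_\psi}^\psi$ equals $1$ and $\r_{i_\psi}(W_{j_\psi}^\psi) = \bn$, which by \eqref{Ljpsi0DEF} forces $L_{j_\psi}^\psi \perp \vece_{i_\psi}$; hence every point of $\SS_{j_\psi}^\psi = \pi(L_{j_\psi}^\psi)$ has vanishing $i_\psi$-th coordinate, and in particular $\tr_{i_\psi}(W) = \bn$. Adding the two contributions yields $\tr_{i_\psi}(V_{j_\psi}) = \bn$.

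For the second claim, I would argue exactly as in the second paragraph of the proof of Remark \ref{XpsireplXrem}. Namely, since $\XX^\psi = p_{j_\psi}^{-1}(\XX_{j_\psi}^{(i_\psi)})$ by \eqref{XjiDEF}, and $\tp_{j_\psi}(x(V)) = x(V_{j_\psi})$, it suffices to verify $x(V_{j_\psi}) \in \XX_{j_\psi}^{(i_\psi)}$. Lifting $V_{j_\psi}$ to any $\tV_{j_\psi} \in \M_{r_{j_\psi} \times d}(\R)$ with $\pi(\tV_{j_\psi}) = V_{j_\psi}$, we have $x(V_{j_\psi}) = \Gamma_{j_\psi} \I_{\tV_{j_\psi}}$, and the first claim gives $\r_{i_\psi}(\tV_{j_\psi}) \in \Z^d$. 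Consequently
\[
\Z^d \, \a_{i_\psi}(\I_{\tV_{j_\psi}}) = \Z^d \cdot (\I, \r_{i_\psi}(\tV_{j_\psi})) = \Z^d + \r_{i_\psi}(\tV_{j_\psi}) = \Z^d,
\]
which is a lattice containing $\bn$; hence $x(V_{j_\psi}) \in \XX_{j_\psi}^{(i_\psi)}$ by the definition of $\XX_j^{(i)}$ in \eqref{XjiDEF}, completing the proof.

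There is no real obstacle here: the lemma is a direct bookkeeping exercise that unpacks the definitions of $\YY^\psi_j$, $\SS_{j_\psi}^\psi$, $\XX^\psi$, and $x$. The only subtlety worth highlighting is the use of the admissibility-free fact that $L_{j_\psi}^\psi$ is orthogonal to $\vece_{i_\psi}$, which is what makes translations by $(\SS_{j_\psi}^\psi)^d$ preserve the vanishing of the $i_\psi$-th row.
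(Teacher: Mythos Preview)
Your proof is correct and follows essentially the same approach as the paper's own argument: both pick $\vecq\in\scrL_\psi$, use that $\r_{i_\psi}(U_{j_\psi}^{(\vecq)})\in\Z^d$ (from \eqref{LpsiDEF} and \eqref{Ujqdef}) together with $L_{j_\psi}^\psi\perp\vece_{i_\psi}$ (from the proof of Lemma~\ref{omegajpsiLEM}) to obtain $\tr_{i_\psi}(V_{j_\psi})=\bn$, and then deduce $x(V)\in\XX^\psi$ by the computation $\Z^d\,\a_{i_\psi}(\I_{\tV_{j_\psi}})=\Z^d$. The only cosmetic difference is that the paper works with a single lift of $V_{j_\psi}$ throughout, while you work in the torus for the first claim and then lift for the second.
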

\begin{proof}
Assume $V=(V_1,\ldots,V_N)\in\YY^\psi$.
Write $\psi=(j,i)$, and choose a point $\vecq\in\scrL_\psi$.
Take $W\in\M_{r_j\times d}(\R)$ so that $V_j=\pi(W)$.
It follows from $V_j\in\YY^\psi_j$ that
$W\in U_j^{(\vecq)}+(L_j^\psi)^d+\M_{r_j\times d}(\Z)$.
It follows from \eqref{LpsiDEF} and \eqref{Ujqdef}
that $\r_i\bigl(U_j^{(\vecq)}\bigr)\in\Z^d$,
and we noted in the proof of Lemma \ref{omegajpsiLEM}
that $L_j^\psi\perp\vece_i$.
Hence $\r_i(W)\in\Z^d$.
This shows that $\tr_{i}(V_{j})=\bn$,
and it also implies that the grid $\Z^d\,\a_i(\I_W)=\Z^d+W$ contains $\bn$, %
i.e.\ the point $x(V_j)=\Gamma_j\I_W$ lies in $\XX_j^{(i)}$.
Hence $x(V)\in\XX^\psi$.
\end{proof}

\begin{proof}[Proof of Theorem \ref{MAINUNIFPROP4new}]
Let $\psi\in\Psi$ be given.
Let us set $X:=(X_1^\psi,\ldots,X_N^\psi)$ and
\begin{align}\label{Hpsidef}
H:=\S_{L_1^\psi}^{X_1^\psi}(\R)\times\cdots\times\S_{L_N^\psi}^{X_N^\psi}(\R)
=\I_X\Bigl(\S_{L_1^\psi}(\R)\times\cdots\times\S_{L_N^\psi}(\R)\Bigr)\I_X^{-1}.
\end{align}
Recall that 
$\overline{\omega^\psi}=\overline{\omega^\psi_1}\otimes\cdots\otimes\overline{\omega^\psi_N}$.
We claim that $\overline{\omega^\psi}$ equals the unique 
$\I_X^{-1}H \I_X$-invariant probability measure
on $\Gamma\bs\Gamma H\I_X$.
To prove this, it suffices to prove that for each fixed $j$,
$\overline{\omega_j^\psi}$ equals the unique 
$\S_{L_j^\psi}(\R)$-invariant
probability measure on 
$\Gamma_j\bs\Gamma_j\I_{X_j^\psi}\S_{L_j^\psi}(\R)$.
To this end, fix an arbitrary matrix $W\in X_j^\psi+(L_j^\psi)^d$
which is generic in the sense that it lies 
outside $\M_{r_j\times d}(\Q)+L^d$ for every
rational subspace $L\subsetneq L_j^\psi$,
and set $V=\pi(W)\in\TT_j^d$.
Then $L_j^{(V)}=L_j^\psi$ by Lemma~\ref{IDCLcharLEM},
and hence by Proposition \ref{SLRinvprobmeasLEM},
$\overline{\omega_j^{(V)}}$ is the unique
$\S_{L_j^\psi}(\R)$-invariant
probability measure on 
$\Gamma_j\bs\Gamma_j\I_{W}\S_{L_j^\psi}(\R)=
\Gamma_j\bs\Gamma_j\I_{X_j^\psi}\S_{L_j^\psi}(\R)$.
Furthermore, we have
$\bigl(\SS_j^{(V)}\bigr)^\circ=\pi\bigl(L_j^{(V)}\bigr)=\pi(L_j^\psi)$ %
and thus
$V+\bigl(\SS_j^{(V)}\bigr)^{\circ\, d}=\pi(X_j^\psi+(L_j^\psi)^d)
=\YY_j^\psi=\pi\bigl(U_j^{(\vecq)}\bigr)+(\SS_j^\psi)^d$
for any $\vecq\in\scrL_\psi$.
This implies $\scrO_j^{(V)}=\scrO_j^{\psi}$
(cf.\ \eqref{Ojpsi0DEF} and \eqref{OjVDEF}),
and hence 
$\omega_j^{(V)}=\omega_j^\psi$.
This completes the proof of the claim.

Set $\Gamma_H:=\Gamma\cap H$,
and let $\mu$ be the $H$-invariant probability measure on
$\Gamma_H\bs H$.

Next, for each $j$ let us write $s_j:=\dim L_j^\psi$,
and recall that we have fixed a 
linear bijection $\varphi_j^\psi:L_j^\psi\stackrel{\sim}{\rightarrow}\R^{s_j}$
with the property that
$\varphi_j^\psi(L_j^\psi\cap\Z^{r_j})=\Z^{s_j}$.
Set
\begin{align*}
\tG:=\S_{s_1}(\R)\times\cdots\times\S_{s_N}(\R),
\end{align*}
and let $\Phi$ be the Lie group isomorphism
\begin{align*}
\Phi:=\S_{\varphi_1^\psi}^{X_1^\psi}\times\cdots\times\S_{\varphi_N^\psi}^{X_N^\psi}:
\quad H\stackrel{\sim}{\rightarrow}\tG
\end{align*}
(using the notation from \eqref{SphiXDEFnew}).
For each $j$, choose $q_j\in\Z^+$ so that $X_j^\psi\in q_j^{-1}\Z$,
and let $\Gamma_j'$ be the principal congruence subgroup of $\SL_d(\Z)$ of order $q_j$ (cf.\ \eqref{princcongrsubgpdef});
then define $\tGamma_j$ and $\tGamma$ as in \eqref{tGammachoice}.
By an argument entirely similar to the discussion in the proof of
Theorem \ref{nonunifTHM1} (leading up to \eqref{MAINcleanCONVcor2reptransl2}),
one verifies that Theorem \ref{MAINcleanUNIFCONVROTthm}
applied to the homogeneous space $\tGamma\bs\tG$,
yields the following result:
For any $f\in\C_b(\Gamma_H\bs H)$
and $\ve>0$,
there exists some $k\in\Z^+$ such that for every 
$\lambda\in\Pac(\US)$, $\eta>0$ and $\tM\in G'\setminus\fD_{\scrS}$,
there exists some $\rho_0\in(0,1)$ such that
\begin{align}\label{MAINcleanUNIFCONVpropRESreptransl}
\biggl|\int_{\R^{d-1}}f\Bigl(\Gamma_H\I_W \I_X
\tM\varphi(R(\vecv)D_\rho)
\I_X^{-1}\Bigr)\,d\lambda(\vecv)
-
\int_{\Gamma_H\bs H} f\,d\mu\biggr|
<\ve
\end{align}
for all $\rho\in(0,\rho_0)$ and all 
$W=\bigl( W_1,\ldots,W_N\bigr)\in\prod_{j=1}^N(L_j^\psi)^d$
satisfying
$\pi'_j((\varphi_j^\psi)^d(W_j))\notin\Delta_{j,k}^{(\eta)}$ for every $j\in\{1,\ldots,N\}$ with $s_j>0$.
In the last condition, 
the set $\Delta_{j,k}^{(\eta)}$ is defined exactly as on p.\ \pageref{DeltajkDEF}
but using the dimension $s_j$ in place of $r_j$
(so that ``$\TT_j^d$'' on p.\ \pageref{DeltajkDEF}
becomes %
$\M_{s_j\times d}(\R/\Z)$),
and $\pi'_j$ is the projection $\M_{s_j\times d}(\R)\to\M_{s_j\times d}(\R/\Z)$. 

Finally, let us write $\tV:=W+X$ %
in the previous result;
thus $\tV_j:=W_j+X_j^\psi\in X_j^\psi+(L_j^\psi)^d$ for each $j$,
and also $\I_W\I_X=\I_{\tV}$ in \eqref{MAINcleanUNIFCONVpropRESreptransl}.
Then in view of the definitions
\eqref{DeltapsijketaDEF} and \eqref{DeltapsiketaDEF},
the condition on $W$ 
is equivalent to 
$\pi(\tV)\notin\Delta_{\psi,k}^{(\eta)}$.
Hence, 
by an argument completely
similar to the proof that \eqref{nonunifTHM1pf1} suffices to give Theorem \ref{nonunifTHM1},
the result stated around \eqref{MAINcleanUNIFCONVpropRESreptransl}
implies the statement of Theorem \ref{MAINUNIFPROP4new}.
\end{proof}

Next let us note that by combining
Theorem \ref{MAINUNIFPROP4new} with Theorem \ref{nonunifTHM1},
we immediately obtain a variant of %
Theorem \ref{MAINUNIFPROP4new},
where the limit measure 
$\overline{\omega^\psi}$
in \eqref{MAINUNIFPROP4newres2}
is replaced by $\overline{\omega^{(V)}}$:

\begin{thm}\label{MAINUNIFPROP4COR}
For any $\psi\in\Psi$,
$f\in\C_b(\XX^{\psi})$
and $\ve>0$,
there exists some $k\in\Z^+$ such that for every 
$\lambda\in\Pac(\US)$,
$\eta>0$
and every $\tM\in G'\setminus\fD_{\scrS}$, 
there exists some $\rho_0\in(0,1)$ such that
\begin{align}\label{MAINUNIFPROP4CORres2}
\biggl|\int_{\US}f\bigl(x(V)\tM\varphi(R(\vecv)D_\rho)\bigr)\,d\lambda(\vecv)-
\int_{\XX^{\psi}} f \,d\overline{\omega^{(V)}}\biggr|<\ve
\end{align}
for all $\rho\in(0,\rho_0)$ and all $V\in\YY^\psi\setminus\Delta_{\psi,k}^{(\eta)}$.
\end{thm}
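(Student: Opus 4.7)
The plan is to deduce Theorem~\ref{MAINUNIFPROP4COR} as a direct combination of Theorem~\ref{MAINUNIFPROP4new} and Theorem~\ref{nonunifTHM1} (in its $\XX^\psi$-form given by Remark~\ref{XpsireplXrem}), by observing that along the allowed set $\YY^\psi\setminus\Delta_{\psi,k}^{(\eta)}$ the two putative limit measures $\overline{\omega^\psi}$ and $\overline{\omega^{(V)}}$ must be very close against the test function $f$.

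First I would apply Theorem~\ref{MAINUNIFPROP4new} to $\psi$, $f$ and $\ve/2$ (rather than $\ve$). This yields an integer $k\in\Z^+$ such that for every $\lambda\in\Pac(\US)$, $\eta>0$ and $\tM\in G'\setminus\fD_{\scrS}$ there is some $\rho_0\in(0,1)$ with
\begin{align*}
\biggl|\int_{\US}f\bigl(x(V)\tM\varphi(R(\vecv)D_\rho)\bigr)\,d\lambda(\vecv)-\int_{\XX^{\psi}} f\,d\overline{\omega^{\psi}}\biggr|<\tfrac{\ve}{2}
\end{align*}
for all $\rho\in(0,\rho_0)$ and all $V\in\YY^{\psi}\setminus\Delta_{\psi,k}^{(\eta)}$. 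I will claim that this same $k$ works for Theorem~\ref{MAINUNIFPROP4COR}, and the same $\rho_0$ may be used as well (in the worst case after shrinking it).

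Next, fix any $V\in\YY^\psi\setminus\Delta_{\psi,k}^{(\eta)}$. By Lemma~\ref{xofYpsilem}, $\tr_{i_\psi}(V_{j_\psi})=\bn$, so the hypothesis of Remark~\ref{XpsireplXrem} is fulfilled. Applying Theorem~\ref{nonunifTHM1} in the form \eqref{XpsireplXremres} (with $\lambda,\tM$ as above and $f\in\C_b(\XX^\psi)$) gives
\begin{align*}
\lim_{\rho\to0}\int_{\US}f\bigl(x(V)\tM\varphi(R(\vecv)D_\rho)\bigr)\,d\lambda(\vecv)=\int_{\XX^\psi} f\,d\overline{\omega^{(V)}}.
\end{align*}
Passing to the limit $\rho\to0$ in the inequality of the previous paragraph therefore yields
\begin{align*}
\biggl|\int_{\XX^\psi} f\,d\overline{\omega^{(V)}}-\int_{\XX^\psi} f\,d\overline{\omega^{\psi}}\biggr|\leq\tfrac{\ve}{2},\qquad\forall V\in\YY^\psi\setminus\Delta_{\psi,k}^{(\eta)}.
\end{align*}
Note that this bound is $\rho$-independent and holds simultaneously for \emph{all} allowed $V$.

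Finally I would conclude by the triangle inequality: for every $\rho\in(0,\rho_0)$ and every $V\in\YY^\psi\setminus\Delta_{\psi,k}^{(\eta)}$,
\begin{align*}
\biggl|\int_{\US}f\bigl(x(V)\tM\varphi(R(\vecv)D_\rho)\bigr)\,d\lambda(\vecv)-\int_{\XX^\psi}f\,d\overline{\omega^{(V)}}\biggr|
\leq\tfrac{\ve}{2}+\tfrac{\ve}{2}=\ve,
\end{align*}
which is precisely \eqref{MAINUNIFPROP4CORres2}. There is no substantive obstacle here; the only thing to check carefully is the applicability of Remark~\ref{XpsireplXrem} on $\YY^\psi$ (which is exactly what Lemma~\ref{xofYpsilem} provides), so that both convergence statements live on the same space $\XX^\psi$ and can legitimately be combined.
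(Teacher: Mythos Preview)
Your proof is correct and follows essentially the same approach as the paper's: apply Theorem~\ref{MAINUNIFPROP4new}, use Theorem~\ref{nonunifTHM1} via Remark~\ref{XpsireplXrem} (invoking Lemma~\ref{xofYpsilem}) to pin down the integral of $f$ against $\overline{\omega^{(V)}}$, and then conclude by the triangle inequality. The only cosmetic difference is that you apply Theorem~\ref{MAINUNIFPROP4new} with $\ve/2$ to land exactly on $\ve$, whereas the paper applies it with $\ve$ and ends with $2\ve$.
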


\begin{proof}
Given $\psi,f,\ve$,
take $k$ as in Theorem \ref{MAINUNIFPROP4new}.
Now also let $\lambda\in\Pac(\US)$, $\eta>0$ and $\tM\in G'\setminus\fD_{\scrS}$
be given.
Take $\rho_0\in(0,1)$ as in Theorem \ref{MAINUNIFPROP4new},
i.e.\ so that \eqref{MAINUNIFPROP4newres2} holds
for all $\rho\in(0,\rho_0)$ and all $V\in\YY^{\psi}\setminus \Delta_{\psi,k}^{(\eta)}$.
Now for any fixed $V\in\YY^\psi$
we have $\tr_{i_\psi}(V_{j_\psi})=\bn$ 
by Lemma \ref{xofYpsilem},
and so by Theorem \ref{nonunifTHM1} and Remark \ref{XpsireplXrem},
the convergence in \eqref{XpsireplXremres} holds as $\rho\to0$.
Combining this fact with \eqref{MAINUNIFPROP4newres2} gives
\begin{align}\label{MAINUNIFPROP4CORpf1}
\biggl|\int_{\XX^{\psi}} f \,d\overline{\omega^{(V)}}-
\int_{\XX^{\psi}} f \,d\overline{\omega^{\psi}}\biggr|\leq\ve,
\qquad\forall V\in\YY^{\psi}\setminus \Delta_{\psi,k}^{(\eta)}.
\end{align}
Combining \eqref{MAINUNIFPROP4newres2} and \eqref{MAINUNIFPROP4CORpf1},
we conclude that \eqref{MAINUNIFPROP4CORres2}, with $2\ve$ in place of $\ve$, 
holds for all
$\rho\in(0,\rho_0)$ and all $V\in\YY^\psi\setminus\Delta_{\psi,k}^{(\eta)}$.
\end{proof}

\subsection{Proof of the uniformity in [P2]; later steps}
\label{P2prooflaterSEC}
Note that for any $\psi\in\Psi$ and $\vecq\in\scrL_\psi$
we have 
$\pi\bigl(U_j^{(\vecq)}\bigr)\in\YY_j^\psi$ for all $j\in\{1,\ldots,N\}$,
by \eqref{Yjpsidef},
and hence $\pi\bigl(U^{(\vecq)}\bigr)\in\YY^\psi$.
The following proposition shows that 
by taking $\eta$ %
small,
we can ensure that the density of points $\vecq\in\scrL_\psi$
for which $\pi\bigl(U^{(\vecq)}\bigr)$ falls inside the
``singular'' set $\Delta_{\psi,k}^{(\eta)}$,
is %
small.

\begin{prop}\label{UksetsaregoodPROP}
For every $\psi\in\Psi$ and $k\in\Z^+$ we have
\begin{align}\label{UksetsaregoodPROPres}
\lim_{\eta\to 0}
\limsup_{T\to\infty}
\frac{\#\bigl\{\vecq\in\scrL_{\psi}\cap\scrB_T^d\col
\pi\bigl(U^{(\vecq)}\bigr)\in \Delta_{\psi,k}^{(\eta)}\bigr\}}{T^d}=0.
\end{align}
\end{prop}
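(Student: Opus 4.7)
\bigskip

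The plan is to reduce the proposition to a torus equidistribution statement and then use the admissibility of the presentation to rule out degenerate behaviour. First, I would exploit the union bound
\[
\Delta_{\psi,k}^{(\eta)}\subset \bigcup_{\substack{j:\: L_j^\psi\neq\{\bn\}}}\bigcup_{\substack{1\leq q\leq k\\ 0<\|\vecb\|\leq k,\:\vecb\in\Z^{s_j}}}
\tp_j^{\hspace{3pt}-1}\!\bigl((\widetilde{\varphi_j^\psi})^{-1}(\Delta_{j,q,\vecb}^{(\eta)})\bigr),
\]
where $s_j=\dim L_j^\psi$, so it suffices to prove the analogous density bound with $\Delta_{\psi,k}^{(\eta)}$ replaced by any single $(\widetilde{\varphi_j^\psi})^{-1}(\Delta_{j,q,\vecb}^{(\eta)})$. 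Parametrising $\scrL_\psi$ via $\vecq(\vecm)=c_\psi(\vecm+\vecw_\psi)M_{j_\psi}$, $\vecm\in\Z^d$, formula \eqref{Ujqformula} gives $U_j^{(\vecq(\vecm))}=W_j^\psi-\vecc_j^\psi\vecm T_j^\psi$, so the map $\phi_j:\vecm\mapsto\widetilde{\varphi_j^\psi}(\pi(U_j^{(\vecq(\vecm))}))$ is an affine-linear map from $\Z^d$ into the torus $\M_{s_j\times d}(\R/\Z)$.

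Next I would invoke Weyl equidistribution for affine-linear sequences in a torus: the image $\phi_j(\Z^d)$ is equidistributed in its closure $\oY\subseteq \M_{s_j\times d}(\R/\Z)$ (a coset of a closed connected subgroup) with respect to Haar measure $\nu_{\oY}$. Sandwiching $I_{\Delta_{j,q,\vecb}^{(\eta)}}$ between continuous functions, one obtains
\[
\limsup_{T\to\infty}\frac{\#\{\vecm\in\Z^d\cap[-T,T]^d\col \phi_j(\vecm)\in\Delta_{j,q,\vecb}^{(\eta)}\}}{(2T+1)^d}\leq\nu_{\oY}\!\bigl(\overline{\Delta_{j,q,\vecb}^{(\eta)}}\bigr),
\]
which in turn tends to $\nu_{\oY}(\Delta_{j,q,\vecb})$ as $\eta\to 0$. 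Since $\Delta_{j,q,\vecb}$ is a closed subgroup of $\M_{s_j\times d}(\R/\Z)$ and $\oY$ is a coset of a subgroup, $\nu_{\oY}(\Delta_{j,q,\vecb})=0$ as soon as $\oY\not\subseteq\Delta_{j,q,\vecb}$.

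The hard part is the remaining claim: for every $\vecb\in\Z^{s_j}\setminus\{\bn\}$ and every $q\in\Z^+$, $\oY\not\subseteq\Delta_{j,q,\vecb}$. Argue by contradiction and analyse the two necessary subconditions, namely $\phi_j(\bn)\in\Delta_{j,q,\vecb}$ and $G_{\phi_j}\subseteq\Delta_{j,q,\vecb}^\circ$, where $G_{\phi_j}$ is the subgroup generated by the shifts $\phi_j(\vece_k)-\phi_j(\bn)$. The shifts pull back to rank-one matrices $\vecc_j^\psi\vece_kT_j^\psi$; writing $\vecc_j^\psi=\vecl+\vecn$ with $\vecl\in L_j^\psi$, $\vecn\in\Z^{r_j}$ (possible by admissibility in the case $j=j_\psi$; automatic in the case $j\neq j_\psi$ since $\R\vecc_j^\psi\subseteq L_j^\psi$), and setting $\vecgamma:=\varphi_j^\psi(\vecl)\in\R^{s_j}$, the vanishing of the $d$ coordinate characters of $\Delta_{j,q,\vecb}$ on $G_{\phi_j}$ translates, after a matrix computation, to $q(\vecb\cdot\vecgamma)T_j^\psi\in\M_d(\Z)$. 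When $j\neq j_\psi$ this forces $\vecb\cdot\vecgamma=0$ because $T_j^\psi\notin\scrS$ by \eqref{GENPOINTSET1req}; when $j=j_\psi$ it directly gives $q(\vecb\cdot\vecgamma)\in\Z$. Combining this with the additional constraints coming from $\phi_j(\bn)\in\Delta_{j,q,\vecb}$ (which yields $q\vecb\cdot\varphi_j^\psi(W_{j,\ell}^\psi)\in\Z$ for each $\ell$ modulo the rational shifts coming from $X_j^\psi\in\M_{r_j\times d}(\Q)$), I would extend $\vecb\circ\varphi_j^\psi$ trivially on $(L_j^\psi)^\perp$ to a rational vector $\veca\in\Q^{r_j}$. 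Then $\veca$ satisfies $\veca\cdot\vecc_j^\psi\in\Q$ (or $\veca\cdot\vecc_j^\psi=0$ when $j\neq j_\psi$) and $\veca\cdot W_{j,\ell}^\psi\in\Q$ for all $\ell$, and hence by Lemma~\ref{IDCLcharLEM2} (and Remark~\ref{IDCLcharLEM2rem2}) $\veca\perp L_j^\psi=\fL(S)$. But $\veca|_{L_j^\psi}=\vecb\circ\varphi_j^\psi$, forcing $\vecb=\bn$, a contradiction.

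The main obstacle I foresee is the careful bookkeeping in Step~4 to handle the auxiliary rational matrix $X_j^\psi$ that enters the definition of $\widetilde{\varphi_j^\psi}$, and in particular to correctly transfer the condition $\phi_j(\bn)\in\Delta_{j,q,\vecb}$ into a rational condition on $\veca\cdot W_{j,\ell}^\psi$. Modulo this bookkeeping, the argument reduces exactly to the characterisation of $L_j^\psi$ as $\fL$ of the vectors $\vecc_j^\psi$ and $W_{j,\ell}^\psi$ via Lemma~\ref{IDCLcharLEM2}, so admissibility enters in a clean and essential way.
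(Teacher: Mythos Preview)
Your proposal follows the same architecture as the paper's proof: decompose $\Delta_{\psi,k}^{(\eta)}$ via a union bound, apply Weyl equidistribution of the affine map $\vecm\mapsto\pi(U_j^{(\vecq(\vecm))})$ in its orbit closure, and then show that the limiting exceptional sets have measure zero in that closure. The paper carries this out in $\TT_j^d$ directly, whereas you pass to $\M_{s_j\times d}(\R/\Z)$ via $\widetilde{\varphi_j^\psi}$; these are equivalent. For the non-containment step, the paper invokes Lemma~\ref{Ljpsi0keypropLEM2}, while your character-theoretic contradiction argument essentially inlines the proof of that lemma through the characterisation of $L_j^\psi$ in \eqref{Ljpsi0DEF} together with Lemma~\ref{IDCLcharLEM2}.

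There is, however, one genuine gap. Your assertion that $\oY$ is a coset of a closed \emph{connected} subgroup is false in general: for example, if $j=j_\psi$ and all ratios $c_{j,i}/c_\psi$ are rational, the generators $-\vecgamma\vece_k$ are rational and $\overline{\langle\text{shifts}\rangle}$ can be finite. Consequently the implication ``$\nu_{\oY}(\Delta_{j,q,\vecb})=0$ as soon as $\oY\not\subseteq\Delta_{j,q,\vecb}$'' fails, since some connected components of $\oY$ may lie in $\Delta_{j,q,\vecb}$ while others do not, yielding $\nu_{\oY}(\Delta_{j,q,\vecb})\in(0,1)$. The fix---and this is exactly what the paper does---is to treat each connected component $\oY'$ of $\oY$ separately. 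The preimage of $\oY'$ under $\phi_j$ is a subgrid $\vecm_0+n\Z^d$ of $\Z^d$, and your contradiction argument then goes through for that subgrid with only cosmetic changes: the difference condition becomes $nq(\vecb\cdot\vecgamma)T_j^\psi\in\M_d(\Z)$, and the base-point condition is evaluated at $\vecm_0$ rather than $\bn$. Since $T_j^\psi=I$ when $j=j_\psi$ (so $(\vecm_0 T_j^\psi)_\ell\in\Z$) and since $\veca\cdot\vecc_j^\psi=0$ when $j\neq j_\psi$, one still deduces $\veca\cdot W_{j,\ell}^\psi\in\Q$ for every $\ell$ and reaches the same contradiction $\veca\perp L_j^\psi$. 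The paper packages exactly this subgrid argument into Lemma~\ref{Ljpsi0keypropLEM2}.
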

\begin{proof}
It follows from the definitions 
in \eqref{DeltapsijketaDEF}, \eqref{DeltapsiketaDEF}
and on p.\ \pageref{DeltajkDEF}
that $\Delta_{\psi,k}^{(\eta)}$ is a union of sets of the form
\begin{align}\label{UksetsaregoodPROPpf4}
\Delta^{(\eta)}_{j,A,L}:=\bigl\{V=\bigl( V_1,\ldots,V_N\bigr)\in\YY^\psi\col V_j-\pi(X_j^\psi)\text{ is $\eta$-near }\pi(A+L^d)\bigr\},
\end{align}
the union being taken over a finite set of triples
$\langle j,A,L\rangle$
with $j\in\{1,\ldots,N\}$,
$L_j^\psi\neq\bn$,
$A\in (L_j^\psi)^d\cap M_{r_j\times d}(\Q)$
and $L$ being a rational subspace of $L_j^\psi$,
$L\neq L_j^\psi$.
Note that in \eqref{UksetsaregoodPROPpf4}, ``$\eta$-near''
refers to the Riemannian metric on $\YY_j^\psi$
induced from the standard Euclidean metric on $\M_{s\times d}(\R/\Z)$ (with $s=\dim L_j^\psi$)
via the diffeomorphism in \eqref{tvarphijpsiDEF}.

It follows that it suffices to prove that for any fixed such triple
$\langle j,A,L\rangle$,
\begin{align}\label{UksetsaregoodPROPpf1}
\lim_{\eta\to0}\limsup_{T\to\infty}
\frac{\#\bigl\{\vecq\in\scrL_{\psi}\cap\scrB_T^d\col
\pi\bigl(U_j^{(\vecq)}\bigr)\in \Delta_{j,A,L}^{(\eta)}\bigr\}}{T^d}=0.
\end{align}
But it follows from
the formula for $U_j^{(\vecq)}$
in \eqref{Ujqformula}
and 
Weyl equidistribution
that if we let $Z$ be the closed subgroup of $\TT_j^d$
which is the closure of the set
$\bigl\{\pi(U_j^{(\vecq)}-W_j^\psi)\col\vecq\in\scrL_\psi\bigr\}$,
and if $\nu$ is the Haar measure on $Z$ normalized so that $\nu(Z)=1$,
then for any fixed closed subset $C\subset Z$,
\begin{align}\label{UksetsaregoodPROPpf5}
\limsup_{T\to\infty}\frac{\#\{\vecq\in\scrL_\psi\cap\scrB_T^d\col\pi(U_j^{(\vecq)}-W_j^\psi)\in C\}}{\nbar_{\psi}\vol(\scrB_T^d)}\leq\nu(C).
\end{align}
Note that it follows from \eqref{Yjpsidef} and \eqref{Xjpsireq}
that $\pi(U_j^{(\vecq)}-W_j^\psi)\in\pi(X_j^\psi-W_j^\psi)+(\SS_j^\psi)^d$ for all $\vecq\in\scrL_\psi$;
hence also $Z\subset\pi(X_j^\psi-W_j^\psi)+(\SS_j^\psi)^d$,
and since $0\in Z$ it follows that
$\pi(X_j^\psi-W_j^\psi)\in(\SS_j^\psi)^d$
and $Z\subset(\SS_j^\psi)^d$.
Using \eqref{UksetsaregoodPROPpf5} and \eqref{UksetsaregoodPROPpf4},
it follows that in order to prove
\eqref{UksetsaregoodPROPpf1},
it suffices to prove that $\nu(C_\eta)\to0$ as $\eta\to0$,
where $C_\eta$ is the closed $\eta$-neighborhood of $\pi(X_j^\psi-W_j^\psi+A+L^d)$.
But we have $\cap_{k=1}^\infty C_{1/k}=\pi(X_j^\psi-W_j^\psi+A+L^d)$;
hence in fact it suffices to prove that 
\begin{align}\label{UksetsaregoodPROPpf6}
\nu(\pi(X_j^\psi-W_j^\psi+A+L^d))=0.
\end{align}

Fix $q\in\Z^+$ so that $X_j^\psi+A\in q^{-1}\M_{r_j\times d}(\Z)$.
Let $Z^\circ$ be the identity component of $Z$;
then $Z$ is a union of a finite number of $Z^\circ$-cosets.
Now if $Z'$ is any of these cosets, we may argue as follows.
Set 
\begin{align*}
\scrL':=\{\vecq\in\scrL_\psi\col \pi(U_j^{(\vecq)}-W_j^\psi)\in Z'\};
\end{align*}
this is a subgrid of $\scrL_\psi$.
Hence by Lemma \ref{Ljpsi0keypropLEM2},
$L_j^\psi=\fL(\{U_{j,\ell}^{(\vecq)}\col\vecq\in\scrL',\:\ell\in\{1,\ldots,d\}\})$,
and so by Lemma \ref{IDCLcharLEM}, since $L$ is a rational subspace of $L_j^\psi$
and $L\neq L_j^\psi$,
there exist some $\vecq\in\scrL'$ and $\ell\in\{1,\ldots,d\}$
for which $U_{j,\ell}^{(\vecq)}\notin q^{-1}\Z^{r_j}+L$.
This implies that $U_j^{(\vecq)}\notin q^{-1}\M_{r_j\times d}(\Z)+L^d$,
and in particular $\pi(U_j^{(\vecq)})\notin\pi(X_j^\psi+A+L^d)$.
But
$\pi(U_j^{(\vecq)}-W_j^\psi)\in Z'$.
Hence we conclude that
\begin{align}\label{UksetsaregoodPROPpf7}
Z'\not\subset\pi(X_j^\psi-W_j^\psi+A+L^d).
\end{align}
Now both $Z'$ and $\pi(X_j^\psi-W_j^\psi+A+L^d)$ are translates of closed connected subtori of $(\SS_j^\psi)^d$;
hence \eqref{UksetsaregoodPROPpf7}
implies that $Z'\cap\pi(X_j^\psi-W_j^\psi+A+L^d)$
is either empty or a submanifold of codimension $\geq1$ of $Z'$.
Therefore %
\begin{align}\label{UksetsaregoodPROPpf8}
\nu(Z'\cap\pi(X_j^\psi-W_j^\psi+A+L^d))=0.
\end{align}

We have proved that \eqref{UksetsaregoodPROPpf8} holds for every $Z^\circ$-coset $Z'$ in $Z$;
hence \eqref{UksetsaregoodPROPpf6} holds, and the lemma is proved.
\end{proof}

Next we prove an auxiliary lemma 
concerning the type of uniform convergence which we require.
We define the \textit{upper density} of a subset $\scrZ\subset\R^d$ 
to be the number\footnote{To conform with the definition of asymptotic density in \eqref{density000},
it would be more natural to divide with $\vol(\scrB_T^d)$ 
instead of $T^d$
in \eqref{upperdensityDEF};
however using \eqref{upperdensityDEF} makes some 
computations in the following slightly cleaner.}
\begin{align}\label{upperdensityDEF}
\limsup_{T\to\infty}T^{-d}\#(\scrZ\cap\scrB_T^d).
\end{align}
\begin{lem}\label{densityzeroconstrLEM}
Let $\scrQ$ be a locally finite subset of $\R^d$, let $J$ be a countable set,
and let a function $F:J\times\scrQ\times(0,1)\to\R$
be given.
Assume that
\\[3pt]\rule{0pt}{0pt}\hspace{15pt}
\parbox{420pt}{(i) For any fixed $j\in J$ and $\vecq\in\scrQ$,
$F(j,\vecq,\rho)\to 0$ as $\rho\to0$,}
\\[3pt]
and
\\[3pt]\rule{0pt}{0pt}\hspace{15pt}
(ii) \parbox[t]{400pt}{for any $j\in J$ and $\ve,\ve'>0$, there exist $\rho_0\in(0,1)$
and a subset $\scrZ\subset\scrQ$ 
of upper density $\leq\ve'$,
such that 
$|F(j,\vecq,\rho)|<\ve$
\text{for all $\rho\in(0,\rho_0)$ and all $\vecq\in\scrQ\setminus\scrZ$.}}
\\[6pt]
Then for any decreasing function $\scrT:(0,1)\to\R^+$,
there exists a subset $\scrE\subset\scrQ$ of 
density zero,
such that for each fixed $j\in J$,
we have
$F(j,\vecq,\rho)\to0$ as $\rho\to0$,
uniformly over all $\vecq\in\scrQ\cap\scrB_{\scrT(\rho)}^d\setminus\scrE$.
\end{lem}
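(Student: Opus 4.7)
My plan is to use a diagonal construction. First, enumerate $J = \{j_1, j_2, \ldots\}$ (padding with a fixed element if $J$ is finite). For each $n \in \Z^+$ and each $k \in \{1, \ldots, n\}$, apply hypothesis (ii) with $j = j_k$, $\ve = 1/n$, and $\ve' = 2^{-n}/n$ to obtain $\rho_{k,n} \in (0,1)$ and a set $\scrZ_{k,n} \subset \scrQ$ of upper density at most $2^{-n}/n$, such that $|F(j_k, \vecq, \rho)| < 1/n$ whenever $\rho < \rho_{k,n}$ and $\vecq \in \scrQ \setminus \scrZ_{k,n}$. Aggregate these into $\scrZ_n := \bigcup_{k=1}^n \scrZ_{k,n}$ (upper density at most $2^{-n}$) and $\rho_n := \min(\rho_{n-1}/2,\,\min_{k \leq n} \rho_{k,n})$ with $\rho_0 := 1$, so that $(\rho_n)$ decreases strictly to $0$.

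Next I would recursively choose radii $R_n$ large enough that $R_n \geq \scrT(\rho_n)$ and $\#(\scrZ_n \cap \scrB_T^d) \leq 2^{1-n} T^d$ for all $T \geq R_n$, with the growth $R_{n+1}/R_n$ bounded by a constant $\kappa$ satisfying $\kappa^d < 2$. Then set
\begin{align*}
\scrE := \bigcup_{n=1}^\infty \scrZ_n \cap \scrB_{R_{n+1}}^d.
\end{align*}
For the uniform convergence, given $j_k$ and $\ve > 0$, take $m_0 \geq k$ with $1/m_0 < \ve$ and $\rho_0 := \rho_{m_0}$. For any $\rho \in (0, \rho_0)$, the unique $n \geq m_0$ with $\rho \in [\rho_{n+1}, \rho_n)$ satisfies $\scrT(\rho) \leq \scrT(\rho_{n+1}) \leq R_{n+1}$, so any $\vecq \in \scrQ \cap \scrB_{\scrT(\rho)}^d \setminus \scrE$ lies in $\scrB_{R_{n+1}}^d$; combined with $\scrE \supseteq \scrZ_n \cap \scrB_{R_{n+1}}^d$, this forces $\vecq \notin \scrZ_n$, and the defining property of $\scrZ_n$ yields $|F(j_k,\vecq,\rho)| < 1/n \leq 1/m_0 < \ve$.

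For the density-zero check I would write $\#(\scrE \cap \scrB_T^d) \leq \sum_n \#(\scrZ_n \cap \scrB_{\min(T, R_{n+1})}^d)$ and split at the index $n_0(T)$ with $R_{n_0} \leq T < R_{n_0+1}$: the contribution from $n < n_0$ is bounded by $\sum_{n < n_0} 2^{1-n} R_{n+1}^d$, which under the geometric growth $\kappa^d < 2$ telescopes to $o(T^d)$ after division by $T^d \geq R_{n_0}^d$, while the contribution from $n \geq n_0$ is bounded by $\sum_{n \geq n_0} 2^{1-n} T^d = O(2^{-n_0}) T^d$; both tend to zero as $T \to \infty$ (and hence $n_0(T) \to \infty$).

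The main obstacle I expect is the compatibility of the three simultaneous requirements on $R_n$: the lower bound $R_n \geq \scrT(\rho_n)$ can be very restrictive if the given $\scrT$ grows rapidly relative to how quickly hypothesis (ii) forces $\rho_n$ to shrink, and one must reconcile this with the cap $R_{n+1}/R_n \leq \kappa$ needed for the density-zero estimate. Resolving this requires delicate bookkeeping at each recursive step, where one may coordinate the choice of $\ve'$ in the application of (ii) with the current radii, possibly applying (ii) with a larger $\ve'$ than the naive $2^{-n}/n$ to obtain a larger $\rho_n^{\mathrm{init}}$ when necessary, at the cost of a correspondingly larger (but still summable) upper density bound on $\scrZ_n$.
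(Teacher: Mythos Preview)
The obstacle you identify at the end is real, and your proposed fix does not work: hypothesis (ii) gives no monotonicity whatsoever between $\ve'$ and the resulting $\rho_0$, so there is no way to ``buy'' a larger $\rho_n$ by accepting a looser density bound. Since $\scrT$ is an arbitrary decreasing function and the sequence $(\rho_n)$ is handed to you by (ii), the values $\scrT(\rho_n)$ can grow faster than any geometric rate, making the cap $R_{n+1}/R_n\le\kappa$ incompatible with $R_n\ge\scrT(\rho_n)$. Your density-zero argument genuinely needs that cap (the bound $\sum_{n<n_0}2^{1-n}R_{n+1}^d=o(T^d)$ fails without it), so the construction as written does not close.

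The paper resolves this by a trick you omit: after applying (ii) to obtain $\rho_0'$ and $\scrZ'$ of upper density $\le 2^{-m-1}$, it \emph{trims} $\scrZ'$ to $\scrZ:=\scrZ'\setminus\scrB_{T_0}^d$ with $T_0$ chosen so large that $\#(\scrZ\cap\scrB_T^d)<2^{-m}T^d$ holds for \emph{every} $T>0$, not just for large $T$. The finitely many excised points are then handled one by one via hypothesis (i), at the cost of shrinking $\rho_0$. With this uniform-in-$T$ density bound in hand, the growth condition on the radii can be reversed to a \emph{lower} bound $R_{m+1}\ge 2R_m$, which is trivially compatible with $R_m\ge\scrT(\trho_0(m+1))$; the density-zero computation then uses $R_m\le 2^{m-n}R_n$ (together with $d\ge 2$) instead of your geometric cap. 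Your proof never invokes hypothesis (i) in the construction of $\scrE$, and that is precisely the missing ingredient.
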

\begin{proof}
We assume, without loss of generality, that $J=\Z^+$.
For any $j,k,m\in\Z^+$, by (ii) there exist
$\rho_0'\in(0,1)$ and a subset $\scrZ'\subset\scrQ$
of upper density $\leq 2^{-m-1}$ such that
$|F(j,\vecq,\rho)|<2^{-k}$
for all $\rho\in(0,\rho_0')$ and all $\vecq\in\scrQ\setminus\scrZ'$.
We may then choose $T_0>0$ so that
$T^{-d}\#(\scrZ'\cap\scrB_T^d)<2^{-m}$ for all $T\geq T_0$,
and set $\scrZ=\scrZ(j,k,m):=\scrZ'\setminus\scrB_{T_0}^d$.
Now $\scrZ'\setminus\scrZ$ is finite, 
and for each $\vecq\in\scrZ'\setminus\scrZ$ there exists some $\rho_0^{(\vecq)}\in(0,1)$
such that $|F(j,\vecq,\rho)|<2^{-k}$ for all $\rho\in(0,\rho_0^{(\vecq)})$, by (i).
Set 
\begin{align*}
\rho_0=\rho_0(j,k,m):=\min(\{\rho_0'\}\cup\{\rho_0^{(\vecq)}\col\vecq\in\scrZ'\setminus\scrZ\}).
\end{align*}
Now for any $j,k,m\in\Z^+$, we have constructed a
number $\rho_0(j,k,m)\in(0,1)$ and a subset $\scrZ(j,k,m)\subset\scrQ$,
and it is clear from our construction that
\begin{align}
\#(\scrZ(j,k,m)\cap\scrB_T^d)<2^{-m}T^d,\qquad\forall T>0
\end{align}
and that
\begin{align}\label{densityzeroconstrLEMpf8}
|F(j,\vecq,\rho)|<2^{-k},\qquad\forall \rho\in(0,\rho_0(j,k,m)),\: 
\vecq\in\scrQ\setminus\scrZ(j,k,m).
\end{align}
Let us now also set, for any $m\in\Z^+$,
\begin{align*}
\tscrZ(m):=\bigcup_{j\in\Z^+}\bigcup_{k\in\Z^+}\scrZ(j,k,j+k+m).
\end{align*}
Then %
\begin{align}\label{densityzeroconstrLEMpf2}
\#(\tscrZ(m)\cap\scrB_T^d)\leq\sum_{j,k\in\Z^+}2^{-j-k-m}T^d=2^{-m}T^d,
\qquad\forall T>0.
\end{align}

Now let a decreasing function $\scrT:(0,1)\to\R^+$ be given,
as in the statement of the lemma.
For any $m\in\Z^+$ we set
\begin{align}\label{densityzeroconstrLEMpf6}
\trho_0(m):=\min\{\rho_0(j,k,j+k+m)\col j,k\in\{1,\ldots,m\}\}.
\end{align}
Next choose numbers $1\leq R_1<R_2<\cdots$ so that
$R_m\geq\scrT(\trho_0(m+1))$ 
and $R_{m+1}\geq 2R_m$ for all $m$.
Finally set 
\begin{align*}
\scrE:=\bigcup_{m=1}^\infty\bigl(\tscrZ(m)\cap\scrB_{R_m}^d\bigr).
\end{align*}
Let us prove that this set $\scrE$ has density zero.
Given any $T\geq R_1$, choosing $n$ so that $R_n\leq T<R_{n+1}$
we have
\begin{align*}
\#(\scrE\cap\scrB_T^d)
&\leq\sum_{m=1}^{n}\#(\tscrZ(m)\cap\scrB_{R_m}^d)
+\sum_{m=n+1}^{\infty}\#(\tscrZ(m)\cap\scrB_{T}^d)
\leq\sum_{m=1}^{n} 2^{-m}R_m^d
+\sum_{m=n+1}^{\infty}2^{-m}T^d,
\end{align*}
by \eqref{densityzeroconstrLEMpf2}.
Here 
for all $m\leq n$ we have
$R_m\leq 2^{m-n}R_n$
and hence $R_m^d\leq 2^{2(m-n)}T^d$, since $d\geq2$.
Plugging in these bounds we get
$\#(\scrE\cap\scrB_T^d)<3\cdot 2^{-n}T^d$.
Hence since $n\to\infty$ as $T\to\infty$,
we conclude that $\scrE$ has density zero.

It remains to %
prove the uniform convergence stated in the lemma.
Thus let $j\in\Z^+$ and $\ve>0$ be given.
By (i), we can take $\rho_1\in(0,1)$ so small that
$|F(j,\vecq,\rho)|<\ve$ for all 
$\vecq\in\scrQ\cap\scrB_{R_j}^d$ and all
$\rho\in(0,\rho_1)$.
Choose $k\in\Z^+$ so that $2^{-k}<\ve$,
and set
\begin{align}\label{densityzeroconstrLEMpf4}
\rho_0:=\min\bigl(\{\rho_1\}\cup\{\rho_0(j,k,j+k+m)\col m\in\{1,\ldots,k\}\}\bigr).
\end{align}
Now for any $\rho\in(0,\rho_0)$
and $\vecq\in \scrQ\cap\scrB_{\scrT(\rho)}^d\setminus\scrE$
we can argue as follows:
If $\|\vecq\|<R_j$ then $|F(j,\vecq,\rho)|<\ve$
since $\rho<\rho_0\leq\rho_1$.
Next assume instead that $\|\vecq\|\geq R_j$,
and let $m>j$ be the minimal positive integer such that $\|\vecq\|<R_m$.
Then $\scrT(\rho)>\|\vecq\| \geq R_{m-1}\geq\scrT(\trho_0(m))$,
and so $\rho<\trho_0(m)$.
If $m\geq k$ then $\trho_0(m)\leq\rho_0(j,k,j+k+m)$
by \eqref{densityzeroconstrLEMpf6};
on the other hand if $m<k$ then
$\rho_0\leq\rho_0(j,k,j+k+m)$ by \eqref{densityzeroconstrLEMpf4}.
Hence we always have $\rho<\rho_0(j,k,j+k+m)$.
Furthermore,
$\vecq\notin\scrE$ and $\|\vecq\|<R_m$ implies
$\vecq\notin\tscrZ(m)$,
and in particular $\vecq\notin\scrZ(j,k,j+k+m)$.
Hence by \eqref{densityzeroconstrLEMpf8},
$|F(j,\vecq,\rho)|<2^{-k}<\ve$.
Summing up, we have proved:
\begin{align}\label{densityzeroconstrLEMpf7}
|F(j,\vecq,\rho)|<\ve,\qquad
\forall \rho\in(0,\rho_0),
\:\vecq\in \scrQ\cap\scrB_{\scrT(\rho)}^d\setminus\scrE.
\end{align}
This completes the proof of the uniform convergence
stated in the lemma.
\end{proof}

We are now ready to prove Theorem \ref{HOMDYNMAINTHM}.

\begin{proof}[Proof of Theorem \ref{HOMDYNMAINTHM}]
Let $\psi$ and $\scrT$ be given as in the statement of 
Theorem \ref{HOMDYNMAINTHM}.
\label{pfHOMDYNMAINTHMstart}

Let $J_1$ be a fixed countable dense subset of $\C_c(\XX^\psi)$ 
with respect to the uniform norm.
We equip $\Pac(\US)$ 
the metric $d$ defined by
$d(\lambda_1,\lambda_2):=\int_{\US}|\lambda_1'-\lambda_2'|\,d\vecv$,
where $\lambda_j'$ is the 
density of $\lambda_j$ with respect to $\sigma$,
that is,
$\lambda_j'\in\L^1(\US)$ 
and $\lambda_j=\lambda_j'\, d\sigma$ ($j=1,2$).
Now let $J_2$ be a fixed countable dense subset of $\Pac(\US)$
with respect to the metric $d$;
such a set exists since %
$\C(\US)$ is dense in $\L^1(\US)$
\cite[Thm.\ 3.14]{wR87}.

We will apply Lemma \ref{densityzeroconstrLEM} with
$\scrQ=\scrL_\psi$, 
$J:=J_1\times J_2$,
and with the function 
$F:J\times\scrL_\psi\times(0,1)\to\R$
defined by
\begin{align}\label{MAINUNIFPROP2pf10}
F\bigl(\langle f,\lambda\rangle,\vecq,\rho\bigr):=
\int_{\US}f\bigl(\Gamma g_0^{(\vecq)}\varphi(R(\vecv)D_\rho)\bigr)\,d\lambda(\vecv)
-\int_{\XX^\psi} f \,d\overline{\omega^{(\vecq)}}.
\end{align}
Recall that $\overline{\omega^{(\vecq)}}(\XX^{\psi})=1$ 
for every $\vecq\in\scrL_\psi$,
by Lemmas \ref{omegajqpiinv0LEM} and \ref{oomegapinullLEM};
hence the 
integral over $\XX^\psi$ in \eqref{MAINUNIFPROP2pf10}
may just as well be taken over all $\XX$.
Recall also that for any $\vecq\in\R$ %
we have $\overline{\omega^{(\vecq)}}=\mu^{(\vecq)}$,
by Proposition \ref{SLRinvprobmeasLEM}
and \eqref{muqDEF}, \eqref{omegaqDEF}, \eqref{oomegaDEF}.
Hence by Theorem \ref{HOMDYNintrononunifTHM}
we have
$F\bigl(\langle f,\lambda\rangle,\vecq,\rho\bigr)\to0$
for any fixed 
$\langle f,\lambda\rangle\in J$
and $\vecq\in\scrL_\psi$,
i.e.\ the assumption (i) in Lemma \ref{densityzeroconstrLEM} holds.

We next verify that also 
the assumption (ii) in Lemma \ref{densityzeroconstrLEM} holds.
Thus let $\langle f,\lambda\rangle\in J$ and $\ve,\ve'>0$ be given.
Choose $k\in\Z^+$ as in Theorem \ref{MAINUNIFPROP4COR}  %
(for our given $\psi,f,\ve$).
By Proposition~\ref{UksetsaregoodPROP},
we can now fix some $\eta>0$ such that the set
$\scrZ:=\bigl\{\vecq\in\scrL_{\psi}\col\pi\bigl(U^{(\vecq)}\bigr)\in \Delta_{\psi,k}^{(\eta)}\bigr\}$
has upper density $<\ve'$.
We keep $\tM\in G'\setminus\fD_{\scrS}$
fixed as in \eqref{tMdef},
for our given, fixed union of grids $\scrP$.
Because of our choice of $k$,
we may now fix $\rho_0\in(0,1)$
in such a way that
\eqref{MAINUNIFPROP4CORres2}
holds for all $\rho\in(0,\rho_0)$ and
all $V\in\YY^{\psi}\setminus \Delta_{\psi,k}^{(\eta)}$,
for our chosen $\tM,\psi,f,\lambda,\ve,\eta$.
Now for any $\vecq\in\scrL_\psi\setminus\scrZ$,
we may apply \eqref{MAINUNIFPROP4CORres2}
with $V:=\pi\bigl(U^{(\vecq)}\bigr)\in\tTT$;
indeed, for this $V$ we have
$V\in\YY^{\psi}\setminus \Delta_{\psi,k}^{(\eta)}$
since $\vecq\notin\scrZ$;
also $\Gamma g_0^{(\vecq)}=x(V)\tM$
(by \eqref{g0qdef})
and $\omega^{(V)}=\omega^{(\vecq)}$.
Hence we conclude that for all 
$\vecq\in\scrL_\psi\setminus\scrZ$
and all $\rho\in(0,\rho_0)$ we have
$\bigl|F\bigl(\langle f,\lambda\rangle,\vecq,\rho\bigr)\bigr|<\ve$.
Hence assumption (ii) in Lemma~\ref{densityzeroconstrLEM}
is indeed fulfilled.

It now follows from 
Lemma~\ref{densityzeroconstrLEM}
that there exists a 
subset $\scrE\subset\scrL_\psi$ of  density zero
such that 
for any $f\in J_1$ and $\lambda\in J_2$
we have 
$F\bigl(\langle f,\lambda\rangle,\vecq,\rho\bigr)\to0$
as $\rho\to0$, uniformly over all $\vecq\in\scrL_\psi\cap\scrB_{\scrT(\rho)}^d\setminus\scrE$,
viz.,
the uniform convergence in
\eqref{HOMDYNMAINTHMres} in Theorem \ref{HOMDYNMAINTHM}
holds. %
To complete the proof of Theorem \ref{HOMDYNMAINTHM},
we will give a (standard) approximation argument to show that
the uniform convergence in
\eqref{HOMDYNMAINTHMres} in fact
holds for arbitrary $f\in\C_b(\XX^\psi)$ and $\lambda\in\Pac(\US)$.

Note that equation \eqref{MAINUNIFPROP2pf10} defines
$F\bigl(\langle f,\lambda\rangle,\vecq,\rho\bigr)$
for arbitrary
$f\in\C_b(\XX^\psi)$ and $\lambda\in\Pac(\US)$;
and for any 
$f_1,f_2\in\C_b(\XX^\psi)$,
$\lambda_1,\lambda_2\in\Pac(\US)$,
$\vecq\in\scrL_\psi$ and $\rho\in(0,1)$
we have,
with $\|\cdot\|_u$ denoting the uniform norm on 
$\C_b(\XX^\psi)$:
\begin{align*}
\Bigl|F\bigl(\langle f_1,\lambda_1\rangle,\vecq,\rho\bigr)
&-F\bigl(\langle f_2,\lambda_2\rangle,\vecq,\rho\bigr)\Bigr|
\\
&\leq 2\|f_1-f_2\|_u+
\int_{\US}\bigl|f_1\bigl(\Gamma g_0^{(\vecq)}\varphi(R(\vecv)D_\rho)\bigr)\bigr|\cdot|\lambda_1'(\vecv)-\lambda_2'(\vecv)|\,d\sigma(\vecv)
\\
&\leq 2\|f_1-f_2\|_u+\|f_1\|_u\cdot d(\lambda_1,\lambda_2).
\end{align*}
Using this bound,
and the fact that $J_1$ and $J_2$ are dense in $\C_c(\XX^\psi)$
and $\Pac(\US)$, respectively,
it is immediate to extend
the uniform convergence in
\eqref{HOMDYNMAINTHMres}
(with the subset $\scrE\subset\scrL_\psi$ fixed once and for all)
from $f\in J_1$ and $\lambda\in J_2$
to arbitrary $f\in\C_c(\XX^\psi)$
and $\lambda\in\Pac(\US)$.

It remains to extend to arbitrary functions $f\in\C_b(\XX^\psi)$.
Thus let $f\in \C_b(\XX^\psi)$, $\lambda\in\Pac(\US)$
and $\ve>0$ be given.
Set $B:=\|f\|_u$; we may assume $B>0$ since otherwise $f$ is identically zero.
Let $\nu$ be the $\SL_d(\R)$ invariant probability measure on 
$\SL_d(\Z)\bs\SL_d(\R)$;
fix a compact subset $K'$ of
$\SL_d(\Z)\bs\SL_d(\R)$ with $\nu(K')>\bigl(1-\ve/(4B)\bigr)^{1/N}$,
and then set $K:=\XX^\psi\cap\prod_{j=1}^N\tiota_j^{-1}(K')$,
where $\tiota_j$ is the projection from $\XX_j$ to $\SL_d(\Z)\bs\SL_d(\R)$ \footnote{This is the map which we called
``$\tiota$'' in \eqref{tiotaDEF}; we now call it $\tiota_j$ for clarity.}.
Then $K$ is a compact subset of $\XX^\psi$,
and for every $\vecq\in\scrL_\psi$ we have
\begin{align}
\overline{\omega^{(\vecq)}}(K)=
\overline{\omega^{(\vecq)}}\biggl(\prod_{j=1}^N\tiota_j^{-1}(K')\biggr)
=\prod_{j=1}^N\overline{\omega_j^{(\vecq)}}\bigl(\tiota_j^{-1}(K')\bigr)
=\nu(K')^N>1-\frac{\ve}{4B},
\end{align}
where we first used the fact that
$\overline{\omega^{(\vecq)}}(\XX^{\psi})=1$
(by Lemmas \ref{omegajqpiinv0LEM} and \ref{oomegapinullLEM}),
and then used Lemma~\ref{rhosoomegaeqnuLEM}.
Next fix a function
$h\in\C_c(\XX_\psi)$ with $0\leq h\leq 1$ and $h|_K\equiv1$.
By the uniform convergence which we have already proved, 
there exists $\rho_0\in(0,1)$
such that 
for all $\rho\in(0,\rho_0)$ and $\vecq\in\scrL_\psi\cap\scrB_{\scrT(\rho)}^d\setminus\scrE$
we have $\bigl|F\bigl(\langle h,\lambda\rangle,\vecq,\rho\bigr)\bigr|<\ve/(4B)$;
but also 
$\int_{\XX^\psi} h \,d\overline{\omega^{(\vecq)}}\geq 
\overline{\omega^{(\vecq)}}(K)>1-\ve/(4B)$,
and hence
$\int_{\US}h\bigl(\Gamma g_0^{(\vecq)}\varphi(R(\vecv)D_\rho)\bigr)\,d\lambda(\vecv)>1-\ve/(2B)$.
It follows that
\begin{align*}
\lambda\bigl(\bigl\{\vecv\in\US\col \Gamma g_0^{(\vecq)}\varphi(R(\vecv)D_\rho)\in\supp(h)\bigr\}\bigr)>1-\frac{\ve}{2B},
\end{align*}
for all $\rho\in(0,\rho_0)$ and $\vecq\in\scrL_\psi\cap\scrB_{\scrT(\rho)}^d\setminus\scrE$.

Next fix a function $h_1\in\C_c(\XX^\psi)$
with $0\leq h_1\leq 1$ and $h_1|_{\supp(h)}\equiv1$,
and set $f_1:=h_1f\in\C_c(\XX^\psi)$.
Then note that for all $x\in\XX^\psi$ we have
\begin{align*}
\bigl|f(x)-f_1(x)\bigr|
\leq\bigl|f(x)\bigr|\cdot\bigl|1-h_1(x)\bigr|
\leq B\bigl|1-h_1(x)\bigr|
\leq B\cdot I(x\notin\supp(h)).
\end{align*}
Hence for all $\rho\in(0,\rho_0)$ and $\vecq\in\scrL_\psi\cap\scrB_{\scrT(\rho)}^d\setminus\scrE$
we have
\begin{align*}
\biggl|\int_{\US}f\bigl(\Gamma g_0^{(\vecq)}\varphi(R(\vecv)D_\rho)\bigr)\,d\lambda(\vecv)
-\int_{\US}f_1\bigl(\Gamma g_0^{(\vecq)}\varphi(R(\vecv)D_\rho)\bigr)\,d\lambda(\vecv)\biggr|
\hspace{60pt}
\\
\leq B\cdot\lambda\bigl(\bigl\{\vecv\in\US\col
\Gamma g_0^{(\vecq)}\varphi(R(\vecv)D_\rho)\notin \supp(h)\bigr\}\bigr)
<\frac{\ve}2,
\end{align*}
and also
\begin{align*}
\biggl|\int_{\XX^\psi}f\,d\overline{\omega^{(\vecq)}}
-\int_{\XX^\psi}f_1\,d\overline{\omega^{(\vecq)}}\biggr|
\leq B\cdot\overline{\omega^{(\vecq)}}\bigl(\XX^\psi\setminus\supp(h)\bigr)
\leq B\cdot\overline{\omega^{(\vecq)}}\bigl(\XX^\psi\setminus K\bigr)
<\frac{\ve}{4}.
\end{align*}
Hence for these $\rho$ and $\vecq$ we have
$\bigl|F\bigl(\langle f,\lambda\rangle,\vecq,\rho\bigr)-F\bigl(\langle f_1,\lambda\rangle,\vecq,\rho\bigr)\bigr|<3\ve/4$. %
Furthermore, by again applying 
the uniform convergence result which we have already proved
it follows that after possibly shrinking $\rho_0$,
we have
$\bigl|F\bigl(\langle f_1,\lambda\rangle,\vecq,\rho\bigr)\bigr|<\ve/4$ %
for all $\rho\in(0,\rho_0)$
and all $\vecq\in\scrL_\psi\cap\scrB_{\scrT(\rho)}^d\setminus\scrE$.
Combining the last two inequalities, we conclude that
$\bigl|F\bigl(\langle f,\lambda\rangle,\vecq,\rho\bigr)\bigr|<\ve$ %
for all $\rho\in(0,\rho_0)$
and all $\vecq\in\scrL_\psi\cap\scrB_{\scrT(\rho)}^d\setminus\scrE$.\label{HOMDYNMAINTHMproofend}
This completes the proof of 
the uniform convergence in
\eqref{HOMDYNMAINTHMres}, %
viz., the proof of Theorem \ref{HOMDYNMAINTHM}.\label{pfHOMDYNMAINTHMend}
\end{proof}

Note that in view of %
the results in Section \ref{verQ1Q2etcSEC},
we have now also completed the proof of the main result of the paper,
Theorem \ref{MAINTHM}.

\section{The transition kernels}
\label{transkerSEC}

\subsection{Definitions; collision kernels and transition kernels}
\label{transkerdefSEC}

We will now give the explicit formulas for the collision kernels
$p^{(\psi)}$ and $p^{(\psi'\to\psi)}$ appearing in
Theorem \ref{limitflightprocessexplTHM}.
It should be noted that 
Theorem \ref{limitflightprocessexplTHM}
is a reformulation of 
\cite[Theorem 4.6]{jMaS2019},
in our special case
of $\scrP$ being a finite union of grids.
Crucially, our main result,
Theorem \ref{MAINTHM},
is required to ensure %
that the assumptions in 
\cite[Theorem 4.6]{jMaS2019}
are fulfilled.
We also use the fact that any integral over the space $\Sigma$ 
with respect to the measure $\mm$ can be reduced to an integral
over $\tPsi$, since $\mm(\Sigma\setminus\tPsi)=0$;
and in the statement of Theorem \ref{limitflightprocessexplTHM}
we view $\mm$ as a probability measure on $\Psi$,
via the bijection
$\psi\leftrightarrow\sigma^\psi$ between $\Psi$ and $\tPsi$.
The defining formulas
for the collision kernels
are \cite[(3.41) and (3.44)]{jMaS2019}\footnote{See also
\eqref{ktranslMAMS} below.}
\begin{align}\label{pgendef001}
p^{(\psi)}\bigl(\vecV';\xi,\vecV\bigr)=\frac{\sigma(\vecV',\vecV)}{v_{d-1}}
\,k^{\g}(\xi,\vecw,\psi) %
\end{align}
and
\begin{align}\label{pbndefG001}
p^{(\psi'\to\psi)}\bigl(\vecV'',\vecV';\xi,\vecV\bigr)=
\frac{\sigma(\vecV',\vecV)}{v_{d-1}}\,
k(\vecw',\psi',\xi,\vecw,\psi),  %
\end{align}
where $k$ and $k^{\g}$ are the \textit{transition kernels},
whose definition we will recall below;   %
$\sigma(\vecV',\vecV)$
is the differential cross section of the 
fixed scattering process;
and finally $\vecw\in\UB$ is the impact parameter 
which is a function of $\vecV'$ and $\vecV$,
and $\vecw'\in\UB$ is the exit parameter which is a function of
$\vecV''$ and $\vecV'$.
It should be noted that the formula
\eqref{pgendef001} %
only makes sense for $\vecV',\vecV$ which can form
entry and exit velocities in a scatterer collision; %
for any other $\vecV',\vecV\in\US$
we have $p^{(\psi)}\bigl(\vecV';\xi,\vecV\bigr)=0$ %
by definition; a similar statement holds for \eqref{pbndefG001}.

\vspace{5pt}

Next we recall in detail the %
definition of the transition kernels $k$ and $k^{\g}$,
from
\cite[Section 3.1]{jMaS2019},
with some parts of the notation mildly modified.

Recall that for any $\xi>0$ we denote
$\fZ_\xi=(0,\xi)\times\UB$,
an open cylinder in $\R^d$.
In the following we will also use this notation with $\xi=\infty$,
i.e., $\fZ_\infty=(0,\infty)\times\UB$.
Furthermore, we define the ``$\vece_1$-coordinate'' of any point $(\vecx,\vs)\in\scrX$
to be the number $\vecx\cdot\vece_1$.
Next we define a map
\begin{align}\label{mapzDEF}
\vecz:N_s(\scrX)\to\Delta:=(\fZ_\infty\times\Sigma)\sqcup\{\undef\},
\end{align}
in the following way:\footnote{This map $\vecz$ is the same map as in
\cite[(3.3)]{jMaS2019};
in particular, note that
in \cite{jMaS2019}, ``$\Omega$'' denotes $\UB\times\Sigma$,
and hence ``$\R_{>0}\times\Omega$'' can in an obvious way be identified with the set $\fZ_\infty\times\Sigma$.
Recall that in the present paper, ``$\Omega$'' has a completely different meaning;
see \eqref{OMEGAdef}.}
Given $Y\in N_s(\scrX)$, let $\vecz=\vecz(Y)$ be that point in 
$Y\cap(\fZ_\infty\times\Sigma)$ which has minimal $\vece_1$-coordinate;
if there does not exist a unique such point then let
$\vecz(Y)=\undef$.
Here ``$\undef$'' is a dummy element not in $\fZ_\infty\times\Sigma$,
and we provide $\Delta$ with the disjoint union topology.
We also introduce the reflection map
\begin{align}\label{reflmapDEF}
\iota:\scrX\to\scrX,\qquad\iota((w_1,\vecw),\vs)=((w_1,-\vecw),\vs)
\qquad (w_1\in\R,\:\vecw\in\R^{d-1},\:\vs\in\Sigma).
\end{align}
Next, given $\vecw\in\R^{d-1}$ 
and $\psi\in\Psi$,
we let
$\kappa_{\vecw,\psi}\in P(\Delta)$ 
be the pushforward of the measure $\mu_{\sigma^{\psi}}$
by the map $Y\mapsto \iota(\vecz(Y-(0,\vecw)))$ from
$N_s(\scrX)$ to $\Delta$.
Equivalently, 
\begin{align}\label{kappawppsipdeffact1}
\text{$\kappa_{\vecw,\psi}$ is the pushforward of $\overline{\omega^{\psi}}$ by the map
$x\mapsto\iota(\vecz(J(x)-(0,\vecw)))$ from $\XX$ to $\Delta$.}
\end{align}
Indeed, this holds since 
$\mu_{\sigma^{\psi}}=J_{\psi*}\,\overline{\omega^{\psi}}$
by definition, 
and furthermore
$\vecz(J_{\psi}(x))=\vecz(J(x))$ for all $x\in\XX^{\psi}$,
and $\overline{\omega^{\psi}}(\XX\setminus\XX^{\psi})=0$
by \eqref{oomegaXpsieq1}.
One always has %
$\kappa_{\vecw,\psi}(\{\undef\})=0$,
i.e.\ $\kappa_{\vecw,\psi}$ restricts to a probability measure on
$\fZ_{\infty}\times\Sigma$.
Furthermore, since $J(x)\subset\R^d\times\tPsi$ for all $x\in\XX^{\psi}$,
by %
\eqref{JmapFULL},
$\kappa_{\vecw,\psi}$ in fact restricts to a probability measure on
$\fZ_{\infty}\times\tPsi$.
By Lemma \ref{INTENSITYmunuLEM},
we have $\kappa_{\vecw,\psi}(B)\leq \nbar_{\scrP}\,\mu_{\scrX}(B)$
for every Borel set $B\subset\fZ_{\infty}\times\Sigma\subset\scrX$;
in particular $\kappa_{\vecw,\psi}$ is absolutely continuous with respect to $\mu_{\scrX}$.

Now the transition kernel $k$ is defined as follows:
For any given $\vecw'\in\R^{d-1}$ and $\psi'\in\Psi$,
we define $k(\vecw',\psi',\,\cdot\,)$ to be the probability density
of the measure $\kappa_{\vecw',\psi'}$
with respect to the measure $v_{d-1}^{-1}\,\mu_{\scrX}$,
but restricted to the set $\fZ_{\infty}\times\tPsi$,
parametrized by $(\xi,\vecw,\psi)\in\R_{>0}\times\UB\times\Psi$.
That is, we define the function
\begin{align}\label{transkDEF}
k:\R^{d-1}\times\Psi\times\R_{>0}\times\UB\times\Psi\to[0,\nbar_{\scrP}\,v_{d-1}]
\end{align}
so that for each $\vecw'\in\R^{d-1}$ and $\psi'\in\Psi$,
$k(\vecw',\psi',\cdot,\cdot,\cdot)$ is uniquely defined as an element in
$\L^1(\R_{>0}\times\UB\times\Psi,d\xi\,d\vecw\,d\mm)$,
and 
\begin{align}\label{transkDEF2}
\kappa_{\vecw',\psi'}(B)=v_{d-1}^{-1}\int_{(\xi,\vecw,\sigma^\psi)\in B} 
k(\vecw',\psi',\xi,\vecw,\psi)\,d\xi\,d\vecw\,d\mm(\psi)
\end{align}
for all Borel sets $B\subset\fZ_{\infty}\times\Sigma$.
Note that in \eqref{transkDEF2} we view $\mm$ as a 
probability measure on $\Psi$
through $\mm(\psi):=\mm(\sigma^\psi)$,
just as we %
did in the statement of Theorem \ref{limitflightprocessexplTHM}.

\vspace{5pt}

Similarly, we let $\kappa^{\g}$ be the pushforward of the measure $\mu^{\g}$
by the map $Y\mapsto \iota(\vecz(Y))$ from
$N_s(\scrX)$ to $\Delta$.
Then by \eqref{mugdef} and Lemma \ref{mugtranslinvLEM} we have, for any fixed $\vecv\in\R^d$:
\begin{align}\label{kappagwppsipdeffact1gen}
\text{$\kappa^{\g}$ is the pushforward of $\overline{\omega^{\g}}$ by the map
$x\mapsto\iota(\vecz(J(x)+\vecv))$ from $\XX$ to $\Delta$.}
\end{align}
By \cite[Lemma 3.5]{jMaS2019},
and since $J(x)\subset\R^d\times\tPsi$ for all $x\in\XX$
by \eqref{tPsiDEF} and \eqref{JmapFULL},
$\kappa^{\g}$ in fact restricts to a probability measure on
$\fZ_{\infty}\times\tPsi$.
By \cite[Prop.\ 2.27]{jMaS2019}
applied with $A=N_s(\scrX)$, we have
$\kappa^{\g}(B)\leq \nbar_{\scrP}\,\mu_{\scrX}(B)$
for every Borel set $B\subset\fZ_{\infty}\times\Sigma\subset\scrX$
(in our situation this may also be derived from
\eqref{mugdef} and Proposition \ref{SIEGELFORMULALEM1},
by an argument similar to that in the proof of Lemma \ref{INTENSITYmunuLEM}).
In particular $\kappa^{\g}$ is absolutely continuous with respect to $\mu_{\scrX}$.
We now define the transition kernel $k^{\g}$ to be the corresponding probability density
with respect to the measure $v_{d-1}^{-1}\,\mu_{\scrX}$,
but restricted to the set $\fZ_{\infty}\times\tPsi$,
which we parametrize by $(\xi,\vecw,\psi)\in\R_{>0}\times\UB\times\Psi$.
That is, we define the function
\begin{align}\label{transkDEF3}
k^{\g}:\R_{>0}\times\UB\times\Psi\to[0,\nbar_{\scrP}\,v_{d-1}]
\end{align}
so that 
\begin{align}\label{transkDEF4}
\kappa^{\g}(B)=v_{d-1}^{-1}\int_{(\xi,\vecw,\sigma^\psi)\in B}  k^{\g}(\xi,\vecw,\psi)\,d\xi\,d\vecw\,d\mm(\psi)
\end{align}
for all Borel sets $B\subset\fZ_{\infty}\times\Sigma$.
The function $k^{\g}$ is uniquely defined as an element in
$\L^1(\R_{>0}\times\UB\times\Psi,d\xi\,d\vecw\,d\mm)$.

\vspace{5pt}

To facilitate comparison, %
let us note that in the notation of \cite{jMaS2019} we have:
\begin{align}\label{ktranslMAMS}
k(\vecw',\psi',\xi,\vecw,\psi)=\text{``}k\bigl(\bigl(\vecw',\sigma^{\psi'}\bigr),\xi,\bigl(\vecw,\sigma^\psi\bigr)\bigr)\text{''};
\qquad
k^{\g}(\xi,\vecw,\psi)=
\text{``}k^{\g}\bigl(\xi,\bigl(\vecw,\sigma^\psi\bigr)\bigr)\text{''}.
\end{align}

\vspace{5pt}

Let us also recall from \cite{jMaS2019} the
following formula expressing $k^{\g}$ in terms of $k$.
\begin{lem}\label{kkgrelLEM}
There is a continuous version of $k^{\g}$,
which for all $(\xi,\vecw,\psi)\in\R_{>0}\times\UB\times\Psi$ satisfies
\begin{align}\label{MEMSProp3p22restat3}
k^{\g}(\xi,\vecw,\psi)
=\nbar_{\scrP}\int_{\xi}^\infty\int_{\UB}\int_{\Psi} k(-\vecw,\psi,\xi',\vecw',\psi')\,d\mm(\psi')\,d\vecw'\,d\xi'.
\end{align}
\end{lem}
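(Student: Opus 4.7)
The plan is to recognize \eqref{MEMSProp3p22restat3} as a special case of the general identity between macroscopic and microscopic transition kernels established in \cite[Prop.~3.22]{jMaS2019}. That proposition holds for any scatterer configuration satisfying the hypotheses [P1]--[P3] and [Q1]--[Q3] of Section~\ref{KINTHEORYrecapsec}, which in our case are precisely the content of Theorem~\ref{MAINTHM}. The reduction from an integral over $\Sigma$ in the general formulation to the sum over $\Psi$ appearing in \eqref{MEMSProp3p22restat3} uses $\mm(\Sigma\setminus\tPsi)=0$ (cf.\ \eqref{mmdef}, \eqref{tPsiDEF}).

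The proof of the underlying proposition rests on a Palm-type identity which in our setting takes the concrete form
\begin{align*}
\int_{N_s(\scrX)} \sum_{(\vecx,\sigma^\psi)\in Y} F(\vecx,\psi,Y-(\vecx,\bn))\,d\mu^{\g}(Y)
= \sum_{\psi\in\Psi}\nbar_\psi\int_{\R^d}\int_{N_s(\scrX)} F(\vecx,\psi,Y)\,d\mu_{\sigma^\psi}(Y)\,d\vecx
\end{align*}
for every Borel $F:\R^d\times\Psi\times N_s(\scrX)\to\R_{\geq 0}$. In our explicit setup, I would prove this directly by substituting $\mu^{\g}=J_*(\overline{\omega^{\g}})$ and $\mu_{\sigma^\psi}=J_{\psi*}(\overline{\omega^\psi})$, unfolding the left-hand sum via \eqref{JmapFULL} into a sum over $\psi\in\Psi$ of inner sums over $\Z^d$, and applying Proposition~\ref{SIEGELFORMULALEM1} to each factor $\XX_j$ in turn. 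The matching of the resulting $\overline{\omega^{\g}}$-integral with the required $\overline{\omega^\psi}$-integral is enabled by the translation-invariance of $\omega_j^{\g}$ along $\R\tvecc_j$ (cf.\ the proof of Lemma~\ref{mugtranslinvLEM}) together with the explicit relations between $\omega_j^{\g}$ and $\omega_j^\psi$ developed in Section~\ref{SSjpsi0Ojpsi0sec}.

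To deduce \eqref{MEMSProp3p22restat3}, I would apply the Palm identity with a test function of the form $F(\vecx,\psi,Y):=I(\psi=\psi_0)\,h(x_1)\,g\bigl(\iota(\vecz(Y+(x_1,-\vecx_\perp,0)))\bigr)$ for $\psi_0\in\Psi$, $h\in\C_c(\R_{>0})$ and $g\in\C_c(\Delta)$. After the Palm shift $Y\mapsto Y-(\vecx,\bn)$, the argument of $g$ encodes the first forward scatterer of $Y$ observed from the point $(-x_1,\bn)$; using translation-invariance of $\mu^{\g}$ (Lemma~\ref{mugtranslinvLEM}) this brings the left-hand side into the form of a $\kappa^{\g}$-integral against $g$, weighted by $\int h$. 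The right-hand side, after integrating out $\vecx_\perp$ (using the $\SO(d-1)$-invariance supplied by [Q1]) reduces to $\nbar_{\psi_0}\bigl(\int h\bigr)$ times an integral of $g$ against $\kappa_{-\vecw,\psi_0}$, averaged over $\vecw\in\UB$. Expressing both sides in terms of the densities \eqref{transkDEF2}, \eqref{transkDEF4} and comparing the resulting integrands yields \eqref{MEMSProp3p22restat3}; the sign inversion $\vecw\mapsto-\vecw$ is the trace of the reflection \eqref{reflmapDEF}, i.e.\ a forward impact-parameter as seen from the generic observation point corresponds, up to reflection, to a backward exit-parameter as seen from the first scatterer on the ray. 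Continuity of the right-hand side in $(\xi,\vecw,\psi)$ — in $\xi$ as a tail integral of an $L^1$ density, in $\psi$ trivially since $\Psi$ is discrete, and in $\vecw$ by weak continuity of $\vecw\mapsto\kappa_{\vecw,\psi}\in P(\Delta)$ inherited from \eqref{kappawppsipdeffact1} together with $\kappa_{\vecw,\psi}\ll\mu_\scrX$ — produces the claimed continuous representative of $k^{\g}$.

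The main obstacle will be the careful bookkeeping of sign conventions and the role of the reflection $\iota$ in the specialization step; once these are under control, the rest of the argument is a routine translation of the definitions.
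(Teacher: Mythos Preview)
Your approach is correct in outline, but the paper takes a much shorter route: it simply invokes the proof of \cite[Cor.~3.24]{jMaS2019} (not Prop.~3.22), which already yields an identity of exactly the form \eqref{MEMSProp3p22restat3} for any scatterer configuration satisfying the hypotheses of Section~\ref{KINTHEORYrecapsec}. The only additional remark the paper makes concerns how the minus sign in $-\vecw$ arises: for $d\geq 3$ one runs the argument of \cite[Cor.~3.24]{jMaS2019} with a fixed $R\in O(d-1)$ of determinant $-1$ and then appeals to the $\SO(d-1)$-invariance of the kernel from \cite[Lemma~3.18]{jMaS2019}; for $d=2$ the forced choice $R=-I_1$ gives the sign directly.

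Your plan to instead rederive the underlying Palm identity in the concrete setting---via Proposition~\ref{SIEGELFORMULALEM1}, the translation invariance along $\R\tvecc_j$, and the explicit relations between $\omega_j^{\g}$ and $\omega_j^\psi$---would produce a self-contained argument, which is a genuine advantage over a bare citation. But it duplicates work already carried out abstractly in \cite{jMaS2019}, and the step you describe as ``matching'' (effectively, that the Palm distribution of $\mu^{\g}$ at a point of mark $\sigma^\psi$ is $\mu_{\sigma^\psi}$) is the real content and would require more than the one-sentence sketch you give: one has to pass from $\omega_j^{\g}$ conditioned on $\tr_i^{-1}(\{\bn\})$ to $\omega_j^\psi$, which is not quite what the lemmas in Section~\ref{SSjpsi0Ojpsi0sec} state directly. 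The paper sidesteps all of this by staying at the level of the general framework.
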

\begin{proof}
This follows as in the proof of 
\cite[Cor.\ 3.24]{jMaS2019}.
(Indeed, for $d=2$ we are forced to take $R=-I_1\in O(1)$ in that proof, %
which directly gives the lemma;
for $d\geq 3$ we apply the proof %
with a fixed $R\in O(d-1)$ with $\det R=-1$,
and then also use 
\cite[Lemma 3.18]{jMaS2019}.)
\end{proof}

In the rest of Section \ref{transkerSEC} we will always assume that
the continuous version of $k^{\g}$ is used.

\vspace{5pt}

Let us now %
note that the formula for the 
free path length density $\Phi_{\scrP}$,
\eqref{PhiPdef}, can be rewritten as
follows using \eqref{pgendef001}:
\begin{align}\label{genoPhiPhirel4}
\Phi_{\scrP}(\xi)=
\frac1{v_{d-1}}\int_{\UB\times\Psi}k^{\g}(\xi,\vecw,\psi)\,d\vecw\,d\mm(\psi).
\end{align}
We also introduce 
the density function for the free path length between consecutive collisions,
\begin{align}\label{genoPhiPhirel5}
\oPhi_{\scrP}(\xi)
=\frac1{v_{d-1}^2}\int_{\UB\times\Psi}\int_{\UB\times\Psi}
k(\vecw',\psi',\xi,\vecw,\psi)\,d\vecw\,d\mm(\psi)\,d\vecw'\,d\mm(\psi').
\end{align}
It is immediate from the definitions of
$k^{\g}$ and $k$ 
that 
$\int_0^\infty\Phi_{\scrP}(\xi)\,d\xi
=\int_0^\infty\oPhi_{\scrP}(\xi)\,d\xi=1$
and that 
$0\leq\Phi_{\scrP}(\xi),\: \oPhi_{\scrP}(\xi) \leq\nbar_{\scrP}\,v_{d-1}$.
Furthermore $\Phi_{\scrP}(\xi)$ is continuous since $k^{\g}$ is continuous.
The formula \eqref{PhioPhirel} 
expressing $\Phi_{\scrP}(\xi)$ in terms of 
$\oPhi_{\scrP}(\xi)$ follows from Lemma \ref{kkgrelLEM},
and \eqref{PhioPhirel}  implies that
$\Phi_{\scrP}(\xi)$ is a decreasing function.
Also, \eqref{genoPhiPhirel6}
follows from $\int_0^\infty\Phi_{\scrP}(\xi)\,d\xi=1$ %
and \eqref{PhioPhirel}.
It should be noted that the formula \eqref{genoPhiPhirel6}
holds more generally for any scattering configuration
$\scrP$ which fits within the framework of \cite{jMaS2019}; see
\cite[Cor.\ 3.23]{jMaS2019}.

\begin{remark}\label{singlegridREM}
In the special case when $\scrP$ is \textit{a single grid}
(presented with $\#\Psi=1$),
the transition kernels $k$ and $k^{\g}$
have previously been studied in detail
in the papers
\cite{jMaS2008,jMaS2010a,jMaS2011a,jMaS2010b}.
In this case we have:
\begin{align*}
k(\vecz,\psi,\xi,\vecw,\psi)=\nbar_{\scrP}\,v_{d-1}\Phi_{\bn}(\nbar_{\scrP}\xi,\vecw,\vecz)
\qquad\text{and}\qquad
k^{\g}(\xi,\vecw,\psi)=\nbar_{\scrP}\,v_{d-1}\Phi(\nbar_{\scrP}\xi,\vecw),
\end{align*}
where ``$\Phi_{\bn}$'' and ``$\Phi$'' is the notation used in
\cite{jMaS2008,jMaS2010b}.
Also the free path length densities are given by
\begin{align*}
\oPhi_{\scrP}(\xi)=\nbar_{\scrP}\oPhi_{\bn}(\nbar_{\scrP}\xi)
\qquad\text{and}\qquad
\Phi_{\scrP}(\xi)=\nbar_{\scrP}\Phi(\nbar_{\scrP}\xi),
\end{align*}
where, again, ``$\Phi_{\bn}$'' and ``$\Phi$'' is the notation used in
\cite{jMaS2008,jMaS2010b}.
\end{remark}

\begin{remark}\label{incommensurableREM}
The more general special case %
when all the grids appearing in $\scrP$ are incommensurable
was previously studied in \cite{jMaS2013a}.
In that paper the normalizing condition $\nbar_{\scrP}=1$ was imposed.
Given such a point set $\scrP$, presented with $\Psi=\{(j,1)\col j=1,\ldots,N\}$, we have:
\begin{align}\label{incommensurableREMres}
k(\vecw,\psi',\xi,\vecz,\psi)
=\frac{v_{d-1}}   %
{\nbar_{j_\psi}}\cdot\Phi_{\bn}^{(j_\psi'\to j_\psi)}(\xi,\vecz,\vecw)
\qquad\text{and}\qquad
k^{\g}(\xi,\vecw,\psi)
=\frac{v_{d-1}} %
{\nbar_{j_\psi}}\cdot\Phi^{(j_\psi)}(\xi,\vecw),
\end{align}
where ``$\Phi_{\bn}^{(j\to k)}$''
and ``$\Phi^{(j)}$'' is the notation used in 
\cite{jMaS2013a}.
The first translation formula in \eqref{incommensurableREMres}
can be obtained e.g.\ by comparing the statements of
\cite[Theorem 4 and (5.4)]{jMaS2013a}
and \cite[Thm.\ 3.6]{jMaS2019},
and the second formula by comparing 
\cite[Theorem 4 and (5.11)]{jMaS2013a}
and \cite[Thm.\ 3.14]{jMaS2019}.
See also Remark \ref{oldprodformulacompareREM} below.
\end{remark}

\begin{remark}
For $\scrP$ a general finite union of grids as in  %
the present paper,
it seems that it should be possible to give explicit formulas 
for the transition kernels $k$ and $k^{\g}$
in terms of Haar measures on appropriate homogeneous submanifolds
of $\XX$,
similar in spirit to the formulas in
\cite[Sec.\ 7--8]{jMaS2010a}
for the case when $\scrP$ is a single grid.
Obtaining such formulas would be a first step towards the problem of 
generalizing the precise asymptotic estimates for the transition kernels
which were obtained in \cite{jMaS2010b} in the single grid case.
\end{remark}

\subsection{The product formula}
\label{prodformulapfsec}

In view of our explicit set-up from Section \ref{Preprsec}, %
any two $\psi,\psi'\in\Psi$ are equivalent
(``$\psi\sim\psi'$'') in the sense defined in
Section \ref{prodformulasSEC} 
if and only if $j_\psi=j_{\psi'}$.
Hence we have an obvious 
bijection between 
$C_\Psi$ and $\{1,\ldots,N\}$.
For any $j\in\{1,\ldots,N\}$,
let us denote the corresponding equivalence class by
$\Psi_j:=\{(j,i)\col i\in\{1,\ldots,r_j\}\}\in C_\Psi$. %
Let us also introduce the short-hand notation
$\scrP_j:=\scrP_{\Psi_j}$; that is
\begin{align}\label{scrPjdef}
\scrP_j=\bigcup_{\psi\in\Psi_j}\scrL_\psi. %
\end{align}
The asymptotic density of this point set is
$\nbar_j:=\nbar_{\scrP_j}=\sum_{\psi\in\Psi_j}\nbar_{\psi}$.
\begin{lem}\label{PjadmLEM}
For each $j\in\{1,\ldots,N\}$, the presentation of $\scrP_j$ in \eqref{scrPjdef} is admissible.
\end{lem}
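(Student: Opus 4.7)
The plan is to show that the presentation of $\scrP_j$ obtained by restricting the given admissible presentation of $\scrP$ to the commensurability class $\Psi_j$ inherits the admissibility condition directly. The proof is essentially bookkeeping, so I will describe the steps and leave the verifications to brief remarks.

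First I would set up the presentation of $\scrP_j$ formally. Since all grids $\scrL_\psi$ for $\psi \in \Psi_j$ are commensurable (they all have $j_\psi = j$), we present $\scrP_j$ as in \eqref{PSIdef}--\eqref{GENPOINTSET1} with only $N=1$ commensurability class, the single lattice $\scrL_j = \Z^d M_j$, and the indices $(1, i)$ identified with $(j, i) \in \Psi_j$, keeping the numbers $c_{j,i}$ and vectors $\vecw_{j,i}$ unchanged. Then I would verify the two baseline conditions. Condition \eqref{THINDISJcond2} for this sub-presentation is just the restriction of the corresponding condition for $\scrP$ to $\Psi_j \times \Psi_j$, hence inherited. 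Condition \eqref{GENPOINTSET1req} is vacuous, since there is only one lattice.

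Next I would check that, for $\psi = (j,i) \in \Psi_j$, the quantities $\vecc_j^{\psi}$ and $L_j^{\psi}$ entering the admissibility requirement of Definition \ref{admissibleDEF} are intrinsic to the $j$th commensurability class and thus coincide with the corresponding quantities computed from the sub-presentation of $\scrP_j$. Indeed, by \eqref{cjdef}, $\vecc_j^{\psi}$ depends only on $c_{j,1}, \ldots, c_{j,r_j}$ and $c_\psi$. For $L_j^\psi$, since $j_\psi = j$ we are in the first branch of \eqref{Ljpsi0DEF}, and $W_j^{\psi}$ reduces via $T_j^{\psi} = M_jM_j^{-1} = I$ and \eqref{Wjdef} to a quantity involving only $\vecw_{j,1}, \ldots, \vecw_{j,r_j}, \vecw_\psi$ and $\vecc_j^{\psi}$. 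So both $\vecc_j^\psi$ and $L_j^\psi$ depend solely on the data of the $\psi \in \Psi_j$ component, and are insensitive to whether they are computed inside $\scrP$ or inside $\scrP_j$.

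Finally, the admissibility of the given presentation of $\scrP$ yields $\vecc_j^{\psi} \in L_j^{\psi} + \Z^{r_j}$ for every $\psi = (j,i) \in \Psi$, and in particular for every $\psi \in \Psi_j$. Combined with the previous step, this is precisely the admissibility condition for the sub-presentation of $\scrP_j$, completing the proof. I do not anticipate any real obstacle here; the only point requiring care is confirming that the definitions $\vecc_j^\psi$ and $L_j^\psi$ are genuinely determined by the $j$th block of data and do not secretly involve the other lattices $\scrL_{j'}$ with $j' \neq j$, which is transparent from \eqref{cjdef}, \eqref{Wjdef}, and \eqref{Ljpsi0DEF}.
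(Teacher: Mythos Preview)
Your proposal is correct and follows essentially the same approach as the paper's own proof: both set up the sub-presentation of $\scrP_j$ with a single commensurability class, verify that conditions \eqref{THINDISJcond2} and \eqref{GENPOINTSET1req} are inherited (the latter vacuously), check that the quantities $\vecc_j^\psi$ and $L_j^\psi$ for $\psi\in\Psi_j$ coincide with those computed from the sub-presentation, and conclude admissibility from that of $\scrP$. The paper even remarks, as you do, that the lemma is ``essentially clear by inspection'' and that the proof is mainly bookkeeping to fix notation for later use.
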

\begin{proof}
This lemma is essentially clear %
by inspection;
we still write out some details in order to explain 
certain notation which we will use later.
Let $j\in\{1,\ldots,N\}$ be fixed.
The setup in Section \ref{Preprsec} applies to $\scrP_j$ if we set
\begin{align*}
\text{[$\Psi$ for $\scrP_j$]} \: :=\{(1,i)\col i\in\{1,\ldots,r_j\}\}
\end{align*}
(thus [$N$ for $\scrP_j$] $=1$ and [$r_1$ for $\scrP_j$] $=r_j$),
and then, for each $i\in\{1,\ldots,r_j\}$, set
\begin{align*}
\text{[$c_{1,i}$ for $\scrP_j$] $:=c_{j,i}$,}
\quad
\text{[$\vecw_{1,i}$ for $\scrP_j$] $:=\vecw_{j,i}$},
\quad
\text{and [$M_1$ for $\scrP_j$] $:=M_j$,}
\end{align*}
so that [$\scrL_{(1,i)}$ for $\scrP_j$] $=\scrL_{(j,i)}$
(see \eqref{LpsiDEF}).
Indeed, with these definitions the analogue of 
the formula in \eqref{GENPOINTSET1} holds for $\scrP_j$.
Of course, this formula %
is exactly the same as %
\eqref{scrPjdef},
once we identify [$\Psi$ for $\scrP_j$] with $\Psi_j$ through $(1,i)\leftrightarrow(j,i)$.

Now the condition \eqref{THINDISJcond2} trivially holds for $\scrP_j$
since it holds for $\scrP$,
and the condition \eqref{GENPOINTSET1req} for $\scrP_j$
is void, since [$N$ for $\scrP_j$]$=1$.
Next, we have 
[$\vecc_1^{(1,i)}$ for $\scrP_j$] $=\vecc_j^{(j,i)}$
by \eqref{cjdef},
and [$W_1^{(1,i)}$ for $\scrP_j$] $=W_j^{(j,i)}$
by \eqref{Wjdef};
therefore
[$L_j^{(1,i)}$ for $\scrP_j$] $=L_j^{(j,i)}$
by \eqref{Ljpsi0DEF}.
Hence by 
Definition \ref{admissibleDEF},
our presentation of $\scrP_j$ is admissible if and only if
$\vecc_j^{(j,i)}\in L_j^{(j,i)}+\Z^{r_j}$ for all $i\in\{1,\ldots,r_j\}$;
and these conditions are certainly fulfilled,
since we are assuming that the 
fixed presentation of 
$\scrP$ in
\eqref{GENPOINTSET1} is admissible.
\end{proof}

Recall from
Section \ref{prodformulasSEC} 
that for any $j\in\{1,\ldots,N\}$ and $\psi,\psi'\in\Psi_j$,
the collision kernels
$\cp^{(\psi)}$ and $\cp^{(\psi\to\psi')}$
are defined as 
[$p^{(\psi)}$ and $p^{(\psi\to\psi')}$ for $\scrP_{j_\psi}$].
Similarly let us define the transition kernels
$\ck(\vecw',\psi',\xi,\vecw,\psi)$
and $\ck^{\g}(\xi,\vecw,\psi)$
as [$k(\cdots)$ and $k^{\g}(\cdots)$ for $\scrP_{j_\psi}$].
Now the product formulas announced in
Section \ref{prodformulasSEC} are as follows.
\begin{thm}\label{Productformulasthm}
Given any
$\langle\vecw',\psi'\rangle\in\UB\times\Psi$
and $\psi\in\Psi$,
the following formula holds for Lebesgue almost all 
$\langle\xi,\vecw\rangle\in\R_{>0}\times\UB$:
\begin{align}\label{Productformulasthmres1och2}
k(\vecw',\psi',\xi,\vecw,\psi)
=\begin{cases}
{\displaystyle \frac{\nbar_{\scrP}}{\nbar_{j_\psi}}\ck(\vecw',\psi',\xi,\vecw,\psi)\,
\prod_{j\neq j_{\psi}}\int_\xi^\infty\Phi_{\scrP_j}(\xi')\,d\xi'}
\hspace{70pt}\text{if }\: j_{\psi'}=j_\psi
\\[20pt]
{\displaystyle \frac{\nbar_{\scrP}}{\nbar_{j_\psi}\nbar_{j_{\psi'}}\,v_{d-1}}
\ck^{\g}(\xi,\vecw,\psi)
\ck^{\g}(\xi,-\vecw',\psi')
\prod_{j\neq j_{\psi}, j_{\psi'}}\int_\xi^\infty\Phi_{\scrP_j}(\xi')\,d\xi'}
\\
\hspace{270pt}\text{if }\:j_{\psi'}\neq j_\psi.
\end{cases}
\end{align}
Furthermore, for all $\langle\xi,\vecw,\psi\rangle\in\R_{>0}\times\UB\times\Psi$:
\begin{align}\label{Productformulasthmres3}
k^{\g}(\xi,\vecw,\psi)=\frac{\nbar_{\scrP}}{\nbar_{j_\psi}}
\ck^{\g}(\xi,\vecw,\psi)
\,\prod_{j\neq j_\psi}\int_\xi^\infty\Phi_{\scrP_j}(\xi')\,d\xi'.
\end{align}
Each product over $j$ in 
\eqref{Productformulasthmres1och2} and \eqref{Productformulasthmres3}
should be understood to run over all $j\in\{1,\ldots,N\}$
except those numbers which are explicitly excluded.
\end{thm}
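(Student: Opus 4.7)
The plan is to exploit the product structure over commensurability classes. Writing $\XX=\XX_1\times\cdots\times\XX_N$, the map $J$ in \eqref{JmapFULL} decomposes as a disjoint union (in the second coordinate)
\begin{align*}
J(x_1,\ldots,x_N)=\bigsqcup_{j=1}^N J_j(x_j),
\qquad
J_j(x_j):=\bigcup_{\psi\in\Psi_j} c_\psi\bigl(\Z^d\, p_\psi(g)\bigr)\times\{\sigma^\psi\},
\end{align*}
where $x_j=\Gamma_j g$; the disjointness is automatic because the marks $\sigma^\psi$ are all distinct, and the dependence on only $x_j$ follows since $p_\psi=\a_{i_\psi}\circ p_{j_\psi}$. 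By \eqref{oomegaDEF} we have $\overline{\omega^{\g}}=\bigotimes_j\overline{\omega_j^{\g}}$ and $\overline{\omega^{\psi'}}=\bigotimes_j\overline{\omega_j^{\psi'}}$, and Lemma \ref{SjpsieqSjLEM} gives $\overline{\omega_j^{\psi'}}=\overline{\omega_j^{\g}}$ whenever $j\neq j_{\psi'}$. Under these product measures the random point sets $J_j(x_j)$ are \emph{mutually independent}, and the first-hit map $\vecz$ appearing in the definitions of $\kappa^{\g}$ and $\kappa_{\vecw',\psi'}$ simply picks out the point with smallest $\vece_1$-coordinate. Consequently the overall first-hit density at a location marked in class $j_\psi$ factorises as (class-$j_\psi$ first-hit density)$\,\times\prod_{j\neq j_\psi}\PP(\xi_j>\xi)$, where $\xi_j$ denotes the $\vece_1$-coordinate of the class-$j$ first hit.

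For \eqref{Productformulasthmres3}, the class-$j_\psi$ first-hit density under $\overline{\omega_{j_\psi}^{\g}}$ is by definition $\ck^{\g}(\xi,\vecw,\psi)/v_{d-1}$ for $\scrP_{j_\psi}$ via the analogue of \eqref{transkDEF4}, and the class-$j$ survival probability is $\int_\xi^\infty\Phi_{\scrP_j}(\xi')\,d\xi'$ by \eqref{genoPhiPhirel4} applied to $\scrP_j$. Comparing coefficients in $d\xi\,d\vecw\,d\mm(\psi)$, and converting the within-class measure $\mm_{j_\psi}(\psi)=\nbar_\psi/\nbar_{j_\psi}$ to the global $\mm(\psi)=\nbar_\psi/\nbar_{\scrP}$ introduces the overall factor $\nbar_{\scrP}/\nbar_{j_\psi}$, yielding \eqref{Productformulasthmres3}; the continuity supplied by Lemma \ref{kkgrelLEM} then promotes the identity from almost-everywhere to pointwise.

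For \eqref{Productformulasthmres1och2} we repeat the argument with $\mu_{\sigma^{\psi'}}=J_{\psi'\,*}\,\overline{\omega^{\psi'}}$ in place of $\mu^{\g}$ and with the extra translation $Y\mapsto Y-(0,\vecw')$ from \eqref{kappawppsipdeffact1}. For $j\neq j_{\psi'}$, Lemma \ref{SjpsieqSjLEM} together with the translation-invariance of $\mu^{\g}$ (Lemma \ref{mugtranslinvLEM}) shows that the translated class-$j$ point set has the same distribution as in the macroscopic setting, so its first hit behaves exactly as in the previous paragraph. In the same-class case $j_{\psi'}=j_\psi$, the second-collision density within class $j_\psi$ is by definition $\ck(\vecw',\psi',\xi,\vecw,\psi)/v_{d-1}$, and combining with the survival factors $\int_\xi^\infty\Phi_{\scrP_j}(\xi')\,d\xi'$ for $j\neq j_\psi$ produces the first line of \eqref{Productformulasthmres1och2}. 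In the distinct-class case $j_{\psi'}\neq j_\psi$, the class-$j_\psi$ first-hit density contributes $\ck^{\g}(\xi,\vecw,\psi)/v_{d-1}$, while the class-$j_{\psi'}$ survival probability ``next collision within class $j_{\psi'}$ has coordinate $>\xi$'' is computed by Lemma \ref{kkgrelLEM} applied to $\scrP_{j_{\psi'}}$ at argument $(\xi,-\vecw',\psi')$, yielding $\ck^{\g}(\xi,-\vecw',\psi')/(v_{d-1}\,\nbar_{j_{\psi'}})$; multiplying all three kinds of factors and renormalising the mark measure yields the second line of \eqref{Productformulasthmres1och2}. The main technical obstacle is simply careful bookkeeping of the normalisation constants $\nbar_\psi/\nbar_{j_\psi}$ versus $\nbar_\psi/\nbar_{\scrP}$, and verifying that the removal of the origin in $J_{\psi'}$ has no effect after translation by $-(0,\vecw')$ (since the translated origin has vanishing first coordinate and thus lies outside $\fZ_\infty$).
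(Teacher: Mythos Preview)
Your proposal is correct and follows essentially the same approach as the paper: both exploit the product decomposition $J(x)=\bigsqcup_j J^{(j)}(x_j)$ together with $\overline{\omega^{\psi'}}=\bigotimes_j\overline{\omega_j^{\psi'}}$, invoke Lemma~\ref{SjpsieqSjLEM} to identify the $j\neq j_{\psi'}$ factors with the macroscopic ones, compute the class-$j_{\psi'}$ survival probability via Lemma~\ref{kkgrelLEM}, and finish by converting $\mm_{j_\psi}$ to $\mm$ via the ratio $\nbar_{\scrP}/\nbar_{j_\psi}$. The paper carries this out by integrating against test sets $B\subset\fZ_\infty\times\Sigma_{j_0}$ and applying Fubini explicitly (with some extra care about the $\mathrm{undef}$ value of $\vecz$), but the substance is the same.
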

\begin{remark}\label{Productformulasthm2}
It is not difficult to translate
Theorem \ref{Productformulasthm} into a product formula involving the
collision kernels instead, using 
\eqref{pgendef001} and \eqref{pbndefG001}.
For example, when $j_{\psi'}=j_\psi$,
the formula in \eqref{Productformulasthmres1och2}
is equivalent to
\begin{align}\label{Productformulasthm2res1}
p^{(\psi'\to\psi)}(\vecV'',\vecV';\xi,\vecV)
=\frac{\nbar_{\scrP}}{\nbar_{j_\psi}}\cp^{(\psi'\to\psi)} %
(\vecV'',\vecV';\xi,\vecV)\,
\prod_{j\neq j_\psi}    %
\int_\xi^\infty\Phi_{\scrP_{j}}(\xi')\,d\xi'.
\end{align}
But when $j_{\psi'}\neq j_\psi$
(and assuming $\vecV'\in\scrV_{\vecV''}$),
\eqref{Productformulasthmres1och2}
translates into the following
somewhat more complicated formula:
\begin{align}\notag
p^{(\psi'\to\psi)}(\vecV'',\vecV';\xi,\vecV)
=\frac{\nbar_{\scrP}}{\nbar_{j_\psi}\nbar_{j_{\psi'}}\,\sigma(\vecV',\vecW)}
p^{(\psi)}(\vecV';\xi,\vecV)
p^{(\psi')}(\vecV';\xi,\vecW)
\prod_{j\neq j_{\psi}, j_{\psi'}}\int_\xi^\infty\Phi_{\scrP_j}(\xi')\,d\xi',
\end{align}
with $\vecW=\Psi_1(\vecV',-\vecbeta_{\vecV''}^+(\vecV'))$,
where the maps ``$\Psi_1$'' and ``$\vecbeta^+$'' are as in
\cite[Sec.\ 3.4]{jMaS2019}
(these are defined in terms of the fixed scattering process of the Lorentz gas). %
If the scattering process %
preserves angular momentum,
then we have in fact $\vecW=\vecV''$,
by \cite[(3.33)]{jMaS2019}.
\end{remark}

\vspace{5pt}

\noindent
\textit{Proof of Theorem \ref{Productformulasthm}.}
We first need to introduce some further notation.
For each $j\in\{1,\ldots,N\}$ let us set
$\Sigma_j:=\{(\psi,\omega)\in\Sigma\col j_\psi=j\}$
and $\scrX_j=\R^d\times\Sigma_j$.
Note that each $\Sigma_j$ is a clopen subset of $\Sigma$,
and each $\scrX_j$ is a clopen subset of $\scrX$,
and $\Sigma$ and $\scrX$ decompose as disjoint unions
$\Sigma=\sqcup_{j=1}^N\Sigma_j$ and $\scrX=\sqcup_{j=1}^N\scrX_j$.
Also for each $j\in\{1,\ldots,N\}$ we define the map
$J^{(j)}:\XX_j\to N_s(\scrX_j)$ 
through
\begin{align}\label{tnupsiabstractdefSTEP1}
J^{(j)}(\Gamma_jg)=\bigcup_{i=1}^{r_j}c_{j,i}\bigl(\Z^d\a_i(g)\bigr)\times\{\sigma^{(j,i)}\}
\qquad (g\in G_j).
\end{align}
Using these maps $J^{(j)}$,
the formula for $J:\XX\to N_s(\scrX)$ in \eqref{JmapFULL} can be
written
\begin{align}\label{tnupsiabstractdefSTEP2}
J(x)=\bigcup_{j=1}^N J^{(j)}(\tp_j(x))\qquad (x\in\XX).
\end{align}
Throughout the following, %
we will identify 
[$\Psi$ for $\scrP_j$] with $\Psi_j$ through $(1,i)\leftrightarrow(j,i)$
whenever convenient,
as explained %
in the proof of Lemma \ref{PjadmLEM} above.
It is now straightforward to
express the concepts defined in
Section \ref{spaceofmarksandmapsSEC} when 
\textit{applied to $\scrP_j$ with its presentation in \eqref{scrPjdef}.}
For example, %
[$\Omega$ for $\scrP_j$] now equals $P(\TT_j^d)'$,
[$\Sigma$ for $\scrP_j$] equals $\Sigma_j$,
and 
\begin{align}\label{mjformula}
\mm_j:=\text{[$\mm$ for $\scrP_j$]}=\frac{\nbar_{\scrP}}{\nbar_j}\mm\big|_{\Sigma_j}
\quad\in P(\Sigma_j).
\end{align}
As we did for $\mm$ in Section \ref{transkerdefSEC},
we also use $\mm_j$ to denote the probability measure on
$\Psi_j$ given by $\mm_j(\psi):=\mm_j(\sigma^\psi)$ ($\psi\in\Psi_j$);
this gives back the measure defined in
\eqref{mcDEF} (but now writing $\Psi_j$ in place of $c$).
Furthermore, we have [$\scrX$ for $\scrP_j$] $=\scrX_j$
and [$\mu_{\scrX}$ for $\scrP_j$] $=\vol\times\mm_j
={\displaystyle \frac{\nbar_{\scrP}}{\nbar_j}}\mu_{\scrX}\big|_{\scrX_j}$.
Also, the map [$J$ for $\scrP_j$] is given by $J^{(j)}$ defined in
\eqref{tnupsiabstractdefSTEP1}.

Next,  %
for any $\vecw'\in\R^{d-1}$ and 
$\psi'\in\Psi_j$, 
let us write 
\begin{align*}
\ckappa_{\vecw',\psi'}:=[\kappa_{\vecw',\psi'}\:\text{ for }\:\scrP_j],
\qquad\text{and also}\qquad
\ckappa_j^{\g}:=[\kappa^{\g}\:\text{ for }\:\scrP_j].
\end{align*}
Now it is easy to write out the definitions and facts from Section \ref{transkerdefSEC}
when applied for $\scrP_j$ in the place of $\scrP$.
For example, $\ck$,
which we defined above to be the transition kernel [$k$ for $\scrP_j$],
is a function 
\begin{align}\label{ckDEF}
\ck:\R^{d-1}\times\Psi_j\times\R_{>0}\times\UB\times\Psi_j\to[0,\nbar_j\,v_{d-1}].
\end{align}
such that for each $\vecw'\in\R^{d-1}$ and $\psi'\in\Psi_j$,
and for any Borel set $B\subset\fZ_{\infty}\times\Sigma_j$,
we have
\begin{align*}
\ckappa_{\vecw',\psi'}(B)=v_{d-1}^{-1}\int_{(\xi,\vecw,\sigma^\psi)\in B} \ck(\vecw',\psi',\xi,\vecw,\psi)\,d\xi\,d\vecw\,d\mm_j(\psi).
\end{align*}

\vspace{5pt}

We now start with the actual proof of Theorem \ref{Productformulasthm}.
Let $\vecw'\in\UB$ and $\psi'\in\Psi$; these will be kept fixed throughout the proof.
Let $j_0\in\{1,\ldots,N\}$,
and let $B$ be a Borel subset of 
$\fZ_\infty\times\Sigma_{j_0}$.
Then by \eqref{kappawppsipdeffact1},
\begin{align}\label{Productformulasthmpf10}
\kappa_{\vecw',\psi'}(B)
=\overline{\omega^{\psi'}}(A),
\qquad\text{with }\:
A:=\bigl\{x\in\XX\col\iota(\vecz(J(x)-(0,\vecw')))\in B\bigr\}.
\end{align}
Recall that $\XX=\XX_1\times\cdots\times\XX_N$
and $\overline{\omega^{\psi'}}
=\overline{\omega_1^{\psi'}}\otimes\cdots\otimes\overline{\omega_N^{\psi'}}$.
Let us write $\XX^{(j_0)}=\prod_{j\neq j_0}\XX_j$,
so that 
\begin{align*}
\XX=\XX_{j_0}\times\XX^{(j_0)}
=\{(x,x')\col x\in \XX_{j_0},\: x'\in\XX^{(j_0)}\},
\end{align*}
in an obvious identification.
Corresponding to this product decomposition we have
$\overline{\omega^{\psi'}}
=\overline{\omega_{j_0}^{\psi'}}\otimes\nu$
with $\nu:=\otimes_{j\neq j_0}\overline{\omega_j^{\psi'}}$,
and hence, by Fubini's Theorem,
\begin{align}\label{Productformulasthmpf2}
\kappa_{\vecw',\psi'}(B)=\overline{\omega^{\psi'}}(A)
=\int_{\XX_{j_0}}\nu(A_x)\,d\overline{\omega_{j_0}^{\psi'}}(x),
\end{align}
where for each $x\in\XX_{j_0}$,
\begin{align*}
A_x:=\{x'\in\XX^{(j_0)}\col \iota(\vecz(J(x,x')-(0,\vecw')))\in B\}.
\end{align*}

For each $j\in\{1,\ldots,N\}$ we set
$\tXX_j:=\{x\in\XX_j\col\vecz(J^{(j)}(x)-(0,\vecw'))\neq\undef\}$.
Then
\begin{align}\label{Productformulasthmpf1}
\overline{\omega_j^{\psi'}}\bigl(\XX_j\setminus \tXX_j\bigr)=0.
\end{align}
Indeed, if $j=j_{\psi'}$ then \eqref{Productformulasthmpf1}
follows from $\ckappa_{\vecw',\psi'}(\{\undef\})=0$
(which holds by \cite[Lemma 3.1]{jMaS2019}),
and if $j\neq j_{\psi'}$ then
$\omega_j^{\psi'}=\omega_j^{\g}$
by Lemma \ref{SjpsieqSjLEM},
and so \eqref{Productformulasthmpf1} follows from
the fact that 
$\ckappa_j^{\g}(\{\undef\})=0$
(which holds
by \cite[Lemma 3.5]{jMaS2019})
and the formula \eqref{kappagwppsipdeffact1gen}
with $\vecv=-(0,\vecw')$,
applied for $\scrP_j$ in the place of $\scrP$.
Hence, writing also $\tXX=\prod_{j=1}^N\tXX_j$
and $\tXX^{(j_0)}=\prod_{j\neq j_0}\tXX_j$,
we have
$\overline{\omega^{\psi'}}\bigl(\XX\setminus\tXX\bigr)=0$
and $\nu\bigl(\XX^{(j_0)}\setminus\tXX^{(j_0)}\bigr)=0$,
and using these facts together with \eqref{Productformulasthmpf1},
it follows that:
\begin{align}\label{Productformulasthmpf3}
\kappa_{\vecw',\psi'}(B)=\overline{\omega^{\psi'}}\bigl(A\cap\tXX\bigr)
=\int_{\tXX_{j_0}}\nu\bigl(A_x\cap\tXX^{(j_0)}\bigr)\,d\overline{\omega_{j_0}^{\psi'}}(x),
\end{align}

For any $x\in\tXX_{j_0}$, let us define
$\xi(x)\in\R_{>0}$, $\vecw(x)\in\UB$ and $\psi(x)\in\Psi_{j_0}$ through 
the relation
\begin{align}\label{Productformulasthmpf12}
\bigl((\xi(x),\vecw(x)),\psi(x)\bigr)=\iota(\vecz(J^{(j_0)}(x)-(0,\vecw')))
\qquad\text{in }\: \fZ_\infty\times\Sigma_{j_0}.
\end{align}
Let us also set
\begin{align*}
\tB=\{x\in\tXX_{j_0}\col ((\xi(x),\vecw(x)),\psi(x))\in B\}.
\end{align*}
We will write any element $x'$ in 
$\XX^{(j_0)}=\prod_{j\neq j_0}\XX_j$ as $x'=(x'_j)_{j\neq j_0}$
with $x'_j\in\XX_j$.
Also, for any $Y\in N_s(\scrX)$ such that $\vecz(Y)\neq\undef$,
let us denote by $z_1(Y)$ the $\vece_1$-coordinate of the point $\vecz(Y)$;
in particular we then have
$z_1(J^{(j_0)}(x)-(0,\vecw'))=\xi(x)$ for every $x\in\tXX_{j_0}$.
Using the definition of the map $\vecz$
and our assumption that $B\subset\fZ_\infty\times\Sigma_{j_0}$,
it follows that 
for every $(x,x')\in\tXX_{j_0}\times\tXX^{(j_0)}$,
the condition
$\iota(\vecz(J(x,x')-(0,\vecw')))\in B$ holds
if and only if
$x\in\tB$ and
$z_1(J^{(j)}(x'_j)-(0,\vecw'))>\xi(x)$ for all $j\neq j_0$.
Hence \eqref{Productformulasthmpf3} can be
rewritten as
\begin{align}\label{Productformulasthmpf4}
\kappa_{\vecw',\psi'}(B)
=\int_{\tB}\prod_{j\neq j_0}\overline{\omega_j^{\psi'}}
\bigl(\bigl\{x_j'\in\tXX_j\col z_1(J^{(j)}(x'_j)-(0,\vecw'))>\xi(x)\bigr\}\bigr)
\,d\overline{\omega_{j_0}^{\psi'}}(x),
\end{align}
Here for each $j\neq j_{\psi'}$ we have
\begin{align}\notag
\overline{\omega_j^{\psi'}}
\bigl(\bigl\{x_j'\in\tXX_j\col z_1(J^{(j)}(x'_j)-(0,\vecw'))>\xi(x)\bigr\}\bigr)
=\ckappa_j^{\g}\bigl((\xi(x),\infty)\times\UB\times\tPsi_j\bigr)
\hspace{50pt}
\\\notag
=v_{d-1}^{-1}\int_{\xi(x)}^\infty\int_{\UB}\int_{\Psi_j} \ck^{\g}(\xi',\vecw,\psi)\,d\mm_j(\psi)\,d\vecw\,d\xi
=\int_{\xi(x)}^\infty\Phi_{\scrP_j}(\xi')\,d\xi',
\end{align}
where the first equality
follows from $\omega_j^{\psi'}=\omega_j^{\g}$
and the formula
\eqref{kappagwppsipdeffact1gen}
with $\vecv=-(0,\vecw')$,
applied for $\scrP_j$ in place of $\scrP$,
and the last equality 
holds by 
\eqref{genoPhiPhirel4} applied for $\scrP_j$.
On the other hand,
for $j=j_{\psi'}$ we have
\begin{align}\notag
\overline{\omega_j^{\psi'}}
\bigl(\bigl\{x_j'\in\tXX_j\col z_1(J^{(j)}(x'_j)-(0,\vecw'))>\xi(x)\bigr\}\bigr)
=\ckappa_{\vecw',\psi'}\bigl((\xi(x),\infty)\times\UB\times\tPsi_j\bigr)
\hspace{30pt}
\\\notag
=v_{d-1}^{-1}\int_{\xi(x)}^\infty\int_{\UB}\int_{\Psi_j} \ck(\vecw',\psi',\xi',\vecw,\psi)\,d\mm_j(\psi)\,d\vecw\,d\xi
=\frac1{\nbar_j\,v_{d-1}}\ck^{\g}_j(\xi(x),-\vecw',\psi'),
\end{align}
where in the last equality we used
Lemma \ref{kkgrelLEM} for $\scrP_j$.

Let us first assume $j_0=j_{\psi'}$.
Then the above formulas imply that 
\begin{align}\label{Productformulasthmpf6}
\kappa_{\vecw',\psi'}(B)
=\int_{\tB}\biggl(\prod_{j\neq j_0}
\int_{\xi(x)}^\infty\Phi_{\scrP_j}(\xi')\,d\xi'\biggr)
\,d\overline{\omega_{j_0}^{\psi'}}(x),
\end{align}
We here change to new integration variables 
$(\xi,\vecw,\psi)$ through
\eqref{Productformulasthmpf12};
then by the definition of $\ck(\vecw',\psi',\cdots)$ we have
$d\overline{\omega_{j_0}^{\psi'}}(x)=v_{d-1}^{-1}\,\ck(\vecw',\psi',\xi,\vecw,\psi)\,d\xi\,d\vecw\,d\mm_{j_0}(\psi)$.
Hence, using also \eqref{mjformula}
to express $\mm_{j_0}$ in terms of $\mm$, we get
\begin{align}\label{Productformulasthmpf8}
\kappa_{\vecw',\psi'}(B)
=\frac{\nbar_{\scrP}}{\nbar_{j_0}\,v_{d-1}}\int_{B}\ck(\vecw',\psi',\xi,\vecw,\psi)\, \biggl(\prod_{j\neq j_0}
\int_{\xi}^\infty\Phi_{\scrP_j}(\xi')\,d\xi'\biggr)
\,d\xi\,d\vecw\,d\mm(\psi).
\end{align}
The fact that \eqref{Productformulasthmpf8} holds for all
Borel sets $B\subset\fZ_\infty\times\Sigma_{j_0}$
implies that the first formula in \eqref{Productformulasthmres1och2} holds 
for almost all
$\langle\xi,\vecw,\psi\rangle\in\R_{>0}\times\UB\times\Psi_{j_0}$.

Next assume $j_0\neq j_{\psi'}$.
Then in a similar way, we get
\begin{align}\notag
&\kappa_{\vecw',\psi'}(B)
=\frac1{\nbar_{j_{\psi'}}\,v_{d-1}}\int_{\tB}
\ck^{\g}_{j_{\psi'}}(\xi(x),-\vecw',\psi')\,
\biggl(\prod_{j\neq j_{\psi'},j_0}
\int_{\xi(x)}^\infty\Phi_{\scrP_j}(\xi')\,d\xi'\biggr)
\,d\overline{\omega_{j_0}^{\psi'}}(x)
\\\label{Productformulasthmpf9}
&=\frac{\nbar_{\scrP}}{\nbar_{j_0}\nbar_{j_{\psi'}}\,v_{d-1}^2}\int_{B}\ck_{j_0}^{\g}(\xi,\vecw,\psi)\,
\ck^{\g}_{j_{\psi'}}(\xi,-\vecw',\psi')\,
 \biggl(\prod_{j\neq j_{\psi'},j_0}
\int_{\xi}^\infty\Phi_{\scrP_j}(\xi')\,d\xi'\biggr)
\,d\xi\,d\vecw\,d\mm(\psi).
\end{align}
Here, to obtain the last equality, we 
again changed to new integration variables 
$(\xi,\vecw,\psi)$ through
\eqref{Productformulasthmpf12}
and used $\omega_{j_0}^{\psi'}=\omega_{j_0}^{\g}$
and the formula
\eqref{kappagwppsipdeffact1gen}
with $\vecv=-(0,\vecw')$
(applied for $\scrP_j$ in place of $\scrP$),
to see that 
$d\overline{\omega_{j_0}^{\psi'}}(x)
=v_{d-1}^{-1}\,\ck^{\g}_{j_{\psi'}}(\xi,-\vecw',\psi')\,d\xi\,d\vecw\,d\mm_{j_0}(\psi)$.
Finally, the fact that \eqref{Productformulasthmpf9} holds for all
Borel sets $B\subset\fZ_\infty\times\Sigma_{j_0}$
implies that the second formula in \eqref{Productformulasthmres1och2} holds 
for almost all
$\langle\xi,\vecw,\psi\rangle\in\R_{>0}\times\UB\times\Psi_{j_0}$.

Since $j_0$ was arbitrary, we have now proved 
all of \eqref{Productformulasthmres1och2}.
The formula \eqref{Productformulasthmres3} follows 
by a very similar computation,
starting from the relation
$\kappa^{\g}(B)=\overline{\omega^{\g}}(A)$
instead of \eqref{Productformulasthmpf10},
leading to
\begin{align*}
\kappa^{\g}(B)
=\frac{\nbar_{\scrP}}{\nbar_{j_0}\,v_{d-1}}\int_{B}\ck^{\g}(\xi,\vecw,\psi)\, \biggl(\prod_{j\neq j_0}
\int_{\xi}^\infty\Phi_{\scrP_j}(\xi')\,d\xi'\biggr)
\,d\xi\,d\vecw\,d\mm(\psi).
\end{align*}
The fact that \eqref{Productformulasthmres3} 
holds for \textit{all} (and not just almost all) $\xi,\vecw,\psi$
follows by continuity, see Lemma \ref{kkgrelLEM}.

This completes the proof of Theorem \ref{Productformulasthm},
and thus also the proof of Theorem \ref{Productformulasthm2}.
\hfill$\square$

\begin{remark}\label{oldprodformulacompareREM}
In the special case
when all the grids appearing in $\scrP$ are incommensurable
and $\nbar_{\scrP}=1$,
the product formulas in 
Theorem \ref{Productformulasthm}
agree with the formulas
\cite[(5.5), (5.6), (5.12)]{jMaS2013a}.
This is immediately verified using the 
formulas in Remark \ref{singlegridREM} 
and Remark \ref{incommensurableREM},
and the fact that 
$\ck^{\g}(\xi,-\vecw',\psi')=\ck^{\g}(\xi,\vecw',\psi')$,
since $\scrP_{j_{\psi'}}$ is a single grid.
(The last mentioned symmetry relation follows from
Remark \ref{singlegridREM} and 
\cite[Remark 4.5]{jMaS2010a}\footnote{Note that
the function ``$\Phi(\xi,\vecw)$'' in \cite{jMaS2008,jMaS2010b} is the same as
``$\Phi_{\vecalf}(\xi,\vecw,\vecz)$ with $\vecalf\notin\Q^d$''
in \cite{jMaS2010a}.}.
In fact, in dimension $d\geq3$ we have for our \textit{general} $\scrP$ that
$k^{\g}(\xi,\vecw,\psi)$ only depends 
on $\xi,\|\vecw\|,\psi$,
by \cite[Lemma 3.18]{jMaS2019}.)
\end{remark}

We next give the proof of Theorem \ref{indepmergeTHM}.
We will here stay close to the notation used in the statement of that
theorem, thus, e.g., we return to parametrizing the equivalence classes in $\Psi$
by $c\in C_\Psi$ instead of $j\in\{1,\ldots,N\}$.

\noindent
\textit{Proof of Theorem \ref{indepmergeTHM}.}
Since the set $A$ may be decomposed as the disjoint union of the subsets
$A\cap(\R_{>0}\times\{\psi\}\times\US)$ with $\psi$ running through $\Psi$,
it suffices to prove the statements of the theorem %
when $A$ is a subset of $\R_{>0}\times\{\psi_0\}\times\US$ for some fixed $\psi_0\in\Psi$.
In this case, for any given $\psi'\in\Psi$,
it follows from
the definition of $\tc_{\psi'}$ that
the right hand side of \eqref{indepmergeCORres2} equals
\begin{align}\label{indepmergeCORpf1}
\PP\Bigl(\txi_{[\psi_0],\psi'}<\txi_{c,\psi'}\:\forall c\neq[\psi_0]
\text{ and }\langle\txi_{[\psi_0],\psi'},\tpsi_{[\psi_0],\psi'},\tvecV_{[\psi_0],\psi'}\rangle\in A\Bigr).
\end{align}
Using the definition of the random triples
$\langle \txi_{c,\psi'},\tpsi_{c,\psi'},\tvecV_{c,\psi'}\rangle$, %
the above probability can be expressed as follows, if $\psi'\not\sim\psi_0$:
\begin{align*}
&\int_{A}\cp^{(\psi_0)}(\vecV';\xi,\vecV)
\biggl(\int_{(\xi,\infty)\times[\psi']\times\US}\cp^{(\psi'\to\psi_1)}(\vecV'',\vecV';\xi_1,\vecVone)\,d\xi_1\,d\mm_{[\psi']}(\psi_1)\,d\sigma(\vecVone)
\biggr)
\\
&\hspace{30pt}
\times\biggl(\prod_{c\neq[\psi_0],[\psi']}\int_{(\xi,\infty)\times c\times\US}\cp^{(\psi_1)}(\vecV';\xi_1,\vecVone)
\,d\xi_1\,d\mm_{c}(\psi_1)\,d\sigma(\vecVone)\biggr)
\,d\xi\,d\mm_{[\psi_0]}(\psi)\,d\sigma(\vecV)
\\
&=\int_A\cp^{(\psi_0)}(\vecV';\xi,\vecV)
\biggl(\frac1{v_{d-1}}\int_{(\xi,\infty)\times[\psi']\times\UB}\ck(\vecw',\psi',\xi_1,\vecw_1,\psi_1)\,d\xi_1\,d\mm_{[\psi']}(\psi_1)\,d\vecw_1
\biggr)
\\
&\hspace{70pt}\times\biggl(\prod_{c\neq[\psi_0],[\psi']}\int_\xi^\infty\Phi_{\scrP_c}(\xi')\,d\xi'\biggr)
\,d\xi\,d\mm_{[\psi_0]}(\psi)\,d\sigma(\vecV),
\end{align*}
where $\vecw'\in\UB$ is the exit parameter determined by the fixed vectors $\vecV'',\vecV'$.
To get the last equality,
in the integral over $(\xi,\infty)\times[\psi']\times\US$ we introduced the new variable
$\vecw_1:=$ the impact parameter determined by $\vecV',\vecVone$,
and used \eqref{pbndefG001} and 
the defining property of the differential cross section
(cf.\ \cite[Sec.\ 3.4 and Lemma 3.26]{jMaS2019});
also, inside the product over $c$ we applied 
\eqref{PhiPcformula}. %

Next we apply Lemma \ref{kkgrelLEM} to identify the integral over
$(\xi,\infty)\times[\psi']\times\UB$,
and in the integral over $A$ we 
substitute $\vecw_2:=$ the impact parameter determined by $\vecV',\vecV$,
and use \eqref{pgendef001}.
It follows that the above expression equals
\begin{align*}
\frac1{\nbar_{[\psi']}\,v_{d-1}^2}\int_{\oA_{\vecV'}}\ck(\xi,\vecw_2,\psi_0) \ck(\xi,-\vecw',\psi')
\biggl(\prod_{c\neq[\psi_0],[\psi']}\int_\xi^\infty\Phi_{\scrP_c}(\xi')\,d\xi'\biggr)\,
\,d\xi\,d\mm_{[\psi_0]}(\psi)\,d\vecw_2,
\end{align*}
where $\oA_{\vecV'}$ is the set of triples $(\xi,\psi,\vecw_2)$ which 
arise as images of the $(\xi,\psi,\vecV)\in A$.
Finally, using $\mm_{[\psi_0]}=(\nbar_{\scrP}/\nbar_{[\psi_0]})\,\mm\big|_{[\psi_0]}$
and the product formula \eqref{Productformulasthmres1och2} in Theorem \ref{Productformulasthm},
and our assumption that $A\subset\R_{>0}\times\{\psi_0\}\times\US$
($\Rightarrow \oA_{\vecV'}\subset\R_{>0}\times\{\psi_0\}\times\UB$),
the above expression becomes
\begin{align*}
\frac1{v_{d-1}}\int_{\oA_{\vecV'}} k(\vecw',\psi',\xi,\vecw_2,\psi)\,d\xi\,d\mm(\psi)\,d\vecw_2
=\int_A p^{(\psi'\to\psi)}(\vecV'',\vecV';\xi,\vecV)\,d\xi\,d\mm(\psi)\,d\sigma(\vecV).
\end{align*}
By \eqref{Markovprocexpl2} in Theorem \ref{limitflightprocessexplTHM},
this equals the left hand side of \eqref{indepmergeCORres2}!

In the case $\psi'\sim\psi_0$, the probability in \eqref{indepmergeCORpf1} instead equals
\begin{align*}
\int_A\cp^{(\psi'\to\psi_0)}(\vecV'',\vecV';\xi,\vecV)\,\biggl(\prod_{c\neq[\psi_0]}
\int_\xi^\infty\Phi_{\scrP_c}(\xi')\,d\xi'\biggr)
\,d\xi\,d\mm_{[\psi_0]}(\psi)\,d\sigma(\vecV),
\end{align*}
and by the same type of computation as above
(but easier, and using the 
first equality in \eqref{Productformulasthmres1och2} instead of the second),
this is again seen to be equal to the left hand side of \eqref{indepmergeCORres2}.
This completes the proof of \eqref{indepmergeCORres2}.

The proof of \eqref{indepmergeCORres1} is completely similar, but easier:
Again assuming $A\subset\R_{>0}\times\{\psi_0\}\times\US$, 
the right hand side of \eqref{indepmergeCORres1} equals
\begin{align*}
&\PP\Bigl(\txi_{[\psi_0]}<\txi_{c}\:\forall c\neq[\psi_0]
\text{ and }\langle\txi_{[\psi_0]},\tpsi_{[\psi_0]},\tvecV_{[\psi_0]}\rangle\in A\Bigr)
\\
&\hspace{40pt}
=\int_A\cp^{(\psi_0)}(\vecV';\xi,\vecV)\,\biggl(\prod_{c\neq[\psi_0]}
\int_\xi^\infty\Phi_{\scrP_c}(\xi')\,d\xi'\biggr)
\,d\xi\,d\mm_{[\psi_0]}(\psi)\,d\sigma(\vecV),
\end{align*}
and by the same type of computation as above,
using now \eqref{Productformulasthmres3},
this is seen to be equal to the left hand side of \eqref{indepmergeCORres1}.
\hfill$\square$

\vspace{10pt}

\noindent
\textit{Proof of Corollary \ref{ProductformulasthmCOR}.}
Given $\xi>0$, we apply \eqref{indepmergeCORres1} in Theorem \ref{indepmergeTHM}
to the set $A:=(\xi,\infty)\times\Psi\times\US$.
Then by \eqref{Markovprocexpl1} and \eqref{PhiPdef},
the left hand side of \eqref{indepmergeCORres1}
equals $\int_\xi^\infty\Phi_{\scrP}(\xi')\,d\xi'$.
On the other hand, by the definition of the 
(mutualy independent) random variables
$\langle \txi_{c},\tpsi_{c},\tvecV_{c}\rangle$, 
and the definition of $\tc$,
the right hand side of 
\eqref{indepmergeCORres1}
equals $\prod_{c\in C_\Psi}\PP(\txi_c>\xi)$,
which by 
\eqref{Markovprocexpl1} (for $\scrP_c$) and \eqref{PhiPcformula}
equals $\prod_{c\in C_\Psi}\int_\xi^\infty\Phi_{\scrP_{c}}(\xi')\,d\xi'$.
Hence:
\begin{align*}
\int_\xi^\infty\Phi_{\scrP}(\xi')\,d\xi'
=\prod_{c\in C_\Psi}\int_\xi^\infty\Phi_{\scrP_{c}}(\xi')\,d\xi',
\end{align*}
and the corollary follows by differentiation,
using the fact that the functions $\Phi_{\scrP}$ and $\Phi_{\scrP_c}$ are 
nonnegative and continuous.
\hfill$\square$

\subsection{Asymptotic estimates for the free path length distribution}
\label{asymptSEC}

We will now prove 
Theorem \ref{AsymptTHM}.
This theorem will be a quite easy consequence of the
following result.
\begin{prop}\label{AsymptPROP}
There exist constants $0<c_1<c_2$, which only depend on $\scrP$,
such that
\begin{align}\label{AsymptPROPres}
c_1\xi^{-N}<\int_\xi^{\infty}\Phi_{\scrP}(\xi')\,d\xi'< c_2\xi^{-N},
\qquad\forall \xi\geq1.
\end{align}
\end{prop}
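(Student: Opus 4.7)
The plan is to reduce the proposition to the single commensurability class case ($N=1$) via the product formula of Corollary~\ref{ProductformulasthmCOR}, and then to treat the base case by combining the classical single-grid asymptotics with an analysis of unions of commensurable grids. By Corollary~\ref{ProductformulasthmCOR} we have
\begin{align*}
\int_\xi^\infty \Phi_\scrP(\xi')\,d\xi' = \prod_{c\in C_\Psi}\int_\xi^\infty \Phi_{\scrP_c}(\xi')\,d\xi',
\end{align*}
so it suffices to show that for each $c\in C_\Psi$ there exist positive constants $\tc_{1,c}<\tc_{2,c}$ with $\tc_{1,c}\xi^{-1} < \int_\xi^\infty \Phi_{\scrP_c}(\xi')\,d\xi' < \tc_{2,c}\xi^{-1}$ for all $\xi\geq 1$; the proposition then follows by taking $c_i:=\prod_{c\in C_\Psi}\tc_{i,c}$.

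For the base case, $\scrP$ is a finite union of pairwise commensurable grids. The upper bound is immediate: fixing any $\psi_0\in\Psi$, we have $\scrL_{\psi_0}\subset\scrP$, and since enlarging the scatterer configuration can only shorten free paths, the free path length in $\scrP$ is stochastically dominated by that in the single lattice $\scrL_{\psi_0}$. Hence
\begin{align*}
\int_\xi^\infty \Phi_\scrP(\xi')\,d\xi' \leq \int_\xi^\infty \Phi_{\scrL_{\psi_0}}(\xi')\,d\xi' = O(\xi^{-1}),
\end{align*}
by the classical single-grid asymptotics, e.g.\ \cite[Theorem 1.13]{jMaS2010b}.

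The lower bound in the base case is the main obstacle, because $\scrP$ cannot in general be embedded in a single grid of bounded covolume---in particular when the rescaling factors $c_\psi$ admit irrational ratios---so the dual comparison argument is unavailable. I would instead express the tail probability via \eqref{kappagwppsipdeffact1gen} as an $\overline{\omega_j^\g}$-measure of a no-collision event,
\begin{align*}
\int_\xi^\infty \Phi_\scrP(\xi')\,d\xi' = \overline{\omega_j^\g}\bigl(\bigl\{\Gamma_j g\in \XX_j \col J^{(j)}(\Gamma_j g)\cap(\fZ_\xi\times\Sigma)=\emptyset\bigr\}\bigr),
\end{align*}
where $j$ is the unique index in the class, and disintegrate $\overline{\omega_j^\g}$ via Lemma~\ref{oomegapartDEFlem} as $\int_{F_d}\int_{\TT_j^d}\cdots\,d\omega_j^\g\,d\nu$. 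Following the classical lower-bound strategy for a single lattice (Bourgain--Golse--Wennberg, refined in \cite{jMaS2010b}), one isolates a $\nu$-positive set of matrices $A\in F_d$ for which $\Z^d A$ admits a short nonzero vector of length $\sim \xi^{-1}$ in a direction near $\vece_1$, opening a cylindrical channel of length $\gg\xi$ around this vector that avoids all points of $\Z^d A$. Commensurability forces each $\scrL_\psi$ to be a rescaled translate of $\Z^d A$, so one can then exploit the structure of $\omega_j^\g$ on the torus fiber (via $\scrO_j\subset(\tSS_j)^d$ from Lemma~\ref{OjwelldefLEM}), together with the Siegel integration formula (Proposition~\ref{SIEGELFORMULALEM1}) applied to each $\scrL_\psi$, to produce a set of fiber positions of $\omega_j^\g$-measure bounded below along which all the $c_\psi$-rescaled grids simultaneously miss the channel. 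The hard part is quantifying this simultaneous-avoidance event uniformly in the $c_\psi$, especially when they carry irrational ratios and the union of rescaled grids has no common periodic structure transverse to the channel.
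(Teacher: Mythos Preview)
Your reduction to $N=1$ via the product formula and your upper bound are both correct and match the paper's argument (the paper makes the stochastic domination precise by projecting onto a single row $\tr_i$ and using that $\tr_{i*}(\omega^{\g})$ is Lebesgue on $(\R/\Z)^d$, which is exactly the limit version of ``$\scrL_{\psi_0}\subset\scrP$ shortens free paths'').

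The lower bound, however, is where your proposal has a genuine gap. You correctly set up the problem as an $\overline{\omega^{\g}}$-measure and disintegrate via Lemma~\ref{oomegapartDEFlem}, but your proposed tools---the Siegel formula and the fine structure of $\scrO_j\subset(\tSS_j)^d$---are not what is needed, and you acknowledge you do not see how to close the argument. The paper avoids this difficulty entirely by two observations you are missing. First, it uses the $\SL_d(\R)$-invariance of $\overline{\omega^{\g}}$ to replace $\fZ_\xi$ by the balanced region $\tfZ_\xi=(0,\xi^{1/d})\times\scrB_{\xi^{1/d}}^{d-1}$; this recasts the question as one about lattices with a \emph{long} vector $\vecell$ (norm $\geq B\sim\xi^{1/d}$), i.e.\ lattices confined to widely spaced parallel hyperplanes $\Z\vecell+\vecell^\perp$. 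A Siegel-set computation (Lemma~\ref{AsymptPROPauxLEM1}) shows the set of such $A$ has $\nu$-measure $\gg B^{-d}\sim\xi^{-1}$.

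Second---and this is the key step you did not find---for each such $A$, every rescaled grid $c_{1,i}(\Z^d+\tr_i(U))A$ is itself contained in a \emph{single} coset $(\Z+x_i)\vecell+\vecell^\perp$, and the value $x_i\in\R/\Z$ depends on $U$ only through $\delta\circ\tr_i$ for a fixed linear map $\delta$. Since $\tr_{i*}(\omega^{\g})$ is Lebesgue (translation invariance, cf.\ Lemma~\ref{mugtranslinvLEM}), each $x_i$ is uniform on $\R/\Z$. A crude union bound over $i=1,\ldots,r_1$ then gives that the probability (in $U$) that \emph{some} grid meets $\tfZ_\xi$ is at most $\sum_i 2C_\xi/(c_{1,i}\|\vecell\|)\leq\tfrac12$, by choosing $B$ as in \eqref{AsymptPROPpf4}. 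No joint control of the fiber distribution and no Siegel integration are required; the irrational ratios among the $c_\psi$ play no role.
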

\begin{proof}
By Corollary \ref{ProductformulasthmCOR},
$\int_\xi^{\infty}\Phi_{\scrP}(\xi')\,d\xi'
=\prod_{j=1}^N\int_\xi^{\infty}\Phi_{\scrP_j}(\xi')\,d\xi'$;
thus, if we can prove the proposition when $N=1$,
it follows for general $N$.
Hence from now on we assume $N=1$.

By \eqref{genoPhiPhirel4} and the definition of $k^{\g}$, we have
\begin{align}\notag
\int_\xi^\infty\Phi_{\scrP}(\xi')\,d\xi'
&=\frac1{v_{d-1}}\int_{[\xi,\infty)\times\UB\times\Psi} k^{\g}(\xi',\vecw,\psi)\,d\xi'\,d\vecw\,d\mm(\psi)
\\\label{AsymptPROPpf10}
&=\kappa^{\g}\bigl([\xi,\infty)\times\UB\times\Sigma\bigr)
=\overline{\omega^{\g}}\bigl(\bigl\{x\in\XX\col J(x)\cap(\fZ_\xi\times\Sigma)=\emptyset\bigr\}\bigr),
\end{align}
where in the last equality we used
\eqref{kappagwppsipdeffact1gen},
$\kappa^{\g}(\{\undef\})=0$, and the definition of the map $\vecz$.
Now since $N=1$ we have
$\XX=\XX_1$ and $\omega^{\g}=\omega_1^{\g}$,
and $\overline{\omega^{\g}}$ is $\SL_d(\R)$-invariant
by Lemma \ref{rhosoomegaeqnuLEM};
in particular $\overline{\omega^{\g}}$ is invariant under
(right) multiplication by
$D:=D_{\xi^{1/d}}=\diag\bigl(\xi^{1-\frac1d},\xi^{-\frac1d},\ldots,\xi^{-\frac1d}\bigr)$,
and so we may replace $J(x)$ by $J(xD)$ in the last expression in
\eqref{AsymptPROPpf10}.
Using also $J(xD)=J(x)D$,
we see that this is equivalent with replacing,
in the same expression,
$\fZ_\xi$ by
\begin{align*}
\tfZ_\xi:=\fZ_\xi D^{-1}=(0,\xi^{1/d})\times\scrB_{\xi^{1/d}}^{d-1}.
\end{align*}
Using also Lemma \ref{oomegapartDEFlem}
for $\omega^{\g}=\omega_1^{\g}$,
and the definition of $J$ in \eqref{JmapFULL},
we conclude:
\begin{align}\label{AsymptPROPpf1}
\int_\xi^\infty\Phi_{\scrP}(\xi')\,d\xi'
=\int_{F_d}
\omega^{\g}\bigl(\bigl\{U\in\TT_1^d\col
c_{1,i}\bigl(\Z^d+\tr_i(U))A\cap\tfZ_\xi=\emptyset\text{ for }i=1,\ldots,r_1\bigr\}\bigr)\,d\nu(A).
\end{align}
Here %
$\tr_i(U)\in(\R/\Z)^d$, and in the above expression we use 
the convention that 
for any $\vecu\in(\R/\Z)^d$, 
$\Z^d+\vecu$ denotes the inverse image of $\vecu$ in $\R^d$
(that is, $\Z^d+\vecu:=\Z^d+\vecv$, where $\vecv$ is any lift of $\vecu$ to $\R^d$).

To get an upper bound, we fix an arbitrary $i\in\{1,\ldots,r_1\}$,
and note that the previous integral is bounded above by
\begin{align*}
\int_{F_d}
[\tr_{i*}(\omega^{\g})]\bigl(\bigl\{\vecu\in(\R/\Z)^d\col  c_{1,i}(\Z^d+\vecu)A\cap\tfZ_\xi=\emptyset
\bigr\}\bigr)\,d\nu(A).
\end{align*}
Here note that the measure
$\tr_{i*}(\omega^{\g})$ is %
invariant under translations,
since, as we saw in the proof of Lemma \ref{mugtranslinvLEM}, 
$\omega^{\g}=\omega_1^{\g}$ is invariant under $X\mapsto X+\tvecc_1\vecv$
for all $\vecv\in\R^d$, and $\r_i(\tvecc_1\vecv)=c_{1,i}^{-1}\vecv$ with $c_{1,i}>0$.
Hence $\tr_{i*}(\omega^{\g})$ is in fact the Lebesgue probability measure
on $(\R/\Z)^d$, and it follows that 
in the last integral equals
\begin{align*}
\int_{\ASL_d(\Z)\bs\ASL_d(\R)}I\bigl(j(x)\cap c_{1,i}^{-1}\tfZ_\xi=\emptyset\bigr)\,d\tnu(x),
\end{align*}
where $\tnu$ is the $\ASL_d(\R)$ invariant probability measure on
$\ASL_d(\Z)\bs\ASL_d(\R)$ and 
$j$ is the standard identification between
$\ASL_d(\Z)\bs\ASL_d(\R)$ and the space of grids of density one in $\R^d$,
that is, $j(\ASL_d(\Z)g):=\Z^dg$ for any $g\in\ASL_d(\R)$.
This integral is known to be asymptotically equal to $c\,\xi^{-1}$ as $\xi\to\infty$
with an explicit constant $c$
\cite[Theorem 3.5 and (3.21), (3.22)]{jMaS2010b}.
Hence the upper bound in
\eqref{AsymptPROPres} (for $N=1$) holds for an appropriate constant $c_2$.

It remains to prove the lower bound.
We will need the following lemma.
\begin{lem}\label{AsymptPROPauxLEM1}
There exists a constant $c=c(d)>0$
such that for any $B\geq1$, there exists a subset 
$M_B\subset F_d$
with $\nu(M_B)\geq cB^{-d}$ such that for every 
$A\in M_B$ there exists a vector $\vecell\in\R^d$
satisfying $\|\vecell\|\geq B$
and $\Z^dA\subset \Z\vecell+\vecell^\perp$.
\end{lem}
(Here $\vecell^\perp$ denotes the hyperplane orthogonal to $\vecell$.)
\begin{proof}
Consider the Siegel set
\begin{align*}
\scrS_d:=\bigl\{u\,\diag(a_1,\ldots,a_d)\,k\col u\in F_U,\:0<a_{j+1}\leq\tfrac2{\sqrt3}a_j\: (j=1,\ldots,d-1),\: k\in\SO(d)\bigr\},
\end{align*}
where $F_U$ is the set of all unipotent upper triangular matrices in $\SL_d(\R)$
all of whose elements above the diagonal belong to $[-\frac12,\frac12]$.
It is known that $\scrS_d$ contains a fundamental region for $\SL_d(\Z)\bs\SL_d(\R)$,
and on the other hand,
the number of points in $\scrS_d$ which project to any given point in $\SL_d(\Z)\bs\SL_d(\R)$
is bounded above by a constant $K=K(d)$ %
(see \cite{aB69}).
Note that for any $A=u\,\diag(a_1,\ldots,a_d)\,k\in\scrS_d$ we have
$\vece_1 u\subset\vece_1+\vece_1^\perp$ and $\vece_j u\subset\vece_1^\perp$
for all $j\in\{2,\ldots,d\}$;
hence the vector $\vecell:=a_1\vece_1k$ satisfies
$\Z^dA\subset \Z\vecell+\vecell^\perp$.
Therefore, given $B\geq1$, if we set
\begin{align*}
M_B':=\{u\,\diag(a_1,\ldots,a_d)\,k\in\scrS_d\col a_1\geq B\},
\end{align*}
then for every $A\in M_B'$ %
there exists $\vecell\in\R^d$ satisfying 
$\|\vecell\|\geq B$
and $\Z^dA\subset \Z\vecell+\vecell^\perp$.
Note also that this property only depends on the lattice $\Z^dA$,
that is, it depends only on the coset $\SL_d(\Z)\,A$.
Hence if we set $M_B:=F_d\cap\bigl(\SL_d(\Z)\,M_B'\bigr)$,
then for every $A\in M_B$ there again exists 
$\vecell\in\R^d$ satisfying 
$\|\vecell\|\geq B$
and $\Z^dA\subset \Z\vecell+\vecell^\perp$.
Furthermore,
\begin{align*}
\nu(M_B)\geq K^{-1}\nu(M_B')
\gg\int_B^\infty a_1^{-d-1}\,da_1\gg B^{-d},
\end{align*}
where the lower bound on $\nu(M_B')$
follows from a standard integration
using the expression for $\nu$ in terms of the
Iwasawa decomposition of $\SL_d(\R)$
(see, e.g., 
\cite[(2.11) and (2.12)]{aS2011i},
and use also the fact that for $B$ sufficiently large,
the set of $A'\in\scrS_{d-1}$ with $a_1\leq\frac2{\sqrt3}B^{d/(d-1)}$
has Haar measure bounded away from zero independently of $B$).
\end{proof}
Now let $M_B\subset F_d$ be the set provided by
Lemma \ref{AsymptPROPauxLEM1}, applied with
\begin{align}\label{AsymptPROPpf4}
B:=4\Bigl(\sum_{i=1}^{r_1}c_{1,i}^{-1}\Bigr)C_\xi
\qquad\text{where }\:
C_\xi:=\sup\bigl\{\bigl|\vecv\cdot\vecw\bigr|\col\vecv\in\US,\:\vecw\in\tfZ_\xi\bigr\}.
\end{align}
We claim that for every $A\in M_B$,
\begin{align}\label{AsymptPROPpf2}
\omega^{\g}\bigl(\bigl\{U\in\TT_1^d\col
c_{1,i}\bigl(\Z^d+\tr_i(U))A\cap\tfZ_\xi=\emptyset\text{ for }i=1,\ldots,r_1\bigr\}\bigr)
\geq\tfrac12.
\end{align}
To prove this, we first use that the left hand side of
\eqref{AsymptPROPpf2} is
\begin{align}\label{AsymptPROPpf3}
\geq 1-\sum_{i=1}^{r_1}\omega^{\g}\bigl(\bigl\{U\in\TT_1^d\col c_{1,i}(\Z^d+\tr_i(U))A\cap \tfZ_\xi\neq\emptyset\bigr\}\bigr).
\end{align}
Now $A\in M_B$ implies that there is a vector 
$\vecell\in\R^d$ satisfying $\|\vecell\|\geq B$
and $\Z^dA\subset \Z\vecell+\vecell^\perp$.
It follows that for every $U\in\TT_1^d$ we have
$(\Z^d+\tr_i(U))A\subset(\Z+x)\vecell+\vecell^\perp$,
where $x:=\|\vecell\|^{-2}\vecell\cdot(\vecv A)$
for some vector $\vecv\in\Z^d+\tr_i(U)$.
Note that for every $\vecm\in\Z^d$ we have
$\vecm A\in\Z\vecell+\vecell^\perp$ and so $\|\vecell\|^{-2}\vecell\cdot (\vecm A)\in\Z$;
this implies that the
map $\vecv\mapsto\|\vecell\|^{-2}\vecell\cdot(\vecv A)$ from $\R^d$ to $\R$
induces a well-defined map 
$\delta:(\R/\Z)^d\to\R/\Z$. %
Using this map, the inclusion just mentioned can be written:
\begin{align*}
(\Z^d+\tr_i(U))A\subset(\Z+\delta(\tr_i(U)))\vecell+\vecell^\perp,
\qquad\forall U\in\TT_1^d.
\end{align*}
(Here for any $y\in\R/\Z$, we write $\Z+y$ for the inverse image of $y$ in $\R$.)
Hence we conclude that the expression in %
\eqref{AsymptPROPpf3} is
\begin{align}\label{AsymptPROPpf5}
\geq1-\sum_{i=1}^{r_1}
\bigl[(\delta\circ\tr_i)_*(\omega^{\g})\bigr]
\bigl(\bigl\{y\in\R/\Z\col (\Z+y)\vecell+\vecell^{\perp}\cap c_{1,i}^{-1}\tfZ_{\xi}\neq\emptyset\bigr\}\bigr).
\end{align}
But we have noted that $\tr_{i*}(\omega^{\g})$ is Lebesgue measure on $(\R/\Z)^d$;
hence $(\delta\circ\tr_i)_*(\omega^{\g})$ is Lebesgue measure on $\R/\Z$.
Note also that for any $x\in\R$, the set 
$c_{1,i}^{-1}\tfZ_{\xi}$ intersects $x\vecell+\vecell^\perp$
if and only there is some 
$\vecw\in c_{1,i}^{-1}\tfZ_{\xi}$
satisfying $\|\vecell\|^{-2}\vecell\cdot\vecw=x$.
Furthermore, because of our choice of $C_\xi$ in \eqref{AsymptPROPpf4}, we have
$\bigl|\|\vecell\|^{-2}\vecell\cdot\vecw\bigr|\leq C_\xi / \bigl(c_{1,i}\|\vecell\|\bigr)$
for every $\vecw\in c_{1,i}^{-1}\tfZ_{\xi}$.
Therefore, the expression in \eqref{AsymptPROPpf5} is
\begin{align*}
&\geq1-\sum_{i=1}^{r_1}
\Leb\,\biggl(\biggl\{y\in\R/\Z\col (\Z+y)\cap
\biggl[-\frac{C_\xi}{c_{1,i}\|\vecell\|},\frac{C_\xi}{c_{1,i}\|\vecell\|}\biggr]\neq\emptyset\biggr\}\biggr)
=1-\sum_{i=1}^{r_1}\frac{2C_\xi}{c_{1,i}\|\vecell\|}\geq\frac12,
\end{align*}
where the last bound holds since $\|\vecell\|\geq B$ and
because of our choice of  $B$ in \eqref{AsymptPROPpf4}
(and the preceding equality holds since 
$C_\xi / \bigl(c_{1,i}\|\vecell\|\bigr)\leq\frac14\leq\frac12$ for every $i$).
This completes the proof of \eqref{AsymptPROPpf2}.

Using \eqref{AsymptPROPpf2} and 
\eqref{AsymptPROPpf1}, we conclude
\begin{align*}
\int_\xi^\infty\Phi_{\scrP}(\xi')\,d\xi'\geq\tfrac12\nu(M_B)
\gg B^{-d}\gg\xi^{-1},
\end{align*}
where we first used the bound from 
Lemma \ref{AsymptPROPauxLEM1},
and then the fact that $B\ll C_\xi\ll\xi^{1/d}$.
Hence the lower bound in
\eqref{AsymptPROPres} (for $N=1$) holds for an appropriate constant $c_1$,
and Proposition \ref{AsymptPROP} is proved.
\end{proof}

\begin{proof}[Proof of Theorem \ref{AsymptTHM}]
This is immediate from Proposition \ref{AsymptPROP}
and the fact that $\Phi_{\scrP}(\xi)$ is a nonnegative,
decreasing function.
Indeed, the upper bound for $\xi\geq2$ follows using
$(\xi/2)\Phi_{\scrP}(\xi)\leq\int_{\xi/2}^{\xi}\Phi_{\scrP}(\xi')\,d\xi'$;
for $1\leq\xi\leq2$
one may e.g.\ use
$\Phi_{\scrP}(\xi)\leq\nbar_{\scrP}\,v_{d-1}$.
For the lower bound, let $c_1,c_2$ be as in 
Proposition \ref{AsymptPROP}, and fix $a>1$ so that 
$c_1-c_2a^{-N}>0$.
Then for any $\xi\geq1$,
\begin{align*}
(a-1)\xi\Phi_{\scrP}(\xi)
\geq\int_\xi^{a\xi}\Phi_{\scrP}(x)\,dx
=\int_\xi^{\infty}\Phi_{\scrP}(x)\,dx-\int_{a\xi}^\infty\Phi_{\scrP}(x)\,dx
>c_1\xi^{-N}-c_2(a\xi)^{-N},
\end{align*}
which implies that
the lower bound in \eqref{genoPhiPhirel7disc4}
holds with $c_2^{\new}=(a-1)^{-1}(c_1-c_2a^{-N})$.
\end{proof}

\begin{proof}[Proof of Corollary \ref{Asympt2momentCOR}]
By \eqref{PhioPhirel},
$-\bigl(\nbar_{\scrP}\,v_{d-1}\bigr)^{-1}\Phi_{\scrP}(\xi)$ is a primitive
function of $\oPhi_{\scrP}(\xi)$;
hence by
integration by parts,
\begin{align*}
\int_0^A\xi^2\,\oPhi_{\scrP}(\xi)
=\frac1{\nbar_{\scrP}\,v_{d-1}}\Bigl(-A^2\Phi_{\scrP}(A)+2\int_0^A\xi\Phi_{\scrP}(\xi)\,d\xi\Bigr),
\end{align*}
for any $A>0$.
Now the corollary follows by letting $A\to\infty$ and using Theorem \ref{AsymptTHM}.
\end{proof}

\newpage

\section*{Index of notation} %

For the convenience of the reader, we include an
index of some of the most important notation.
Note that %
in Section \ref{UNIPOTAPPLsec},
some of the notation listed below (for example, ``$\Gamma\,$'', ``$\Gamma_j$'' and ``$\tM$'')
is used in a slightly different and more general way; this is explained in the beginning of that section.

\begin{center}
\begin{footnotesize}
\begin{longtable}{llr}
$\scrB_r^d$ & open ball in $\R^d$ with center $\bn$ and radius $r$ & \pageref{scrBrhodDEF}
\\
$\scrB_r^d(\vecx)$ & open ball in $\R^d$ with center $\vecx$ and radius $r$ & \pageref{scrBRxdef}
\\
$\C_b(S)$ & the space of bounded continuous functions on $S$
& \pageref{HOMDYNintrononunifTHM}
\\
$C_\Psi$ & family of equivalence classes in $\Psi$
& \pageref{CPsidef}
\\
$\tvecc_j$
& the vector $\bigl(c_{j,1}^{-1}\: \cdots \: c_{j,r_j}^{-1}\bigr)\trans$ in $\R^{r_j}$
& \pageref{cjdef}
\\
$\vecc_j^\psi$ & the vector $c_\psi\tvecc_j$ in $\R^{r_j}$
& \pageref{cjdef}
\\
$c_{\psi}$ & 
fixed positive real numbers 
such that \eqref{LpsiDEF}--\eqref{GENPOINTSET1} hold
& \pageref{LpsiDEF}
\\
$D_\rho$ & diagonal matrix $\diag(\rho^{d-1},\rho^{-1},\cdots,\rho^{-1})$ & \pageref{Drhodef}
\\
$\fD_{\scrS}$ & the set $\bigcup_{i<j}\bigl\{(M_1,\ldots,M_N)\in G'\col M_iM_j^{-1}\in\scrS\bigr\}$
& \pageref{DSdef}
\\
$\scrE$ & subset of $\scrP$ of density zero (``exceptional points'') & \pageref{scrEDEF}, \pageref{scrEass}
\\
$\vece_k$ & the $k$th standard unit vector in $\R^d$ (or in $\R^r$ or $\R^{r_j}$) 
& \pageref{vece1DEF}, \pageref{vecekrowDEF}, \pageref{vecekcolDEF}
\\
$G$ & $\S_{r_1}(\R)\times\cdots\times\S_{r_N}(\R)$ & \pageref{GXDEF}
\\
$G_j$ & $\S_{r_j}(\R)$ & \pageref{GjDEF}
\\
$G'$ & $\SL_d(\R)^N$ (a subgroup of $G$) & \pageref{Gpdef}
\\
$g_0^{(\vecq)}$ & $\I_{U^{(\vecq)}}\,\tM$
& \pageref{g0qdef}
\\[2pt]
$\I_U$ & 
for $U\in\M_{r\times d}(\R)$, $\I_U:=(\I,U)\in\S_r(\R)$;
for $U\in\prod_{j=1}^N\M_{r_j\times d}(\R)$,
$\I_U:=(\I_{U_1},\ldots,\I_{U_N})$
& \pageref{IUdef}, \pageref{IVdef}
\\[2pt]
$I(\cdot)$ & indicator function: $I(P)=1$ if statement $P$ is true, otherwise $I(P)=0$
& \pageref{indicatorfunction}
\\[2pt]
$i_\psi$ & the second coordinate of $\psi$ (for any $\psi\in\Psi$)
&\pageref{ipsijpsidef}
\\[2pt]
$J_0$ & a map from $\XX$ to $N_s(\R^d)$
& \pageref{Gammagpointset}
\\[2pt]
$J$ & a map from $\XX$ to $N_s(\scrX)$
& \pageref{JmapFULL}
\\[2pt]
$J_\psi$ & a map from $\XX^\psi$ to $N_s(\scrX)$
& \pageref{Jpsidef}
\\[2pt]
$j_\psi$ & the first coordinate of $\psi$ (for any $\psi\in\Psi$)
&\pageref{ipsijpsidef}
\\[2pt]
$k$ & transition kernel, defined in \eqref{transkDEF}--\eqref{transkDEF2}
& \pageref{transkDEF}
\\[2pt]
$k^{\g}$ & transition kernel, defined in \eqref{transkDEF3}--\eqref{transkDEF4}
& \pageref{transkDEF3}
\\[2pt]
$\fL(S)$  & for $\emptyset\neq S\subset\TT^{r_j}$,
$\fL(S):=\overline{\big\langle \pi^{-1}(S)\big\rangle}^{\hspace{3pt}\circ}$
(a rational subspace of $\R^{r_j}$)
&\pageref{fLdef}
\\
$\scrL_\psi$ & 
a grid in $\R^d$. %
After \eqref{LpsiDEF} we have
$\scrL_\psi=c_{\psi}(\Z^d+\vecw_\psi)M_{j_\psi}$ %
& \pageref{Ppsidec1}, \pageref{LpsiDEF}
\\[2pt]
$L_j^{(\vecq)}$ & 
$L_j^{(\pi(U_j^{(\vecq)}))}$
& 
\pageref{Ljqfirstdef},
\pageref{Ljqdef}
\\[2pt]
$L_j^{(V)}$ & $\fL(V_1,\ldots,V_d)$, for $V=(V_1,\ldots,V_d)\in\TT_j^d$ 
& \pageref{LjDEF}
\\[2pt]
$L_j^{\psi}$ & the rational subspace of $\R^{r_j}$ given by \eqref{Ljpsi0DEF}
& \pageref{Ljpsi0DEF}
\\[2pt]
$L_j$ & the rational subspace of $\R^{r_j}$ defined in \eqref{trueLjDEF}
& \pageref{trueLjDEF}
\\
$\mm$ & probability measure on $\Psi$ or on $\Sigma$
& \pageref{mmpsiDEF}, \pageref{mmDEF2}, \pageref{mmdef}
\\
$\mm_c$ & probability measure on an equivalence class $c\subset\Psi$
& \pageref{mcDEF}
\\[2pt]
$M_j$ & $M_1,\ldots,M_N$ are fixed elements in $\SL_d(\R)$ %
such that \eqref{LpsiDEF}--\eqref{GENPOINTSET1} hold
& \pageref{PSIdef}
\\[2pt]
$\tM$ & $(M_1,\ldots,M_N)$ (an element in $G'$, thus in $G$) 
& \pageref{tMdef}
\\
$N(\scrX)$ & the set of locally finite counting measures on $\scrX$ & \pageref{NscrXDEF}
\\
$N_s(\scrX)$ & the set of simple measures in $N(\scrX)$ & \pageref{NscrXDEF}
\\
$\nbar_{\scrP}$ & the asymptotic density of the point set $\scrP$; $\nbar_{\scrP}=\sum_{\psi\in\Psi}\nbar_{\psi}$ 
& \pageref{nbarPDEF1}, \pageref{nbarPDEF0}
\\
$\nbar_{\psi}$ & the asymptotic density of $\scrL_\psi$.
(After \eqref{LpsiDEF}: $\nbar_\psi=c_{\psi}^{-d}$.) & \pageref{nbarDEF0}
\\
$\scrO_j^{(\vecq)}$ & $\scrO_j^{(\pi(U_j^{(\vecq)}))}$ & \pageref{OjqDEF}
\\[2pt]
$\scrO_j^{(V)}$ & the subset of  $\bigl(\SS_j^{(V)}\bigr)^d$  %
given by \eqref{OjVDEF}
&\pageref{OjVDEF}
\\
$\scrO_j^{\psi}$ & the subset of  $\TT_j^d$ given in \eqref{Ojpsi0DEF}
& \pageref{Ojpsi0DEF}
\\
$\scrO_j$ & the subset of $\TT_j^d$ given in \eqref{OjDEF}
& \pageref{OjDEF}
\\
$\scrP$ & the scatterer configuration; a fixed union of grids in $\R^d$ & \pageref{Pform0}
\\
$\tP$ & $\{(\vecp,{\vs}(\vecp))\col\vecp\in\scrP\}$ (a subset of $\scrX$)
& \pageref{tPDEF}
\\
$\tP_\vecq$ & $\tP\setminus\{(\vecq,\vs(\vecq))\}$ if $\vecq\in\scrP$, otherwise $\tP$
& \pageref{repPqdef}
\\
$\scrP_T(\rho)$ & the set $\scrP\cap\scrB^d_{T\rho^{1-d}}\setminus\scrE$
& \pageref{repASS:KEY}
\\
$P(S)$ & the set of Borel probability measures on $S$ (for any topological space $S$) 
& \pageref{PSdef} 
\\
$\Pac(\US)$ & the set of $\lambda\in P(\US)$ which are absolutely continuous with respect to $\sigma$
& \pageref{PacUSdef}
\\
$P(\TT_j^d)'$ & the subset of $\SL_d(\Z)$-invariant measures in $P(\TT_j^d)$ & \pageref{PTjdprimDEF}
\\
$p^{(\psi)}$ & collision kernel & \pageref{Markovprocexpl1}
\\
$p^{(\psi'\to\psi)}$ & collision kernel & \pageref{Markovprocexpl2}
\\
$\cp^{(\psi)}$ & collision kernel for the scatterer configuration $\scrP_{[\psi]}$
& \pageref{Markovprocexpl1}
\\
$\cp^{(\psi'\to\psi)}$ & collision kernel for the scatterer configuration $\scrP_{[\psi]}$ (here $[\psi']=[\psi]$)
& \pageref{Markovprocexpl2}
\\
$p_j$ & the projection map $G\to\S_{r_j}(\R)$
& \pageref{pjdef}
\\
$\tp_j$ & either of the projection maps $\XX\to\XX_j$ or $\tTT\to\TT_j^d$
& \pageref{pjdef}, \pageref{pjdef2}
\\
$p_\psi$ & 
for $\psi=(i,j)\in\Psi$,
$p_\psi$ is 
the map $\a_i\circ p_j$ from $G$ to $\ASL_d(\R)$ 
& \pageref{ppsiDEF}
\\
$\scrQ_\rho(\vecq,\vecv)$ & $(\tP_\vecq-\vecq)\,R(\vecv)\,D_\rho$
& \pageref{repXIRHOqv}
\\
$R$ & a fixed map $\US\to\SO(d)$ such that $\vecv R(\vecv)=\vece_1$, $\forall \vecv\in\US$ & \pageref{Rdef}
\\
$\r_i$ & the projection map $\M_{r\times d}(\R)\to\R^d$ which takes any matrix to its $i$th row & \pageref{riDEF}
\\
$\tr_i$ & for $1\leq i\leq r_j$, $\tr_i$ is the projection map $\TT_j^d\to(\R/\Z)^d$ induced by $\r_i$ & \pageref{triDEF}
\\
$\scrS$ & the commenturator of $\SL_d(\Z)$ in $\SL_d(\R)$ & \pageref{COMMENSURATOR}
\\
$\S_r(\R)$ & $\SL_d(\R)\ltimes\M_{r\times d}(\R)$ & \pageref{Srdefrep}
\\[3pt]
$\S_L(\R)$ & for $L$ a linear subspace of $\R^r$, $\S_L(\R):=\SL_d(\R)\ltimes L^d$, a subgroup of $\S_r(\R)$
& \pageref{SVdefrepnew}
\\[3pt]
$\SS_j^{(V)}$ & $\overline{\langle V_1,\ldots,V_d\rangle}$, a closed subgroup of $\TT_j$  & \pageref{SSjVdef}
\\[2pt]
$\SS_j^{(\vecq)}$ & $\SS_j^{(\pi(U_j^{(\vecq)}))}$ & \pageref{SSjdef} 
\\[2pt]
$\SS_j^{\psi}$ 
& $\pi(L_j^{\psi})$ 
& \pageref{Sjpsi0DEF}
\\[2pt]
$\SS_j$ 
& $\pi(L_j)$ 
& \pageref{SjDEF}
\\[2pt]
$\TT_j$ & $(\R/\Z)^{r_j}$
& \pageref{TTjdef}
\\
$\TT_j^d$ & $\TT_j\times\cdots\times\TT_j=\M_{r_j\times d}(\R/\Z)$ & \pageref{TTjdef}
\\
$\tTT$ & $\TT_1^d\times\TT_2^d\times\cdots\times\TT_N^d$ & \pageref{tTTdef2}
\\
$U_j^{(\vecq)}$ & the $r_j\times d$ matrix with row vectors
$\vecw_{j,i}-c_{j,i}^{-1}\vecq M_j^{-1}$ ($i=1,\ldots,r_j$)
&\pageref{Ujqdef}
\\
$U^{(\vecq)}$ & $(U_1^{(\vecq)},\cdots,U_N^{(\vecq)})$ & \pageref{tUqdef}
\\
$v_{d-1}$ & $\vol(\UB)$ & \pageref{vdm1DEF}
\\
$\vecw_{\psi}$ & 
fixed vectors in $\R^d$
such that \eqref{LpsiDEF}--\eqref{GENPOINTSET1} hold
& \pageref{PSIdef}
\\
$W_j$ & The $r_j\times d$ matrix with row vectors $\vecw_{j,i}$ ($i=1,\dots,r_j$)
& \pageref{Wjdef}
\\
$\scrX$ & $\R^d\times\Sigma$ & \pageref{scrXDEF}
\\
$\XX$ & $\GaG$ & \pageref{GXDEF}
\\
$\XX_j$ & $\Gamma_j\bs G_j$ & \pageref{GjDEF}
\\
$\XX^{\psi}$ & $\{\Gamma g\in\XX\col g\in G,\: \bn\in\Z^d\,p_\psi(g)\}$ & \pageref{Xpsi0def}
\\
$x$ & the embedding $\TT_j^d\to\XX_j$ defined in \eqref{xmapDEF},
or the embedding $\tTT\to\XX$
& \pageref{xmapDEF}, \pageref{xmapDEF2}
\\
$\fZ_\xi$ & The open cylinder $(0,\xi)\times\UB$ & \pageref{fZxidef}
\\
$\vecz$ & map from $N_s(\scrX)$ to $\Delta$ %
& \pageref{mapzDEF}
\\
$\Gamma$ & $\S_{r_1}(\Z)\times\cdots\times\S_{r_N}(\Z)$ & \pageref{GXDEF}
\\
$\Gamma_j$ & $\S_{r_j}(\Z)$ & \pageref{GjDEF}
\\
$\Delta$ & the set $(\fZ_\infty\times\Sigma)\sqcup\{\undef\}$
& \pageref{mapzDEF}
\\
$\iota$ & the projection $\S_r(\R)\to\SL_d(\R)$, or the reflection map $\scrX\to\scrX$ 
& \pageref{IOTAdef}, \pageref{reflmapDEF}
\\
$\tiota$ & the projection $\XX_j\to\SL_d(\Z)\bs\SL_d(\R)$
& \pageref{tIOTAdef}
\\
$\mu_{\vecq,\rho}^{(\lambda)}$ & distribution of $\scrQ_\rho(\vecq,\vecv)$ for $\vecv$
random in $(\US,\lambda)$
& \pageref{repmuqrholambdaDEF}
\\
$\mu_{\vs}$ & we have fixed a continuous map $\vs\mapsto\mu_{\vs}$ from $\Sigma$ to $P(N(\scrX))$
& \pageref{muvsmapDEF}, \pageref{muvsDEF}
\\
$\mu_{\scrX}$ & $\vol\times\mm$ & \pageref{muscrXDEF}
\\
$\mu_j^{(\vecq)}$ & measure in $P(\XX_j)$
& \pageref{muqDEF}
\\
$\mu^{(\vecq)}$ & the measure $\mu_1^{(\vecq)}\otimes\cdots\otimes\mu_N^{(\vecq)}$ in $P(\XX)$
& \pageref{muqDEF}
\\
$\mu^{\g}$ & the measure $J_*(\overline{\omega^{\g}})$ on $N_s(\scrX)$
& \pageref{mugdef}
\\
$\nu$ & Haar measure on $\SL_d(\R)$, normalized by $\nu(\SL_d(\Z)\bs\SL_d(\R))=1$
& \pageref{nuDEF}
\\
$\Xi$ & random flight process & \pageref{limitflightprocessTHM}
\\
$\pi$ & either of the projection maps 
$G_j\to\XX_j$, $G\to\XX$,
$\R^{r_j}\to\TT_j$ or $(\R^{r_j})^d\to\TT_j^d$
& \pageref{piXprojDEF} %
\\
$\Sigma$ & fixed compact metric space (the space of marks) 
& \pageref{scrXDEF}, \pageref{Sigmadef}
\\
$\sigma$ & $\vol_{\S_1^{d-1}}$, Lebesgue measure on $\US$ & \pageref{sigmadef}
\\
$\sigma^\psi$ &  $(\psi,\omega^\psi)$ (an element in $\Sigma$) & \pageref{OMEGAPSIdef}
\\
$\vs$ & function $\scrP\to\Sigma$ fixed in \eqref{vsmapDEF}
& \pageref{vsmapDEF}
\\
$\Phi_{\scrP}(\xi)$ & free path length density &
\pageref{PhiPdef}
\\
$\varphi$ & the diagonal embedding $\SL_d(\R)\to G$ & \pageref{varphiDEF}
\\
$\Psi$ & a set of indices. %
After \eqref{PSIdef}:
$\Psi=\{(j,i)\col j\in\{1,\ldots,N\},\: i\in\{1,\ldots,r_j\}\}$ & \pageref{Ppsidec1}, \pageref{PSIdef}
\\
$\tPsi$ & the set $\{\sigma^\psi\col\psi\in\Psi\}$ (subset of $\Sigma$)
& \pageref{tPsiDEF}
\\
$\psi$ & function $\scrP\to\Psi$ fixed in \eqref{psimarkdef}. But $\psi$ is also used to denote
a variable element in $\Psi$. & \pageref{psimarkdef}
\\
$\Omega$ & $\prod_{j=1}^N P(\TT_j^d)'$ & \pageref{OMEGAdef}
\\[4pt]
$\omega_j^{(\vecq)}$ & $\omega_j^{(\pi(U_j(\vecq)))}$ 
&\pageref{mujDEF}
\\[2pt]
$\omega_j^{(V)}$
& normalized restriction to $\scrO_j(V)$ of the Haar measure on $\SS_j(V)^d$
&\pageref{mujVDEF}
\\[2pt]
$\omega_j^{\psi}$ & 
normalized restriction to $\scrO_j^{\psi}$ of the Haar measure on $(\tSS_j^{\psi})^d$
& \pageref{mujpsi0def}
\\[2pt]
$\omega_j^{\g}$ & 
normalized restriction to $\scrO_j$ of the Haar measure on $(\tSS_j)^d$
& \pageref{omegagjdef}
\\[2pt]
$\omega^{(\vecq)}$ & $\bigl(\omega_1^{(\vecq)},\ldots,\omega_N^{(\vecq)}\bigr)$ (an element in $\Omega$)
& \pageref{omegaqDEF}
\\[2pt]
$\omega^{(V)}$ & $\bigl(\omega_1^{(V_1)},\ldots,\omega_N^{(V_N)}\bigr)$ (an element in $\Omega$)
&\pageref{omegaVDEF}
\\[2pt]
$\omega^\psi$
& $\bigl( \omega_1^{\psi},\ldots,\omega_N^{\psi}\bigr)$ (an element in $\Omega$)
& \pageref{OMEGAPSIdef}
\\[2pt]
$\omega^{\g}$
& $\bigl(\omega_1^{\g},\ldots,\omega_N^{\g}\bigr)$ (an element in $\Omega$)
& \pageref{omegagdef}
\\[2pt]
$\oomega$ &
for $\omega\in P(\TT_j^d)'$, $\oomega$ is
the probability measure on $\XX_j$ 
defined below \eqref{mapPTtoPXjrep};
& \pageref{oomegapartDEF}
\\
& for $\omega %
\in\Omega$, $\oomega$ is the probability measure on $\XX$ defined 
in \eqref{oomegaDEF}
& \pageref{oomegaDEF}
\end{longtable}
\end{footnotesize}
\end{center}

\section*{Funding and conflicts of interests}

The research leading to these results received funding from 
the Knut and Alice Wallenberg Foundation.
The authors have no competing interests to declare that are relevant to the content of this article.

\section*{Data availability statement}

This manuscript has no associated data.

\end{document}